\newcommand{\cB}{\mathcal{B}}
\newcommand{\cC}{\mathcal{C}}
\newcommand{\cE}{\mathcal{E}}
\newcommand{\cF}{\mathcal{F}}
\newcommand{\cG}{\mathcal{G}}
\newcommand{\cH}{\mathcal{H}}
\newcommand{\cJ}{\mathcal{J}}
\newcommand{\cL}{\mathcal{L}}
\newcommand{\cM}{\mathcal{M}}
\newcommand{\cN}{\mathcal{N}}
\newcommand{\cO}{\mathcal{O}}
\newcommand{\cQ}{\mathcal{Q}}
\newcommand{\cS}{\mathcal{S}}
\newcommand{\cU}{\mathcal{U}}
\newcommand{\cX}{\mathcal{X}}
\newcommand{\cY}{\mathcal{Y}}
\newcommand{\cW}{\mathcal{W}}
\newcommand{\hcS}{\widehat{\mathcal{S}}}
\newcommand{\tcS}{\widetilde{\mathcal{S}}}
\newcommand{\fm}{\mathfrak{m}}
\newcommand{\fX}{\mathfrak{X}}
\newcommand{\fY}{\mathfrak{Y}}
\newcommand{\fU}{\mathfrak{U}}
\newcommand{\fV}{\mathfrak{V}}
\newcommand{\bP}{\mathbb{P}}
\newcommand{\bR}{\mathbb{R}}
\newcommand{\bA}{\mathbb{A}}
\newcommand{\bQ}{\mathbb{Q}}
\newcommand{\bZ}{\mathbb{Z}}
\newcommand{\bG}{\mathbb{G}}
\newcommand{\bF}{\mathbb{F}}
\newcommand{\bN}{\mathbb{N}}
\newcommand{\bk}{\mathbbm{k}}
\newcommand{\pr}{\mathrm{pr}}
\newcommand{\red}{\mathrm{red}}
\newcommand{\cI}{\mathcal{I}}
\newcommand{\rmB}{\mathrm{B}}
\newcommand{\fH}{\mathfrak{H}}
\newcommand{\osXg}{\overline{\mathscr{X}}_{\!g}}
\newcommand{\QCoh}{\mathrm{QCoh}}
\DeclareMathOperator{\Aut}{Aut}
\DeclareMathOperator{\Hilb}{\mathbf{Hilb}}
\DeclareMathOperator{\Spec}{Spec}
\DeclareMathOperator{\id}{id}
\DeclareMathOperator{\Proj}{Proj}
\DeclareMathOperator{\Supp}{Supp}
\DeclareMathOperator{\rank}{rank}
\DeclareMathOperator{\diag}{diag}
\DeclareMathOperator{\rib}{rib}
\newcommand{\Chow}{\mathbf{Chow}}
\newcommand{\PGL}{\mathrm{PGL}}
\newcommand{\GL}{\mathrm{GL}}
\newcommand{\Hom}{\mathrm{Hom}}
\newcommand{\tPhi}{\widetilde{\Phi}}
\newcommand{\bmu}{\boldsymbol{\mu}}
\newcommand{\rmT}{\mathrm{T}}
\newcommand{\hyp}{\mathrm{hyp}}
\newcommand{\Stab}{\mathrm{Stab}}
\newcommand{\coker}{\mathrm{coker}}
\newcommand{\tC}{\widetilde{C}}
\newcommand{\tU}{\widetilde{U}}
\newcommand{\tV}{\widetilde{V}}
\newcommand{\K}{\mathrm{K}}
\newcommand{\oS}{\overline{S}}
\newcommand{\oM}{\overline{M}}
\newcommand{\oX}{\overline{X}}
\newcommand{\sX}{\mathscr{X}}
\newcommand{\sY}{\mathscr{Y}}
\newcommand{\sH}{\mathscr{H}}
\newcommand{\sR}{\mathscr{R}}
\newcommand{\sU}{\mathscr{U}}
\newcommand{\sV}{\mathscr{V}}
\newcommand{\sW}{\mathscr{W}}
\newcommand{\tsR}{\widetilde{\mathscr{R}}}
\newcommand{\tsH}{\widetilde{\mathscr{H}}}
\newcommand{\STR}{\overline{\mathrm{ST}}_R}
\newcommand{\osX}{\overline{\mathscr{X}}}
\newcommand{\Curves}{\mathcal{C}urves}
\newcommand{\Ext}{\mathrm{Ext}}
\newcommand{\ocM}{\overline{\mathcal{M}}}
\newcommand{\Def}{\mathrm{Def}}
\newcommand{\Sym}{\mathrm{Sym}}
\newcommand{\KSBA}{\mathrm{KSBA}}
\newcommand{\CY}{\mathrm{CY}}
\newcommand{\fL}{\mathfrak{L}}
\newcommand{\Gor}{\mathrm{Gor}}
\newcommand{\lci}{\mathrm{lci}}
\newcommand{\ccirc}{{\rm c},\circ}
\newcommand{\bfP}{\mathbf{P}}
\newcommand{\bfL}{\mathbf{L}}
\newcommand{\PuP}{\mathbb{P}^2_{x_0} \cup \mathbb{P}^2_{x_3}}
\newcommand{\ofM}{\overline{\mathfrak{M}}}
\newcommand*{\@rowstyle}{}
\newcommand*{\rowstyle}[1]{
  \gdef\@rowstyle{#1}%
  \@rowstyle\ignorespaces%
}
\newcolumntype{=}{
  >{\gdef\@rowstyle{}}%
}
\newcolumntype{+}{
  >{\@rowstyle}%
}
\numberwithin{equation}{section}
\newcommand\numberthis{\addtocounter{equation}{1}\tag{\theequation}}
\newtheorem{prop} {Proposition} [section]
\newtheorem{thm}[prop] {Theorem} 
\newtheorem{defn-prop}[prop] {Definition-Proposition}
\newtheorem{lem}[prop] {Lemma}
\newtheorem{cor}[prop]{Corollary}
\newtheorem{prop-def}[prop]{Proposition-Definition}
\newtheorem{theorem}[prop]{Theorem}
\newtheorem{lemma}[prop]{Lemma}
\newtheorem{corollary}[prop]{Corollary}
\newtheorem{thm-defn}[prop]{Theorem-Definition}
\theoremstyle{definition}
\newtheorem{rem}[prop] {Remark} 
\newtheorem{defn}[prop]{Definition}
\newtheorem{expl}[prop] {Example}
\newtheorem{remark}[prop]{Remark}
\newtheorem{definition}[prop]{Definition}
\title{Hassett--Keel Program in genus four}
\date{\today} 
\author{Kenneth Ascher}
\address{Department of Mathematics, University of California, Irvine, CA, 92697, USA}
\email{kascher@uci.edu}
\author{Kristin DeVleming}
\address{Department of Mathematics,
University of California San Diego, La Jolla, CA, 92093, USA}
\email{kedevleming@ucsd.edu}
\author{Yuchen Liu}
\address{Department of Mathematics, Northwestern University, Evanston, IL 60208, USA}
\email{yuchenl@northwestern.edu}
\author{Xiaowei Wang}
\address{Department of Mathematics and Computer Science, Rutgers University, Newark, NJ 07102, USA}
\email{xiaowwan@rutgers.edu}
\begin{document}
\begin{abstract}
The Hassett--Keel program seeks to give a modular interpretation to the steps of the log minimal model program of $\overline{\mathcal{M}}_g$. The goal of this paper is to complete the Hassett--Keel program in genus four, supplementing earlier results of Casalaina-Martin--Jensen--Laza and Alper--Fedorchuk--Smyth--van der Wyck. The main tools we use are wall crossing for moduli spaces of pairs in the sense of K-stability and KSBA stability, and the recently constructed moduli spaces of boundary polarized Calabi--Yau surface pairs. We also give a construction of the hyperelliptic flip using a stackified Chow quotient which is expected to generalize to higher genus.
\end{abstract}

\maketitle
\tableofcontents

\section{Introduction}

The \textit{Hassett--Keel program} asks whether the steps of the log minimal model program on $\ocM_g$, the Deligne--Mumford moduli space of stable curves of genus $g \geq 2$, have natural modular interpretations. More specifically, let $\ocM_g$ denote the moduli stack and consider the divisor $K_{\ocM_g} + \alpha \delta$, where  $\delta = \delta_0 + \delta_1 + \dots + \delta_{\lfloor g/2 \rfloor }$
is the sum of the boundary divisors on $\ocM_g$ and $\alpha \in (0,1) \cap \bQ$.  If $\phi: \ocM_g \to \oM_g$ is the coarse moduli map and $\Delta_i$ the associated boundary divisor to $\delta_i$ on $\oM_g$, then 
\[ K_{\ocM_g} + \alpha \delta = \phi^*\left(K_{\oM_g} + \alpha(\Delta_0 + \Delta_2 + \dots + \Delta_{\lfloor g/2 \rfloor}) + \tfrac{1+ \alpha}{2} \Delta_1 \right). \]

For any rational number $\alpha \in (0,1)$ such that $K_{\ocM_g} + \alpha \delta$ is big, by \cite{BCHM} the log canonical ring is finitely generated and we may form the log canonical model 
\[ \oM_g(\alpha) = \Proj\left(\oplus_{m \ge 0} H^0( \ocM_g, \lfloor m(K_{\ocM_g} + \alpha \delta) \rfloor )\right).\]

For $\alpha = 1$, the divisor $K_{\ocM_g} + \delta$ is known to be ample (\cite{MumfordStability}, \cite[Theorem 5.7]{Has05}), so $\oM_g(1) = \oM_g$.  Hassett asked if $\oM_g(\alpha)$ admits a modular interpretation for all $\alpha \in (0,1) \cap \bQ$. \\

In general, the Hassett--Keel program is understood when $\alpha > \frac{2}{3} -\epsilon$
(see Section \ref{sec:HK} for more details, differences for $g \in \{2,3\}$ and discussion of terminology). Indeed, Hassett--Hyeon \cite{HH09, HH13} show that if $\alpha \in (\frac{9}{11}, 1)$, then $\oM_g(\alpha) = \oM_g$.  At $\alpha = \frac{9}{11}$, there is a non-trivial birational modification, a divisorial contraction,  contracting \textit{elliptic tails} to cuspidal ($A_2$) singularities. For $\alpha \in (\frac{7}{10}, \frac{9}{11}]$, there is an isomorphism $\oM_g(\alpha) \cong \oM_g^{\mathrm{ps}}$, where $\oM_g^{\mathrm{ps}}$ denotes the moduli space of Schubert pseudostable curves \cite{Sch91}. When $g \geq 3$, the authors show that the moduli space undergoes a flip at $\alpha = \frac{7}{10}$ corresponding to the replacement of \textit{elliptic bridges} by tacnodal ($A_3$) singularities.  When $g \geq 4$, Alper, Fedorchuk, Smyth, and van der Wyck \cite{AFSvdW, AFS17, AFS17b} showed that there is a flip at $\alpha = \frac{2}{3}$, corresponding to the replacement of Weierstrass genus two tails by ramphoid cuspidal ($A_4$) singularities. Note that Hyeon and Lee in \cite{HL14} showed that there is no wall crossing for $g=4$ when $\alpha\in (\frac{2}{3}, \frac{7}{10})$, which was generalized by \cite{AFSvdW, AFS17, AFS17b} to any $g\geq 4$.

When $g=4$, the range $\alpha\leq \frac{5}{9}$ is also understood: see \cite{CMJL14}  and \cite{Fed12, CMJL12} for partial earlier results. By VGIT for $(2,3)$-complete intersection curves in $\bP^3$, \cite{CMJL14} shows there are three critical values of $\alpha$ (and one additional terminal value) in the range $\alpha<\frac{5}{9}$ for which the moduli spaces change, corresponding to the last four rows of Table \ref{table:singularities}.

The purpose of this paper is to complete the Hassett--Keel program for genus four curves by proving the following.  Below, the notation $\ocM_g(b,a)$ (respectively, $\oM_g(b,a)$) for rational numbers $b > a$ represents $\ocM_g(\alpha)$ (respectively, $\oM_g(\alpha)$) for any $\alpha \in (a,b)$.

\begin{theorem}\label{introthm:59to23}
    For each rational number $\alpha \in  [\frac{5}{9},\frac{2}{3})$, there exists a smooth algebraic stack $\ocM_4(\alpha)$ of finite type with affine diagonal that parameterizes $\alpha$-stable curves of genus four, which are projective curves of genus four with lci singularities and ample canonical bundle satisfying certain  conditions (see Definition \ref{def:newmodels}). 
    The stack $\ocM_4(\alpha)$  admits a projective good moduli space isomorphic to $\oM_4(\alpha)$, the log canonical model of $\ocM_4$.  Furthermore, these stacks and good moduli spaces satisfy the following wall crossing diagram:
    \[   
    \begin{tikzcd}
    \ocM_4(\frac{2}{3},\frac{19}{29})\arrow{d}{}\arrow[hookrightarrow]{r}{} &
    \ocM_4(\frac{19}{29})\arrow{d}{}\arrow[hookleftarrow]{r}{} & \ocM_4(\frac{19}{29}, \frac{5}{9})\arrow{d}{}\arrow[hookrightarrow]{r}{} &\ocM_4(\frac{5}{9})\arrow{d}{} \arrow[hookleftarrow]{r}{} &\ocM_4(\frac{5}{9}, \frac{23}{44})\arrow{d}{}  \\
    \oM_4(\frac{2}{3},\frac{19}{29})\arrow{r}{} &
    \oM_4(\frac{19}{29}) & \oM_4(\frac{19}{29}, \frac{5}{9})\arrow[swap]{l}{\cong}{}\arrow{r}{} &\oM_4(\frac{5}{9}) & \oM_4(\frac{5}{9}, \frac{23}{44}) \arrow{l}{}
    \end{tikzcd}
    \]

    where the arrows in the top row are open immersions, the vertical arrows are good moduli space morphisms, and the arrows in the bottom row are projective birational morphisms. 

    \begin{itemize}
        \item As $\alpha$ decreases from $\alpha = \frac{19}{29} + \epsilon$ to $\alpha = \frac{19}{29}$, there is a contraction of the divisor $\Delta_2$ parametrizing the union of two genus two curves intersecting at a non-Weierstrass point of each curve, with image parametrizing curves with a separating $A_5$ singularity.
        \item  As $\alpha$ decreases from $\alpha = \frac{5}{9} + \epsilon$ to $\alpha = \frac{5}{9}$, the locus of curves with elliptic triboroughs (two genus one curves meeting at three points) is contracted to the locus of curves with $D_4$ singularities; the locus of hyperelliptic curves is contracted to a point parametrizing the unique hyperelliptic ribbon, and the locus of curves on quadric cones with a tacnodal singularity at the vertex of the cone is contracted to a curve $\Gamma \subset \oM_4(\frac{5}{9})$.
    \end{itemize} 
\end{theorem}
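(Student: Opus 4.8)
The plan is to realize each $\ocM_4(\alpha)$ for $\alpha$ in this range as the good moduli space of a moduli stack of surface pairs, and to propagate geometric information wall by wall starting from the model $\oM_4(\tfrac23,\tfrac{19}{29})$ of \cite{AFSvdW, AFS17, AFS17b}. A canonically embedded genus four curve is a $(2,3)$ complete intersection in $\bP^3$ lying on a quadric surface $Q$: for a general curve $Q\cong\bP^1\times\bP^1$ with $C$ of bidegree $(3,3)$, for a trigonal curve $Q$ is a quadric cone, and for a hyperelliptic curve the picture degenerates (the canonical image is a rational normal cubic and $C$ becomes a ribbon). One checks $K_Q+\tfrac23 C\sim_\bQ 0$, so $(Q,\tfrac23 C)$ is a log Calabi--Yau surface pair; I would therefore study pairs $(Q,c\,C)$ with an interpolation coefficient $c=c(\alpha)$ slightly below $\tfrac23$, take the K-moduli of these log del Pezzo pairs (supplemented by KSBA-type arguments near $\alpha=\tfrac59$), with the moduli of boundary polarized Calabi--Yau surface pairs arising in the limit $c\to\tfrac23^-$ and furnishing the link to the $\alpha=\tfrac23$ theory. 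Openness of the relevant stability, together with $\Theta$-reductivity, S-completeness, and boundedness, yields $\ocM_4(\alpha)$ as an algebraic stack of finite type with affine diagonal admitting a projective good moduli space (projectivity from the CM/KSBA polarization); smoothness of the stack is automatic once one knows $\alpha$-stable curves are lci, since $T^2$ vanishes for lci curves. The $\alpha$-stability of Definition~\ref{def:newmodels} is then the explicit list of curves appearing in pairs that are semistable for the corresponding $c$, read off from the stability computation.

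The identification of this good moduli space with the log canonical model $\oM_4(\alpha)$ is the intersection-theoretic core of the argument. Having built the projective birational morphisms in the bottom row of the diagram, it suffices to show that the pullback to $\oM_4(\tfrac23,\tfrac{19}{29})$ (and inductively along the chain) of the descended CM/KSBA polarization equals, up to a positive scalar, the class corresponding to $K_{\ocM_4}+\alpha\delta$. On the relevant open loci the Picard group of $\ocM_4$ is spanned by $\lambda$ and $\delta$, so the comparison reduces to pinning down two coefficients, which I would extract from explicit one-parameter degenerations, e.g. degenerating a curve of type $\Delta_2$ to one with a separating $A_5$ singularity at the $\tfrac{19}{29}$ wall and analogous degenerations at $\tfrac59$. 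Once the resulting class is seen to be ample on the good moduli space, the latter is the $\Proj$ of the associated section ring and therefore coincides with the log canonical model; the full wall-crossing diagram then follows by comparing stability chambers across each wall.

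The two displayed wall descriptions are local computations. At $\alpha=\tfrac{19}{29}$ one shows that, as $c$ crosses the corresponding value, the pair with $C=C_1\cup C_2$ a union of two genus two curves meeting at a non-Weierstrass point of each becomes strictly semistable and is S-equivalent, via an explicit $\bG_m$-degeneration in $\bP^3$, to the pair in which $C$ has acquired a separating $A_5$ singularity; this simultaneously exhibits the divisorial contraction of $\Delta_2$ and identifies its image. At $\alpha=\tfrac59$ the surface degenerates and three loci are contracted: the elliptic tribrough locus to curves with $D_4$ singularities, and the quadric-cone--tacnode locus to a curve $\Gamma\subset\oM_4(\tfrac59)$, both again by S-equivalence computations for the appropriate degenerate pairs $(Q,c\,C)$; the hyperelliptic locus is contracted to a single point, the hyperelliptic ribbon.

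The main obstacle is this last contraction. The hyperelliptic genus four curves form a positive-dimensional family that must collapse to one point whose limiting object is non-reduced, and a naive K-moduli or GIT limit of the pairs $(Q,c\,C)$ degenerates $Q$ to the wrong surface --- the pencil of quadrics through a rational normal cubic makes the ambient surface ambiguous --- and produces an incorrect limit curve. To handle this I would replace the relevant quotient by a stackified Chow quotient of the Chow (or Hilbert) scheme of the degenerations in question, show it has the hyperelliptic ribbon pair as its unique closed point over the hyperelliptic locus, and --- the delicate step --- prove that this Chow-theoretic construction glues compatibly with the K-moduli/KSBA chambers on the rest of $\ocM_4(\tfrac59)$, so that the glued stack is smooth with a projective good moduli space sitting in the stated diagram. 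Verifying this compatibility and the projectivity of the glued space is where the real work lies, and it is precisely the ingredient the authors expect to generalize to higher genus.
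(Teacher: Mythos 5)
Your proposal captures the right ingredients — log Calabi--Yau surface pairs, K/KSBA wall crossing, a stackified Chow quotient to cure the hyperelliptic degeneration, and a projectivity argument by comparing descended polarizations — but it is organized around a mechanism that does not actually work and that the paper explicitly rules out. You propose that varying a coefficient $c=c(\alpha)$ slightly below $\tfrac{2}{3}$ should produce K-moduli spaces of pairs $(Q,cC)$ realizing each $\ocM_4(\alpha)$. This fails on two counts. First, the K-moduli space of $(2,3)$ pairs at $c=\tfrac{2}{3}-\epsilon$ does not agree with \emph{any} Hassett--Keel model (see the discussion in the introduction citing \cite[Remark 6.10]{ADL20}: the K-moduli walls for $c<\tfrac23$ properly contain the VGIT/Hassett--Keel walls), and the KSBA side at $c=\tfrac23+\epsilon$ compactifies a \emph{blow-up} of $M_4$ along the hyperelliptic locus by \cite{DH21}, again not a log canonical model. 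Second — and this is the more fundamental conceptual gap — the bpCY wall crossing for pairs happens at a \emph{single} critical coefficient $c=\tfrac23$, whereas the Hassett--Keel program crosses \emph{two} walls ($\alpha=\tfrac{19}{29}$ and $\alpha=\tfrac59$) attributable to two distinct polystable type~II degenerations ($S_{2A_5}$ and $\PuP$). No single-parameter interpolation in $c$ can decouple them. The paper's resolution is precisely to refuse the global picture you propose: it proves a local isomorphism result (Theorem~\ref{thm:CY-forget-open}) showing the forgetful map $\sW^{\CY}\to\Curves_4^{\lci+}$ is an open immersion, restricts the K/CY/KSBA wall crossing to separate open substacks $\sV_1^{\pm},\sV_1$ near $[C_{2A_5}]$ and $\sV_2,\sV_2^+$ near $[C_D]$, and glues these, along the Chow-stable locus, with the stackified Chow quotient $\sX$ (which serves as the \emph{entire} $\ocM_4(\tfrac59)$, not merely a patch near the hyperelliptic locus). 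The $\alpha$-dependence is realized by which mixture of K/CY/KSBA substacks is chosen near each locus (Definition~\ref{def:newmodels}), not by a coefficient $c(\alpha)$.

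Two smaller issues: (i) your assertion that smoothness is ``automatic once one knows $\alpha$-stable curves are lci, since $T^2$ vanishes'' is only correct for reduced curves with finite singular locus; the hyperelliptic ribbon is lci but non-reduced, and the vanishing of its obstruction space requires the explicit calculation of Theorem~\ref{thm:hyp-ribbon-unobstructed}. (ii) your description of the $\alpha=\tfrac{19}{29}$ wall as ``$c$ crosses the corresponding value'' again presupposes the incorrect $c(\alpha)$ interpolation; the correct statement is that the corresponding S-equivalence occurs at the single CY wall $c=\tfrac23$, localized to the bpCY polystable pair $(S_{2A_5},\tfrac23 C_{2A_5}^\circ)$.
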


Combining this result with \cite{CMJL14}, which verifies the Hassett--Keel program in genus four for $\alpha \le \frac{5}{9}$ and \cite{HH09, HH13, AFSvdW, AFS17, AFS17b}, which verify the Hassett--Keel program in all genus for $\alpha > \frac{2}{3} - \epsilon$, we obtain the full Hassett--Keel program for $\ocM_4$.  

\begin{theorem}\label{introthm:fullHKprogram}
  The full Hassett--Keel program for $g = 4$ has the following description.
        \begin{enumerate}
            \item The critical values of $\alpha$ (called \emph{walls}) at which the moduli stacks change are 
            \[ (\alpha_1, \dots, \alpha_9) = \left( \frac{9}{11}, \frac{7}{10}, \frac{2}{3}, \frac{19}{29}, \frac{5}{9}, \frac{23}{44}, \frac{1}{2}, \frac{29}{60},  \frac{8}{17} \right). \] 
            For each $\alpha_i$ for $1 \le i  \le 9$, there is a sequence of stacks and good moduli spaces satisfying the following diagram:

\[
    \begin{tikzcd}
    \ocM_4(\alpha_{i-1},\alpha_i)\arrow{d}{}\arrow[hookrightarrow]{r}{} & \ocM_4(\alpha_i)\arrow{d}{}\arrow[hookleftarrow]{r}{} & \ocM_4(\alpha_i,\alpha_{i+1})\arrow{d}{} \\
    \oM_4(\alpha_{i-1},\alpha_i) \arrow[rightarrow]{r}{} & \oM_4(\alpha_i) \arrow[leftarrow]{r}{} & \oM_4(\alpha_i,\alpha_{i+1}) 
    \end{tikzcd}
\]
If $i = 1$, we take $\alpha_0 = 1$, and if $i = 9$, we take $\ocM_4(\alpha_9,\alpha_{10}) = \oM_4(\alpha_9,\alpha_{10}) = \emptyset$. For $i = 9$, the moduli space $\oM_4(\alpha_9) = \oM_4(\frac{8}{17}) = \{ *\}$ is the terminal model of $\oM_4$.
            \item The curve singularities parameterized by $\oM_4(\alpha)$ are as predicted in Table \ref{table:singularities}.
        \end{enumerate}
\end{theorem}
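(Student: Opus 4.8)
The plan is to deduce Theorem~\ref{introthm:fullHKprogram} by assembling three already-available regimes and verifying that, together, they exhaust the walls and account for the singularities of Table~\ref{table:singularities}. On $(\frac{2}{3},1]$ the program is classical: $\oM_4(\alpha) = \oM_4$ for $\alpha \in (\frac{9}{11},1]$ by \cite{MumfordStability, Has05, HH09}; there is a divisorial contraction of elliptic tails to $A_2$ singularities at $\alpha = \frac{9}{11}$ and an isomorphism $\oM_4(\alpha) \cong \oM_4^{\mathrm{ps}}$ for $\alpha \in (\frac{7}{10}, \frac{9}{11}]$ by \cite{HH09, HH13, Sch91}; a flip replacing elliptic bridges by $A_3$ singularities at $\alpha = \frac{7}{10}$ established by the authors; no wall on $(\frac{2}{3}, \frac{7}{10})$ by \cite{HL14}; and a flip replacing Weierstrass genus two tails by $A_4$ singularities at $\alpha = \frac{2}{3}$ by \cite{AFSvdW, AFS17, AFS17b}. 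On $[\frac{5}{9}, \frac{2}{3})$ we invoke Theorem~\ref{introthm:59to23}, which supplies the stacks $\ocM_4(\alpha)$, their projective good moduli spaces, the wall-crossing diagrams, and the walls at $\frac{19}{29}$ and $\frac{5}{9}$. On $(0, \frac{5}{9}]$ we use \cite{CMJL14}, whose VGIT of $(2,3)$-complete intersection curves in $\bP^3$ produces the walls at $\frac{23}{44}, \frac{1}{2}, \frac{29}{60}$, the terminal point $\oM_4(\frac{8}{17}) = \{*\}$, and the vanishing of the relevant sections for $\alpha < \frac{8}{17}$.

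Granting these inputs, I would first record that the coarse space $\oM_4(\alpha) = \Proj \bigoplus_m H^0(\ocM_4, \lfloor m(K_{\ocM_4}+\alpha\delta)\rfloor)$ is intrinsic to $\alpha$ whenever the log canonical ring is finitely generated, which holds on the relevant range by \cite{BCHM} together with the cited works; hence the coarse spaces produced by the three regimes automatically agree on overlaps, and there is nothing to glue at the level of $\oM_4(\alpha)$. Collecting the critical values of the three regimes gives the strictly decreasing list $(\alpha_1,\dots,\alpha_9) = (\frac{9}{11}, \frac{7}{10}, \frac{2}{3}, \frac{19}{29}, \frac{5}{9}, \frac{23}{44}, \frac{1}{2}, \frac{29}{60}, \frac{8}{17})$. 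That this list is complete --- i.e.\ that $\ocM_4(\alpha) \cong \ocM_4(\alpha')$ whenever $\alpha,\alpha'$ lie in one connected component of $(\frac{8}{17},1] \setminus \{\alpha_1,\dots,\alpha_9\}$ and that the isomorphism type jumps at each $\alpha_i$ --- follows because the three regimes' ranges cover $(\frac{8}{17},1]$ and each regime already exhibits all of its own walls (it is \cite{HL14} that rules out a wall in $(\frac{2}{3}, \frac{7}{10})$). Splicing the local three-term diagrams from the three regimes then gives part~(1), with the conventions $\alpha_0 = 1$ and empty stacks below $\alpha_9$; and reading off the permitted singularities from each construction --- at worst $A_1$ initially, then successively admitting $A_2$ at $\frac{9}{11}$, $A_3$ at $\frac{7}{10}$, $A_4$ at $\frac{2}{3}$, $A_5$ at $\frac{19}{29}$, $D_4$ at $\frac{5}{9}$, and the remaining types classified by the complete-intersection GIT for $\alpha \le \frac{5}{9}$ --- gives part~(2) and Table~\ref{table:singularities}.

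The one genuinely nontrivial point, and where I expect the main obstacle, is the compatibility of the three constructions at the transition values $\alpha = \frac{2}{3}$ and $\alpha = \frac{5}{9}$, since near each of these the same space is described in a priori different ways: via the CM/GIT setup of \cite{AFSvdW} versus the K-stability of Definition~\ref{def:newmodels} at $\frac{2}{3}$, and via the latter versus the VGIT of complete intersections of \cite{CMJL14} at $\frac{5}{9}$. Concretely, one must verify that the stack $\ocM_4(\frac{2}{3}, \frac{19}{29})$ of Theorem~\ref{introthm:59to23}, restricted to $\alpha$ just below $\frac{2}{3}$, has the same objects and isomorphisms as the stack produced by \cite{AFSvdW} on that side of the $\frac{2}{3}$-flip, and likewise that $\ocM_4(\frac{5}{9})$ agrees with the quotient stack at $\frac{5}{9}$ coming from \cite{CMJL14}; equivalently, that the $\alpha$-stability of Definition~\ref{def:newmodels} coincides, near these two values, with the (semi)stability conditions of the two GIT problems. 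Once this matching of moduli problems is in place, the intrinsic characterization of the log canonical model --- and of its good moduli space --- propagates the identification to the coarse level, and Theorem~\ref{introthm:fullHKprogram}(1)--(2) follows formally from the assembled pieces.
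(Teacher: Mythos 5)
Your proposal follows essentially the same route as the paper's own (very short) proof: assemble the three regimes $(\frac{2}{3},1]$, $[\frac{5}{9},\frac{2}{3})$, and $(0,\frac{5}{9}]$ from \cite{HH09,HH13,AFSvdW,AFS17,AFS17b}, Theorem~\ref{introthm:59to23}, and \cite{Fed12,CMJL12,CMJL14} respectively, use the intrinsic definition of $\oM_4(\alpha)$ to see there is no gluing issue at the coarse level, and verify stack-level compatibility at the transition values $\alpha=\frac{2}{3}$ and $\alpha=\frac{5}{9}$. The two compatibility checks you flag as the main obstacle are exactly what the paper supplies via Theorem~\ref{thm:AFS-agree} (agreement of $\ocM_4(\alpha)$ for $\alpha\in(\frac{19}{29},\frac{2}{3})$ with the \cite{AFSvdW} stack) and Proposition~\ref{prop:59-ewall} (the open immersion of the final VGIT stack $\ocM_4(\frac{5}{9},\frac{23}{44})$ into $\sX=\ocM_4(\frac{5}{9})$), so your analysis correctly isolates where the substance lies.
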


\begin{table}[htbp!]\renewcommand{\arraystretch}{1.5}
\caption{Hassett--Keel Program in genus four}\label{table:singularities}
\begin{tabular}{|=c|+c|+l|+l|}
\hline 
$i$ & $\alpha_i$   & \textbf{Locus removed} & \textbf{Singularity type introduced} \\ \hline \hline 
1 & $\frac{9}{11}$  &  elliptic tails & $A_2$        \\ 
2 & $\frac{7}{10}$  & elliptic bridges & $A_3$    \\ 
3 & $\frac{2}{3}$  &   \makecell[l]{genus two tails attached at a \\Weierstrass point}   &     $A_4$ \\ 
\rowstyle{\bfseries} 4 & $\mathbf{\frac{19}{29}}$  &   $\delta_2$, i.e. general genus two tails   &   $\boldsymbol{A_5^{\rm sep}}$           \\ 
\rowstyle{\bfseries} 5 & $\mathbf{\frac{5}{9}}$ & \makecell[l]{tacnodal curves glued at \\conjugate points} 
&     $\boldsymbol{A_5^{\rm nsep}}$    \\ 
\rowstyle{\bfseries} 5 & $\mathbf{\frac{5}{9}}$ & hyperelliptic curves & $\boldsymbol{A_6}$, $\boldsymbol{A_7^{\rm nsep}}$, $\boldsymbol{A_8}$, $\boldsymbol{A_9}$  \\ 
\rowstyle{\bfseries} 5 & $\mathbf{\frac{5}{9}}$ &  elliptic triboroughs  &     $\boldsymbol{D_4}$     \\ 
6 & $\frac{23}{44}$  &   \makecell[l]{cuspidal curves with hyperelliptic \\normalization}  & $A_7^{\rm sep}$   \\ 
7 & $\frac{1}{2}$ &  \makecell[l]{nodal curves with hyperelliptic \\normalization}  &  $\textrm{double conic} + \textrm{transversal conic}$  \\ 
8 & $\frac{29}{60}$ &   Gieseker-Petri divisor   &  \textrm{triple conic}\\ 
9 & $\frac{8}{17}$ & everything & terminal model \\
\hline
\end{tabular}
\end{table}

\begin{remark}[Previous expectations]\label{rmk:separating}
 In \cite[Section 3]{AFS16}, the authors predict values corresponding to walls for $\alpha > \frac{5}{9}$ for general genus $g$ and in \cite{CMJL14}, the authors predict values for the range $(\frac{2}{3}, \frac{5}{9})$ for $g = 4$.  The known walls and predictions are described in Table \ref{table:singularities}, with the previously unconfirmed walls emphasized in bold.  In the table, the notation $A_{2n+1}^{\rm sep}$ (resp. $A_{2n+1}^{\rm nsep}$) is used to denote curves with \textit{separating} $A_{2n+1}$ singularities, where normalization of the singularity disconnects the curve (resp. \textit{non-separating} $A_{2n+1}$ singularities, where the curve remains connected after normalization of the singularity).  

    Our results are mostly in line with previous predictions (see \cite{Fed12, AFS16,CMJL14}), aside from the case of $A_6$ singularities, which were predicted to be replaced at $\alpha = \frac{49}{83}$.
\end{remark}

\subsection{Method of proof}
To prove Theorem \ref{introthm:59to23} and Theorem \ref{introthm:fullHKprogram}, we first compare $\oM_4(\frac{2}{3} -\epsilon)$ and $\oM_4(\frac{5}{9})$, known by the earlier works \cite{AFSvdW, AFS17, AFS17b} and \cite{CMJL14}. Comparing these two known moduli spaces, one observes that they are isomorphic away from three disjoint loci in $\oM_4(\frac{5}{9})$: a point $[C_{2A_5}]$ parametrizing the $(2,3)$-complete intersection curve $C_{2A_5}$ in $\bP^3$ with two separating $A_5$ singularities, a point $[C_D]$ parametrizing the $(2,3)$-complete intersection curve $C_D$ with two $D_4$ singularities and three $A_1$ singularities, and a curve $\Gamma$ parametrizing certain singular $(2,3)$-complete intersection curves $C_{A,B}$ (which generically have both $A_5$ and $A_3$ singularities) on singular quadric surfaces (see \cite[Theorems 3.1 \& 4.1]{CMJL12}).  

We verify the predictions of the Hassett--Keel program by studying each of these three loci separately.  More precisely, in an open neighborhood of each point, we construct stacks and good moduli spaces satisfying a wall crossing diagram that will be glued together to form the global wall crossings for $\ocM_4(\alpha)$.  For the isolated points $[C_{2A_5}]$ and $[C_D]$, we use the recent theory of wall crossing for boundary polarized Calabi--Yau pairs from \cite{ABB+, BL24} (see Section \ref{sec:bpcy} and Section \ref{sec:logCYwallcrossing}).  For the curve $\Gamma$ introduced in Theorem \ref{introthm:59to23}, we construct a stack $\sX = \ocM_4(\frac{5}{9})$ directly and a suitable open substack $ \sU^+ \subset \sX$ of curves that admit degenerations to curves parameterized by $\Gamma$ that will serve as a local model of the wall crossing $\ocM_4(\frac{5}{9} + \epsilon) \to \ocM_4(\frac{5}{9} )$ (see Section \ref{sec:hyp-flip}). We prove that the stacks admit good moduli spaces, and then glue together the relevant substacks in a neighborhood of each locus to form the stacks $\ocM_4(\alpha)$ as $\alpha$ varies from $\frac{5}{9}$ to $\frac{2}{3} - \epsilon$ (see Theorem \ref{thm:new-models}).  Finally, by verifying that the log canonical divisor $K_{\ocM_4} + \alpha \delta$ descends to an ample line bundle on the good moduli space (see Theorem \ref{thm:projectivity}), we conclude that the good moduli space is the log canonical model of $\ocM_4$.

\subsection{Wall crossing for pairs} As mentioned above, we construct the wall crossing that replaces the curves $C_{2A_5}$ and $C_D$  using the theory of wall crossing for \textit{boundary polarized Calabi--Yau pairs} (bpCYs) as developed in \cite{ABB+,BL24}. 

If $X$ is a Fano variety and $D$ is an effective $\bQ$-divisor on $X$ such that $(X,D)$ is slc and $K_X + D \sim_{\bQ} 0$, we may consider moduli of spaces of pairs $(X,cD)$ (and their $\bQ$-Gorenstein degenerations) for all rational coefficients $c > 0$ such that $(X,cD)$ is slc.  For $c < 1$, the pair $(X,cD)$ is log Fano and the associated moduli spaces can be constructed using K-stability \cite{Xu25}.  These moduli spaces admit a wall crossing framework with respect to the coefficient $c$ by \cite{ADL19, Zho23}.  For $c > 1$, the pair $(X,cD)$ is log canonically polarized and the associated moduli spaces can be constructed using KSBA stability \cite{Kol23} and also admit a wall crossing framework with respect to $c$ by \cite{ABIP, MZ23}.  Finally, for $c =1$, in certain settings, the works \cite{ABB+,BL24} connect the K-moduli and KSBA moduli spaces with an additional boundary polarized Calabi--Yau moduli space.  We note that connections between wall crossing and the Hassett--Keel program have already been studied in the case of genus $g=3$; see for example \cite[Section 9.3.1]{ADL19} and \cite{HL10}.

In this paper, we apply this theory to the case where $X \subset \bP^3$ is a quadric surface and $D = \frac{2}{3} C$ for $C$ a $(2,3)$-complete intersection curve, and consider bpCY degenerations of $(X, D)$.  The curves $C_{2A_5}$ and $C_{D}$ each can be viewed in this context as bpCY pairs $(\bP^1 \times \bP^1, \frac{2}{3}C_{2A_5})$ and $(\bP^2 \cup \bP^2, \frac{2}{3}C_{D})$, and the bpCY wall crossing in an open neighborhood of each point provides a local model for the Hassett--Keel program.  More precisely, let $\ocM_4^{\CY}$ denote the bpCY moduli stack of all pairs $(X,D)$ (see Definition \ref{def:CY4stack}). Then it follows from \cite{ABB+, BL24} that $\ocM_4^{\CY}$ admits a projective good moduli space $\oM_4^{\CY}$. Note that in our setting, the stack $\ocM_4^{\CY}$ is already bounded as it does not contain type III bpCY pairs by  \cite[Theorem 16.11]{ABB+}.
Then we prove that there is an open neighborhood $\sW^{\CY} \subset \ocM_4^{\CY}$ of the above two bpCY pairs associated to $C_{2A_5}$ and $C_D$ such that the forgetful map $[(X,D)] \mapsto [D]$ is an isomorphism onto its image inside the stack of Gorenstein curves (Theorem \ref{thm:CY-forget-open}).  In a neighborhood of $[C_{2A_5}]$ and $[C_D]$, we use open substacks of $\sW^{\K} \subset \sW^{\CY}$ and $\sW^{\KSBA} \subset \sW^{\CY}$ and the associated bpCY wall crossing to realize the Hassett--Keel program.  By analysis of the pairs $(X,D)$ parameterized by $\sW^{\K}, \sW^{\KSBA},$ and $\sW^{\CY}$, one proves directly that $D$ is an $\alpha$-stable curve for the appropriate $\alpha$ in the Hassett--Keel program (Propositions \ref{prop:C2A5-replacement} and \ref{prop:D4replacement}) and therefore suitable open substacks give local models of the Hassett--Keel program near $[C_{2A_5]}$ and $[C_D]$. 

We note that this bpCY wall crossing does \textit{not} give the appropriate wall crossing in the Hassett--Keel program in a neighborhood of the third locus $\Gamma$, which must be replaced using a separate method.  Indeed, the KSBA moduli space of $(X, (\frac{2}{3}+\epsilon)C)$  on one side of the bpCY wall crossing compactifies the \textit{blowup} of $M_4$ along the hyperelliptic locus by \cite[Theorem 2]{DH21}, and thus cannot be a log canonical model of $\oM_4$.  In the K-moduli space of $(X, (\frac{2}{3}-\epsilon)C)$ on the other side of the bpCY wall crossing, by \cite[Remark 6.10]{ADL20}, the walls in the Hassett--Keel program for $\alpha < \frac{5}{9}$ obtained by VGIT are a strict proper subset of the K-moduli walls and thus this K-moduli space $\oM_4^{\K}$ does not agree with any of the Hassett--Keel models. 

We also note that the wall crossing for K-moduli spaces has applications to the Hassett--Keel program in genus six, see \cite{Zha24, Zha23}. In addition, one of the Hassett--Keel models in genus four appears naturally as a K-moduli space of Fano threefolds, see \cite{LZ24}.

Finally, we prove the following result of independent interest relating moduli spaces of genus four curves and K3 surfaces which is a slight improvement of \cite[Theorem 7.11]{BL24}.

\begin{thm}[see Theorem \ref{thm:bpCY-is-smooth}]
The moduli stack $\ocM_4^{\CY}$ is smooth. Moreover, its good moduli space $\oM_4^{\CY}$ is a normal projective variety and isomorphic to the Baily--Borel compactification of Kond\={o}'s ball quotient from \cite{Kondog4}. 
\end{thm}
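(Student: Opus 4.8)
The plan is to pass from a pair $(X,D)$ parameterized by $\ocM_4^{\CY}$ to the degree-$3$ cyclic cover of $X$ branched along $D$ --- a (possibly degenerate) K3 surface --- to deduce smoothness of $\ocM_4^{\CY}$ from unobstructedness of the $\mu_3$-equivariant deformations of these covers, and then to upgrade the period-map comparison underlying \cite[Theorem 7.11]{BL24} to an isomorphism using the resulting normality. First I would set up the cover: every pair parameterized by $\ocM_4^{\CY}$ has the form $(X,\tfrac{2}{3}C)$ with $X$ a Gorenstein degeneration of a quadric surface, $\omega_X\cong\mathcal{O}_X(-2)$, and $C\in|\mathcal{O}_X(3)|$ a $(2,3)$-complete intersection curve or a degeneration of one; the degree-$3$ cyclic cover $\pi\colon Y\to X$ branched along $C$ and associated to $L=\mathcal{O}_X(1)$ satisfies $\omega_Y\cong\pi^*(\omega_X\otimes L^{\otimes 2})\cong\mathcal{O}_Y$ and carries a faithful $\mu_3$-action with $(Y/\mu_3,\tfrac{2}{3}\,\mathrm{Branch})=(X,\tfrac{2}{3}C)$. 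Since $\ocM_4^{\CY}$ contains no type~III pairs \cite[Theorem 16.11]{ABB+}, each $Y$ is a semi-log-canonical (hence du~Bois) surface with trivial dualizing sheaf, i.e. a possibly singular or type~II K3 surface, and this identifies $\ocM_4^{\CY}$ with a moduli stack of $\mu_3$-K3 surfaces.

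Next I would prove smoothness. Under the identification above, deformations and obstructions of $(X,\tfrac{2}{3}C)$ match those of $\mu_3$-equivariant deformations of $Y$. For the K3 surfaces and the type~II degenerations occurring here, the deformation functor is unobstructed: this is the Bogomolov--Tian--Todorov principle in the slc setting, together with the deformation and smoothing theory for type~II boundary polarized Calabi--Yau surface pairs developed in \cite{ABB+}. As $\mu_3$ is linearly reductive, the $\mu_3$-equivariant deformations form the fixed locus of a linear action on a smooth formal deformation space, so they are again unobstructed; hence $\ocM_4^{\CY}$ is smooth. (Alternatively, one may try to verify directly that the obstruction space vanishes: via Serre duality and the residue sequence of $C\subset X$ where $C$ is smooth, and via the cover on the degenerate locus.)

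For the good moduli space, projectivity of $\oM_4^{\CY}$ is \cite{ABB+,BL24}; and since $\ocM_4^{\CY}$ admits a good moduli space and is a smooth finite-type stack with affine diagonal, the \'etale-local structure theorem for good moduli spaces (Alper--Hall--Rydh) presents $\oM_4^{\CY}$ \'etale-locally as $\Spec(A^{G})$ for $G$ a linearly reductive group acting on a smooth affine scheme $\Spec A$, and such quotients are normal (invariants of a normal ring under a linearly reductive group are normal). Finally, for the comparison with Kond\={o}'s ball quotient, the weight-one Hodge structure on the $\zeta_3$-eigenspace of $H^2(Y)$ gives a period point in a $9$-dimensional complex ball $\mathbb{B}^9$; by \cite{Kondog4} the induced period map restricts to an open immersion into the arithmetic ball quotient $\Gamma\backslash\mathbb{B}^9$ on the smooth-curve locus, and by \cite{BL24} it extends to a finite birational morphism $\oM_4^{\CY}\to\overline{\Gamma\backslash\mathbb{B}^9}^{\,\mathrm{BB}}$ onto the Baily--Borel compactification. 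Since source and target are both normal projective varieties, Zariski's main theorem forces this morphism to be an isomorphism.

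The hardest part will be the smoothness in the second step --- in particular, showing that the most degenerate (type~II) bpCY pairs are smooth points of $\ocM_4^{\CY}$ and that unobstructedness persists after taking $\mu_3$-invariants. This is exactly where the structural theory of boundary polarized Calabi--Yau surface pairs from \cite{ABB+} --- du~Bois singularities, explicit local models, and the classification ruling out type~III pairs --- carries the argument; the remaining steps are then essentially bookkeeping or citations.
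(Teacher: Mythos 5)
Your high-level strategy agrees with the paper on the endgame (smooth stack $\Rightarrow$ normal good moduli space $\Rightarrow$ the normalization isomorphism of \cite[Theorem 7.11]{BL24} is actually an isomorphism), but the heart of the argument --- smoothness of $\ocM_4^{\CY}$ --- is carried out by a genuinely different mechanism, and that is where the proposal has real gaps.

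The paper does \emph{not} attempt to prove a general BTT-type unobstructedness statement for all type~II bpCY pairs via the triple cover. Instead, it uses the cusp count. By \cite[Theorem 7.11]{BL24}, the normalization of $\oM_4^{\CY}$ is isomorphic to Kond\={o}'s Baily--Borel compactification, which by \cite[Theorem 5.9]{CMJL12} has exactly three cusps. Since Remark \ref{rmk:3-polystable-pts} exhibits three non-isomorphic type~II polystable pairs --- $(S_{2A_5},\frac{2}{3}C_{2A_5}^\circ)$, $(\PuP,\frac{2}{3}C_D)$, and $(\bP(1,2,3)\cup\bP(1,1,3),\frac{2}{3}C_{1,0}^\circ)$ --- these are the \emph{only} type~II polystable points, and the normalization is bijective over them. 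One then only has to check unobstructedness at those three explicit pairs (and at klt pairs), which is done by Proposition \ref{prop:deformations}: the surface pairs there have plt normalizations, and the canonical-covering-family argument of \cite[Proposition 11.5]{ABB+}, \cite{HP10}, \cite{ACC+} gives smoothness of the stack at such points. Openness of smoothness then covers all remaining type~II pairs. This is what makes the paper's proof short: the global unobstructedness statement you are after is replaced by a finite, explicit, pointwise check plus a counting argument.

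By contrast, your route through the $\mu_3$-cyclic K3 cover requires several statements you have not supplied and which are nontrivial precisely on the locus that matters.
First, the $\mu_3$-cover branched along $C$ needs a line bundle $L$ with $L^{\otimes 3}\cong\cO_X(C)$. On $\bP(1,1,2)$ (and \emph{a fortiori} on $S_{2A_5}$, $\PuP$, and $\bP(1,2,3)\cup\bP(1,1,3)$) the sheaf $\cO_X(1)$ is only $\bQ$-Cartier, so the cyclic cover must be constructed with index-one covering stacks or by hand, and one then has to check that the resulting $Y$ is slc with trivial dualizing sheaf and that the construction commutes with $\bQ$-Gorenstein deformations of the pair; none of this is automatic.
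Second, the functorial identification of deformations of $(X,\frac{2}{3}C)$ in $\ocM_4^{\CY}$ with $\mu_3$-equivariant deformations of $Y$ is asserted, not proved, and it is exactly at the type~II boundary (where $Y$ is non-normal with components carrying cyclic quotient singularities) that this identification is delicate.
Third, and most seriously, the unobstructedness of deformations of $Y$ itself at type~II points is not the classical BTT theorem (which is for smooth Calabi--Yau manifolds): you need a statement about slc surfaces with trivial dualizing sheaf. You gesture at \cite{ABB+} for this, but \cite{ABB+} does not prove it in that generality; it proves something much more targeted (essentially the plt-normalization criterion the paper actually uses). If you want to pursue the $\mu_3$-cover route you would have to establish this piece independently, and it is not a bookkeeping step.

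The normality and comparison portions of your argument are fine modulo minor inaccuracies (Alper's \cite[Theorem 4.16(viii)]{alper} is a one-line citation; and \cite[Theorem 7.11]{BL24} gives $(\oM_4^{\CY})^{\nu}\cong F^{\rm BB}$, not a morphism $\oM_4^{\CY}\to F^{\rm BB}$, so once you have normality you don't need Zariski's main theorem --- the normalization map is already an isomorphism). What your proposal is missing is the key reduction: restrict the unobstructedness check to finitely many explicit polystable type~II pairs using the cusp count, rather than proving a global statement.
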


\subsection{The hyperelliptic flip and the stackified Chow quotient}
The remaining locus to study in $\oM_4(\frac{5}{9})$ is a curve $\Gamma$ parameterizing a pencil $C_{A,B}$ of singular curves on a quadric cones, containing a unique point representing the hyperelliptic ribbon (see Definition \ref{def:GIT-strictly-semistable-curves}), all of which are $\alpha$-unstable for $\alpha > \frac{5}{9}$.  Thus, we wish to construct the wall crossing for the Hassett--Keel program in a neighborhood of this curve $\Gamma$.  By standard predictions in the Hassett--Keel program, the hyperelliptic ribbon must be replaced (as $\alpha$ increases) by all hyperelliptic genus four curves, and the wall crossing interchanging the hyperelliptic curves and ribbons is called the \textit{hyperelliptic flip}.   

In \cite{CMJL12, CMJL14}, the authors prove that $\oM_4(\frac{5}{9})$ is isomorphic to the Chow quotient $\fX^{\rm c}_4$ of cycles associated to $(2,3)$-complete intersection curves (see Theorem \ref{thm:cmjl-vgit}).  The authors conjectured (see, e.g. \cite[Last paragraph of Section 4]{CMJL12}) that this Chow quotient would provide a solution to the open problem of constructing the hyperelliptic flip.   We ultimately verify this conjecture based on work in Sections \ref{sec:hyp-and-rib}, \ref{sec:chow-ss-can}, and \ref{sec:hyp-flip}.  To do so, for our wall crossing framework, we first need a stack $\ocM_4(\frac{5}{9})$ of curves with open immersions $\ocM_4(\frac{5}{9} \pm \epsilon) \hookrightarrow \ocM_4(\frac{5}{9})$ admitting the Chow quotient as a good moduli space. The Chow scheme only parameterizes \emph{cycles}, and so we construct a new stack $\sX = \sX_4$ of curves which presents a ``stackification'' of the Chow quotient and prove that it admits a good moduli space isomorphic to the Chow quotient (see Remark \ref{rmk:cmjl-cycles}) which enables us to set $\ocM_4(\frac{5}{9})=\sX_4$. We also note that our stack $\sX_4$ is similar in spirit to the moduli stack of branchvarieties from \cite{HLFHJR}.

We first consider a stack $\sX_g$ parameterizing smoothable Gorenstein genus $g$ curves $C$ such that $\omega_C$ is ample and basepoint free, and that the Hilbert--Chow image of $C$ induced by $|\omega_C|$ is a Chow-semistable $1$-cycle in $\bP^{g-1}$ (see Definition \ref{def:sX_g} for a precise definition). 

\begin{thm}[see Theorem \ref{thm:sX_g-smooth}]\label{introthm:hyperelliptic}
Let $g\geq 3$ be an integer. Then $\sX_g$ is an irreducible algebraic stack of finite type with affine diagonal that contains $\cM_g$ as a dense open substack. Moreover, $\sX_g$ contains a smooth open neighborhood of the point corresponding to the unique hyperelliptic ribbon of genus $g$.
\end{thm}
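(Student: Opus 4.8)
The plan is to analyze $\sX_g$ by first establishing its basic structural properties via boundedness and a deformation-theoretic argument, and then to handle the hyperelliptic ribbon point separately using an explicit local model. First I would set up the boundedness: curves $C$ parameterized by $\sX_g$ are Gorenstein genus $g$ curves with $\omega_C$ ample and basepoint free, so $|\omega_C|$ gives an embedding (or at least a finite map) into $\bP^{g-1}$ as a degree $2g-2$ curve, and Chow-semistability cuts out a bounded family inside the relevant Chow variety. Since these curves are required to be smoothable, $\sX_g$ sits inside the (bounded) stack of Chow-semistable $1$-cycles of degree $2g-2$ in $\bP^{g-1}$ together with the data of a Gorenstein curve structure, and one obtains that $\sX_g$ is an algebraic stack of finite type. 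The affine diagonal follows from the fact that automorphisms and isomorphisms of polarized Gorenstein curves are parameterized by affine schemes (e.g., via the embedding into projective space, $\Aut$ is a closed subscheme of $\mathrm{PGL}_g$ intersected with affine conditions, hence affine), or more directly because $\sX_g$ maps with affine diagonal to the stack of cycles.

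Next I would prove irreducibility and density of $\cM_g$. Since $\cM_g$ is irreducible and every smooth canonically embedded (or bicanonically embedded, for hyperelliptic) genus $g$ curve is Chow-semistable by the Kempf--Morrison/Mumford stability results for canonical curves, $\cM_g$ is an open substack of $\sX_g$; one then needs that every point of $\sX_g$ lies in the closure of $\cM_g$, which follows from the smoothability hypothesis built into the definition of $\sX_g$ together with the fact that Chow-semistability is a closed condition preserved under specialization, so the generic point of any component of $\sX_g$ containing a given curve can be connected to a smooth curve. This forces $\sX_g$ to be irreducible of the expected dimension $3g-3$.

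The main obstacle — and the part requiring real work — is showing that $\sX_g$ contains a \emph{smooth} open neighborhood of the hyperelliptic ribbon point. The hyperelliptic ribbon $R$ of genus $g$ is a non-reduced curve (a double structure on $\bP^1$), so I would first recall its explicit description (Definition \ref{def:GIT-strictly-semistable-curves}) and compute its deformation theory: the relevant obstruction space should be the one governing $\bQ$-Gorenstein (or embedded, since $\omega_R$ is very ample away from the ribbon issue) deformations of $R$. The strategy is to show that the versal deformation space of $R$ inside $\sX_g$ is smooth by exhibiting enough first-order deformations and checking that the obstruction map vanishes — this is where the ribbon geometry is delicate, since ribbons have a rich deformation theory (smoothing a ribbon to a hyperelliptic curve is a classical but nontrivial construction, cf.\ Bayer--Eisenbud). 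Concretely, I would identify an explicit chart: realize $R$ as a limit of hyperelliptic curves in a family over a smooth base (the hyperelliptic locus being itself smooth of dimension $2g-1$), and also as a limit within the Chow quotient picture coming from $(2,3)$-complete intersections when $g=4$; then show the tangent space to $\sX_g$ at $[R]$ has dimension exactly $3g-3$ by a direct cohomology computation (using $H^1$ of the normal sheaf of the Hilbert--Chow image, or $\Ext^1$ of the cotangent complex of $R$), and that all obstructions lie in a space that vanishes for the ribbon. Once the tangent space has the expected dimension and the deformation functor is unobstructed, smoothness of $\sX_g$ near $[R]$ follows, and combined with the structural results above this completes the proof of Theorem \ref{introthm:hyperelliptic}.
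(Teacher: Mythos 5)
Your overall structure matches the paper's (boundedness, irreducibility via density of $\cM_g$, unobstructedness of deformations of the hyperelliptic ribbon), but there are two genuine gaps and a couple of factual slips.

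First, on boundedness: you write that Chow-semistability ``cuts out a bounded family inside the relevant Chow variety,'' but $\sX_g$ parameterizes \emph{curves}, not cycles, and the map to the Chow scheme forgets all scheme structure; its fiber over the doubled rational normal curve contains all ribbons and all hyperelliptic curves of genus $g$. One therefore cannot deduce finite type from the Chow scheme alone. The paper instead bounds the ambient stack $\Curves_{g,1}$ directly (Lemma \ref{lem:g,1-ft}): since $\omega_C$ is ample and basepoint free, the pushforward of $\cO_C$ to $\bP^{g-1}$ is $3$-regular in the sense of Castelnuovo--Mumford, so $\omega_C^{\otimes 3}$ is very ample with a uniform Hilbert polynomial, and boundedness follows from the Hilbert scheme. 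Your proof would need an argument of this kind (or some substitute) in place of the appeal to the Chow scheme.

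Second, on smoothness at $[R_{\hyp}]$: unobstructedness of $\Def_{R_{\hyp}}$ (which the paper proves via $\Ext^2_C(\bfL_C,\cO_C)=0$ using the conormal sequence of the Cartier divisor $(z^2=0)\subset\bP(1,1,g+1)\setminus\{[0,0,1]\}$) shows that $\Curves_{g,1}$ is smooth at $[R_{\hyp}]$, but that is \emph{not yet} the claimed statement, because $\sX_g$ is defined as an open substack of $\osXg$, the closure of $\cM_g$ in $\Curves_{g,1}$, and $\osXg$ is a priori a proper closed substack. The missing step is that smoothness of $\Curves_{g,1}$ at $[R_{\hyp}]$ forces local irreducibility there, and since $R_{\hyp}$ is smoothable it lies on the component $\osXg$; hence $\osXg$ agrees with $\Curves_{g,1}$ in a neighborhood of $[R_{\hyp}]$, and then openness of Chow-semistability transfers this to $\sX_g$. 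Your ``once the deformation functor is unobstructed, smoothness of $\sX_g$ follows'' skips this argument.

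Two smaller corrections: Chow-semistability is an \emph{open}, not closed, condition (you need the opposite direction from what you state, and in fact the paper's density of $\cM_g$ is essentially definitional since $\sX_g\subset\osXg=\overline{\cM_g}$ together with Lemma \ref{lem:sm-chow} verifying $\cM_g\subset\sX_g$); and $\omega_{R_{\hyp}}$ is basepoint free but \emph{not} very ample — the canonical map of the hyperelliptic ribbon factors through the double cover of $\bP^1$ — so the parenthetical ``since $\omega_R$ is very ample away from the ribbon issue'' is incorrect and misleading about where the delicacy lies.
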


The main purpose of the stack $\sX_g$ is to (potentially) provide a description of the hyperelliptic flip in the Hassett--Keel program for all $g$.  Toward this goal, we define stacks of hyperelliptic curves $\sH_g$ and ribbons $\sR_g$ (see Definition \ref{def:hyp-rib-stacks}) and give explicit presentations of these stacks as quotient stacks in Theorems \ref{thm:hyp-stack} and \ref{thm:rib-stack}.  From Theorem \ref{introthm:hyperelliptic}, we prove that both $\sH_g$ and $\sR_g$ are closed substacks of $\sX_g$ and find a specific smooth open substack $\sX_g^\circ$ containing $\sH_g$ and $\sR_g$ (see Definition-Proposition \ref{dp:Xgcirc}). This allows us to prove:

\begin{theorem}[see Theorem \ref{thm:sX_g-gms}]
    The open substack $\sX_g^\circ$ admits a good moduli space isomorphic to an open subscheme of the Chow quotient $\fX^{\rm c}_g$.  The image of $\sH_g \cup \sR_g$ is the unique Chow polystable point representing the doubled rational normal curve.
\end{theorem}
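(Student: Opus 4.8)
The plan is to build the good moduli space of $\sX_g^\circ$ out of the tautological Hilbert--Chow map to the Chow variety, and then to verify the good moduli space axioms by an \'etale-local (Luna slice) analysis concentrated at the hyperelliptic ribbon. Let $\mathrm{Ch}^{\rm ss}_g\subseteq\Chow$ be the $\SL_g$-invariant open locus of Chow-semistable $1$-cycles of degree $2g-2$ in $\bP^{g-1}$, so that $\fX^{\rm c}_g=\mathrm{Ch}^{\rm ss}_g/\!\!/\SL_g$ is the good moduli space of $[\mathrm{Ch}^{\rm ss}_g/\SL_g]$. By the definition of $\sX_g$ (Definition~\ref{def:sX_g}), a family of curves in $\sX_g$ over $T$ determines, via the relative canonical system and Hilbert--Chow, a $T$-family of Chow-semistable cycles in $\bP^{g-1}$ canonically up to the $\SL_g$-ambiguity in trivializing $\pi_*\omega$; this yields a morphism $\rho\colon\sX_g\to[\mathrm{Ch}^{\rm ss}_g/\SL_g]$, and composing with the GIT good moduli space morphism gives $\bar h\colon\sX_g\to\fX^{\rm c}_g$. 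I would restrict everything to the smooth open substack $\sX_g^\circ$ of Definition-Proposition~\ref{dp:Xgcirc}.

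Next I would analyze $\rho$. On the locus of curves with reduced canonical cycle, $\rho$ is an open immersion into $[\mathrm{Ch}^{\rm ss}_g/\SL_g]$, since such a curve is recovered as the reduced support of its cycle; within $\sX_g^\circ$ this locus is exactly $\sX_g^\circ\setminus(\sH_g\cup\sR_g)$, since a curve in $\sX_g^\circ$ whose canonical cycle is nonreduced must have canonical cycle the doubled rational normal curve $2\cdot R$ (it then maps $2$-to-$1$ onto $R\cong\bP^1$, so $\omega_C=L^{\otimes(g-1)}$ for a degree-$2$ pencil $L$, forcing $C$ to be a possibly degenerate hyperelliptic curve or a ribbon; here the smallness of $\sX_g^\circ$ is used). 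So the only point of $[\mathrm{Ch}^{\rm ss}_g/\SL_g]$ over which $\sX_g^\circ$ is not locally an open immersion is the closed point $[2\cdot R]$, with preimage $\sH_g\cup\sR_g$. For the existence of a good moduli space of $\sX_g^\circ$: away from $\sH_g\cup\sR_g$ it is inherited from $[\mathrm{Ch}^{\rm ss}_g/\SL_g]$, and near $\sH_g\cup\sR_g$ I would invoke the explicit quotient presentations of $\sH_g$ and $\sR_g$ (Theorems~\ref{thm:hyp-stack}, \ref{thm:rib-stack}) and the smoothness of a neighborhood of the hyperelliptic ribbon $R_\ast$ (Theorem~\ref{introthm:hyperelliptic}): $R_\ast$ is the unique closed point of $\sX_g^\circ$ in $\sH_g\cup\sR_g$ (every hyperelliptic curve and every ribbon of genus $g$ degenerates to it), its automorphism group $G_\ast$ is linearly reductive — it maps onto $\Sym^{g-1}(\PGL_2)=\PGL_2$ with the $\bG_m$ scaling the ribbon's defining extension as kernel — and the \'etale-local structure theorem for algebraic stacks with linearly reductive stabilizers presents $\sX_g^\circ$ \'etale-locally near $R_\ast$ as $[\Spec A/G_\ast]$ with $\Spec A$ smooth affine. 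The quotients $\Spec A/\!\!/ G_\ast$ glue to a good moduli space $\pi\colon\sX_g^\circ\to\overline{X}^\circ$.

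To identify $\overline{X}^\circ$ with an open subscheme of $\fX^{\rm c}_g$, note that $\bar h|_{\sX_g^\circ}$ factors as $\sX_g^\circ\xrightarrow{\pi}\overline{X}^\circ\xrightarrow{\bar\iota}\fX^{\rm c}_g$. The morphism $\bar\iota$ is an isomorphism over the locus of Chow-stable reduced canonical curves, where $\rho$ is an isomorphism; near $[2\cdot R]$ one compares the local model $\Spec A/\!\!/ G_\ast$ with a Luna slice of $\mathrm{Ch}^{\rm ss}_g$ at $2\cdot R$. The key point is that the deformations of $R_\ast$ inside the smooth stack $\sX_g^\circ$ split $G_\ast$-equivariantly into the directions tangent to $\sH_g\cup\sR_g$, on which the central $\bG_m\subseteq G_\ast$ acts with nonzero weight and which therefore contribute nothing to the ring of $G_\ast$-invariants, and the directions transverse to $\sH_g\cup\sR_g$, which are exactly the smoothings of $R_\ast$ to nearby non-hyperelliptic canonical curves and which $\rho$ carries $G_\ast$-equivariantly onto the Luna slice of $\mathrm{Ch}^{\rm ss}_g$ at $2\cdot R$. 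Passing to $G_\ast$-quotients shows $\bar\iota$ is \'etale near $[2\cdot R]$, hence \'etale; and $\bar\iota$ is injective, since two curves in $\sX_g^\circ$ with the same $\bar h$-image have $S$-equivalent canonical cycles — over the stable locus this means the curves are isomorphic, and over $[2\cdot R]$ it means, after lifting the common degeneration of cycles to a degeneration of curves (the valuative input), that both specialize within $\sX_g^\circ$ to $R_\ast$; either way they have the same $\pi$-image. An \'etale monomorphism is an open immersion, so $\overline{X}^\circ\hookrightarrow\fX^{\rm c}_g$ is an open subscheme. Finally, by construction $\bar h$ contracts $\sH_g\cup\sR_g$ to the single GIT-quotient point $[2\cdot R]$; that this point is Chow-polystable — equivalently, that the $\SL_g$-orbit of the Chow form of $2\cdot R$ is closed in $\mathrm{Ch}^{\rm ss}_g$, making it the unique polystable representative of its $S$-equivalence class — follows from the Hilbert--Mumford criterion for closed orbits and the reductivity of its stabilizer, and is in any case part of the VGIT description of $\fX^{\rm c}_g$ in \cite{CMJL12, CMJL14}.

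The hard part is the local analysis at $R_\ast$: one must show that, inside the abstract stack $\sX_g$, the positive-dimensional closed substack $\sH_g\cup\sR_g$ collapses under $\pi$ to one point because all of its objects degenerate to $R_\ast$, and that the (smooth) normal geometry of $\sX_g^\circ$ along $\sH_g\cup\sR_g$, after passing to $G_\ast$-invariants, reproduces exactly the Luna slice of $\fX^{\rm c}_g$ at $2\cdot R$. This is where Theorem~\ref{introthm:hyperelliptic} and the explicit presentations of $\sH_g$ and $\sR_g$ (Theorems~\ref{thm:hyp-stack}, \ref{thm:rib-stack}) are essential, and it is also where one must control the valuative lifting of degenerations of cycles to degenerations of curves that makes $\bar\iota$ injective and hence its image open.
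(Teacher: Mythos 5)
Your high-level strategy — construct the Hilbert--Chow map to $[\Chow^{\rm ss}_{g,1}/\PGL(g)]$, observe it is an isomorphism off $\sH_g\cup\sR_g$, and then perform a local analysis at the hyperelliptic ribbon $R_{\hyp}$ using the quotient presentations of $\sH_g$ and $\sR_g$, the reductive stabilizer $\Aut(R_{\hyp})\cong\GL(2)/\bmu_{g+1}$, and the smoothness of $\sX_g^\circ$ at $R_{\hyp}$ — is the same skeleton as the paper's proof (Lemma~\ref{lem:gms-ribbon}, Proposition~\ref{prop:gms-PGLtorsor}, Theorem~\ref{thm:sX_g-gms}). However, there are real gaps in your local argument.

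First, your description of the tangent space at $R_{\hyp}$ is wrong in a way that undermines the ``key point.'' You split the deformations of $R_{\hyp}$ into directions ``tangent to $\sH_g\cup\sR_g$'' and directions ``transverse to $\sH_g\cup\sR_g$,'' claiming the latter are the smoothings to non-hyperelliptic curves. But by Propositions~\ref{prop:hyp-rib-transversal} and~\ref{prop:hyp-rib-intersection}, the full tangent space is precisely $T_{\sH_g,[R_{\hyp}]}\oplus T_{\sR_g,[R_{\hyp}]}\cong H^0(\bP^1,L^{-2})\oplus\Ext^1(\Omega_{\bP^1},L)$; the generic smoothing has nonzero components in \emph{both} factors. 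There is no complementary ``transverse'' summand, so your decomposition does not exist and your argument that the transverse directions carry onto the Luna slice of $\Chow^{\rm ss}_{g,1}$ at $2R$ has nothing to attach to. The statement that purely-$\sH_g$ and purely-$\sR_g$ directions contribute no $\bG_m$-invariants is correct (one has weight $2$, the other weight $-1$, and the union $\bA(H^0)\cup\bA(\Ext^1)$ has coordinate ring with zero ideal of cross terms), but the \emph{product} $\bA(H^0)\times\bA(\Ext^1)$ — which is the actual slice — has a nontrivial $\bG_m$-invariant subring generated by degree-$(1,2)$ monomials, and these invariants are exactly what must be matched to the Luna slice of the Chow scheme.

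Second, you claim $\bar\iota$ is étale near $[2R]$, but the Luna-slice comparison that would establish this is not carried out: you would need to compute the $\Aut(R_{\hyp})$-representation on the slice of $\Chow^{\rm ss}_{g,1}$ at the cycle $2R$ and exhibit an equivariant isomorphism with the product slice above. The paper avoids this entirely by a different argument: it shows the induced map $\fY_g\to V_{g,1}$ from the good moduli space of the $\PGL(g)$-torsor $\sY_g$ is \emph{bijective} (using Lemma~\ref{lem:gms-ribbon} and the fact that the stack is an isomorphism to a scheme off the ribbon locus), that $\fY_g$ is normal (since $\sX_g^\circ$ is smooth), and that $V_{g,1}$ is normal (Proposition~\ref{prop:hilb-chow}(4)); Zariski's Main Theorem then gives the isomorphism. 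This is considerably lighter than an explicit slice computation. Relatedly, your existence argument — gluing the local models $\Spec A/\!\!/G_\ast$ — needs the saturation condition of Theorem~\ref{thm:gms-glue} (or a criterion such as Theorem~\ref{thm:AFS-gms}, which is what the paper applies to $\sY_g$, with a nontrivial case analysis verifying that local quotient presentations send closed points to closed points); you do not address this, and it is not automatic. Finally, your claimed injectivity via ``lifting the common degeneration of cycles to a degeneration of curves'' is the content of the paper's Proposition~\ref{prop:hyp-rib-closed-substack} (properness of $\sH_g,\sR_g$) together with the analysis in Proposition~\ref{prop:gms-PGLtorsor}; it needs to be spelled out rather than invoked as ``the valuative input.''
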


When $g=4$, we strengthen this result by proving $\sX_4^{\circ} = \sX_4$ and the following.

\begin{theorem}[see Theorem \ref{thm:sX-gms}]
    For $g = 4$, the stack $\sX_4$ admits a projective good moduli space isomorphic to the Chow quotient $\fX^{\rm c}_4$.
\end{theorem}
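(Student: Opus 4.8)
The plan is to bootstrap from the general statement about $\sX_g^{\circ}$ (Theorem~\ref{thm:sX_g-gms}) by upgrading it, in the case $g=4$, to the two assertions $\sX_4^{\circ}=\sX_4$ and ``the good moduli space of $\sX_4^{\circ}$ exhausts $\fX^{\rm c}_4$''. Recall that by Theorem~\ref{thm:sX_g-gms} the open substack $\sX_g^{\circ}$ admits a good moduli space $\pi^{\circ}\colon\sX_g^{\circ}\to U_g$ with $U_g\subseteq\fX^{\rm c}_g$ open, and that $\pi^{\circ}$ is the restriction of the natural Hilbert--Chow morphism $h\colon\sX_g\to\fX^{\rm c}_g$ sending $[C]$ to the $\SL_g$-orbit of the Chow point of $\overline{C}:=\phi_{|\omega_C|}(C)\subset\bP^{g-1}$; this $h$ is defined on all of $\sX_g$ since $\omega_C$ is canonical and $\overline{C}$ is Chow-semistable by the definition of $\sX_g$. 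Hence it suffices to prove (i) $\sX_4=\sX_4^{\circ}$ and (ii) $U_4=\fX^{\rm c}_4$: with both in hand, $h$ itself is a good moduli space morphism onto $\fX^{\rm c}_4$, and $\fX^{\rm c}_4$ is projective, being a GIT quotient of the (projective) Chow variety of $\bP^3$ --- equivalently, by \cite{CMJL14} it is isomorphic to the log canonical model $\oM_4(\tfrac{5}{9})$, cf.\ Theorem~\ref{thm:cmjl-vgit} --- which proves the theorem.

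For (i): since $\sX_4$ is irreducible of finite type with affine diagonal and $\sX_4^{\circ}$ is a nonempty open substack (Theorem~\ref{introthm:hyperelliptic}, Definition-Proposition~\ref{dp:Xgcirc}), it is enough to show that every geometric point $[C]$ of $\sX_4$ lies in $\sX_4^{\circ}$ --- then the closed complement $\sX_4\setminus\sX_4^{\circ}$, having no geometric point, is empty. Fix such a $C$, a smoothable Gorenstein genus-four curve with $\omega_C$ ample and basepoint free and with $\overline{C}$ Chow-semistable. The canonical morphism $\phi_{|\omega_C|}\colon C\to\bP^3$ is either two-to-one onto its image --- in which case $C$ is a hyperelliptic curve or the unique hyperelliptic ribbon, so $[C]\in\sH_4\cup\sR_4\subseteq\sX_4^{\circ}$ --- or a closed embedding, in which case $\overline{C}\cong C$ is a connected arithmetic-genus-four degree-six curve lying on a unique quadric surface, hence a $(2,3)$-complete intersection; in particular $C$ is lci, so $\Def(C)$ is unobstructed and $\sX_4$ is smooth at $[C]$, and Chow-semistability of $\overline{C}$ constrains the quadric and the singularities of $C$ exactly as in the VGIT of $(2,3)$-complete intersections (Theorem~\ref{thm:cmjl-vgit}, \cite{CMJL12}), from which one verifies the remaining defining conditions of $\sX_4^{\circ}$. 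I expect this last verification --- running through the finitely many semistable configurations, in particular the non-transverse ones (curves through the vertex of a quadric cone, and the cone/reducible-quadric degenerations) together with their Gorenstein curve singularities, and matching them against the definition of $\sX_4^{\circ}$ --- to be the main obstacle.

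For (ii): since $\pi^{\circ}$ is a good moduli space, $U_4$ is the set-theoretic image of $h|_{\sX_4^{\circ}}$, which equals $h$ by (i); thus (ii) reduces to showing that every Chow-polystable point of $\fX^{\rm c}_4$ is $h([C])$ for some $[C]\in\sX_4$. By the explicit VGIT description of $\fX^{\rm c}_4$ (Theorem~\ref{thm:cmjl-vgit}) each such point is represented by a $1$-cycle associated to a $(2,3)$-complete intersection in $\bP^3$ or to a degeneration thereof. If this cycle is reduced, it is a complete intersection curve $C$, which is lci (hence Gorenstein), has $\omega_C\cong\cO_C(1)$ ample and basepoint free, and is smoothable by deforming its defining equations, so $[C]\in\sX_4$ and $h([C])$ is the given point; the one remaining non-reduced polystable point is the doubled twisted cubic, which is the canonical image of the hyperelliptic ribbon (Definition~\ref{def:GIT-strictly-semistable-curves}) and hence also lies in the image of $h$ via $\sR_4\subseteq\sX_4$. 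Therefore $U_4=\fX^{\rm c}_4$, and combining with (i) we conclude that $h\colon\sX_4\to\fX^{\rm c}_4$ is a good moduli space morphism with projective target, which completes the proof.
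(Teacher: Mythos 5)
Your overall strategy---bootstrap from $\sX_g^{\circ}$ via Theorem~\ref{thm:sX_g-gms} and then show $\sX_4^{\circ}=\sX_4$---is exactly the paper's approach, and your part (ii) is essentially correct. But part (i), which you flag as the ``main obstacle'' and leave open, is a genuine gap. There are actually two things missing. First, the dichotomy ``the canonical map $\varphi$ is two-to-one onto a rational normal curve or a closed embedding'' is not a priori available: $\varphi$ could in principle be birational but not an embedding, or the cycle could have a component of multiplicity $\geq 3$. Second, even granting the dichotomy, membership in $\sX_4^{\circ}$ requires condition (3) of Definition-Proposition~\ref{dp:Xgcirc}: the Chow \emph{polystable degeneration} of $\varphi_*[C]$ must lie in $U_{4,1}$, and you never verify this.

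The missing ingredient is to work at the level of cycles rather than curves, as the paper does in Lemma~\ref{lem:Xcirc=X}. Since $\sX_g^{\circ,{\rm sn}}=(\Phi_g^{\rm sn})^{-1}(\sX_g^{{\rm c},\circ})$ by construction, proving $\sX_4^{{\rm c},\circ}=\sX_4^{\rm c}$ (equivalently $U_{4,1}=\Chow_{4,1}^{\rm ss}$) immediately yields $\sX_4^{\circ}=\sX_4$ without any configuration-by-configuration check on geometric points. And $U_{4,1}=\Chow_{4,1}^{\rm ss}$ is an easy consequence of Theorem~\ref{thm:CMJL-Chow-GIT}: in genus $4$ every Chow-semistable cycle is either a reduced $(2,3)$-complete intersection curve in $\bP^3$ (which is lci with $\cO(1)$ very ample, hence in $U_{4,1}$ by Proposition~\ref{prop:hilb-chow}(3)) or a doubled rational normal curve. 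This one observation closes both gaps at once---in particular, once $\sX_4^{{\rm c},\circ}=\sX_4^{\rm c}$ is known, the closed-embedding claim for the non-ribbon case follows from the proof of Definition-Proposition~\ref{dp:Xgcirc} without further case analysis, so the ``finitely many semistable configurations'' you anticipated having to traverse never arise.
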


In Section \ref{sec:hyp-flip}, this result is used to prove the existence of the hyperelliptic flip for $g=4$.  The stack $\sX_4$ admits $\fX^{\rm c}_4 \cong \oM_4(\frac{5}{9})$ as its good moduli space, and we define an open substack $\sX_4^+ \subset \sX_4$ which admits a good moduli space $\fX_4^+$ with a proper morphism to $\oM_4(\frac{5}{9})$.  This serves as the final local model of the Hassett--Keel program for $\oM_4(\frac{5}{9}+\epsilon) \to \oM_4(\frac{5}{9})$ in a neighborhood of the curve $\Gamma$.  Furthermore, by defining a suitable local VGIT on $\sX^\circ_g$, we expect that this construction yields the hyperelliptic flip in every genus (see Remark \ref{rmk:highergenus-hyperellipticflip}).

\subsection*{Acknowledgement} We would like to thank Jarod Alper, Harold Blum, Sebastian Casalaina-Martin, Nathan Chen, Maksym Fedorchuk, Daniel Halpern-Leistner, Changho Han, Radu Laza, Yongnam Lee, Zhiyuan Li, and Junyan Zhao for helpful discussions. The authors were supported in part by the American Insitute of Mathematics as part of the AIM SQuaREs program. Research of KA was supported in part by NSF grant DMS-2302550 and a UCI Chancellor's Fellowship.  Research of KD was supported in part by NSF grant DMS-2302163.  Research of YL was supported in part by the NSF CAREER grant DMS-2237139 and an AT\&T Research Fellowship from Northwestern University.  Research of XW was supported in part by Simons Foundation Grant 631318 and MPS-TSM-00006636.

\section{Preliminaries}

Throughout this paper, we work over a fixed algebraically closed field $\bk$ of characteristic zero. 

\subsection{Stacks of curves}\label{sec:stack-curves}

Following \cite[\href{https://stacks.math.columbia.edu/tag/0D4Z}{Tag 0D4Y} and \href{https://stacks.math.columbia.edu/tag/0DMJ}{Tag 0DMJ}]{stacksproject}, let $\Curves$ be the functor from $\bk$-schemes to groupoids defined as 
\[
\Curves(S) := \left\{f: X \to S\left| \begin{array}{l} \textrm{$X$ is an algebraic space, $f$ is a flat proper morphism} \\
\textrm{of finite presentation, and $f$ has relative dimension $\leq 1$}
\end{array}
\right.\right\}.
\]
We note that our notation of $\Curves$ refers to $\bk\textrm{-}\Curves$ in \cite{stacksproject}. By \cite[\href{https://stacks.math.columbia.edu/tag/0D5A}{Tag 0D5A} and \href{https://stacks.math.columbia.edu/tag/0DSS}{Tag 0DSS}]{stacksproject} (see also \cite{dJHS11, Smy13}), we know that $\Curves$ is a quasi-separated algebraic stack locally of finite type over $\bk$. 

A \emph{curve} in this paper stands for a geometrically connected equidimensional scheme of dimension $1$ and of finite type over some field extension $\bk'$ of $\bk$. Note that a $\bk'$-point in $\Curves$ may not parameterize a curve in our sense, although all relevant open substacks below will satisfy this.

Let $g$ be a non-negative integer. 
By \cite[\href{https://stacks.math.columbia.edu/tag/0E1L}{Tag 0E1L} and \href{https://stacks.math.columbia.edu/tag/0E6K}{Tag 0E6K}]{stacksproject}, there is an open substack $\Curves_g^{\Gor}$ of $\Curves$ such that $\Curves_g^{\Gor}(S)$ consists of $(f:X\to S)\in \Curves(S)$ satisfying the following conditions:
\begin{enumerate}
    \item $f$ is Gorenstein whose fibers are equidimensional of dimension $1$;
    \item $f_*\cO_X = \cO_S$, and this holds after arbitrary base change;
    \item $R^1 f_*\cO_X$ is a locally free $\cO_S$-module of rank $g$.
\end{enumerate}
In particular, a $\bk'$-point $[C]\in \Curves_g^{\Gor}(\bk')$ for a field extension $\bk'/\bk$ parameterizes a Gorenstein proper curve $C$ over $\bk'$ such that $H^0(C, \cO_C)\cong \bk'$ and $H^1(C, \cO_C)\cong \bk'^g$. In this case, we say that $g$ is the \emph{(arithmetic) genus} of $C$.
We also define two open substacks of $\Curves_g^{\Gor}$ as 
\[
\Curves_g^{\lci +}\hookrightarrow \Curves_g^{\lci} \hookrightarrow \Curves_g^{\Gor},
\]
where $\Curves_g^{\lci}$ parameterizes curves in $\Curves_g^{\Gor}$ with local complete intersection singularities, and $\Curves_g^{\lci +}$ parameterizes curves in  $\Curves_g^{\lci}$ whose singular locus is finite; see \cite[\href{https://stacks.math.columbia.edu/tag/0E0J}{Tag 0E0J} and \href{https://stacks.math.columbia.edu/tag/0DZT}{Tag 0DZT}]{stacksproject}. Moreover, the algebraic stack $\Curves_g^{\lci +}$ is smooth by \cite[\href{https://stacks.math.columbia.edu/tag/0DZT}{Tag 0DZT}]{stacksproject}.

Let $\cM_g$ and $\ocM_g$ denote open substacks of $\Curves_g^{\Gor}$ parameterizing smooth curves of genus $g$ and Deligne--Mumford stable curves of genus $g$, respectively; see \cite[\href{https://stacks.math.columbia.edu/tag/0E82}{Tag 0E82} and \href{https://stacks.math.columbia.edu/tag/0E77}{Tag 0E77}]{stacksproject}. Then we have open immersions 
\[
\cM_g \hookrightarrow \ocM_g \hookrightarrow \Curves_g^{\lci +}\hookrightarrow\Curves_g^{\Gor}.
\]
By \cite[\href{https://stacks.math.columbia.edu/tag/0E86}{Tag 0E86}]{stacksproject}, we know that $\cM_g$ is open and dense in $\Curves_g^{\lci +}$. Since $\cM_g$ is irreducible by \cite{DM69}, so are  $\ocM_g$ and $\Curves_g^{\lci +}$.

\subsection{Good moduli spaces for algebraic stacks}

We recall the definition of good moduli spaces for algebraic stacks and their properties here.

\begin{defn}[\cite{alper}]
A quasi-compact and quasi-separated morphism $\pi: \cX \to \cY$ between algebraic stacks $\cX$ and $\cY$  is  a \emph{good moduli space morphism} if the following conditions hold.
\begin{enumerate}
    \item $\cO_{\cY} \to \pi_* \cO_{\cX}$ is an isomorphism;
    \item $\pi$ is \emph{cohomologically affine}, i.e. $\pi_*: \QCoh(\cX) \to \QCoh(\cY)$ is exact.
\end{enumerate}
Moreover, if $\cY $ is representable by an algebraic space $X$, then we say that $X$ is a \emph{good moduli space} of $\cX$. 
\end{defn}

\begin{defn}
Let $\cX$ be an algebraic stack of finite type over $\bk$. Let $x \in \cX(\bk)$ be a closed point. We say that $f: \cW \to \cX$ is a \emph{local quotient presentation around $x$} if:
\begin{enumerate}
    \item the stabilizer $G_x$ of $x$ is linearly reductive;
    \item there is an isomorphism $\cW \cong \left[ \Spec A / G_x \right]$, where $A$ is a finite type $\bk$-algebra; 
    \item the morphism $f$ is \'etale and affine; and
    \item there exists a point $w \in \cW(\bk)$ such that $f(w) = x$ and $f$ induces an isomorphism $G_w \cong G_x$. 
\end{enumerate}
The stack $\cX$ admits local quotient presentations if there exist local quotient presentations around all closed points $x \in \cX(\bk)$.
\end{defn}

The following result guarantees the existence of local quotient presentations.

\begin{thm}[\cite{AHR20}]\label{thm:ahr}
Let $\cX$ be an algebraic  stack of finite type 
over $\bk$ with affine diagonal. If $x \in \cX(\bk)$ is a closed point with linearly reductive stabilizer, then there exists a local quotient presentation $f: \cW \to \cX$ around $x$.
\end{thm}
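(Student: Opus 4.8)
The plan is to follow the strategy of Alper--Hall--Rydh (the source cited in the statement), whose key inputs are, in increasing order of depth: deformation theory of the residual gerbe inside $\cX$, Artin's algebraization theorem, Tannaka duality for algebraic stacks (Hall--Rydh), and a coherent completeness statement for quotient stacks by linearly reductive groups. Since the assertion is local around $x$ and $\cX$ is of finite type over $\bk$, we may assume $\cX$ is Noetherian and quasi-compact; as $\bk$ is algebraically closed and $x\in\cX(\bk)$, the residual gerbe $\cG_x$ of $x$ is isomorphic to $BG_x$ over $\bk$, and $G:=G_x$ is linearly reductive by hypothesis.

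\textbf{Step 1 (the formal chart).} Consider the infinitesimal neighborhoods $\cX_n\hookrightarrow\cX$ of $\cG_x$, i.e.\ the thickenings defined by powers of the ideal $\cI$ of $\cG_x=BG$. Each $\cX_n$ is obtained from $BG$ by a finite tower of square-zero extensions by coherent sheaves. Because $G$ is linearly reductive, $BG$ is cohomologically affine, and cohomological affineness is insensitive to nilpotent thickenings, so each $\cX_n$ is cohomologically affine and, using the resolution property, of the form $[\Spec A_n/G]$ with $A_n$ Artinian; set $\hat A:=\varprojlim A_n$, a complete local Noetherian $G$-algebra with $\hat A^{G}$ complete local. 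The crucial point is that the obstruction theory selecting the tower $\{A_n\}$ is governed by $\Ext$-groups attached to $L_{\cX}|_{\cG_x}$, which is a complex of $G$-representations and hence \emph{splits} because $G$ is linearly reductive; this lets one build the tower compatibly with a $G$-equivariant normal space $N:=(\fm/\fm^2)^\vee$, so that the formal quotient $\widehat{\cN}:=[\Spec\hat A/G]$, with its origin $w$, is formally étale over $\cX$ at $w$.

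\textbf{Step 2 (assembly and algebraization).} Step 1 produces a compatible system of morphisms $\cX_n\to\cX$. To promote it to a single morphism $\widehat{\cN}\to\cX$, one invokes Tannaka duality for algebraic stacks, $\Hom(\cT,\cX)\simeq\Hom^{\otimes}(\mathrm{Coh}(\cX),\mathrm{Coh}(\cT))$, together with coherent completeness of $\widehat{\cN}$, namely $\mathrm{Coh}(\widehat{\cN})\simeq\varprojlim\mathrm{Coh}(\cX_n)$. The resulting morphism $\widehat{\cN}\to\cX$ sends $w\mapsto x$, induces $G_w\cong G$, and is formally étale at $w$. Then Artin's algebraization theorem, applied to this pointed formal object, yields a finite-type $G$-scheme $\Spec A$ with a $G$-fixed $\bk$-point $w$ (so $G_w\cong G$) and a morphism $f\colon\cW:=[\Spec A/G]\to\cX$ agreeing with $\widehat{\cN}\to\cX$ after completion at $w$; in particular $f$ is étale at $w$. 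Since $\cW$ and $\cX$ are Noetherian, étaleness spreads to an open neighborhood of $w$, which may be chosen $G$-invariant affine, so we may assume $f$ is étale. Finally, the hypothesis that $\cX$ has affine diagonal makes $f$ separated and representable, and a further comparison of $\cW$ with the affine scheme $\Spec A^{G}$ shows $f$ may be taken affine as well; this furnishes the local quotient presentation.

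\textbf{Main obstacle.} The genuine difficulty is the middle of Step 2: the coherent completeness of $[\Spec\hat A/G]$ together with the Tannakian recognition of morphisms, which is what converts the inductive, order-by-order deformation-theoretic data into an honest morphism of stacks that can then be algebraized. The deformation theory of Step 1 and the algebraization and spreading-out of Step 2 are, by comparison, routine once linear reductivity of $G_x$ has been used to split the cotangent complex and kill the relevant obstruction groups; it is precisely this hypothesis that cannot be removed.
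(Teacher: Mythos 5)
The paper does not prove this statement; it cites it directly from Alper--Hall--Rydh \cite{AHR20} (their local structure theorem for algebraic stacks, specialized to the case where the base is an algebraically closed field and $\cX$ has affine diagonal). Your sketch is a faithful high-level account of the strategy of that reference: build the formal chart $[\Spec \hat{A}/G]$ order by order using linear reductivity to split the cotangent complex, assemble the compatible system into a genuine morphism via coherent completeness of $[\Spec \hat{A}/G]$ plus Tannaka duality, then algebraize with Artin's theorem and spread out étaleness. One small infelicity: at the end of Step 1 you already assert that $\widehat{\cN}\to\cX$ is formally étale at $w$, but at that stage you only have the compatible system $\{\cX_n\to\cX\}$; the existence of the morphism out of the formal quotient is exactly what Step 2 supplies, so the claim belongs there. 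Also, the reduction from ``étale with an isomorphism of stabilizers at $w$'' to an \emph{affine} étale morphism (your last sentence) is where the affine-diagonal hypothesis is actually used in \cite{AHR20}, and is slightly more delicate than a ``comparison with $\Spec A^G$''; it invokes their refinement that one can arrange the quotient presentation to be affine over $\cX$ precisely because $\cX$ has affine diagonal. These are presentational points, not gaps: your proposal takes the same route as the cited source.
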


Moreover, we have the following criterion that guarantees the existence of a good moduli space.

\begin{thm}[{\cite[Theorem 1.2]{AFS17}}]\label{thm:AFS-gms}
Let $\cX$ be an algebraic stack of finite type over $\bk$. Suppose the following conditions hold.
\begin{enumerate}
    \item For every closed point $x\in \cX(\bk)$, there exists a local quotient presentation $f: \cW\to \cX$ around $x$ such that $f$ is stabilizer preserving at closed points of $\cW$, and $f$ sends closed points to closed points.
    \item For every point $x\in \cX(\bk)$, the closed substack $\overline{\{x\}}$ admits a good moduli space.
\end{enumerate}
Then, $\cX$ admits a good moduli space.
\end{thm}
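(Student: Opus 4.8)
\emph{Overall approach.} The plan is to construct the good moduli space by gluing the good moduli spaces of the local quotient presentations supplied by condition (1), and then to verify the two defining properties of a good moduli space morphism étale-locally on the target. The starting point is that a quotient stack $[\Spec A / G_x]$ with $G_x$ linearly reductive admits a good moduli space, namely $\Spec A^{G_x}$ (by \cite{alper}), and that good moduli space morphisms are stable under arbitrary base change and under composition. Since $\cX$ is of finite type over $\bk$ it is quasi-compact, and since a nonempty quasi-compact stack has a closed point, an open substack containing all closed points is all of $\cX$; hence finitely many local quotient presentations $f_i \colon \cW_i = [\Spec A_i/G_{x_i}] \to \cX$ ($1 \le i \le n$) have images covering $\cX$. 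This produces a representable étale surjection $p \colon \cU := \bigsqcup_i \cW_i \to \cX$, each $f_i$ being affine, stabilizer preserving at closed points, and sending closed points to closed points, and it presents $\cX$ as the quotient $[\cU/\cR]$ of the étale groupoid $\cR := \cU \times_{\cX} \cU \rightrightarrows \cU$, with $\cU$ having the good moduli space $U := \bigsqcup_i \Spec A_i^{G_{x_i}}$.

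\emph{Descending the groupoid.} The heart of the argument is to show that $\cR$ admits a good moduli space $R$ and that the two projections descend to étale morphisms $R \rightrightarrows U$ forming an étale equivalence relation in algebraic spaces. For this I would isolate and repeatedly invoke a ``strongly étale descent'' statement: if $h \colon \cV \to \cW$ is representable, étale, stabilizer preserving at closed points, and sends closed points to closed points, and $\cW$ admits a good moduli space $\cW \to W$, then $\cV$ admits a good moduli space $V$, the induced $V \to W$ is étale, and the square
\[
\begin{tikzcd}
\cV \arrow{r}\arrow{d} & \cW \arrow{d}\\
V \arrow{r} & W
\end{tikzcd}
\]
is cartesian. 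Applying this to each projection $\cW_i \times_{\cX} \cW_j \to \cW_i$ requires first checking that such fiber products inherit the two conditions in (1) --- that is, that ``stabilizer preserving at closed points'' and ``sends closed points to closed points'' are stable under base change of one $f_j$ along another $f_i$ --- and this is exactly where condition (1) is genuinely used, in the form that the class of such morphisms is closed under composition and fiber product over $\cX$. Granting this, the unit, inverse, and composition maps of the groupoid descend to $R \rightrightarrows U$ by uniqueness of the morphisms between good moduli spaces induced by compatible morphisms of stacks, and the cartesian squares upgrade $[\cU/\cR] = \cX$ to the identity $\cX \times_{U/R} U \cong \cU$.

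\emph{Building $X$ and checking the properties; role of condition (2).} One then forms $X := U/R$ as the quotient of an étale equivalence relation of algebraic spaces, hence an algebraic space, with induced morphism $\pi \colon \cX \to X$. Since $\pi$ is quasi-compact and quasi-separated (this can be checked étale-locally, where it is $\cU \to U$), and since $U \to X$ is étale surjective with $\cX \times_X U \cong \cU$, the two conditions $\cO_X \xrightarrow{\sim} \pi_*\cO_{\cX}$ and cohomological affineness of $\pi$ follow by faithfully flat descent on $X$ from the corresponding facts for $\cU \to U$, as both conditions are étale-local on the base and pushforward commutes with flat base change here. Condition (2) is what guarantees that all of this produces a genuine algebraic space rather than a mere groupoid quotient that fails to be separated enough: $\overline{\{x\}}$ admitting a good moduli space for every $x$ precludes a point whose closure contains two distinct closed points (or two closed points not separated by invariant functions), which is precisely what forces $R \rightrightarrows U$ to be an equivalence relation with the correct quotient; alternatively, condition (2) can be used to run a Noetherian induction on $\cX$ passing from the local pictures to the global one.

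\emph{Main obstacle.} The step I expect to be hardest is establishing the strongly étale descent statement above --- that a representable étale morphism which is stabilizer preserving at closed points and maps closed points to closed points descends to an étale morphism of good moduli spaces fitting into a cartesian square --- together with the verification that these conditions propagate to the fiber products $\cW_i \times_{\cX} \cW_j$. This needs a careful formal/étale-slice analysis around closed points to control how formation of invariants interacts with étale base change, and it is exactly where every hypothesis on the local quotient presentations (étale, affine, stabilizer preserving, closed-to-closed) is indispensable, with condition (2) then the extra ingredient that lets the local pieces glue into a separated-enough quotient.
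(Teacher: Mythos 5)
The gap is at the step you yourself flag as the main obstacle: you assert that the two conditions in hypothesis (1) --- stabilizer preserving at closed points, and sends closed points to closed points --- propagate to the fiber products $\cW_i \times_{\cX} \cW_j \to \cW_i$, and that this follows from condition (1). It does not, and this is exactly where the argument would fail without condition (2). Consider $\cX = [\bP^1/\bG_m]$ with $\bG_m$ scaling an affine chart, and take $\cW_1 = [\bA^1_0/\bG_m]$, $\cW_2 = [\bA^1_\infty/\bG_m]$ the two standard charts mapping to $\cX$ by affine \'etale open immersions; these are local quotient presentations around the two closed points $0$ and $\infty$, and each is stabilizer preserving at closed points and sends closed points (namely $0$, resp.\ $\infty$) to closed points, so condition (1) holds. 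But $\cW_1 \times_{\cX} \cW_2 \cong [\bG_m/\bG_m] \cong \Spec\bk$, and the projection to $\cW_1 = [\bA^1/\bG_m]$ sends the unique (closed) point of $\Spec\bk$ onto the open dense orbit, which is not closed; so the projection does \emph{not} send closed points to closed points and the strongly \'etale descent lemma cannot be applied to it. Running your groupoid descent anyway in this example produces $X = \Spec\bk$, yet $\cX \to \Spec\bk$ is not cohomologically affine and your claimed cartesian square $\cX \times_X U \cong \cU$ fails outright ($\cX \times_X U = \cX \sqcup \cX$, not $\cU$). Of course this $\cX$ violates condition (2) and indeed admits no good moduli space; but the point is that your draft would not detect this, because you assign condition (2) the vaguer role of ``ensuring separatedness of the quotient'' rather than the role it actually plays: it is needed precisely to make the overlaps of the local presentations behave well.

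By contrast, the proof in \cite{AFS17} does not form the global groupoid $\cR = \cU \times_{\cX} \cU$ directly. The key technical input is a shrinking lemma: using conditions (1) and (2) together --- via a careful analysis of closed points, their specializations along the \'etale maps, and Noetherian induction --- one replaces each $\cW_i$ by a smaller local quotient presentation whose image $\cU_i \subset \cX$ is an open substack with $f_i^{-1}(\cU_i)$ saturated in $\cW_i$, so that $\cU_i$ inherits a good moduli space, and moreover the overlaps $\cU_i \cap \cU_j$ are saturated. The global good moduli space is then obtained by gluing via \cite[Proposition~7.9]{alper} (the paper's Theorem~\ref{thm:gms-glue}). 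If you wish to retain the groupoid formulation, you would first have to perform exactly this shrinking so that the projections $\cR \to \cU$ \emph{do} become strongly \'etale; at that point you are essentially re-deriving the AFS shrinking lemma, which is where all the work lies.
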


Next, we introduce the concept of $\Theta$-reductivity and S-completeness which are closely related to the existence of good moduli spaces.

\begin{defn}[\cite{AHLH18}]
For a DVR $R$ with a uniformizer $r$, we define two quotient stacks 
\[
\Theta_R:= [\bA^1_R /\bG_m]\quad\textrm{and}\quad\STR := [\Spec( R[s,t]/(st-r) )/\bG_m],
\]
where the $\bG_m$-action on $\bA_R^1$ is the standard multiplication action, and the $\bG_m$-action on $\STR$ has weights $1,-1$ in $s,t$. We denote by $0$ the unique closed point in $\Theta_R$ and $\STR$.

An algebraic stack $\cX$ is
\begin{enumerate}
    \item \emph{$\Theta$-reductive} if, for every DVR $R$, every morphism $\Theta_R \setminus 0 \to \cX$ admits a unique extension $\Theta_R\to \cX$; 
    \item \emph{S-complete} if, for every DVR $R$, every morphism $\STR \setminus 0 \to \cX$ admits a unique extension $\STR \to \cX$. 
\end{enumerate}
\end{defn}

\begin{thm}[\cite{AHLH18}]\label{thm:ahlh}
Let $\cX$ be an algebraic stack of finite type over $\bk$ with affine diagonal. Then $\cX$ admits a separated good moduli space $X$ if and only if $\cX$ is $\Theta$-reductive and S-complete. Moreover, the good moduli space $X$ is proper over $\bk$ if and only if $\cX$ satisfies the existence part of the valuative criterion for properness.
\end{thm}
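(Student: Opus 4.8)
This is Theorem~A of \cite{AHLH18}, and I would organize its proof around the two directions of the biconditional, with essentially all of the substance in the ``if'' direction. \emph{Necessity.} Suppose $\pi\colon\cX\to X$ is a good moduli space morphism with $X$ a separated algebraic space. The starting point is that $\Theta_R=[\bA^1_R/\bG_m]$ and $\STR$ both have $\Spec R$ as good moduli space: the $\bG_m$-invariant subrings of $R[t]$ (with $t$ in weight $1$) and of $R[s,t]/(st-r)$ (weights $1,-1$) are each equal to $R$, and the quotient maps are cohomologically affine since $\bG_m$ is linearly reductive. Given $g\colon\Theta_R\setminus 0\to\cX$, the source has $\Spec R$ as good moduli space, so $\pi\circ g$ factors through a morphism $\Spec R\to X$; precomposing with $\Theta_R\to\Spec R$ extends $\pi\circ g$ to $\Theta_R\to X$, and it remains to lift this extension along $\pi$ to $\cX$, which follows from the standard stability properties of good moduli space morphisms under base change \cite{alper}. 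For S-completeness one argues the same way, with one extra input: the special fibre of $\STR$ is $\Theta_\kappa\cup_{B\bG_m}\Theta_\kappa$, so $\STR\setminus 0$ is ``$\Spec R$ with a doubled closed point'', and one must invoke the \emph{separatedness} of $X$ to see the two copies of that point map to the same point of $X$ before factoring through $\Spec R$. Uniqueness of the extensions follows since $\cX$ has affine, hence separated, diagonal and $\Theta_R,\STR$ have schematically dense open orbits.

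\emph{Sufficiency.} Assume $\cX$ is $\Theta$-reductive and S-complete. \textbf{Step 1:} closed points have linearly reductive stabilizers. If $x\in\cX(\bk)$ is closed, its stabilizer $G_x$ is affine by the affine-diagonal hypothesis, and S-completeness forces $G_x$ to be geometrically reductive --- the residual gerbe at $x$ is $BG_x$, and the extension property for $\STR\setminus 0$-families modelled on $G_x$ is exactly the statement that $BG_x$ is S-complete, which is equivalent to $G_x$ being linearly reductive (in characteristic $0$, reductive $=$ linearly reductive). \textbf{Step 2:} by the Alper--Hall--Rydh local structure theorem (Theorem~\ref{thm:ahr}), around each closed $x$ there is an étale affine local quotient presentation $f\colon\cW\cong[\Spec A/G_x]\to\cX$ with $f(w)=x$ and $G_w\cong G_x$. \textbf{Step 3 (the crux):} refine $f$ to be ``strongly étale''. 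After a $G_x$-equivariant shrinking of $\Spec A$, the map $f$ should be stabilizer-preserving at all closed points of $\cW$ and should send closed points to closed points, and the induced morphism on affine GIT quotients $\Spec A^{G_x}\to Y$ should be étale with the natural square cartesian. This is where both hypotheses are consumed: $\Theta$-reductivity ensures one-parameter degenerations (maps from $\Theta_R$) have limits in $\cX$, which after shrinking makes $f$ send closed points to closed points and keeps the quotient map from creating new identifications; S-completeness prevents two closed points of $\cW$ over the same point of $\Spec A^{G_x}$ from being glued in $\cX$, which forces $f$ to be stabilizer-preserving near $w$ and the quotient map to be étale. \textbf{Step 4:} with strongly étale charts, the hypotheses of the Alper--Fedorchuk--Smyth criterion (Theorem~\ref{thm:AFS-gms}) are met --- every closed point has a stabilizer- and closed-point-preserving local quotient presentation, and orbit closures, now of the form $[\Spec A/G_x]$ with $G_x$ linearly reductive, admit good moduli spaces $\Spec A^{G_x}$ --- so $\cX$ admits a good moduli space $X$, glued from the $\Spec A^{G_x}$ along the étale overlaps of Step~3. \textbf{Step 5:} $X$ is separated, since two $R$-points of $X$ agreeing over the fraction field lift, using S-completeness of $\cX$, to a morphism $\STR\to\cX$ whose image forces the two $R$-points to coincide.

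\emph{Properness addendum.} Since $\cX$ is of finite type over $\bk$, so is $X$, and it is separated by Step~5; hence $X$ is proper over $\bk$ iff it satisfies the existence part of the valuative criterion for properness. Because $\pi\colon\cX\to X$ is surjective, universally closed, and itself satisfies the existence part of that criterion (for good moduli space morphisms, the preimage of a trait contains a closed point over the closed point, which produces a section after a ramified base change of the trait), a trait in $X$ admits a lift to $\cX$, after possibly extending the DVR, exactly when it comes from one in $\cX$; therefore $X$ is proper over $\bk$ iff $\cX$ satisfies the existence part of the valuative criterion for properness.

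\emph{Main obstacle.} The technical heart is Step~3: a general étale neighbourhood $[\Spec A/G_x]\to\cX$ need not respect the orbit-closure stratification --- a closed orbit upstairs can map into a larger orbit downstairs, and two closed orbits over the same invariant-theoretic point can collide in $\cX$ --- and essentially all of the work of \cite{AHLH18} goes into showing the chart can always be shrunk to eliminate both pathologies, with $\Theta$-reductivity controlling the first and S-completeness the second. A secondary, more self-contained difficulty is Step~1, where one must analyse maps out of $\STR$ carefully in order to extract reductivity of the stabilizer groups.
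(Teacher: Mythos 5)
This statement is cited by the paper directly from Alper--Halpern-Leistner--Heinloth \cite{AHLH18}; the paper gives no proof of its own, so there is no internal argument to compare against, and any assessment has to be against the source. Your sketch is a faithful high-level account of the structure of Theorem~A and Theorem~5.4 of \cite{AHLH18}. Two organizational remarks. First, in Step~4 you route the existence of the good moduli space through the Alper--Fedorchuk--Smyth criterion (Theorem~\ref{thm:AFS-gms} in this paper), whereas \cite{AHLH18} does not invoke that criterion --- it is logically a precursor --- and instead proves a refined strongly étale local structure statement and assembles the good moduli space by a direct Keel--Mori-style gluing/descent argument. Once Step~3 is available your assembly is a legitimate alternative, and it matches the idiom this paper uses elsewhere, but it is not literally the route of the cited source. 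Second, the division of labor you assign in Step~3 (``$\Theta$-reductivity controls closed-point-preservation, S-completeness controls non-gluing/stabilizer preservation'') is a reasonable slogan, but in \cite{AHLH18} the two hypotheses are more interlocked: S-completeness also underlies Step~1 (linearly reductive stabilizers, via S-completeness of the residual gerbe) and supplies the cohomological affineness needed to descend, while $\Theta$-reductivity is essentially a semistable-reduction statement. The properness addendum is correct and matches the source. In short, the sketch is sound; just be aware you are paraphrasing \cite{AHLH18}, not reconstructing a proof that appears in this paper.
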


Let us remark that it suffices to check the lifting criteria of Theorem \ref{thm:ahlh} for DVRs that are essentially of finite type over $\bk$; see \cite[Propositions 3.17, 3.41, Theorem 5.4, and Lemma A.11]{AHLH18}.

We collect some useful properties of good moduli spaces.

\begin{thm}[{\cite[Theorem 6.6]{alper}}]\label{thm:gms-universal}
Let $\cX$ be an algebraic stack of finite type over $\bk$. Assume that $\phi: \cX \to X$ is a good moduli space for $\cX$. Then, $\phi$ is universal for maps from $\cX$ to algebraic spaces.
\end{thm}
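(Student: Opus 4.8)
The plan is to reduce, in stages, to the case of a morphism into an affine scheme, where the statement becomes the tautology $\Hom(\Spec B,\Spec A)=\Hom(B,A)$ combined with the defining isomorphism $\cO_X\xrightarrow{\sim}\phi_*\cO_\cX$. First I would settle uniqueness. A good moduli space morphism $\phi\colon\cX\to X$ is surjective and universally closed, hence a universal submersion, and it satisfies $\cO_X\xrightarrow{\sim}\phi_*\cO_\cX$; from these properties one sees that $\phi$ is an epimorphism, so any two morphisms $X\to Y$ that become equal after precomposition with $\phi$ must coincide. Uniqueness is also what allows the existence argument to be carried out locally on $X$ and then glued.

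For existence, fix a morphism $\psi\colon\cX\to Y$ with $Y$ an algebraic space. Since $\phi^{-1}(U)\to U$ is again a good moduli space for every open $U\subseteq X$, and $\phi$ is a universal submersion, it suffices to construct the factorization over the members of an affine open cover of $X$ and glue; so I may assume $X=\Spec A$, in which case $\cX$ is cohomologically affine over $\bk$ with $\Gamma(\cX,\cO_\cX)\cong A$. The crucial geometric input is that $\psi$ is constant on the fibers of $\phi$: for $x\in X$ the fiber $\cX_x:=\cX\times_X\Spec\kappa(x)\to\Spec\kappa(x)$ is again a good moduli space (good moduli space morphisms are stable under base change), so $\cX_x$ is cohomologically affine over $\kappa(x)$ with only the constants as global functions, and therefore has a unique closed point $\xi$ to which every point of $\cX_x$ specializes. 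Choosing an \'etale morphism $V\to Y$ from an affine scheme whose image contains $\psi(\xi)$, its pullback to $\cX_x$ is an \'etale morphism whose image is an open substack containing $\xi$; but the only open substack of $\cX_x$ containing $\xi$ is $\cX_x$ itself, and one deduces from this that $\psi|_{\cX_x}$ factors through the point $\Spec\kappa(x)\to Y$, so it is constant. Since $\phi$ is a topological quotient map, constancy on fibers yields a continuous map $g\colon|X|\to|Y|$ with $g\circ|\phi|=|\psi|$.

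Now I would treat the scheme case first: if $Y$ is a scheme, cover it by affine opens $Y_i=\Spec B_i$; because $\psi$ is constant on the fibers of $\phi$, each $\psi^{-1}(Y_i)$ is a union of fibers, hence equals $\phi^{-1}(U_i)$ for an open $U_i\subseteq X$, and a morphism $\phi^{-1}(U_i)\to\Spec B_i$ is the same datum as a ring map $B_i\to\Gamma(\phi^{-1}(U_i),\cO)=\Gamma(U_i,\cO)$, i.e.\ a morphism $U_i\to Y_i$; these glue by uniqueness to $\chi\colon X\to Y$ with $\chi\circ\phi=\psi$. For a general algebraic space $Y$ one runs the same argument \'etale-locally on $Y$: using $g$ together with \'etale affine charts of $Y$ over neighborhoods of the points of $g(|X|)$, one produces the factorization after \'etale base change on $Y$, and a descent argument (again using uniqueness) assembles these into the required $\chi\colon X\to Y$.

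The step I expect to be the main obstacle is precisely the combination of these last two points: proving that $\psi$ is genuinely constant on each fiber of $\phi$ — which uses the precise structure of the fibers as cohomologically affine stacks with a single closed point, not merely that the image of a fiber is irreducible — and then accommodating the fact that $Y$ is an algebraic space rather than a scheme, which forces one to work with an \'etale atlas of $Y$ and descend the locally constructed morphisms. Once the target is an affine scheme, the factorization is immediate from the defining property $\cO_X\xrightarrow{\sim}\phi_*\cO_\cX$ of a good moduli space.
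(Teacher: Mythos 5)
This statement is cited from Alper's paper rather than proved in the present one, so there is no in-paper proof to compare against. Your overall strategy — uniqueness via $\phi$ being an epimorphism (universal submersion plus $\cO_X \xrightarrow{\sim} \phi_*\cO_\cX$), existence by reducing to $X$ affine, showing $\psi$ collapses each fiber of $\phi$ to a point of the target, and then gluing the resulting local factorizations obtained from the adjunction $\Hom(\Gamma(V,\cO),\Gamma(\cU,\cO))$ — is indeed the standard route and essentially the argument Alper gives.

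However, there is a real gap at the step you flag as "one deduces from this." From the fact that $\cX_x\times_Z V\to\cX_x$ is a surjective \'etale morphism (where I have written $Z$ for the target of $\psi$, since your use of $Y$ there collides with the name of the good moduli space in the paper's statement), it does not follow formally that $\psi|_{\cX_x}$ is constant: the surjectivity tells you that $\psi|_{\cX_x}$ factors through the open image of $V\to Z$, but that open subspace is still only an algebraic space, and you have traded your well-understood $\cX_x$ for a stack $\cX_x\times_Z V$ about whose global sections you know nothing (cohomological affineness is not preserved under \'etale base change, and $\Gamma(\cX_x\times_Z V,\cO)$ need not be a field). The clean argument in the case where $Z$ is a \emph{scheme} does not actually need the \'etale pullback at all: one covers $Z$ by affine opens $\Spec B$, observes that some $\psi^{-1}(\Spec B)$ is an open substack of $\cX_x$ containing $\xi$ and hence equals $\cX_x$, and then uses $\Hom(\cX_x,\Spec B)=\Hom(B,\Gamma(\cX_x,\cO))=\Hom(B,\kappa(x))$. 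When $Z$ is a genuine algebraic space, the passage to the point requires an additional input, for instance stratifying the (quasi-compact, quasi-separated) image by locally closed subschemes and analyzing where $\xi$ lands, or invoking that $Z$ is decent so that $\Spec\kappa(z)\to Z$ is a monomorphism together with a schematic-image argument; some such device is needed to close the gap you correctly identify as the crux of the proof.
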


\begin{thm}[{\cite[Theorem 4.16(viii)]{alper}}]\label{thm:gms-normal}
Let $\cX$ be an algebraic stack of finite type over $\bk$ that admits a good moduli space $X$. If $\cX$ is reduced (resp. irreducible, normal), then so is $X$. 
\end{thm}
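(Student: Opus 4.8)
The plan is to treat the three assertions separately --- in order of increasing depth --- using only the basic features of a good moduli space morphism $\pi\colon\cX\to X$: the isomorphism $\cO_X\xrightarrow{\sim}\pi_*\cO_{\cX}$, exactness of $\pi_*$, surjectivity and universal closedness of $\pi$ \cite{alper}, and the fact that $\pi$ is universal among morphisms from $\cX$ to algebraic spaces (Theorem \ref{thm:gms-universal}). \emph{Irreducibility} is immediate: $|\pi|\colon|\cX|\to|X|$ is a continuous surjection, and the continuous image of an irreducible topological space is irreducible, so $|\cX|$ irreducible forces $|X|$ irreducible. For \emph{reducedness}, which may be checked locally on $X$, I would write $X=\Spec A$, pick a smooth presentation $p\colon V\to\cX$ with $V$ a scheme (which is reduced, since reducedness can be checked on a smooth presentation), and note that $A=\Gamma(X,\cO_X)=\Gamma(\cX,\cO_{\cX})$ embeds --- by faithful flatness of $p$ --- into the reduced ring $\Gamma(V,\cO_V)$, hence is reduced. (Alternatively, pullbacks of nilpotents are nilpotent, so $\pi$ factors through the closed subspace $X_{\mathrm{red}}\hookrightarrow X$; by the universal property of $\pi$ this factorization is induced by a section $X\to X_{\mathrm{red}}$ of $X_{\mathrm{red}}\hookrightarrow X$, which forces $X=X_{\mathrm{red}}$ --- the same mechanism that will be used for normality.)

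For \emph{normality} I would first reduce to the integral case. A normal algebraic stack is the disjoint union of its connected components, which are integral; there are finitely many since $\cX$ is quasi-compact, $\pi$ restricts to a good moduli space morphism on each of them, and good moduli spaces commute with disjoint unions, so --- invoking the previous two cases --- we may assume $\cX$, and hence $X$, is integral. Normality is local on $X$, so write $X=\Spec A$ with $A=\Gamma(\cX,\cO_{\cX})$ a domain with fraction field $K$, which embeds into the function field $K(\cX)$ of the integral stack $\cX$. Now if $f\in K$ is integral over $A$, then $f$ is integral over each local ring $\cO_{\cX,x}$, and since $\cX$ is normal every $\cO_{\cX,x}$ is a normal domain with fraction field $K(\cX)$; hence $f\in\cO_{\cX,x}$ for all $x\in|\cX|$, and therefore $f$ is a global section of $\cO_{\cX}$, i.e.\ $f\in\Gamma(\cX,\cO_{\cX})=A$. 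Thus $A$ is integrally closed in $K$, so $X$ is normal. (Equivalently: the normalization $\nu\colon\widetilde X\to X$ is finite and birational, $\pi$ lifts to $\widetilde\pi\colon\cX\to\widetilde X$ by the universal property of normalization --- checked on a smooth cover and descended --- and Theorem \ref{thm:gms-universal} then produces a section $s\colon X\to\widetilde X$ of $\nu$; since $\nu$ is an isomorphism over the dense normal locus and $\widetilde X$ is reduced, $s$ is a surjective closed immersion and hence an isomorphism, so $X\cong\widetilde X$ is normal.)

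I expect the normal case to be the only one requiring genuine input, and the crux there is the statement that the ring of global functions on a normal integral algebraic stack is integrally closed in its fraction field --- equivalently, that the universal property of the good moduli space (Theorem \ref{thm:gms-universal}) is precisely what is needed to descend normality. The naive attempt --- realizing $A=\Gamma(\cX,\cO_{\cX})$ as a subring of the normal ring $\Gamma(V,\cO_V)$ for a smooth presentation $V\to\cX$, as in the reduced case --- fails, since a subring of a normal ring need not be normal.
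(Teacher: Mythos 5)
Your proof is correct. The paper does not reproduce a proof but cites \cite[Theorem 4.16(viii)]{alper}, and your parenthetical argument for normality---lift $\pi$ through the normalization $\nu\colon\widetilde X\to X$ by descending the lift from a normal smooth cover, apply Theorem~\ref{thm:gms-universal} to produce a section $s$ of $\nu$, and conclude that $s$ is an isomorphism since it is a closed immersion with dense image in the reduced scheme $\widetilde X$---is the standard argument, as are the reduced and irreducible cases. The one soft spot is the ``direct'' version of the normality argument: for a general algebraic stack (not necessarily Deligne--Mumford) the local rings $\cO_{\cX,x}$, the function field $K(\cX)$, and the identity $\bigcap_x\cO_{\cX,x}=\Gamma(\cX,\cO_{\cX})$ all need some care to make precise, so as written that passage is heuristic; since you supply the rigorous lifting argument as an equivalent, this is harmless. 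If one does want a direct ring-theoretic proof, the cleanest version replaces the local rings with a normal affine smooth presentation $V=\Spec B\to\cX$: if $f=g/h$ with $g,h\in A:=\Gamma(\cX,\cO_{\cX})$ is integral over $A$, then $f\in B$ by normality of $B$, and $f$ lies in the equalizer $A=\ker\bigl(B\rightrightarrows\Gamma(V\times_\cX V,\cO)\bigr)$ because $h$ is a non-zero-divisor in the domain $A$, the two projections are flat, and $\alpha(f)\alpha(h)=\alpha(g)=\beta(g)=\beta(f)\alpha(h)$; this is exactly the classical proof that invariants of a reductive group acting on a normal affine variety are normal.
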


\begin{defn}[{\cite[Definition 6.1 and Remark 6.2]{alper}}]
Let $\cX$ be an algebraic stack of finite type over $\bk$ that admits a good moduli space $\phi: \cX \to X$. An open substack $\cU\hookrightarrow \cX$ is \emph{saturated} if $\cU = \phi^{-1}(\phi(\cU))$. If $\cU$ is saturated in $\cX$, then $\phi(\cU)\subset X$ is open and $\phi|_{\cU}:\cU \to \phi(\cU)$ is a good moduli space.
\end{defn}

\begin{thm}[{\cite[Proposition 7.9]{alper}}]\label{thm:gms-glue}
Let $\cX$ be an algebraic stack of finite type over $\bk$. Let $\{\cU_i\}_{i=1}^m$ be open substacks of $\cX$ such that $\cX = \cup_{i=1}^m \cU_i$ and for each $1\leq i\leq m$, there exists a good moduli space $\phi_i: \cU_i \to U_i$. Then there exists a good moduli space $\phi: \cX \to X$ and open sub-algebraic spaces $\tU_i \subset X$ such that $\tU_i \cong U_i$ and $\phi^{-1}(\tU_i) = \cU_i$ if and only if for each $1\leq i, j\leq m$, the intersection $\cU_i\cap \cU_j$ is saturated in $\cU_i$. 
\end{thm}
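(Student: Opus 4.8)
The plan is to deduce the theorem from two structural facts about good moduli spaces. First I would record that a good moduli space is \emph{unique up to unique isomorphism}: this follows from the universal property in Theorem~\ref{thm:gms-universal}, since if $\phi\colon\cX\to X$ and $\phi'\colon\cX\to X'$ are both good moduli spaces, then each factors uniquely through the other and the composites are identities by uniqueness again. Second, being a good moduli space morphism is \emph{local on the target}: both defining conditions --- exactness of $\phi_*$ on quasi-coherent sheaves and $\cO_X\to\phi_*\cO_{\cX}$ being an isomorphism --- can be checked after restricting to the members of a Zariski cover of $X$, using flat base change along the (flat) open immersions. Granting these, the forward implication is immediate: given $\phi\colon\cX\to X$ with $\tU_i\cong U_i$ open in $X$ and $\phi^{-1}(\tU_i)=\cU_i$, the restriction $\phi|_{\cU_i}\colon\cU_i\to\tU_i$ is a good moduli space, hence equals $\phi_i$ by uniqueness; then $\cU_i\cap\cU_j=\phi^{-1}(\tU_i\cap\tU_j)$, and since good moduli space morphisms are surjective one gets $\phi_i(\cU_i\cap\cU_j)=\tU_i\cap\tU_j$, so $\phi_i^{-1}(\phi_i(\cU_i\cap\cU_j))=\cU_i\cap\cU_j$, i.e.\ $\cU_i\cap\cU_j$ is saturated in $\cU_i$.

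For the converse, assume each $\cU_i\cap\cU_j$ is saturated in $\cU_i$ (hence, by symmetry of the hypothesis, also in $\cU_j$). By the remark following the definition of saturatedness, $U_{ij}:=\phi_i(\cU_i\cap\cU_j)$ is open in $U_i$ and $\phi_i$ restricts to a good moduli space $\cU_i\cap\cU_j\to U_{ij}$; running the same argument in $\cU_j$ and invoking uniqueness produces a canonical isomorphism $\psi_{ij}\colon U_{ij}\xrightarrow{\sim}U_{ji}$ identifying the two good moduli space maps out of $\cU_i\cap\cU_j$. The key observation is that pairwise saturatedness propagates automatically to all finite overlaps: a saturated open in $\cU_i$ is exactly a union of $\phi_i$-fibers, and an intersection of such is again a union of $\phi_i$-fibers, so $\cU_i\cap\cU_j\cap\cU_k=(\cU_i\cap\cU_j)\cap(\cU_i\cap\cU_k)$ is saturated in $\cU_i$, and symmetrically in $\cU_j$ and $\cU_k$. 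Applying uniqueness to the three good moduli space maps out of $\cU_i\cap\cU_j\cap\cU_k$ then yields the cocycle identity $\psi_{jk}\circ\psi_{ij}=\psi_{ik}$ (and $\psi_{ji}=\psi_{ij}^{-1}$, $\psi_{ii}=\mathrm{id}$) on the appropriate open. Consequently the algebraic spaces $U_i$ glue, along the $U_{ij}$ via the $\psi_{ij}$, to an algebraic space $X$ with open immersions $\tU_i\hookrightarrow X$ under which $\tU_i\cong U_i$ and $\tU_i\cap\tU_j$ is identified with $U_{ij}$.

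The composites $\cU_i\xrightarrow{\phi_i}U_i\cong\tU_i\hookrightarrow X$ agree on the overlaps $\cU_i\cap\cU_j$ by the construction of $X$, so they glue to a quasi-compact, quasi-separated morphism $\phi\colon\cX\to X$. Then I would check $\phi^{-1}(\tU_i)=\cU_i$: the inclusion $\supseteq$ is clear, and conversely if $x\in\phi^{-1}(\tU_i)$ then $x\in\cU_j$ for some $j$ and $\phi_j(x)\in\tU_i\cap\tU_j=\phi_j(\cU_i\cap\cU_j)$, which forces $x\in\cU_i\cap\cU_j\subset\cU_i$ because $\cU_i\cap\cU_j$ is saturated in $\cU_j$. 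Hence $\phi|_{\cU_i}$ is identified with the good moduli space $\phi_i$ for every $i$, and since $\{\tU_i\}$ covers $X$ and the good moduli space property is local on the target, $\phi$ is a good moduli space with the required properties.

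The main obstacle will be the gluing step in the converse direction: I need the good moduli spaces over the double overlaps to be \emph{canonically} identified --- which is precisely where the universal property of Theorem~\ref{thm:gms-universal} enters --- and I need the hypothesis, stated only for pairwise intersections, to control the triple overlaps so that the cocycle condition holds for free rather than having to be assumed separately. Once the observation about intersections of saturated opens being saturated is in place, the remainder is bookkeeping with saturated opens and the local nature of the good moduli space property.
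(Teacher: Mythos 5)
Your argument is correct, and it is essentially the standard proof; the paper itself does not supply one but simply cites Alper's Proposition 7.9, whose proof follows this same outline (uniqueness of the good moduli space via its universal property, the Zariski-local nature of the good moduli space property on the target, and gluing the $U_i$ along the $U_{ij}$). The two points you single out as potential obstacles are handled cleanly: the observation that a saturated open in $\cU_i$ is exactly a $\phi_i$-preimage, so that a finite intersection of saturated opens is again saturated, is indeed what makes the cocycle condition on triple overlaps automatic from the pairwise hypothesis, and the canonical identifications $\psi_{ij}$ come precisely from the uniqueness statement implicit in Theorem~\ref{thm:gms-universal}. One small presentational remark: when you deduce $\phi_i(\cU_i\cap\cU_j)=\tU_i\cap\tU_j$ in the forward direction, it is worth saying explicitly that you are using both the surjectivity of good moduli space morphisms and the prior identification $\phi_i=\phi|_{\cU_i}$ (the latter coming from uniqueness applied to the restriction of $\phi$ to the saturated open $\cU_i=\phi^{-1}(\tU_i)$); as written the chain of equalities is correct but the reader has to supply that intermediate step.
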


\subsection{Stacks from first three steps of Hassett--Keel program}\label{sec:HK}
We recall the notation \[\oM_g(\alpha) = \Proj\big(\oplus_{m \geq 0} H^0(\ocM_g, \lfloor m(K_{\ocM_g} + \alpha \delta)\rfloor)\big),\] the log canonical model of $(\ocM_g, \alpha \delta)$, from the introduction. The Hassett--Keel program predicts that as $\alpha$ decreases from one to zero there are certain ``critical'' values of $\alpha$ where birational contractions occur to produce new modular compactifications of $\cM_g$ with increasingly worse singularities.  In this section, we briefly review previously known results. 

First, we recall that an \emph{elliptic tail} is a connected genus $1$ component of a stable curve meeting the rest of the curve at a single point. When $\alpha > \frac{7}{10}$ and $\alpha\neq \frac{9}{11}$, the stack $\ocM_g(\alpha)$ is Deligne--Mumford for $g \geq 2$.  Indeed:
\begin{itemize}
    \item When $\alpha > \frac{9}{11}$, there is an isomorphism $\ocM_g(\alpha) \cong \ocM_g$ (see e.g. \cite[Section 4]{Has05} or \cite[Section 1]{HH09}; the point is that $K_{\ocM_g} + \alpha \delta$ is ample provided $\frac{9}{11} < \alpha \leq 1$).
    \item  The first critical $\alpha$ value occurs at $\alpha = \frac{9}{11}$. Indeed, by the work of \cite{HH09} ($g \geq 4)$, \cite{HL10} ($g = 3$), and \cite{Has05} ($g=2$), we have that $\oM_g(\frac{9}{11})$ is the coarse space of $\ocM_g^{\rm ps}$, where $\ocM_g^{\rm ps}$ denotes Schubert's moduli stack of pseudo-stable curves. There is a divisorial contraction $\oM_g \to \oM_g(\frac{9}{11})$ induced by $\ocM_g \to \ocM_g^{\rm ps}$ which corresponds to replacing elliptic tails with cusps. The curves parameterized by $\ocM_g^{\rm ps}$ have ample canonical sheaf, and at worst $A_2$-singularities not containing $A_1$-attached elliptic curves (i.e. elliptic tails). Pseudostable curves have finite automorphisms for $g > 2$ by \cite[Proof of Lemma 5.3]{Sch91}, and so the stack $\ocM_g^{\rm ps}$ is Deligne--Mumford.  Moreover, in the above mentioned works, the authors show that $\oM_g(\alpha) \cong \oM_g(\frac{9}{11})$ whenever $\frac{7}{10} < \alpha \leq \frac{9}{11}$.
\end{itemize}

Before stating the next lemma, we summarize the curves $C$ by parameterized by $\ocM_g(\alpha)$ for $\alpha \in (\frac{2}{3} - \epsilon, 1)$. We use the notation $A_{\leq n}$ to denote the class of $A_1, A_2, \dots, A_n$ singularities. For the definition of \emph{elliptic chains} and \emph{Weierstrass chains}, we refer the reader to \cite[Definitions 2.3 \& 2.4]{AFSvdW}.

\begin{defn}
\cite[Definition 2.5 \& Theorem 2.7]{AFSvdW}\label{def:HKsing}
The stacks $\ocM_g(\alpha)$ parameterize proper reduced Gorenstein  curves $C$ of arithmetic genus $g \ge 4$ with $\omega_C$ ample satisfying the following constraints on their singularities and irreducible components depending on the value $\alpha \in (\frac{2}{3}-\epsilon, 1]$.
    \begin{itemize}
        \item For $\alpha \in (\frac{9}{11}, 1]$: $C$ has only $A_1$ singularities.
        \item For $\alpha = \frac{9}{11}$: $C$ has only $A_1, A_2$ singularities. 
        \item For $\alpha \in (\frac{7}{10}, \frac{9}{11})$: $C$ has only $A_1$ and $A_2$ singularities, and does not contain $A_1$-attached elliptic tails.
        \item For $\alpha = \frac{7}{10}$: $C$ has only $A_{\leq 3}$ singularities, and does not contain $A_1, A_3$-attached elliptic tails. 
        \item For $\alpha \in (\frac{2}{3}, \frac{7}{10})$: $C$ has only $A_{\leq 3}$ singularities, and does not contain: 
        \begin{itemize}
            \item $A_1, A_3$-attached elliptic tails, 
            \item $A_1/A_1$-attached elliptic chains. 
        \end{itemize}
        \item For $\alpha = \frac{2}{3}$: $C$ has only $A_{\leq 4}$ singularities, and does not contain:
        \begin{itemize}
            \item $A_1, A_3, A_4$-attached elliptic tails, 
            \item $A_1/A_1, A_1/A_4, A_4/A_4$-attached elliptic chains.
        \end{itemize}
       \item For $\alpha \in (\frac{2}{3}-\epsilon, \frac{2}{3})$: $C$ has only $A_{\leq 4}$-singularities, and does not contain:
\begin{itemize}
 \item $A_1,A_3, A_4$-attached elliptic tails,
 \item $A_1/A_1, A_1/A_4, A_4/A_4$-attached elliptic chains, or
 \item $A_1$-attached Weierstrass chains.
    \end{itemize}
\end{itemize}
Moreover, each $\ocM_g(\alpha)$ is an algebraic stack of finite type over $\bk$ with affine diagonal. Indeed, each $\ocM_g(\alpha)$ is an open substack of $\Curves_g^{\lci +}$.
\end{defn}

The following summarizes the first three steps of the Hassett--Keel program in any genus $g\geq 2$. We note that there are minor differences when $g \in \{2,3\}$, so we state those cases separately.

\begin{thm}[Hassett--Keel program for $g \geq 4$; {\cite{HH09, HH13, AFSvdW, AFS17, AFS17b}}]
For every $\alpha\in (\frac{2}{3}-\epsilon, 1]$ and every $g\geq 4$, the algebraic stack $\ocM_g(\alpha)$ admits a projective good moduli space isomorphic to $\oM_g(\alpha)$. Moreover, for each critical value $\alpha_c\in \{\frac{9}{11}, \frac{7}{10}, \frac{2}{3}\}$, we have a wall crossing diagram 
\[   
    \begin{tikzcd}
    \ocM_g(\alpha_c+\epsilon)\arrow[hookrightarrow]{r}{}\arrow{d}{} &
    \ocM_g(\alpha_c)\arrow{d}{}\arrow[hookleftarrow]{r}{} & \ocM_g(\alpha_c-\epsilon)\arrow{d}{}\\
    \oM_g(\alpha_c+\epsilon)\arrow[rightarrow]{r}{}&
    \oM_g(\alpha_c)\arrow[leftarrow]{r}{}& \oM_g(\alpha_c-\epsilon)
    \end{tikzcd}
 \]
 where the top arrows are open immersions of algebraic stacks, the bottom arrows are projective morphisms, and the vertical arrows are good moduli space morphisms.
\end{thm}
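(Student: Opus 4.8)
The statement is a synthesis of \cite{HH09,HH13} (covering $\alpha\in(\tfrac{7}{10},1]$) and \cite{AFSvdW,AFS17,AFS17b} (covering the step at $\alpha=\tfrac{2}{3}$ and supplying the general good moduli space framework), so the plan is to assemble these inputs rather than to reprove them. The first point to record is that, by Definition \ref{def:HKsing}, each $\ocM_g(\alpha)$ is an open substack of the smooth stack $\Curves_g^{\lci+}$, hence an algebraic stack of finite type over $\bk$ with affine diagonal; this is exactly the setting in which the good moduli space formalism recalled above applies, and irreducibility of $\ocM_g(\alpha)$ follows because it contains $\cM_g$ as a dense open substack (which is irreducible by \cite{DM69}).

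To produce the good moduli space I would apply the criterion of Theorem \ref{thm:AFS-gms}, whose two hypotheses are verified in \cite{AFS17,AFS17b}: (i) for every closed point $[C]\in\ocM_g(\alpha)$ one builds a local quotient presentation $\cW\cong[\Spec A/G_{[C]}]\to\ocM_g(\alpha)$ out of the (mini)versal deformation theory of the Gorenstein curve $C$, and checks that it is stabilizer-preserving and sends closed points to closed points — the point being that closed points of $\ocM_g(\alpha)$ correspond to curves with linearly reductive automorphism group (finite, or a product of $\bG_m$'s arising from the $A_{2k}$-singularities and the elliptic/Weierstrass-chain components allowed at the given $\alpha$); and (ii) for every point $x$ the orbit-closure stack $\overline{\{x\}}$ admits a good moduli space, since it is a torus quotient of an affine space. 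Properness of $\oM_g(\alpha)$ over $\bk$ then follows from the existence part of the valuative criterion, i.e. from (semi)stable reduction for $\alpha$-stable curves as carried out in \cite{HH13,AFSvdW}.

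Next I would address projectivity and the identification with the log canonical model. Since the tautological class $K_{\ocM_g}+\alpha\delta$ has trivial weight along the automorphism tori of the closed points, it descends to a $\bQ$-line bundle on $\oM_g(\alpha)$; the key input — and the technical heart of \cite{HH09,HH13,AFSvdW} — is that this descended class is \emph{ample}, proved via the Hodge-theoretic positivity of the relevant combination of $\lambda$ and the boundary classes (semipositivity/Kollár-type ampleness checked on the good moduli space). Ampleness then gives
\[
\oM_g(\alpha)\;\cong\;\Proj\Big(\bigoplus_{m\geq 0}H^0\big(\ocM_g,\lfloor m(K_{\ocM_g}+\alpha\delta)\rfloor\big)\Big),
\]
so $\oM_g(\alpha)$ is the log canonical model, and in particular it is projective. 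I expect this ampleness to be the main obstacle: it is the only genuinely hard input, it relies on Hodge-theoretic positivity, and it is precisely what pins down each good moduli space as the log canonical model rather than merely as some projective birational contraction of $\oM_g$.

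Finally, for the wall-crossing diagrams at $\alpha_c\in\{\tfrac{9}{11},\tfrac{7}{10},\tfrac{2}{3}\}$: inspection of the singularity lists in Definition \ref{def:HKsing} shows that both $\ocM_g(\alpha_c+\epsilon)$ and $\ocM_g(\alpha_c-\epsilon)$ are open substacks of $\ocM_g(\alpha_c)$ (the list of curves allowed at the wall contains the two one-sided lists), which yields the top horizontal open immersions. Composing an open immersion $\ocM_g(\alpha_c\pm\epsilon)\hookrightarrow\ocM_g(\alpha_c)$ with the good moduli space morphism $\ocM_g(\alpha_c)\to\oM_g(\alpha_c)$ gives a morphism to an algebraic space, so by the universal property of good moduli spaces (Theorem \ref{thm:gms-universal}) it factors uniquely through a morphism $\oM_g(\alpha_c\pm\epsilon)\to\oM_g(\alpha_c)$, producing the bottom horizontal maps and making the squares commute. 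These morphisms are projective because source and target are both projective over $\bk$, and birational because all four stacks share $\cM_g$ as a common dense open substack; everything in this last step is routine once Definition \ref{def:HKsing} and the formalism above are in hand.
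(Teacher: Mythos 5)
The paper does not prove this theorem: it is stated as a background result attributed entirely to the cited works \cite{HH09,HH13,AFSvdW,AFS17,AFS17b}, so there is no in-paper argument to compare against. Your sketch is a correct high-level synthesis of those references, and the routine closing steps (openness of the wall-crossing immersions from inspection of Definition~\ref{def:HKsing}, the factorization through good moduli spaces via Theorem~\ref{thm:gms-universal}, projectivity of the induced morphisms) are all fine.

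One small clarification worth adding: the mechanism for projectivity is not uniform across the whole range $(\tfrac{2}{3}-\epsilon,1]$. For $\alpha\in(\tfrac{7}{10},1]$, the good moduli spaces $\oM_g$ and $\oM_g^{\rm ps}$ in \cite{HH09,HH13} are constructed as GIT quotients of Hilbert/Chow schemes, so projectivity is automatic from GIT and the identification with the log canonical model is a separate comparison of polarizations. The abstract ``construct the stack, verify Theorem~\ref{thm:AFS-gms}, then prove ampleness of the descended $K_{\ocM_g}+\alpha\delta$ via Kollár-type semipositivity'' route you describe is specifically the method of \cite{AFSvdW,AFS17,AFS17b} for the range around and below $\tfrac{7}{10}$, where no global GIT description is available. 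Your sketch folds both into one narrative; that is a reasonable simplification for a summary statement, but a reader reconstructing the proof should be aware that the two halves of the range use genuinely different projectivity arguments.
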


\begin{thm}[Hassett--Keel program for $g=2$ {\cite{Has05, HL07}} and $g=3$ \cite{HL10}]

Let $g \in \{2,3\}$.
    For every $\alpha \in (\frac{7}{10}, 1]$, the algebraic stack $\ocM_g(\alpha)$ admits a projective coarse moduli space isomorphic to $\oM_g(\alpha)$. In particular:
    \begin{itemize}
        \item if $\alpha \in (\frac{7}{10}, \frac{9}{11}]$ then $\oM_g(\alpha)$ is the coarse space of the stack of pseudostable curves, and when $\alpha > \frac{9}{11}$, the space $\oM_g(\alpha)$ is the coarse space of the Deligne--Mumford compactification. 
        \item When $g=2$, the space $\oM_2(\alpha)$ becomes a point for $\alpha = \frac{7}{10}$.
        \item When $g=3$, the space $\oM_g(\alpha)$ admits a projective good moduli space for all $\alpha \in (\frac{5}{9}, \frac{7}{10})$ with explicit descriptions given in \cite{HL10}. When $\alpha = \frac{5}{9}$, the space $\oM_3(\alpha)$ becomes a point. 
    \end{itemize} 
\end{thm}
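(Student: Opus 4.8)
The plan is to read this statement as a synthesis of the cited works: one follows the log canonical models $\oM_g(\alpha)$ as $\alpha$ decreases from $1$ and, at each critical value, identifies the birational modification with an explicit modular operation. The skeleton is uniform in genus: near $\alpha=1$ the class $K_{\ocM_g}+\alpha\delta$ is ample (by \cite{MumfordStability} and \cite[Theorem 5.7]{Has05} it is ample on $\oM_g$ for $\alpha>\tfrac{9}{11}$), so $\oM_g(\alpha)\cong\oM_g$; and the first wall, at $\alpha=\tfrac{9}{11}$, is a divisorial contraction of the elliptic-tail locus realized by Schubert's pseudostable stack $\ocM_g^{\mathrm{ps}}$. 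Since pseudostable curves have finite automorphisms for $g>2$ (and the genus two case is handled directly in \cite{Has05}), the relevant stacks in the range $\alpha>\tfrac{7}{10}$ are Deligne--Mumford, which is why a \emph{coarse} moduli space is the right notion there.

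First I would dispatch $\alpha\in(\tfrac{9}{11},1]$ using ampleness of $K_{\ocM_g}+\alpha\delta$ on $\oM_g$, so that the log canonical ring is the section ring of an ample divisor and $\oM_g(\alpha)\cong\oM_g$, the coarse space of the Deligne--Mumford stack. For $\alpha\in(\tfrac{7}{10},\tfrac{9}{11}]$ I would use the contraction $\ocM_g\to\ocM_g^{\mathrm{ps}}$: it is an isomorphism away from the closure of the elliptic-tail divisor, the target parameterizes curves with ample canonical sheaf and at worst $A_2$ singularities containing no $A_1$-attached elliptic tails, and the essential point (proved in \cite{Has05, HL07} for $g=2$, in \cite{HL10} for $g=3$, parallel to \cite{HH09} for $g\ge 4$) is that $K_{\ocM_g}+\alpha\delta$ is the pullback of a $\bQ$-divisor on $\oM_g^{\mathrm{ps}}$ that is ample exactly when $\tfrac{7}{10}<\alpha\le\tfrac{9}{11}$; this both identifies $\oM_g(\alpha)$ with the pseudostable coarse space and shows it is constant on the interval.

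The endpoint assertions I would treat separately. For $g=2$, the relevant N\'eron--Severi group of $\ocM_2$ is of rank two with explicitly known effective and nef cones, so the claim that $K_{\ocM_2}+\tfrac{7}{10}\delta$ lies on an extremal ray whose section ring has bounded dimension --- hence $\oM_2(\tfrac{7}{10})$ is a point --- is a finite intersection-number computation in a plane (the content of \cite{Has05, HL07}). For $g=3$ one continues below $\alpha=\tfrac{7}{10}$ by variation of GIT: \cite{HL10} constructs the spaces $\oM_3(\alpha)$ for $\alpha\in(\tfrac{5}{9},\tfrac{7}{10})$ as GIT quotients of (bi)canonically embedded genus three curves, with chamber walls corresponding to the appearance of tacnodes and further singularities, and with the terminal point $\oM_3(\tfrac{5}{9})$ as the last chamber; here the relevant curves have positive-dimensional automorphism groups, so the correct statement is the existence of a projective \emph{good} moduli space, and one must check that $K_{\ocM_3}+\alpha\delta$ descends to a GIT-ample $\bQ$-line bundle on each quotient.

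The principal obstacle is the genus three range $\alpha<\tfrac{7}{10}$: establishing (semi)stability of the relevant canonical and bicanonical curves, matching the VGIT walls with the predicted singularity walls, and verifying that the log canonical polarization descends to the GIT polarization on each quotient --- this descent-and-ampleness bookkeeping, carried out in \cite{HL10}, is the technical heart. By comparison the genus two endpoint collapses to a two-dimensional cone computation, and the range $\alpha>\tfrac{7}{10}$ follows readily once the pseudostable contraction and the ampleness estimates are in hand.
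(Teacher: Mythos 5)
The paper does not actually prove this theorem; it records it as a summary of results from \cite{Has05, HL07, HL10}, citing without argument. Your proposal is therefore not competing with a hidden proof in the paper but rather sketching how the cited works establish the claims, and in that role it is a sound and well-organized synthesis: ampleness of $K_{\ocM_g}+\alpha\delta$ on $\oM_g$ for $\alpha>\tfrac{9}{11}$; the pseudostable contraction and the pullback formula for $\alpha\in(\tfrac{7}{10},\tfrac{9}{11}]$; the rank-two cone computation for the genus two endpoint; and the VGIT chamber decomposition of \cite{HL10} for genus three below $\tfrac{7}{10}$, including the observation that positive-dimensional automorphisms force the shift from coarse to good moduli spaces there.

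One point deserves more care than you give it. For $g=2$ the pseudostable stack $\ocM_2^{\mathrm{ps}}$ is \emph{not} Deligne--Mumford: the irreducible rational curve with two cusps has arithmetic genus two, ample dualizing sheaf, and automorphism group containing $\bG_m$, so the stack has a closed point with positive-dimensional stabilizer. Your parenthetical ``pseudostable curves have finite automorphisms for $g>2$ (and the genus two case is handled directly)'' flags the issue but does not resolve it, and the statement as written in the paper, which asserts a projective \emph{coarse} moduli space for $\alpha\in(\tfrac{7}{10},1]$ including $g=2$, is already imprecise on this point. The honest version is that \cite{Has05} constructs $\oM_2(\alpha)$ directly as a GIT quotient and identifies it with the log canonical model; one should not phrase the $g=2$, $\alpha\le\tfrac{9}{11}$ case in terms of a coarse space of a DM stack. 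If you retain the DM/coarse framing for $g=3$ (where pseudostable curves do have finite automorphisms and the framing is correct), make the $g=2$ exception explicit rather than deferring it to a citation.
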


The next lemma is special for $g=4$.

\begin{lem}\label{lem:2/3+DM}
When $\alpha\in (\frac{2}{3}, \frac{7}{10})$, the stack $\ocM_4(\alpha)$ is a proper Deligne--Mumford stack.
\end{lem}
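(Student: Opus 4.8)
The plan is to separate the two assertions. Properness of $\ocM_4(\alpha)$ as an algebraic stack is part of the Hassett--Keel statements quoted above (or follows formally: since $\ocM_4(\alpha)$ has affine diagonal and admits a projective — in particular separated — good moduli space $\oM_4(\alpha)$, it is S-complete by Theorem \ref{thm:ahlh}, hence separated, so once it is Deligne--Mumford it has finite inertia, $\oM_4(\alpha)$ is its coarse space, and projectivity of $\oM_4(\alpha)$ forces $\ocM_4(\alpha)$ to be proper). So the content is the Deligne--Mumford property. Because $\ocM_4(\alpha)$ is an algebraic stack of finite type over $\bk$ with affine diagonal (an open substack of $\Curves_4^{\lci +}$) and we work in characteristic zero, it is Deligne--Mumford exactly when its diagonal is unramified, i.e. when $\Aut(C)$ is a finite group for every curve $C$ parameterized by $\ocM_4(\alpha)$. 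So everything reduces to finiteness of automorphism groups.

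First I would make $\Aut(C)$ into a linear algebraic group and reduce to excluding a one-parameter subgroup. Since $C$ is a proper curve with $\omega_C$ ample, $\omega_C^{\otimes m}$ is very ample for $m\gg 0$; automorphisms preserve $\omega_C$, so $\Aut(C)$ acts faithfully on $H^0(C,\omega_C^{\otimes m})$ and is a closed, hence linear algebraic, subgroup of $\PGL(H^0(C,\omega_C^{\otimes m}))$. If it were infinite, its identity component would contain a nontrivial $\G\cong\bG_m$ or $\G\cong\bG_a$ acting faithfully and nontrivially on $C$, hence nontrivially on some component $Z$, whose normalization is then $\bP^1$. The $\G$-invariant finite set $\Sigma\subset\bP^1$ of preimages of the points at which $Z$ is singular in $C$ or meets another component consists of fixed points of $\G$, so $|\Sigma|\le 1$ if $\G=\bG_a$ and $|\Sigma|\le 2$ if $\G=\bG_m$; on the other hand $0<\deg(\omega_C|_Z)=-2+\sum_{p\in\Sigma}c_p$ where $c_p=1$ if $p$ lies over a node and $c_p=2$ if $p$ lies over an $A_2$ cusp or a branch of an $A_3$ tacnode — the only options, as $C$ has only $A_{\le 3}$ singularities. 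This rules out $\bG_a$ outright and forces $\G=\bG_m$, $|\Sigma|=2$, with at least one point of $\Sigma$ over an $A_2$ or $A_3$ singularity.

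Then I would run through the resulting configurations against the list of curves allowed in $\ocM_4(\alpha)$ for $\alpha\in(\tfrac23,\tfrac{7}{10})$ from Definition \ref{def:HKsing} and \cite[Def.~2.5]{AFSvdW}. If both points of $\Sigma$ lie over unibranch or self-singularities of $Z$, then $Z$ together with those singularities is a connected component of $C$ of arithmetic genus $\le 2 < 4$, impossible; if $Z$ is glued to the rest at an $A_1$-point and an $A_2$- or $A_3$-point, then the connected genus-one subcurve through $Z$ is an $A_1$- or $A_3$-attached (possibly cuspidal) elliptic tail, which is excluded. The remaining cases are those in which $Z\cong\bP^1$ is a rational bridge glued to $\overline{C\setminus Z}$ along two tacnodes or along a tacnode and a node; here a $\bG_m$-action on a tacnode $\bk[[x,y]]/(y^2-x^4)$ is diagonal of weight $(a,2a)$, so the torus weight is nonzero and equal on the two branches of every tacnode it moves. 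Propagating weights through tacnodes, the union of components on which $\G$ acts nontrivially is a cycle of $\bP^1$'s whose special points are exhausted by the linking singularities, hence equals $C$; then $p_a(C)=4$ forces exactly three of the links to be tacnodes and the rest nodes, so the cycle either has length three and is linked entirely by tacnodes — in which case going once around the loop flips the weight an odd number of times and forces the action to be trivial — or it contains a node, hence a rational bridge glued by one node and one tacnode, which is excluded by $\alpha$-stability. Each case is a contradiction, so $\Aut(C)$ is finite.

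The hard part is precisely this last step: one must check that every degenerate configuration carrying a one-parameter automorphism group is ruled out by the stability conditions of \cite{AFSvdW} for non-wall $\alpha\in(\tfrac23,\tfrac{7}{10})$, the delicate cases being the tacnodal rational bridges. This is also where $g=4$ enters — through the parity of the length of the resulting tacnodal cycle — which is the reason the lemma is special to genus four; the remaining formal steps (reduction to automorphism groups, the linear-algebraic-group argument, and deducing properness from the projective good moduli space) are uniform.
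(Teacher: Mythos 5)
You take a genuinely different route from the paper: rather than citing a known structure theorem, you try to re-derive it. The paper's proof first uses the existence of the proper good moduli space (from \cite{HH13,AFSvdW,AFS17}) to reduce to showing that \emph{closed} points have finite stabilizer, then invokes \cite[Corollary~8.8]{HH13} (cf.\ \cite[Remark~2.28]{AFSvdW}): any such curve with infinite stabilizer contains an $A_1/A_1$-attached rosary of length $3$. Since that rosary has arithmetic genus $2$, the residual curve has genus $1$, hence is an $A_1/A_1$-attached elliptic bridge or a pair of $A_1$-attached elliptic tails — both forbidden. This also makes it transparent why the lemma is special to $g=4$: for $g>4$ the residual has genus $g-3\ge 2$ and is not forbidden. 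Your outline of the classification (embed $\Aut(C)$ linearly, reduce to $\bG_m$ acting on a rational component $Z$, count fixed points on $\widetilde Z\cong\bP^1$ against $\deg\omega_C|_Z>0$, and propagate through tacnodes) is the right skeleton, and ruling out $\bG_a$ this way is fine, but the endgame has two genuine gaps.

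First, the assertion that ``the union of components on which $\G$ acts nontrivially is a cycle of $\bP^1$'s\dots hence equals $C$'' is not justified. A tacnode forces the weight to propagate to the neighboring branch, but a node does not, so the chain of moved $\bP^1$'s can terminate at a node to a $\G$-\emph{fixed} subcurve. In that case the moved locus is an open $A_1/A_1$-attached rosary of some length $n\ge 2$, of genus $n-1$; the residual curve then has genus $4-n$ and is killed by the same elliptic-bridge/tail count — but you did not treat this case, and it is exactly the shape the cited structure theorem actually produces. Second, the claim that ``a rational bridge glued by one node and one tacnode\dots is excluded by $\alpha$-stability'' is false as stated: such a $Z$ has $\deg\omega_C|_Z=1$ and violates none of the conditions of Definition~\ref{def:HKsing}, which is phrased in terms of singularity types, elliptic tails, and $A_1/A_1$-attached elliptic chains, not in terms of rational bridges. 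What is actually forbidden in the cycle of $n\ge 4$ rational bridges with three tacnode links and at least one node link is the larger subcurve: normalizing at a node yields a tacnode-linked chain of $\bP^1$'s, and pairing consecutive components into genus-one pieces exhibits an $A_1/A_1$-attached elliptic chain. This pairing trick fails exactly for odd-length rosaries, which is why the exceptional configuration the structure theorem isolates is the length-$3$ rosary (your sign-parity argument handles the closed length-$3$ case, and in the open case the genus count kicks in). As currently written, the last two case eliminations are incorrect or unsupported, which is why the paper prefers to quote \cite[Corollary~8.8]{HH13} and spend its one line of argument on the genus count.
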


\begin{proof}
By Definition \ref{def:HKsing}, we know that $\ocM_g(\alpha)$ parameterizes curves with $A_{\leq 3}$-singularities, not containing $A_1$, $A_3$-attached elliptic tails or $A_1/A_1$-attached elliptic chains. Since $\ocM_4(\alpha)$ admits a proper good moduli space by \cite{HH13, AFSvdW, AFS17}, it suffices to show that every closed point in the stack has finite stabilizer.

Assume to the contrary that there exists a closed point $[C]\in \ocM_4(\alpha)$ with infinite stabilizer. By \cite[Corollary 8.8]{HH13} (see also \cite[Remark 2.28]{AFSvdW}), $C$ admits an $A_1/A_1$-attached rosary of length $3$. Since the rosary has arithmetic genus $2$,  the remaining curve has to be either an $A_1/A_1$-attached elliptic bridge or two $A_1$-attached elliptic tails, which are not allowed in $\ocM_4(\alpha)$. This is a contradiction. The proof is finished.
\end{proof}

\subsection{Chow stability and VGIT}\label{sec:chow-vgit}

\subsubsection{Chow stability}
We first introduce the relevant Chow schemes and stacks for curves of genus $g\geq 3$. 
Let $\boldsymbol{C}_{1,2g-2}(\bP^{g-1})$ be the Chow scheme of $\bP^{g-1}$ parameterizing $1$-cycles of degree $2g-2$. Denote by $\Chow_{g,1}$ the seminormalization of the closure of the locus parameterizing the $1$-cycles of  smooth canonical curves of genus $g$ in $\boldsymbol{C}_{1,2g-2}(\bP^{g-1})$.

Let $L_\infty$ be the restriction of the Chow polarization on the Chow scheme $\boldsymbol{C}_{1,2g-2}(\bP^{g-1})$ to $\Chow_{g,1}$. 

\begin{defn}\label{def:chowss}
 We say that a cycle in $\Chow_{g,1}$ is \emph{Chow (poly/semi)stable} if it is GIT (poly/semi)stable with respect to $L_{\infty}$.
Let $\Chow_{g,1}^{\rm ss}$ be the open subscheme of $\Chow_{g,1}$ parametrizing Chow semistable cycles. Then we have the Chow quotient stack
\[
\sX_g^{\rm c} := [\Chow_{g,1}^{\rm ss}/ \PGL(g)].
\]
with a good moduli space $\phi_g^{\rm c}:\sX_g^{\rm c}\to \fX_g^{\rm c}=\Chow_{g,1}^{\rm ss}\sslash \PGL(g)$.
\end{defn}

\begin{lem}\label{lem:sm-chow}
A smooth canonical curve of genus $g$ in $\bP^{g-1}$ is Chow stable. A doubled rational normal curve in $\bP^{g-1}$ is Chow polystable.
\end{lem}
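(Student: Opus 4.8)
The plan is to prove the two assertions separately, reducing the first to the classical linear stability of canonical curves and the second to the Chow stability of the rational normal curve.
\emph{Smooth canonical curves.} Let $C\subset\bP^{g-1}$ be a smooth, canonically embedded curve (necessarily non-hyperelliptic, $g\ge 3$). I would verify Chow stability through the Hilbert--Mumford numerical criterion: for every nontrivial one-parameter subgroup $\lambda$ of $\SL(g)$, diagonalized with integer weights $\rho_0\le\cdots\le\rho_{g-1}$ of sum zero, one shows that the Chow weight $\mu_{\mathrm{Chow}}(C,\lambda)$ satisfies the strict inequality required for stability. The two classical inputs are: (i) $C$ is projectively normal --- Noether's theorem for non-hyperelliptic canonical curves --- so that the Hilbert function of the cone over $C$, and of its flat limits under $\lambda$, is the expected one; and (ii) $C$ is \emph{linearly stable} in Mumford's sense, i.e.\ for every proper linear subspace $\Lambda\subsetneq\bP^{g-1}$ the projection of $C$ away from $\Lambda$ has image of degree strictly larger than $\frac{\deg C}{g-1}\big((g-1)-\dim\Lambda-1\big)$; this is a consequence of Clifford's theorem (geometric Riemann--Roch) applied to the canonical system. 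Granting (i) and (ii), Mumford's estimate bounding $\mu_{\mathrm{Chow}}(C,\lambda)$ in terms of the Hilbert polynomial of $\lim_{t\to 0}\lambda(t)\cdot C$ yields the required inequality; see \cite{MumfordStability}. For $g=3$ this recovers the classical GIT stability of smooth plane quartics, and for $g=4$ it is contained in the analysis of $(2,3)$-complete intersection curves in \cite{CMJL14, CMJL12}.

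\emph{The doubled rational normal curve.} Let $R\subset\bP^{g-1}$ be the rational normal curve of degree $g-1$. Since Chow forms are multiplicative over components counted with multiplicity, the Chow form of the $1$-cycle $2R$ is the square of that of $R$; hence $\mu_{\mathrm{Chow}}(2R,\lambda)=2\,\mu_{\mathrm{Chow}}(R,\lambda)$ for every $\lambda$, and the cycle-doubling map $Z\mapsto 2Z$ realizes the Chow variety of degree-$(g-1)$ $1$-cycles in $\bP^{g-1}$ as a $\PGL(g)$-equivariant closed subvariety of the Chow variety of degree-$(2g-2)$ $1$-cycles, compatibly (up to a power) with the Chow polarizations. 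Thus $[2R]$ is Chow semistable, and has closed orbit inside the semistable locus, if and only if $[R]$ does; so it suffices to prove that $[R]$ is Chow polystable. This is classical: the stabilizer of $R$ in $\PGL(g)$ is $\Aut(R)\cong\PGL(2)$, acting on $\bP^{g-1}$ through $\Sym^{g-1}$, which is reductive, and a direct Hilbert--Mumford computation with the monomial parametrization $[s^{g-1}:s^{g-2}t:\cdots:t^{g-1}]$ shows that $[R]$ is Chow semistable and that the one-parameter subgroups achieving equality in the numerical criterion are exactly those lying in the stabilizer of $R$; by the Hilbert--Mumford criterion applied to the closed orbit in an orbit closure, this forces the orbit of $[R]$ to be closed in the semistable locus. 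Hence $[R]$, and therefore $[2R]$, is Chow polystable.

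\emph{Main obstacle.} The delicate point in both halves is the Chow weight estimate. In the first half, converting linear stability of the canonical curve into the strict numerical inequality for \emph{every} $\lambda$ requires Mumford's careful bookkeeping of the Hilbert polynomial of the flat limit cycle; upgrading semistability to stability additionally uses projective normality together with the finiteness of $\Aut(C)$ to rule out the degenerate ``boundary'' one-parameter subgroups, i.e.\ to show the orbit of $[C]$ is closed. In the second half, proving the Hilbert--Mumford inequality for the rational normal curve for all $\lambda$ and pinning down the equality case is a concrete but somewhat involved combinatorial computation in the weights $\rho_i$. Everything else --- the reduction through the squared Chow form, the equivariance, and the passage from ``semistable with closed orbit'' to ``polystable'' --- is formal.
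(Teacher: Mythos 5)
Your proposal is correct and follows essentially the same route as the paper, which simply cites references for both claims: for the first assertion the paper invokes \cite[Corollary 2.8]{PazLopez} (equivalently \cite[Section XIV.3]{ACGH2}), which goes through exactly the linear-stability-plus-Clifford argument you outline; for the second assertion the paper invokes \cite[Corollary 5.3]{Kempf1978} for Chow polystability of the rational normal curve and the general fact that multiplying a Chow polystable cycle by a positive integer preserves polystability. Your reduction via the squared Chow form is precisely the clean way to prove that last fact, and your Hilbert--Mumford computation for the rational normal curve is a direct reproof of Kempf's result (the paper's alternative proof, using \cite[Corollary 5.1]{Kempf1978} together with the triviality of the centralizer of $\PGL(2)$ in $\PGL(g)$, is the abstract version of the same equality analysis). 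In short, you have unpacked the cited references rather than taken a genuinely different path; the argument is sound.
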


\begin{proof}
The first part follows from \cite[Corollary 2.8]{PazLopez} (see also \cite[Section XIV.3]{ACGH2}). The second part follows from the facts that the rational normal curve is always Chow polystable (see \cite[Corollary 5.3]{Kempf1978}) and any multiple of a Chow polystable cycle remains Chow polystable.  Alternatively, one may deduce that a doubled rational normal curve in $\bP^{g-1}$ is Chow polystable using \cite[Corollary 5.1]{Kempf1978} together with the fact that the centralizer of  the automorphism group of a doubled rational normal curve in $\mathrm{PGL}(g)$ is trivial.
\end{proof}

\subsubsection{Chow stability and VGIT in genus $4$}

Let us specialize to the genus $g=4$ case. For simplicity, we omit the subscript $4$ from $\sX_4^{\rm c}$, $\fX_4^{\rm c}$, etc. We also denote by $\sX^{\rm s}\hookrightarrow \sX^{\rm c}$ (resp. $\fX^{\rm s}\hookrightarrow\fX^{\rm c}$) the open substack (resp. open subscheme) parameterizing Chow stable cycles. From the theory of GIT we know that $\sX^{\rm s}$ is a Deligne--Mumford stack with a coarse moduli space $\fX^{\rm s}$.

Let $\bP^9 = \bP(H^0(\bP^3, \cO_{\bP^3}(2)))$ be the parameter space of quadric surfaces in $\bP^3$. Let $\cQ\subset \bP^3\times \bP^9$ with second projection $\pr_2: \cQ\to \bP^9$ be the universal family of quadric surfaces. Let $E_3:=(\pr_2)_* \cO_{\cQ}(3,0)$ be the vector bundle over $\bP^9$ whose fiber over $[Q]\in \bP^9$ is given by $H^0(Q, \cO_Q(3))$. Let $\bP(E_3):=\Proj_{\bP^9} \Sym(E_3^\vee)\xrightarrow{\pi} \bP^9$ be the projective bundle associated to $E_3$.  Denote by 
\[
\eta:=\pi^* \cO_{\bP^9}(1) \quad \textrm{and}\quad \xi:=\cO_{\bP(E)}(1).
\]

Let $U_{(2,3)}\subset \bP(E_3)$ be the open subscheme parameterizing $(2,3)$-complete intersections in $\bP^3$. We have a Hilbert--Chow morphism $\varphi_{(2,3)}: U_{(2,3)} \to \boldsymbol{C}_{1,6}(\bP^3)$. Since a general curve in $U_{(2,3)}$ is a smooth canonical curve of genus $4$, as abuse of notation we have a factorization \[
\varphi_{(2,3)}: U_{(2,3)} \to \Chow_{4,1}.
\]

Let $W$ denote the closure of the graph of $\varphi_{(2,3)}$ in $\bP(E_3)\times \Chow_{4,1}$ with reduced scheme structure. Denote by $p_1: W\to \bP(E_3)$ and $p_2:W\to \Chow_{4,1}$ the first and second projections respectively.

\begin{defn}\label{def:23VGIT}
 For each $t\in (0, \frac{2}{3})\cap \bQ$, we choose $\delta \in (0, \min\{t, \frac{2}{9}\})\cap \bQ$ and define the VGIT quotient stack $\sX_t$ and space $\fX_t$ of slope $t$ as
\[
\sX_t:=[W^{\rm ss}(N_t)/\PGL(4)]\quad \textrm{and}\quad\fX_t:=W^{\rm ss}(N_t)\sslash\PGL(4),
\]
where 
\[
N_t:=\tfrac{2-3t}{2-3\delta}p_1^*(\eta+\delta \xi) + \tfrac{t-\delta}{2-3\delta}p_2^* L_\infty. 
\]
These VGIT quotients are independent of the choice of $\delta$, see e.g. \cite{CMJL14, LO21, ADL20}.
\end{defn}

In \cite{CMJL12}, the authors gave a complete description of cycles in $\fX^{\rm c}$ and $\sX^{\rm c}$.  To state their result, we first define the following curves. 

\begin{defn}[\cite{CMJL12}]\label{def:GIT-strictly-semistable-curves}
    Denote by $C_{2A_5}$ the $(2,3)$ complete intersection curve $V(x_0x_3 - x_1x_2, x_0x_2^2 + x_1^2 x_3) \subset \bP^3$, depicted in Figure \ref{fig:C2A5andCD}. On the quadric surface $\bP^1 \times \bP^1 = V(x_0 x_3 -x_1 x_2)$, this curve is the union of two sections of $\cO(1,0)$, each meeting a section of $\cO(1,3)$ in an $A_5$-singularity. In particular, $C_{2A_5}$ has two separating $A_5$-singularities.

    Denote by $\PuP = V(x_0x_3) \subset \bP^3$ and by $C_{D}$ the $(2,3)$ complete intersection curve $V(x_0x_3, x_1^3 + x_2^3) \subset \PuP$, depicted in Figure \ref{fig:C2A5andCD}.  This curve is the union of two sets of three lines meeting at a point. In particular, $C_D$ has two $D_4$-singularities and three $A_1$-singularities.

    For $(A,B) \in \bk^2\setminus \{(0,0)\}$, denote by $C_{A,B}$ the pencil of $(2,3)$ complete intersection curves 
    \[ V(x_2^2 - x_1 x_3, Ax_1^3 + Bx_0 x_1 x_2 + x_0^2x_3) \subset \bP^3\]
    contained in the singular quadric surface $V(x_2^2 - x_1 x_3)$.  If $4A /B^2 \ne 0,1$, these curves have an $A_5$ singularity at a smooth point of the quadric surface and an $A_3$ singularity at the cone point.  If $4A/B^2 = 0$, there is an additional $A_1$ singularity at a smooth point of the quadric surface.  If $4A/B^2 = 1$, this curve is a canonical ribbon of genus four (see section \ref{sec:ribbons}).
\end{defn}

    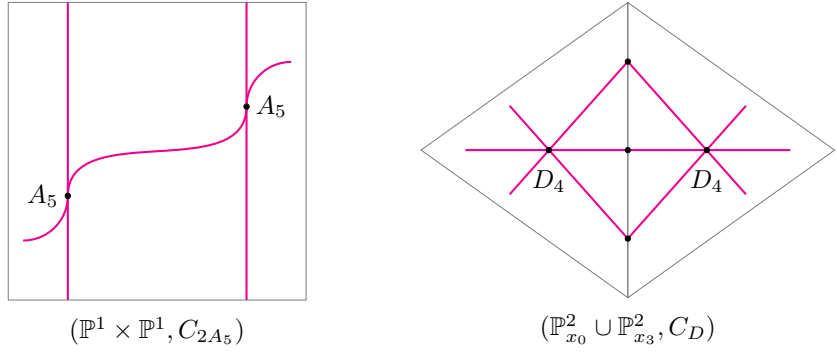
\begin{figure}[h]
    \resizebox{.3\textwidth}{!}{
    \begin{tabular}{c}
    {\scalebox{4}{\begin{tabular}{c}
\begin{tikzpicture}[gren0/.style = {draw, circle,fill=greener!80,scale=.7},gren/.style ={draw, circle, fill=greener!80,scale=.4},blk/.style ={draw, circle, fill=black!,scale=.08},plc/.style ={draw, circle, color=white!100,fill=white!100,scale=0.02},smt/.style ={draw, circle, color=gray!100,fill=gray!100,scale=0.02},lbl/.style ={scale=.2}] 

\draw [-,color=gray] (0,0) to (0,4) to (4,4) to (4,0) to (0,0);

\draw [color=magenta,thick] (.8,0) -- (.8,4);
\draw [color=magenta,thick] (3.2,0) -- (3.2,4);
\draw [-,color=magenta,thick] (.2,.8) to [out=0, in=-90] (.8,1.4) to [out=90,in=-90] (3.2,2.6) to [out=90, in=180] (3.8,3.2);
\filldraw (.8,1.4) circle (1pt);
\filldraw (3.2,2.6) circle (1pt);

\node[below, node font=\small] at (2,-.1) {$(\bP^1 \times \bP^1, C_{2A_5})$};
\node[left, node font=\small] at (.8,1.4) {$A_5$};
\node[right, node font=\small] at (3.2,2.6) {$A_5$};

\end{tikzpicture}
\end{tabular}}}
    \end{tabular}
    } \qquad
    \resizebox{.4\textwidth}{!}{
    \begin{tabular}{c}
    {\scalebox{4}{\begin{tabular}{c}
\begin{tikzpicture}[gren0/.style = {draw, circle,fill=greener!80,scale=.7},gren/.style ={draw, circle, fill=greener!80,scale=.4},blk/.style ={draw, circle, fill=black!,scale=.08},plc/.style ={draw, circle, color=white!100,fill=white!100,scale=0.02},smt/.style ={draw, circle, color=gray!100,fill=gray!100,scale=0.02},lbl/.style ={scale=.2}] 

\draw [-,color=gray] (0,-2) -- (0,2) -- (2.8,0) -- (0, -2);
\draw [-,color=gray] (0,-2) -- (0,2) -- (-2.8,0) -- (0, -2);

\draw [color=magenta,thick] (-2.2,0) -- (2.2,0);
\draw [color=magenta,thick] (-1.6,-.6) -- (0,1.2);
\draw [color=magenta,thick] (-1.6,.6) -- (0,-1.2);
\draw [color=magenta,thick] (1.6,.6) -- (0,-1.2);
\draw [color=magenta,thick] (1.6,-.6) -- (0,1.2);

\filldraw (-1.07,0) circle (1pt);
\filldraw (1.07,0) circle (1pt);
\filldraw (0,0) circle (1pt);
\filldraw (0,1.2) circle (1pt);
\filldraw (0,-1.2) circle (1pt);

\node[below, node font=\small] at (0,-2.1) {$(\PuP, C_D)$};
\node[below, node font=\small] at (-1.07,-.15) {$D_4$};
\node[below, node font=\small] at (1.07,-.15) {$D_4$};
\end{tikzpicture}
\end{tabular}}}
    \end{tabular}
    }
    \caption{A depiction of the curves $C_{2A_5}$ and $C_{D}$.}
    \label{fig:C2A5andCD}
    \end{figure}

	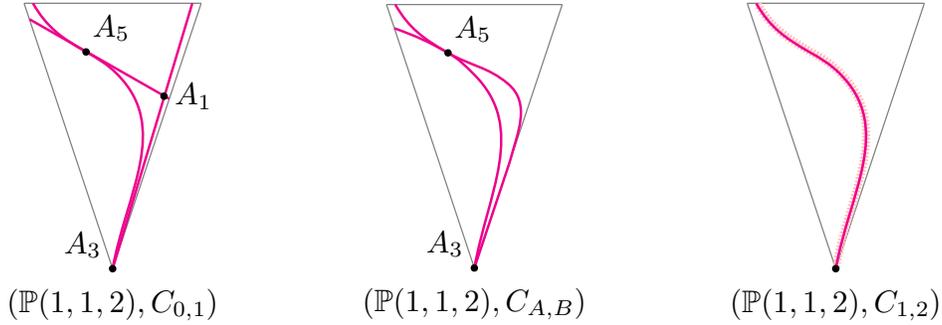
\begin{figure}[h]
		\resizebox{.25\textwidth}{!}{
			\begin{tabular}{c}
				{\scalebox{4}{\begin{tabular}{c}
	\begin{tikzpicture}[gren0/.style = {draw, circle,fill=greener!80,scale=.7},gren/.style ={draw, circle, fill=greener!80,scale=.4},blk/.style ={draw, circle, fill=black!,scale=.08},plc/.style ={draw, circle, color=white!100,fill=white!100,scale=0.02},smt/.style ={draw, circle, color=gray!100,fill=gray!100,scale=0.02},lbl/.style ={scale=.2}]

		\draw [-,color=gray] (0,0) to (1, 3);
		\draw [-,color=gray] (0,0) to (-1, 3);

		\draw [-,color=gray] (-1, 3) to (1,3);
		
		\draw [-,color=magenta,thick] (0,0) to (0.9,3);
		\draw [-,color=magenta,thick] (0,0) to [out=80,in=-45] (.05,2.2) to [out=135,in=-30] (-.3,2.45) to [out=150,in=-60] (-.9,3);
		\draw [-,color=magenta,thick] (-.94,2.82) to (0.64,1.92);

		\filldraw (0,0) circle (1pt);
		\filldraw (-.3,2.45) circle (1pt);
		\filldraw (.58,1.95) circle (1pt);
		
		\node[below, node font=\small] at (0,-.1) {$(\bP(1,1,2), C_{0,1})$};
		\node[above left, node font=\small] at (0,0) {$A_3$};
		\node[above right, node font=\small] at (-.35,2.45) {$A_5$};
		\node[right, node font=\small] at (.58,1.95) {$A_1$};
	\end{tikzpicture}
\end{tabular}}}
			\end{tabular}
		} \qquad
		\resizebox{.26\textwidth}{!}{
			\begin{tabular}{c}
				{\scalebox{4}{\begin{tabular}{c}
	\begin{tikzpicture}[gren0/.style = {draw, circle,fill=greener!80,scale=.7},gren/.style ={draw, circle, fill=greener!80,scale=.4},blk/.style ={draw, circle, fill=black!,scale=.08},plc/.style ={draw, circle, color=white!100,fill=white!100,scale=0.02},smt/.style ={draw, circle, color=gray!100,fill=gray!100,scale=0.02},lbl/.style ={scale=.2}]

		\draw [-,color=gray] (0,0) to (1, 3);
		\draw [-,color=gray] (0,0) to (-1, 3);

		\draw [-,color=gray] (-1, 3) to (1,3);

		\draw [-,color=magenta,thick] (0,0) to [out=80,in=-45] (0,2.2) to [out=135,in=-30] (-.3,2.45) to [out=150,in=-60] (-.9,3);
		\draw [-,color=magenta,thick] (0,0) to (.4,1.2) to [out=75,in=-40] (.38,2.1) to [out=140,in=-30] (-.3,2.45) to [out=150,in=-20] (-.91,2.73);

		\filldraw (0,0) circle (1pt);
		\filldraw (-.3,2.45) circle (1pt);
		
		\node[below, node font=\small] at (0,-.1) {$(\bP(1,1,2), C_{A,B})$};
		\node[above left, node font=\small] at (0,0) {$A_3$};
		\node[above right, node font=\small] at (-.35,2.45) {$A_5$};
	\end{tikzpicture}
\end{tabular}}}
			\end{tabular}
		}
		 \qquad
		\resizebox{.25\textwidth}{!}{
			\begin{tabular}{c}
				{\scalebox{4}{\begin{tabular}{c}
	\begin{tikzpicture}[gren0/.style = {draw, circle,fill=greener!80,scale=.7},gren/.style ={draw, circle, fill=greener!80,scale=.4},blk/.style ={draw, circle, fill=black!,scale=.08},plc/.style ={draw, circle, color=white!100,fill=white!100,scale=0.02},smt/.style ={draw, circle, color=gray!100,fill=gray!100,scale=0.02},lbl/.style ={scale=.2}]

		\draw [-,color=gray] (0,0) to (1, 3);
		\draw [-,color=gray] (0,0) to (-1, 3);

		\draw [-,color=gray] (-1, 3) to (1,3);

        \draw [-,color=pink, decorate,decoration={snake, amplitude=.4mm,segment length=.4mm}] (0,0) to [out=80,in=-45] (.05,2.2) to [out=135,in=-30] (-.3,2.45) to [out=150,in=-60] (-.9,3);
		\draw [-,color=magenta,thick] (0,0) to [out=80,in=-45] (.05,2.2) to [out=135,in=-30] (-.3,2.45) to [out=150,in=-60] (-.9,3);

		\filldraw (0,0) circle (1pt);

		\node[below, node font=\small] at (0,-.1) {$(\bP(1,1,2), C_{1,2})$};

	\end{tikzpicture}
\end{tabular}}}
			\end{tabular}
		}
		\caption{A depiction of the curves $C_{A,B}$.}
		\label{fig:CAB}
	\end{figure}

\begin{thm}[{\cite[Theorem 3.1]{CMJL12}}]\label{thm:CMJL-Chow-GIT}
The Chow stable locus $\sX^{\rm s}$ precisely parameterizes curves $C\in |\cO_Q(3)|$ such that $Q\subset \bP^3$ is a quadric surface with $\rank Q \geq 3$, $C$ has at worst $A_4$-singularities at smooth points of $Q$, and at worst $A_2$-singularities at the vertex of $Q$ if $\rank Q = 3$.

The Chow strictly semistable locus $\sX^{\rm c} \setminus \sX^{\rm s}$ precisely parameterizes curves $C \in |\cO_Q(3)|$ such that $Q \subset \bP^3$ is a quadric surface with $\rank Q \ge 2$, and, if $C$ is reduced, $Q$ is uniquely determined and $(Q,C)$ satisfies:
    \begin{enumerate}
        \item if $\rank Q = 4$, then either (a) $C$ contains a singularity of type $D_4$ or $A_5$, or (b) $C$ does not contain an irreducible component of degree $\le 2$ and contains a singularity of type $A_k$, $k \ge 6$.
        \item if $\rank Q = 3$, then $C$ has at worst an $A_k$ singularity at the vertex of $Q$ and either (a) $C$ contains a $D_4$ or $A_5$ singularity at a smooth point of $Q$ or an $A_3$ singularity at the vertex of $Q$, or (b) $C$ does not contain an irreducible component of degree $1$ and contains a singularity of type $A_k$, $k \ge 6$ at a smooth point of $Q$ or of type $A_k$, $k \ge 4$ at the vertex of $Q$. 
        \item if $\rank Q = 2$, then $C$ meets the singular locus of $Q$ in 3 distinct points.  These curves are called \textit{elliptic triboroughs}.
    \end{enumerate}
If $C$ is not reduced, then $C$ is a genus $4$ ribbon.

Moreover, the strictly Chow polystable locus $\fX^{\rm c}\setminus \fX^{\rm s}$ is precisely the union of two isolated points $C_{2A_5}$, $C_{D}$ and a disjoint rational curve $\Gamma$ parameterizing $C_{A,B}$ (which contains a unique point representing the cycle of a ribbon).
\end{thm}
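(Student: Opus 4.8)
The plan is to run the Hilbert--Mumford numerical criterion for the $\mathrm{SL}(4)$-action on the Chow variety $\boldsymbol{C}_{1,6}(\bP^3)$ with the Chow polarization, restricted to the component $\Chow_{4,1}$. Since the cycle of a reduced $(2,3)$-complete intersection determines the containing quadric $Q$ (except in very degenerate cases), one may phrase everything in terms of the pair $(Q,C)$ with $Q$ of rank $\geq 2$ and $C\in|\cO_Q(3)|$, treating separately the non-reduced locus (ribbons) and the locus where $Q$ is not unique. For a one-parameter subgroup $\lambda$ of $\mathrm{SL}(4)$, diagonalized in coordinates $x_0,x_1,x_2,x_3$ with integer weights $a_0\geq a_1\geq a_2\geq a_3$, $\sum a_i=0$, the Chow--Mumford weight $w_\lambda([C])$ is computed from the $\lambda$-flat limit cycle $C_0=\lim_{t\to 0}\lambda(t)\cdot C$ by the standard weighted intersection formula; concretely it is governed by which monomials survive in $\mathrm{in}_\lambda$ of the ideal of $C$. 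Chow semistability (resp.\ stability) of $[C]$ is then equivalent to $w_\lambda([C])\leq 0$ (resp.\ $<0$) for every nontrivial such $\lambda$.

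First I would cut down the infinite family of $\lambda$'s to a finite test set: up to the $\mathrm{SL}(4)$-action one normalizes coordinates so that the worst behaviour of $C$ (its deepest singularity, or a low-degree component) is adapted to the coordinate flag, and a convexity argument on the state polytope of the ideal of $C$ reduces testing to finitely many extremal one-parameter subgroups. The core local computation expresses $w_\lambda([C])$ in terms of the contact order of $C$ with the $\lambda$-fixed flag, hence in terms of the singularity type at the fixed point together with $\rk Q$ and the position of that point (a smooth point of $Q$, or the vertex of the cone when $\rk Q=3$). This pins down the numerical walls: an $A_5$ or $D_4$ at a smooth point of a rank-$4$ quadric, or an $A_3$ at the vertex of a rank-$3$ quadric, lies exactly on $w_\lambda=0$, while $A_{\leq 4}$ at a smooth point (resp.\ $A_{\leq 2}$ at the vertex) gives $w_\lambda<0$; and for $\rk Q=2$ the weight vanishes precisely when $C$ meets the singular line of $Q$ in three distinct points.

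Next I would run the case analysis in $\rk Q\in\{1,2,3,4\}$. For each rank the ``only if'' direction — that every curve in the listed bad families carries a destabilizing or weight-zero $\lambda$ — follows directly from the normalized coordinates. The ``if'' direction, that a curve avoiding all those configurations is Chow stable, requires a uniform estimate: for an \emph{arbitrary} $\lambda$ one bounds $w_\lambda([C])$ using the constraints on singularities and components and shows it is strictly negative. This is the most technical step, since all $\lambda$ must be controlled simultaneously, including those not centered at a singular point; one handles it by combining a Gieseker-type bound for curves not lying on a surface of low degree with an explicit analysis on the quadric. The $\rk Q=1$ case forces $C$ non-reduced, i.e.\ a genus-four ribbon, and the reduced-but-$Q$-non-unique case is shown to force $\rk Q\leq 2$ with $C$ containing a low-degree component, landing inside the already-analyzed families.

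Finally, to identify the strictly polystable locus I would use that a strictly semistable $[C]$ admits a weight-zero $\lambda$, so its orbit closure meets the fixed locus of $\lambda$; passing to the $\lambda$-flat limit one reduces to the strictly polystable representatives, which have positive-dimensional reductive stabilizer and hence are fixed by a nontrivial one-parameter subgroup. These I would classify by hand, obtaining the two isolated points $[C_{2A_5}]$ (two sections of $\cO(1,0)$ on $\bP^1\times\bP^1$, each in $A_5$-contact with a section of $\cO(1,3)$) and $[C_D]$ (two triples of concurrent lines, one in each plane of $V(x_0x_3)$), together with the one-dimensional family $C_{A,B}$ on the quadric cone whose polystable representatives, cut out by the residual $\bG_m$-action, sweep out a rational curve $\Gamma$; the point of $\Gamma$ with $4A/B^2=1$ is the cycle of the canonical genus-four ribbon, matching the $\rk Q=1$ analysis. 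The main obstacle throughout is the bookkeeping-heavy Hilbert--Mumford computation — fixing the exact numerical walls and, above all, proving the converse stability estimate uniformly over all one-parameter subgroups.
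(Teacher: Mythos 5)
You should first note that the paper does not actually prove Theorem~\ref{thm:CMJL-Chow-GIT}; it is stated with a direct citation to \cite[Theorem~3.1]{CMJL12}, so there is no proof in the paper to compare against. Your sketch is a reasonable reconstruction of how the cited GIT analysis goes in broad outline: the Hilbert--Mumford numerical criterion, normalization of coordinates to reduce to finitely many extremal one-parameter subgroups via the state polytope, a rank-by-rank case analysis on the quadric, and identification of the polystable representatives from fixed loci of weight-zero one-parameter subgroups. This is indeed the engine behind CMJL12's classification.

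However, your treatment of the ribbon case contains a concrete error that would derail the rank-by-rank bookkeeping. You assert that ``the $\rk Q=1$ case forces $C$ non-reduced, i.e.\ a genus-four ribbon'' and that the ribbon point $4A/B^2 = 1$ on $\Gamma$ ``matches the $\rk Q=1$ analysis.'' This is false: a rank-1 quadric is a double plane, and the ribbon cycle is $2[C_0]$ with $C_0$ a twisted cubic, which is non-degenerate and hence never lies in a plane. The genus-4 canonical ribbons in this classification are $(2,3)$-complete intersections on a \emph{rank-3} quadric cone --- they are precisely the curves $C_{A,B}$ with $4A = B^2$ (cf.\ Definition~\ref{def:GIT-strictly-semistable-curves}), and indeed the proof of Lemma~\ref{lem:Chow-normal} explicitly records that the supporting quadric of a ribbon cycle has $\rank Q \geq 3$. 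The rank-1 quadrics do not appear anywhere in the Chow semistable locus (the theorem's conclusion begins at $\rank Q \geq 2$), and treating them as the source of the ribbon stratum would lead you to miss the ribbon when organizing the strictly semistable and polystable loci on the quadric cone. You should fold the non-reduced analysis into the rank-3 case instead, and handle rank-1 by showing it is always Chow unstable.
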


\begin{remark}\label{rmk:GIT-stable-properties}
    From the description above, it is clear that any Chow stable curve $C$ lies on a normal quadric surface $Q$ such that $(Q, \frac{2}{3}C)$ is klt.  Furthermore, any Chow polystable point $C$ other than the ribbon lies on a quadric surface such that $(Q, \frac{2}{3}C)$ is slc.

    Furthermore, it will follow from Proposition \ref{prop:U+open} that all Chow stable curves are contained in $\ocM_4(\frac{2}{3} - \epsilon)$ as in Definition \ref{def:HKsing}. 
\end{remark}

The following lemma was used implicitly in \cite{CMJL12}. We provide a proof for the readers' convenience.

\begin{lem}\label{lem:Chow-normal}
The Chow semistable open subscheme $\Chow_{4,1}^{\rm ss}$ is normal. In particular, the stack $\sX^{\rm c}$ and its good moduli space $\fX^{\rm c}$ are both normal.
\end{lem}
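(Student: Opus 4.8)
The plan is to realize $\Chow_{4,1}^{\rm ss}$ as the image of a proper birational morphism from a smooth variety with connected fibers, and then exploit seminormality. By construction $\Chow_{4,1}$ is seminormal, and it is integral since $\cM_4$ — hence its closure in the Chow scheme — is irreducible; consequently the open subscheme $\Chow_{4,1}^{\rm ss}$ (which is nonempty and dense, containing all smooth canonical curves by Lemma~\ref{lem:sm-chow}) is integral and seminormal. For a reduced seminormal scheme of finite type over $\bk$, being normal is equivalent to the normalization morphism being bijective: if $g\colon T\to\Chow_{4,1}^{\rm ss}$ is proper and birational with $T$ normal, then Stein factorization identifies the normalization of $\Chow_{4,1}^{\rm ss}$ with $\Spec$ of the coherent sheaf $g_*\cO_T$ (which is integrally closed and birational over $\cO_{\Chow_{4,1}^{\rm ss}}$), whose fiber over $y$ is in bijection with $\pi_0(g^{-1}(y))$; so it suffices to produce such a $g$ that is surjective with all fibers connected, since a bijective finite birational morphism onto a seminormal scheme is an isomorphism.

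For $T$ I would take the open subscheme $\Omega\subset\bP(E_3)$ of pairs $(Q,C)$ with $\rank Q\ge 2$ and with $C$ not containing any component of $Q$; as $\bP(E_3)$ is a projective bundle over $\bP^9$ it is smooth, hence so is $\Omega\supset U_{(2,3)}$. Over $\Omega$ the universal cubic section is a flat family of effective Cartier divisors on the flat family of reduced quadrics, so the Hilbert--Chow construction extends $\varphi_{(2,3)}$ to a morphism $\varphi\colon\Omega\to\Chow_{4,1}$ (it factors through the seminormalization because $\Omega$ is normal and its generic point maps to a smooth canonical curve). This $\varphi$ is birational, being generically injective — a smooth canonical curve lies on a unique quadric and carries a unique cubic-section structure — and dominant. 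Set $T:=\varphi^{-1}(\Chow_{4,1}^{\rm ss})$, open in $\Omega$ hence smooth, and $g:=\varphi|_T$. Surjectivity of $g$ is exactly the statement in Theorem~\ref{thm:CMJL-Chow-GIT} that every Chow-semistable cycle is a cubic section of a rank-$\ge 2$ quadric with no two-dimensional component. For properness I would check the valuative criterion: a DVR-family in $\Chow_{4,1}^{\rm ss}$ lifting generically to $T$ has a limit in the proper scheme $\bP(E_3)$, and Theorem~\ref{thm:CMJL-Chow-GIT} is used to show that this limit again lies in $\Omega$ — a limiting quadric of rank $\le1$, or a limiting cubic absorbing a component of the limiting quadric, would force the limiting cycle to be Chow-unstable — after which separatedness of $\Chow_{4,1}^{\rm ss}$ pins down the lift.

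It then remains to verify that $g$ has connected fibers. If $y$ is a Chow-semistable cycle whose underlying scheme is reduced, then by Theorem~\ref{thm:CMJL-Chow-GIT} the quadric $Q$ is uniquely determined, the cubic section is forced, and $g^{-1}(y)$ is a single reduced point. The only remaining Chow-semistable cycles are, up to $\PGL(4)$, the doubled twisted cubic $2[\Gamma_0]$. Here $g^{-1}(2[\Gamma_0])$ consists of pairs $(Q,C)$ with $Q\supset\Gamma_0$ of rank $\ge2$ and $C\in|\cO_Q(3)|$ with cycle $2[\Gamma_0]$; a divisor-class comparison on $\bP^1\times\bP^1$ rules out $\rank Q=4$, while for a rank-three cone $Q$ one has $2\Gamma_0\sim\cO_Q(3)$ and the section of $\cO_Q(3)$ with cycle $2[\Gamma_0]$ is unique (its local equation at the vertex of $Q$ is pinned down by the cycle, with no room for embedded points). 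Hence $g^{-1}(2[\Gamma_0])$ is identified with the family of quadric cones through $\Gamma_0$, i.e. the plane quartic cut out by the discriminant inside the net $|\cI_{\Gamma_0}(2)|\cong\bP^2$, which is connected. Therefore all fibers of $g$ are connected, and $\Chow_{4,1}^{\rm ss}$ is normal.

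Finally, normality ascends along the smooth surjection $\Chow_{4,1}^{\rm ss}\to\sX^{\rm c}=[\Chow_{4,1}^{\rm ss}/\PGL(4)]$, so $\sX^{\rm c}$ is normal, and its good moduli space $\fX^{\rm c}$ is then normal by Theorem~\ref{thm:gms-normal}. The steps I expect to be most delicate are the properness of $g$ and the connectedness of the fiber over the doubled twisted cubic; both rest on the explicit classification in Theorem~\ref{thm:CMJL-Chow-GIT}, and once that is in hand the remainder of the argument is formal.
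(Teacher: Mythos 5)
Your argument is correct and follows the same overall strategy as the paper's: set up a proper birational morphism from a nice source onto $\Chow_{4,1}^{\rm ss}$, use Stein factorization together with the seminormality of $\Chow_{4,1}$ to reduce to connected fibers, and then analyze those fibers via the classification in Theorem~\ref{thm:CMJL-Chow-GIT}. The paper works directly with $p_2\colon W^{\rm ss}(L_\infty)\to\Chow_{4,1}^{\rm ss}$, the restriction of the second projection from the closure $W$ of the graph of $\varphi_{(2,3)}$; this is proper for free, but the Stein-factorization step quietly requires the source to be normal (so that the pushforward of the structure sheaf is the integral closure), and the paper does not verify this for $W^{\rm ss}(L_\infty)$. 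You instead carve the smooth $T=\varphi^{-1}(\Chow_{4,1}^{\rm ss})\subset\Omega\subset\bP(E_3)$ out directly, which makes the normality of the source automatic, at the cost of having to prove that $g\colon T\to\Chow_{4,1}^{\rm ss}$ is proper; but that properness check — ruling out a limiting quadric of rank~$\le1$ (which would force a degenerate cycle) or a limiting cubic that vanishes on a component of a rank-$2$ quadric (which would force the curve to contain the double line, contradicting the elliptic-tribourough structure in Theorem~\ref{thm:CMJL-Chow-GIT}(3)) — is exactly the case analysis the paper uses to show that the fiber of $p_2$ over a reduced Chow-semistable cycle is a single point, so the two proofs use the same input in the end. (Equivalently, this analysis shows $W^{\rm ss}(L_\infty)\subset p_1^{-1}(\Omega)$, i.e. $W^{\rm ss}(L_\infty)\cong T$ is smooth, which is precisely the normality the paper needs.) The one place where the routes genuinely diverge is the fiber over the doubled rational normal curve: the paper identifies it as a single orbit of $\Aut(\Gamma_0)\cong\PGL(2)$ using the fact that all genus-$4$ canonical ribbons are projectively equivalent, whereas you parameterize it by the quadric cones through $\Gamma_0$, i.e.\ the rank-$3$ locus in the net $|\cI_{\Gamma_0}(2)|\cong\bP^2$. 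Both give connectedness. Two small imprecisions in your write-up worth flagging, neither a gap: you should also rule out $\rank Q=2$ in this fiber (immediate, since an irreducible non-degenerate twisted cubic cannot lie in a union of two planes), and the discriminant of that net is in fact a \emph{double conic}, not a reduced quartic — the cones through $\Gamma_0$ are the projections from points of $\Gamma_0$ itself, so the reduced locus is $\cong\bP^1$ — though of course this does not affect connectedness.
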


\begin{proof}
We look at the second projection map $p_2: W\to \Chow_{4,1}$. Let $W^{\rm ss}(L_\infty):=p_2^{-1}(\Chow_{4,1}^{\rm ss})$. Since $p_2$ is birational and proper, we know that its restriction $W^{\rm ss}(L_\infty)\to \Chow_{4,1}^{\rm ss}$ is birational and proper. By taking the Stein factorization, it suffices to show that  $p_2$ has connected fibers over any point $[C]\in \Chow_{4,1}^{\rm ss}$. 

We first assume that $[C]$ is reduced. By \cite[Theorem 3.1]{CMJL12} there exists a unique quadric surface $Q= (q=0)$ and a non-zero section $s\in H^0(Q, \cO_Q(3))$ unique up to scaling such that $C=V(q,s)$. We claim that $p_2^{-1}([C])$ consists of only one point $([s],[C])$. Assume that $([s'], [C])\in p^{-1}([C])$ where $0\neq s'\in H^0(Q', \cO_{Q'}(3))$ and $Q'=V(q')$ is a quadric surface in $\bP^3$. From the construction we know that $C\subset V(q',s')\subset Q'$ which implies that $Q'=Q$ is the unique quadric surface containing $C$. If $V(q',s')$ is not a complete intersection, then under  suitable coordinates we may write
\[
q' = x_0x_1, \quad s = x_0 \cdot f_2(x_0, x_2, x_3),
\]
where $f_2$ is a non-zero homogeneous polynomial of degree $2$. Thus $V(q',s') \cap V(x_1)$ is a conic curve $V(x_1, f_2)$ union the line $V(x_0,x_1)$. On the other hand, by \cite[Theorem 3.1(2)iii)]{CMJL12} we know that $C\cap V(x_1)$ is a reduced cubic curve not containing the line $V(x_0,x_1)$. Thus $C\cap V(x_1)$ is not contained in $V(q',s') \cap V(x_1)$ which contradicts $C\subset V(q',s')$.
Thus $V(q',s')$ is a complete intersection with the same degree as $C$ which implies  $[s']=[s]\in \bP(E_{3})$. 

Finally, we study the case that $[C]$ is not reduced. Suppose $p_2([s], [C]) = [C]$ where $s\in H^0(Q, \cO_Q(3))$ is a non-zero section with $Q = V(q)$ a quadric surface. Since $\Supp([C])$ is a twisted cubic curve contained in $Q$, we know that $\rank (Q) \geq 3$ which implies that $q$ is irreducible. Hence $C=V(q,s)$ is a complete intersection curve with $\phi(C) = [C]$. By \cite[Theorem 3.1(0)]{CMJL12} we know that $C$ is a canonical genus $4$ ribbon.
Since all canonical genus $4$ ribbons are projectively equivalent by \cite{BE95}, we know that $p_2^{-1}([C])$ is a single orbit for the group $\Aut([C_{\red}]) \cong \PGL(2)$ which implies its connectedness.

The last statement follows from the fact that $\sX^{\rm c} = [\Chow_{4,1}^{\rm ss}/\PGL(4)]$ and a good moduli space of a normal stack is normal (see Theorem \ref{thm:gms-normal}).
\end{proof}

\begin{thm}[\cite{CMJL12, CMJL14}]\label{thm:cmjl-vgit}
The Chow quotient $\fX^{\rm c}$ is isomorphic to $\oM_4(\frac{5}{9})$. Moreover, for every $\alpha \in (\frac{8}{17}, \frac{5}{9})\cap \bQ$ and $t(\alpha):=\frac{34\alpha - 16}{33\alpha -14}$, let $\ocM_4(\alpha):=\sX_{t(\alpha)}$. Then $\ocM_4(\alpha)$ admits a projective good moduli space isomorphic to $\oM_4(\alpha)$. 
\end{thm}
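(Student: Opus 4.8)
Both assertions are restatements, in the conventions of Definitions \ref{def:chowss} and \ref{def:23VGIT}, of the main results of \cite{CMJL12, CMJL14}; the plan is to recall those results and supply the translation dictionary.

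First I would recall the comparison theorem of \cite{CMJL14}: for rational $\alpha<\tfrac{5}{9}$ with $K_{\ocM_4}+\alpha\delta$ big, the log canonical model $\oM_4(\alpha)=\Proj\bigoplus_m H^0(\ocM_4,\lfloor m(K_{\ocM_4}+\alpha\delta)\rfloor)$ is isomorphic to a GIT quotient of $W$ for a linearization interpolating between $p_1^*(\eta+\delta\xi)$ and $p_2^*L_\infty$. The substitution $t(\alpha)=\tfrac{34\alpha-16}{33\alpha-14}$ is precisely the dictionary between the Hassett--Keel coefficient $\alpha$ and the VGIT slope $t$ appearing in $N_t$: I would check it by pushing $K_{\ocM_4}+\alpha\delta$ forward to the quotient, writing the result in terms of the Hodge class $\lambda$ and the total boundary $\delta$, and comparing with the analogous expansion of $N_t$ — an intersection computation on the universal family of $(2,3)$-complete intersections carried out in \cite{CMJL14}. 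Since $t(\tfrac{8}{17})=0$ and $t(\tfrac{5}{9})=\tfrac{2}{3}$, as $\alpha$ runs over $(\tfrac{8}{17},\tfrac{5}{9})$ the slope $t(\alpha)$ sweeps out $(0,\tfrac{2}{3})$, so the quotient stack $\sX_{t(\alpha)}=[W^{\rm ss}(N_{t(\alpha)})/\PGL(4)]$ of Definition \ref{def:23VGIT} is exactly the VGIT quotient of \cite{CMJL14}. Its good moduli space $\fX_{t(\alpha)}=W^{\rm ss}(N_{t(\alpha)})\sslash\PGL(4)$ is projective — it is a GIT quotient of the projective scheme $W$ by the reductive group $\PGL(4)$ with respect to an ample $\bQ$-linearization, the constraint $\delta\in(0,\min\{t,\tfrac{2}{9}\})$ being exactly what forces $N_t$ to be ample — and it is isomorphic to $\oM_4(\alpha)$ by the comparison theorem. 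Setting $\ocM_4(\alpha):=\sX_{t(\alpha)}$ then gives the second assertion.

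For the first assertion I would take the boundary slope $t=\tfrac{2}{3}$, at which the coefficient of $p_1^*(\eta+\delta\xi)$ in $N_t$ vanishes and $N_t$ degenerates, up to scaling, to $p_2^*L_\infty$; the corresponding GIT quotient of $W$ is $W^{\rm ss}(p_2^*L_\infty)\sslash\PGL(4)$ with $W^{\rm ss}(p_2^*L_\infty)=p_2^{-1}(\Chow_{4,1}^{\rm ss})$. Since $p_2\colon W\to\Chow_{4,1}$ is proper and birational and, by Lemma \ref{lem:Chow-normal} and its proof, $\Chow_{4,1}^{\rm ss}$ is normal with $p_2$ having connected fibers over it, the $\PGL(4)$-invariant sections of the powers of $p_2^*L_\infty$ agree with those of the powers of $L_\infty$, so this quotient is canonically identified with $\Chow_{4,1}^{\rm ss}\sslash\PGL(4)=\fX^{\rm c}$. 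On the other hand, the section ring defining $\oM_4(\tfrac{5}{9})$ is computed by the same GIT construction at the boundary slope $t=\tfrac{2}{3}$ — this is the endpoint case of the comparison theorem of \cite{CMJL14}. Combining the two identifications gives $\oM_4(\tfrac{5}{9})\cong\fX^{\rm c}$.

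The genuinely hard input lies not in this translation but in the quoted results, namely the intersection-theoretic computation of \cite{CMJL14} showing that the pushforward of $K_{\ocM_4}+\alpha\delta$ to the VGIT quotient is a positive multiple of $N_{t(\alpha)}$, valid through the limiting case $\alpha=\tfrac{5}{9}$, $t=\tfrac{2}{3}$. The only points genuinely left to check are that the objects $W$, $N_t$, $\Chow_{4,1}$, $\fX^{\rm c}$ of Section \ref{sec:chow-vgit} coincide with those of \cite{CMJL12, CMJL14}, and that the $t=\tfrac{2}{3}$ endpoint GIT quotient is the Chow quotient — the latter following from properness and birationality of $p_2$ together with Lemma \ref{lem:Chow-normal}.
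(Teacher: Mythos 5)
Your proposal is correct and consistent with the paper's approach: the paper states Theorem~\ref{thm:cmjl-vgit} as a direct citation to \cite{CMJL12, CMJL14} and gives no independent proof, and your write-up is an accurate reconstruction of what those references establish, including the change of variables $t(\alpha)$, the endpoint computations $t(\tfrac{8}{17})=0$, $t(\tfrac{5}{9})=\tfrac{2}{3}$, and the identification of the $t=\tfrac23$ quotient with $\fX^{\rm c}$ via properness of $p_2$ and Lemma~\ref{lem:Chow-normal}. One small imprecision: the constraint $\delta\in(0,\min\{t,\tfrac29\})$ does not by itself make $N_t$ ample --- rather $\delta<\tfrac29$ makes $\eta+\delta\xi$ ample on $\bP(E_3)$ and $\delta<t$ keeps the $p_2^*L_\infty$-coefficient positive, and ampleness of $N_t$ on $W$ then follows because $p_1\times p_2$ embeds $W$ into $\bP(E_3)\times\Chow_{4,1}$; this is the standard argument in \cite{CMJL14}, so the substance of your claim is fine.
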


\begin{rem}\label{rmk:cmjl-cycles}
    In Theorem \ref{thm:cmjl-vgit}, we do not define $\ocM_{4}(\frac{5}{9})$ to be $\sX^{\rm c}$. The key reason is that $\sX^{\rm c}$ does not parameterize curves but rather cycles. Later, in Sections \ref{sec:hyp-flip} and \ref{sec:newmodelsforHK}, we define  $\ocM_4(\frac{5} {9})$ as a new stack $\sX$ which admits a good moduli space isomorphic to $\fX^{\rm c}$.  We will see in Proposition \ref{prop:59-ewall} that $\ocM_4(\frac{5}{9} - \epsilon)$ admits an open immersion to the new stack $\sX$. 
\end{rem}

In Section \ref{sec:newmodelsforHK}, we will describe the wall crossings for $\oM_4(\alpha)$ as $\alpha$ ranges in $[\frac{5}{9}, \frac{2}{3})$.  For completeness, we describe below the last wall crossing from \cite{CMJL14} as $\alpha$ increases from $\frac{5}{9} - \epsilon$ to $\frac{5}{9}$.

\begin{theorem}[{\cite[Theorem 6.3]{CMJL14}}]\label{thm:lastVGITstack}
    All curves parameterized by the final VGIT quotient stack $ \ocM_4(\frac{5}{9}, \frac{23}{44}) := \sX_{t \in(\frac{6}{11},\frac{2}{3})}$ are reduced $(2,3)$ complete intersection curves in $\bP^3$.  The VGIT (semi-)stable points $[(C)] \in \ocM_4(\frac{5}{9}, \frac{23}{44})$ are precisely the Chow (semi-)stable (see Theorem \ref{thm:CMJL-Chow-GIT}) that are not ribbons, elliptic triboroughs, or curves on a quadric cone with a tacnode (an $A_3$ singularity) at the vertex of the cone.  
\end{theorem}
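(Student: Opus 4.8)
The plan is to apply the Hilbert--Mumford numerical criterion to the $\PGL(4)$-action on $W$ linearized by $N_t$, using the Chow-stability classification of Theorem~\ref{thm:CMJL-Chow-GIT} as input. Write $N_t = a(t)\,p_1^*(\eta+\delta\xi) + b(t)\,p_2^*L_\infty$ with $a(t) = \tfrac{2-3t}{2-3\delta}$ and $b(t)=\tfrac{t-\delta}{2-3\delta}$; both are positive on the chamber $t\in(\tfrac{6}{11},\tfrac23)$ and $a(t)/b(t)\to 0^{+}$ as $t\uparrow\tfrac23$. For $w=(x,z)\in W$ with $x=([Q],[s])$ and a one-parameter subgroup $\lambda$,
\[
\mu^{N_t}(w,\lambda)=a(t)\big(\mu^{\eta}(Q,\lambda)+\delta\,\mu^{\xi}(s,\lambda)\big)+b(t)\,\mu^{L_\infty}(z,\lambda),
\]
where $\mu^{\eta},\mu^{\xi}$ are the usual Mumford weights of the quadric $Q$ and of the cubic section $s$ (read off from the $\lambda$-weights of the monomials in their equations, uniformly bounded over normalized $\lambda$ on $W$) and $\mu^{L_\infty}$ is the Chow weight of the cycle $z$. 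With Mumford's sign convention $w$ is $N_t$-semistable iff $\mu^{N_t}(w,\lambda)\ge 0$ for all $\lambda$, and recall that a smooth quadric surface is GIT-semistable while every quadric of rank $\le 3$ is GIT-unstable, destabilized by an explicit one-parameter subgroup $\lambda_Q$ concentrating at a point of its singular locus. Since $a(t)/b(t)\to 0$, for $t$ near $\tfrac23$ the sign of $\mu^{N_t}(w,\cdot)$ is controlled by $\mu^{L_\infty}$ except along the Chow-neutral directions; writing $\Lambda(z)=\{\lambda:\mu^{L_\infty}(z,\lambda)=0\}$, a standard near-the-wall argument gives: $w$ is $N_t$-unstable for $t$ slightly below $\tfrac23$ iff $\mu^{\eta}(Q,\lambda)+\delta\,\mu^{\xi}(s,\lambda)<0$ for some $\lambda\in\Lambda(z)$, and $w$ is $N_t$-stable iff $\mu^{\eta}(Q,\lambda)+\delta\,\mu^{\xi}(s,\lambda)>0$ for every nonzero $\lambda\in\Lambda(z)$ modulo the stabilizer of $w$. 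We use throughout that the quotient is independent of $\delta$, so $\delta$ may be taken arbitrarily small and the quadric weight $\mu^{\eta}$ dominates the secondary term.

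Two inputs are routine. If $z$ is Chow-unstable, then $\inf_{\lambda}\mu^{L_\infty}(z,\lambda)$ is bounded above by a negative constant on the closed Chow-unstable locus, so by the uniform bound on the quadric and cubic weights $\mu^{N_t}(w,\lambda)<0$ for $t$ near $\tfrac23$; hence $N_t$-semistability implies Chow-semistability. If $z$ is Chow-stable, then $\Lambda(z)=\{0\}$ and $\mu^{L_\infty}(z,\lambda)>0$ with a uniform gap, so the same estimate gives $\mu^{N_t}(w,\lambda)>0$ for all $\lambda$: the $N_t$-stable locus contains, hence by Theorem~\ref{thm:CMJL-Chow-GIT} equals, the Chow-stable locus (none of ribbons, elliptic triboroughs, quadric-cone curves with a tacnode at the vertex is Chow-stable, so ``Chow-stable minus the three loci'' simply recovers the Chow-stable locus). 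For the reducedness and complete-intersection claim, over $\bP(E_3)\setminus U_{(2,3)}$ the pair $(q,s)$ shares a component, and one shows the limiting cycle $z\in p_2(W)$ is then Chow-unstable, hence $N_t$-unstable, so $W^{\rm ss}(N_t)\subset p_1^{-1}(U_{(2,3)})$; combined with the fact (proof of Lemma~\ref{lem:Chow-normal}) that every non-reduced Chow-semistable cycle is a genus-four ribbon, this reduces the reducedness statement to showing ribbons are $N_t$-unstable, done below.

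The heart of the proof is the analysis of the Chow strictly semistable curves $z=V(q,s)$, organized by $\rk Q$ and by the location of the distinguished singularity, with the outcome decided by $\mu^{\eta}+\delta\mu^{\xi}$ along $\Lambda(z)$. If $\rk Q=4$ then $Q$ is GIT-semistable, so $\mu^{\eta}(Q,\lambda)\ge 0$ for all $\lambda$; along the Chow-neutral one-parameter subgroup(s) realizing the Chow-polystable degeneration of $z$ to $C_{2A_5}$ (whose quadric is again smooth) one checks $\mu^{\eta}$, and for $\delta$ small also $\mu^{\eta}+\delta\mu^{\xi}$, vanishes, so $z$ stays $N_t$-semistable but is not $N_t$-stable --- exactly as claimed for rank-$4$ curves with a $D_4$ or $A_5$ (or an $A_{\ge 6}$ with no low-degree component). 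If $\rk Q=3$, then $Q$ is GIT-unstable with a destabilizing $\lambda_Q$ concentrating at the vertex of the cone: when the distinguished singularity of $z$ lies at that vertex (the tacnodal case), $\lambda_Q$ is Chow-neutral --- the cycle degenerates toward a polystable one under $\lambda_Q$ --- so $\mu^{N_t}(w,\lambda_Q)=a(t)\big(\mu^{\eta}(Q,\lambda_Q)+\delta\mu^{\xi}(s,\lambda_Q)\big)<0$ for $\delta$ small and $z$ is $N_t$-unstable; when instead the distinguished singularity lies at a smooth point of the cone, $\lambda_Q$ fails to be Chow-neutral ($\mu^{L_\infty}(z,\lambda_Q)>0$), so it does not destabilize, one verifies that no direction in $\Lambda(z)$ does, and $z$ is $N_t$-semistable (and not $N_t$-stable, by the same vanishing argument as in the rank-$4$ case). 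If $\rk Q=2$, $z$ is an elliptic triborough and $Q$ is GIT-unstable with destabilizing $\lambda_Q$; one checks $\lambda_Q$ is Chow-neutral for the triborough (its cycle degenerates toward a polystable configuration of concurrent-line triangles), so $z$ is $N_t$-unstable. Finally, a ribbon lies on a rank-$3$ cone, its cycle being a doubled twisted cubic which is Chow-polystable by Lemma~\ref{lem:sm-chow}; the one-parameter subgroup $\lambda_0$ stabilizing the twisted cubic and concentrating at the vertex is Chow-neutral and has $\mu^{\eta}(Q,\lambda_0)<0$, so $\mu^{N_t}(w,\lambda_0)<0$ and the ribbon is $N_t$-unstable. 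Combining these cases with the routine inputs above proves the theorem.

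I expect the genuine difficulty to be this last step: pinning down, for each Chow strictly semistable configuration in Theorem~\ref{thm:CMJL-Chow-GIT}, the precise Chow-neutral one-parameter subgroups --- equivalently, the explicit weighted degenerations of the curve and of its quadric to the polystable models --- and then computing $\mu^{\eta}+\delta\mu^{\xi}$ along them carefully enough, while tracking the small parameter $\delta$, to see that the sign changes exactly along the three excluded loci and nowhere else, and in particular that for the surviving strictly semistable curves the secondary weight vanishes along the critical direction (so that none of them becomes $N_t$-stable). The boundary analysis over $\bP(E_3)\setminus U_{(2,3)}$, though conceptually easy, likewise relies on the explicit list of limiting cycles underlying the proof of Lemma~\ref{lem:Chow-normal}.
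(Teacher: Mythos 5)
The paper does not actually prove this statement; it is quoted verbatim as \cite[Theorem 6.3]{CMJL14}, so there is no internal proof to compare against. Your proposal is an attempt to re-derive that theorem from the Chow-stability classification (Theorem~\ref{thm:CMJL-Chow-GIT}) by a Hilbert--Mumford analysis, which is indeed the spirit of Casalaina-Martin--Jensen--Laza's GIT approach. The structural outline is reasonable, and several of the concrete claims (a smooth quadric is GIT-polystable so the Chow-neutral degeneration to $C_{2A_5}$ has vanishing quadric weight; a rank-$3$ quadric is destabilized by a $1$-PS concentrating at the vertex; the doubled twisted cubic is Chow-polystable with the ribbon's destabilizing $1$-PS lying in its stabilizer and having negative $\eta$-weight) check out.

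There is, however, a genuine gap that you should name rather than elide. Your entire argument is a near-the-wall analysis at $t\to\tfrac{2}{3}^-$: the ratio $a(t)/b(t)\to 0$ is what lets you reduce $N_t$-(in)stability to the sign of $\mu^\eta+\delta\mu^\xi$ along the Chow-neutral cone $\Lambda(z)$. This at best determines $W^{\mathrm{ss}}(N_t)$ for $t$ sufficiently close to $\tfrac{2}{3}$. The statement asserts equality of the semistable locus on the \emph{entire} open interval $t\in(\tfrac{6}{11},\tfrac{2}{3})$, which presupposes that there are no further VGIT walls in that interval. Establishing that fact (equivalently, locating the next wall at $t=\tfrac{6}{11}$, i.e.\ $\alpha=\tfrac{23}{44}$) is a separate piece of work that a boundary-wall perturbation argument simply cannot supply; it requires a classification of the critical slopes $-\mu^{\eta+\delta\xi}(w,\lambda)/\mu^{L_\infty}(w,\lambda)$ ranging over all strictly semistable $w$ and destabilizing $\lambda$, which is precisely the bulk of the computation in \cite{CMJL14}. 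As written, your argument silently assumes the chamber structure it is trying to prove.

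Two further soft spots are worth flagging. First, the ``near-the-wall'' criterion you invoke (membership in $W^{\mathrm{ss}}(N_t)$ for $t$ near $\tfrac{2}{3}$ is governed by the sign of $\mu^{\eta+\delta\xi}$ restricted to the Chow-neutral cone) is being used at a \emph{boundary} wall of the VGIT cone, where $L_\infty$ pulled back to $W$ is only semi-ample; the precise form of the Dolgachev--Hu/Thaddeus criterion in that degenerate setting, and the correct formulation of the stability (as opposed to semistability) clause ``modulo the stabilizer,'' should be stated as a lemma with a reference or proof rather than as folklore. Second, in the rank-$3$, smooth-point-$A_5$ case, the assertion ``one verifies that no direction in $\Lambda(z)$ destabilizes'' is the crux (the quadric is already GIT-unstable, so you must check that its destabilizing directions are all strictly Chow-positive for $z$); that verification, together with the analogous sign checks along $\Lambda(z)$ in the other cases, is exactly where the content lies, and you correctly identify it as the bottleneck, but a complete proof would need to carry it out with the explicit degenerations to $C_{A,B}$, $C_{2A_5}$, and $C_D$.
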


The next result is a summary of \cite[Theorem 3.1, Theorem 6.3]{CMJL14}.

\begin{theorem}[\cite{CMJL14}]\label{thm:lastVGITwall}
    The morphism $\oM_4(\frac{5}{9} - \epsilon) \to \oM_4(\frac{5}{9}) \cong \fX^{\rm c}$ is an isomorphism above $\fX^{\rm c} - \{[C_D] \cup \Gamma\}$.  The preimage of $[C_D]$ parameterizes unions of three conics meeting in two $D_4$ singularities.  The preimage of a generic point of $\Gamma$ consists of reduced curves $C$ on normal quadrics satisfying all of the following conditions: 
    \begin{itemize}
        \item $C$ has at worst a cusp at a singular point of the quadric (including the case of not passing through a singular point); 
        \item $C$ has an $A_5$ singularity at a smooth point of the quadric; 
        \item any component of $C$ passing through the $A_5$ singularity has degree at least $2$.\footnote{For genus reasons, a curve on $\bP(1,1,2)$ satisfying these conditions cannot contain a line.  Curves on the smooth quadric containing a line through the $A_5$ singular point are precisely those with non-separating $A_5$ singularities and specialize to the point $[C_{2A_5}]$ in $\sX^{\mathrm{c}}$.}
    \end{itemize}
    The preimage of the point in $\Gamma$ corresponding to the ribbon parameterizes reduced curves $C$ on normal quadrics satisfying all of the following conditions: 
    \begin{itemize}
        \item $C$ has at worst a cusp at a singular point of the quadric (including the case of not passing through a singular point); 
        \item $C$ has an $A_6, A_7, A_8, $ or $A_9$ singularity at a smooth point of the quadric; 
        \item if $C$ lies on a smooth quadric, then $C$ does not contain an irreducible component of degree $\le 2$; and 
        \item if $C$ lies on a quadric cone, then $C$ does not contain an irreducible component of degree $1$. 
    \end{itemize}
\end{theorem}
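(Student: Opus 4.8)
The plan is to read the statement off the variation-of-GIT (VGIT) analysis of \cite{CMJL12,CMJL14}, via the identifications recorded in Theorems~\ref{thm:cmjl-vgit} and~\ref{thm:lastVGITstack}. First I would make the relevant morphism explicit: for $\alpha\in(\tfrac{8}{17},\tfrac59)$ one has $\oM_4(\alpha)=\fX_{t(\alpha)}$ with $t(\alpha)=\frac{34\alpha-16}{33\alpha-14}\to\tfrac23^-$ as $\alpha\to\tfrac59^-$, so $\oM_4(\tfrac59-\epsilon)=\fX_t$ for $t\in(\tfrac6{11},\tfrac23)$ while $\oM_4(\tfrac59)\cong\fX^{\rm c}$; and as $t\to\tfrac23^-$ the polarization $N_t$ of Definition~\ref{def:23VGIT} converges, up to a positive scalar, to $p_2^*L_\infty$, so $W^{\rm ss}(N_t)\subseteq p_2^{-1}(\Chow_{4,1}^{\rm ss})$ and the second projection descends to the projective birational morphism $\fX_t\to\fX^{\rm c}$ in question; cf.\ \cite{CMJL14}.

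Next I would localize using standard VGIT: this morphism is an isomorphism over the $L_\infty$-stable locus $\fX^{\rm s}$, and over any $L_\infty$-strictly-polystable orbit its fibre is the $N_t$-GIT quotient of the $(2,3)$-curves whose Chow cycle degenerates to that orbit. By Theorem~\ref{thm:CMJL-Chow-GIT} we have $\fX^{\rm c}\setminus\fX^{\rm s}=\{[C_{2A_5}],[C_D]\}\cup\Gamma$, so only these three loci need analysis. Over $[C_{2A_5}]$ the fibre is a single point: $C_{2A_5}$ is a reduced $(2,3)$-complete intersection on the smooth quadric that is Chow semistable and is neither a ribbon, an elliptic triborough, nor a curve on a quadric cone with a tacnode at the vertex, hence $N_t$-semistable by Theorem~\ref{thm:lastVGITstack}, and since (by the proof of Lemma~\ref{lem:Chow-normal}) the fibre of $p_2$ over the Chow cycle of $C_{2A_5}$ is the single point $C_{2A_5}$, the latter is its own $N_t$-polystable representative, forcing the fibre of $\fX_t\to\fX^{\rm c}$ over $[C_{2A_5}]$ to be $\{[C_{2A_5}]\}$; this is why the statement excludes only $[C_D]$ and $\Gamma$. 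By contrast $C_D$ is an elliptic triborough (it meets the singular line of the rank-$2$ quadric $V(x_0x_3)$ in the three points $x_1^3=-x_2^3$), the $C_{A,B}$ with $4A/B^2\ne0,1$ carry a tacnode at the vertex of the quadric cone, and the ribbon is non-reduced, so by Theorem~\ref{thm:lastVGITstack} none of these lies in $W^{\rm ss}(N_t)$ and the corresponding fibres are genuinely positive-dimensional.

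Finally I would identify these three positive-dimensional fibres from the Hilbert--Mumford computations of \cite[\S3]{CMJL12} and \cite{CMJL14}: the $(2,3)$-curves whose cycle Chow-degenerates to $C_D$ are exactly the unions of three conics meeting in two $D_4$ points; those degenerating to a generic point of $\Gamma$ are the reduced curves on normal quadrics having at worst a cusp at the vertex, an $A_5$ at a smooth point, and every component through that $A_5$ of degree $\ge2$ (a degree-$1$ component would produce a non-separating $A_5$, hence a specialization to $[C_{2A_5}]$ instead of into $\Gamma$, and on the cone $\bP(1,1,2)$ no line component is possible for genus reasons); and those degenerating to the ribbon point of $\Gamma$ are the curves in parts~(1b) and~(2b) of Theorem~\ref{thm:CMJL-Chow-GIT}, i.e.\ reduced curves on normal quadrics with at worst a cusp at the vertex, an $A_k$ singularity with $6\le k\le9$ at a smooth point, and the stated degree bounds on components. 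The only genuinely non-formal step is to check that these three lists are complete and stable under $\PGL(4)$-degeneration — exactly the content of \cite[Theorems~3.1 and~6.3]{CMJL14} — so the proof reduces to assembling their results; organizing this translation correctly, in particular separating the curves that specialize to $[C_{2A_5}]$ from those specializing into $\Gamma$, is the main point requiring care.
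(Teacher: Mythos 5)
Your proposal is correct and takes essentially the same approach as the paper, which states this result as a direct citation to \cite[Theorems 3.1 and 6.3]{CMJL14} without reproducing a proof; your reconstruction via the VGIT identification of $\oM_4(\tfrac59-\epsilon)$ with $\fX_t$ for $t\in(\tfrac{6}{11},\tfrac23)$ and case analysis over the three strictly polystable loci matches what the cited references establish. The one step worth spelling out more carefully is the $[C_{2A_5}]$ fiber: the clean argument is that any $N_t$-polystable $C'$ in that fiber has a one-parameter degeneration whose Chow cycle tends to $[C_{2A_5}]$, and since the $p_2$-lift of that cycle is unique (proof of Lemma \ref{lem:Chow-normal}) and $N_t$-semistable (Theorem \ref{thm:lastVGITstack}), the limit lies in $W^{\rm ss}(N_t)$, so closedness of the $N_t$-polystable orbit of $C'$ forces $C'\cong C_{2A_5}$.
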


\subsection{Moduli of pairs and log Calabi--Yau wall crossing}\label{sec:bpcy}

In this subsection, we adapt the the moduli theory of boundary polarized Calabi--Yau pairs developed in \cite{ABB+, BL24} to study the moduli of genus four curves. We also introduce the relevant K-moduli and KSBA moduli spaces and setup the log Calabi--Yau wall crossing.

\begin{defn}
A \emph{boundary polarized CY pair} is a projective slc pair $(X,cD)$ such that 
\begin{enumerate}
    \item $D$ is an effective $\bQ$-Cartier Weil divisor on $X$, and $c\in \bQ_{>0}$;
    \item $K_X+cD\sim_{\bQ} 0$. 
\end{enumerate}

\end{defn}

\begin{defn}
Let $N\in \bZ_{>0}$ and $c\in \bQ_{>0}$ satisfying $Nc\in \bZ$. 
A \emph{family of boundary polarized CY pairs} $(X,cD) \to B$ over a Noetherian scheme $B$ with coefficients $c$ and index dividing $N$ consists of 
a flat projective morphism $X\to B$ and a relative K-flat Mumford divisor $D$,
satisfying the following conditions:
\begin{enumerate}
    \item $(X_b,cD_b)$ is a boundary polarized CY pair for each $b\in B$;
    \item $\omega_{X/B}^{[N]}(NcD) \cong_B \cO_X$;
    \item $\omega_{X/B}^{[m]}(nD)$ commutes with base change for all $m,n\in \bZ$.
\end{enumerate}
\end{defn}

\begin{thm}[{\cite[Theorem 3.7]{ABB+}}]
Let $N\in \bZ_{>0}$ and $c\in \bQ_{>0}$ satisfying $Nc\in \bZ$. 
Let $\chi:\bN\to \bZ$ be a function. 
There exists an algebraic stack $\cM(\chi, N, c)$ locally of finite type with affine diagonal that represents the moduli functor that send a scheme $B$ over $\bk$ to 
\[
\left\{\begin{array}{l}\textrm{families of boundary polarized CY pairs }(X,cD)\to B \textrm{ with coefficient $c$ and }\\ \textrm{index dividing $N$ satisfying }\chi(X_b, \omega_{X_b}^{[-m]}) = \chi(m) \textrm{ for any $b\in B$ and $m\in \bN$.}
\end{array}\right\}
\]
\end{thm}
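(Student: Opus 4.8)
The plan is to realize $\cM(\chi,N,c)$ as a locally closed substack of a larger, manifestly algebraic stack of unconstrained families, to check that each defining condition of a boundary polarized CY pair is cut out by an open, closed, or locally-closed condition, and finally to argue separately that the diagonal is affine (this is \emph{not} inherited from the ambient stack). Concretely, let $\mathfrak{P}$ be the pseudofunctor assigning to a $\bk$-scheme $B$ the groupoid of pairs $(X\to B,D)$ with $X\to B$ flat, proper, of finite presentation, with geometrically connected equidimensional fibers of a fixed dimension, and $D$ a relative K-flat Mumford divisor on $X/B$. Fppf descent for flat proper morphisms and for K-flat divisors makes $\mathfrak{P}$ a stack, and combining the algebraicity of the stack of flat proper algebraic spaces of finite presentation (the higher-dimensional analogue of the algebraicity of $\Curves$) with Koll\'ar's representability of the relative Hilbert/Chow scheme of K-flat divisors \cite{Kol23}, $\mathfrak{P}$ is an algebraic stack, locally of finite type over $\bk$. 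Restricting to the open locus where $X\to B$ has $S_2$ and $G_1$ fibers yields a stack $\mathfrak{P}^{\circ}$ on which $\omega_{X/B}$, its reflexive powers, and $\cO_X(nD)$ are well behaved.

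Inside $\mathfrak{P}^{\circ}$ one carves out $\cM(\chi,N,c)$ in four steps. (i) The locus where the fibers $(X_b,cD_b)$ are slc is open, by openness of the slc condition in families \cite{Kol23,ABB+}. (ii) For each $(m,n)\in\bZ^2$, the locus where $\omega_{X/B}^{[m]}(nD)$ is flat over $B$ with formation commuting with base change is open (the hull-and-husk machinery, \cite{Kol23}); intersecting these — only finitely many are independent once the index is controlled — gives an open substack on which $\omega_{X/B}^{[N]}$ and $\cO_X(NcD)$ are line bundles whose formation commutes with base change (this is the content of ``index dividing $N$''). (iii) On that open substack, the condition $\omega_{X/B}^{[N]}(NcD)\cong_B\cO_X$ is the pullback of the unit section of the (separated) relative Picard functor, hence closed, and it imposes $K_{X/B}+cD\sim_{\bQ}0$. (iv) On the locus where $\omega_{X/B}^{[-m]}$ commutes with base change, $b\mapsto\chi(X_b,\omega_{X_b}^{[-m]})$ is locally constant by flatness and cohomology-and-base-change, so requiring it to equal $\chi(m)$ for all $m$ cuts out an open-and-closed substack. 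The common intersection is, by construction, a locally closed substack of $\mathfrak{P}^{\circ}$ representing the stated moduli functor, and as a locally closed substack of an algebraic stack locally of finite type over $\bk$, it is again such.

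It remains to prove the diagonal is affine, i.e. that for two families $\zeta=(X,cD)$, $\zeta'=(X',cD')$ over a scheme $B$, the functor $\underline{\mathrm{Isom}}_B(\zeta,\zeta')$ — the locally closed subscheme of $\underline{\mathrm{Isom}}_B(X,X')$ of isomorphisms carrying $D$ to $D'$ — is affine over $B$. It is automatically of finite type and separated over $B$, so the task is to produce, fppf-locally on $B$, a closed immersion of it into an affine scheme; equivalently, to bound the automorphism group schemes $\Aut(X_b,cD_b)$ and show they are affine. This is exactly where the boundary polarized Calabi--Yau structure is used: such a pair is a limit of log Fano pairs, so the relation $K_X+cD\sim_{\bQ}0$ together with effectivity and nontriviality of $D$ supplies enough positivity — e.g.\ one produces from $\omega_{X/B}$ and $\cO_X(D)$ a relatively ample $\bQ$-line bundle preserved by all isomorphisms, or invokes that $X_b$ is of Fano type so that $\Aut(X_b,cD_b)$ is a linear algebraic group — to realize $\underline{\mathrm{Isom}}_B(\zeta,\zeta')$ as a closed subscheme (using properness of Isom schemes of polarized projective pairs) of an affine Isom scheme of polarized data. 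Hence the diagonal is affine and the construction is complete.

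The step I expect to be the main obstacle is this last one. The openness and locally-closedness assertions in the first two paragraphs are by now standard consequences of Koll\'ar's theory of families of pairs, but affineness of the diagonal genuinely relies on the special geometry of boundary polarized CY pairs and fails for arbitrary slc families: one really must extract positivity from $K_X+cD\sim_{\bQ}0$ and the effectivity of $D$, since a fixed anti-plurigenus Hilbert function alone does not bound the automorphism groups of general slc Calabi--Yau pairs. A secondary, more technical, point is confirming that the K-flatness conditions in the definition of a family of bpCY pairs are precisely what makes every $\omega_{X/B}^{[m]}(nD)$ commute with base change over the substack of step (ii), so that conditions (ii)--(iv) genuinely define substacks — routine, but it must be checked carefully against the conventions of \cite{Kol23}.
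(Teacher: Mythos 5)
The paper proves nothing here; it imports \cite[Theorem 3.7]{ABB+} as a citation, so there is no internal proof to compare against. Your reconstruction follows the standard route for such algebraicity statements (ambient stack of flat proper families with K-flat Mumford divisors; carve out by representable conditions; affine diagonal from the polarization), and this is consistent in spirit with the argument in \cite{ABB+}. Two concrete points, however, need fixing before the argument closes.

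The real gap is that the definition of bpCY pair reproduced in this paper omits the hypothesis that $D$ is \emph{ample}, while \cite{ABB+} includes it and it is essential: without it the statement is simply false. For instance, an abelian surface $X$ with $D=0$ satisfies every condition as written (effective, $\bQ$-Cartier, $K_X+cD\sim_{\bQ}0$, slc), yet $\Aut(X,0)\supset X$ is not affine, so the diagonal cannot be affine. You correctly sense that the boundary polarization rescues the diagonal, but you must say outright that $D$ is ample, then route the $\Isom$-scheme argument through the $\Aut$-equivariant ample Cartier divisor $ND$ (Cartier because the index divides $N$): trivialize $f_*\cO_X(mND)$ fppf-locally for $m\gg 0$, embed into $\bP^r_B$, and identify $\underline{\Isom}_B(\zeta,\zeta')$ with a closed subscheme of $\PGL_{r+1,B}$, closed because the Hilbert scheme is separated — not "proper" as you write. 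Invoking ``$X_b$ of Fano type'' as an alternative is circular without ampleness of $D$. Separately, step (ii) as stated is wrong: the condition that $\omega_{X/B}^{[m]}(nD)$ commutes with base change is \emph{not} open on $B$. Koll\'ar's hull-and-husk theory gives a representable finite-type \emph{monomorphism} (roughly a locally closed decomposition of $B$), not an open immersion, and slc-ness of fibers (your step (i)) is open only \emph{after} one has restricted to the locus where these base-change conditions hold; for arbitrary flat families in $\mathfrak{P}$ it is not. You flag both of these as ``routine but must be checked,'' which is fair, but ``open'' is a substantive error. Once corrected, steps (i)--(iv) produce a finitely presented monomorphism $\cM\to\mathfrak{P}$, which together with the ampleness-based diagonal argument gives the claim.
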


We are interested in the case of compactifications of moduli spaces of $(\bP^1\times\bP^1, \frac{2}{3}D)$ where $D\in |\cO_{\bP^1\times\bP^1}(3,3)|$. 

\begin{defn}\label{def:CY4stack}
    Let $\chi(m) = \chi(\bP^1\times\bP^1, \omega_{\bP^1\times\bP^1}) = (2m+1)^2$.
    Let $\cM_4^{\circ}$ denote the open substack of $\cM(\chi, 3,\frac{2}{3})$ whose $\bk$-points are pairs $(\bP^1\times\bP^1, \frac{2}{3}D)$ where $D$ is a smooth curve of bidegree $(3,3)$.
    Let $\ocM_4^{\CY}$ denote the stack theoretic closure of $\cM_4^{\circ}$ in $\cM(\chi, 3,\frac{2}{3})$. We call $\ocM_4^{\CY}$ the \emph{boundary polarized Calabi--Yau moduli stack} for genus four curves.
\end{defn}

\begin{thm}[{\cite[Theorem 16.11]{ABB+} and \cite[Theorem 7.5]{BL24}}]\label{thm:CY-gms}
    The algebraic stack $\ocM_4^{\CY}$ is irreducible, of finite type with affine diagonal. Moreover, $\ocM_4^{\CY}$ is S-complete, $\Theta$-reductive,  satisfies the existence part of valuative criterion for properness, and admits a good moduli space $\oM_4^{\CY}$ as an irreducible projective scheme.
\end{thm}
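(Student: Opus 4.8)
The plan is to realize $\ocM_4^{\CY}$ as a closed substack of the general moduli stack $\cM(\chi, 3, \tfrac{2}{3})$ of boundary polarized Calabi--Yau pairs from \cite{ABB+}, and then to transport the structural results of \cite{ABB+, BL24} to our setting; most of the work is a matter of matching definitions, with one genuinely substantive external input. \emph{Irreducibility} is elementary: $\cM_4^{\circ}$ is the quotient stack of the irreducible open locus of smooth members of $|\cO_{\bP^1\times\bP^1}(3,3)|$ by $\Aut(\bP^1\times\bP^1)$, hence irreducible, and therefore so is its stack-theoretic closure $\ocM_4^{\CY}$. That $\ocM_4^{\CY}$ is \emph{locally of finite type with affine diagonal} is immediate from \cite[Theorem 3.7]{ABB+}, since both properties pass to closed substacks of $\cM(\chi, 3, \tfrac{2}{3})$.

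The geometric heart of the statement is \emph{boundedness}. By \cite[Theorem 16.11]{ABB+}, no type III boundary polarized CY pair appears in $\ocM_4^{\CY}$; consequently every pair parameterized by $\ocM_4^{\CY}$ lies in a single bounded family, so $\ocM_4^{\CY}$ is of finite type over $\bk$, not merely locally of finite type. I expect this to be the main obstacle: it is where the particular geometry of $(3,3)$-curves on $\bP^1\times\bP^1$ (equivalently, of plane sextics and their associated degree-two K3 covers) enters, and ruling out the maximally degenerate type III pairs ultimately rests on Hodge-theoretic input related to Kond\={o}'s period map. We invoke it as a black box.

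Granting boundedness, the remaining properties follow from the general theory of \cite{ABB+} applied essentially verbatim. \emph{S-completeness} and \emph{$\Theta$-reductivity} of a moduli stack of boundary polarized CY pairs are proved there by local arguments on the stack --- using uniqueness of slc limits, separatedness of KSBA moduli for the log-canonically polarized specializations, and properness of K-moduli for the Fano specializations --- and hence hold for $\ocM_4^{\CY}$; likewise the \emph{existence part of the valuative criterion for properness} is the semistable-reduction statement for bpCY pairs, which in the absence of type III pairs is supplied by \cite[Theorem 16.11]{ABB+} together with \cite[Theorem 7.5]{BL24}. Applying Theorem \ref{thm:ahlh} to the finite-type stack $\ocM_4^{\CY}$, which has affine diagonal and is $\Theta$-reductive and S-complete, produces a separated good moduli space $\oM_4^{\CY}$, and this space is proper over $\bk$ because $\ocM_4^{\CY}$ satisfies the existence part of the valuative criterion for properness.

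Finally, $\oM_4^{\CY}$ is irreducible by Theorem \ref{thm:gms-normal}, being the good moduli space of the irreducible stack $\ocM_4^{\CY}$. Projectivity is established in \cite[Theorem 7.5]{BL24} (following the framework of \cite{ABB+}): one descends a Hodge-theoretic ample line bundle from $\ocM_4^{\CY}$ to $\oM_4^{\CY}$ --- the polarization that will later be identified with the Baily--Borel polarization in Theorem \ref{thm:bpCY-is-smooth} --- and since a proper algebraic space carrying an ample line bundle is a projective scheme, the proof is complete.
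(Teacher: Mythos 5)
Your proposal is correct and takes the same approach as the paper: the theorem is stated there as an application of the cited results of \cite{ABB+} and \cite{BL24}, with no independent proof given, and your reconstruction correctly identifies boundedness (exclusion of type~III pairs, via \cite[Theorem~16.11]{ABB+}) as the substantive input and the remaining assertions as following from the general bpCY machinery plus Theorem~\ref{thm:ahlh}. One tangential slip worth correcting: the parenthetical ``equivalently, of plane sextics and their associated degree-two K3 covers'' misdescribes the genus~four picture---the relevant geometry is that of $(3,3)$-curves on $\bP^1\times\bP^1$ and the \emph{degree-three} cyclic K3 covers appearing in Kond\={o}'s period map \cite{Kondog4}, not the plane-sextic/double-cover story---but this does not affect the logic of the argument.
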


We will see in Theorem \ref{thm:bpCY-is-smooth} that $\ocM_4^{\CY}$ is smooth, and that its good moduli space $\oM_4^{\CY}$ is a normal projective variety isomorphic to the Baily--Borel compactification of Kond\={o} \cite{Kondog4} of non-hyperelliptic genus four curves.

\begin{defn}\label{def:KAndKSBA}
We define the \emph{K-moduli stack} $\ocM_4^{\K}$ and the \emph{KSBA moduli stack} $\ocM_4^{\KSBA}$ for genus four curves as follows.
\begin{enumerate}
    \item Let $\ocM_4^{\K}$ be the substack of $\ocM_4^{\CY}$ consisting of families $(X,\frac{2}{3}D)\to B$ such that for each $b\in B$, $(X_b, (\frac{2}{3}-\epsilon)D_b)$ is a K-semistable log Fano pair for $0<\epsilon\ll 1$. 
    \item Let $\ocM_4^{\KSBA}$ be the substack of $\ocM_4^{\CY}$ consisting of families $(X,\frac{2}{3}D)\to B$ such that for each $b\in B$, $(X_b, (\frac{2}{3}+\epsilon)D_b)$ is slc for $0<\epsilon\ll 1$. 
\end{enumerate}

\end{defn}

Before stating the log Calabi--Yau wall crossing theorem, we recall some deformation theory results from \cite{ABB+}.

\begin{prop}\label{prop:deformations}
If $[(X_0, \frac{2}{3}D_0)]$ in $\ocM_4^{\CY}$ and the normalization $(\overline{X}_0, \overline{G}_0)$ of $(X_0, 0)$ is plt, then $\ocM_4^{\CY}$ is smooth at $[(X_0, \frac{2}{3}D_0)]$. 
\end{prop}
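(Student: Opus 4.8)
The plan is to show that the deformations of the boundary polarized CY pair $(X_0,\tfrac{2}{3}D_0)$ are unobstructed, and then transfer this to $\ocM_4^{\CY}$. For the transfer: by construction $\ocM_4^{\CY}$ is the stack-theoretic closure of the open substack $\cM_4^{\circ}\subset\cM(\chi,3,\tfrac{2}{3})$, and $x:=[(X_0,\tfrac{2}{3}D_0)]$ lies in this closure. Hence if $\cM(\chi,3,\tfrac{2}{3})$ is smooth at $x$, then a small enough open neighborhood $U$ of $x$ there is smooth, so has irreducible underlying topological space; since $\cM_4^{\circ}$ is open and $x\in\overline{\cM_4^{\circ}}$, the intersection $\cM_4^{\circ}\cap U$ is a nonempty open, hence dense, subset of $U$, and therefore the closed substack $\ocM_4^{\CY}$ meets $U$ in all of $U$, i.e. $\ocM_4^{\CY}\cap U=U$ is smooth. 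Thus it suffices to prove that the deformation functor of $(X_0,\tfrac{2}{3}D_0)$ as a boundary polarized CY pair — deformations of the flat family $X\to B$ together with its relative K-flat Mumford divisor $D$ and the isomorphism $\omega_{X/B}^{[3]}(2D)\cong_B\cO_X$ — is formally smooth.

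First I would invoke the tangent--obstruction theory of the functor $\cM(\chi,N,c)$ from \cite{ABB+}: the obstructions to lifting $(X_0,cD_0)$ along a square-zero extension lie in the second hypercohomology of an appropriate ``log tangent complex'' of the pair. Since $X_0$ is demi-normal, I would then reduce to the normalization $\nu\colon(\overline{X}_0,\overline{G}_0+\tfrac{2}{3}\overline{D}_0)\to(X_0,\tfrac{2}{3}D_0)$, with $\overline{D}_0$ the strict transform of $D_0$, via Koll\'ar's gluing formalism: a deformation of $(X_0,\tfrac{2}{3}D_0)$ amounts to a deformation of the normalized pair compatible with the gluing involution on the conductor, together with a deformation of that involution. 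Because $(\overline{X}_0,\overline{G}_0)$ is plt, the conductor $\overline{G}_0$ is a disjoint union of smooth curves, so its deformations and those of the involution are automatically unobstructed and contribute nothing; hence the problem reduces to unobstructedness of deformations of the log Calabi--Yau pair $(\overline{X}_0,\overline{G}_0+\tfrac{2}{3}\overline{D}_0)$, which satisfies $K_{\overline{X}_0}+\overline{G}_0+\tfrac{2}{3}\overline{D}_0\sim_{\bQ}\nu^*(K_{X_0}+\tfrac{2}{3}D_0)\sim_{\bQ}0$ and is log canonical with at worst quotient singularities away from the reduced divisor $\overline{G}_0$.

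Finally I would prove unobstructedness of this log Calabi--Yau surface pair by a logarithmic Bogomolov--Tian--Todorov-type argument: its plt/lc structure yields the vanishing of the relevant $H^2$ of reflexive logarithmic differentials (via Kawamata--Viehweg-type vanishing together with the degeneration of the associated Hodge-to-de Rham spectral sequence), and this powers a $T^1$-lifting argument showing formal smoothness of the deformation functor. I expect the main obstacle to lie exactly in this last step in the singular, non-normal, fractional-coefficient setting: one must use the correct sheaf of reflexive differentials, check that the plt hypothesis is precisely what controls the singularities both along and away from $\overline{G}_0$ so that the needed $H^2$-vanishing holds, verify that the gluing reduction introduces no spurious obstructions, and match the obstruction space of the K-flat moduli functor of \cite{ABB+} with the cohomology that the Hodge-theoretic input controls. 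Once this vanishing is in place, $\cM(\chi,3,\tfrac{2}{3})$ is smooth at $x$, and by the first paragraph $\ocM_4^{\CY}$ is smooth at $[(X_0,\tfrac{2}{3}D_0)]$.
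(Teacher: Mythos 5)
Your opening reduction — that smoothness of $\cM(\chi,3,\tfrac{2}{3})$ at $x$ together with $x\in\overline{\cM_4^{\circ}}$ forces $\ocM_4^{\CY}$ to coincide with the ambient stack in a neighborhood of $x$ — is correct and is implicitly part of the paper's argument too. Beyond that, however, you take a genuinely different route from the paper, and the core of your route is a plan rather than a proof. The paper's proof is short: it follows the proof of Proposition~11.5 of \cite{ABB+}, whose key input is Hacking's unobstructedness theorems for $\bQ$-Gorenstein deformations of slc log surface pairs (\cite[Theorems~8.2, 9.1]{Hac04}), applied via the \emph{canonical covering family} of the pair. The role of the plt hypothesis on the normalization $(\overline{X}_0,\overline{G}_0)$ is precisely to guarantee that this covering family exists and behaves well, using \cite[Proposition~3.1]{HP10} and \cite[Lemma~6]{ACC+}. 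In other words, the paper outsources the hard singular Bogomolov--Tian--Todorov work to Hacking, and the plt condition is what lets one invoke it.

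Your proposal instead wants to reduce to the normalization via Koll\'ar gluing and then run a log BTT / $T^1$-lifting argument directly, and you openly flag that this last step is not done. That is the genuine gap: you do not prove the $H^2$-vanishing, the degeneration of the relevant Hodge-to-de Rham spectral sequence, or the identification of the obstruction space of the K-flat moduli functor of \cite{ABB+} with the cohomology group your Hodge-theoretic input would control. Those are not formalities in this setting (non-normal total space, fractional coefficients, reflexive rather than locally free log differentials); filling them in would essentially amount to re-proving Hacking's theorem. There is also a smaller but real issue in the gluing step: deforming the slc pair $(X_0,\tfrac{2}{3}D_0)$ is not the same as deforming the normalized pair and the conductor separately. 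One must deform $(\overline{X}_0,\overline{G}_0+\tfrac{2}{3}\overline{D}_0)$ together with the gluing involution on the conductor, and the compatibility of these two pieces of data is a nontrivial constraint whose obstruction theory does not visibly vanish just because $\overline{G}_0$ is smooth. So the assertion that the gluing data ``contributes nothing'' needs an argument, which is again something the covering-family approach of \cite{ABB+} and \cite{Hac04} handles for you.
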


\begin{proof}
This follows from the proof of \cite[Proposition 11.5]{ABB+}, using \cite[Proposition 3.1]{HP10} and \cite[Lemma 6]{ACC+} (in addition to \cite[Theorems 8.2, 9.1]{Hac04} as used in loc. cit.) to obtain the canonical covering family.
\end{proof}

\begin{thm}[\cite{ABB+, BL24}]\label{thm:bpCYwallcrossing}
    Both $\ocM_4^{\K}$ and $\ocM_4^{\KSBA}$ are open substacks of $\ocM_4^{\CY}$. Furthermore, the following hold.
    \begin{enumerate}
        \item $\ocM_4^{\K}$ is an algebraic stack of finite type with affine diagonal. Moreover, it admits a good moduli space $\oM_4^{\K}$ as a normal, irreducible projective scheme.
        \item $\ocM_4^{\KSBA}$ is a separated Deligne--Mumford stack of finite type. Moreover, it admits a coarse moduli space $\oM_4^{\KSBA}$ as an irreducible projective scheme.
        \item There is a commutative wall crossing diagram
        \[
\begin{tikzcd}
\ocM_4^{\K} \arrow[d] \arrow[r,hook]  &
\ocM_{4}^{\CY} \arrow[d] & 
\ocM_{4}^{\KSBA} \arrow[d] \arrow[l,hook']  \\
\oM_4^{\K} \arrow[r]  & 
\oM_4^{\CY}  & 
\oM_4^{\KSBA} \arrow[l]
\end{tikzcd}
,\]
where the top arrows are open immersions, the vertical arrows are good moduli space morphisms, and the bottom arrows are projective birational morphisms. 
    \end{enumerate}
\end{thm}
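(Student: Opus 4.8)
The plan is to deduce this from the general wall-crossing framework for boundary polarized Calabi--Yau pairs established in \cite{ABB+} together with the genus-four analysis of \cite{BL24}, after verifying that the hypotheses of that machinery are met in our situation; the essential inputs are boundedness of $\ocM_4^{\CY}$ (Theorem \ref{thm:CY-gms}), openness of K-semistability and of the slc condition in families, and the existence of projective good (resp. coarse) moduli spaces on the two sides.

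First I would prove that $\ocM_4^{\K}$ and $\ocM_4^{\KSBA}$ are open substacks. For a family $(X,\tfrac{2}{3}D)\to B$ in $\ocM_4^{\CY}$, the locus of $b\in B$ where $D_b$ is ample is open, and on it the relation $K_{X_b}+\tfrac23 D_b\sim_{\bQ}0$ forces $-(K_{X_b}+(\tfrac23-\epsilon)D_b)\sim_{\bQ}\epsilon D_b$ to be ample and $K_{X_b}+(\tfrac23+\epsilon)D_b\sim_{\bQ}\epsilon D_b$ to be ample; thus $(X_b,(\tfrac23-\epsilon)D_b)$ is a log Fano pair and $(X_b,(\tfrac23+\epsilon)D_b)$ is log canonically polarized on this locus. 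Restricting to it, the K-semistable locus is open by openness of K-semistability in families of log Fano pairs, giving openness of $\ocM_4^{\K}$, and the slc locus is open by openness of slc (inversion of adjunction), giving openness of $\ocM_4^{\KSBA}$. Both therefore inherit finite type and affine diagonal from $\ocM_4^{\CY}$ (Theorem \ref{thm:CY-gms}).

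Next I would produce the moduli spaces. On the K side, the stack $\ocM_4^{\K}$ is $\Theta$-reductive and S-complete and satisfies the existence part of the valuative criterion for properness (all inherited via the K-moduli theory of log Fano pairs as in \cite{ABB+,BL24} and Theorem \ref{thm:CY-gms}), so by Theorem \ref{thm:ahlh} it admits a proper good moduli space $\oM_4^{\K}$; ampleness of the CM line bundle makes $\oM_4^{\K}$ projective, and normality follows from smoothness of $\ocM_4^{\CY}$ (Theorem \ref{thm:bpCY-is-smooth}) via Theorem \ref{thm:gms-normal}. On the KSBA side, each $(X_b,(\tfrac23+\epsilon)D_b)$ is a stable pair in the sense of \cite{Kol23}, hence has finite automorphisms, so $\ocM_4^{\KSBA}$ is Deligne--Mumford; separatedness and properness follow from the separatedness and properness of stable pairs (a limit exists in $\ocM_4^{\CY}$ by Theorem \ref{thm:CY-gms} and lands in $\ocM_4^{\KSBA}$ after passing to the stable limit), and in characteristic zero it has a projective coarse moduli space $\oM_4^{\KSBA}$ by Koll\'ar's ampleness result. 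Irreducibility of both spaces follows since $\ocM_4^{\CY}$ is irreducible with dense open substack $\cM_4^{\circ}$ of smooth $(3,3)$-curves, and $\cM_4^{\circ}\subset\ocM_4^{\K}\cap\ocM_4^{\KSBA}$ (a smooth $D$ on $\bP^1\times\bP^1$ gives a klt pair with $D$ ample: K-semistability of $(\bP^1\times\bP^1,(\tfrac23-\epsilon)D)$ holds by perturbation off the K-polystable $\bP^1\times\bP^1$, and $(\bP^1\times\bP^1,(\tfrac23+\epsilon)D)$ is klt with ample log canonical class). Finally, for the diagram: the top arrows are the open immersions just established and the vertical arrows are the good moduli space morphisms (in characteristic zero the coarse space of the tame Deligne--Mumford stack $\ocM_4^{\KSBA}$ is a good moduli space). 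By the universal property of good moduli spaces (Theorem \ref{thm:gms-universal}), the compositions $\ocM_4^{\K}\hookrightarrow\ocM_4^{\CY}\to\oM_4^{\CY}$ and $\ocM_4^{\KSBA}\hookrightarrow\ocM_4^{\CY}\to\oM_4^{\CY}$ factor uniquely through $\oM_4^{\K}$ and $\oM_4^{\KSBA}$, giving the bottom arrows and making the squares commute; these are birational because they restrict to isomorphisms on the images of the common dense open $\cM_4^{\circ}$, and they are projective as morphisms of projective $\bk$-schemes.

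The genuinely hard part is not in the assembly but in the cited inputs: openness of K-semistability and, especially, the construction of a projective good moduli space for the K-semistable locus (the $\Theta$-reductivity, S-completeness, and CM-ampleness). These are imported wholesale from \cite{ABB+,BL24}, so the work specific to genus four is confined to checking that their hypotheses apply here, which reduces to the boundedness and finite-type statements of Theorem \ref{thm:CY-gms} and to exhibiting the dense smooth locus $\cM_4^{\circ}$ inside each of $\ocM_4^{\K}$ and $\ocM_4^{\KSBA}$.
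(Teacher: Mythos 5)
Your proposal is essentially correct and follows the same route as the paper: both simply import the bpCY wall-crossing machinery of \cite{ABB+,BL24} (the paper cites \cite[Theorems 1.1 and 7.5]{BL24} and \cite[Theorem 16.11]{ABB+} directly), and the details you supply are just an expansion of what those theorems assert. The one place you depart is in proving normality of $\oM_4^{\K}$: you invoke Theorem~\ref{thm:bpCY-is-smooth} (smoothness of $\ocM_4^{\CY}$) and then Theorem~\ref{thm:gms-normal}, but Theorem~\ref{thm:bpCY-is-smooth} is stated and proved only later in Section~6, so this creates a forward reference (though not, as far as I can tell, an actual logical circle); the paper instead cites the general normality result \cite[Theorem 2.21]{ADL20} for K-moduli of log Fano pairs, which is the cleaner choice here.
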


\begin{proof}

This follows from \cite[Theorems 1.1 and 7.5]{BL24} (see also \cite[Theorem 16.11]{ABB+}). The normality of $\oM_4^{\K}$ follows from \cite[Theorem 2.21]{ADL20}.
\end{proof}

The locus of klt pairs $(X,\frac{2}{3}D)$ is contained in each of the three stacks.  

\begin{prop}\label{prop:klt-is-stable}
    If $[(X,\frac{2}{3}D)] \in \ocM_4^{\CY}$ is a klt pair, then $[(X,\frac{2}{3}D)] \in  \ocM_4^{\KSBA}$ and $[(X,\frac{2}{3}D)] \in \ocM_4^{\K}$.  In fact, $(X,(\frac{2}{3}- \epsilon)D)$ is K-stable for all $0<\epsilon \ll 1$. 
\end{prop}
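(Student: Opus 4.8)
The plan is to establish the two containments separately, in both cases exploiting that a klt pair $(X,\tfrac23 D)$ with $K_X+\tfrac23 D\sim_\bQ 0$ is a \emph{klt log Calabi--Yau} pair, so that slightly raising the coefficient of $D$ keeps the pair klt (yielding a KSBA stable pair), while slightly lowering it produces a log Fano pair whose K-stability is controlled by $(X,\tfrac23 D)$ itself. Throughout I will use that for such a klt point $X$ is normal and irreducible, $-K_X\sim_\bQ\tfrac23 D$, and $D$ is ample (the underlying surface is a klt del Pezzo surface of anticanonical degree $8$, and $D$ polarizes the boundary polarized CY family), so that for every divisorial valuation $E$ over $X$ the expected vanishing order $S_D(E):=\tfrac1{D^2}\int_0^\infty\vol_X(D-tE)\,dt$ is finite and positive, and $S_{aD}(E)=a\,S_D(E)$ for $a\in\bQ_{>0}$.

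For KSBA stability: since $D$ is a fixed nonzero effective $\bQ$-Cartier divisor, $\{c\ge 0:(X,cD)\text{ is lc}\}=[0,\lct(X;D)]$ is a closed interval with finite right endpoint, and klt-ness of $(X,\tfrac23 D)$ forces $\lct(X;D)>\tfrac23$. Hence for all $0<\epsilon\ll 1$ the pair $(X,(\tfrac23+\epsilon)D)$ is klt, in particular slc since $X$ is normal, and $K_X+(\tfrac23+\epsilon)D\sim_\bQ\epsilon D$ is ample; thus $(X,(\tfrac23+\epsilon)D)$ is a stable pair, and by Definition~\ref{def:KAndKSBA}(2) we get $[(X,\tfrac23 D)]\in\ocM_4^{\KSBA}$.

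For K-stability: I would set $\mu:=\delta_D(X,\tfrac23 D)=\inf_E A_{(X,\frac23 D)}(E)/S_D(E)$, the $\delta$-invariant of the klt pair $(X,\tfrac23 D)$ computed against the ample polarization $D$. By the theory of stability thresholds for klt pairs polarized by an ample divisor (\cite{Xu25} and the references therein), $0<\mu<\infty$; strict positivity is exactly where klt-ness is used, since $(X,\tfrac23 D)$ klt means $A_{(X,\frac23 D)}(E)>0$ for every $E$. Now fix $\epsilon\in(0,\min\{\mu,\tfrac23\})$ and put $\Delta_\epsilon:=(\tfrac23-\epsilon)D$. Then $-(K_X+\Delta_\epsilon)\sim_\bQ\epsilon D$ is ample and $A_{(X,\Delta_\epsilon)}(E)=A_{(X,\frac23 D)}(E)+\epsilon\,\ord_E D\ge A_{(X,\frac23 D)}(E)>0$, so $(X,\Delta_\epsilon)$ is a log Fano pair; moreover $S_{(X,\Delta_\epsilon)}(E)=S_{-(K_X+\Delta_\epsilon)}(E)=S_{\epsilon D}(E)=\epsilon\,S_D(E)$. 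Therefore, for every divisorial valuation $E$ over $X$,
\[
\frac{A_{(X,\Delta_\epsilon)}(E)}{S_{(X,\Delta_\epsilon)}(E)}=\frac{A_{(X,\frac23 D)}(E)+\epsilon\,\ord_E D}{\epsilon\,S_D(E)}\ \ge\ \frac{A_{(X,\frac23 D)}(E)}{\epsilon\,S_D(E)}\ \ge\ \frac{\mu}{\epsilon}\ >\ 1,
\]
so $\delta(X,\Delta_\epsilon)\ge\mu/\epsilon>1$, and $(X,(\tfrac23-\epsilon)D)$ is K-stable (indeed uniformly K-stable) by the valuative criterion for K-stability (\cite{Xu25}). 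In particular it is a K-semistable log Fano pair, so $[(X,\tfrac23 D)]\in\ocM_4^{\K}$ by Definition~\ref{def:KAndKSBA}(1).

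The only genuinely nontrivial ingredient is the strict positivity $\mu>0$ of the $\delta$-invariant of the klt log Calabi--Yau pair relative to the polarization $D$, i.e.\ the existence of $c_0>0$ with $A_{(X,\frac23 D)}(E)\ge c_0\,S_D(E)$ for all $E$; this is where klt-ness (rather than merely lc) is essential. Once that, the ampleness of $D$, and the openness of klt under raising coefficients are in place, both containments and the K-stability assertion drop out of the one-line estimate above.
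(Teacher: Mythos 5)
Your proposal is correct, but it takes a genuinely different route from the paper on the K-stability half. For the KSBA containment the two arguments coincide: klt-ness of $(X,\tfrac23 D)$ gives $\lct(X;D)>\tfrac23$, so $(X,(\tfrac23+\epsilon)D)$ is klt (hence slc) for $0<\epsilon\ll 1$, which is exactly the membership condition in Definition~\ref{def:KAndKSBA}(2). For the K-stability statement, the paper cites \cite[Lemma 5.3]{Zho23} (K-semistability near the CY wall when there are no lc places) together with \cite[Proposition 6.13, Theorem 6.15]{ABB+} to upgrade to K-stability, whereas you re-derive this directly from the valuative criterion by an interpolation estimate: writing $\mu=\delta_D(X,\tfrac23 D)$ and using $S_{(X,\Delta_\epsilon)}(E)=\epsilon S_D(E)$ and $A_{(X,\Delta_\epsilon)}(E)\ge A_{(X,\frac23 D)}(E)$ you obtain $\delta(X,\Delta_\epsilon)\ge\mu/\epsilon>1$, hence uniform K-stability (so in fact slightly more than what is asserted). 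The estimate itself is correct; the one place you should be a bit more careful is the claim that $\mu>0$ follows ``exactly from klt-ness'' --- $A_{(X,\frac23 D)}(E)>0$ for every individual $E$ does not by itself bound the infimum away from $0$, and what you really need is the (nontrivial, but standard) positivity of the stability/alpha threshold for a klt pair with an ample polarization, which you do reference via \cite{Xu25}. In short, the paper's proof is shorter because it offloads the interpolation to existing lemmas, while yours is more self-contained and makes the quantitative dependence ($\epsilon<\mu$) explicit; both establish the same statement.
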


\begin{proof}
    The KSBA stability is straightforward: because $(X,\frac{2}{3}D)$ is klt, $(X, (\frac{2}{3} + \epsilon)D)$ is lc for $0<\epsilon \ll 1$ and hence $[(X,\frac{2}{3}D)] \in \ocM_4^{\KSBA}$. The K-semistability follows from \cite[Lemma 5.3]{Zho23} because there are no lc places for the klt pair, and by \cite[Proposition 6.13, Theorem 6.15]{ABB+}, the pairs are in fact K-stable.  \end{proof}

\subsection{Curves on quadric cones}

In this subsection, we collect some useful results on curves of degree $6$ in $\bP(1,1,2)$.

\begin{prop}\label{prop:A4-cone}
Let $C$ be a curve on $\bP(1,1,2)$ of degree $6$ with an $A_4$-singularity at the cone vertex $[0,0,1]$. Then $C$ admits a $\bG_m$-equivariant degeneration to the strictly GIT polystable hyperelliptic curve $(x_2^2 = x_0^5 x_1^5)\subset \bP(1,1,5)_{[x_0, x_1, x_2]}$ (see Definition \ref{def:hyp-GIT}). 
\end{prop}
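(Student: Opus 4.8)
The plan is to first bring $C$, as an abstract curve, into a standard form as a double cover of $\bP^1$, and then write down an explicit $\bG_m$-action on $\bP(1,1,5)$ whose limit is the stated curve.

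\smallskip
\emph{Step 1: the double-cover normal form.} We may assume $C$ is reduced. Since an $A_4$ singularity is unibranch, $C$ cannot contain a ruling of the cone (every ruling passes through the vertex $v=[0{:}0{:}1]$, so $C\supset R$ would make $C$ either smooth or multibranch at $v$). Hence the projection away from the vertex, $\pi\colon\bP(1,1,2)\dashrightarrow\bP^1=\bP(1,1)$, $[x_0{:}x_1{:}x_2]\mapsto[x_0{:}x_1]$, restricts to a \emph{finite} morphism on $C\setminus\{v\}$, and computing $C\cdot R$ for a general ruling $R\cong\bP^1$ (on which $\cO_{\bP(1,1,2)}(6)$ has degree $3$, one of the three intersection points being $v$) shows this morphism has degree $2$. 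As the $A_4$ singularity is unibranch and, after a harmless change of the uniformizer of $C$ at $v$, has the standard form $\bk+\bk t^2+\fm^4\subset\bk[[t]]$, the branch of $C$ at $v$ is a ramification point of index $2$ and $\pi|_{C\setminus\{v\}}$ extends to a finite degree-two morphism $f\colon C\to\bP^1$. Since $\bP^1$ is regular and $C$ is Cohen--Macaulay (being Gorenstein), $f$ is flat, so $f_*\cO_C\cong\cO_{\bP^1}\oplus\cO_{\bP^1}(-d)$ and $C\cong\{x_2^2=\Phi(x_0,x_1)\}\subset\bP(1,1,d)$ for a form $\Phi$ of degree $2d$; comparing arithmetic genus with $g=4$ forces $d=5$. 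Finally, a local double cover $x_2^2=u^{m}\cdot(\mathrm{unit})$ has an $A_{m-1}$ singularity, so the $A_4$ at $v$ forces $\ord_q\Phi=5$ where $q=f(v)$; after an automorphism of $\bP^1$ taking $q$ to $[0{:}1]$ we get
\[
C\;\cong\;\bigl\{\,x_2^2=x_0^5\,g(x_0,x_1)\,\bigr\}\subset\bP(1,1,5),\qquad g\in\bk[x_0,x_1]_5,\quad x_0\nmid g.
\]

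\smallskip
\emph{Step 2: the $\bG_m$-degeneration.} Write $x_0^5g=\sum_{i=0}^{5}c_i\,x_0^{10-i}x_1^i$ with $c_5\neq0$, and let $\bG_m$ act on $\bP(1,1,5)$ by $\lambda\cdot[x_0{:}x_1{:}x_2]=[x_0{:}\lambda^2 x_1{:}\lambda^5 x_2]$ (a diagonal action, hence well defined on the weighted projective space). Then $\lambda\cdot C$ is cut out, up to rescaling the defining equation by $\lambda^{10}$, by $x_2^2-\sum_{i=0}^{5}c_i\,\lambda^{10-2i}x_0^{10-i}x_1^i$, and the induced family
\[
\cC \;=\; V\bigl(x_2^2-{\textstyle\sum_{i=0}^{5}}\,c_i\,t^{10-2i}x_0^{10-i}x_1^i\bigr)\;\subset\;\bP(1,1,5)\times\bA^1_t
\]
is a relative hypersurface of degree $10$, hence flat over $\bA^1_t$. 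For $t\neq0$ the fibre is isomorphic to $C$ via the automorphism $[x_0{:}x_1{:}x_2]\mapsto[x_0{:}t^2x_1{:}t^5x_2]$, while the fibre over $t=0$ is $\{x_2^2=c_5 x_0^5x_1^5\}$, which after the automorphism $x_2\mapsto c_5^{1/2}x_2$ of $\bP(1,1,5)$ is precisely the strictly GIT-polystable hyperelliptic curve $(x_2^2=x_0^5x_1^5)\subset\bP(1,1,5)$ of Definition \ref{def:hyp-GIT}. Moreover $\cC$ is $\bG_m$-invariant under $\lambda\cdot\bigl([x_0{:}x_1{:}x_2],t\bigr)=\bigl([x_0{:}\lambda^2x_1{:}\lambda^5x_2],\lambda t\bigr)$ covering the standard weight-one action on $\bA^1_t$, so $\cC\to\bA^1_t$ is the desired $\bG_m$-equivariant degeneration of $C$ to the hyperelliptic curve.

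\smallskip
\emph{Where the work is.} Step 2 is a bookkeeping computation with weights, and the identification of the limit uses that $x_0^5x_1^5$ is the unique closed $\SL_2$-orbit in the boundary of the semistable locus of binary decics. The substantive point is Step 1 --- that a degree-$6$ curve on $\bP(1,1,2)$ with an $A_4$ at the vertex is exactly such a double cover, with the $A_4$ accounting for a branch point of multiplicity $5$ and with no further condition on $g$. Alternatively one can run Step 1 directly on the defining equation $F=q_2x_2^2+q_4x_2+q_6$ of $C\subset\bP(1,1,2)$ (note $q_0=0$ since $v\in C$): the $A_4$ condition forces $q_2$ to be a nonzero perfect square, and successive use of $\Aut(\bP(1,1,2))$ normalizes $F$ so that the double cover $\{x_2^2=x_0^5g\}$ is read off directly; this is more computational but avoids the extension-across-$v$ step.
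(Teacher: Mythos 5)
Step 1 fails: the projection $\pi$ does \emph{not} extend to a morphism $C\to\bP^1$ across the vertex $v$, and $C$ is \emph{not} hyperelliptic. One sees the latter immediately: a degree-$6$ curve on $\bP(1,1,2)\cong Q\subset\bP^3$ is a $(2,3)$-complete intersection, so $\omega_C\cong\cO_C(1)$ is very ample and $C$ is canonically embedded, which by Proposition \ref{prop:hyp-canonical-map} is impossible for a hyperelliptic curve. Hence there is no isomorphism $C\cong\{x_2^2=x_0^5g\}\subset\bP(1,1,5)$, and the degeneration in Step 2, whose fibers are all Weierstrass curves and hence hyperelliptic by Theorem \ref{thm:hyp=Weierstrass}, never reaches $C$. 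The precise point of failure is the assertion that $\pi|_{C\setminus\{v\}}$ extends. It is true that $\widehat\cO_{C,v}\cong\bk[[t^2,t^5]]\subset\bk[[t]]$ and that the lift of $\pi$ to the normalization of $C$ has ramification index $2$ there; but ramification index $2$ only gives $\pi^*w=c_2t^2+c_3t^3+\cdots$ with $c_2\ne0$ (here $w$ is a local coordinate on $\bP^1$), whereas descending to $C$ requires $\pi^*w\in\bk[[t^2,t^5]]$, which fails whenever $c_3\ne0$ because $t^3\notin\bk[[t^2,t^5]]$. And $c_3\ne0$: in the paper's notation, the $\bmu_2$-cover $\tC$ of a neighborhood of $v$ has an $A_7$ at the origin, one branch of which is parametrized as $x=s$, $y=us^3+\alpha s^4+O(s^5)$, where $u\ne0$ (otherwise the lift of $\pi$ would have ramification index $3>2=\deg\pi$) and $\alpha\ne0$ (otherwise the two branches of $\tC$ would meet to order $>4$, giving $A_{>7}$ rather than $A_7$); thus $\pi^*w=y/x=us^2+\alpha s^3+\cdots$, so $c_3=\alpha\ne0$. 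The ``alternative'' sketched at the end of Step 1 hits the same obstruction: completing the square requires $z\mapsto z-q_4/(2q_2)$, which is not a polynomial automorphism of $\bP(1,1,2)$.

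The degeneration you want does exist, but it necessarily carries a non-hyperelliptic curve to a hyperelliptic one, so the ambient surface $\bP(1,1,2)$ must degenerate too. That is what the paper's proof does: it builds an explicit $\bG_m$-equivariant family $(\cX,\cC)\subset\bP(1,1,2,3)\times\bA^1$ whose general fiber is $(\bP(1,1,2),C)$ and whose special fiber is a curve on the \emph{different} surface $\bP(1,2,9)$, which it then identifies with $V(x_2^2-x_0^5x_1^5)\subset\bP(1,1,5)$. Your Step 2 weight bookkeeping is the right kind of calculation, but it has to be run on a degenerating family of (surface, curve) pairs, not inside a fixed weighted projective line.
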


\begin{proof}
Let $[x,y,z]$ be the projective coordinates of $\bP(1,1,2)$. 
Denote by $C = (f(x,y) = 0)$ in the affine coordinate chart $z=1$. 
Since $C$ has an $A_4$-singularity at the cone vertex, we know that the covering curve $\tC = (f(x,y)=0)\subset \bk^2_{(x,y)}$ has an $A_7$-singularity at the origin. Thus after change of coordinates we may assume that the tangent line at the origin is $(y=0)$, and there exists analytic coordinates $(x, y')$ such that 
\begin{equation}\label{eq:A4-1}
f(x,y) = y'^2 + a x^8 + \textrm{higher order terms},
\end{equation}
where $(x,y')$ has weight $(1,4)$ and $a \neq 0$. Moreover, since the tangent line is $(y=0)$, we may choose $y'$ to be a $4$-jet of the form $y'= y - u x^3$ since it is semi-invariant under the $\bmu_2$-action $(x,y)\mapsto (-x,-y)$. If $u=0$ then we must have $a = 0$ which is contradiction to the assumption of an $A_4$ singularity. Thus we may assume $u = 1$ after a rescaling of the coordinates. Then, we may write 
\begin{equation}\label{eq:A4-2}
f(x,y) = y^2 - 2x^3 y + g_6(x,y) = y'^2 + (g_6(x, y'+x^3) - x^6),
\end{equation}
where $g_6(x,y)= \sum_{i =0}^6 b_i x^i y^{6-i}$ is a degree $6$ homogeneous polynomial in $(x,y)$. Combining \eqref{eq:A4-1} and \eqref{eq:A4-2}, we get $b_6 = 1$ and $b_5=a\neq 0$. Again after a rescaling of $(x,y)$ with weight $(1,3)$, we may assume that $b_5 = -1$. Thus we can write
\[
f(x, y) = y^2 - 2x^3 y + x^6 - x^5y + y^2 h_4(x,y),
\]
where $h_4(x,y) = \sum_{i=0}^4 b_i x^i y^{4-i}$ is a degree $4$ homogeneous polynomial in $(x,y)$.

Next we construct the test configuration degenerating $C$ to the given hyperelliptic curve. Let 
\begin{align*}
\cX & = V(sz_3 - z_1^3 + z_0 z_2) \subset \bP(1,1,2,3)_{[z_0, z_1, z_2, z_3]} \times \bA_s^1,\\
\cC & = V(z_3^2 - z_0^2 z_1^2 z_2 - s z_0 z_1^2 z_3+ \sum_{i=0}^4 b_i s^{10-2i} z_0^{6-i} z_1^{i})|_{\cX}
\end{align*}
We consider the $\bG_m$-action on $\bP(1,1,2,3)\times \bA^1$ by 
\[
t\cdot ([z_0, z_1, z_2, z_3], s) = ([z_0, t^2 z_1, t^6 z_2, t^5 z_3], t s).
\]
It is straightforward to check that $(\cX, \cC)$ is $\bG_m$-invariant which makes it a test configuration.

Finally, we analyze the fibers of $(\cX,\cC)\to \bA^1$. When $s=1$, we can identify $\cX_1$ with $\bP(1,1,2)_{[x,y,z]}$ under the closed immersion $\iota:\bP(1,1,2)\hookrightarrow \bP(1,1,2,3)$ where $\iota([x,y,z]) = [y, x, z, x^3 - yz]$. Then the equation of $\cC_1$ in $\bP(1,1,2)$ becomes
\begin{align*}
0 & = (x^3 - yz)^2 - x^2 y^2 z - x^2 y (x^3-yz) + \sum_{i=0}^4 b_i x^i y^{6-i} \\ & = y^2z^2 - 2x^3 y z + x^6 - x^5y + y^2 h_4(x,y).
\end{align*}
Thus in the affine coordinate $z=1$ we get precisely the equation $f(x,y)=0$, which implies that $(\cX_1, \cC_1)\cong (\bP(1,1,2), C)$. 

When $s=0$, we can identify $\cX_0$ with $\bP(1,2,9)_{[y_0, y_1, y_2]}$ under the closed immersion $\iota':\bP(1,2,9)\hookrightarrow \bP(1,1,2,3)$ where $\iota'([y_0,y_1,y_2]) = [y_0^3, y_0y_1, y_1^3, y_2]$. Then the equation of $\cC_0$ in $\bP(1,2,9)$ becomes $0 = y_2^2 - y_0^8 y_1^5$. We claim that $\cC_0$ is isomorphic to the strictly GIT polystable hyperelliptic curve. Indeed, we can take another closed immersion $\iota'':\bP(1,2,9)\hookrightarrow \bP(1,1,5,9)_{[x_0,x_1,x_2,x_3]}$ where $\iota''([y_0,y_1,y_2]) = [y_0^2, y_1, y_0y_2, y_2^2]$. Then $(\cX_0,\cC_0)$ is identified with $(V(x_2^2 - x_0 x_3), V(x_2^2 - x_0 x_3, x_3 - x_0^4 x_1^5))$ inside $\bP(1,1,5,9)$. Thus $\cC_0$ is isomorphic to $V(x_2^2 - x_0^5 x_1^5)$ inside $\bP(1,1,5)_{[x_0,x_1,x_2]}$ after substituting $x_3$ by $x_0^4 x_1^5$. The proof is finished.
\end{proof}

\begin{prop}\label{prop:A3-cone}
Let $C$ be a curve on $\bP(1,1,2)_{[x,y,z]}$ of degree $6$ with an $A_3$-singularity at the cone vertex $[0,0,1]$. Then after change of coordinates we may write 
\begin{equation}\label{eq:A3-1}
C = (y^2 z^2 + B x^3 y z + A x^6 + y^2 h_4(x,y) = 0)
\end{equation}
where $4A\neq B^2$. Moreover, $C$ admits a $\bG_m$-equivariant degeneration to $C_{A,B}$. If in addition $h_4\neq 0$, then $C$ has at worst $A_4$-singularities in the smooth locus of $\bP(1,1,2)$. 
\end{prop}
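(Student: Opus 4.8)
The plan is to follow the pattern of the proof of Proposition~\ref{prop:A4-cone}: pass to the $\bmu_2$-cover of a neighbourhood of the cone vertex, put the defining equation in normal form by a sequence of automorphisms of $\bP(1,1,2)$, and then read off the remaining claims. Since $C$ is singular at $[0,0,1]$ it passes through the vertex, so the degree-$6$ equation has no $z^3$-term and we may write $F = q_2(x,y)z^2 + q_4(x,y)z + q_6(x,y)$ with $q_i$ homogeneous of degree $i$; restricting to the chart $z=1$ gives the covering curve $\tC = (\tilde F = 0)\subset\bk^2_{(x,y)}$ with $\tilde F = q_2+q_4+q_6$, and the hypothesis that $C$ has an $A_3$ at the vertex means exactly that $\tC$ has an $A_5$ at the origin (just as an $A_4$ at the vertex corresponds to an $A_7$ on the cover in Proposition~\ref{prop:A4-cone}). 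The tangent cone of an $A_5$ is a double line, so after a $\GL_2$-substitution on $(x,y)$ we may take $q_2 = y^2$; by Weierstrass preparation $\tilde F$ is then analytically $(\text{unit})\cdot(y^2 + a_1(x)y + a_0(x))$ with $a_1$ odd and $a_0$ even, and the $A_5$-condition $\ord_x(a_0 - \tfrac14 a_1^2) = 6$ first forces the $x^4$-coefficient of $q_4$ to vanish. A translation $z\mapsto z + (\text{quadratic in }x,y)$ then clears the $y^2$-divisible part of $q_4$, leaving $q_4 = Bx^3y$, and in this form the $A_5$-condition becomes precisely $4\gamma_6\neq B^2$, where $\gamma_6$ is the $x^6$-coefficient of $q_6$.

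\textbf{The main step.} What remains is to remove the $x^5y$-monomial from $q_6 = \gamma_6 x^6 + \gamma_5 x^5 y + y^2 h_4(x,y)$; this is the crux of the argument. I would apply the substitution $x\mapsto x + by$ (which is upper triangular, hence preserves $q_2 = y^2$ and the divisibility $y\mid q_4$) followed by a compensating $z$-translation chosen to put $q_4$ back in the form $Bx^3y$. Bookkeeping the net effect on the $x^5y$-coefficient of $q_6$ produces a \emph{linear} equation for $b$ whose leading coefficient is a nonzero multiple of $4\gamma_6 - B^2$; it is therefore solvable precisely because of the $A_5$-condition, and neither $q_2$ nor the leading structure of $q_4$ nor $\gamma_6$ is disturbed. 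Setting $A := \gamma_6$ and absorbing all the corrections into $h_4$ yields $C = (y^2z^2 + Bx^3yz + Ax^6 + y^2 h_4(x,y)=0)$ with $4A\neq B^2$, as claimed. I expect the delicacy here — getting the combined coordinate change right and seeing that the only obstruction is controlled by $4A-B^2$ — to be the main technical hurdle.

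\textbf{The $\bG_m$-degeneration.} This is immediate from the normal form. Consider the one-parameter subgroup $\lambda(t)\cdot[x:y:z] = [x : t^{-1}y : tz]$ of $\Aut(\bP(1,1,2))$. Acting on $C$ replaces $F$ by $F(x,ty,t^{-1}z) = y^2z^2 + Bx^3yz + Ax^6 + t^2 y^2 h_4(x,ty)$, and since every monomial $x^i y^{6-i}$ of $t^2y^2h_4(x,ty)$ occurs with exponent $t^{6-i}$ and $i\le 4$, the limit as $t\to 0$ is $y^2z^2 + Bx^3yz + Ax^6$, i.e. $C_{A,B}$. The orbit closure $\overline{\{(\lambda(t)\cdot C,\,t)\}}\subset \bP(1,1,2)\times\bA^1$ is the desired $\bG_m$-equivariant test configuration.

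\textbf{Singularities away from the vertex.} Work in the chart $\{y\neq 0\}$ with coordinates $\bar x = x/y$, $\bar z = z/y^2$; completing the square in $\bar z$ identifies $C\cap\{y\neq 0\}$ with $(w^2 + p(\bar x) = 0)$, where $p(\bar x) = h_4(\bar x,1) + (A - \tfrac{B^2}{4})\bar x^6$ has degree exactly $6$ (its leading coefficient being nonzero since $4A\neq B^2$) and has \emph{no} $\bar x^5$-term. The singularities of such a curve (at points with $\bar z$ finite) are an $A_{m-1}$ at each root of $p$ of multiplicity $m$; an $A_5$ or worse would require a root of multiplicity $\ge 6$, forcing $p = c(\bar x-a)^6$, and vanishing of the $\bar x^5$-coefficient then forces $a=0$, hence $p = c\bar x^6$, hence $h_4 = 0$. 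If $A\neq 0$, then $F|_{y=0} = Ax^6$ meets the ruling $\{y=0\}$ only at the vertex, so $\{y\neq 0\}$ already contains every point of $C$ in the smooth locus and we are done. If $A = 0$ (so $B\neq 0$), then $F = y(yz^2 + Bx^3z + yh_4)$; the ruling $\{y=0\}$ is smooth, an identical chart computation shows the quintic component $C' = (yz^2 + Bx^3z + yh_4 = 0)$ takes the form $(w^2 + \tilde p(\bar x)=0)$ with $\tilde p = h_4(\bar x,1) - \tfrac{B^2}{4}\bar x^6$ of degree $6$ and no $\bar x^5$-term (so again an $A_{\ge 5}$ on $C'$ forces $h_4=0$), and $C'$ meets $\{y=0\}$ only at the vertex and transversally at $[1:0:0]$ (an $A_1$). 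Hence, provided $h_4\neq 0$, $C$ has at worst $A_4$-singularities at smooth points of $\bP(1,1,2)$.
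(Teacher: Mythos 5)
Your proposal is correct and follows essentially the same route as the paper's proof: the normal form is obtained by the same sequence of $\bP(1,1,2)$-automorphisms (a $\GL_2$-change putting $q_2=y^2$, a $z$-translation, and the crucial shear $x\mapsto x+by$ whose unique solvability is governed by the nonvanishing of $4A-B^2$), the degeneration uses the identical one-parameter subgroup $[x:y:z]\mapsto[x:t^{-1}y:tz]$, and the singularity bound away from the vertex is read off by completing the square in the same affine charts, with the $A=0$ ruling handled separately as in the paper's chart $x=1$.
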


\begin{proof}
Let $[x,y,z]$ be the projective coordinates of $\bP(1,1,2)$. 
Denote by $C = (f(x,y) = 0)$ in the affine coordinate chart $z=1$. 
Since $C$ has an $A_3$-singularity at the cone vertex, we know that the covering curve $\tC = (f(x,y)=0)\subset \bA^2_{(x,y)}$ has an $A_5$-singularity at the origin. Thus after change of coordinates we may assume that the tangent line at the origin is $(y=0)$, and there exists   an analytic coordinate $(x, y')$ such that 
\[
f(x,y) = y'^2 + a x^6 + \textrm{higher order terms},
\]
where $(x,y'= y- ux^3)$ has weight $(1,3)$ and $a\neq 0$. Thus we may write 
\begin{equation}\label{eq:A3-2}
f(x,y) = y^2 + B x^3y  + Ax^6 + \textrm{higher order terms},
\end{equation}
where $(x,y)$ has weight $(1,3)$ and $(A,B) = (a + u^2, -2u)$ satisfies $4A - B^2 = 4a \neq 0$.

Next we claim that there exists a unique unipotent automorphism $\sigma:[x,y,z]\mapsto [x+ vy, y , z+q(x,y)]$ that transforms $C$ into the standard form \eqref{eq:A3-1} where $v\in \bk$ and $q(x,y)$ is a degree $2$ homogeneous polynomial. According to \eqref{eq:A3-2} we have $C=(F(x,y,z)=0)$ where 
\[
F(x,y,z) = y^2 z^2 + g_4(x,y) z + g_6(x,y),
\]
where $g_4(x,y) = B x^3 y + \sum_{i=0}^2 a_i x^i y^{4-i}$ and $g_6(x,y) = A x^6+\sum_{i=0}^5 b_i x^i y^{6-i}$. Thus $\sigma^* C=(F\circ\sigma(x,y,z)) =0)$ where 
\begin{align*}
F\circ\sigma(x,y,z) & = y^2(z+q)^2 + g_4(x+vy, y) (z+q) + g_6(x+vy, y)\\
& = y^2 z^2 + (2y^2 q + g_4(x+vy, y)) z + (y^2 q^2 + g_4(x+vy, y) q + g_6(x+vy, y))\\
& =: y^2 z^2 + g_{4}^\sigma (x, y) z + g_6^{\sigma}(x,y).
\end{align*}
For every $v\in \bk$ there exists a unique $q = \frac{Bx^3 y - g_4(x+vy, y)}{2y^2}$ such that $g_4^\sigma = Bx^3 y$. Then we have
\begin{align*}
    g_6^\sigma(x,y) & = g_6(x+vy,y) +  B x^3y q - y^2 q^2 \\
    & = Ax^6 + (b_5 +6A v + B\cdot \mathrm{coeff}_{x^2}(q) ) x^5 y +  \textrm{lower order terms}
\end{align*}
From the above description we have $\mathrm{coeff}_{x^2}(q) = -\frac{1}{2}(a_2+3Bv)$. Thus we have 
\[
\mathrm{coeff}_{x^5 y}(g_6^\sigma) = b_5 +6A v -\frac{B}{2}(a_2+3Bv) = b_5 - \frac{B a_2}{2}  + \frac{3}{2}(4A-B^2) v.
\]
Since $4A-B^2\neq 0$,  there exists a unique $v\in \bk$ such that $\mathrm{coeff}_{x^5 y}(g_6^\sigma)  = 0$. As a result, we have 
\[
F\circ \sigma (x,y,z) = y^2 z^2 + Bx^3yz + Ax^6 +  y^2 h_4(x,y),
\]
where $h_4(x,y) := \frac{g_6^\sigma - Ax^6}{y^2}$,
and the claim is proved.

Next we show the statement on degenerations. Suppose $C$ has the standard form \eqref{eq:A3-1}. Let $\rho: \bG_m\to \Aut(\bP(1,1,2))$ be a $1$-PS such that $\rho(t)\cdot [x,y,z ] = [x, t^{-1} y, t z]$. Then clearly $\rho(t)_* C = (y^2z^2 + Bx^3yz + Ax^6 + t^2 y^2 h_4(x, ty) = 0)$ which implies that $\lim_{t\to 0} \rho(t)_* C = C_{A,B}$. 

Finally we address the singularities of $C$ in the smooth locus of $\bP(1,1,2)$. Assume that $h_4\neq 0$. We first look at the affine chart $y = 1$, and consider the new coordinates $(x, z' = z+\frac{B}{2} x)$. Then $C$ is defined by the equation
\[
z'^2 + \frac{4A-B^2}{4} x^6 + \sum_{i=0}^4 b_i x^i = 0.
\]
Since $4A-B^2\neq 0$ and $h_4\neq 0$, we know that the polynomial $\frac{4A-B^2}{4} x^6 + \sum_{i=0}^4 b_i x^i$ has no root of multiplicity $6$. Thus $C$ has at worst $A_4$-singularities in the affine chart $y=1$. Next, we look at the affine chart $x=1$, where $C$ is defined by the equation
\[
y^2 z^2 + By z + A + \sum_{i=0}^4 b_i y^{6-i} = 0.
\]
We only need to look at the singularities on the line $(y=0)$, which is non-empty if and only if $A= 0$. By analyzing the partial derivatives in $y$ and $z$, we conclude that $C$ has at worst an $A_1$-singularity at $[1,0,0]$, and is smooth elsewhere on the line $y=0$ and $x\neq 0$. Thus the proof is finished.
\end{proof}

\section{Hyperelliptic curves and ribbons}\label{sec:hyp-and-rib}

In this section, we revisit the moduli theory of hyperelliptic curves and ribbons of genus $g\geq 2$ after \cite{AV04, BE95}. We define moduli stacks $\sH_g$ of hyperelliptic curves and $\sR_g$ of ribbons (Definition \ref{def:hyp-rib-stacks}) and give explicit quotient stack presentations (Theorems \ref{thm:hyp-stack} and \ref{thm:rib-stack}). We also show that the deformation of the hyperelliptic ribbon (the unique curve in $\sH_g\cap \sR_g$) is unobstructed (Theorem \ref{thm:hyp-ribbon-unobstructed}).

\subsection{Hyperelliptic curves}

\begin{defn}\label{def:hyperelliptic}
A \emph{hyperelliptic curve} of (arithmetic) genus $g$ is a proper curve $C$ over $\bk$ satisfying the following conditions.
\begin{enumerate}
    \item $H^0(C, \cO_C) \cong \bk$ and $H^1(C, \cO_C) \cong \bk^{g}$;
    \item there exists a finite morphism $\varphi: C \to \bP^1$ such that $\varphi_* \cO_C$ is a sheaf of rank two. 
\end{enumerate}
\end{defn}

\begin{defn}
A \emph{Weierstrass curve} of (arithmetic) genus $g$ is a Cartier divisor 
 $C$ of $\bP(1,1,g+1)_{[x,y,z]}$ defined by the equation $C=(z^2 = f(x,y))$ where $f$ is a (possibly zero) binary form of degree $2g+2$.
\end{defn}

\begin{thm}\label{thm:hyp=Weierstrass}
Every hyperelliptic curve is isomorphic to a Weierstrass curve. Conversely, every Weierstrass curve is hyperelliptic. In particular, if $C$ is a hyperelliptic curve, then $\varphi_* \cO_C$ is a locally free sheaf of rank $2$ on $\bP^1$.
\end{thm}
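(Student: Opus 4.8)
The plan is to prove both directions by explicitly constructing the relevant morphisms and analyzing the pushforward sheaf. First I would treat the converse direction, which is easier: given a Weierstrass curve $C = (z^2 = f(x,y)) \subset \bP(1,1,g+1)$, the projection $\varphi \colon C \to \bP^1_{[x,y]}$ forgetting $z$ is finite, and one computes directly that $\varphi_*\cO_C \cong \cO_{\bP^1} \oplus \cO_{\bP^1}(-(g+1))$ as an $\cO_{\bP^1}$-module, since $C$ is cut out by a degree-$2$ equation in $z$ over the base $\bP^1$ with $z$ having weight $g+1$. In particular $\varphi_*\cO_C$ is locally free of rank $2$, so condition (2) of Definition \ref{def:hyperelliptic} holds, and condition (1) ($H^0(C,\cO_C) \cong \bk$ and $H^1(C,\cO_C) \cong \bk^g$) follows from the standard cohomology computation on $\bP(1,1,g+1)$ using the adjunction formula or direct Čech calculation; thus $C$ is hyperelliptic.

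For the main direction, let $C$ be a hyperelliptic curve with $\varphi \colon C \to \bP^1$ as in the definition. The first key step is to upgrade the hypothesis ``$\varphi_*\cO_C$ has rank two'' to ``$\varphi_*\cO_C$ is locally free of rank two.'' Since $\varphi$ is finite and $\bP^1$ is a smooth curve (hence a regular one-dimensional scheme), $\varphi_*\cO_C$ is a coherent sheaf on $\bP^1$ with no embedded points provided $C$ has no embedded/isolated zero-dimensional components — which holds because $C$ is a curve in the paper's sense (geometrically connected, equidimensional of dimension one) and $H^0(C,\cO_C) = \bk$ forces $C$ to have pure dimension one with connected structure sheaf. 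A torsion-free coherent sheaf on a smooth curve is locally free, so $\varphi_*\cO_C$ is locally free; it has generic rank two, hence rank two everywhere. By the structure theorem for vector bundles on $\bP^1$, write $\varphi_*\cO_C \cong \cO_{\bP^1}(a) \oplus \cO_{\bP^1}(b)$ with $a \geq b$. The unit $\cO_{\bP^1} \to \varphi_*\cO_C$ splits (it is a sub-bundle realizing the sheaf of constants, and $C \to \bP^1$ admits a section or at least the trace map splits it in characteristic zero), forcing $a = 0$. Then Riemann–Roch / the projection formula gives $\chi(C,\cO_C) = \chi(\bP^1, \varphi_*\cO_C) = 1 + (1 + b)$, and since $\chi(C,\cO_C) = 1 - g$ we get $b = -(g+1)$. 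Thus $\varphi_*\cO_C \cong \cO_{\bP^1} \oplus \cO_{\bP^1}(-(g+1))$.

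The final step is to produce the embedding into $\bP(1,1,g+1)$. The $\cO_{\bP^1}$-algebra structure on $\varphi_*\cO_C$ is determined by the multiplication map, and using the splitting we pick a local generator $z$ of the $\cO_{\bP^1}(-(g+1))$-summand; then $z^2$ lies in $\varphi_*\cO_C = \cO_{\bP^1} \oplus \cO_{\bP^1}(-(g+1)) \cdot z$, and after adjusting $z$ by the $\cO_{\bP^1}$-component (completing the square, valid in characteristic zero) we may assume $z^2 = f$ for a section $f \in H^0(\bP^1, \cO_{\bP^1}(2(g+1)))$, i.e. a binary form of degree $2g+2$ (possibly zero, in the ribbon case). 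This exhibits $C \cong \mathrm{Spec}_{\bP^1}(\cO_{\bP^1} \oplus \cO_{\bP^1}(-(g+1)))$ with the displayed algebra structure, which is exactly the Weierstrass curve $(z^2 = f(x,y))$ in $\bP(1,1,g+1)$. The ``in particular'' clause is already established as part of the argument. The main obstacle I anticipate is the careful handling of the completing-the-square step and the algebra-structure identification when $C$ is non-reduced (the ribbon case, $f = 0$) or singular, where one must check that the abstract $\cO_{\bP^1}$-algebra isomorphism genuinely comes from a closed immersion into the weighted projective space rather than merely an affine-local statement; this is resolved by noting that $\cO_{\bP^1}(-(g+1))$ is the twist making $\mathrm{Proj}$ of the symmetric-type algebra land in $\bP(1,1,g+1)$, and invoking relative $\mathrm{Proj}$/$\mathrm{Spec}$ functoriality.
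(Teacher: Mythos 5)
Your proposal is essentially correct and follows the same overall strategy as the paper's proof: establish local freeness of $\varphi_*\cO_C$, invoke Grothendieck splitting, determine the splitting type from the genus, analyze the $\cO_{\bP^1}$-algebra structure, complete the square, and identify the result with a Cartier divisor in $\bP(1,1,g+1)$ via the realization of $\bP(1,1,g+1)\setminus\{[0,0,1]\}$ as the total space of $\cO_{\bP^1}(g+1)$. Two places where you deviate or leave a gap are worth flagging. First, your route to torsion-freeness of $\varphi_*\cO_C$ goes through ``$C$ has no embedded points,'' but the reasons you give (geometric connectedness, equidimensionality of dimension one) do not actually exclude embedded points; the correct justification is that a nonzero torsion subsheaf of $\cO_C$ would be supported at finitely many points and hence contribute extra global sections, contradicting $H^0(C,\cO_C)\cong\bk$. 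The paper avoids this detour by arguing directly that $H^0(\bP^1,\varphi_*\cO_C)\cong\bk$ forces $\varphi_*\cO_C$ to be torsion-free. Second, for the splitting type, you split off the trivial summand via the trace map; this works in characteristic zero and is a legitimate alternative to the paper's direct count of $h^0$ and $h^1$ of $\cO(a)\oplus\cO(b)$, but your parenthetical alternative ``$C\to\bP^1$ admits a section'' is false for a generic hyperelliptic double cover and should be dropped. With those two points tightened, your argument is sound and matches the paper's in substance.
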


\begin{proof}
We start by showing the first statement.
Let $C$ be a hyperelliptic curve of genus $g$. Since $\varphi$ is surjective,  $\varphi^{\#}: \cO_{\bP^1} \to \varphi_* \cO_C$ is non-zero at the generic point of $\bP^1$. 
Then $H^0(\bP^1, \varphi_* \cO_C) \cong H^0(C, \cO_C) \cong \bk$ implies that $\varphi_* \cO_C$ is torsion free as an $\cO_{\bP^1}$-module. Hence $\varphi_* \cO_C$ is locally free of rank $2$. By the Grothendieck splitting theorem, we have $\varphi_* \cO_C\cong \cO_{\bP^1}(a) \oplus \cO_{\bP^1}(b)$ for some integers $a\geq b$. Then $h^0(\bP^1, \varphi_* \cO_C) = 1$ and $h^1(\bP^1, \varphi_* \cO_C) = g$ implies that $a =0$ and $b = -g - 1$. Denote by $L:=\cO_{\bP^1}(-g-1)$ so we have $\varphi_* \cO_C \cong \cO_{\bP^1}\oplus L$. Moreover, we may choose the isomorphism so that $\varphi^{\#}(s) = (s,0)$. Then we have $C \cong \Spec_{\bP^1} \varphi_* \cO_C$.

Next, we analyze the $\cO_{\bP^1}$-algebra structure on $\cO_{\bP^1} \oplus L$. It is clear that the algebra structure is determined by a $\cO_{\bP^1}$-module homomorphism $L^{\otimes 2}\to \cO_{\bP^1} \oplus L$, which then corresponds to a section $(h_2,h_1)\in H^0(\bP^1, L^{\otimes -2} \oplus L^{\otimes -1})$. We claim that $C$ is isomorphic to the Cartier divisor $C'$ on $\bP(1,1,g+1)$ defined by $(z^2 - h_1(x,y)z - h_2(x,y) =0)$. Indeed, let $S:=\bP(1,1,g+1)\setminus \{[0,0,1]\}$. Then we have the projection map $\varphi_S: S\to \bP^1$ defined by $\varphi_S([x,y,z]) = [x,y]$. Moreover, $\varphi_S$ realizes $ S$ as the total space of the line bundle $L^\vee$ over $\bP^1$, so we have
\[
S \cong \Spec_{\bP^1} \oplus_{m=0}^\infty L^{\otimes m}.
\]
Since $(z=0)$ corresponds to the zero section of $\varphi_S$, we have that $z$ corresponds to a nowhere vanishing section $z_S\in H^0(\bP^1, L\otimes \cO_{\bP^1}(g+1))$, such that 
\[
C' \cong \Spec_{\bP^1} \oplus_{m=0}^{\infty} L^{\otimes m}/(z_S^2 -h_1(x,y) z_S - h_2(x,y)).
\]
Since $z_S^m$ generates $L^{\otimes m}\otimes \cO_{\bP^1}(m(g+1))$ for any $m\in \bN$, we know that 
\[
C' \cong  \Spec_{\bP^1} \cO_{\bP^1}\oplus L,
\]
where the algebra structure is determined by 
\[
L^{\otimes 2}\to \cO_{\bP^1}\oplus L, \quad s\mapsto (s\otimes h_2, s\otimes h_1) \quad\textrm{for any local section $s$ of $L$}.
\]
Thus we have $C\cong C'$. After a change of coordinate $[x,y,z]\mapsto [x,y,z-\frac{h_1}{2}]$, we see that $C'$ is isomorphic to the curve $C'':= (z^2 = f(x,y))$ in $\bP(1,1,g+1)$ where $f = \frac{h_1^2}{4} +h_2$. Thus we have that $C\cong C''$, and $\varphi$ corresponds to $\varphi_S|_{C''}$ under this isomorphism. 

Conversely, we shall show that a Weierstrass curve is always hyperelliptic. If $C = (z^2 = f(x,y))$ is a Weierstrass curve in $\bP(1,1,g+1)$, then the above argument implies that $\varphi=\varphi_S|_{C}: C\to \bP^1$ is affine and $\varphi_* \cO_C \cong \cO_{\bP^1} \oplus L$, and hence $H^i(C, \cO_C) \cong H^0(\bP^1, \varphi_* \cO_C) $. This implies that $\varphi$ is finite and hence $C$ is hyperelliptic. The proof is finished.
\end{proof}

\begin{prop}\label{prop:hyp-canonical-map}
Let $C$ be a hyperelliptic curve of genus $g$. Then $C$ has local complete intersection singularities, $|\omega_C|$ is base point free and induces a map $f:C\to \bP^{g-1}$. Moreover, $f$ can be factored into $f = \iota\circ \varphi$ where $\iota:\bP^1\hookrightarrow \bP^{g-1}$ is a closed embedding whose image is a rational normal curve. In particular, $\varphi$ is unique up to automorphisms of $\bP^1$.
\end{prop}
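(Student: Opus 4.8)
The plan is to reduce to the Weierstrass normal form supplied by Theorem~\ref{thm:hyp=Weierstrass} and then run an explicit adjunction computation on the weighted projective plane $\bP := \bP(1,1,g+1)$; throughout we use $g \ge 2$. So I would begin by writing $C = (z^2 = f(x,y)) \subset \bP$ with $f$ a binary form of degree $2g+2$, and letting $\varphi = \varphi_S|_C$, where $\varphi_S\colon S := \bP \setminus \{[0,0,1]\} \to \bP^1$, $[x,y,z] \mapsto [x,y]$, exhibits $S$ as the total space of $\cO_{\bP^1}(g+1)$ over $\bP^1$ (as in the proof of Theorem~\ref{thm:hyp=Weierstrass}). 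Since $z^2 = f(0,0) = 0$ forces $z = 0$, the curve $C$ avoids the vertex $[0,0,1]$, so $C$ is an effective Cartier divisor on the \emph{smooth} surface $S$; hence $C$ has local complete intersection singularities and is in particular Gorenstein, so $\omega_C$ is a line bundle.

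Next I would compute $\omega_C$ by adjunction on $S$ (equivalently on $\bP$ along $C$, which lies in the smooth locus): from $\omega_\bP \cong \cO_\bP(-(g+3))$ and $\cO_\bP(C) \cong \cO_\bP(2g+2)$ one gets $\omega_C \cong \cO_\bP(g-1)|_C$, which under the total-space description of $S$ equals $\varphi^*\cO_{\bP^1}(g-1)$. Then, using $\varphi_*\cO_C \cong \cO_{\bP^1} \oplus \cO_{\bP^1}(-g-1)$ and the projection formula,
\begin{align*}
H^0(C,\omega_C) &= H^0(\bP^1,\, \varphi_*\cO_C \otimes \cO_{\bP^1}(g-1)) \\
&= H^0(\bP^1, \cO_{\bP^1}(g-1)) \oplus H^0(\bP^1, \cO_{\bP^1}(-2)) = H^0(\bP^1, \cO_{\bP^1}(g-1)),
\end{align*}
and this isomorphism is exactly pullback along $\varphi$. (Equivalently, one may read this off the exact sequence $0 \to \cO_\bP(-(g+3)) \to \cO_\bP(g-1) \to \omega_C \to 0$ on $\bP$, using $H^0(\cO_\bP(-(g+3))) = H^1(\cO_\bP(-(g+3))) = 0$, to see that $H^0(C,\omega_C)$ is spanned by the restrictions of the monomials $x^i y^{g-1-i}$.)

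It then follows that $|\omega_C| = \varphi^*|\cO_{\bP^1}(g-1)|$: since $|\cO_{\bP^1}(g-1)|$ is base point free on $\bP^1$ and $\varphi$ is a morphism, $|\omega_C|$ is base point free, and the morphism $f\colon C \to \bP^{g-1}$ it defines is $f = \iota \circ \varphi$, where $\iota\colon \bP^1 \hookrightarrow \bP^{g-1}$ is the closed embedding attached to the complete linear system $|\cO_{\bP^1}(g-1)|$, i.e.\ the degree-$(g-1)$ Veronese embedding, whose image is a rational normal curve. For the last assertion, the canonical map $f$ depends only on $C$ (up to the $\PGL_g$ coming from the choice of a basis of $H^0(C,\omega_C)$), and its image $R := f(C) = \iota(\bP^1)$, the rational normal curve, is intrinsic to $C$; since $\iota$ is an isomorphism onto $R$, any other factorization $f = \iota' \circ \varphi'$ through a rational normal curve satisfies $\varphi' = (\iota')^{-1} \circ f$ (viewing $\iota'\colon \bP^1 \xrightarrow{\sim} R$), so $\varphi' = ((\iota')^{-1}\circ \iota) \circ \varphi$ with $(\iota')^{-1}\circ \iota \in \Aut(\bP^1)$. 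Hence $\varphi$ is unique up to an automorphism of $\bP^1$ — and conversely any such post-composition is again a legitimate hyperelliptic map.

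I expect the only real content to be the identifications $\omega_C \cong \varphi^*\cO_{\bP^1}(g-1)$ and $H^0(C,\omega_C) = \varphi^* H^0(\bP^1,\cO_{\bP^1}(g-1))$, i.e.\ verifying that every canonical form on a hyperelliptic curve is pulled back from the base of the hyperelliptic map; once this is in hand, the lci property, base point freeness, the description of $\iota$, and the uniqueness of $\varphi$ are all formal. Two minor points to flag: the degenerate genus-two case, where $\bP^{g-1} = \bP^1$, $\iota$ is an isomorphism, and $f = \varphi$ is itself the hyperelliptic double cover; and the fact that the argument never uses reducedness of $C$ — only that it is an effective Cartier divisor on $S$ missing the vertex — so it applies verbatim to the hyperelliptic ribbon $(z^2 = 0)$.
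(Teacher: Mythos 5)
Your proof is correct and follows essentially the same path as the paper's: reduce to the Weierstrass model, observe that $C$ misses the vertex and so is a Cartier divisor on the smooth surface $S$, compute $\omega_C \cong \varphi^*\cO_{\bP^1}(g-1)$ by adjunction, and use $\varphi_*\cO_C \cong \cO_{\bP^1}\oplus\cO_{\bP^1}(-g-1)$ with the projection formula to identify $H^0(C,\omega_C)$ with $\varphi^*H^0(\bP^1,\cO_{\bP^1}(g-1))$. The extra remarks (the $g=2$ degenerate case, applicability to the ribbon $z^2=0$) are accurate but not needed.
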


\begin{proof}
By Theorem \ref{thm:hyp=Weierstrass}, we may assume that $C$ is a Weierstrass curve in $\bP(1,1,g+1)$. Since $C$ is a Cartier divisor in the smooth surface $S = \bP(1,1,g+1)\setminus \{[0,0,1]\}$, we know that $C$ has local complete intersection singularities. By adjunction, we have $\omega_C \cong \omega_S(C)|_C$. Moreover, 
\[
\omega_S(C)\cong \cO_{\bP(1,1,g+1)}(K_{\bP(1,1,g+1)} + C)|_S \cong \cO_{\bP(1,1,g+1)}(g-1)|_S\cong \varphi_S^* \cO_{\bP^1}(g-1). 
\]
Thus we have $\omega_C \cong \varphi^* \cO_{\bP^1}(g-1)$ which implies that $|\omega_C|$ is base point free. Moreover, we have 
\[
H^0(C, \omega_C) \cong H^0(\bP^1, \varphi_* \omega_C) \cong H^0(\bP^1, \cO_{\bP^1}(g-1)\oplus (L\otimes \cO_{\bP^1}(g-1))) \cong H^0(\bP^1, \cO_{\bP^1}(g-1)). 
\]
Here we used the fact that $L = \cO_{\bP^1}(-g-1)$ which implies $H^0(\bP^1, L\otimes \cO_{\bP^1}(g-1))=0$. Hence $f$ factors into $f = \iota\circ\varphi$ where $\iota:\bP^1\to \bP^{g-1}$ is induced by the complete linear system $|\cO_{\bP^1}(g-1)|$. The last statement on $\varphi$ follows easily from this as it is determined by the canonical map $f$ up to projective equivalence. 
\end{proof}

\begin{cor}\label{cor:equiv-hyp}
    Let $C, C'$ be hyperelliptic curves of genus $g$.  Then, $C \cong C'$ if and only if $C$ and $C'$ are equivalent as Weierstrass curves, i.e. by an automorphism of $\bP(1,1,g+1)$.  
\end{cor}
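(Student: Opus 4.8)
The plan is to handle the two implications separately; one is formal, and the other relies on the rigidity of the hyperelliptic pencil recorded in Proposition~\ref{prop:hyp-canonical-map}. If $\Phi\in\Aut(\bP(1,1,g+1))$ carries $C$ to $C'$, then $\Phi|_C\colon C\to C'$ is an isomorphism, which settles the ``only if'' direction once we realize $C$ and $C'$ as Weierstrass curves via Theorem~\ref{thm:hyp=Weierstrass}. For the converse, by Theorem~\ref{thm:hyp=Weierstrass} I would assume $C=(z^2=f(x,y))$ and $C'=(z^2=f'(x,y))$ inside $\bP(1,1,g+1)$, with hyperelliptic maps $\varphi=\varphi_S|_C$ and $\varphi'=\varphi_S|_{C'}$ to $\bP^1$ as in the proof of that theorem. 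Given an isomorphism $\psi\colon C\xrightarrow{\ \sim\ }C'$, the goal is to upgrade it to an automorphism of $\bP(1,1,g+1)$ in two steps: first arrange that $\psi$ respects the two projections to $\bP^1$, and then show that an isomorphism over $\bP^1$ must be induced by a rescaling of the coordinate $z$.

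For the first step, I would observe that $\varphi'\circ\psi\colon C\to\bP^1$ is finite and that $(\varphi'\circ\psi)_*\cO_C\cong\varphi'_*\cO_{C'}$ is locally free of rank two, so $\varphi'\circ\psi$ is again a map as in Definition~\ref{def:hyperelliptic}(2). By the uniqueness clause of Proposition~\ref{prop:hyp-canonical-map} (the canonical map of $C$ factors through any such map, and its image, a rational normal curve, is intrinsic to $C$), there is a unique $\bar\psi\in\Aut(\bP^1)$ with $\varphi'\circ\psi=\bar\psi\circ\varphi$. Lifting $\bar\psi$ to $\mathrm{GL}_2$ acting on the weight-one coordinates $(x,y)$ gives an automorphism $\Phi_0$ of $\bP(1,1,g+1)$ fixing $z$, with $\varphi_S\circ\Phi_0=\bar\psi\circ\varphi_S$, and $\Phi_0^{-1}(C')$ is again a Weierstrass curve. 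Replacing $C'$ by $\Phi_0^{-1}(C')$ and $\psi$ by $\Phi_0^{-1}|_{C'}\circ\psi$, we may assume $\bar\psi=\mathrm{id}$, so that $\psi$ is an isomorphism of $\bP^1$-schemes.

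For the second step, I would recall from the proof of Theorem~\ref{thm:hyp=Weierstrass} that $C\cong\Spec_{\bP^1}(\cO_{\bP^1}\oplus L)$ with $L\cong\cO_{\bP^1}(-g-1)$, the algebra structure being determined by the relation $z^2=f$ on the distinguished generator, and similarly for $C'$ with relation $z'^2=f'$. An isomorphism over $\bP^1$ corresponds to an $\cO_{\bP^1}$-algebra isomorphism $\theta$ in the opposite direction; $\cO_{\bP^1}$-linearity forces $\theta(z')=\alpha+\beta z$ with $\alpha\in H^0(\bP^1,\cO(g+1))$ and $\beta\in\bk$, and comparing $\theta(z'^2)=f'$ with $\theta(z')^2=\alpha^2+2\alpha\beta z+\beta^2 f$ gives $\alpha\beta=0$ and $f'=\alpha^2+\beta^2 f$. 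Surjectivity of $\theta$ forces $\beta\neq 0$, hence $\alpha=0$ and $f'=\beta^2 f$; thus $\theta$ is exactly the map of algebras induced by the automorphism $[x,y,z]\mapsto[x,y,\beta z]$ of $\bP(1,1,g+1)$, call it $\Phi_1$, and $\Phi_1$ restricts to $\psi$. Composing, $\Phi_0\circ\Phi_1$ is an automorphism of $\bP(1,1,g+1)$ carrying the original $C$ to $C'$, which is what we want.

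The one step with real content is the reduction to an isomorphism over $\bP^1$, and that is precisely where Proposition~\ref{prop:hyp-canonical-map} is indispensable: without the rigidity of the degree-two pencil there is no way to pin down $\bar\psi$. Everything afterward is the routine algebra of $\bP^1$-algebras of the form $\cO_{\bP^1}\oplus\cO_{\bP^1}(-g-1)$, and I expect it to run uniformly whether or not $f$ (equivalently $f'$) vanishes, so that reduced hyperelliptic curves and the genus-$g$ ribbon $(z^2=0)$ are all covered by a single argument.
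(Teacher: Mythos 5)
Your proof is correct. Both you and the paper reduce the nontrivial implication to producing, via Proposition~\ref{prop:hyp-canonical-map}, an automorphism $\bar\psi$ of $\bP^1$ intertwining the two degree-two maps; the divergence is in how that automorphism is upgraded to one of $\bP(1,1,g+1)$. The paper first disposes of the ribbon case $f=f'=0$ separately, and in the reduced case argues that $\bar\psi$ must carry the branch divisor $(f'=0)$ to $(f=0)$, so that a lift of $\bar\psi$ to the weight-one coordinates sends $f'$ to a nonzero scalar multiple of $f$; the final rescaling of $z$ that removes that scalar is left implicit. You instead normalize so the isomorphism is over $\bP^1$ and then classify $\cO_{\bP^1}$-algebra isomorphisms of $\cO_{\bP^1}\oplus\cO_{\bP^1}(-g-1)$ directly: $\cO_{\bP^1}$-linearity forces $\theta(z')=\alpha+\beta z$ with $\alpha\in H^0(\cO(g+1))$ and $\beta\in\bk$, multiplicativity gives $\alpha\beta=0$ and $f'=\alpha^2+\beta^2 f$, and surjectivity forces $\beta\neq 0$, hence $\alpha=0$. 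This is slightly longer but buys you two things the paper's route does not make explicit: the argument is uniform in $f$ (the ribbon case $f=f'=0$ and the reduced case are handled simultaneously, with no case split), and the automorphism of $\bP(1,1,g+1)$ is produced completely, including the rescaling $z\mapsto\beta z$. One cosmetic remark: you call the easy implication the ``only if'' direction, but as the corollary is phrased (``$C\cong C'$ if and only if \dots equivalent as Weierstrass curves''), the easy direction is the ``if''; the mathematics is of course unaffected.
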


\begin{proof}
    The backwards implication is clear.  For the forward implication, suppose there is an isomorphism $\alpha: C \to C'$ and denote the map $C \to \bP^1$ by $\varphi$ and $C' \to \bP^1$ by $\varphi'$.  By Theorem~\ref{thm:hyp=Weierstrass}, we may assume each curve is a Weierstrass curve with $C$ given by $(z^2 = f(x,y))$ and $C'$ by $(z^2 = f'(x,y))$.
    If both $C$ and $C'$ are non-reduced, then we must have $f = f'=0$ and hence the Weierstrass curves are the same. Let us assume that both $C$ and $C'$ are reduced, i.e. both $f$ and $f'$ are non-zero.
    By Proposition~\ref{prop:hyp-canonical-map}, $\varphi'\circ \alpha$ is a hyperelliptic map $C \to \bP^1$, so $\varphi' \circ \alpha = \beta \circ \varphi$ where $\beta$ is an automorphism of $\bP^1$.  Therefore, the ramification divisors of $C$ and $C'$ must coincide via $\beta$, but the ramification divisors are precisely $(f(x,y) = 0)$ and $(f'(x,y) = 0) \subset \bP^1$.  Therefore, taking a suitable lifting of $\beta \in \Aut(\bP^1)$ as an element of $\Aut(\bP(1,1,g+1)$ acting on the first two coordinates, we see that $C$ and $C'$ differ by an automorphism of $\bP(1,1,g+1)$. 
\end{proof}

\begin{defn}\label{def:hyp-GIT}
A Weierstrass curve $C= (z^2 = f(x,y))\subset \bP(1,1,g+1)$ is called \emph{GIT (poly/semi)stable} if the binary form $f$ is non-zero and GIT (poly/semi)stable. A hyperelliptic curve is called  \emph{GIT (poly/semi)stable} if its isomorphic Weierstrass curve is GIT (poly/semi)stable. By Corollary \ref{cor:equiv-hyp}, this is independent of the choice of the isomorphic Weierstrass curve.  We denote by $\frak{H}_g^+$ the GIT moduli space of hyperelliptic curves of genus $g$. 
\end{defn}

\subsection{Ribbons}\label{sec:ribbons}

We briefly recall the concept of ribbons from \cite{BE95}. 

\begin{defn}\label{def:ribbon}
    A \emph{(rational) ribbon}  is a scheme $C$ of finite type over $\bk$ equipped with an isomorphism $\bP^1_{\bk} \xrightarrow{\cong} C_{\red}$, such that the nilradical sheaf $\cN\subset \cO_C$ satisfies $\cN^2 = 0$, and that $\cN$ is a locally free $\cO_{C_{\red}}$-module of rank $1$. We often denote by $L$ the sheaf $\cN$ considered as an  $\cO_{\bP^1}$-module, and call it the \emph{conormal bundle} of $\bP^1$ in $C$.

    We say that a ribbon $C$ has \emph{(arithmetic) genus} $g$ if $\chi(C, \cO_C) = 1-g$. By Riemann--Roch, this is equivalent to saying that $L \cong \cO_{\bP^1}(-g-1)$. In particular, we have $H^0(C, \cO_C) \cong \bk$ and $H^1(C, \cO_C) \cong \bk^g$.

    A ribbon $C$ is called \emph{split} if the morphism $C_{\red}\to C$ admits a retraction $C \to C_{\red}$. By \cite[Proposition 1]{BE95} there exists a unique split ribbon of each genus. 
\end{defn}

\begin{expl}
Let $R_{\hyp}:=(z^2=0) \subset \bP(1,1,g+1)$ be a specific hyperelliptic curve. Then there is a finite morphism $\varphi: R_{\hyp}\to \bP^1$ given by $\varphi([x,y,z]) =[x,y]$. Moreover, $R_{\hyp} \cong \Spec_{\bP^1} \cO_{\bP^1} \oplus L$ with $L = \cO_{\bP^1}(-g-1)$, where the $\cO_{\bP^1}$-algebra structure satisfies $L^2 = 0$. Thus $R_{\hyp}$ is a split ribbon of genus $g$. We call $R_{\hyp}$ the \emph{hyperelliptic ribbon of genus $g$}. Since split ribbons are unique, we know that every split ribbon is isomorphic to the hyperelliptic ribbon.
\end{expl}

For any ribbon $C$ with conormal bundle of $\bP^1$ denoted $L$ as above, there is a short exact sequence 
\[ 0 \to L \to \Omega_C \vert_{\bP^1} \to \Omega_{\bP^1} \to 0 \] from restriction of $\Omega_C$ to $\bP^1$.  By the standard interpretation of Ext groups (see, for example \cite[Exercise III.6.1]{Har77}), $\Omega_C \vert_{\bP^1}$ corresponds to an element of $\Ext^1(\Omega_{\bP^1}, L)$, denoted by $e_C$.  This extension class classifies the ribbon as given by the following theorem. 

\begin{thm}[{\cite[Theorem 1.2]{BE95}}]\label{thm:ribbon-ext}
Given any class $e\in \Ext^1(\Omega_{\bP^1}, L)$, there is a unique ribbon $C$ with $e = e_C$. Moreover, two ribbons $C$ and $C'$ are isomorphic over $\bP^1$ if and only if $e_C= a e_{C'}$ for some $a\in \bk^{\times}$. In particular, a ribbon $C$ is split if and only if $e_C = 0$.
\end{thm}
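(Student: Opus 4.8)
The plan is to reinterpret the structure of a ribbon $C$ with $C_{\red}\cong\bP^1$ as a square-zero extension of $\cO_{\bP^1}$ and then apply the standard deformation-theoretic classification of such extensions. Throughout, recall from Definition~\ref{def:ribbon} that a genus $g$ ribbon has conormal bundle isomorphic to the fixed line bundle $L=\cO_{\bP^1}(-g-1)$.

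\emph{Step 1: the dictionary.} Fixing in addition an identification $\cN\cong L$ of the conormal bundle, the datum of such a ribbon is exactly the datum of a sheaf of $\bk$-algebras $\cA=\cO_C$ on $\bP^1$ together with a surjection $\cA\twoheadrightarrow\cO_{\bP^1}$ whose kernel is $L$, viewed as a square-zero ideal with its given $\cO_{\bP^1}$-module structure; conversely, any square-zero extension $0\to L\to\cA\to\cO_{\bP^1}\to 0$ yields a sheaf of algebras that is Zariski-locally isomorphic to $\cO_{\bP^1}[\epsilon]/(\epsilon^2)$ twisted by $L$, hence locally free of rank $2$ over $\cO_{\bP^1}$, and as its underlying module is $\cO_{\bP^1}\oplus L$ it has Euler characteristic $1-g$; thus it is a ribbon of genus $g$ with $\cN\cong L$.

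\emph{Step 2: classification.} Since $\bP^1$ is smooth over $\bk$, square-zero extensions of $\cO_{\bP^1}$ by the module $L$, taken up to isomorphisms restricting to the identity on $\cO_{\bP^1}$ and on $L$, are classified by $\Ext^1_{\cO_{\bP^1}}(\Omega_{\bP^1},L)$, with the split extension $\cO_{\bP^1}\oplus L$ corresponding to $0$. Moreover, the class attached to a given extension $\cA$ is, up to sign, the extension class of the second fundamental exact sequence $0\to L\to\Omega_{\cA/\bk}\otimes_{\cA}\cO_{\bP^1}\to\Omega_{\bP^1}\to 0$ — which is left exact here because smoothness of $\bP^1$ provides local splittings — and for $\cA=\cO_C$ this is precisely the sequence $0\to L\to\Omega_C|_{\bP^1}\to\Omega_{\bP^1}\to 0$ defining the class $e_C$ in the statement. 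This already yields existence, namely that every $e$ arises, together with uniqueness up to isomorphisms fixing both $\bP^1$ and the identification of the conormal bundle. Alternatively, existence can be made explicit: trivialize $L$ on the two standard affine charts of $\bP^1$ and take the trivial square-zero extension over each, then glue over the overlap by an automorphism $a+b\epsilon\mapsto a+(b+D(a))\epsilon$ with $D$ a $\bk$-derivation $\cO\to L$, i.e.\ a section of $T_{\bP^1}\otimes L$; since $\Omega_{\bP^1}$ is locally free one has $\cE xt^{i}(\Omega_{\bP^1},L)=0$ for $i>0$, hence $\Ext^1_{\cO_{\bP^1}}(\Omega_{\bP^1},L)=H^1(\bP^1,T_{\bP^1}\otimes L)$, and a \v{C}ech $1$-cocycle representing $e$ furnishes exactly such gluing data, with a direct computation of the conormal sequence of the resulting algebra recovering $e_C=e$.

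\emph{Step 3: the scaling ambiguity, and the main obstacle.} To pass from the rigidified statement to the theorem, note that the conormal bundle of a ribbon is only canonically the abstract line bundle $\cN$, and $\Aut_{\cO_{\bP^1}}(L)=\bk^{\times}$. An isomorphism $C\xrightarrow{\sim}C'$ of ribbons over $\bP^1$ induces $\cN_C\xrightarrow{\sim}\cN_{C'}$, which after transport along the two chosen identifications with $L$ is multiplication by some $a\in\bk^{\times}$; by functoriality of the second fundamental sequence this forces $e_C=a\,e_{C'}$ (modulo sign conventions). Conversely, if $e_C=a\,e_{C'}$, rescaling the identification $\cN_{C'}\cong L$ by $a$ replaces $e_{C'}$ by $a\,e_{C'}=e_C$ without changing the isomorphism class of $C'$ over $\bP^1$, so Step 2 gives $C\cong C'$ over $\bP^1$. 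In particular, $C$ is split — isomorphic to the trivial extension $\cO_{\bP^1}\oplus L$ — if and only if $e_C=0$, the class fixed by the $\bk^{\times}$-action. The only genuinely delicate point is the matching in Step 2 of the paper's definition of $e_C$ via $\Omega_C|_{\bP^1}$ with the intrinsic classifying class of the square-zero extension $\cO_C$, plus the bookkeeping of the $\bk^{\times}$-ambiguity in Step 3; both are routine once set up, and the assertion is exactly \cite[Theorem~1.2]{BE95}.
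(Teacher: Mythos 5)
The paper does not prove this statement — it cites \cite[Theorem~1.2]{BE95} — so there is no internal proof to compare against. Your argument is correct and gives a legitimate proof, but the route is different from the one in \cite{BE95} (and from the way the paper actually uses this result inside the proof of Theorem~\ref{thm:rib-stack}). You invoke the abstract deformation-theoretic classification of first-order thickenings of a smooth $\bk$-scheme $X$ by $\Ext^1_{\cO_X}(\Omega_{X/\bk},\cI)$, with the conormal sequence realizing the classifying class, and separately sketch a \v{C}ech gluing realization of this classification. By contrast, Bayer--Eisenbud build the ribbon from the extension $0\to L\to\cE\to\Omega_{\bP^1}\to 0$ by the explicit fiber product $\cO_C = \cE\times_{\Omega_{\bP^1}}\cO_{\bP^1}$ along $d:\cO_{\bP^1}\to\Omega_{\bP^1}$, equipped with the product $(a_1,x_1)(a_2,x_2)=(a_1a_2,\,a_1x_2+a_2x_1)$, and verify directly that this recovers the class; this is exactly the construction the paper imports to define the functor $F:\sR_g'\to\sR_g$ in the proof of Theorem~\ref{thm:rib-stack}. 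Both arguments establish the same bijection, and your handling of the $\bk^{\times}$-scaling ambiguity (identifying $\Aut_{\cO_{\bP^1}}(L)=\bk^{\times}$ and tracking how it rescales $e_C$ via functoriality of the conormal sequence) is correct. The abstract approach is cleaner but invokes heavier machinery, while the fiber-product construction is elementary and plays better with relativization over a base $S$, which is why the paper uses it in the family version.

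One small imprecision worth flagging: in Step~1 you assert that $\cA$ is ``locally free of rank $2$ over $\cO_{\bP^1}$'' with ``underlying module $\cO_{\bP^1}\oplus L$.'' Globally, $\cO_C$ is not an $\cO_{\bP^1}$-module at all unless the ribbon is split (there is no section of $\cO_C\twoheadrightarrow\cO_{\bP^1}$ in general), and the \v{C}ech gluing automorphisms $a+b\epsilon\mapsto a+(b+D(a))\epsilon$ you use later are only $\bk$-linear, not $\cO_{\bP^1}$-linear, which is exactly why. The genus computation you want is fine — it follows from additivity of Euler characteristic in the short exact sequence of sheaves of abelian groups $0\to L\to\cO_C\to\cO_{\bP^1}\to 0$ — but the justification should be stated that way rather than via a direct sum decomposition that need not exist.
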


\begin{prop}[{\cite[Theorem 5.3]{BE95}}]\label{prop:ribbon-canonical-map}
Let $C$ be a ribbon of genus $g$. Then $|\omega_C|$ is base point free and induces a map $f: C\to \bP^{g-1}$ whose reduced image is a rational normal curve. Moreover, $f$ is a closed embedding if and only if $C$ is not a hyperelliptic ribbon.
\end{prop}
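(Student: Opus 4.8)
This is \cite[Theorem 5.3]{BE95}; here is the route I would take. Write $\bP^1 = C_{\red}$, and recall that the nilradical $\cN$ satisfies $\cN^2 = 0$ with $L := \cN \cong \cO_{\bP^1}(-g-1)$ as an $\cO_{\bP^1}$-module and classifying extension class $e_C \in \Ext^1(\Omega_{\bP^1}, L)$ (Theorem \ref{thm:ribbon-ext}).

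\textbf{Step 1: the canonical sheaf.} A ribbon is locally the hypersurface $(v^2 = 0)$ in a smooth surface, hence lci and Gorenstein, so $\omega_C$ is a line bundle and Serre duality gives $h^0(\omega_C) = h^1(\cO_C) = g$ and $h^1(\omega_C) = 1$. Tensoring $0 \to \cN \to \cO_C \to \cO_{\bP^1}\to 0$ with $\omega_C$ and using $\cN^2 = 0$ to identify $\cN\otimes_{\cO_C}\omega_C \cong L\otimes_{\cO_{\bP^1}}(\omega_C|_{\bP^1})$ yields $0\to L\otimes(\omega_C|_{\bP^1})\to\omega_C\to\omega_C|_{\bP^1}\to 0$; computing $\chi(\omega_C) = g-1$ from this sequence forces $\omega_C|_{\bP^1}\cong\cO_{\bP^1}(g-1)$ and $L\otimes(\omega_C|_{\bP^1})\cong\cO_{\bP^1}(-2)$. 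Since $H^0(\cO_{\bP^1}(-2)) = 0$ and $h^0(\omega_C) = g = h^0(\cO_{\bP^1}(g-1))$, the long exact sequence shows the restriction $H^0(C,\omega_C)\xrightarrow{\sim}H^0(\bP^1,\cO_{\bP^1}(g-1))$ is an isomorphism.

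\textbf{Step 2: base-point freeness and the reduced image.} Every closed point of $C$ lies on $\bP^1$, and there $\omega_C|_{\bP^1}\cong\cO_{\bP^1}(g-1)$ is globally generated; by Step 1 every generating section lifts to $\omega_C$, so $|\omega_C|$ is base-point free and defines $f\colon C\to\bP^{g-1}$ with $f^*\cO(1) = \omega_C$. Restricted to $C_{\red}$, $f$ is the morphism of the complete system $|\cO_{\bP^1}(g-1)|$, i.e.\ the degree-$(g-1)$ rational normal curve $R\subset\bP^{g-1}$; since $C$ and $C_{\red}$ have the same underlying set, the reduced image of $f$ is $R$.

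\textbf{Step 3: the embedding criterion.} As $f$ is proper and radicial ($f|_{C_{\red}}$ is injective on $\bk$-points and residue fields are $\bk$), $f$ is a closed immersion iff it is unramified, i.e.\ iff $f^*\Omega_{\bP^{g-1}}\to\Omega_C$ is surjective — a condition that can be checked after restriction to $C_{\red}$. Comparing the conormal sequence of $R\subset\bP^{g-1}$ with the defining sequence $0\to L\to\Omega_C|_{\bP^1}\to\Omega_{\bP^1}\to 0$ of $e_C$, the five lemma exhibits $\Omega_C|_{\bP^1}$ as the pushout of $0\to N^\vee_{R/\bP^{g-1}}|_{\bP^1}\to\Omega_{\bP^{g-1}}|_{\bP^1}\to\Omega_{\bP^1}\to 0$ along the induced map $\bar\phi\colon N^\vee_{R/\bP^{g-1}}|_{\bP^1}\to L$, and shows $f$ is unramified iff $\bar\phi$ is surjective. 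Since $N^\vee_{R/\bP^{g-1}}\cong\cO_{\bP^1}(-g-1)^{\oplus(g-2)}$, the map $\bar\phi$ is surjective iff it is nonzero; and because $e_C$ is the pushforward of the conormal extension class along $\bar\phi$, one has $\bar\phi = 0 \Rightarrow e_C = 0$. Hence if $C$ is not split then $\bar\phi\neq 0$ and $f$ is a closed embedding. Conversely, if $C$ is split — equivalently (by the earlier Example) $C$ is the hyperelliptic ribbon $R_{\hyp}$ — then the retraction $\pi\colon C\to\bP^1$ is finite flat of degree $2$, a short computation gives $\omega_C\cong\pi^*\cO_{\bP^1}(g-1)$, so $f$ factors through $\pi$ and cannot be a closed immersion.

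\textbf{Main obstacle.} The crux is the equivalence $\bar\phi\neq 0\iff e_C\neq 0$ in Step 3: reducing unramifiedness of $f$ to non-vanishing of the induced conormal map $\bar\phi$, and extracting from the pushout description of $\Omega_C|_{\bP^1}$ that $\bar\phi$ cannot vanish for a non-split ribbon. Everything else reduces to Riemann--Roch on $\bP^1$ together with the explicit structure of the split ribbon.
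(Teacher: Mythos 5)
The paper does not prove this statement: it is cited as \cite[Theorem~5.3]{BE95} with no argument supplied, so there is no in-paper proof to compare against. Your proof is correct and supplies a complete argument for the cited result. Step 1 is a standard Euler-characteristic computation that pins down $\omega_C|_{\bP^1}\cong\cO_{\bP^1}(g-1)$ and $L\otimes\omega_C|_{\bP^1}\cong\cO_{\bP^1}(-2)$, and the vanishing of $H^0(\cO_{\bP^1}(-2))$ then gives the isomorphism on global sections and hence basepoint-freeness in Step 2. In Step 3, the reduction of ``$f$ is a closed immersion'' to ``$f$ is unramified'' via properness and universal injectivity is valid, and checking surjectivity of $f^*\Omega_{\bP^{g-1}}\to\Omega_C$ after restriction to $\bP^1$ is justified by Nakayama since $C$ and $C_{\red}$ share the same residue fields. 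The diagram chase identifying $\Omega_C|_{\bP^1}$ as the pushout along $\bar\phi\colon N^\vee_{R/\bP^{g-1}}\to L$ is really the snake lemma applied to the morphism of short exact sequences with identity on the quotient $\Omega_{\bP^1}$ (not the five lemma, a minor mislabel), giving $\coker\alpha\cong\coker\bar\phi$ and hence the unramifiedness criterion. The key point $\bar\phi=0\Rightarrow e_C=0$ is exactly the functoriality $e_C=\bar\phi_*(\epsilon)$ of the pushout, and the rank-one reasoning showing any nonzero $\cO_{\bP^1}(-g-1)^{\oplus(g-2)}\to\cO_{\bP^1}(-g-1)$ is surjective is fine. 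Your converse uses $\omega_{R_\hyp}\cong\pi^*\cO_{\bP^1}(g-1)$, which is consistent with the paper's Proposition~\ref{prop:hyp-canonical-map}. As far as I can tell this essentially reconstructs the Bayer--Eisenbud argument that the citation points to.
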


\subsection{Stacks of hyperelliptic curves and ribbons}

\begin{defn}
Let $S$ be a scheme. A \emph{$\bP^1$-fibration} $P/S$ is a morphism $P\to S$ from a scheme $P$ that is a $\bP^1$-bundle in the \'etale topology. In other words, a $\bP^1$-fibration $P/S$ is precisely a Severi--Brauer $S$-scheme $P$ of relative dimension $1$.
\end{defn}

\begin{defn}\label{def:hyp-rib-stacks}
We define the functors $\sH_g$ and $\sR_g$ from locally Noetherian $\bk$-schemes to groupoids as 
\[
\sH_g(S) := \left\{(f: X\to S )\in \Curves_g^{\Gor}(S)\left|\begin{array}{l}
\textrm{there exists a $\bP^1$-fibration $\varpi:P\to S$ and a }\\
\textrm{finite morphism $\varphi: X\to P$ such that $\varphi_*\cO_{X}$}\\
\textrm{is locally free of rank $2$ and $f = \varpi\circ \varphi$.}
\end{array}\right.\right\}.
\]
\[
\sR_g(S):= \left\{(f: X\to S )\in \Curves_g^{\Gor}(S)\left|\begin{array}{l}
\textrm{there exists a $\bP^1$-fibration $\varpi:P\to S$ and a}\\
\textrm{closed immersion $\psi:P\hookrightarrow X$ with ideal sheaf}\\
\textrm{$\cN$ such that $\cN^2 = 0$, $\cN$ is a locally free}\\ \textrm{$\cO_P$-module of rank $1$, and $\varpi = f\circ \psi$.}
\end{array}\right.\right\}.
\]
An object in $\sH_g(S)$ (resp. $\sR_g(S)$) is called a \emph{family of hyperelliptic curves} (resp. a \emph{family of ribbons}) of genus $g$ over $S$. 
\end{defn}

\begin{rem}
\begin{enumerate}
    \item By Theorem \ref{thm:hyp=Weierstrass} (resp. Definition \ref{def:ribbon}), we know that $\sH_g(\bk)$ (resp. $\sR_g(\bk)$) is precisely the groupoid of all hyperelliptic curves (resp. all ribbons) of genus $g$. 
    \item Note that we assume the $\bk$-schemes $S$ in the above definition to be locally Noetherian, as $\Curves_g^{\Gor}$ is a locally Noetherian algebraic stack. One can define the functors over arbitrary $\bk$-schemes by bootstrapping from the locally Noethrian case, see e.g. \cite[Definition 3.4]{ABB+}.
    \item In the above definition, we only assume the existence of a $\bP^1$-fibration $P/S$, while no specific choice of $P/S$ is taken. Nevertheless, as we shall see in proofs of Theorems \ref{thm:hyp-stack} and \ref{thm:rib-stack}, the $\bP^1$-fibration $P/S$ is indeed unique up to unique isomorphism once exists. The advantage of our definition is that there are natural monomorphisms of stacks from $\sH_g$ and $\sR_g$ to $\Curves_g^{\Gor}$, which turn out to be locally closed immersions (see Proposition  \ref{prop:hyp-rib-closed-substack}). 
\end{enumerate}

\end{rem}

\begin{thm}\label{thm:hyp-stack}
The functor $\sH_g$ is represented by an algebraic stack of finite type over $\bk$. Moreover, we have an isomorphism of  stacks
\[
\sH_g \cong [\bA(H^0(\bP^1, \cO_{\bP^1}(2g+2)))/(\GL(2)/\bmu_{g+1})],
\]
where the $\GL(2)/\bmu_{g+1}$-action is induced by the standard isomorphism of $\GL(2)$ with the automorphism group $\Aut(\bP^1, \cO_{\bP^1}(1))$. 
\end{thm}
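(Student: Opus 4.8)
The plan is to identify $\sH_g$ with the quotient stack $[\bA(V)/G]$, where $V:=H^0(\bP^1,\cO_{\bP^1}(2g+2))$ and $G:=\GL(2)/\bmu_{g+1}$, by writing down two mutually inverse morphisms. Since $G$ is a smooth affine algebraic group over $\bk$ and $\bA(V)$ is an affine $\bk$-scheme of finite type, the stack $[\bA(V)/G]$ is automatically algebraic, of finite type over $\bk$, with affine diagonal; so once the isomorphism is in place, both assertions of the theorem follow. The essential inputs are Theorem \ref{thm:hyp=Weierstrass} and Proposition \ref{prop:hyp-canonical-map}. I would first unwind what $[\bA(V)/G]$ classifies: using the standard identification $\GL(2)\cong\Aut(\bP^1,\cO_{\bP^1}(1))$ and the fact that $\cO_{\bP^1}(g+1)=\cO_{\bP^1}(1)^{\otimes(g+1)}$ carries a canonical $G$-linearization (the scalars $\bmu_{g+1}$ act trivially on it and on $V=H^0(\bP^1,\cO_{\bP^1}(g+1)^{\otimes 2})$), a $G$-torsor over $S$ is the same datum as a pair $(\varpi\colon P\to S,\mathcal{M})$ of a $\bP^1$-fibration and a line bundle $\mathcal{M}$ on $P$ with $\mathcal{M}|_{P_s}\cong\cO_{\bP^1}(g+1)$ (étale-locally $P$ trivializes, then $\mathcal{M}$ differs from $\varpi^*\cO(g+1)$ by a pullback from $S$, which Zariski-locally trivializes); and, since $\varpi_*(\mathcal{M}^{\otimes 2})$ is locally free of rank $2g+3$ commuting with base change, an $S$-point of $[\bA(V)/G]$ is a triple $(\varpi\colon P\to S,\mathcal{M},\mathbf h)$ with $\mathbf h\in H^0(P,\mathcal{M}^{\otimes2})$.

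\textbf{The morphism $\Phi\colon[\bA(V)/G]\to\sH_g$.} To a triple $(\varpi\colon P\to S,\mathcal{M},\mathbf h)$ I associate the relative Weierstrass curve: inside $\rho\colon\mathrm{Tot}_P(\mathcal{M})\to P$, which is smooth over $S$, let $X$ be the zero scheme of $\mathbf z^{2}-\rho^{*}\mathbf h$, where $\mathbf z$ is the tautological section of $\rho^*\mathcal{M}$; equivalently $X=\Spec_P(\cO_P\oplus\mathcal{M}^{-1})$ with algebra structure given by $\mathbf h\colon\mathcal{M}^{-1}\otimes\mathcal{M}^{-1}\to\cO_P$ and vanishing linear part, and $\varphi\colon X\to P$ finite of degree $2$. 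Then $X\to S$ is a relative effective Cartier divisor in a scheme smooth over $S$, hence flat, proper (it is finite over $P$) and a relative local complete intersection, so Gorenstein; and $\varphi_*\cO_X=\cO_P\oplus\mathcal{M}^{-1}$ gives $\cO_S\xrightarrow{\sim}f_*\cO_X$ universally and $R^1f_*\cO_X$ locally free of rank $g$. This is the family version of the local statements in Proposition \ref{prop:hyp-canonical-map}, so $X/S\in\Curves_g^{\Gor}(S)$ and, with $\varphi$, lies in $\sH_g(S)$. Over the trivial torsor $\bA(V)\to[\bA(V)/G]$ this produces exactly the universal Weierstrass curve $(z^2=f(x,y))$ of Theorem \ref{thm:hyp=Weierstrass}. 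Functoriality is clear, giving $\Phi$.

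\textbf{The morphism $\Psi\colon\sH_g\to[\bA(V)/G]$.} Conversely, given $(f\colon X\to S)\in\sH_g(S)$ with auxiliary data $(\varpi'\colon P'\to S,\varphi')$, I first recover a $\bP^1$-fibration and hyperelliptic map \emph{canonically} from $X/S$. Since $X/S$ is Gorenstein with $h^0(\omega_{X_s})=g$ and $h^0(\cO_{X_s})=1$ locally constant, $f_*\omega_{X/S}$ is locally free of rank $g$ and commutes with base change, and $\omega_{X/S}$ is relatively base point free by the fiberwise part of Proposition \ref{prop:hyp-canonical-map}; I let $P$ be the scheme-theoretic image of the relative canonical morphism $X\to\bP(f_*\omega_{X/S})$ and $\varphi\colon X\to P$ the induced map. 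Using $\varphi'_*\cO_X=\cO_{P'}\oplus N'$ one computes for the double cover $\omega_{X/S}=\varphi'^{*}(\omega_{P'/S}\otimes N'^{-1})$, whence $f_*\omega_{X/S}=\varpi'_*(\omega_{P'/S}\otimes N'^{-1})$ and $X\to\bP(f_*\omega_{X/S})$ factors as $X\xrightarrow{\varphi'}P'\hookrightarrow\bP(f_*\omega_{X/S})$ with a closed immersion whose image is fiberwise the rational normal curve; therefore this image equals $P$ and $(P,\varphi)\cong(P',\varphi')$, so $(P,\varphi)$ is intrinsic and $P\to S$ is a $\bP^1$-fibration. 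The degree-$2$ map $\varphi$ has a canonical involution, and in characteristic $0$ its eigenspace decomposition gives $\varphi_*\cO_X=\cO_P\oplus N$ with $N$ a line bundle restricting to $\cO_{\bP^1}(-g-1)$ on fibers; the multiplication $N^{\otimes2}\to\cO_P$ is a section $h\in H^0(P,N^{\otimes-2})$. Setting $\mathcal{M}:=N^{-1}$, the dictionary above turns $(P,\mathcal{M},h)$ into an $S$-point of $[\bA(V)/G]$, and this defines $\Psi$.

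\textbf{Conclusion and main obstacle.} It remains to produce $2$-isomorphisms $\Phi\circ\Psi\cong\id$ and $\Psi\circ\Phi\cong\id$. Both are essentially formal: $\Psi\Phi$ sends $(P,\mathcal{M},\mathbf h)$ back to itself because the relative canonical reconstruction applied to $\Spec_P(\cO_P\oplus\mathcal{M}^{-1})$ recovers $P$ (the canonical map being $\varphi$ followed by the fiberwise rational normal embedding), the involution is $\mathbf z\mapsto-\mathbf z$ with eigenbundle $\mathcal{M}^{-1}$, and the multiplication is $\mathbf h$; and $\Phi\Psi$ returns $X$ since $X=\Spec_P\varphi_*\cO_X$ with algebra $z^2=h$ and no linear term by construction of $N$; one checks these identifications are compatible with base change. (Corollary \ref{cor:equiv-hyp} is the $S=\Spec\bk$ shadow of fully faithfulness.) I expect the genuine difficulty to be concentrated in the second step, namely proving that $P$ and $\varphi$ are canonically reconstructed from the bare family $X/S$: the base-change compatibility of $f_*\omega_{X/S}$, the relative base-point-freeness of $\omega_{X/S}$ including over non-reduced fibers such as the hyperelliptic ribbon, and the fact that the scheme-theoretic image of the relative canonical map is flat over $S$ with $\bP^1$-fibers and commutes with base change. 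This is exactly what forces $\sH_g$ to remember nothing beyond the triple $(P,\mathcal{M},h)$ and makes $\Psi$ well defined; the remaining ingredients are either the cited single-curve statements or the formal theory of quotient stacks.
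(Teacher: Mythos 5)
Your proposal is essentially the paper's proof, re-packaged slightly. The paper first introduces an intermediate stack $\sH_g'$ of triples $(P/S,\cL,i\colon\cL^{\otimes2}\to\cO_P)$ (with $\cL$ of fibrewise degree $-g-1$, the inverse of your $\mathcal{M}$), quotes \cite{AV04} to identify $\sH_g'$ with the quotient stack, and then shows the functor $F\colon\sH_g'\to\sH_g$, $(P,\cL,i)\mapsto\Spec_P(\cO_P\oplus\cL)$, is an equivalence by proving essential surjectivity and full faithfulness; you instead write down mutually inverse morphisms $\Phi$ and $\Psi$ directly, which amounts to the same thing. The ingredients are identical: the Weierstrass-curve construction for $\Phi$, and for $\Psi$ the computation $\omega_{X/S}\cong\varphi^*(\omega_{P/S}\otimes\cL^\vee)$ showing that $(P,\varphi)$ is intrinsically recoverable as the scheme-theoretic image of the relative canonical map, together with the splitting $\varphi_*\cO_X\cong\cO_P\oplus\cL$ that produces $i$. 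The one place you genuinely diverge from the paper is how you obtain that splitting: you invoke the canonical involution of a finite flat degree-$2$ cover (equivalently, the trace retraction $\tfrac12\mathrm{tr}\colon\varphi_*\cO_X\to\cO_P$), which is a clean char-$0$ shortcut, whereas the paper constructs the involution étale-locally and verifies descent by analysing the eigensheaves. Both work; the trace argument is arguably slicker. One small ordering caveat in your write-up of $\Psi$: you use $\varphi'_*\cO_X=\cO_{P'}\oplus N'$ to compute $\omega_{X/S}$ and conclude $(P,\varphi)\cong(P',\varphi')$ before you state where the splitting comes from, so you should establish the trace splitting for the given auxiliary $(\varpi',\varphi')$ at the outset. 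This is a presentational reshuffle, not a gap.
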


\begin{proof}

First of all, we show that $\sH_g$ is a prestack, i.e. a category fibered in groupoids. Suppose $(f:X\to S)\in \sH_g(S)$. Let $g: S'\to S$ be a morphism of locally Noetherian $\bk$-schemes. Then we have $f': X'= X\times_S S' \to S'$ as the base change of $f$ under $g$. Clearly, $f'$ is the pull-back of $f$ under $g$ in the stack $\Curves_g^{\Gor}$. Thus it suffices to show that $f'\in \sH_g(S')$. Let $\varpi': P'=P\times_S S'\to S'$ and $\varphi': X'\to P'$ be the base change of $\varpi$ and $\varphi$ under $g$ respectively. Then clearly $P'/S'$ is a $\bP^1$-fibration, and $\varphi'$ is a finite morphism such that $f'=\varpi'\circ\varphi'$. Denote by $g_P: P'\to P$ the base change of $g$ under $\varpi$. Since $\varphi$ is affine, by \cite[\href{https://stacks.math.columbia.edu/tag/02KG}{Tag 02KG}]{stacksproject} we have that $\varphi'_* \cO_{X'}  = g_P^*\varphi_* \cO_X$ is locally free of rank $2$. Thus $f'\in\sH_g(S')$. 

Next, we define a functor $\sH_g'$ from locally Noetherian $\bk$-schemes to groupoids as
\[
\sH_g'(S):=\left\{(P/S, \cL, i: \cL^{\otimes 2}\to \cO_{P})\left|\begin{array}{l}
P/S\textrm{ is a $\bP^1$-fibration, $\cL$ is a line bundle on $P$}\\
\textrm{whose restriction to every geometric fiber has}\\
\textrm{degree $-g-1$, and $i$ is a homomorphism of}\\
\textrm{$\cO_P$-modules.}
\end{array}\right.\right\}.
\]
Note that our definition of $\sH_g'$ is very similar to $\cH'(1,2, g+1)$ from \cite[Remark 3.3]{AV04}, except that we allow $i$ to be zero on fibers. By the same argument as \cite[Proof of Theorem 4.1]{AV04}, we have that $\sH_g'$ is a stack and
\begin{equation}\label{eq:hyp-quotient-stack}
\sH_g'\cong [\bA(H^0(\bP^1, \cO_{\bP^1}(2g+2)))/(\GL(2)/\bmu_{g+1})].
\end{equation}

Next, we show that $\sH_g$ and $\sH_g'$ are equivalent as fibered categories. Let $F:\sH_g'\to \sH_g$ be a functor that sends $(P/S, \cL, i)\in \sH_g'(S)$ to $(X\to S)\in \sH_g(S)$, where 
\[
X :=\Spec_{P} \cO_P \oplus \cL,
\]
and the algebra structure on  $\cO_P \oplus \cL$ is determined by $i:\cL^{\otimes 2}\to \cO_P$. It is not hard to see that $X\to S$ is a family of hyperelliptic curves. 

To show that $F$ is an equivalence of fibered categories, it suffices to show that $F$ is fully faithful and essentially surjective. 
We start from showing the essential surjectivity of $F$.
Let $(\pi:X\to S)\in \sH_g(S)$ be a family of hyperelliptic curves. 

 Let $\cF:=\ker(\cO_P\to \varphi_* \cO_{X})$ and $\cL:= \coker(\cO_P\to \varphi_* \cO_{X})$. For every point $s\in S$, by Theorem \ref{thm:hyp=Weierstrass} we know that there is an exact sequence 
\[
0 \to \cO_{P_s}\to \varphi_{s,*} \cO_{X_s}\to \cL_s\to 0
\]
where $\cL_s$ is a line bundle on $P_s$. Since $\varphi_{s,*} \cO_{X_s}$ is the pull-back of $\varphi_* \cO_{X}$ by \cite[\href{https://stacks.math.columbia.edu/tag/02KG}{Tag 02KG}]{stacksproject}, we know that $\cL_s$ is also the pull-back of $\cL$ by right exactness of pull-back. Thus we know that $\cL\otimes \kappa(p)$ has dimension $1$ at every point $p\in S$, which implies that $\cL$ is a line bundle on $P$. Thus we have a short exact sequence 
\[
0 \to \cO_P/\cF \to \varphi_* \cO_{X} \to \cL \to 0,
\]
where $\cF\subset \cO_P$ is some ideal sheaf.
Since both $\varphi_* \cO_{X}$ and  $\cL$ are locally free, they are both flat which implies that $\cO_P/\cF$ is also flat, and hence locally free. As a result, we have $\cF = 0$ and hence a short exact sequence
\begin{equation}\label{eq:hyp-SB}
    0\to \cO_P \to \varphi_* \cO_{X}\to \cL \to 0,
\end{equation}
where $\cL$ is an invertible sheaf on $P$ whose restriction to every geometric fiber has degree $-g-1$. 

Next, we show that there is a fiberwise $\bmu_2$-action on $X/S$ such that $X\to P$ is the $\bmu_2$-quotient map. Let $\sqcup_j S_j\to S$ be an \'etale affine covering such that $(P_j:=P\times_S S_j, \cL_j)$ is isomorphic to $(\bP^1_{S_j}, \cO_{\bP^1_{S_j}}(-g-1))$. Here $\pi_j:X_j\to S_j$, $\varpi_j: P_j\to S_j$, and $\cL_j$ denote the base change of $\pi$, $\varpi$, and $\cL$ under $S_j/S$. Then we get an exact sequence by the base change of \eqref{eq:hyp-SB}:
\begin{equation}\label{eq:hyp-SB-cover}
     0\to \cO_{P_j} \to \varphi_{j,*} \cO_{X_j}\to \cL_j \to 0.
\end{equation}
Since 
\[
\Ext^1(\cL_j, \cO_{P_j}) \cong H^1(P_j, \cL_j^{\vee}) \cong H^1( \bP^1_{S_j}, \cO_{\bP^1_{S_j}}(g+1))= 0,
\]
the exact sequence \eqref{eq:hyp-SB-cover} splits. Thus we have $\varphi_{j,*}\cO_{X_j} \cong \cO_{P_j}\oplus \cL_j$, and the algebra structure is determined by $\cL_j^{\otimes 2} \to \cO_{P_j}\oplus \cL_j$, which corresponds to $(h_{2}, h_{1})\in H^0(P_j, \cL_j^{\otimes -2}\oplus \cL_j^{\otimes -1})$. Similar to the proof of Theorem \ref{thm:hyp=Weierstrass}, under an automorphism of $\cO_{P_j}\oplus \cL_j$ given by $(t_1, t_2) \mapsto (t_1 - t_2\otimes \frac{h_1}{2}, t_2)$, we have a new isomorphism $\varphi_{j,*}\cO_{X_j} \cong \cO_{P_j}\oplus \cL_j$ where the algebra structure on  the latter is given by $\cL_j^{\otimes 2}\to \cO_{P_j}$ corresponding to $h_2 +\frac{h_1^2}{4}$. Thus we have an involution $\tau_j: X_j \to X_j$ where $\tau_j$ acts as $\diag(1,-1)$ on $\cO_{P_j}\oplus \cL_j$.

Next, we show that $\{\tau_j\}$ satisfies \'etale descent and hence gives a non-trivial involution $\tau: X \to X$ leaving $P$ fixed. Indeed, if $T\to S_j$ is a morphism, and $\tau_T: X_T\to X_T$ with $X_T:=X_j\times_{S_j} T$ is a non-trivial involution that leaves $P_T:=P_j\times_{S_j} T$ fixed, then we claim that $\tau_T$ is the pull-back of $\tau_j$. Let $\cF_+$ and $\cF_-$ be the $\tau_T$-eigensheaves of $\varphi_{T,*} \cO_{X_T}$ corresponding to eigenvalues $1$ and $-1$. Then we have  $\varphi_{T,*} \cO_{X_T}\cong \cF_+\oplus \cF_-$ which implies that both $\cF_+$ and $\cF_-$ are locally free of rank $1$ as they are non-zero. 
Moreover, since $\cO_{P_T}$ is contained in $\cF_+$, we have a surjection $\cL \twoheadrightarrow  \cF_-$ which implies that $\cL \cong \cF_-$ and hence $\cO_{P_T}\cong \cF_+$. Therefore, $\tau_T$ and the pull-back of $\tau_j$ have the same eigensheaves which implies that they are the same. This shows that $\{\tau_j\}$ satisfies \'etale descent and gives a construction of $\tau$. Thus \eqref{eq:hyp-SB} splits and we have $\varphi_* \cO_{X}\cong \cO_P \oplus \cL$ as the decomposition of $\tau$-eigensheaves corresponding to eigenvalues $1$ and $-1$. Therefore, the algebra structure on $\varphi_* \cO_{X}$ is determined by $i: \cL^{\otimes 2} \to \cO_P$. As a result, for each family $\pi:X\to S$ of hyperelliptic curves we found a triple $(P/S, \cL, i)\in \sH_g'(S)$ such that $(X\to S)$ is isomorphic to $F(P/S, \cL, i)$. Thus $F$ is essentially surjective. 

Finally, we show that $F$ is fully faithful. This is equivalent to showing that for any $(X\to S)\in \sH_g(S)$, the triple $(P/S, \cL, i)\in \sH_g'(S)$ whose image under $F$ is $(X\to S)$ is unique up to unique isomorphism. 
This can be reduced to showing that the factorization $X\xrightarrow{\varphi} P \xrightarrow{\varpi} S$ of $f: X\to S$ is unique up to unique isomorphism. Indeed, once this is shown, then $\cL=\coker(\cO_P\to \varphi_* \cO_X)$, and there is a unique splitting $\cL\hookrightarrow \varphi_*\cO_X$ such that the multiplication map sends $\cL^{\otimes 2}$ to $\cO_P$, which implies that both $\cL$ and $i$ are unique up to unique isomorphism. 

Since $\varphi$ is finite and Gorenstein, by Grothendieck duality we have 
\[
\varphi_* \omega_{X/P} \cong (\varphi_* \cO_X)^{\vee} \cong \cO_P \oplus \cL^{\vee}\cong \cL^{\vee} \otimes \varphi_* \cO_X. 
\]
Thus we have $\omega_{X/P}\cong \varphi^* \cL^{\vee}$ and hence 
\[
\omega_{X/S} \cong \omega_{X/P}\otimes \varphi^* \omega_{P/S} \cong \varphi^* (\cL^{\vee}\otimes  \omega_{P/S} ). 
\]
In particular, $\omega_{X/S}$ is $f$-globally generated as $\cL^{\vee}\otimes \omega_{P/S}$ restricts to $\cO_{\bP^1}(g-1)$ on every geometric fiber of $\varpi$. 
Hence 
\[
f_* \omega_{X/S}\cong f_*\varphi^* (\cL^{\vee}\otimes  \omega_{P/S} ) \cong \varpi_* (\cL^{\vee}\otimes \omega_{P/S}\otimes \varphi_* \cO_X) \cong \varpi_*(\cL^{\vee}\otimes \omega_{P/S}) \oplus \varpi_*  \omega_{P/S}. 
\]
Since $\omega_{P/S}$ restricts to $\cO_{\bP^1}(-2)$ on every geometric fiber of $\varpi$, we know that $\varpi_* \omega_{P/S} =0$. As a result, we have $f_* \omega_{X/S}\cong \varpi_*(\cL^{\vee}\otimes \omega_{P/S})$. Therefore, the relative canonical map $X\to \bfP:=\Proj_S \Sym f_* \omega_{X/S}$ factors through the closed embedding of $P \hookrightarrow \bfP$ induced by the $\varpi$-very ample line bundle $\cL^{\vee}\otimes \omega_{P/S}$. Since $\cO_P$ injects into $\varphi_* \cO_X$, we know that $X\to P$ is isomorphic to the scheme theoretic image map of $X\to \bfP$ up to a unique isomorphism. Thus we have shown that $F$ is fully faithful. 

To summarize, we have shown that $\sH_g$ and $\sH_g'$ are equivalent as fibered categories. Thus  the proof is finished by \eqref{eq:hyp-quotient-stack}. 
\end{proof}

\begin{thm}\label{thm:rib-stack}
The functor $\sR_g$ is represented by an algebraic stack of finite type over $\bk$. Moreover, we have an isomorphism of stacks
\[
\sR_g  \cong [\bA(\Ext^1(\Omega_{\bP^1}, \cO_{\bP^1}(-g-1)))/(\GL(2)/\bmu_{g+1})],
\]
where $\GL(2)/\bmu_{g+1}$-action is induced by the standard isomorphism $\GL(2)\cong \Aut(\bP^1, \cO_{\bP^1}(1))$. 
\end{thm}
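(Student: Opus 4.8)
The plan is to mirror the proof of Theorem~\ref{thm:hyp-stack}. First, $\sR_g$ is a category fibered in groupoids: if $(f\colon X\to S)\in\sR_g(S)$ is witnessed by $(\varpi\colon P\to S,\ \psi\colon P\hookrightarrow X,\ \cN)$ and $S'\to S$ is a morphism, then $X':=X\times_S S'$ lies in $\Curves_g^{\Gor}(S')$, the pullbacks of $P$ and $\psi$ give a $\bP^1$-fibration $P'/S'$ and a closed immersion $P'\hookrightarrow X'$, and since $\cN$ is locally free on the $S$-flat scheme $P$ it is $S$-flat, so its pullback is the ideal sheaf of $P'$, is locally free of rank $1$ on $P'$, and squares to zero; hence $(f'\colon X'\to S')\in\sR_g(S')$.

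Next, I would introduce the auxiliary functor $\sR_g'$ sending $S$ to the groupoid of triples $(P/S,\ \cL,\ e)$, where $P/S$ is a $\bP^1$-fibration, $\cL$ is a line bundle on $P$ of relative degree $-g-1$ along $\varpi$, and $e\in\Ext^1_{\cO_P}(\Omega_{P/S},\cL)$. By the same argument as in \cite[Proof of Theorem~4.1]{AV04}, the stack of pairs $(P/S,\cL)$ is $B(\GL(2)/\bmu_{g+1})$ (using $\Aut(\bP^1,\cO_{\bP^1}(-g-1))\cong\GL(2)/\bmu_{g+1}$), and over the universal pair one has $\Ext^1_{\cO_P}(\Omega_{P/S},\cL)\cong R^1\varpi_*(\Omega_{P/S}^{\vee}\otimes\cL)$ since $\Omega_{P/S}$ is a line bundle; this sheaf is locally free of rank $g-2$ and commutes with base change by cohomology and base change, its fiber being $H^1(\bP^1,\cO_{\bP^1}(1-g))$. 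Hence $\sR_g'$ is an algebraic stack of finite type with
\[
\sR_g'\;\cong\;\bigl[\,\bA\bigl(\Ext^1(\Omega_{\bP^1},\cO_{\bP^1}(-g-1))\bigr)\big/(\GL(2)/\bmu_{g+1})\,\bigr],
\]
the zero section corresponding to the (hyperelliptic) split ribbon.

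The third step is to construct a functor $F\colon\sR_g'\to\sR_g$ and prove it is an equivalence. Send $(P/S,\cL,e)$ to the square-zero extension $X\to S$ of $P$ over $S$ with ideal $\cL$ classified by $e$ (deformation theory of the smooth morphism $P/S$), together with its tautological closed immersion $P\hookrightarrow X$ with $\cN=\cL$, $\cN^2=0$. Then $X/S$ is flat and proper, and each fiber $X_s$ is the ribbon of genus $g$ with conormal bundle $\cO_{\bP^1}(-g-1)$ and extension class $e_s$ as in Theorem~\ref{thm:ribbon-ext}; in particular $X_s$ is Gorenstein by \cite{BE95}, $\chi(\cO_{X_s})=1+\chi(\cO_{\bP^1}(-g-1))=1-g$, and $R^1f_*\cO_X\cong R^1\varpi_*\cL$ is locally free of rank $g$, so $F$ lands in $\sR_g$. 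Conversely, any $(f\colon X\to S)\in\sR_g(S)$ with data $(P,\psi,\cN)$ is itself a square-zero extension of $P$ over $S$ by $\cN$, so taking $\cL=\cN$ and $e$ its classifying class in $\Ext^1_{\cO_P}(\Omega_{P/S},\cN)$ yields $F(P/S,\cL,e)\cong(X\to S)$; this gives essential surjectivity.

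For full faithfulness it remains to show that the datum $(P/S,\psi,\cN)$ is intrinsic to $X/S$: the factorization $P\xrightarrow{\psi}X\xrightarrow{f}S$ through a $\bP^1$-fibration by a square-zero closed immersion with invertible conormal is unique up to unique isomorphism, after which $\cL=\cN$ and $e$ are forced. Here I would argue with the relative canonical morphism as in Theorem~\ref{thm:hyp-stack}: by Proposition~\ref{prop:ribbon-canonical-map}, $\omega_{X_s}$ is base point free with $h^0(\omega_{X_s})=g$ and $h^1(\omega_{X_s})=1$ on every ribbon fiber, so $f_*\omega_{X/S}$ is locally free of rank $g$, commutes with base change, and $\omega_{X/S}$ is $f$-globally generated, inducing $X\to\bfP:=\Proj_S\Sym f_*\omega_{X/S}$, a $\bP^{g-1}$-fibration over $S$; restricting along $\psi$, the image of $P$ is fiberwise the rational normal curve in $\bfP$, which I would pin down intrinsically (e.g.\ via the $2\times 2$-minor equations cutting out the relative rational normal curve, or as the unique $S$-flat closed subscheme of $X$ with fiberwise reduced structure sheaf), treating separately the single point where $X_s$ is the hyperelliptic ribbon and the canonical map fails to be a closed embedding. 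The step I expect to be the main obstacle is exactly this last uniqueness assertion: since the nilradical does not commute with base change one cannot simply set $P=X_{\mathrm{red}}$, and since the hyperelliptic ribbon's canonical map is not an embedding, recovering $P\hookrightarrow X$ functorially over an arbitrary base needs genuine care. Combining these steps yields $\sR_g\cong\sR_g'$ and hence the desired presentation.
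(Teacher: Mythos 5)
Your Steps 1--3 (prestack check, auxiliary functor $\sR_g'$, its quotient presentation) match the paper's strategy, with the harmless variant of recording the extension class $e\in\Ext^1_{\cO_P}(\Omega_{P/S},\cL)$ rather than an actual short exact sequence $\cE$; for $g\ge 2$ this is fine since $\Hom(\Omega_{\bP^1},\cO_{\bP^1}(-g-1))=H^0(\bP^1,\cO_{\bP^1}(1-g))=0$, so an extension is determined by its class up to unique isomorphism. Your $F$ is also the same construction as the paper's (the pullback algebra from \cite{BE95}), and essential surjectivity via $0\to\cN\to\Omega_{X/P}|_P\to\Omega_{P/S}\to 0$ is the same.

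The genuine gap is exactly where you flag it: full faithfulness. You propose to recover $P\hookrightarrow X$ from $X/S$ via the relative canonical morphism and an intrinsic description of the rational normal curve, but you correctly observe that (i) the nilradical of $\cO_X$ does not commute with base change so $P\ne X_{\red}$ in general, and (ii) the canonical map fails to be a closed embedding precisely at the hyperelliptic ribbon, which is the one closed point of $\sR_g$ and hence cannot be avoided. You do not resolve either issue, so the argument as written does not close. The paper sidesteps the canonical map entirely: it reduces full faithfulness to the uniqueness of the ideal sheaf $\cN\subset\cO_X$ defining a $\bP^1$-fibration. For reduced $S$ one has $\cN=\cN_X$ (the nilradical). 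For general $S$, the paper argues Zariski-locally on $P$: identify $\cO_{X,x}\cong B[t]/(t^2)$ with $\cN_x=(t)$, write a competing locally principal ideal $\cN'_x=(b_0+b_1t)$, and use the reduced case (pass to $B_{\red}=B\otimes_R R_{\red}$) to show $b_0$ lies in the nilradical of $B$ and $b_1$ is a unit; then $\cN'^2=0$ forces $b_0b_1=0$, so $b_0=0$ and $\cN'_x=(t)=\cN_x$. This local commutative-algebra computation is the missing ingredient, and it is both shorter and more robust than the canonical-map route, since it makes no reference to the behavior of $\omega_{X_s}$ on individual fibers.
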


\begin{proof}
First of all, we show that $\sR_g$ is a prestack, i.e. a category fibered in groupoids. Suppose $(f:X\to S)\in \sR_g(S)$. Let $g: S'\to S$ be a morphism of locally Noetherian $\bk$-schemes. Then we have $f': X'= X\times_S S' \to S'$ as the base change of $f$ under $g$. Clearly, $f'$ is the pull-back of $f$ under $g$ in the stack $\Curves_g^{\Gor}$. Thus it suffices to show that $f'\in \sR_g(S')$. Let $\varpi': P'=P\times_S S'\to S'$ and $\psi': P'\to X'$ be the base change of $\varpi$ and $\psi$ under $g$ respectively. Then clearly $P'/S'$ is a $\bP^1$-fibration, and $\psi'$ is a closed immersion such that $f'=\psi'\circ\varpi'$. Denote by $g_X: X'\to X$ the base change of $g$ under $f$.  Since $P$ is flat over $S$, we know that $g_X^* \cN$ is the ideal sheaf $\cN'$ of $P'$ in $X'$, which implies that $\cN'^2=0$ and $\cN'$ is a locally free $\cO_{P'}$-module of rank $1$. Thus $f'\in \sR_g(S')$. 

Next, we define a functor $\sR_g'$ from locally Noetherian $\bk$-schemes to groupoids as
\[
\sR_g'(S):=\left\{(P/S, \cL, \cE)\left|\begin{array}{l}
P/S\textrm{ is a $\bP^1$-fibration, $\cL$ is a line bundle on $P$ whose}\\
\textrm{restriction to every geometric fiber has degree $-g-1$,}\\
\textrm{and $\cE$ is an extension of $\Omega_{P/S}$ by $\cL$.}
\end{array}\right.\right\}.
\]
Here the morphisms of $\sR_g'(S)$ are defined in the way that $(P/S,\cL,\cE) \to (P'/S, \cL', \cE')$ consists of an isomorphism $h: P\xrightarrow{\cong} P'$ of $S$-schemes and an isomorphism between exact sequences $(0\to \cL \to \cE\to \Omega_{P/S}\to 0)$ and $(0\to h^*\cL' \to h^*\cE'\to h^*\Omega_{P'/S}\to 0)$ extending the natural identification $\Omega_{P/S} = h^*\Omega_{P'/S}$. It is clear that $\sR_g'$ is a prestack by flatness of $P/S$.
We claim that $\sR_g'$ is a stack and
\begin{equation}\label{eq:rib-quotient-stack}
\sR_g'\cong [\bA(\Ext^1(\Omega_{\bP^1}, \cO_{\bP^1}(-g-1)))/(\GL(2)/\bmu_{g+1})].
\end{equation}

Similar to \cite[Proof of Theorem 4.1]{AV04}, we consider the auxiliary fibered category $\tsR_g'$ whose objects over $S$ are pairs consisting of $(P/S, \cL, \cE)\in \sR_g(S)$ plus an isomorphism $p: (P, \cL) \to (\bP_S^1, \cO_{\bP_S^1}(-g-1))$ over $S$. The arrows in $\tsR_g'$ are arrows in $\sR_g'$ preserving $p$. From the standard theory of extensions, we know that  $\tsR_g'$ is indeed equivalent to  a category fibered in sets where $\tsR_g'(S)$ is equivalent to the set $\Ext^1_{\bP_S^1}(\Omega_{\bP_S^1/S},\cO_{\bP_S^1}(-g-1))$. By Leray's spectral sequence and flat base change $S\to \Spec(\bk)$, one can see that 
\[
\Ext^1_{\bP_S^1}(\Omega_{\bP_S^1/S},\cO_{\bP_S^1}(-g-1)) \cong \Ext^1_{\bP^1}(\Omega_{\bP^1}, \cO_{\bP^1}(-g-1))\otimes_{\bk} H^0(S, \cO_S).
\]
Moreover, the above isomorphism commutes with arbitrary base change $S'\to S$. 
As a result, the functor $\tsR'_g$ is represented by the $\bk$-scheme $\bA(\Ext^1(\Omega_{\bP^1}, \cO_{\bP^1}(-g-1)))$. Then following  similar arguments to \cite[Proof of Theorem 4.1]{AV04}, we have the equivalence \eqref{eq:rib-quotient-stack} as $\GL(2)/\bmu_{g+1}$ is precisely $\Aut(\bP^1, \cO_{\bP^1}(-g-1))$. 

Next, we show that $\sR_g$ and $\sR_g'$ are equivalent as fibered categories. We construct a functor $F: \sR_g'\to \sR_g$ following \cite[Proof of Theorem 1.2]{BE95}. Given $(P/S, \cL, \cE)\in \sR_g'(S)$, let $\cO_X$ be a sheaf of $\varpi^{-1} \cO_S$-modules obtained as the pullback of $\cO_{P}\xrightarrow{d}\Omega_{P/S}$ under $\cE \to \Omega_{P/S}$, so that we have a commutative diagram
\[
\begin{tikzcd}
    0 \arrow{r}{}& \cL \arrow{r}{} \arrow{d}{\id}& \cO_X \arrow{r}{}\arrow{d}{d'} & \cO_P\arrow{r}{}\arrow{d}{d} & 0\\
    0 \arrow{r}{}& \cL \arrow{r}{} & \cE \arrow{r}{\gamma} & \Omega_{P/S}\arrow{r}{}& 0
\end{tikzcd}
\]
We define a $\varpi^{-1} \cO_S$-algebra structure on $\cO_X$ in the following way. Let $a_1, a_2 \in \cO_P(U)$ and $x_1, x_2 \in \cE(U)$ where $U\subset P$ is open, so that $d(a_i) = \gamma(x_i)$ and hence $(a_i, x_i)\in \cO_X(U)$. Then we define
\[
(a_1, x_1) (a_2, x_2):= (a_1 a_2, a_1 x_2 +a_2x_1).
\]
The product is in $\cO_X(U)$ as $d$ is a derivation. Then we define $X:=\Spec \cO_X$ with the underlying topological space $|X|=|P|$.
Following \cite[Proof of Theorem 1.2]{BE95}, it is not hard to check that $X\to S$ is a family of ribbons and $\cE \cong \Omega_{X/S}|_{P}$.

To show that $F$ is an equivalence of fibered categories, it suffices to show that $F$ is fully faithful and essentially surjective.  For each family of ribbons $X\to S$, by smoothness of $P/S$ we have a short exact sequence 
\[
0\to \cN \to \Omega_{X/P}|_{P}\to \Omega_{P/S}\to 0.
\]
Thus following \cite[Proof of Theorem 1.2]{BE95} it is not hard to check that $(X\to S)$ is isomorphic to $F(P/S, \cN, \Omega_{X/P}|_P)$. This shows that $F$ is essentially surjective.

Finally, we show that $F$ is fully faithful. Let $(X\to S)\in \sR_g(S)$ be a family of ribbons, which is isomorphic to an image of $F(S)$ by essential surjectivity of $F$. Since both $\cE$ and $\cL$ are uniquely determined (up to unique isomorphism) by the ideal sheaf $\cN$ of $P$ in $X$, it suffices to show that $\cN$ is unique. This is clear if $S$ is reduced: notice that $P$ is reduced as $P\to S$ is flat with reduced fibers. Thus the ideal $\cN$ contains the nilradical sheaf $\cN_X$ of $X$, while $\cN^2=0$ implies that $\cN$ is contained in $\cN_X$, and hence $\cN = \cN_X$ is unique. 

It remains to show that $\cN$ is unique for a general base scheme $S$. 
Assume that we have two ideal sheaves $\cN$ and $\cN'$ of $\cO_X$ defining two $\bP^1$-fibrations $P/S$ and $P'/S$ respectively. Our goal is to show  $\cN =\cN'$. By \'etale descent, we may replace $S$ by an affine \'etale cover such that $P$ is a trivial $\bP^1$-fibrations. In other words, we may assume that $S = \Spec R$ and $P = \bP_{R}^1$. Let $x\in X$ be a scheme-theoretic point. It suffices to show the localizations $\cN_{x}$ and $\cN'_{x}$ are the same. Let $(B,\fm):=(\cO_{P,x},\fm_{P,x})$, then $R\to B$ is flat. Since any extension $\cE$ of $\cN$ and $\Omega_{P/S}$ splits locally at $x$, we may identify $\cO_{X,x}$ with $B\otimes_k k[t]/(t^2) = B[t]/(t^2)$, under which  $\cN_x = (t)$. Since $\cN'$ is a locally free $\cO_P'$-module of rank $1$, it is locally a principal ideal. Thus there exist $b_0, b_1\in B$ such that $\cN'_x = (b_0 + b_1 t)$. Since $B$ is a local ring of $P=\bP_{R}^1$, we have $B_{\red}\cong B\otimes_R R_{\red}$. Moreover, from the previous discussion on the case of reduced $S$ we have that $\phi_B(\cN_{x})= \phi_B(\cN'_x)$ has the same image under the quotient ring map $\phi_B: B[t]/(t^2) \to B_{\red}[t]/(t^2)$. Let $N_{B}$ be the nilradical of $B$. Then there exists $c_0 + c_1 t\in B[t]/(t^2)$ such that $\phi_B(c_0 + c_1t)$ is invertible and  
\[
b_0 + b_1 t - t(c_0 + c_1 t) \in N_B[t]/(t^2). 
\]
In particular, we have $b_0 \in N_B$ and $b_1 - c_0 \in N_B$.
The invertibility of $\phi_B(c_0 + c_1t)$ is equivalent to  $c_0 \in B^{\times} = B\setminus \fm$. Since $N_B\subset \fm$, we know that $b_1\not\in \fm$ which implies  $b_1\in B^{\times}$. Recall that $\cN'^2 = 0$ from definition, and hence $(b_0+b_1t)^2 = 0$ in $B[t]/(t^2)$ which implies that $b_0 b_1 = 0$. Since $b_1$ is invertible, we have $b_0 = 0$ and hence $\cN_x' = (b_1 t) = (t) = \cN_x$. Thus we have shown that $F$ is fully faithful.

To summarize, we have shown that $\sR_g$ and $\sR_g'$ are equivalent as fibered categories. Thus  the proof is finished by \eqref{eq:rib-quotient-stack}. 
\end{proof}

\begin{prop}\label{prop:hyp-rib-closedpt}
The hyperelliptic ribbon $[R_{\hyp}]$ is the only closed point in $\sH_g$ and $\sR_g$. The automorphism group $\Aut(R_{\hyp})$ is isomorphic to $\GL(2)/\bmu_{g+1}$. Moreover, $[R_{\hyp}]$ is contained in $\overline{\{x\}}$ for any point $x$ in $\sH_g$ or $\sR_g$.
\end{prop}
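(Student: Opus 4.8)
The plan is to argue entirely through the quotient presentations of Theorems~\ref{thm:hyp-stack} and \ref{thm:rib-stack}. Write $G := \GL(2)/\bmu_{g+1}$ and set $V_{\sH} := H^0(\bP^1,\cO_{\bP^1}(2g+2))$, $V_{\sR} := \Ext^1(\Omega_{\bP^1},\cO_{\bP^1}(-g-1))$, so that $\sH_g\cong[\bA(V_{\sH})/G]$ and $\sR_g\cong[\bA(V_{\sR})/G]$ with $G$ acting linearly. The first step is to identify $[R_{\hyp}]$ with the origin $0$ of each affine space. Tracing through the equivalence $F$ built in the two proofs, the triple $(\bP^1,\cO_{\bP^1}(-g-1),\,i=0)\in\sH_g'(\bk)$ is sent to $\Spec_{\bP^1}\!\big(\cO_{\bP^1}\oplus\cO_{\bP^1}(-g-1)\big)$ with zero multiplication $\cL^{\otimes 2}\to\cO_{\bP^1}$, and the split extension class is sent to the split ribbon; both of these are the Weierstrass curve $(z^2=0)\subset\bP(1,1,g+1)$, i.e.\ $R_{\hyp}$ (cf.\ the description of $R_{\hyp}$ after Definition~\ref{def:ribbon} and Theorem~\ref{thm:ribbon-ext}). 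Since $0$ is fixed by all of $G$, the $G$-equivariant closed immersion $\{0\}\hookrightarrow\bA(V)$ induces a closed immersion $BG\cong[\{0\}/G]\hookrightarrow[\bA(V)/G]$; hence $[R_{\hyp}]$ is a closed point and its automorphism group scheme is $\Stab_G(0)=G=\GL(2)/\bmu_{g+1}$.

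It remains to prove $[R_{\hyp}]\in\overline{\{x\}}$ for every point $x$; together with the previous paragraph this forces $[R_{\hyp}]$ to be the \emph{unique} closed point, since any closed point $y$ then satisfies $[R_{\hyp}]\in\overline{\{y\}}=\{y\}$. The key input is that the central torus $\bG_m\subset G$ (the image of the scalar matrices in $\GL(2)$, with kernel $\bmu_{g+1}$) acts on $\bA(V)$ through a single nonzero character: a direct computation using the identification $\GL(2)=\Aut(\bP^1,\cO_{\bP^1}(1))$ gives weight $2$ on $V_{\sH}$ and weight $-1$ on $V_{\sR}$ for $g\ge 3$ (while for $g=2$ one has $V_{\sR}=0$ and $\sR_2\cong BG$, making the assertion trivial). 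After replacing this torus by its inverse if necessary, all of its weights on $\bA(V)$ are positive, so the action extends to a morphism $\bA^1\times\bA(V)\to\bA(V)$ contracting $\bA(V)$ onto the origin. Given $x$, the preimage of the closed substack $\overline{\{x\}}$ under the atlas $\bA(V)\to[\bA(V)/G]$ is a nonempty $G$-invariant closed subset $Y\subseteq\bA(V)$, in particular $\bG_m$-stable; restricting the contracting action gives $\bA^1\times Y\to\bA(V)$ whose image lies in $\overline{Y}=Y$ (as $\bG_m\times Y$ is dense in $\bA^1\times Y$, taking $Y$ reduced), and evaluating at $\{0\}\times Y$---which maps constantly to $0$---forces $0\in Y$. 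Hence $[R_{\hyp}]$, the image of $0$, lies in $\overline{\{x\}}$.

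The routine parts---the identification of $0$ with $R_{\hyp}$, the closed-immersion statement for $BG$, and the weight computation---are bookkeeping from the two presentations. I expect the only place needing genuine care is the closing topological manipulation: that the preimage of $\overline{\{x\}}$ under the atlas, equipped with its reduced structure, is a $\bG_m$-stable closed subscheme, and that a positive-weight $\bG_m$-action on affine space, so restricted, still degenerates every point to $0$. Neither step presents a real obstacle, so I do not anticipate any difficulty beyond careful bookkeeping.
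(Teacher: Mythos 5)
Your proof is correct and follows essentially the same strategy as the paper's: identify $[R_{\hyp}]$ with the origin of the affine space in both quotient presentations, read off the automorphism group as the stabilizer of the origin, and use the fact that the central $\bG_m\cong\{\diag(t,t)\}/\bmu_{g+1}$ acts by scaling through a single nonzero character to degenerate every point to the origin. The paper states this more tersely, but your fleshed-out version (including the uniqueness-of-closed-point deduction and the $g=2$ degenerate case) is the same argument.
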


\begin{proof}
From the construction it is straightforward to see that $[R_{\hyp}]$ corresponds to the origin in both quotient stack presentations of $\sH_g$ and $\sR_g$. Thus the claim is clear by examining the $\GL(2)/\bmu_{g+1}$-action on both affine spaces, where the last statement follows from the fact that the $\bG_m (\cong \{\diag(t,t)\mid t\in \bk^\times\}/\bmu_{g+1})$-action on both affine spaces is a scaling.
\end{proof}

\subsection{Deformation theory of the hyperelliptic ribbon}\label{sec:ribbon-deform}

\begin{thm}\label{thm:hyp-ribbon-unobstructed}
The deformation of the hyperelliptic ribbon of genus $g\geq 2$ is unobstructed.
\end{thm}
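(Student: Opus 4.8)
The plan is to exploit the fact that $R_{\hyp}=(z^2=0)\subset\bP(1,1,g+1)$ avoids the cone vertex $[0,0,1]$, so it is a Cartier divisor in the smooth quasi-projective surface $S:=\bP(1,1,g+1)\setminus\{[0,0,1]\}$. In particular $i\colon R_{\hyp}\hookrightarrow S$ is a regular closed embedding of codimension one, so $R_{\hyp}$ is lci and its cotangent complex $\mathbb{L}_{R_{\hyp}}$ is perfect of amplitude $[-1,0]$. Deformations of $R_{\hyp}$ as a scheme (equivalently, as a point of $\Curves_g^{\lci}$, which is open in $\Curves$) are governed by the cotangent cohomology $T^j:=\Ext^j_{\cO_{R_{\hyp}}}(\mathbb{L}_{R_{\hyp}},\cO_{R_{\hyp}})$: the tangent space is $T^1$, and by standard deformation theory the obstructions lie in $T^2$. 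Thus it suffices to prove $T^2=0$.

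Next I would make $\mathbb{L}_{R_{\hyp}}$ explicit. Since $i$ is a regular embedding into a smooth surface, $\mathbb{L}_{R_{\hyp}}$ is quasi-isomorphic to the two-term complex $[\cN^{\vee}\to\Omega_S|_{R_{\hyp}}]$ placed in degrees $-1,0$, where $\cN:=\cN_{R_{\hyp}/S}=\cO_S(R_{\hyp})|_{R_{\hyp}}$ is the normal bundle. Dualizing, $R\mathcal{H}om(\mathbb{L}_{R_{\hyp}},\cO_{R_{\hyp}})$ is the complex $[T_S|_{R_{\hyp}}\to\cN]$ in degrees $0,1$, so the local-to-global hypercohomology spectral sequence yields $T^j=\mathbb{H}^j\big(R_{\hyp},[T_S|_{R_{\hyp}}\to\cN]\big)$. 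Because $R_{\hyp}$ is a proper curve, $H^q(R_{\hyp},\mathcal{F})=0$ for every coherent sheaf $\mathcal{F}$ and every $q\ge 2$; hence the spectral sequence degenerates and $T^2$ is a quotient of $H^1(R_{\hyp},\cN)$. It therefore remains to show $H^1(R_{\hyp},\cN)=0$.

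This is a short computation using Theorem \ref{thm:hyp=Weierstrass}. The divisor $R_{\hyp}$ is cut out on $S$ by $z^2\in H^0(S,\cO_S(2g+2))$, and on $S$ the sheaf $\cO_S(1)$ is a genuine line bundle, equal to $\varphi_S^{*}\cO_{\bP^1}(1)$ for the projection $\varphi_S\colon S\to\bP^1$; hence $\cN=\cO_S(2g+2)|_{R_{\hyp}}=\varphi^{*}\cO_{\bP^1}(2g+2)$, where $\varphi\colon R_{\hyp}\to\bP^1$ is the finite degree-two map with $\varphi_{*}\cO_{R_{\hyp}}\cong\cO_{\bP^1}\oplus\cO_{\bP^1}(-g-1)$. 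By finiteness of $\varphi$ and the projection formula, $H^1(R_{\hyp},\cN)\cong H^1(\bP^1,\cO_{\bP^1}(2g+2))\oplus H^1(\bP^1,\cO_{\bP^1}(g+1))=0$ (indeed for every $g\ge 0$). Hence $T^2=0$ and the deformations of $R_{\hyp}$ are unobstructed.

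The work here is bookkeeping rather than a genuine obstacle: the only points requiring care are the identification of the cotangent complex of the lci subscheme $R_{\hyp}\subset S$ together with the associated hypercohomology spectral sequence, and the standard fact that the obstruction space for flat deformations of a scheme is $\Ext^2(\mathbb{L},\cO)$. One should also note that the non-properness of $S$ is harmless, since only the local structure of $\mathbb{L}_{R_{\hyp}}$ and the cohomology of coherent sheaves on the proper curve $R_{\hyp}$ enter the argument.
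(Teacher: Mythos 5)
Your proposal is correct and takes essentially the same route as the paper: both compute $\Ext^2(\bfL_{R_{\hyp}},\cO_{R_{\hyp}})$ via the presentation of $\bfL_{R_{\hyp}}$ as the two-term conormal complex for the lci embedding $R_{\hyp}\hookrightarrow S$, and both reduce to the vanishing $H^1(R_{\hyp},\cN)=H^1(\bP^1,\cO(2g+2)\oplus\cO(g+1))=0$ (together with the dimension-one vanishing $H^2(R_{\hyp},T_S|_{R_{\hyp}})=0$). The only difference is cosmetic: you package the computation as the hypercohomology spectral sequence of the dual complex $[T_S|_{R_{\hyp}}\to\cN]$, whereas the paper writes out the long exact sequence of $\Ext$ groups attached to the distinguished triangle for $\bfL_{R_{\hyp}}$.
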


\begin{proof}

Let $C$ be the hyperelliptic ribbon of genus $g\geq 2$, i.e. $C=(z^2=0) \subset \bP(1,1,g+1)$.

Denote by $S=\bP(1,1,g+1) \setminus \{[0,0,1]\}$. We have the closed immersion $\psi: \bP^1 \hookrightarrow C$ and the projection maps $f:S\to \bP^1$ and $\varphi = f|_C: C\to \bP^1$. We can treat $S$ as the total space of $L^\vee$ over $\bP^1$ where $L = \cO_{\bP^1}(-g-1)$. Denote by $\cI$ the ideal sheaf of $C$ in $S$.

From deformation theory we know that the tangent space (resp. the obstruction space) of $\Def_C$ is given by $\Ext_C^1(\bfL_C, \cO_C)$ (resp. $\Ext_C^2(\bfL_C, \cO_C)$), where $\bfL_C$ denotes the cotangent complex of $C$ over $\bk$. Since $C$ is a local complete intersection in $S$, we know that $\bfL_C$ is quasi-isomorphic to the mapping cone $[\cI/\cI^2\to \Omega_S\otimes \cO_C]$ by \cite[\href{https://stacks.math.columbia.edu/tag/08SL}{Tag 08SL}]{stacksproject}. Thus we have  the following long exact sequence
\begin{align*}\label{eq:conormal-exact}
0 &\to \Hom_C(\bfL_C, \cO_C) \to \Hom_C(\Omega_S\otimes \cO_C, \cO_C) \to \Hom_C(\cI/\cI^2 , \cO_C) \\
&\to \Ext_C^1(\bfL_C, \cO_C) \to \Ext_C^1(\Omega_S\otimes \cO_C, \cO_C)\to  \Ext_C^1(\cI/\cI^2 , \cO_C)\numberthis\\
& \to \Ext_C^2(\bfL_C, \cO_C) \to \Ext_C^2(\Omega_S\otimes \cO_C, \cO_C).
\end{align*}

To show unobstructedness of $\Def_C$, it suffices to show that $\Ext_C^2(\bfL_C, \cO_C) = 0$. First of all, we have 
\[
\Ext_C^i(\Omega_S\otimes \cO_C, \cO_C) \cong H^i(C, T_S\otimes \cO_C),
\]
which implies $\Ext_C^2(\Omega_S\otimes \cO_C, \cO_C)=0$
by the Grothendieck vanishing theorem. Next, notice that $\cI = \cO_S(-2 C_{\red})\cong f^* L^{\otimes 2}$ and thus we have $\cI/\cI^2 \cong \varphi^* L^{\otimes 2}$. This implies that 
\begin{align*}
\Ext_C^i(\cI/\cI^2, \cO_C)& \cong \Ext_C^i(\varphi^*L^{\otimes 2}, \cO_C)\cong H^i(C, \varphi^* L^{\otimes -2})\\
&\cong H^i(\bP^1, L^{\otimes -2} \otimes \varphi_* \cO_C)\cong H^i(\bP^1, L^{\otimes -2}\oplus L^{\vee}).
\end{align*}
Here we use $\varphi_* \cO_C\cong \cO_{\bP^1}\oplus L$ in the last isomorphism. Thus we have 
\[
\Ext_C^1(\cI/\cI^2 , \cO_C)\cong H^1(\bP^1, L^{\otimes -2}\oplus L^{\vee})=H^1(\bP^1, \cO_{\bP^1}(2g+2)\oplus\cO_{\bP^1}(g+1)) = 0.
\]
Therefore, we have $\Ext_C^2(\bfL_C, \cO_C) = 0$ by \eqref{eq:conormal-exact}. The proof is finished.
\end{proof}

\begin{cor}\label{cor:hyp-rib-unobstructed}
The deformation of a hyperelliptic curve or a ribbon of genus $g\geq 2$ is unobstructed.
\end{cor}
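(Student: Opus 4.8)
The plan is to deduce this from the single hard computation already carried out in Theorem \ref{thm:hyp-ribbon-unobstructed}, by propagating unobstructedness of the hyperelliptic ribbon to all hyperelliptic curves and all ribbons along specializations. The key geometric input is Proposition \ref{prop:hyp-rib-closedpt}: the hyperelliptic ribbon $R_{\hyp}$ is the unique closed point of both $\sH_g$ and $\sR_g$, and $[R_{\hyp}]$ lies in $\overline{\{x\}}$ for every point $x$ of $\sH_g$ or of $\sR_g$. Combined with the fact that the smooth locus of an algebraic stack locally of finite type over $\bk$ is open, this forces every hyperelliptic curve and every ribbon to lie in the smooth locus of the ambient stack $\Curves_g^{\Gor}$, hence to have unobstructed deformations.

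Concretely, I would proceed as follows. First, translate Theorem \ref{thm:hyp-ribbon-unobstructed} into the statement that $\Curves_g^{\Gor}$ (equivalently $\Curves$, as $\Curves_g^{\Gor}$ is open in it) is smooth at the point $[R_{\hyp}]$: the deformation theory of $\Curves$ at a $\bk$-point $[C]$ representing a proper curve $C$ is governed by $\Ext^i_C(\bfL_C,\cO_C)$, with obstructions in $\Ext^2_C(\bfL_C,\cO_C)$; the vanishing of this group established in Theorem \ref{thm:hyp-ribbon-unobstructed} says exactly that $\Def_{R_{\hyp}}$ is formally smooth, so the complete local ring of $\Curves_g^{\Gor}$ at $[R_{\hyp}]$ is a power series ring, and since $\Curves_g^{\Gor}$ is of finite type near that point it is smooth there. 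Hence there is an open substack $\cU\subseteq\Curves_g^{\Gor}$ with $[R_{\hyp}]\in\cU$ on which $\Curves_g^{\Gor}$ is smooth. Next, recall from Definition \ref{def:hyp-rib-stacks} that the forgetful assignments $X\mapsto X$ define morphisms of stacks $\sH_g\to\Curves_g^{\Gor}$ and $\sR_g\to\Curves_g^{\Gor}$; these are continuous on underlying topological spaces and send the point $[R_{\hyp}]$ of $\sH_g$ (resp. $\sR_g$) to the point $[R_{\hyp}]$ of $\Curves_g^{\Gor}$. Now let $C$ be any hyperelliptic curve or ribbon of genus $g\geq 2$, with corresponding point $x$ in $\sH_g(\bk)$ or in $\sR_g(\bk)$. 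By Proposition \ref{prop:hyp-rib-closedpt}, $[R_{\hyp}]\in\overline{\{x\}}$; applying the continuous forgetful morphism gives $[R_{\hyp}]\in\overline{\{[C]\}}$ inside $\Curves_g^{\Gor}$. Since the complement of $\cU$ is closed, any point specializing to $[R_{\hyp}]\in\cU$ must itself lie in $\cU$; hence $[C]\in\cU$, so $\Curves_g^{\Gor}$ is smooth at $[C]$, i.e. $\Def_C$ is unobstructed.

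The only genuine subtlety, and the step I would be most careful about, is the dictionary in the first step between unobstructedness of $\Def_C$ and smoothness of the moduli stack at $[C]$: one must ensure that the obstruction theory of $\Curves$ is precisely the one computed in Theorem \ref{thm:hyp-ribbon-unobstructed}, which it is by the cotangent complex description used there. Everything else is formal — continuity of a morphism of stacks and openness of the smooth locus. As a sanity check and an alternative route for part of the statement: a \emph{reduced} hyperelliptic curve $C$ has lci singularities with finite (indeed isolated) singular locus by Proposition \ref{prop:hyp-canonical-map}, so $[C]\in\Curves_g^{\lci +}$, which is smooth; thus only the non-reduced hyperelliptic curve $R_{\hyp}$ itself and the (necessarily non-reduced) ribbons require the specialization argument above, and these are exactly the cases covered by Theorem \ref{thm:hyp-ribbon-unobstructed} together with Proposition \ref{prop:hyp-rib-closedpt}.
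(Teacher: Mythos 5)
Your proof is correct and takes essentially the same approach as the paper: both combine Theorem \ref{thm:hyp-ribbon-unobstructed} (smoothness of $\Curves_g^{\Gor}$ at $[R_{\hyp}]$) with the specialization statement of Proposition \ref{prop:hyp-rib-closedpt} and openness of the smooth locus. Your elaboration of the transport step from $\sH_g$ or $\sR_g$ to $\Curves_g^{\Gor}$ via continuity of the forgetful morphism is a correct unpacking of what the paper asserts in a single sentence, and your sanity-check observation about reduced hyperelliptic curves lying in $\Curves_g^{\lci+}$ is also accurate.
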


\begin{proof}
Let $C$ be a hyperelliptic curve or a ribbon. Then by Proposition \ref{prop:hyp-rib-closedpt} we know that $[R_{\hyp}]$ is contained in the closure of $[C]$ in $\Curves_{g}^{\Gor}$. By Theorem \ref{thm:hyp-ribbon-unobstructed} we know that $\Curves_g^{\Gor}$ is smooth at $[R_{\hyp}]$.  Thus $\Curves_g^{\Gor}$ is smooth at $[C]$ as well, i.e. the deformation of $C$ is unobstructed. 
\end{proof}

\begin{prop}\label{prop:hyp-rib-transversal}
Let $C = (z^2=0)\subset \bP(1,1,g+1)$ be the hyperelliptic ribbon of genus $g\geq 2$. Let $S$ be the smooth locus of $\bP(1,1,g+1)$. Recall that $\Def_C$ represents the functor of deformations of $C$. Denote by $\Def_{C\hookrightarrow S}$ (resp. $\Def'_{C}$) the functor of embedded deformations of $C$ in $S$ (resp. locally trivial deformations of $C$).
Then the tangent space of $\Def_C$ is a direct sum of  the image of the tangent space of $\Def_{C\hookrightarrow S}$ and the tangent space of $\Def_C'$. Moreover, this can be written as 
\[
\Ext_C^1(\bfL_C, \cO_C) \cong H^0(\bP^1, L^{\otimes -2}) \oplus \Ext_{\bP^1}^1(\Omega_{\bP^1}, L),
\]
where $L = \cO_{\bP^1}(-g-1)$. 
\end{prop}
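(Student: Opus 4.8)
The plan is to feed the long exact sequence \eqref{eq:conormal-exact} a computation of the cotangent sheaves $\mathcal{E}xt^\bullet_C(\bfL_C,\cO_C)$, and then to recognize the two resulting pieces of $\Ext^1_C(\bfL_C,\cO_C)$ as the image of the tangent space of $\Def_{C\hookrightarrow S}$ and as the tangent space of $\Def'_C$. I keep the notation of the proof of Theorem~\ref{thm:hyp-ribbon-unobstructed}: $S=\bP(1,1,g+1)\setminus\{[0,0,1]\}$ is the total space of $L^\vee$ over $\bP^1$ with $L=\cO_{\bP^1}(-g-1)$, $\varphi\colon C\to\bP^1$ is the projection with $\varphi_*\cO_C\cong\cO_{\bP^1}\oplus L$, $\cI=\cI_{C/S}$ has $\cI/\cI^2\cong\varphi^*L^{\otimes 2}$, $N:=\mathcal{H}om_{\cO_C}(\cI/\cI^2,\cO_C)\cong\varphi^*L^{\otimes -2}$, and $\bfL_C\simeq[\cI/\cI^2\xrightarrow{d}\Omega_S|_C]$ in degrees $-1,0$. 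Write $\cN\subset\cO_C$ for the nilradical, so $\cN\cong L$ as an $\cO_{\bP^1}$-module and $\cN^2=0$.

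First I would compute the cotangent sheaves. Dualizing $\bfL_C$ (whose terms are locally free on $C$) gives $R\mathcal{H}om_C(\bfL_C,\cO_C)\simeq[\,T_S|_C\to N\,]$ in degrees $0,1$, so $T_C:=\mathcal{E}xt^0_C(\bfL_C,\cO_C)=\mathcal{H}om_C(\Omega_C,\cO_C)$ and the first cotangent sheaf is $T^1_C:=\mathcal{E}xt^1_C(\bfL_C,\cO_C)=\coker(T_S|_C\to N)$. The key local observation is that $T_S|_C\to N$ sends a derivation $D$ to the homomorphism $[z^2]\mapsto D(z^2)=2z\,D(z)$, whose values lie in the nilradical $\cN$; hence the image of $T_S|_C\to N$ is contained in $\mathcal{H}om_C(\cI/\cI^2,\cN)=\cN\cdot N$, and taking $D$ (locally) equal to the generator $\partial_z$ of $T_{S/\bP^1}|_C\cong\varphi^*L^\vee$ shows this image is exactly $\cN\cdot N$. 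Therefore $T^1_C\cong N/\cN N\cong L^{\otimes -2}$ as a line bundle on $\bP^1$, and—since $\varphi_*\cO_C\cong\cO_{\bP^1}\oplus L$ is split—one gets a splitting $\varphi_*N\cong L^{\otimes -2}\oplus L^{\otimes -1}$ in which the first summand is $T^1_C$ and the second is $\cN\cdot N$.

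Next I would run \eqref{eq:conormal-exact}. Using the vanishing $\Ext^1_C(\cI/\cI^2,\cO_C)=H^1(C,N)=0$ recalled in the proof of Theorem~\ref{thm:hyp-ribbon-unobstructed}, it collapses to a short exact sequence $0\to\coker(H^0(C,T_S|_C)\to H^0(C,N))\to\Ext^1_C(\bfL_C,\cO_C)\to H^1(C,T_S|_C)\to 0$ whose first map is the forgetful map $T_{\Def_{C\hookrightarrow S}}=\Hom_C(\cI/\cI^2,\cO_C)=H^0(C,N)\to T_{\Def_C}$, so its image is the image of the tangent space of $\Def_{C\hookrightarrow S}$. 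For the quotient, the relative tangent sequence $0\to T_{S/\bP^1}|_C\to T_S|_C\to\varphi^*T_{\bP^1}\to 0$ together with $H^1(C,T_{S/\bP^1}|_C)=H^1(\bP^1,L^\vee\oplus\cO_{\bP^1})=0$ and the projection formula give $H^1(C,T_S|_C)\cong H^1(\bP^1,T_{\bP^1}\otimes L)=\Ext^1_{\bP^1}(\Omega_{\bP^1},L)$. For the first term, the previous paragraph shows that the image of $H^0(C,T_S|_C)$ inside $H^0(C,N)=H^0(\bP^1,L^{\otimes -2})\oplus H^0(\bP^1,L^{\otimes -1})$ is exactly the summand $H^0(\bP^1,L^{\otimes -1})$ (it lands there because $\mathrm{im}(T_S|_C\to N)=\cN N$, and it is onto because already $H^0(C,T_{S/\bP^1}|_C)\to H^0(C,\cN N)$ is a split surjection), so the first term is $H^0(\bP^1,L^{\otimes -2})$. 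To see the decomposition is \emph{direct} and matches $\Def'_C$, I would also invoke the local-to-global spectral sequence for $\mathcal{E}xt^\bullet_C(\bfL_C,\cO_C)$: since $\dim C=1$ it yields $0\to H^1(C,T_C)\to\Ext^1_C(\bfL_C,\cO_C)\xrightarrow{\rho}H^0(C,T^1_C)\to 0$, in which $H^1(C,T_C)$ is precisely the image of $T_{\Def'_C}$ and $\rho$ records the induced local deformation class. Compatibility of \eqref{eq:conormal-exact} with this spectral sequence identifies the composite $H^0(C,N)\to\Ext^1_C(\bfL_C,\cO_C)\xrightarrow{\rho}H^0(C,T^1_C)$ with $H^0$ of the surjection $N\twoheadrightarrow T^1_C$, i.e.\ with the projection $H^0(\bP^1,L^{\otimes -2})\oplus H^0(\bP^1,L^{\otimes -1})\to H^0(\bP^1,L^{\otimes -2})$, whose kernel is exactly $\mathrm{im}(H^0(C,T_S|_C))$; hence $\rho$ carries $\mathrm{im}(T_{\Def_{C\hookrightarrow S}}\to T_{\Def_C})$ isomorphically onto $H^0(C,T^1_C)$. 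Consequently $\mathrm{im}(T_{\Def_{C\hookrightarrow S}})\cap T_{\Def'_C}=\mathrm{im}(T_{\Def_{C\hookrightarrow S}})\cap\ker\rho=0$, and combining with the short exact sequence above gives $\Ext^1_C(\bfL_C,\cO_C)=\mathrm{im}(T_{\Def_{C\hookrightarrow S}})\oplus T_{\Def'_C}\cong H^0(\bP^1,L^{\otimes -2})\oplus\Ext^1_{\bP^1}(\Omega_{\bP^1},L)$, as desired.

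The main obstacle is the bookkeeping around $T^1_C$: establishing that the image of $T_S|_C\to N$ is precisely the nilradical part $\cN\cdot N$ (so that the embedded-deformation subspace maps isomorphically onto $H^0(C,T^1_C)$), and correctly matching the subquotients produced by the two exact sequences used—the cotangent-complex sequence \eqref{eq:conormal-exact} and the local-to-global spectral sequence—with the tangent spaces of $\Def_{C\hookrightarrow S}$ and $\Def'_C$. The cohomology computations for line bundles on $\bP^1$ and the projection-formula identities are routine.
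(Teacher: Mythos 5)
Your proof is correct and follows essentially the same strategy as the paper's: both hinge on the long exact sequence \eqref{eq:conormal-exact}, the local computation that the image of $T_S|_C\to\cN_{C/S}$ is $\cN\cdot\cN_{C/S}$, the splitting $\varphi_*\cN_{C/S}\cong L^{\vee}\oplus L^{\otimes-2}$, and the relative tangent sequence to compute $H^1(C,T_S|_C)\cong\Ext^1_{\bP^1}(\Omega_{\bP^1},L)$. The only real difference is how the directness of the sum is certified: you invoke the local-to-global spectral sequence $0\to H^1(C,T_C)\to\Ext^1_C(\bfL_C,\cO_C)\xrightarrow{\rho}H^0(C,T^1_C)\to 0$ and show $\rho$ restricts to an isomorphism on $\mathrm{im}(\gamma)$, whereas the paper instead shows $\beta'$ is surjective by an explicit dimension count $h^0(C,T_S\otimes\cO_C)=g+6=h^0(C,T_C)+h^0(C,\cN\cdot\cN_{C/S})$, deduces $H^1(C,T_C)\xrightarrow{\sim}H^1(C,T_S\otimes\cO_C)$, and uses that to split off $\mathrm{im}(\gamma)=\ker(\Ext^1\to H^1(C,T_S\otimes\cO_C))$; these are two faces of the same argument.
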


\begin{proof}
Denote by $\cN_{C/S}:=(\cI/\cI^2)^{\vee}$ the normal sheaf of $C$ in $S$. 
From deformation theory, we know that the tangent space of $\Def_C$ (resp. of $\Def'_C$, of $\Def_{C\hookrightarrow S}$) is given by $\Ext_C^1(\bfL_C, \cO_C)$ (resp. $H^1(C, T_C)$, $H^0(C, \cN_{C/S})$). Recall from \eqref{eq:conormal-exact} the long exact sequence
\[
H^0(C, T_S\otimes\cO_C) \xrightarrow{\beta}H^0(C, \cN_{C/S}) \xrightarrow{\gamma} \Ext_C^1(\bfL_C, \cO_C) \to H^1(C, T_S\otimes\cO_C)\to 0.
\]
Thus it suffices to show the following two statements.
\begin{enumerate}
    \item $\mathrm{im}(\gamma) \cong H^0(\bP^1, L^{\otimes -2})$;
    \item $H^1(C, T_S\otimes \cO_C) \cong H^1(C, T_C) \cong \Ext_{\bP^1}^1(\Omega_{\bP^1}, L)$.
\end{enumerate}

We have the normal exact sequence
\[
0 \to T_C \to T_S \otimes \cO_C \to \cN_{C/S}.
\]
From the proof of Theorem \ref{thm:hyp-ribbon-unobstructed} we know that $\cN_{C/S} \cong \varphi^* L^{\otimes -2}$ is a line bundle on $C$.
By local computation, we know that the image of $T_S \otimes \cO_C \to \cN_{C/S}$ is precisely $\cN \cdot \cN_{C/S}$ where $\cN$ is the nilradical sheaf of $\cO_C$. Thus we have a short exact sequence
\[
0 \to T_C \to T_S \otimes \cO_C \to \cN\cdot \cN_{C/S}\to 0,
\]
which gives a long exact sequence
\begin{align*}
0& \to H^0(C, T_C) \to H^0(C, T_S\otimes \cO_C) \xrightarrow{\beta'} H^0(C, \cN\cdot \cN_{C/S})\\& \to H^1 (C, T_C) \to H^1(C, T_S\otimes \cO_C) \to H^1(C, \cN\cdot \cN_{C/S}).
\end{align*}

We first show that $H^1(C, \cN\cdot \cN_{C/S})=0$. We have 
\[
\varphi_*(\cN\cdot \cN_{C/S})\cong \varphi_*(\cN \cdot \varphi^* L^{\otimes -2}) \cong (\varphi_*\cN) \otimes L^{\otimes -2}\cong L\otimes L^{\otimes -2} \cong L^{\vee}.
\]
Thus 
\begin{equation}\label{eq:normal-cohom}
H^i(C, \cN\cdot \cN_{C/S})\cong H^1(\bP^1, \varphi_*(\cN\cdot \cN_{C/S})) \cong H^i(\bP^1, L^{\vee}) \cong H^i(\bP^1, \cO_{\bP^1}(g+1)).
\end{equation}
As a result, we have $H^1(C, \cN\cdot \cN_{C/S}) \cong H^1(\bP^1, \cO_{\bP^1}(g+1))  = 0$. 

Next, we show that $\beta'$ is surjective. From \eqref{eq:normal-cohom} we have 
\[
h^0(C, \cN\cdot \cN_{C/S}) = h^0(\bP^1, \cO_{\bP^1}(g+1))  = g+2.
\]
By Proposition \ref{prop:hyp-rib-closedpt} we have $h^0(C, T_C) = \dim \Aut(C) = 4$. It is clear that $T_S$ fits into an exact sequence
\[
0 \to f^*L^\vee \to T_S \to f^* T_{\bP^1}\to 0,
\]
which implies the following exact sequence
\[
0 \to \varphi^*L^\vee \to T_S\otimes \cO_C \to \varphi^* T_{\bP^1}\to 0.
\]
Thus we have an exact sequence
\begin{align*}\label{eq:tangent-exact}
    0 &\to H^0(C, \varphi^*L^\vee)\to   H^0 (C, T_S\otimes \cO_C) \to H^0(C, \varphi^* T_{\bP^1}) \\
    &\to H^1(C, \varphi^*L^\vee) \to   H^1 (C, T_S\otimes \cO_C) \to H^1(C, \varphi^* T_{\bP^1}) \to H^2(C, \varphi^*L^\vee) =0\numberthis
\end{align*}
We have $H^1(C, \varphi^* L^\vee) \cong H^1(\bP^1, L^\vee \oplus \cO_{\bP^1}) =0$. Thus we can determine the dimension as 
\begin{align*}
h^0(C, T_S\otimes \cO_C) & = h^0(C, \varphi^* L^{\vee}) + h^0(C, \varphi^* T_{\bP^1}) \\
&= h^0(\bP^1, L^{\vee} \oplus \cO_{\bP^1}) + h^0(\bP^1, T_{\bP^1} \oplus T_{\bP^1}\otimes L) = g+6.
\end{align*}

Thus we have $h^0(C, T_S\otimes \cO_C) = h^0(C, T_C) + h^0(C, \cN\cdot \cN_{C/S})$, which implies that $\beta'$ is surjective. 

Combining $H^1(C, \cN\cdot \cN_{C/S}) = 0$ and surjectivity of $\beta'$, we conclude that $H^1(C, T_C) \cong H^1(C, T_S\otimes \cO_C)$ and that $\mathrm{im}(\beta) = H^0(C, \cN \cdot \cN_{C/S})\subset H^0(C, \cN_{C/S})$. We have
\[
\varphi_* \cN_{C/S} \cong (\varphi_* \cO_C)\otimes \cL^{\otimes -2} \cong L^{\vee} \oplus L^{\otimes -2}. 
\]
Under this isomorphism, $\varphi_*(\cN\cdot \cN_{C/S})$ is mapped to $L^{\vee}$. Thus we have 
\[
\mathrm{im}(\gamma) = \coker(\beta)\cong H^0(\bP^1, L^{\vee} \oplus L^{\otimes -2})/ H^0(\bP^1, L^{\vee}) \cong H^0(\bP^1, L^{\otimes -2}).
\]
This proves (1). 

Finally we prove (2). By \eqref{eq:tangent-exact} and the fact that $H^1(C, \varphi^*L^{\vee}) = 0$, we have 
\begin{align*}
H^1(C, T_S\otimes \cO_C)&\cong H^1(C, \varphi^* T_{\bP^1}) \cong H^1 (\bP^1, (\varphi_* \cO_C) \otimes T_{\bP^1}) \\&\cong H^1(\bP^1, T_{\bP^1} \oplus  T_{\bP^1} \otimes L) \cong  H^1(\bP^1,  T_{\bP^1} \otimes L) \cong \Ext^1(\Omega_{\bP^1}, L). 
\end{align*}
This together with $H^1(C, T_C) \cong H^1(C, T_S\otimes \cO_C)$ proves (2). 
\end{proof}

\section{Moduli of curves via canonical maps} \label{sec:chow-ss-can}

In this section, we construct a new stack $\sX_g$  parameterizing Gorenstein smoothable curves $C$ of genus $g\geq 3$ with ample and basepoint free canonical bundle such that its cycle-theoretic image under the canonical map is Chow semistable (see Definition \ref{def:sX_g}). 
The purpose of introducing $\sX_g$ is to (potentially) provide a wall crossing replacing hyperelliptic curves by ribbons, which is verified  in Section \ref{sec:hyp-flip} for $g=4$. The main result of this section (Theorem \ref{thm:sX_g-gms}) shows that there exists an open neighborhood $\sX_g^{\circ}$ of $\sH_g\cup \sR_g$ in $\sX_g$ that admits a good moduli space isomorphic to an open subscheme of the Chow quotient space $\fX_g^{\rm c}$.

\subsection{Stack of Gorenstein curves with Chow semistable cycle}

\begin{defn-prop}\label{def:curves_g,1} There exists an open substack  $\Curves_{g,1}\hookrightarrow \Curves_g^{\Gor}$ that parameterizes curves $C\in \Curves_g^{\Gor}$ whose canonical bundle $\omega_C$ is ample and basepoint free. Moreover, $\cM_g$ is an open substack of $\Curves_{g,1}$.
\end{defn-prop}

\begin{proof}
Since $\Curves_{g}^{\Gor}$ is an algebraic stack locally of finite type over $\bk$, it suffices to show that for every scheme $U$ of finite type over $\bk$ together with a smooth morphism $U\to \Curves_g^{\Gor}$, the locus 
\[
V:=\{u\in U\mid \omega_{C_u} \textrm{ is ample and basepoint free}\}
\]
is open in $U$. Here $f: C_U\to U$ denotes the pull-back of the universal curve over $ \Curves_g^{\Gor}$, and $C_u:=C_U\times_U k(u)$. Since $f$ is Gorenstein with equidimensional fibers of dimension $1$, we know that $\omega_{C_U/U}$ is an invertible sheaf that commutes with base change. Moreover,  $f_* \cO_{\cU}\cong \cO_U$ and $R^1 f_* \cO_{C_U}$ is a locally free sheaf of rank $g$. By Grothendieck duality, $R^1 f_*\omega_{C_U/U}\cong \cO_U$ and the sheaf $f_* \omega_{C_U/U}\cong (R^1 f_* \cO_{C_U})^*$ is also locally free of rank $g$.
 
Clearly, the ampleness of $\omega_{C_u}$ is an open condition; denote the locus of $u$ for which $\omega_{C_u}$ is ample by $U'\subset U$.  The restriction $f': C_{U'}= f^{-1}(U')\to U'$ of $f$ is a projective morphism of schemes of finite type over $\bk$. By cohomology and base change, we know that $f'_* \omega_{C_{U'}/U'}$ commutes with base change. Thus for $u\in U'$,  $\omega_{C_u}$ is basepoint free if and only if $\psi_{f'}:f'^*f'_* \omega_{C_{U'}/U'}\to \omega_{C_{U'}/{U'}}$ is surjective along $C_{u}$. If $u\in V$, then $\psi_{f'}$ being surjective on $C_{u}$ implies that it is surjective in a neighborhood of $C_u$ as the surjectivity of a morphism between locally free sheaves is an open condition. Thus $V$ contains an open neighborhood of $u$ in $U'$ by properness of $f'$, which implies that $V$ is open in $U$. This shows that $\Curves_{g,1}$ is an open substack of $\Curves_g^{\Gor}$. Moreover, since every smooth curve $C$ of genus $g\geq 2$ satisfies that $\omega_C$ is basepoint free, we know that $\cM_g\hookrightarrow \Curves_{g,1}$ is an open substack.
\end{proof}

\begin{defn}\label{def:sX_g} Let $\osX_{\!g}$ be the stack-theoretic closure of $\cM_g$ in $\Curves_{g,1}$. %
Since $\cM_g$ is a smooth irreducible open substack in $\Curves_g^{\Gor}$, we know that $\osX_{\!g}$ is an irreducible component of $\Curves_{g,1}$ with reduced stack structure. 

Recall from Section \ref{sec:chow-vgit} that $\Chow_{g,1}$ is the seminormalization of the irreducible component of the Chow scheme $\boldsymbol{C}_{1,2g-2}(\bP^{g-1})$ containing smooth canonical genus $g$ curves. Denote by $\osX_{\!g}^{\rm sn}$ the seminormalization of $\osX_{\!g}$ (see e.g. \cite[Definition 13.6]{ABB+} for the definition of seminormalization of algebraic stacks). 
Then we have the Hilbert--Chow morphism $\overline{\Phi}_g^{\rm sn}: \osX_{\!g}^{\rm sn} \to [\Chow_{g,1}/\PGL(g)]$.  There is an open substack
$(\overline{\Phi}_g^{\rm sn})^{-1}(\sX_g^{\rm c})\hookrightarrow \osX_{\!g}^{\rm sn}$,
where $\sX_g^{\rm c} = [\Chow_{g,1}^{\rm ss}/\PGL(g)]$ as in Definition \ref{def:chowss}.

Let $\sX_g \hookrightarrow \osX_{\!g}$ be an open substack with seminormalization $\sX_g^{\rm sn} = (\overline{\Phi}_g^{\rm sn})^{-1}(\sX_g^{\rm c})$. Since seminormalization is  a universal homeomorphism, such an open substack $\sX_g$  exists. In particular, $\sX_g$ parameterizes smoothable curves $C\in \Curves_{g}^{\Gor}$ such that $\omega_C$ is ample and basepoint free, and that the Hilbert--Chow image of $C$ induced by $|\omega_C|$ is a Chow-semistable $1$-cycle in $\bP^{g-1}$. Denote by $\Phi_g^{\rm sn}: \sX_g^{\rm sn}\to \sX_g^{\rm c}$ the restriction of $\overline{\Phi}_g^{\rm sn}$.
\end{defn}

\begin{thm}\label{thm:sX_g-smooth}
Let $g\geq 3$ be an integer. Then $\sX_g$ is an irreducible algebraic stack of finite type with affine diagonal that contains $\cM_g$ as a dense open substack. Moreover, $\sX_g$ contains a smooth open neighborhood of the point $[R_{\hyp}]$ in $\Curves_{g,1}$.
\end{thm}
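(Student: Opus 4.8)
The plan is to prove the three assertions in turn: (i) $\sX_g$ is an algebraic stack of finite type with affine diagonal, (ii) $\cM_g$ is dense in $\sX_g$ (equivalently, $\sX_g$ is irreducible), and (iii) $\sX_g$ contains a smooth open neighborhood of $[R_{\hyp}]$. For (i), note that $\Curves_{g,1}$ is an open substack of $\Curves_g^{\Gor}$ by Definition-Proposition \ref{def:curves_g,1}, hence is an algebraic stack locally of finite type over $\bk$; its diagonal is affine because the diagonal of $\Curves_g^{\Gor}$ is affine (this is part of what we have cited from the Stacks Project, since $\Curves_g^{\Gor}$ sits inside $\Curves$ with affine diagonal). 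The stack $\osX_{\!g}$ is the reduced closed substack given by the stack-theoretic closure of $\cM_g$, so it inherits an affine diagonal and is locally of finite type. Finally $\sX_g$ is by construction an open substack of $\osX_{\!g}$, so it too has affine diagonal and is locally of finite type; finiteness of type then follows from quasi-compactness, which I would deduce from the fact that $\sX_g^{\rm sn} = (\overline{\Phi}_g^{\rm sn})^{-1}(\sX_g^{\rm c})$ maps to the quasi-compact stack $\sX_g^{\rm c} = [\Chow_{g,1}^{\rm ss}/\PGL(g)]$ with quasi-compact fibers (each fiber of the Hilbert--Chow morphism over a given cycle is a substack of a suitable Hilbert scheme, hence quasi-compact), and that seminormalization is a universal homeomorphism so $\sX_g$ and $\sX_g^{\rm sn}$ have the same topological space.

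For (ii), irreducibility and density of $\cM_g$ are essentially built into the definition: $\osX_{\!g}$ is the stack-theoretic closure of the irreducible stack $\cM_g$, hence is irreducible with $\cM_g$ dense; and $\sX_g \subset \osX_{\!g}$ is a nonempty open substack, which is automatically irreducible and contains $\cM_g$ as a dense open substack, provided we check that $\cM_g \subset \sX_g$. But every smooth canonical curve is Chow stable by Lemma \ref{lem:sm-chow}, hence lies in $(\overline{\Phi}_g^{\rm sn})^{-1}(\sX_g^{\rm c})$, so indeed $\cM_g \subset \sX_g$ and the argument closes.

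The substance is (iii): showing $[R_{\hyp}]$ has a smooth open neighborhood in $\sX_g$. The strategy is to combine the unobstructedness of the hyperelliptic ribbon with a comparison of local dimensions. By Theorem \ref{thm:hyp-ribbon-unobstructed}, $\Curves_g^{\Gor}$ is smooth at $[R_{\hyp}]$; its dimension there equals $\dim \Ext^1_C(\bfL_C,\cO_C) - \dim \Aut(R_{\hyp})$, which I would compute using Proposition \ref{prop:hyp-rib-transversal} (the tangent space splits as $H^0(\bP^1,L^{\otimes -2}) \oplus \Ext^1_{\bP^1}(\Omega_{\bP^1},L)$, of dimensions $2g+3$ and $g-2$ respectively) and $\dim\Aut(R_{\hyp}) = 4$ from Proposition \ref{prop:hyp-rib-closedpt}; this gives $\dim_{[R_{\hyp}]}\Curves_g^{\Gor} = (2g+3) + (g-2) - 4 = 3g - 3$. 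Since $\dim \cM_g = 3g-3$ as well, the point $[R_{\hyp}]$ lies on a unique irreducible component of $\Curves_{g,1}$ (the smooth point sees only one component), and that component must be $\osX_{\!g}$ — because $\cM_g$ is $3g-3$-dimensional, dense in $\osX_{\!g}$, and $[R_{\hyp}]$ is a degeneration of hyperelliptic curves hence lies in the closure of $\cM_g$ (concretely, $R_{\hyp} = (z^2 = 0)$ is the flat limit of smooth hyperelliptic Weierstrass curves $(z^2 = \epsilon f(x,y))$ as $\epsilon \to 0$, which is a family in $\Curves_{g,1}$). Therefore, in a neighborhood of $[R_{\hyp}]$, the smooth stack $\Curves_{g,1}$ coincides with $\osX_{\!g}$, so $\osX_{\!g}$ is smooth near $[R_{\hyp}]$. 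It remains to see that $[R_{\hyp}] \in \sX_g$: the canonical model of $R_{\hyp}$ under $|\omega_{R_{\hyp}}|$ is the doubled rational normal curve in $\bP^{g-1}$ by Proposition \ref{prop:ribbon-canonical-map}, which is Chow polystable by Lemma \ref{lem:sm-chow}, hence in particular Chow semistable, so $[R_{\hyp}] \in (\overline{\Phi}_g^{\rm sn})^{-1}(\sX_g^{\rm c})$ and thus $[R_{\hyp}] \in \sX_g$.

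The main obstacle is the dimension bookkeeping in step (iii) — specifically, being careful that ``smooth point on $\Curves_{g,1}$ of dimension $3g-3$'' genuinely forces the local identification with $\osX_{\!g}$ rather than with some other component, and that $R_{\hyp}$ actually lies in $\Curves_{g,1}$ (i.e.\ $\omega_{R_{\hyp}}$ is ample and basepoint free, which follows from Proposition \ref{prop:ribbon-canonical-map} together with the observation that $\omega_{R_{\hyp}} \cong \varphi^*\cO_{\bP^1}(g-1)$ is the pullback of an ample basepoint-free bundle under the finite map $\varphi$). One subtlety to handle is that $\Curves_{g,1}$ a priori may have other irreducible components passing through nearby points, but since $\Curves_g^{\Gor}$ is smooth (hence normal, hence has disjoint irreducible components) at $[R_{\hyp}]$, no such issue arises at the point itself, and a small enough open neighborhood of $[R_{\hyp}]$ in $\Curves_{g,1}$ is irreducible of dimension $3g-3$ and therefore equals the corresponding open in $\osX_{\!g}$; intersecting with the open substack $\sX_g$ yields the desired smooth open neighborhood.
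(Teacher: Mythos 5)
Your items (ii) and (iii) are correct and essentially follow the paper's argument (the dimension count in (iii), while valid, is superfluous: once unobstructedness gives that $\Curves_{g,1}$ is smooth, hence locally irreducible, at $[R_{\hyp}]$, and once you know $[R_{\hyp}]$ lies in $\osX_{\!g}$ via the explicit smoothing $(z^2=tf(x,y))$, it follows immediately that $\osX_{\!g}$ is the unique component through $[R_{\hyp}]$; no dimension comparison is needed). However, item (i) has two genuine gaps.

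First, the claim that ``$\Curves_g^{\Gor}$ sits inside $\Curves$ with affine diagonal'' is false: the Stacks Project only establishes that $\Curves$ is \emph{quasi-separated}, and indeed $\Curves_g^{\Gor}$ cannot have affine diagonal in general since, for instance, a smooth genus-one curve $E$ has $\Aut(E)\supset E$ non-affine. The correct source of affine diagonal is the ampleness of $\omega_C$ on $\Curves_{g,1}$: for polarized projective schemes the Isom functor is affine, and it is this that the paper invokes. Second, your deduction of quasi-compactness of $\sX_g$ is underspecified and does not close as stated. Knowing the target $\sX_g^{\rm c}$ is quasi-compact and that the fibers of $\overline{\Phi}_g^{\rm sn}$ are quasi-compact does not, by itself, give quasi-compactness of the preimage---for that you would need the morphism itself to be quasi-compact, which is essentially what you are trying to prove. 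Moreover, the parenthetical justification ``each fiber $\ldots$ is a substack of a suitable Hilbert scheme'' is not correct as written: curves in $\Curves_{g,1}$ only come with a \emph{finite} canonical morphism to $\bP^{g-1}$, not a closed embedding, so the fiber over the doubled rational normal curve (all GIT-semistable hyperelliptic curves and all ribbons) does not sit inside the Hilbert scheme of $\bP^{g-1}$ in the relevant way. The paper sidesteps both issues by proving directly (Lemma~\ref{lem:g,1-ft}) that $\Curves_{g,1}$ itself is of finite type: for $C\in\Curves_{g,1}(\bk)$ the sheaf $\varphi_*\cO_C$ is $3$-regular on $\bP^{g-1}$, hence $\omega_C^{\otimes 3}$ is very ample with Hilbert polynomial independent of $C$, which gives boundedness. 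You should either reproduce this Castelnuovo--Mumford argument or find another uniform bound; the fibered approach via the Chow stack does not appear to give one without extra input.
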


To prove this theorem, we first show a more general boundedness result.

\begin{lem}\label{lem:g,1-ft}
For any $g\geq 2$, the algebraic stack $\Curves_{g,1}$ is of finite type over $\bk$.
\end{lem}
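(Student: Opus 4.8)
The plan is to prove boundedness of the family of curves parameterized by $\Curves_{g,1}$, which suffices because $\Curves_g^{\Gor}$ is locally of finite type over $\bk$ (Section~\ref{sec:stack-curves}) and $\Curves_{g,1}$ is an open substack, so it is automatically locally of finite type and the only point at issue is quasi-compactness. For $C\in\Curves_{g,1}(\bk')$ one has $\deg\omega_C=2g-2$ and $h^0(C,\omega_C)=h^1(C,\cO_C)=g$ by Riemann--Roch and Grothendieck duality, so the basepoint free system $|\omega_C|$ gives a morphism $f\colon C\to\bP^{g-1}$ with $f^*\cO(1)\cong\omega_C$; since $\omega_C$ is ample, $f$ is finite (its fibres are the proper $0$-dimensional subschemes on which $f^*\cO(1)$ is trivial). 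Then $\cA:=f_*\cO_C$ is a coherent sheaf of $\cO_{\bP^{g-1}}$-algebras, finite as an $\cO_{\bP^{g-1}}$-module, with Hilbert polynomial $P(m):=\chi(\omega_C^{\otimes m})=(2g-2)m+1-g$ independent of $C$, and $\Spec_{\bP^{g-1}}\cA$ recovers $C$ together with $\omega_C=f^*\cO(1)$. Consequently, if the sheaves $\cA$ form a bounded family, then there is a finite type $\bk$-scheme mapping smoothly and surjectively onto $\Curves_{g,1}$ --- the scheme parameterizing, for each $C$, bases of $H^0(C,\omega_C)$ and $H^0(C,\omega_C^{\otimes m_0})$ together with the resulting presentation of $\cA$, whose fibres over $\Curves_{g,1}$ are torsors under a linear algebraic group --- and the lemma follows.

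The one nontrivial input I would need is a \emph{uniform} Serre vanishing: an integer $m_0=m_0(g)$ with $H^1(C,\omega_C^{\otimes m})=0$ for all $m\ge m_0$ and all $C\in\Curves_{g,1}(\bk')$ (note $H^{\ge 2}(C,-)=0$ since $\dim C=1$). Granting this, $\cA$ is $m_0$-regular on $\bP^{g-1}$, because $H^i(\bP^{g-1},\cA(m))\cong H^i(C,\omega_C^{\otimes m})$; hence $\cA(m_0)$ is globally generated by its $P(m_0)$ global sections and $\cA$ is a quotient of $\cO_{\bP^{g-1}}(-m_0)^{\oplus P(m_0)}$ with fixed Hilbert polynomial, i.e.\ a point of a fixed Quot scheme of finite type over $\bk$. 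Imposing the locally closed conditions that the quotient carry an $\cO_{\bP^{g-1}}$-algebra structure and that its relative spectrum lie in $\Curves_{g,1}$ with $\cO(1)$ pulling back to the dualizing sheaf then produces the desired finite type scheme, completing the argument.

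It remains to establish the uniform vanishing. By Grothendieck duality on the Gorenstein curve $C$ this is equivalent to $H^0(C,\omega_C^{\otimes(1-m)})=0$ for $m\ge 2$, that is, to the absence of nonzero sections of an anti-ample line bundle. For $C$ reduced this is immediate: restricting to an irreducible component and pulling back to its normalization kills any section, because $\omega_C$ has positive degree on every component. For non-reduced $C$ one filters $\cO_C$ by powers of the nilradical $\cN$; here $\cN^{k+1}=0$ with $k$ bounded in terms of $g$, since $f_*[C]$ having degree $2g-2$ forces every component of $C$ to have multiplicity at most $2g-2$. This reduces the claim to the vanishing of $H^0$ of twists of the graded pieces $\cN^j/\cN^{j+1}$ on $C_{\red}$; because $C$ is Cohen--Macaulay these pieces have torsion of bounded length, and the negativity of $\omega_C^{\otimes(1-m)}$ on $C_{\red}$ forces the vanishing once $m$ exceeds a bound depending only on $g$. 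In the most degenerate instances --- the hyperelliptic curves and the hyperelliptic ribbon, where $\omega_C$ itself is not very ample --- one can instead read the vanishing directly off the explicit Weierstrass model, using Theorem~\ref{thm:hyp=Weierstrass} and Proposition~\ref{prop:ribbon-canonical-map}. I expect this uniform control of the nilradical filtration to be the main obstacle, as it is the only step where boundedness of Gorenstein curves does not apply off the shelf and the estimate must be made uniformly in the genus.
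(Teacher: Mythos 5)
Your overall strategy matches the paper's: reduce to quasi-compactness, use the canonical map $C\to\bP^{g-1}$, push forward $\cO_C$, establish Castelnuovo--Mumford regularity of $\varphi_*\cO_C$, and conclude boundedness from a fixed Hilbert polynomial. But you have misidentified the ``main obstacle,'' and the step you worry about is actually elementary and needs none of the nilradical analysis you sketch.

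Having reduced (via Serre duality on the Gorenstein curve $C$) the required vanishing $H^1(C,\omega_C^{\otimes m})=0$ to $H^0(C,\omega_C^{\otimes(1-m)})=0$, observe that for \emph{every} $m\geq 2$ the line bundle $\omega_C^{\otimes(1-m)}$ is anti-ample, so all these statements follow simultaneously from the single general fact: on a proper connected Cohen--Macaulay curve $C$ over a field with $H^0(C,\cO_C)=\bk$, an anti-ample line bundle $L$ has $H^0(C,L)=0$. There is no $m_0$ depending on $g$ to hunt for; $m_0=2$ works uniformly. This is precisely what the paper uses: $H^1(C,\omega_C^{\otimes 2})\cong H^0(C,\omega_C^\vee)^\vee=0$ gives $3$-regularity of $\varphi_*\cO_C$, hence (by Mumford's theorem on products of sections) very ampleness of $\omega_C^{\otimes 3}$, and boundedness follows from the very ample $\omega_C^{\otimes 3}$ with fixed Hilbert polynomial.

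The clean proof of the vanishing makes no reference to multiplicities or filtration lengths. Suppose $s\in H^0(C,L)$ is nonzero. Because $C$ is Cohen--Macaulay of pure dimension one, $\cO_C$ (and hence the line bundle $L$) has no nonzero sections supported in dimension zero, so $Z:=V(\mathrm{Ann}(s))\subset C$ is of pure dimension one. By construction $s$ is a nonzerodivisor section of $L|_Z$; at each generic point $\eta$ of $Z$ the local ring $\cO_{Z,\eta}$ is Artinian, where nonzerodivisors are units, so $s$ restricts to a nonzero section of $L$ on any irreducible component $Z_1$ of $Z$ taken with reduced structure. But $L|_{Z_1}$ is anti-ample on an integral proper curve, hence has no nonzero sections --- a contradiction. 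Your nilradical filtration, by contrast, is both unnecessary and hazardous: the graded pieces $\cN^j/\cN^{j+1}$ need not be torsion-free $\cO_{C_{\red}}$-modules, and neither the assertion that their torsion lengths are bounded by $g$ nor the final comparison with the degree of $\omega_C^{\otimes(1-m)}$ is justified in your sketch. Once you replace that paragraph with the argument above (or simply invoke the fact that anti-ample line bundles on such curves have no sections, as the paper does), the rest of your Quot-scheme packaging of boundedness is a fine alternative to the Hilbert-scheme packaging in the paper.
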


\begin{proof}
 Since $\Curves_g^{\Gor}$ is locally of finite type, it suffices to show that $\Curves_{g,1}$ is quasi-compact, or equivalently, the curves $C\in \Curves_{g,1}(\bk)$ are bounded. Let $\varphi: C\to \bP^{g-1}$ be the finite morphism induced by the basepoint free linear system $|\omega_C|$. We first show that $\omega_C^{\otimes 3}$ is very ample. Since $H^1(\bP^{g-1}, \varphi_* \cO_C (2)) \cong H^1(C, \omega_C^{\otimes 2}) \cong H^0(C, \omega_C^{\vee}) ^{\vee}= 0$, we know that $\varphi_* \cO_C$ is $3$-regular in the sense of Castelnuovo--Mumford regularity. By \cite[Theorem 1.8.3]{positivity-I}, we know that 
 \[
 H^0(\bP^{g-1}, \varphi_* \cO_C (3)) \otimes H^0(\bP^{g-1}, \cO_{\bP^{g-1}}(m)) \to H^0(\bP^{g-1}, \varphi_* \cO_C (3+m))
 \]
 is surjective for every $m\geq 0$. As a result, we know that 
 \[
 H^0(C,\omega_C^{\otimes 3})\otimes H^0(C,\omega_C^{\otimes m})\to H^0(C, \omega_C^{\otimes(3+m)})
 \]
 is surjective for every $m\geq 0$, which implies that $\omega_C^{\otimes 3}$ is very ample. By the Riemann--Roch formula for Gorenstein curves \cite[\href{https://stacks.math.columbia.edu/tag/0BS6}{Tag 0BS6}]{stacksproject}, we know that 
 \[
 h^0(C,\omega_C^{\otimes 3m}) =  \chi(C, \omega_C^{\otimes 3m}) = 3m(2g-2) - g +1. 
 \]
 Thus the Hilbert polynomial of $(C, \omega_C^{\otimes 3})$ is independent of the choice of $C$. This together with very ampleness of $\omega_C^{\otimes 3}$ implies the boundedness  of $C\in \Curves_{g,1}(\bk)$. 
\end{proof}

\begin{proof}[Proof of Theorem \ref{thm:sX_g-smooth}]
First we show that $\sX_g$ is irreducible and contains a dense open substack $\cM_g$. By Lemma \ref{lem:sm-chow} we know that the $1$-cycle of a smooth canonical curve of genus $g$ is Chow stable. The $1$-cycle as the Hilbert--Chow image of a smooth hyperelliptic curve of genus $g$ is the doubled rational normal curve in $\bP^{g-1}$, which is Chow polystable by Lemma \ref{lem:sm-chow}. Thus we have $\cM_g\subset \sX_g$. 
Since $\cM_g$ is open and dense in $\osXg$, it is also open and dense in $\sX_g$ which implies the irreducibility of $\sX_g$.

Next, since $\sX_g$ is a locally closed substack of $\Curves_{g,1}$, by Lemma \ref{lem:g,1-ft} we know that $\sX_g$ is of finite type. In addition, since every curve $C$ in $\Curves_{g,1}$ satisfies that $\omega_C$ is ample, we conclude that $\Curves_{g,1}$ and hence $\sX_g$ has affine diagonal.

Finally, we show that $\sX_g$ contains a smooth open neighborhood of $[R_{\hyp}]$ in $\Curves_{g,1}$. By Theorem \ref{thm:hyp-ribbon-unobstructed} we know that the deformations of $R_{\rm hyp}$ are unobstructed, which implies that $\Curves_{g,1}$ is smooth and hence irreducible in a neighborhood of $R_{\rm hyp}$. Clearly, $\osXg$ is an irreducible component of $\Curves_{g,1}$. Since $R_{\hyp}$ is smoothable as it is a degeneration of smooth hyperelliptic curves, the irreducibility of $\Curves_{g,1}$ at $[R_{\hyp}]$ implies that $\osXg$ contains a smooth open neighborhood of  $R_{\hyp}$ in $\Curves_{g,1}$. Moreover, by openness of Chow-semistability we know that $\sX_g$ also contains a smooth open neighborhood of  $R_{\hyp}$ in $\Curves_{g,1}$. 
\end{proof}

\subsection{Closed substacks parameterizing hyperelliptic curves and ribbons}

\begin{prop}\label{prop:hyp-rib-closed-substack}
Both $\sH_g$ and $\sR_g$ are  closed substacks of $\sX_g$. 
\end{prop}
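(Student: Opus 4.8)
The plan is to realize $\sH_g$ and $\sR_g$ as closed substacks of $\sX_g$ by the valuative criterion. First I would observe that the natural morphisms $\sH_g\to\Curves_g^{\Gor}$ and $\sR_g\to\Curves_g^{\Gor}$ are monomorphisms — this is precisely the uniqueness (up to unique isomorphism) of the data $(P/S,\cL,i)$, resp. $(P/S,\cL,\cE)$, established in the proofs of Theorems~\ref{thm:hyp-stack} and~\ref{thm:rib-stack} — hence they are representable by algebraic spaces. A monomorphism of finite type that is universally closed is automatically a closed immersion (being separated it is then proper, hence finite, hence a closed immersion), and since it is a monomorphism it suffices to verify the existence part of the valuative criterion over a DVR $R$ itself: uniqueness of a lift is free, and a lift existing over an extension of $R$ descends by faithfully flat descent applied to the monomorphism base-changed to $\Spec R$. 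So the whole proof reduces to the following: given a DVR $R$ with fraction field $K$ and a family $(X\to\Spec R)\in\sX_g(R)$ with $X_K$ hyperelliptic (resp.\ a ribbon), show $X\to\Spec R\in\sH_g(R)$ (resp.\ $\sR_g(R)$) — granting first that the two functors do land in $\sX_g$.

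For the factorization $\sH_g,\sR_g\to\sX_g$: by Propositions~\ref{prop:hyp-canonical-map} and~\ref{prop:ribbon-canonical-map} every hyperelliptic curve and every ribbon has $\omega_C$ ample and basepoint free, so both functors land in $\Curves_{g,1}$ (relative ampleness and basepoint-freeness are open on the base for a proper flat family and hold fiberwise). Any such $C$ is smoothable: $[R_{\hyp}]\in\overline{\{[C]\}}$ by Proposition~\ref{prop:hyp-rib-closedpt}, while $\Curves_{g,1}$ is irreducible at $[R_{\hyp}]$ by Theorem~\ref{thm:hyp-ribbon-unobstructed}, so the unique component of $\Curves_{g,1}$ through $[R_{\hyp}]$ — namely $\osXg$, since $R_{\hyp}$ is a limit of smooth hyperelliptic curves — contains $[C]$; as $\sH_g$ and $\sR_g$ are reduced this gives a factorization through $\osXg$. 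Finally the canonical map carries $C$ onto a rational normal curve with generic multiplicity two, so the Hilbert--Chow image of $C$ is the doubled rational normal curve, which is Chow polystable by Lemma~\ref{lem:sm-chow}; hence the morphisms factor through the open substack $\sX_g\subset\osXg$.

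For $\sH_g$: since $\omega_{X/R}$ is relatively ample and basepoint free, the relative canonical morphism $\kappa\colon X\to\bP(f_*\omega_{X/R})\cong\bP^{g-1}_R$ has $\kappa^*\cO(1)=\omega_{X/R}$ and is therefore finite; let $P\subset\bP^{g-1}_R$ be its scheme-theoretic image, so $\kappa$ factors as $X\xrightarrow{\varphi}P\hookrightarrow\bP^{g-1}_R$ with $\varphi$ finite and scheme-theoretically dominant. Then $\kappa_*\cO_X$ is $R$-flat ($X$ is $R$-flat and $\kappa$ affine), so its image $\cO_P$ is $R$-torsion free and $P$ is $R$-flat; by Proposition~\ref{prop:hyp-canonical-map} the scheme-theoretic image $P_K$ of $\kappa_K$ is the rational normal curve of $X_K$, a form of $\bP^1$ over $K$. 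For the special fiber, flatness gives $\chi(\cO_{P_0})=1$ and $\deg P_0=g-1$, while $P_0$ contains the scheme-theoretic image of $X_0$, whose reduction is the rational normal curve underlying the Hilbert--Chow image of $X_0$ — and that image must be the doubled rational normal curve, being a Chow semistable specialization of the Chow polystable doubled rational normal curve of the nearby fibers, hence lying in its closed orbit. A degree count then forces the cycle $[P_0]$ to equal that rational normal curve with multiplicity one and no other component, and $\chi(\cO_{P_0})=1$ rules out embedded points, so $P_0$ is (a form of) $\bP^1$. Thus $P\to\Spec R$ is proper and smooth with all geometric fibers $\bP^1$, i.e.\ a $\bP^1$-fibration, and $P$ is regular of dimension two. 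Since $X$ is Cohen--Macaulay (flat over regular $R$ with Gorenstein fibers) of dimension two and $\varphi$ is finite with zero-dimensional fibers, miracle flatness (\cite[\href{https://stacks.math.columbia.edu/tag/00R4}{Tag 00R4}]{stacksproject}) makes $\varphi$ flat, so $\varphi_*\cO_X$ is locally free over $\cO_P$, of rank two by a fiberwise count with $\deg\omega_{X_b}=2g-2$. Hence $(X\to\Spec R)\in\sH_g(R)$, and by Theorem~\ref{thm:hyp-stack} this lift is unique and restricts to the given $\Spec K\to\sH_g$.

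For $\sR_g$ the argument is parallel, with $P$ now the scheme-theoretic closure of $(X_K)_{\red}$ inside $X$ (one checks along the way that this coincides with $X_{\red}$ and is $R$-flat). Since $X_K$ is non-reduced and $\cO_X$ is $R$-torsion free, $X_0$ is non-reduced as well; the same degree–$\chi$ analysis — now using that the canonical map restricts to an isomorphism $(X_K)_{\red}\xrightarrow{\ \sim\ }$ rational normal curve, so $\omega_{X_K}|_{(X_K)_{\red}}\cong\cO_{\bP^1}(g-1)$ — yields $P_0\cong\bP^1$ and shows $P\to\Spec R$ is a $\bP^1$-fibration. The ideal sheaf $\cN:=\cI_{P/X}$ is $R$-flat, satisfies $\cN^2=0$ (true over $K$, and $\cN^2$ is an $R$-torsion subsheaf of the $R$-torsion free $\cO_X$, hence zero), and restricts to the nilradical $\cN_0$ of $X_0$; because $X_0$ is Gorenstein with reduction $\bP^1$, localizing at the generic point of $\bP^1$ exhibits an Artinian Gorenstein ring with square-zero maximal ideal, forcing $\cN_0$ to have generic rank one, and multiplication by a nonzerodivisor of the Gorenstein local ring acts injectively on $\cN_0$, so $\cN_0$ is torsion free, hence a line bundle on $\bP^1$; by the fiberwise flatness criterion $\cN$ is then $\cO_P$-flat of rank one, so $(X\to\Spec R)\in\sR_g(R)$, uniquely by Theorem~\ref{thm:rib-stack}. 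I expect the main obstacle in both cases to be controlling the special fiber $P_0$ of the constructed $\bP^1$-fibration: the only leverage is that Chow semistability pins the Hilbert--Chow image of every fiber of $X$ to the doubled rational normal curve, and it is the interplay of this with flatness (constancy of $\chi$ and of the degree) that prevents $P_0$ from degenerating to a genus-zero nodal curve or acquiring embedded points; for $\sR_g$ one additionally has to recognize the limit curve as an honest ribbon, which is where Gorenstein-ness of the fibers of $\sX_g$ enters.
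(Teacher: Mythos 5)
Your proof is correct and follows the same strategy as the paper: show that the monomorphisms $\sH_g,\sR_g\to\sX_g$ are proper via the valuative criterion, using the relative canonical map over the DVR and the fact that the doubled rational normal curve has a closed (polystable) Chow orbit to produce the $\bP^1$-fibration $P$ and then verify that the hyperelliptic/ribbon structure extends. There are two worthwhile local variants. In the hyperelliptic case you deduce that $\varphi_*\cO_X$ is locally free of rank two by miracle flatness, using that $P$ is regular and $X$ is Cohen--Macaulay; the paper instead uses the affine base-change formula and Theorem~\ref{thm:hyp=Weierstrass} to see that $\varphi_*\cO_X$ has constant fiber rank on the integral scheme $P$ --- both work, and yours trades a structural input about hyperelliptic curves for a homological one. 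In the ribbon case you spell out the argument that $\cN_0$ is torsion-free over $\cO_{P_0}$ (multiplication by a nonzerodivisor of the one-dimensional Gorenstein local ring $\cO_{X_0,x}$ is injective on the minimal-prime ideal $\cN_{0,x}$), which makes explicit a step the paper states tersely (``$\cN$ is locally free of rank $1$ as $P/R$ is a $\bP^1$-fibration''). Your verification that the two functors factor through $\sX_g$ is also sound, though the paper gets this more economically from the topological observation that every point of $\sH_g\cup\sR_g$ lies in every open neighbourhood of $[R_{\hyp}]$, combined with the smooth neighbourhood supplied by Theorem~\ref{thm:sX_g-smooth}.
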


\begin{proof}
From Definition \ref{def:hyp-rib-stacks}, we know that both $\sH_g$ and $\sR_g$ admit monomorphisms to $\Curves_g^{\Gor}$. Since $[R_{\hyp}]$ is contained in the closure of every point in $\sH_g$ and $\sR_g$ by Proposition \ref{prop:hyp-rib-closedpt}, we know that the images of $\sH_g$ and $\sR_g$ in $\Curves_g^{\Gor}$ are contained in every open neighborhood of $[R_{\hyp}]$. Thus by Theorem \ref{thm:sX_g-smooth} we have monomorphisms $\iota_{\hyp}:\sH_g \to \sX_g$ and $\iota_{\rib}:\sR_g \to \sX_g$. Since all three stacks $\sH_g$, $\sR_g$ and $\sX_g$ are of finite type over $\bk$, we know that both $\iota_{\hyp}$ and $\iota_{\rib}$ are quasi-finite. Since monomorphisms of algebraic spaces are separated \cite[\href{https://stacks.math.columbia.edu/tag/042N}{Tag 042N}]{stacksproject}, we know that both $\iota_{\hyp}$ and $\iota_{\rib}$ are representable by schemes by \cite[\href{https://stacks.math.columbia.edu/tag/0418}{Tag 0418}]{stacksproject}. Therefore, to show $\iota_{\hyp}$ and $\iota_{\rib}$ to be closed immersions, it suffices to show their properness by \cite[\href{https://stacks.math.columbia.edu/tag/04XV}{Tag 04XV}]{stacksproject}, which further reduces to checking the existence part of the valuative criterion for properness given their separatedness.

Let $f:X\to \Spec\, R$ be an object in $\sX_g(R)$ where $R$ is a DVR with generic point $\eta$ and closed point $\kappa$. By the semicontinuity theorem, we know that $f_* \omega_{X/R}$ is locally free of rank $g$. Moreover, from the definition of $\sX_g$ we know that $\omega_{X/R}$ is $f$-ample and $f$-globally generated. Thus the relative canonical map gives a finite morphism $\varphi: X \to \bfP:=\Proj_R \Sym f_* \omega_{X/R}$. 

Firstly, we assume that $X_{\eta}\in \sH_g(\eta)$ and aim to show $f\in \sH_g(R)$. 
Denote by $P$ the scheme theoretic image of $\varphi$. Then we have an injection $\cO_P \hookrightarrow \varphi_*\cO_X$. Since $X$ is flat over $R$, we know that $\varphi_*\cO_X$ is a torsion free $R$-module, which implies that $\cO_P$ is a torsion free $R$-module. Hence $P$ is also flat over $R$. 
Since $X_{\bar{\eta}}$ is a hyperelliptic curve, we know that $P_{\bar{\eta}}$ is a rational normal curve in $\bfP_{\bar{\eta}}$. Hence the $1$-cycle $\varphi_* [X_{\bar{\eta}}]$ is Chow polystable by Lemma \ref{lem:sm-chow}. This implies that the $1$-cycle $\varphi_* [X_{\bar{\kappa}}]$ is also a doubled rational normal curve, which implies that $P_{\bar{\kappa}}$ is also a rational normal curve by flatness of $P/R$. As a result, $P\to R$ is a $\bP^1$-fibration. Moreover, we have $\varphi_{\bar{\kappa},*} \cO_{X_{\bar{\kappa}}}$ has rank $2$, which implies that $\varphi_{\bar{\kappa},*} \cO_{X_{\bar{\kappa}}}$ is locally free of rank $2$ by Theorem \ref{thm:hyp=Weierstrass}. Since $\varphi$ is affine, we know that $\varphi_{\bar{\kappa},*} \cO_{X_{\bar{\kappa}}}$ is the pull-back of $\varphi_*\cO_X$ to the geometric fiber $P_{\bar{\kappa}}$ by \cite[\href{https://stacks.math.columbia.edu/tag/02KG}{Tag 02KG}]{stacksproject}. Thus $\varphi_* \cO_X$ has constant rank $2$ and hence is locally free, which implies that $f\in \sH_g(R)$.

Next, we assume that $X_{\eta}\in \sR_g(\eta)$ and aim to show $f\in \sR_g(R)$. Since $X$ is flat over $R$ with $X_{\eta}$ irreducible, we know that $X$ is also irreducible. Let $P:= X_{\red}$ with the natural closed immersion $\psi: P\hookrightarrow X$. Since $P$ is integral, we know that $P$ is flat over $R$. From the previous discussion, we know that $\varphi\circ\psi: P\to \bfP$ is a finite morphism that maps the geometric general fiber $P_{\bar{\eta}}$ isomorphically to a rational normal curve in $\bfP_{\bar{\eta}}$, and the image of $P_{\bar{\kappa}}$ is supported on a rational normal curve in $\bfP_{\bar{\kappa}}$. By flatness of $(\varphi\circ\psi)_* \cO_P$ over $R$, we conclude that $\varphi\circ\psi$ is a closed immersion which makes $P\to \Spec\, R$ a $\bP^1$-fibration. Let $\cN$ be the nilradical  sheaf $X$. Then we have $\cN_{\eta}^2=0$ and $\cN_{\eta}$ is a locally free $\cO_{P_{\eta}}$-module of rank $1$ by assumption. Thus $\cN^2$ is supported over $\kappa$ which implies $\cN^2=0$ by flatness of $X/R$.  Moreover, the flatness of $X$ and $P$ over $R$ implies the flatness of $\cN$ over $R$. Thus $\cN$ is a locally free $\cO_P$-module of rank $1$ as $P/R$ is a $\bP^1$-fibration, which implies that $f\in \sR_g(R)$. The proof is finished. 
\end{proof}

\begin{prop}\label{prop:hyp-rib-intersection}
Th point $[R_{\hyp}]$ is the only point in the intersection $\sH_g\cap \sR_g$ and the only closed point in $\sH_g\cup \sR_g$. Moreover, the stack $\sX_g$ is smooth in a neighborhood of $\sH_g\cup \sR_g$, and
\[
T_{\sX_g, [R_{\hyp}]} = T_{\sH_g, [R_{\hyp}]} \oplus T_{\sR_g, [R_{\hyp}]}.
\]
\end{prop}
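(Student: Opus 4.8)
The plan is to prove the three assertions of Proposition~\ref{prop:hyp-rib-intersection} in turn, bootstrapping from the explicit quotient-stack presentations of $\sH_g$ and $\sR_g$ established in Theorems~\ref{thm:hyp-stack} and~\ref{thm:rib-stack}, together with the smoothness of $\sX_g$ near $[R_{\hyp}]$ from Theorem~\ref{thm:sX_g-smooth} and the tangent-space decomposition of Proposition~\ref{prop:hyp-rib-transversal}. First, for the claim that $[R_{\hyp}]$ is the only point of $\sH_g \cap \sR_g$: a curve $C$ lying in both stacks is simultaneously a hyperelliptic curve and a ribbon, in particular it is non-reduced (as a ribbon) and hyperelliptic; by Theorem~\ref{thm:hyp=Weierstrass} it is a Weierstrass curve $(z^2 = f(x,y))$ in $\bP(1,1,g+1)$, and non-reducedness forces $f \equiv 0$, so $C = R_{\hyp}$. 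That $[R_{\hyp}]$ is the only closed point of $\sH_g \cup \sR_g$ is exactly Proposition~\ref{prop:hyp-rib-closedpt}, which already records that $[R_{\hyp}]$ is the unique closed point in each of $\sH_g$ and $\sR_g$ and lies in the closure of every point; since $\sH_g$ and $\sR_g$ are closed substacks of $\sX_g$ by Proposition~\ref{prop:hyp-rib-closed-substack}, a closed point of $\sH_g\cup\sR_g$ is a closed point of one of them (using that the union is closed and a point closed in $\sX_g$ is closed in each closed substack containing it, while a point closed in the union but lying only in, say, $\sH_g$ is then closed in $\sH_g$), hence equals $[R_{\hyp}]$.

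Next, the smoothness of $\sX_g$ in a neighborhood of $\sH_g\cup\sR_g$. The key point is that by Corollary~\ref{cor:hyp-rib-unobstructed}, every hyperelliptic curve and every ribbon of genus $g\geq 2$ has unobstructed deformations, i.e.\ $\Curves_g^{\Gor}$ is smooth at each point $[C]$ with $C$ hyperelliptic or a ribbon; since $\sX_g$ is an open substack of $\Curves_{g,1}$ which is in turn open in $\Curves_g^{\Gor}$, and since $\sX_g$ contains all of $\sH_g$ and $\sR_g$, it follows that $\sX_g$ is smooth at every point of $\sH_g \cup \sR_g$, hence smooth on an open neighborhood of this closed substack.

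Finally, the tangent-space decomposition. Here the strategy is to combine three inputs. By Proposition~\ref{prop:hyp-rib-transversal}, the tangent space $T_{\sX_g, [R_{\hyp}]} = \Ext^1_C(\bfL_C, \cO_C)$ (with $C = R_{\hyp}$) decomposes as a direct sum of the image of the tangent space of embedded deformations $\Def_{C\hookrightarrow S}$, which is identified with $H^0(\bP^1, L^{\otimes -2})$, and the tangent space of locally trivial deformations $\Def'_C$, identified with $\Ext^1_{\bP^1}(\Omega_{\bP^1}, L)$, where $L = \cO_{\bP^1}(-g-1)$. One then matches these two summands with $T_{\sH_g,[R_{\hyp}]}$ and $T_{\sR_g,[R_{\hyp}]}$ respectively. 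For $\sR_g$: from the presentation $\sR_g \cong [\bA(\Ext^1(\Omega_{\bP^1}, \cO_{\bP^1}(-g-1)))/(\GL(2)/\bmu_{g+1})]$ of Theorem~\ref{thm:rib-stack}, and since $[R_{\hyp}]$ corresponds to the origin, the tangent space to $\sR_g$ at $[R_{\hyp}]$ is the quotient of $\Ext^1(\Omega_{\bP^1}, L)$ by the image of the Lie algebra $\mathfrak{gl}_2$; but the $\GL(2)$-action on $\Ext^1(\Omega_{\bP^1}, \cO_{\bP^1}(-g-1)) \cong H^1(\bP^1, T_{\bP^1}\otimes L)$ is the natural one, and one checks (e.g.\ by the representation-theoretic decomposition into $\SL_2$-irreducibles, or because the hyperelliptic ribbon is rigid as an abstract scheme modulo these automorphisms only through its ribbon structure) that the infinitesimal action is trivial — equivalently, the deformations parametrized by $\bA(\Ext^1(\Omega_{\bP^1},L))$ are precisely the locally trivial deformations $\Def'_C$ and the map $\Ext^1(\Omega_{\bP^1},L) \to T_{\sR_g,[R_{\hyp}]}$ is an isomorphism. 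Similarly for $\sH_g$: using $\sH_g \cong [\bA(H^0(\bP^1,\cO_{\bP^1}(2g+2)))/(\GL(2)/\bmu_{g+1})]$, the tangent space at the origin $[R_{\hyp}]$ is $H^0(\bP^1,\cO_{\bP^1}(2g+2))$ modulo the $\mathfrak{gl}_2$-action, and since $R_{\hyp}$ corresponds to the binary form $f=0$ (a fixed point of the linear action on $\bA(H^0)$) the infinitesimal action vanishes, so $T_{\sH_g,[R_{\hyp}]} \cong H^0(\bP^1,\cO_{\bP^1}(2g+2)) \cong H^0(\bP^1, L^{\otimes -2})$ since $L^{\otimes -2} = \cO_{\bP^1}(2g+2)$. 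Under the forgetful monomorphisms $\iota_{\hyp}$, $\iota_{\rib}$ into $\sX_g$, these two tangent spaces land inside $T_{\sX_g,[R_{\hyp}]}$ as exactly the summands $H^0(\bP^1, L^{\otimes -2})$ (embedded deformations: a Weierstrass curve is an embedded deformation of $(z^2=0)$ in $S$) and $\Ext^1_{\bP^1}(\Omega_{\bP^1},L)$ (locally trivial deformations) identified in Proposition~\ref{prop:hyp-rib-transversal}, giving the desired direct sum decomposition.

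The main obstacle I anticipate is the precise bookkeeping in the last step: verifying that the two closed immersions $\iota_{\hyp}$ and $\iota_{\rib}$ induce, on tangent spaces at $[R_{\hyp}]$, exactly the two complementary summands appearing in Proposition~\ref{prop:hyp-rib-transversal}, rather than merely injections into $T_{\sX_g,[R_{\hyp}]}$ whose images happen to have complementary dimensions. This requires identifying the subfunctor "deformations remaining hyperelliptic" with "embedded deformations in $S = \bP(1,1,g+1)\setminus\{[0,0,1]\}$" and "deformations remaining a ribbon" with "locally trivial deformations", which should follow from: a nearby deformation of $R_{\hyp}$ that is hyperelliptic is again a Weierstrass divisor in $\bP(1,1,g+1)$ by Theorem~\ref{thm:hyp=Weierstrass} (hence an embedded deformation in $S$), while a nearby deformation that is a ribbon has smooth reduction $\bP^1$ and hence is locally trivial along $\bP^1$. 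A dimension count using $\dim \Aut(R_{\hyp}) = 4$ (Proposition~\ref{prop:hyp-rib-closedpt}), $h^0(\bP^1,\cO_{\bP^1}(2g+2)) = 2g+3$, and $\dim\Ext^1(\Omega_{\bP^1},\cO_{\bP^1}(-g-1)) = g-2$ provides a useful consistency check on the identifications.
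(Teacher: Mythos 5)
Your proof is correct, and most of it follows the same skeleton as the paper's: cite Proposition~\ref{prop:hyp-rib-closedpt} for the unique closed point, Corollary~\ref{cor:hyp-rib-unobstructed} for smoothness, and Proposition~\ref{prop:hyp-rib-transversal} for the tangent-space decomposition. The two places where you genuinely deviate or add content are worth noting.

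For the intersection claim, the paper argues via canonical maps: by Proposition~\ref{prop:hyp-canonical-map} the canonical map of a hyperelliptic curve never embeds (it factors through the rational normal curve with degree two), whereas by Proposition~\ref{prop:ribbon-canonical-map} the canonical map of a ribbon is a closed embedding unless the ribbon is $R_{\hyp}$. You instead use the Weierstrass normal form $(z^2 = f)$ from Theorem~\ref{thm:hyp=Weierstrass} and observe that non-reducedness forces $f \equiv 0$. Both are correct one-line arguments; yours is perhaps slightly more elementary in that it avoids the canonical-map dichotomy, while the paper's fits more naturally into the narrative of Section~\ref{sec:chow-ss-can}, where canonical maps are the organizing principle.

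For the tangent-space decomposition, the paper's proof simply says ``follows from Corollary~\ref{cor:hyp-rib-unobstructed} and Proposition~\ref{prop:hyp-rib-transversal},'' leaving implicit exactly the step you flag as the main obstacle: matching the two summands $H^0(\bP^1, L^{\otimes -2})$ and $\Ext^1_{\bP^1}(\Omega_{\bP^1}, L)$ of Proposition~\ref{prop:hyp-rib-transversal} with the images of $T_{\sH_g,[R_{\hyp}]}$ and $T_{\sR_g,[R_{\hyp}]}$ under the closed immersions $\iota_{\hyp}$, $\iota_{\rib}$. Your sketch does this correctly via the quotient presentations of Theorems~\ref{thm:hyp-stack} and~\ref{thm:rib-stack}, together with the observation that first-order hyperelliptic deformations are embedded deformations in $S$ and first-order ribbon deformations are locally trivial, plus a dimension count. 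One clean-up worth making: the reason the infinitesimal $\GL(2)/\bmu_{g+1}$-action on the affine spaces vanishes at $[R_{\hyp}]$ is simply that the origin is a fixed point of a \emph{linear} action, so the orbit map has zero derivative there; the invocation of representation-theoretic decomposition or rigidity of the ribbon is unnecessary and potentially misleading. With that simplification, the proposal is a complete and correct expansion of the paper's terse proof.
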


\begin{proof}
Notice that the canonical map of a hyperelliptic curve is never a closed embedding by Proposition \ref{prop:hyp-canonical-map}, while the canonical map of a non-hyperelliptic ribbon is always a closed embedding by Proposition \ref{prop:ribbon-canonical-map}. Thus $[R_{\hyp}]$ is the only point in $\sH_g\cap \sR_g$. It is the only closed point of $\sH_g\cup \sR_g$ by Proposition \ref{prop:hyp-rib-closedpt}. The last statement follows from Corollary \ref{cor:hyp-rib-unobstructed} and Proposition \ref{prop:hyp-rib-transversal}.
\end{proof}

\subsection{Normality of the Chow scheme}

In this subsection, we show that $\Chow_{g,1}$ is normal in an open neighborhood of the locus parameterizing doubled rational normal curves. 

Let $\boldsymbol{H}_{\chi_g}(\bP^{g-1})$ be the Hilbert scheme of $\bP^{g-1}$ for the Hilbert polynomial $\chi_g(t):=(2g-2)t - g+1$.  Let $\Hilb_{g,1}$ be the closure of the locus parameterizing canonical curves of genus $g$ in $\boldsymbol{H}_{\chi_g}(\bP^{g-1})$ with reduced scheme structure. Then we have a Hilbert--Chow morphism $\Phi_{g,1}: \Hilb_{g,1}^{\rm sn}\to \Chow_{g,1}$, where $\nu_{g,1}:\Hilb_{g,1}^{\rm sn}\to \Hilb_{g,1}$ denotes the seminormalization. From the construction, we know that $\Phi_{g,1}$ is proper birational. 

\begin{lem}\label{lem:ribbon-cycle}
Let $[C]\in \boldsymbol{H}_{\chi_g}(\bP^{g-1})$ be a closed subscheme of $\bP^{g-1}$ whose corresponding $1$-cycle is a doubled rational normal curve. Then $C$ is a canonical ribbon of genus $g$.
\end{lem}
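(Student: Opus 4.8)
The plan is to recognize $C$ as a double structure on the rational normal curve, upgrade it to an abstract ribbon, and then use the Hilbert polynomial to force its conormal bundle to have degree $-g-1$ — which simultaneously identifies the genus and shows there is no extraneous lower-dimensional part. Write $\Gamma\cong\bP^1$ for the rational normal curve of degree $g-1$ whose cycle is $2[\Gamma]=[C]$, so that the $1$-dimensional part of $C_{\red}$ is exactly $\Gamma$. First I would pass to the Cohen--Macaulay-ification $C'\subseteq C$: let $\mathcal{T}\subseteq\cO_C$ be the maximal subsheaf supported in dimension $0$ (which collects all embedded and isolated points) and set $\cO_{C'}:=\cO_C/\mathcal{T}$. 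Then $C'$ is $S_1$ of pure dimension $1$ with $C'_{\red}=\Gamma$, it still has multiplicity $2$ along $\Gamma$ because removing $\mathcal{T}$ does not affect the generic point $\eta$ of $\Gamma$, and $\chi(\cO_C)=\chi(\cO_{C'})+\ell$ with $\ell:=\mathrm{length}\,\mathcal{T}\ge 0$. Since $\chi(\cO_C)=\chi_g(0)=1-g$, this gives $\chi(\cO_{C'})=1-g-\ell$.

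Next I would show $C'$ is a ribbon on $\Gamma$ in the sense of Definition~\ref{def:ribbon}. The local ring $\cO_{C',\eta}$ is Artinian local of length $2$ with residue field $\kappa(\eta)$; being nonreduced of length $2$ it is isomorphic to $\kappa(\eta)[\epsilon]/(\epsilon^2)$, so the nilradical sheaf $\cN:=\cN_{C'}$ satisfies $\cN_\eta^2=0$. As $\cN^2\subseteq\cO_{C'}$ is then supported in dimension $0$ while $\cO_{C'}$ is $S_1$, we conclude $\cN^2=0$; hence $\cN$ is an $\cO_\Gamma$-module, and being a subsheaf of the $S_1$-sheaf $\cO_{C'}$ it is torsion-free of rank $1$ over $\cO_\Gamma\cong\cO_{\bP^1}$, so a line bundle $L'$, and $C'$ is a ribbon with conormal bundle $L'$. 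Because $\cN^2=0$ we also have $\cI_\Gamma^2\subseteq\cI_{C'}$, so $L'=\cI_\Gamma/\cI_{C'}$ is a line-bundle quotient of $N^\vee_{\Gamma/\bP^{g-1}}=\cI_\Gamma/\cI_\Gamma^2$; using that the normal bundle of the rational normal curve of degree $g-1$ in $\bP^{g-1}$ is $\cO_{\bP^1}(g+1)^{\oplus(g-2)}$, this forces $\deg L'\ge -g-1$. On the other hand, from $0\to L'\to\cO_{C'}\to\cO_\Gamma\to 0$ we get $\chi(\cO_{C'})=\deg L'+2$, hence $\deg L'=-g-1-\ell\le -g-1$. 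Therefore $\ell=0$ and $\deg L'=-g-1$: $C=C'$ has no embedded or isolated points and is a ribbon with conormal bundle $L'\cong\cO_{\bP^1}(-g-1)$, i.e.\ of genus $g$.

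Finally I would conclude that the embedding is the canonical one. Since $\cO_C(1)$ restricts to $\cO_{\bP^1}(g-1)$ on $C_{\red}=\Gamma$ and $\Gamma$ spans $\bP^{g-1}$, the restriction maps are isomorphisms $H^0(\bP^{g-1},\cO(1))\cong H^0(C,\cO_C(1))\cong H^0(\Gamma,\cO_{\bP^1}(g-1))$, so $C$ is embedded by the complete linear system $|\cO_C(1)|$; identifying the surjection $N^\vee_{\Gamma/\bP^{g-1}}\twoheadrightarrow L'$ with the extension class of the ribbon via Theorem~\ref{thm:ribbon-ext}, $C$ is a canonical ribbon of genus $g$ in the sense of \cite{BE95} (equivalently $\cO_C(1)\cong\omega_C$, cf.\ Proposition~\ref{prop:ribbon-canonical-map}). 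The step I expect to be the main obstacle is ruling out embedded and isolated points, i.e.\ seeing that the CM-ification did nothing; this is exactly what the squeeze $-g-1\le\deg L'\le -g-1$ delivers, and it genuinely requires both the specific normal bundle of the rational normal curve and the exact value $\chi(\cO_C)=1-g$ — without the Hilbert polynomial constraint one only learns that after CM-ification $C$ is some ribbon on $\Gamma$ of genus $\le g$ possibly carrying extra fat points.
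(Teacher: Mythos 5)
Your proof is correct and essentially identical to the paper's. Passing to the quotient of $\cO_C$ by the maximal subsheaf supported in dimension zero is exactly the paper's construction of $C_{\rib}$ from the largest finite-length ideal sheaf $\cJ$, and your squeeze $-g-1\le\deg L'\le -g-1$ — combining the surjection $\cI_\Gamma/\cI_\Gamma^2\cong\cO_{\bP^1}(-g-1)^{\oplus(g-2)}\twoheadrightarrow L'$ with $\chi(\cO_{C'})=\deg L'+2=1-g-\ell$ — is the same two-sided estimate the paper runs (phrased there as $g_{\rib}\ge g$ from additivity of $\chi$ and $g\ge g_{\rib}$ from the conormal bundle of the rational normal curve).
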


\begin{proof}
Let $C \subset \bP^{g-1}$ be a non-reduced curve with associated Chow cycle $2[C_0]$ where $C_0\subset\bP^{g-1}$ is a rational normal curve.  Let $\cJ$ be the largest finite length ideal sheaf on $C$ and define $C_{\rib}$ as the corresponding closed subscheme of $C$, i.e. $\cO_{C_{\rib}} = \cO_C / \cJ$.  By construction, $C_{\rib}$ is $S_1$ and $(C_{\rib})_{\red} = C_{\red} = C_0$ and the generic points of $C_{\rib}$ and $C$ agree.  We first show that $C_{\rib}$ is a ribbon.  Let $\cN_{\rib}$ be the ideal of nilpotents of $\cO_{C_{\rib}}$.  As $\cO_{C_{\rib}}$ has length $2$ at its generic point and $\cO_{C_{\rib}}$ has no finite length ideal sheaves, we conclude $\cN_{\rib}^2 = 0$. Thus $\cN_{\rib}$ has the structure of an $\cO_{C_0}$-module. Since $\cO_{C_{\rib}}$ has no nonzero finite length ideal sheaves, we know that $\cN_{\rib}$ is a torsion free $\cO_{C_0}$-module of rank $1$. 
Therefore, by smoothness of $C_0$ we conclude that $\cN_{\rib}$ is a rank 1 locally free $\cO_{C_0}$-module, and $C_{\rib}$ is indeed a ribbon.  

From the exact sequence \[ 0 \to \cJ \to \cO_C \to  \cO_{C_{\rib}} \to 0\] and additivity of Euler characteristics, we conclude 
\[ \chi(\cO_{C_{\rib}}) = \chi(\cO_C) - \chi(\cJ) \]
and as $\cJ$ is supported on a finite set, this implies 
\[ g(C_{\rib}) \ge g(C) =  g\]
with equality if and only if $\cJ = 0$ and $C_{\rib} = C$.  Let $g_{\rib} = g(C_{\rib})$ so $C_{\rib}$ is a ribbon with conormal bundle $L = \cO_{\bP^1}(-g_{\rib} - 1)$.  From the exact sequence 
\[ 0 \to L \to \cO_{C_{\rib}} \to \cO_{C_0} \to 0\]
there is an induced exact sequence 
\[ 0 \to \cI_{C_{\rib}} \to \cI_{C_0} \to L \to 0 \]
where $\cI_{C_{\rib}}$, $\cI_{C_0}$ denote the ideal sheaves of $C_{\rib}$, $C_0$ in $\bP^{g-1}$.  By \cite[Lemma 5.4]{BE95}, $\cI_{C_0}/\cI_{C_0}^2 \cong \cO_{\bP^1}(-g-1)^{\oplus (g-2)}$ which gives a surjection  
\[ \cO_{\bP^1}(-g-1)^{\oplus (g-2)} \to L \cong \cO_{\bP^1}(-g_{\rib} - 1) \to 0.\]
This implies $-g_{\rib} - 1 \ge -g -1$, i.e. $g \ge g_{\rib}$.  Combining this with the previous inequality $g_{\rib} \ge g$, we conclude $g_{\rib} = g$ and hence $\cJ = 0$ and $C_{\rib} = C$.  Therefore, if $C$ has image $2[C_0]$ in the Hilbert--Chow morphism, then $C$ is a canonical ribbon.  
\end{proof}

\begin{prop}\label{prop:hilb-chow}
There exists a smooth $\PGL(g)$-equivariant open subscheme $\tU_{g,1}$ of $\Hilb_{g,1}$ such that the following hold.
\begin{enumerate}
    \item $\tU_{g,1}$ contains the locus $H_{\rm rib}$ parameterizing canonical ribbons.
    \item The image $U_{g,1}:=\Phi_{g,1}(\tU_{g,1})$ is an open subscheme of $\Chow_{g,1}^{\rm ss}$. Moreover, $U_{g,1}$ contains  $Z_{\rib}$ which is the reduced locally closed subscheme of $\Chow_{g,1}$ parameterizing doubled rational normal curves.
    \item Every subscheme $[C\hookrightarrow \bP^{g-1}]\in \tU_{g,1}$ that is not a canonical ribbon satisfies that $[C]\in \Curves_{g}^{\rm lci +}$, $\omega_C$ is very ample and induces the closed embedding $C\hookrightarrow \bP^{g-1}$. Conversely, every such curve $C$ whose canonical image gives a Chow semistable $1$-cycle belongs to $\tU_{g,1}$.
    In particular,  $U_{g,1}\setminus Z_{\rm rib}$ only contains reduced $1$-cycles.
    \item The Hilbert--Chow morphism $\tU_{g,1} \to U_{g,1}$ is proper and surjective, and induces an isomorphism between $\tU_{g,1}\setminus H_{\rib}$ and $U_{g,1}\setminus Z_{\rib}$. Moreover, $U_{g,1}$ is normal.
\end{enumerate}
\end{prop}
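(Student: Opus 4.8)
The plan is to realize $\tU_{g,1}$ as the Hilbert--Chow preimage of a suitable $\PGL(g)$-invariant open subscheme $U_{g,1}$ of $\Chow_{g,1}^{\rm ss}$, so that the properness and surjectivity in~(4) are automatic, and then to obtain smoothness by transporting the smoothness of $\Curves_{g,1}$ near $[R_{\hyp}]$ (Theorem~\ref{thm:sX_g-smooth}) along a ``forget the embedding'' morphism. \emph{Construction.} Let $B\subseteq\Hilb_{g,1}$ be the complement of the locus of $[C\hookrightarrow\bP^{g-1}]$ for which either $C$ is a canonical ribbon, or $[C]\in\Curves_g^{\lci+}$ with $\omega_C$ very ample and inducing the given embedding, i.e. $\cO_{\bP^{g-1}}(1)|_C\cong\omega_C$. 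Very ampleness and the latter isomorphism are open conditions and $\Curves_g^{\lci+}$ is open in $\Curves_g^{\Gor}$, so $B$ is closed once one knows (Smoothness step below) that every point of $\Hilb_{g,1}$ near $H_{\rm rib}$ is a canonical ribbon or a canonically embedded lci$+$ curve. As $\Phi_{g,1}$ is proper, $\Phi_{g,1}(B)$ is closed; put $U_{g,1}:=\Chow_{g,1}^{\rm ss}\setminus\Phi_{g,1}(B)$, which is open and $\PGL(g)$-invariant since $B$ is, and $\tU_{g,1}:=\Phi_{g,1}^{-1}(U_{g,1})$. Then $\tU_{g,1}\cap B=\varnothing$ (if $x\in B$ then $\Phi_{g,1}(x)\notin U_{g,1}$), the first assertion of~(3); and $\Phi_{g,1}$ restricts to a proper surjection $\tU_{g,1}\to U_{g,1}$, the first assertion of~(4). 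By Lemma~\ref{lem:ribbon-cycle}, any closed subscheme of $\bP^{g-1}$ with Hilbert polynomial $\chi_g$ and cycle a doubled rational normal curve is a canonical ribbon, so $\Phi_{g,1}^{-1}(Z_{\rib})=H_{\rm rib}$; hence $\Phi_{g,1}^{-1}(Z_{\rib})\cap B=\varnothing$, and since a doubled rational normal curve is Chow semistable (Lemma~\ref{lem:sm-chow}) we get $Z_{\rib}\subseteq U_{g,1}$ and $H_{\rm rib}\subseteq\tU_{g,1}$, giving~(1) and~(2). For the converse in~(3): a canonically embedded lci$+$ curve $C$ with Chow semistable cycle $\gamma$ is reduced — here one checks that a non-reduced Chow semistable cycle in $\Chow_{g,1}$ is nondegenerate of degree $2g-2$, hence of the form $2[C_0]$ with $C_0$ a rational normal curve, which is exactly the claim that $U_{g,1}\setminus Z_{\rib}$ consists of reduced cycles — and then comparing Euler characteristics of $\cO_{C'}$ and $\cO_{C'_{\rm red}}$ shows the only subscheme with cycle $\gamma$ in $\Hilb_{g,1}$ is $C$, so $\gamma\notin\Phi_{g,1}(B)$ and $[C]\in\tU_{g,1}$.

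\emph{Smoothness.} This is the heart of the matter. Let $\Hilb_{g,1}^{\circ}\subseteq\Hilb_{g,1}$ be the open locus where $\omega_C$ is very ample and $\cO_{\bP^{g-1}}(1)|_C\cong\omega_C$; it contains every canonically embedded lci$+$ curve and, by Proposition~\ref{prop:ribbon-canonical-map}, every canonical ribbon. The morphism $\Hilb_{g,1}^{\circ}\to\Curves_{g,1}$, $[C\hookrightarrow\bP^{g-1}]\mapsto[C]$, is smooth: its fibre over $[C]$ is the space of bases of $H^0(C,\omega_C)$ up to scalar, and any deformation lifts because the relative dualizing sheaf deforms uniquely and $\pi_*\omega_{C/S}$ is locally free of rank $g$ commuting with base change, so a framing always extends. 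Now $\Curves_{g,1}$ is smooth near $[R_{\hyp}]$ by Theorem~\ref{thm:sX_g-smooth}; since $[R_{\hyp}]$ lies in the closure of every point of $\sH_g\cup\sR_g$ (Proposition~\ref{prop:hyp-rib-closedpt}) and open substacks are stable under generization, $\Curves_{g,1}$ is smooth at every canonical ribbon, and it is smooth at every point of $\Curves_g^{\lci+}$. Pulling back along the smooth morphism above, $\Hilb_{g,1}$ is smooth at every point of $\tU_{g,1}$, so $\tU_{g,1}$ is smooth, and being open in the irreducible $\Hilb_{g,1}$ it is irreducible. The same morphism supplies the missing input of the Construction: near $[R_{\hyp}]$ the deformations of a canonically embedded ribbon correspond to deformations of the abstract curve in $\Curves_{g,1}$ plus a $\PGL(g)$-framing, and since $\Curves_g^{\lci}$ is open a nearby abstract curve is either non-reduced — in which case its cycle is a doubled rational normal curve and it is a ribbon — or a reduced lci curve, automatically in $\Curves_g^{\lci+}$ and canonically embedded; this shows the complement of $B$ is open near $H_{\rm rib}$, completing~(3).

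\emph{Finishing~(4).} Since $\Phi_{g,1}^{-1}(Z_{\rib})=H_{\rm rib}$, the Hilbert--Chow morphism restricts to a proper birational morphism $\tU_{g,1}\setminus H_{\rm rib}\to U_{g,1}\setminus Z_{\rib}$, bijective on points because over a reduced cycle the fibre is a single reduced point (the Euler characteristic comparison again); as $\Chow_{g,1}$ is seminormal and $\tU_{g,1}\setminus H_{\rm rib}$ reduced, it is an isomorphism. For normality of $U_{g,1}$, take the Stein factorization $\tU_{g,1}\to U'\to U_{g,1}$ of $\Phi_{g,1}|_{\tU_{g,1}}$. Its fibres are connected — single points off $Z_{\rib}$, and over a doubled rational normal curve $2[C_0]$ the fibre is the connected variety of canonical ribbons supported on $C_0$, a closed subscheme of $\bP(\Ext^1(\Omega_{\bP^1},\cO_{\bP^1}(-g-1)))$ — so $U'\to U_{g,1}$ is finite, birational and bijective; moreover $U'=\Spec_{U_{g,1}}\big((\Phi_{g,1})_*\cO_{\tU_{g,1}}\big)$ is normal because $\cO_{\tU_{g,1}}$ is normal. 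A finite birational bijection from a reduced scheme onto the seminormal scheme $U_{g,1}$ is an isomorphism, so $U'=U_{g,1}$ and $U_{g,1}$ is normal.

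\emph{Main obstacle.} The crux is the Smoothness step, specifically that ``forgetting the embedding'' $\Hilb_{g,1}^{\circ}\to\Curves_{g,1}$ is smooth \emph{also at the non-reduced ribbon points}, so that the unobstructedness of ribbon deformations (Theorem~\ref{thm:hyp-ribbon-unobstructed}, Theorem~\ref{thm:sX_g-smooth}) passes to the Hilbert scheme; hand in hand with this goes the combinatorial bookkeeping on which curves and cycles occur in a neighborhood of $H_{\rm rib}$ — needed both for the closedness of $B$ and for the claim that $U_{g,1}\setminus Z_{\rib}$ contains only reduced cycles.
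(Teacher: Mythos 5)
You correctly identify the crux — that the unobstructedness of ribbon deformations must be transported into the Hilbert scheme, together with a combinatorial control on which subschemes appear near $H_{\rib}$ — but your argument asserts both points rather than proving them, and each assertion hides a genuine gap.

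\emph{Gap 1: the claimed smoothness of the forgetful map.} You declare that $\Hilb_{g,1}^{\circ}\to\Curves_{g,1}$ is smooth and justify it by infinitesimal lifting of framings. But that lifting argument produces a deformation in the \emph{full} Hilbert scheme $\boldsymbol{H}_{\chi_g}(\bP^{g-1})$; it gives no reason the lift should factor through $\Hilb_{g,1}$, which by definition carries the \emph{reduced} scheme structure on the closure of the smooth canonical locus. Smoothness of $\Hilb_{g,1}^{\circ}\to\Curves_{g,1}$ is only available \emph{after} one shows that, near the relevant points, $\Hilb_{g,1}$ coincides scheme-theoretically with $\boldsymbol{H}_{\chi_g}(\bP^{g-1})$. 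This in turn needs two inputs: that the corresponding open locus of the full Hilbert scheme is smooth (for which the torsor argument and the unobstructedness of $\Curves_g^{\lci+}$ and of ribbons suffice), \emph{and} that every nearby subscheme is smoothable so that $\Hilb_{g,1}$ is dense in a neighborhood. The paper separates these steps cleanly: it first establishes that $\tU_{g,1}$ (their $U_5$) is open in the full Hilbert scheme, then identifies $[\tU_{g,1}/\PGL(g)]$ with an open substack of $\Curves_{g,1}\cap\Curves_g^{\lci}$, and only then deduces smoothness of $\tU_{g,1}$ from smoothness of $\Curves_g^{\lci+}$ plus Corollary~\ref{cor:hyp-rib-unobstructed}. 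Your write-up conflates $\Hilb_{g,1}^{\circ}$ with the relevant open of $\boldsymbol{H}_{\chi_g}(\bP^{g-1})$ and hence silently assumes what must be proved.

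\emph{Gap 2: the classification of non-reduced curves near $H_{\rib}$.} You close both the construction of $B$ and the last assertion of~(3) with the claim that, near $[R_{\hyp}]$, a nearby non-reduced abstract curve has cycle a doubled rational normal curve and is therefore a canonical ribbon. This is exactly what must be proven, and it is not automatic: an lci non-reduced curve near the hyperelliptic ribbon could a priori have a non-reduced component of smaller degree together with reduced components, and the cycle need not be $2[C_0]$. The paper's construction of $U_4$ and $U_5$ provides the argument: restrict to the Chow-semistable locus, run a one-parameter degeneration to a canonical ribbon, observe the limiting cycle is a doubled rational normal curve, deduce (using Chow semistability to bound the number of components) that the nearby cycle is $2[(C_t)_{\red}]$ with $(C_t)_{\red}$ integral, and close with a Euler-characteristic comparison to pin $(C_t)_{\red}$ down as a rational normal curve. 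Nothing in your proposal replaces this; indeed your argument for closedness of $B$ is made on a neighborhood in $\Hilb_{g,1}$ \emph{before} imposing Chow semistability, so even the Chow-theoretic input is absent at the point where you need it.

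Aside from these two gaps, the rest of your plan --- building $U_{g,1}$ and $\tU_{g,1}$ by removing a closed saturated bad locus, using $\Phi_{g,1}^{-1}(Z_{\rib})=H_{\rib}$ (Lemma~\ref{lem:ribbon-cycle}), and reducing normality to Zariski's Main Theorem over the seminormal $\Chow_{g,1}$ --- is sound and parallel to the paper's, just organized ``from the Chow side'' rather than the paper's ``from the Hilbert side'' cascade $U_1\supset\dots\supset U_5$. But the two gaps are not peripheral: they are, as you say yourself, the heart of the matter, and neither is resolved by the argument as written.
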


\begin{proof}
    We construct the schemes $U_{g,1}$ and $\tU_{g,1}$ in several steps. Since all constructions are $\PGL(g)$-equivariant, we will omit this adjective in the entire proof whenever we talk about open subschemes in $\Hilb_{g,1}$ or $\Chow_{g,1}$.

    We first show that there is an open neighborhood $U_1$ of $H_{\rib}$ in $\Hilb_{g,1}$ that parameterizes curves $C$ with lci singularities and $\omega_C$ is ample and base point free.
    From previous discussions we know that the Hilbert--Chow morphism $\Phi_{g,1}: \Hilb_{g,1}^{\rm sn}\to \Chow_{g,1}$ is a projective birational morphism between projective  seminormal varieties. Moreover, $\Phi_{g,1}^{-1}(2[C_0])$ only consists of canonical ribbons by Lemma \ref{lem:ribbon-cycle}. 
    Let $f:\cC_{g,1}\to \Hilb_{g,1}$ be the universal family. Since $f$ is flat and proper of relative dimension $\leq 1$, it induces a morphism of algebraic stacks $\xi: [\Hilb_{g,1}/\PGL(g)] \to \Curves$. From Section \ref{sec:stack-curves} and Definition-Proposition \ref{def:curves_g,1}, we know that $\Curves_{g,1}\cap \Curves_{g}^{\lci}$ is an open substack of $\Curves$. Since canonical ribbons of genus $g$ belong to $\Curves_{g,1}\cap \Curves_{g}^{\lci}$,  there exists an open neighborhood $U_1$ of $H_{\rib}$ in $\Hilb_{g,1}$ such that $[U_1/\PGL(g)] = \xi^{-1}(\Curves_{g,1}\cap \Curves_{g}^{\lci})$. 

    Next, we show that there exists an open neighborhood $U_3$ of $H_{\rib}$ in $U_1$ that precisely parameterizes curves $[C]\in U_1$ such that $\omega_C$ is very ample and gives the embedding $C\hookrightarrow \bP^{g-1}$. 
    Since curves in $H_{\rib}$ are non-degenerate, by openness of non-degeneracy there exists an open neighborhood $U_2$ of $H_{\rib}$ in $U_1$ that precisely parameterizes non-degenerate curves in $U_1$. 
    Denote by $f_2: \cC_{U_2}\to U_2$ the base change of $f$. Let us consider the invertible sheaf $\cF:=\omega_{\cC_{U_2}/U_2}\otimes \cO_{\cC_{U_2}}(-1)$ on $\cC_{U_2}$. For every $u\in U_2$, we have $C_{u} \in \Curves_{g,1}\cap \Curves_{g}^{\lci}$. Thus by the Riemann--Roch formula for Gorenstein curves \cite[\href{https://stacks.math.columbia.edu/tag/0BS6}{Tag 0BS6}]{stacksproject} we have 
    \[
    h^0(C_u, \omega_{C_u} \otimes \cO_{C_u}(-1)) = h^1(C_u, \cO_{C_u}(1)) = h^0(C_u, \cO_{C_u}(1)) - \chi(C_u ,\cO_{C_u}(1))\geq g - (g-1) = 1.
    \]
    Clearly, the equality holds if $u\in H_{\rib}$ as $\cF$ is relatively trivial over $H_{\rib}$. By the semicontinuity theorem we know that there exists an  open neighborhood $U_2'$ of $H_{\rib}$ in $U_2$ such that $u\in U_2'$ if and only if $h^0(\cF_u) = 1$. In particular, for every $u\in U_3$ there is a non-zero homomorphism $s_u:\cO_{C_u}(1)\to \omega_{C_u}$ that is unique up to scaling. Since $s_u$ is clearly an isomorphism for $u\in H_{\rib}$, by openness of isomorphisms we know that there exists an open neighborhood $U_3$ of $H_{\rib}$ in $U_2'$ such that $u\in U_3$ if and only if $\cO_{C_u}(1)\cong \omega_{C_u}$.

    Since every curve $C_u$ in $U_2$ is  non-degenerate, this implies that $u\in U_3$ if and only if $C_u\hookrightarrow \bP^{g-1}$ is given by the canonical embedding. Thus we verified the condition of $U_3$.

    Next, we show that there exists an open neighborhood $U_5$ of $H_{\rib}$ in $U_3$ such that $u\in U_5\setminus H_{\rib}$ if and only if $C_u$ is reduced and $[C_u]$ is Chow semistable. Let $U_4$ be the open subscheme of $U_3$ parameterizing curves $C$ whose corresponding $1$-cycle is Chow semistable. Then $U_4$ contains $H_{\rib}$. By the openness of reducedness we know that the locus $Z_4$ parameterizing non-reduced curves in $U_4$ is closed in $U_4$. We claim that $H_{\rib}$ is a connected component of $Z_4$. By Lemmas \ref{lem:sm-chow} and \ref{lem:ribbon-cycle} we know that $H_{\rib}$ is closed in $Z_4$. By Theorem \ref{thm:ribbon-ext} and  Proposition \ref{prop:ribbon-canonical-map} we know that $H_{\rib}\to Z_{\rib}$ is a Severi--Brauer scheme with geometric fibers isomorphic to $\bP(\Ext^1(\Omega_{\bP^1}, \cO_{\bP^1}(-g-1)))$. Thus $H_{\rib}$ is connected by the connectedness of $Z_{\rib}$. Assume that there is a flat family of non-reduced curves $\{C_t\}_{t\in T\setminus o}$ in $U_4$ degenerating to $C_o$ in $H_{\rib}$ where $o\in T$ is a smooth pointed curve. Then, the non-reduced $1$-cycles $\Phi_{g,1}(C_t)$ degenerate to the doubled rational normal curve. Thus after shrinking $T$ we know that $\Phi_{g,1}(C_t)$ is the sum of at most two integral curves, which implies that $\Phi_{g,1}(C_t) = 2[(C_t)_{\red}]$ and $(C_t)_{\red}$ is integral. Let $C_o'$ be the flat degeneration of $(C_t)_{\red}$ as $t\to o$. Then we have 
    \[
    \chi(\cO_{C_o'}) = \chi(\cO_{(C_t)_{\red}}) = 1- p_a(\cO_{(C_t)_{\red}})\leq 1.
    \]
    On the other hand, it is clear that $C_o'$ is generically reduced with $(C_o')_{\red} = (C_o)_{\red}\cong \bP^1$. Thus we have 
    \[
     \chi(\cO_{C_o'}) \geq \chi(\cO_{\bP^1})  = 1.
    \]
    Combining the above two inequalities we conclude that $p_a(\cO_{(C_t)_{\red}}) = 0$ which implies that $(C_t)_{\red}$ is a rational normal curve in $\bP^{g-1}$. Thus Lemma \ref{lem:ribbon-cycle} implies that $C_t$ is a canonical ribbon. Thus the claim is proved, and we take $U_5 := U_4 \setminus (Z_4\setminus H_{\rib})$. 

    To summarize, we have found an open subscheme $U_5\subset \Hilb_{g,1}$ containing $H_{\rib}$ such that $[C\hookrightarrow \bP^{g-1}]\in U_5$ if and only if the following conditions hold.
    \begin{itemize}
        \item $C$ has lci singularities;
        \item $\omega_C$ is very ample and induces the closed embedding $C\hookrightarrow \bP^{g-1}$;
        \item the corresponding Chow $1$-cycle $[C]$ is Chow semistable;
        \item  $C$ is either a canonical ribbon (i.e. $C\in H_{\rib}$) or reduced.
    \end{itemize}

    Next, we show that $U_5$ is smooth.
    Indeed, the above arguments actually imply that  $U_5$ is open in $\boldsymbol{H}_{\chi_g}(\bP^{g-1})$. Thus standard arguments of moduli stacks being represented by a quotient stack of subschemes of Hilbert schemes implies that $[U_5/\PGL(g)]\to \Curves_{g,1}\cap \Curves_{g}^{\lci}$ is an open immersion such that every curve $[C]\in U_5\setminus H_{\rib}$ satisfies $[C]\in \Curves_{g}^{\lci +}$. Thus the smoothness of $U_5$ follows from the smoothness of $\Curves_{g}^{\lci +}$ and the unobstructedness of deformations of ribbons (Corollary \ref{cor:hyp-rib-unobstructed}).

    Finally, we define $\tU_{g,1}:= U_5$. Then it remains to verify the properties (1)-(4) from the statement. (1) is automatic from the construction. For (2), it suffices to show that $U_{g,1}$ is open in $\Chow_{g,1}$ as clearly it is contained in $\Chow_{g,1}^{\rm ss}$ by construction. 
    Since $\tU_{g,1}$ is smooth, we identify it with its seminormalization. By the properness of $\Phi_{g,1}: \Hilb_{g,1}^{\rm sn}\to \Chow_{g,1}$, we know that $\Phi_{g,1}(\Hilb_{g,1}^{\rm sn}\setminus \tU_{g,1})$ is closed in $\Chow_{g,1}$. Thus it suffices to show that $\Phi_{g,1}^{-1}(U_{g,1}) = \tU_{g,1}$. Suppose $[C'\hookrightarrow\bP^{g-1}]\in \Phi_{g,1}^{-1}(U_{g,1})$, i.e. there exists a curve $C\in \tU_{g,1}$ such that $[C']=[C]$ as $1$-cycles. If $C$ is reduced, then we know that $C'_{\red} = C$ which implies that $C'=C$ as they have the same Hilbert polynomial. If $C$ is non-reduced, then from the definition of $U_5$ we know that $C$ is a canonical ribbon. Thus by Lemma \ref{lem:ribbon-cycle} we know that $C'$ is also a canonical ribbon which implies that $C'\in \tU_{g,1}$. This verifies (2). 

    The first part of (3) is automatic from the construction. For the second part of (3), notice that $\Curves_{g}^{\lci +}$ contains $\cM_g$ as a dense open scheme. Hence any curve $C$ satisfying the assumptions is smoothable which implies that the canonical image of $C$ belongs to $\Hilb_{g,1}$. Then (3) follows from the construction. The first part of (4) is clear from the properness of $\Phi_{g,1}$. The second part of (4) follows from the fact that $\tU_{g,1}\setminus H_{\rib}$ only contains reduced curves. 
    For the last part of (4),
    since $H_{\rib}\to Z_{\rib}$ has connected fibers, by Zariski's main theorem, seminormality of $U_{g,1}$, and smoothness of $\tU_{g,1}$, we conclude that $U_{g,1}$ is normal. This also implies that $\tU_{g,1}\setminus H_{\rib}\to U_{g,1}\setminus Z_{\rib}$ is an isomorphism. The proof is finished.
\end{proof}

\subsection{A specific open substack of $\sX_g$}

In this subsection, we introduce an open substack $\sX_g^{\circ}$ of $\sX_g$. Later in the next subsection we will show that $\sX_g^{\circ}$ admits a good moduli space.

\begin{defn-prop}\label{dp:Xgcirc}

    There exists an open substack $\sX_g^{\circ}\hookrightarrow\sX_g$ parameterizing  curves $C$ of genus $g$ satisfying the following conditions.
    \begin{enumerate}
        \item $C$ is smoothable and has lci singularities;
        \item $\omega_C$ is ample and basepoint free and induces the canonical map $\varphi: C\to \bP^{g-1}$ with $\varphi_*[C]$ Chow semistable;
        \item the Chow polystable degeneration of the $1$-cycle $\varphi_*[C]$ is either a doubled rational normal curve or a reduced $1$-cycle $[C_0]$ as the cycle-theoretic canonical image of a curve $C_0\in \Curves_g^{\lci +}$ with $\omega_{C_0}$ very ample.
    \end{enumerate}

\end{defn-prop}

\begin{proof}
Recall that $\phi_g^{\rm c}: \sX_g^{\rm c} = [\Chow_{g,1}^{\rm ss}/\PGL(g)]\to\fX_g^{\rm c}$ is a good moduli space morphism. By universal closedness of good moduli space morphisms \cite[Theorem 4.16(ii)]{alper}, we know that $\phi_g^{\rm c}([(\Chow_{g,1}^{\rm ss}\setminus U_{g,1})/\PGL(g)])$ is closed in $\fX_g^{\rm c}$, where $U_{g,1}$ is obtained from Proposition \ref{prop:hilb-chow}. We define 
\[
\sX_g^{{\rm c}, \circ}:= (\phi_g^{\rm c})^{-1}(\fX_g^{\rm c}\setminus \phi_g^{\rm c}([(\Chow_{g,1}^{\rm ss}\setminus U_{g,1})/\PGL(g)])).
\]
We also let $V_{g,1}$ be the open subscheme of $\Chow_{g,1}^{\rm ss}$ such that $\sX_g^{{\rm c}, \circ} = [V_{g,1}/\PGL(g)]$. 
The above definition is equivalent to saying that a Chow semistable $1$-cycle belongs to $V_{g,1}$ if and only if its Chow polystable  degeneration belongs to $U_{g,1}$. 
Then from the definition we know that $\sX_g^{{\rm c}, \circ}$ is the largest saturated open substack of $\sX_g^{\rm c}$ contained in $[U_{g,1}/\PGL(g)]$. To conclude our definition, let $\sX_g^{\circ}\hookrightarrow \sX_g$ be the open substack such that 
\[
\sX_g^{\circ, {\rm sn}}:= (\Phi_g^{\rm sn})^{-1}(\sX_g^{{\rm c}, \circ}).
\]

Next, we verify that $\sX_g^{\circ}$ satisfies the statement.
We start from showing that every curve $C$ satisfying conditions (1)(2)(3) belongs to $\sX_g^{\circ}$. Since (1) and (2) guarantee that $C\in \sX_g$, it suffices to show that the Chow $1$-cycle $\varphi_*[C]\in \sX_g^{{\rm c}, \circ}$. This is equivalent to showing that the Chow polystable degeneration of $\varphi_*[C]$ belongs to $U_{g,1}$, which follows directly from condition (3) and Proposition \ref{prop:hilb-chow}(3).

Finally, we show that every curve in $\sX_g^{\circ}$ satisfies conditions (1)(2)(3). Note that every curve in $\sX_g$ is smoothable and satisfies (2) by definition. Thus it suffices to show that every curve  $C$ in $\sX_g^{\circ}$ has lci singularities and satisfies (3).  If  $\varphi_* [C]$ is a doubled rational normal curve (which implies (3)), then by Lemma \ref{lem:2RNC-hyp-rib} we know that $C$ is either a hyperelliptic curve or a ribbon. Thus $C$ has lci singularities. 
If $\varphi_*[C]$ is not a doubled rational normal curve, then we claim that $\varphi$ is a closed embedding. Given the claim, we have $\varphi(C) \in \Hilb_{g,1}$ whose corresponding $1$-cycle $\varphi_*[C]\in V_{g,1}\subset U_{g,1}$. Then by Proposition \ref{prop:hilb-chow} we know that $[C]\in \Curves_g^{\lci +}$. From the definition of $\sX_g^{{\rm c}, \circ}$ and $V_{g,1}$, we know that the Chow polystable degeneration $[C_0]$ of $\varphi_*[C]$ belongs to $U_{g,1}$ as well. Then by Proposition \ref{prop:hilb-chow}(3) we know that either $[C_0]$ is a doubled rational normal curve or  $C_0\hookrightarrow \bP^{g-1}$ is the canonical embedding of a  curve $C_0\in \Curves_{g}^{\lci +}$ with $\omega_{C_0}$ very ample. Thus (3) is verified.

It remains to verify the claim. 
By Proposition \ref{prop:hilb-chow}(3) we know that $\varphi_*[C]\in U_{g,1}\setminus Z_{\rib}$ is reduced. Denote by $\tV_{g,1}:=\tU_{g,1}\times_{U_{g,1}} V_{g,1}$. By surjectivity of $\tU_{g,1}\to U_{g,1}$ from Proposition \ref{prop:hilb-chow}(4) there exists a curve $C'$ in $\tV_{g,1}$ such that $[C']=\varphi_*[C]$ as $1$-cycles. Thus both $C$ and $C'$ are reduced, and $\varphi: C\to C'$ is finite and birational.  As a result,  we have $\cO_{C'}\subset \varphi_* \cO_C$ whose cokernel $\cG$ has finite support, and hence 
\[
1-g =\chi(\cO_{C'}) =  \chi (\varphi_* \cO_C) - h^0(\cG) = \chi(\cO_C) - h^0(\cG) = 1-g-h^0(\cG). 
\]
Thus $h^0(\cG) =0$, i.e. $\cG=0$, which implies that $\varphi:C\to C'$ is an isomorphism. Thus $\varphi$ is a closed embedding and the claim is proved.
\end{proof}

The following lemma was used in the above proof.

\begin{lem}\label{lem:2RNC-hyp-rib}
Let $C\in \sX_g(\bk)$ be a curve. Let $\varphi: C \to \bP^{g-1}$ be the finite morphism induced by the basepoint free linear system $|\omega_C|$. Then the $1$-cycle $\varphi_*[C]$ is a doubled rational normal curve in $\bP^{g-1}$ if and only if $C$ is  either a hyperelliptic curve or a ribbon.
\end{lem}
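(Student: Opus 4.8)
The plan is to read off both implications from the canonical-map descriptions in Section~\ref{sec:hyp-and-rib} together with elementary bookkeeping with pushforward cycles. For the ``if'' direction, suppose $C$ is hyperelliptic or a ribbon. By Proposition~\ref{prop:hyp-canonical-map} (resp.\ Proposition~\ref{prop:ribbon-canonical-map}) the canonical map $\varphi\colon C\to\bP^{g-1}$ has set-theoretic image a rational normal curve $C_0$, so $\varphi_*[C]=n[C_0]$ for some $n\in\bZ_{>0}$. Since $\varphi^*\cO_{\bP^{g-1}}(1)\cong\omega_C$, the projection formula gives $n\deg C_0=\deg_C\omega_C=2g-2$; as $\deg C_0=g-1$ this forces $n=2$, so $\varphi_*[C]$ is the doubled rational normal curve.

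For the converse, assume $\varphi_*[C]=2[C_0]$ with $C_0\cong\bP^1$ a rational normal curve in $\bP^{g-1}$. Since $\omega_C$ is ample and basepoint free, $\varphi$ is finite, so every irreducible component of $C$ maps onto $C_0$. Writing $[C]=\sum_i m_i[C_i]$, with $m_i$ the length of $\cO_{C,\eta_i}$ at the generic point $\eta_i$ of $C_i$, and $\zeta_0$ for the generic point of $C_0$, we have $\varphi_*[C]=\big(\sum_i m_i\,[\kappa(\eta_i):\kappa(\zeta_0)]\big)[C_0]=2[C_0]$, a sum of positive integers equal to $2$. If every $m_i=1$, then $C$ is reduced (being Gorenstein it has no embedded components), $\varphi_*\cO_C$ is a torsion-free sheaf of generic rank two on the smooth curve $C_0\cong\bP^1$, hence locally free of rank two, and composing $\varphi$ with an isomorphism $C_0\cong\bP^1$ exhibits $C$ as a hyperelliptic curve of genus $g$ in the sense of Definition~\ref{def:hyperelliptic} (using $H^0(\cO_C)\cong\bk$ and $H^1(\cO_C)\cong\bk^g$, which hold as $C\in\Curves_g^{\Gor}$).

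Otherwise some $m_i\ge 2$, which forces $C$ to be irreducible with $\cO_{C,\eta}$ of length two at its generic point $\eta$ and $\varphi|_{C_{\red}}\colon C_{\red}\to C_0$ birational. I would first show $C_{\red}\cong\bP^1$: the inclusion $\cO_{C_0}\hookrightarrow(\varphi|_{C_{\red}})_*\cO_{C_{\red}}$ has cokernel of finite length, so $p_a(C_{\red})\le 0$, and as $C_{\red}$ is integral and projective this gives $p_a(C_{\red})=0$ and that $\varphi|_{C_{\red}}$ is an isomorphism onto $C_0$. To identify the thickening, let $\cN\subset\cO_C$ be the nilradical and work at a closed point $p$ with $A=\cO_{C,p}$. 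Since $C$ is Gorenstein of dimension one it is Cohen--Macaulay, so the nilradical is the unique associated prime of $A$ and $A$ is torsion-free over its reduction $R=A_{\red}$, a DVR; hence the nilradical $N$ of $A$ is a torsion-free $R$-module of rank one, say $N=R\nu$, and since $\nu^2$ maps to $0$ in the length-two Artinian ring $A\otimes_R\mathrm{Frac}(R)$ while $A$ is $R$-torsion-free, we get $\nu^2=0$ and so $N^2=0$. Globalizing, $\cN^2=0$ and $\cN$ is a torsion-free, hence locally free, rank-one sheaf on $C_{\red}\cong\bP^1$, so $C$ is a ribbon, of genus $g$ because $\chi(\cO_C)=1-g$. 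Combining the two cases finishes the proof.

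I expect the real content to lie in this last case: ruling out higher thickenings is handled by the cycle count, but showing $\cN^2=0$ genuinely uses that $C$ is Gorenstein — through Cohen--Macaulayness and the consequent torsion-freeness of $\cO_{C,p}$ over its reduction. The ``if'' direction and the reduced case are routine.
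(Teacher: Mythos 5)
Your proof is correct and has the same skeleton as the paper's (forward direction via Propositions~\ref{prop:hyp-canonical-map} and~\ref{prop:ribbon-canonical-map}; converse split into reduced and non-reduced cases, with the reduced case handled by rank of $\varphi_*\cO_C$). Where you diverge is in the non-reduced case: the paper establishes $\cN^2=0$ by a global cohomological argument (the nilradical squared has finite support since $\cO_{C,\eta}$ has length $2$, so $H^1(\cI^2)=0$, and the long exact sequence together with $h^0(\cO_C)=1$ and $h^0(\cO_C/\cI^2)\ge 1$ forces $h^0(\cI^2)=0$), whereas you argue locally at each point. Both are valid; your route avoids cohomology but trades it for a commutative-algebra subtlety worth spelling out. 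When you write ``$A$ is torsion-free over its reduction $R=A_{\red}$'' and form $A\otimes_R\mathrm{Frac}(R)$, you are tacitly using an $\cO_{C_{\red}}$-algebra structure on $\cO_C$; this does not come from the quotient $\cO_C\twoheadrightarrow\cO_{C_{\red}}$ (that direction only gives an $R$-module structure on $\cN$ once $\cN^2=0$, which is what you are trying to prove). Rather, it comes from the finite map $\varphi$: having already shown $\varphi|_{C_{\red}}:C_{\red}\xrightarrow{\sim}C_0$, the composite $\cO_{C_0}\to\varphi_*\cO_C$ provides a section of $\cO_C\twoheadrightarrow\cO_{C_{\red}}$, and $A$ is then a finite torsion-free (hence free, of rank $2$) module over the DVR $R$, with $A\otimes_R\mathrm{Frac}(R)=A_{\mathfrak p}$. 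You should make this explicit, since ``torsion-free $R$-module of rank one, say $N=R\nu$'' reads as circular otherwise. Alternatively, your argument can be streamlined to avoid the section entirely: Cohen--Macaulay of dimension $1$ means $\mathrm{Ass}(A)=\{\mathfrak p\}$, so $A\hookrightarrow A_{\mathfrak p}$, and since $A_{\mathfrak p}$ is Artinian local of length $2$ its maximal ideal squared vanishes by Nakayama, giving $N^2\subset\ker(A\to A_{\mathfrak p})=0$ directly; torsion-freeness of $\cN$ on $C_{\red}\cong\bP^1$ then follows as you say.
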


\begin{proof}
The backward direction is easy, see Propositions \ref{prop:hyp-canonical-map} and \ref{prop:ribbon-canonical-map}. We shall focus on the forward direction. 
We first treat the case where $C$ is reduced. Denote by $2[C_0]$ the $1$-cycle of doubled rational normal curve.
Then $\varphi$ induces a finite morphism $\varphi_{\red}: C\to C'_{\red} =C_0$. Moreover, $\varphi_*[C] = 2[C_0]$ implies that $(\varphi_{\red})_* \cO_C$ has rank $2$ on $C_0$. Thus we know that $C$ is hyperelliptic by Definition \ref{def:hyperelliptic}.

Next we assume that $C$ is non-reduced. Since $C$ is Gorenstein and hence Cohen--Macaulay, it has no embedded points. Thus $C$ has a generically non-reduced irreducible component. On the other hand, $\varphi_* [C] = 2[C_0]$ implies that $C$ is irreducible, has multiplicity $2$ at its generic point, and $\varphi_{\red} : C_{\red} \to C_0$ is birational. Thus we have $C_{\red} \cong C_0\cong \bP^1$. We will show that $C$ is a ribbon. Let $\cI$ be the nilradical sheaf of $\cO_C$. Since $C$ has multiplicity $2$ at its generic point, we know that $\cI$ has rank $1$ as an $\cO_{C_{\red}}$-module and that $\cI^2$ has finite support. Thus $h^1(C, \cI^2) = 0$ and from the standard long exact sequence we have 
\[
1 = h^0(C,\cO_C) = h^0(C,\cI^2) + h^0(C,\cO_C/\cI^2) \geq h^0(C,\cI^2) + 1,
\]
which implies $\cI^2 = 0$. Moreover, since $H^0(C, \cO_C) \cong H^0(C_{\red}, \cO_{C_{\red}})\cong \bk$, we know that the map $H^0(C, \cO_C) \to H^0(C_{\red}, \cO_{C_{\red}})$ is an isomorphism. This implies that $H^0(C, \cI) = 0$, in particular $\cI$ is torsion free as an $\cO_{C_{\red}}$-module. Therefore,  we conclude that $\cI$ is locally free of rank $1$ as an  $\cO_{C_{\red}}$-module which implies that $C$ is a ribbon.
\end{proof}

\begin{thm}\label{thm:hilb-chow}
The stack $\sX_g^{\circ}$ is smooth and  contains $\sH_g\cup \sR_g$. Moreover,  the following hold. 
\begin{enumerate}
    \item The Hilbert--Chow morphism $\Phi_g^{\circ}: \sX_g^{\circ} \to \sX_g^{\rm c}$ has an open image $\sX_g^{\ccirc}$.
    \item The open substack $\sX_g^{\ccirc}\hookrightarrow \sX_g^{\rm c}$ is saturated and normal.
    \item $\Phi_g^{\circ}:\sX_g^{\circ}\to \sX_g^{\ccirc}$ is isomorphic away from $\sH_g\cup \sR_g$, and $\Phi_g^{\circ}(\sH_g\cup \sR_g) $ is the point representing a doubled rational normal curve.
\end{enumerate}
\end{thm}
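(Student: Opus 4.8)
The plan is to transport the Hilbert-scheme-side statements of Proposition~\ref{prop:hilb-chow} to the Chow side via the good moduli space morphism $\phi_g^{\rm c}\colon\sX_g^{\rm c}\to\fX_g^{\rm c}$, and to read off the structure of $\Phi_g^\circ$ from the explicit description of curves in $\sX_g^\circ$ provided by Definition-Proposition~\ref{dp:Xgcirc}. First I would establish smoothness: by Definition-Proposition~\ref{dp:Xgcirc} every curve $C$ in $\sX_g^\circ$ is either a hyperelliptic curve or a ribbon, near which $\sX_g$ (hence its open substack $\sX_g^\circ$) is smooth by Proposition~\ref{prop:hyp-rib-intersection}, or else has canonical map a closed embedding with $[C]\in\Curves_g^{\lci +}$, in which case $\sX_g$ agrees near $[C]$ with the smooth stack $\Curves_{g,1}\cap\Curves_g^{\lci +}$, exactly as in the last paragraph of the proof of Theorem~\ref{thm:sX_g-smooth}. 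Consequently $\sX_g^\circ$ is smooth, hence seminormal, so $\sX_g^{\circ,{\rm sn}}=\sX_g^\circ$ and the Hilbert--Chow morphism $\Phi_g^\circ$ is directly defined on $\sX_g^\circ$. For the containment $\sH_g\cup\sR_g\subseteq\sX_g^\circ$ I would check conditions (1)--(3) of Definition-Proposition~\ref{dp:Xgcirc} for a hyperelliptic curve or ribbon $C$: it is smoothable with lci singularities by Proposition~\ref{prop:hyp-canonical-map}, Corollary~\ref{cor:hyp-rib-unobstructed}, and the local structure of ribbons; its canonical image cycle is a doubled rational normal curve, which is Chow polystable by Lemma~\ref{lem:sm-chow}; and that cycle is already polystable and lies in $Z_{\rib}\subseteq U_{g,1}$ by Proposition~\ref{prop:hilb-chow}(2).

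For part~(1), I claim that the image $\sX_g^{\ccirc}=\Phi_g^\circ(\sX_g^\circ)$ coincides with the saturated open substack $\sX_g^{{\rm c},\circ}=[V_{g,1}/\PGL(g)]$. The inclusion $\subseteq$ is immediate from condition~(3) of Definition-Proposition~\ref{dp:Xgcirc}, which says the Chow polystable degeneration of $\varphi_*[C]$ lies in $U_{g,1}$; by the defining property of $V_{g,1}$ this means $\varphi_*[C]\in V_{g,1}$. For $\supseteq$, the Hilbert--Chow map $\tU_{g,1}\to U_{g,1}$ is surjective by Proposition~\ref{prop:hilb-chow}(4), and remains so after base change along $V_{g,1}\hookrightarrow U_{g,1}$; thus every cycle of $V_{g,1}$ is represented by an embedded curve $C\hookrightarrow\bP^{g-1}$ in $\tV_{g,1}:=\tU_{g,1}\times_{U_{g,1}}V_{g,1}$, and by Proposition~\ref{prop:hilb-chow}(3) such a $C$ is a canonical ribbon or a reduced $\Curves_g^{\lci +}$-curve with very ample $\omega_C$ inducing the given embedding. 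Either way $C$ satisfies conditions~(1)--(3) of Definition-Proposition~\ref{dp:Xgcirc}, so $C\in\sX_g^\circ$ and $\Phi_g^\circ(C)$ is the prescribed cycle; hence $\sX_g^{\ccirc}=\sX_g^{{\rm c},\circ}$ is open. Part~(2) then follows: $\sX_g^{\ccirc}=\sX_g^{{\rm c},\circ}$ is saturated because it is the $\phi_g^{\rm c}$-preimage of an open subset of $\fX_g^{\rm c}$ by construction, and it is normal because $V_{g,1}$ is an open subscheme of the normal scheme $U_{g,1}$ (Proposition~\ref{prop:hilb-chow}(4)) and a quotient stack of a normal scheme is normal.

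For part~(3), Lemma~\ref{lem:2RNC-hyp-rib} shows that a curve $C$ in $\sX_g$ has $\varphi_*[C]$ a doubled rational normal curve precisely when $C\in\sH_g\cup\sR_g$; since all doubled rational normal curves in $\bP^{g-1}$ are projectively equivalent, they form a single $\PGL(g)$-orbit, that is, one point $z$ of $\sX_g^{\rm c}$, and therefore $\Phi_g^\circ(\sH_g\cup\sR_g)=\{z\}$ and $(\Phi_g^\circ)^{-1}(z)=\sH_g\cup\sR_g$. Away from this locus, every $C$ has $\varphi$ a closed embedding (the claim proved in Definition-Proposition~\ref{dp:Xgcirc}), with $\varphi(C)$ reduced and $\varphi_*[C]\in V_{g,1}\setminus Z_{\rib}$; the association $C\mapsto(\varphi(C)\hookrightarrow\bP^{g-1})$ then identifies $\sX_g^\circ\setminus(\sH_g\cup\sR_g)$ with the corresponding open substack of $[(\tU_{g,1}\setminus H_{\rib})/\PGL(g)]$, using that $[\tU_{g,1}/\PGL(g)]\hookrightarrow\Curves_{g,1}\cap\Curves_g^{\lci}$ is an open immersion (proof of Proposition~\ref{prop:hilb-chow}) and that a canonical embedding is unique up to $\PGL(g)$. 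Under this identification $\Phi_g^\circ$ becomes the restriction of the isomorphism $\tU_{g,1}\setminus H_{\rib}\xrightarrow{\ \sim\ }U_{g,1}\setminus Z_{\rib}$ of Proposition~\ref{prop:hilb-chow}(4) composed with the open inclusion onto $[(V_{g,1}\setminus Z_{\rib})/\PGL(g)]=\sX_g^{\ccirc}\setminus\{z\}$, hence is an isomorphism.

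The main obstacle I anticipate is precisely this last identification: one must verify that the canonical-embedding construction maps $\sX_g^\circ\setminus(\sH_g\cup\sR_g)$ onto exactly the open substack of $[\tU_{g,1}/\PGL(g)]$ parametrizing the cycles of $V_{g,1}\setminus Z_{\rib}$ and induces an isomorphism there, which requires carefully matching the abstract moduli description of $\sX_g^\circ$ in Definition-Proposition~\ref{dp:Xgcirc} with the embedded picture of Proposition~\ref{prop:hilb-chow}, in particular tracking reducedness and the Hilbert polynomial to recover a reduced curve from its cycle. By contrast, the smoothness claim and the passage of openness, saturatedness and normality through $\phi_g^{\rm c}$ are routine given the earlier results.
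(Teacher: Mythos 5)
Your proposal is correct and follows essentially the same route as the paper's own proof: case analysis (hyperelliptic/ribbon versus $\Curves_g^{\lci+}$) for smoothness, surjectivity of $\Phi_g^\circ$ onto $\sX_g^{{\rm c},\circ}$ via Proposition~\ref{prop:hilb-chow}(3)--(4), saturatedness and normality inherited from $V_{g,1}\subseteq U_{g,1}$, and the identification of $\sX_g^\circ\setminus(\sH_g\cup\sR_g)$ with $[(\tV_{g,1}\setminus H_{\rib})/\PGL(g)]$ through the canonical embedding and the ``claim'' in the proof of Definition-Proposition~\ref{dp:Xgcirc}.
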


\begin{proof}

We first show that $\sX_g^{\circ}$ is smooth. Let $[C]\in \sX_g^{\circ}(\bk)$ be a curve. Since $\sX_g$ is an open substack of an irreducible component of $\Curves_{g,1}$, it suffices to show that $\Curves_{g,1}$ is smooth at $[C]$.  Let $\varphi: C\to \bP^{g-1}$ be the finite morphism induced by the basepoint free linear system $|\omega_C|$. If  $\varphi_* [C]$ is a doubled rational normal curve, then by Lemma \ref{lem:2RNC-hyp-rib} we know that $C$ is either a hyperelliptic curve or a ribbon. By Corollary \ref{cor:hyp-rib-unobstructed} we know that the stack $\Curves_{g,1}$ is smooth at $[C]$. If $\varphi_*[C]$ is not a doubled rational normal curve, then from the proof of Definition-Proposition \ref{dp:Xgcirc} we have $[C]\in \Curves_g^{\lci +}$ which implies the smoothness of  $\Curves_{g,1}$ at $[C]$.

Next, we show (1). Since $\sX_g^{\circ}$ is smooth, it can be identified with its seminormalization $\sX_g^{\circ, \rm sn}$ where $\Phi_g^{\rm sn}$ restricts and gives $\Phi_g^{\circ}: \sX_g^{\circ} \to \sX_g^{\rm c, \circ}$. It suffices to show that $\Phi_g^{\circ}$ is surjective onto $\sX_g^{\rm c, \circ}$.  Let $z\in V_{g,1}$ be a $1$-cycle. If $z$ is non-reduced, then by Proposition \ref{prop:hilb-chow}(3) we know that $z$ is a doubled rational normal curve. Thus there exists a canonical ribbon $C$ whose associated $1$-cycle is $z$. Then it follows from Definition-Proposition \ref{dp:Xgcirc} that $C\in \sX_g^{\circ}$. If $z$ is reduced, by Proposition \ref{prop:hilb-chow}(3) we know that there exists a curve $C\in \Curves_{g}^{\lci +}$ with $\omega_C$ very ample such that $z$ is the cycle-theoretic canonical image of $C$. Then we know that $C\in \sX_g$ whose Hilbert--Chow image is $z$, which implies that $C\in \sX_g^{\circ}$. Thus (1) is proved.

Since normality of $\sX_g^{\rm c, \circ}$ follows from normality of $U_{g,1}$ from Proposition \ref{prop:hilb-chow}, and saturatedness has been shown earlier in the proof of Definition-Proposition \ref{dp:Xgcirc}, (2) is proved. 

Finally, we show (3). By Lemma \ref{lem:2RNC-hyp-rib} we know that $\sH_g\cup \sR_g$ is contained in $\sX_g^{\circ}$ and $\Phi_g^{\circ}(\sH_g\cup \sR_g) = \{z_{\rib}\}$, where $z_{\rib}$ represents a doubled rational normal curve. It suffices to show that $\Phi_g^{\circ}$ induces an isomorphism between $\sX_g^{\circ}\setminus (\sH_g\cup \sR_g)$ and $\sX_g^{\rm c,\circ}\setminus \{z_{\rib}\}$. By Proposition \ref{prop:hilb-chow}(4) we have
\[
\sX_g^{\rm c,\circ}\setminus \{z_{\rib}\} = [(V_{g,1}\setminus Z_{\rib})/\PGL_g] \cong [(\tV_{g,1}\setminus H_{\rib})/\PGL_g].
\]
Thus it suffices to show that $\sX_g^{\circ}\setminus (\sH_g\cup \sR_g)$ is isomorphic to $[(\tV_{g,1}\setminus H_{\rib})/\PGL_g]$. From the proof of Definition-Proposition \ref{dp:Xgcirc} we know that there is an open immersion $\iota: [\tV_{g,1}/\PGL_g] \hookrightarrow \sX_g^{\circ}$. Since $\tV_{g,1}\subset \tU_{g,1}$, by Proposition \ref{prop:hilb-chow}(3) we know that every curve $C$ in $\tV_{g,1}\setminus  H_{\rib}$ satisfies that $C$ is reduced with very ample $\omega_C$. Thus by Lemma \ref{lem:2RNC-hyp-rib} we have $\iota([(\tV_{g,1}\setminus H_{\rib})/\PGL_g])\subset \sX_g^{\circ}\setminus (\sH_g\cup \sR_g)$. Conversely, let $[C]\in \sX_g^{\circ}\setminus (\sH_g\cup \sR_g)$ be a point. Then $\varphi_*[C]$ is not a doubled rational normal curve by Lemma \ref{lem:2RNC-hyp-rib}. From the claim in the proof of Definition-Proposition \ref{dp:Xgcirc}, we conclude that $\varphi$ is a closed embedding and $\varphi(C)\in \tV_{g,1}$. Clearly $C$ is not a canonical ribbon, which implies that $\varphi(C)\in \tV_{g,1}\setminus H_{\rib}$.  The proof is finished.
\end{proof}

\subsection{Existence of good moduli spaces}

Let $\sX_g^{\circ}$ be a smooth open substack of $\sX_g$ from Definition-Proposition \ref{dp:Xgcirc} whose Hilbert--Chow image is denoted by $\sX_g^{\rm c, \circ}$. 
Then $\sX_g^{\rm c, \circ} = [V_{g,1}/\PGL(g)]$ admits a good moduli space $\fX_g^{\rm c, \circ}= V_{g,1}\sslash \PGL(g)$ as an open subscheme of $\fX_g^{\rm c}=\Chow_{g,1}^{\rm ss}\sslash \PGL(g)$ by saturatedness, where $V_{g,1}$ is the $\PGL(g)$-equivariant open subscheme of $\Chow_{g,1}^{\rm ss}$ from the proof of Definition-Proposition \ref{dp:Xgcirc}. Denote by $\phi_g^{\rm c, \circ}: \sX_g^{\rm c, \circ}\to \fX_g^{\rm c, \circ}$ the good moduli space morphism as the restriction of $\phi_g^{\rm c}: \sX_g^{\rm c} \to \fX_g^{\rm c}$. Let $\phi_g^{\circ}= \phi_g^{\rm c, \circ}\circ \Phi_g^{\circ}: \sX_g^{\circ}\to \fX_g^{\rm c, \circ}$ be the composition.

Our goal in this subsection is to prove the following result.

\begin{thm}\label{thm:sX_g-gms}
Notation as above. Then $\phi_g^{\circ}: \sX_g^{\circ}\to \fX_g^{\rm c, \circ}$ is a good moduli space morphism.
\end{thm}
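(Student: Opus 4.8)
The plan is to apply the gluing criterion for good moduli spaces, Theorem~\ref{thm:gms-glue}, together with the already-established good moduli space morphism $\phi_g^{\rm c,\circ}\colon \sX_g^{\rm c,\circ}\to \fX_g^{\rm c,\circ}$. Observe that the Hilbert--Chow morphism $\Phi_g^\circ\colon \sX_g^\circ\to \sX_g^{\rm c,\circ}$ from Theorem~\ref{thm:hilb-chow} is, by part (3) of that theorem, an isomorphism away from $\sH_g\cup\sR_g$ and collapses $\sH_g\cup\sR_g$ to the single point $z_{\rib}$ representing the doubled rational normal curve. Since $\phi_g^{\rm c,\circ}$ is a good moduli space morphism and $\Phi_g^\circ$ restricts to an isomorphism over $\sX_g^{\rm c,\circ}\setminus\{z_{\rib}\}$, the composition $\phi_g^\circ=\phi_g^{\rm c,\circ}\circ\Phi_g^\circ$ is automatically a good moduli space morphism over the open saturated substack $\sX_g^{\rm c,\circ}\setminus\{z_{\rib}\}$; call its preimage $\sU_1:=\sX_g^\circ\setminus(\sH_g\cup\sR_g)$. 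So the only remaining issue is to produce a good moduli space morphism on an open neighborhood of $[R_{\hyp}]$ in $\sX_g^\circ$ and then glue.

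First I would exhibit an explicit \'etale local quotient presentation of $\sX_g^\circ$ near $[R_{\hyp}]$. By Proposition~\ref{prop:hyp-rib-closedpt}, $\Aut(R_{\hyp})\cong \GL(2)/\bmu_{g+1}$, which is linearly reductive, and by Theorem~\ref{thm:hilb-chow} the stack $\sX_g^\circ$ is smooth; by Proposition~\ref{prop:hyp-rib-intersection} its tangent space at $[R_{\hyp}]$ splits as $T_{\sH_g,[R_{\hyp}]}\oplus T_{\sR_g,[R_{\hyp}]}$, with the two summands identified (via Theorems~\ref{thm:hyp-stack} and~\ref{thm:rib-stack}) as the $\GL(2)/\bmu_{g+1}$-representations $H^0(\bP^1,\cO_{\bP^1}(2g+2))$ and $\Ext^1(\Omega_{\bP^1},\cO_{\bP^1}(-g-1))$. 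By Theorem~\ref{thm:ahr} (applicable since $\sX_g^\circ$ has affine diagonal, being a locally closed substack of $\Curves_{g,1}$, which has affine diagonal) there is an \'etale affine local quotient presentation $f\colon \cW=[\Spec A/(\GL(2)/\bmu_{g+1})]\to\sX_g^\circ$ around $[R_{\hyp}]$, stabilizer-preserving at the point over $[R_{\hyp}]$; smoothness lets us take $A$ a polynomial ring on the tangent representation above. A quotient stack by a linearly reductive group always has a good moduli space ($\Spec A\sslash (\GL(2)/\bmu_{g+1})$), so the image of $f$ contains an open neighborhood $\sU_2$ of $[R_{\hyp}]$ admitting a good moduli space — here I would invoke Theorem~\ref{thm:AFS-gms}, whose hypotheses are: existence of stabilizer-preserving local quotient presentations sending closed points to closed points (which we have in a neighborhood, since $[R_{\hyp}]$ is the unique closed point of $\sH_g\cup\sR_g$ by Proposition~\ref{prop:hyp-rib-intersection}, and by Theorem~\ref{thm:hilb-chow}(3) all other points of a small neighborhood lie in $\sU_1$ where the quotient presentations pulled back from $\sX_g^{\rm c,\circ}$ already have this property), and that $\overline{\{x\}}$ admits a good moduli space for every $x$ (automatic, as those closures are either points in $\sU_1$ or contain $[R_{\hyp}]$ and map to $\sX_g^{\rm c,\circ}$).

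Next I would glue $\sU_1$ and $\sU_2$ via Theorem~\ref{thm:gms-glue}. We have $\sX_g^\circ=\sU_1\cup\sU_2$ (since $\sU_2$ is a neighborhood of the only point missing from $\sU_1$), good moduli spaces $\phi_g^\circ|_{\sU_1}\colon\sU_1\to\fX_g^{\rm c,\circ}\setminus\{z_{\rib}\}$ and $\sU_2\to U_2$, and it remains to check that $\sU_1\cap\sU_2$ is saturated in each of $\sU_1$ and in $\sU_2$. Saturatedness in $\sU_1$ is clear since $\sU_1$ itself is saturated over $\fX_g^{\rm c,\circ}\setminus\{z_{\rib}\}$ and $\sU_1\cap\sU_2$ is the preimage of an open subset. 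For saturatedness in $\sU_2$, note $\sU_2\setminus(\sU_1\cap\sU_2)=\sU_2\cap(\sH_g\cup\sR_g)$, and $[R_{\hyp}]$ is the unique closed point of $\sU_2$ (by Proposition~\ref{prop:hyp-rib-intersection} it is the only closed point of $\sH_g\cup\sR_g$, and by shrinking $\sU_2$ we may assume all its closed points lie over the closed point $z_{\rib}$); so the fiber of the good moduli space $\sU_2\to U_2$ over any point of $U_2$ other than the image of $[R_{\hyp}]$ is contained in $\sU_1$, while the whole fiber over the image of $[R_{\hyp}]$ is contained in $\sX_g^\circ\setminus\sU_1$ — wait, rather: every point $x\in\sU_2\cap\sH_g$ specializes to $[R_{\hyp}]$ by Proposition~\ref{prop:hyp-rib-closedpt}, so its fiber in $\sU_2$ under the good moduli space map (which is the closed orbit closure) is $\{[R_{\hyp}]\}$'s fiber, hence entirely in $\sH_g\cup\sR_g$; therefore the saturation of $\sU_2\cap\sH_g\cup\sR_g$ is itself, and $\sU_1\cap\sU_2$ is saturated. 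Then Theorem~\ref{thm:gms-glue} produces a good moduli space $\sX_g^\circ\to X$ with an open cover by $\fX_g^{\rm c,\circ}\setminus\{z_{\rib}\}$ and $U_2$; finally, by the universal property (Theorem~\ref{thm:gms-universal}) the map $\phi_g^\circ\colon\sX_g^\circ\to\fX_g^{\rm c,\circ}$, which is a good moduli space morphism on $\sU_1$ and on $\sU_2$ (the latter since $U_2$ maps to a neighborhood of $z_{\rib}$ compatibly), factors through $X$, and an argument comparing the two shows $X\cong\fX_g^{\rm c,\circ}$ so that $\phi_g^\circ$ is itself the good moduli space morphism.

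\textbf{Main obstacle.} The crux is the local analysis near $[R_{\hyp}]$: verifying that Theorem~\ref{thm:AFS-gms} genuinely applies — that is, that the local quotient presentation from Theorem~\ref{thm:ahr} can be chosen so that it \emph{sends closed points to closed points} on a neighborhood (equivalently, that closed points of $\sX_g^\circ$ near $[R_{\hyp}]$ are exactly the points of $\sH_g\cup\sR_g$ near $[R_{\hyp}]$, together with understanding which nearby orbits are closed). This requires knowing the orbit-closure structure of the $\GL(2)/\bmu_{g+1}$-action on $H^0(\bP^1,\cO_{\bP^1}(2g+2))\oplus\Ext^1(\Omega_{\bP^1},\cO_{\bP^1}(-g-1))$ well enough to match it with Chow-(semi)stability under $\Phi_g^\circ$; the identification of $\sX_g^\circ$'s geometry near $[R_{\hyp}]$ with this linear model, and the compatibility of the resulting good moduli space with the Chow quotient $\fX_g^{\rm c,\circ}$, is where the real work lies. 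An alternative, possibly cleaner route — which I would pursue in parallel — is to bypass the gluing entirely by checking $\Theta$-reductivity and S-completeness of $\sX_g^\circ$ directly via Theorem~\ref{thm:ahlh} (reducing to the essentially-finite-type DVR case), using the explicit descriptions of $\sH_g$, $\sR_g$ as quotient stacks and the valuative arguments already carried out in Proposition~\ref{prop:hyp-rib-closed-substack}; but the separatedness/properness bookkeeping there is comparably delicate, so either way the hyperelliptic-ribbon point is the heart of the matter.
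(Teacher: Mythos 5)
Your strategy (gluing via Theorem~\ref{thm:gms-glue}) is a legitimate alternative to the paper's, but it is genuinely different from the route the authors take, and as written it leaves real gaps exactly at the point you flag as the ``main obstacle.'' The paper's proof avoids working directly with the stabilizer $\GL(2)/\bmu_{g+1}$ at $[R_{\hyp}]$ by passing to the $\PGL(g)$-torsor $\sY_g\to\sX_g^\circ$ associated to the Hodge bundle $\pi_*\omega_{\cC/\sX_g^\circ}$: on $\sY_g$ the point over $[R_{\hyp}]$ parameterizes $(R_{\hyp},[s])$ with a projective basis $[s]$, and its stabilizer drops from $\GL(2)/\bmu_{g+1}$ to $\bG_m$. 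One then shows $\tPhi_g:\sY_g\to V_{g,1}$ is a good moduli space morphism by applying Theorem~\ref{thm:AFS-gms} there (the local model is a $\bG_m$-action with weights $(2,-1,0)$ on $H^0(\bP^1,L^{\otimes-2})\oplus\Ext^1(\Omega_{\bP^1},L)\oplus T_{\mathrm{id}}\PGL(g)/\PGL(2)$, handled by \cite[Prop.~4.3]{AHLH18} together with the preliminary Lemma~\ref{lem:gms-ribbon} on $\tsH_g\cup\tsR_g$), and $\fY_g\cong V_{g,1}$ via Zariski's main theorem. The statement for $\Phi_g^\circ$ then follows by descent along the torsor, and composing with $\phi_g^{\rm c,\circ}$ finishes.

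The concrete gaps in your proposal: (1) your claim that $\sU_1\cap\sU_2$ is saturated in $\sU_1$ is not automatic — $\sU_2$ was produced via Theorem~\ref{thm:ahr}, and the image of an \'etale local quotient presentation need not be a union of good-moduli-space fibers of $\sU_1\to\fX_g^{\rm c,\circ}\setminus\{z_{\rib}\}$; to ensure this you would have to choose $\sU_2$ to be the preimage of an open neighborhood of $z_{\rib}$ under $\phi_g^\circ$, which is legitimate but changes the logic. (2) Your argument for saturatedness in $\sU_2$ (``by shrinking $\sU_2$ we may assume all its closed points lie over $z_{\rib}$'') does not work: shrinking an open substack can \emph{create} closed points (points whose closure in $\sX_g^\circ$ partly escapes $\sU_2$ become closed in $\sU_2$), so this can introduce new closed points off $\sH_g\cup\sR_g$ with no control over their fibers. (3) The final comparison $X\cong\fX_g^{\rm c,\circ}$ is only asserted; without pinning down exactly which orbits near $[R_{\hyp}]$ are closed under $\GL(2)/\bmu_{g+1}$ (equivalently the GIT of binary forms of degree $2g+2$ together with the ribbon extension class) and matching them to Chow polystability, the universal-property map need not be an isomorphism a priori. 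All three of these are precisely what the torsor reduction to a $\bG_m$-weight $(2,-1,0)$ local model makes tractable, so the paper's route is not just cosmetically different — it is the device that dissolves the obstacle you correctly identify.
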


Let $\pi:\cC_{\sX_g^{\circ}}\to \sX_g^{\circ}$ be the universal family of curves. We have a vector bundle $\pi_* \omega_{\cC_{\sX_g^{\circ}}/\sX_g^{\circ}}$ of rank $g$ on $\sX_g^{\circ}$. Let $\sY_g\to \sX_g^{\circ}$ be the $\PGL(g)$-torsor associated to $\pi_*\omega_{\cC_{\sX_g^{\circ}}/\sX_g^{\circ}}$. Then we have a Hilbert--Chow morphism $\tPhi_g: \sY_g \to V_{g,1}$ defined by $\tPhi(C, [s]) = (\varphi_s)_*[C]$, where $s=(s_1, \cdots, s_g)$ is a basis of $H^0(C,\omega_C)$, and $\varphi_s: C\to \bP^{g-1}$ is the finite morphism induced by $s$. Clearly $\tPhi_g$ induces the morphism $\Phi_g^{\circ}: \sX_g^{\circ} \to \sX_g^{\rm c, \circ} = [V_{g,1}/\PGL(g)]$ by taking the $\PGL(g)$-quotient. Let $\tsH_g:=\sH_g \times_{\sX_g^{\circ}} \sY_g$ and $\tsR_g:=\sR_g \times_{\sX_g^{\circ}} \sY_g$  denote the $\PGL(g)$-torsors over the closed substacks  $\sH_g$ and $\sR_g$ of $\sX_g^{\circ}$ respectively by Proposition \ref{prop:hyp-rib-closed-substack}. Recall that  $Z_{\rib}$ is  the closed $\PGL(g)$-orbit in $V_{g,1}$ parameterizing doubled rational normal curves.

\begin{lem}\label{lem:gms-ribbon}
Notation as above. Then $\tPhi_{\rib}:=\tPhi_g|_{\tsH_g\cup \tsR_g}: \tsH_g\cup \tsR_g \to Z_{\rib}$ is a good moduli space morphism.
\end{lem}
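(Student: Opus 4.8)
The plan is to exploit the $\PGL(g)$-equivariance of $\tPhi_{\rib}$ to reduce the assertion to a single, concrete $\bG_m$-quotient. Note first that $Z_{\rib}$ is a single $\PGL(g)$-orbit: it is the orbit of the cycle $2[C_0]$ of a fixed rational normal curve $C_0\subset\bP^{g-1}$, whose stabilizer in $\PGL(g)$ is the copy of $\PGL(2)$ given by the $(g-1)$st symmetric power representation, so $[Z_{\rib}/\PGL(g)]\cong B\PGL(2)$. Since $\tPhi_{\rib}$ is $\PGL(g)$-equivariant it descends to $\overline{\tPhi}_{\rib}\colon \sH_g\cup\sR_g=[(\tsH_g\cup\tsR_g)/\PGL(g)]\to B\PGL(2)$. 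Good moduli space morphisms are stable under arbitrary base change, and their two defining conditions can be checked fppf-locally on the target; applying this to the $\PGL(g)$-torsor $Z_{\rib}\to B\PGL(2)$ and then to the $\PGL(2)$-torsor $\Spec\bk\to B\PGL(2)$, I would reduce to proving that the fiber $\cF_0:=(\sH_g\cup\sR_g)\times_{B\PGL(2)}\Spec\bk\cong\tPhi_{\rib}^{-1}(z_0)$ maps to $\Spec\bk$ by a good moduli space morphism.

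Next I would identify $\cF_0$ explicitly. By Theorems~\ref{thm:hyp-stack} and~\ref{thm:rib-stack}, $\sH_g\cong[A_H/G]$ and $\sR_g\cong[A_R/G]$ with $G=\GL(2)/\bmu_{g+1}$, $A_H=\bA(H^0(\bP^1,\cO_{\bP^1}(2g+2)))$, $A_R=\bA(\Ext^1(\Omega_{\bP^1},\cO_{\bP^1}(-g-1)))$, and $[R_{\hyp}]$ the origin in both. By Proposition~\ref{prop:hyp-rib-intersection}, $\sH_g$ and $\sR_g$ meet only at $[R_{\hyp}]$ and transversally inside the smooth stack $\sX_g^{\circ}$, which, using the splitting of the tangent space together with the fact that $[R_{\hyp}]$ is the unique closed point, identifies $\sH_g\cup\sR_g$ with $[(A_H\vee A_R)/G]$, where $A_H\vee A_R\subset A_H\times A_R$ is the union of the two coordinate affine subspaces. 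Unwinding the Hilbert--Chow morphism $\tPhi_g$ and Propositions~\ref{prop:hyp-canonical-map} and~\ref{prop:ribbon-canonical-map} (the canonical map of a hyperelliptic curve or ribbon factors through the rational normal curve), the $\PGL(2)$-torsor $\cF_0\to\sH_g\cup\sR_g$ classified by $\overline{\tPhi}_{\rib}$ is the one recording that rational normal curve; over the presentation $[(A_H\vee A_R)/G]$ this torsor is induced by the quotient homomorphism $G\twoheadrightarrow\PGL(2)=\Aut(\bP^1)$, whose kernel is the subtorus $\bG_m$ of scalars. Cancelling the free $\PGL(2)$-factor (i.e.\ using $[(X\times H)/G]\cong[X/\ker(G\to H)]$ for a surjection $G\to H$) then gives $\cF_0\cong[(A_H\vee A_R)/\bG_m]$.

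To finish I would observe: $\bG_m$ is linearly reductive and $A_H\vee A_R$ is affine, so $\cF_0\to\Spec\bk$ is cohomologically affine; and since $[R_{\hyp}]$ is the unique closed point of both $\sH_g$ and $\sR_g$ (Proposition~\ref{prop:hyp-rib-closedpt}), the scalar torus acts on $A_H$ and on $A_R$ with strictly positive weights, whence $\Gamma(\cF_0,\cO_{\cF_0})=\Gamma(A_H\vee A_R,\cO)^{\bG_m}=\bk$. Hence $\cF_0\to\Spec\bk$ is a good moduli space morphism, and unwinding the reductions of the first paragraph, so is $\tPhi_{\rib}$.

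I expect the main obstacle to be the identification step: promoting the transversal picture at $[R_{\hyp}]$ to the global presentation $\sH_g\cup\sR_g\cong[(A_H\vee A_R)/G]$ (using that every orbit closure contains the unique closed point, so an étale slice through it is already everything) and checking that the $\PGL(g)$-torsor $\tsH_g\cup\tsR_g$ genuinely reduces, via $G\twoheadrightarrow\PGL(2)$, to the claimed form. If this proves awkward, one can instead argue directly that $\cF_0$ is reduced, connected, of finite type with affine diagonal, and has a unique closed point with linearly reductive ($\bG_m$) stabilizer, so it admits a good moduli space by Theorem~\ref{thm:AFS-gms}; this good moduli space is reduced by Theorem~\ref{thm:gms-normal} and has a single point, hence equals $\Spec\bk$.
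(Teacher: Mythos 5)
Your proof is correct and takes essentially the same route as the paper: both reduce to the affine $\bG_m$-quotient $[(A_H\vee A_R)/\bG_m]\to\Spec\bk$ via the transversal union presentation of $\sH_g\cup\sR_g$ as $[(A_H\vee A_R)/G]$ and the surjection $G\twoheadrightarrow\PGL(2)$, with the only difference being that you base change to a fiber of the $\PGL(2)$-torsor whereas the paper phrases it as descent along that torsor. One small slip worth fixing: the central $\bG_m\subset G$ acts with weight $2$ on $A_H$ but weight $-1$ on $A_R$ (see the weight computation in the proof of Proposition~\ref{prop:gms-PGLtorsor}), so the weights are not strictly positive on both factors; nonetheless, since $A_H\vee A_R$ is the coordinate cross and not the product, there are no mixed monomials, and the conclusion $\Gamma(A_H\vee A_R,\cO)^{\bG_m}=\bk$ still holds.
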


\begin{proof}
We shall use the quotient presentations of stacks $\sH_g$ and $\sR_g$ from Theorems \ref{thm:hyp-stack} and \ref{thm:rib-stack}:
\[
\sH_g \cong [\bA(H^0(\bP^1, L^{\otimes -2}) )/ G] \quad \textrm{and}\quad \sR_g \cong [\bA(\Ext^1(\Omega_{\bP^1}, L))/ G],
\]
where $G := \GL(2)/\bmu_{g+1}$ and $L := \cO_{\bP^1}(-g-1)$. By Proposition \ref{prop:hyp-rib-intersection} we know that $\sH_g$ intersects $\sR_g$ at a unique point $[R_{\hyp}]$ which is the origin in both quotient presentations, where the intersection is transversal. Thus we have that 
\[
\sH_g\cup \sR_g \cong [(\bA(H^0(\bP^1, L^{\otimes -2}))\cup \bA(\Ext^1(\Omega_{\bP^1}, L)))/G],
\]
where the union of the two affine spaces is transversal at the origin and hence an affine scheme. As a result, we know that $\sH_g \cup \sR_g \to \rmB G$ is an affine morphism. Composing with the natural morphism $\rmB G \to \rmB \PGL(2)$ where $G\to \PGL(2)$ is the quotient group homomorphism, we see that $\Phi_{\rib}: \sH_g \cup \sR_g \to \rm B \PGL(2)$ is cohomologically affine. Moreover, $\Phi_{\rib}$ is the descent of the good moduli space morphism 
\[
[(\bA(H^0(\bP^1, L^{\otimes -2}))\cup \bA(\Ext^1(\Omega_{\bP^1}, L)))/(\bG_m/\bmu_{g+1})]\to \Spec \bk
\]
under a $\PGL(2)$-quotient. 
Thus we conclude that $\Phi_{\rib}: \sH_g \cup \sR_g \to \rm B \PGL(2)$ is a good moduli space morphism. By taking $\PGL(g)$-torsors of $\Phi_{\rib}$, we conclude that $\tPhi_{\rib}$ is also a good moduli space morphism.
\end{proof}

\begin{prop}\label{prop:gms-PGLtorsor}
Notation as above. Then $\tPhi_g: \sY_g \to V_{g,1}$ is a good moduli space morphism.
\end{prop}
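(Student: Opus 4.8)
The plan is to verify that $\tPhi_g : \sY_g \to V_{g,1}$ is a good moduli space morphism by applying the existence criterion of \cite{AHLH18} (Theorem \ref{thm:ahlh}), together with the descent property of good moduli spaces along \'etale-locally-trivial torsors. First I would note that $\sY_g$ is a smooth algebraic stack of finite type over $\bk$ (being a $\PGL(g)$-torsor over the smooth finite-type stack $\sX_g^{\circ}$ from Theorem \ref{thm:hilb-chow}), and that it has affine diagonal since $\sX_g^{\circ}$ does. Then I would construct a candidate good moduli space, namely $V_{g,1}$ itself, and show $\tPhi_g$ satisfies: (i) $\cO_{V_{g,1}} \xrightarrow{\cong} (\tPhi_g)_* \cO_{\sY_g}$, and (ii) $\tPhi_g$ is cohomologically affine. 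The natural strategy is to verify this after passing to the Hilbert scheme: recall from the proof of Definition-Proposition \ref{dp:Xgcirc} and Theorem \ref{thm:hilb-chow} that $\sX_g^{\circ}$ and its Chow image $\sX_g^{\rm c,\circ} = [V_{g,1}/\PGL(g)]$ differ only over $\sH_g \cup \sR_g$, which maps to $Z_{\rib}$; and over $V_{g,1} \setminus Z_{\rib}$, the morphism $\tPhi_g$ is an isomorphism onto an open subscheme of $\tU_{g,1} \subset \Hilb_{g,1}$ by Proposition \ref{prop:hilb-chow}(4) (since there the curve embeds canonically and the Hilbert point is determined by the cycle).

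The main work, therefore, localizes over $Z_{\rib}$. Here I would use Theorem \ref{thm:ahlh}: it suffices to check that $\sY_g$ is $\Theta$-reductive, S-complete, and satisfies the existence part of the valuative criterion for properness, and then to identify the resulting separated good moduli space with $V_{g,1}$. For $\Theta$-reductivity and S-completeness, the key point is that these properties can be checked \'etale-locally on the base, and in an \'etale neighborhood of a point of $Z_{\rib}$ the stack $\sY_g$ looks like (a $\PGL(g)$-torsor over) the union of the hyperelliptic and ribbon stacks glued transversally at $[R_{\hyp}]$, as made explicit in the proof of Lemma \ref{lem:gms-ribbon}: it is of the form $[(\bA(H^0(\bP^1, L^{\otimes -2})) \cup \bA(\Ext^1(\Omega_{\bP^1}, L)))/G]$ with $G = \GL(2)/\bmu_{g+1}$, a linearly reductive group acting linearly on an affine scheme. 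Quotient stacks $[\Spec A / G]$ by linearly reductive $G$ are automatically $\Theta$-reductive and S-complete (by \cite{AHLH18}; see also the analysis of GIT quotients there), and they admit the affine GIT quotient $\Spec A^G$ as good moduli space. Combined with the \'etale-local triviality this shows $\sY_g$ is $\Theta$-reductive and S-complete near $\tPhi_g^{-1}(Z_{\rib})$, and away from $Z_{\rib}$ the morphism is an open immersion so these properties hold trivially; hence $\sY_g$ admits a separated good moduli space $Y$. The existence part of the valuative criterion for properness follows because $V_{g,1}$ is proper over the affine GIT quotient locally (indeed $\Chow_{g,1}^{\rm ss}$ is a variety and $\tPhi_g$ is proper by Proposition \ref{prop:hilb-chow}(4) over the non-ribbon locus, and proper near the ribbon locus since the relevant $G$-quotient map is proper onto its image $Z_{\rib}$).

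Next I would identify $Y$ with $V_{g,1}$. Since good moduli spaces are universal for maps to algebraic spaces (Theorem \ref{thm:gms-universal}), the Hilbert--Chow morphism $\tPhi_g : \sY_g \to V_{g,1}$ factors uniquely through $Y \to V_{g,1}$. This map is an isomorphism over $V_{g,1} \setminus Z_{\rib}$ (where $\tPhi_g$ is itself an isomorphism), and over $Z_{\rib}$ it is the good moduli space morphism of the local model $[(\bA(H^0(\bP^1, L^{\otimes -2})) \cup \bA(\Ext^1(\Omega_{\bP^1}, L)))/G]$: by Lemma \ref{lem:gms-ribbon} this good moduli space is (\'etale-locally) a point, matching $Z_{\rib}$ being a single $\PGL(g)$-orbit. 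Since $V_{g,1}$ is normal (Proposition \ref{prop:hilb-chow}(4)) and $Y \to V_{g,1}$ is proper, birational, and quasi-finite (the fibers over $Z_{\rib}$ being single points by Lemma \ref{lem:gms-ribbon}), Zariski's main theorem gives $Y \cong V_{g,1}$. This proves the claim.

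The hard part will be the \emph{gluing} step: verifying that the \'etale-local good moduli space structure near $Z_{\rib}$ (coming from the GIT quotient of the transversal union of the two affine spaces, via Lemma \ref{lem:gms-ribbon}) is genuinely compatible with the global Hilbert--Chow morphism $\tPhi_g$, so that one really gets $V_{g,1}$ and not merely some space homeomorphic to it. Concretely, one needs to know that an \'etale slice at $[R_{\hyp}] \in \sX_g^{\circ}$ — which exists by Theorem \ref{thm:ahr} (Theorem \ref{thm:ahlh} and \cite{AHR20}), since the stabilizer $\Aut(R_{\hyp}) \cong \GL(2)/\bmu_{g+1}$ is linearly reductive — is, after taking the associated $\PGL(g)$-torsor, \'etale over a neighborhood of $Z_{\rib}$ in $V_{g,1}$ in a way that matches the cycle map; this uses the smoothness of $\sX_g^{\circ}$, the transversality $T_{\sX_g, [R_{\hyp}]} = T_{\sH_g, [R_{\hyp}]} \oplus T_{\sR_g, [R_{\hyp}]}$ from Proposition \ref{prop:hyp-rib-intersection}, and the description of $U_{g,1}$ (hence $V_{g,1}$) near $Z_{\rib}$ from Proposition \ref{prop:hilb-chow}. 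Once one has an \'etale-local identification of $\sY_g$ near $\tPhi_g^{-1}(Z_{\rib})$ with (an open in) the quotient stack of Lemma \ref{lem:gms-ribbon} compatibly with $\tPhi_g$, cohomological affineness and the isomorphism $\cO_{V_{g,1}} \cong (\tPhi_g)_* \cO_{\sY_g}$ both follow from the corresponding \'etale-local statements, since both conditions are \'etale-local on the target.
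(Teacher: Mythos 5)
Your proposal takes a genuinely different route from the paper's — you invoke Theorem \ref{thm:ahlh} ($\Theta$-reductivity and S-completeness), whereas the paper applies Theorem \ref{thm:AFS-gms} (the Alper--Fedorchuk--Smyth local quotient presentation criterion), which is specifically adapted to situations like this one. Unfortunately, the proposal as written has a genuine gap at its central step, and some inaccuracies in the local description.

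The main problem is the assertion that $\Theta$-reductivity and S-completeness ``can be checked \'etale-locally on the base.'' Neither property is \'etale-local: a map $\Theta_R \setminus 0 \to \sY_g$ (resp.\ $\STR \setminus 0 \to \sY_g$) need not factor through any given open substack or local quotient presentation, so having an \'etale neighborhood that is $\Theta$-reductive does not automatically let you fill in the origin in $\sY_g$. To extract $\Theta$-reductivity/S-completeness from local quotient presentations $[\Spec A/G]$ one must additionally know that the local presentations send closed points to closed points and are stabilizer-preserving at closed points — this is precisely the content of condition (1) of Theorem \ref{thm:AFS-gms}, which is what the paper verifies using the weight decomposition of $T_{\sY_g,y}$ and \cite[Proposition 4.3]{AHLH18}. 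Your argument asserts the conclusion while skipping exactly the work that makes it true; the ``hard part'' you flag (gluing compatibility) is a real issue, but the \'etale-local $\Theta$-reductivity claim itself is where the argument breaks.

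Two smaller inaccuracies. First, the \'etale local model of $\sY_g$ near a closed point $y$ over $[R_{\hyp}]$ is \emph{not} $[(\bA(H^0(\bP^1, L^{\otimes -2})) \cup \bA(\Ext^1(\Omega_{\bP^1}, L)))/G]$; the union of the two affine spaces only models the closed substack $\tsH_g \cup \tsR_g$. The local model of $\sY_g$ itself is a quotient of an \emph{affine space} (the full tangent space, which is the \emph{direct sum} $H^0(\bP^1, L^{\otimes -2}) \oplus \Ext^1(\Omega_{\bP^1}, L)$ plus a weight-$0$ torsor direction) by the stabilizer $\bG_m$ with weights $(2,-1,0)$, via Theorem \ref{thm:ahr}. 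This particular error doesn't derail the GIT-type conclusions, since an affine space mod $\bG_m$ is still an affine quotient stack, but it does obscure the structure the AFS argument relies on. Second, the invocation of the valuative criterion for properness from Theorem \ref{thm:ahlh} is off target: that clause concerns properness of the good moduli space over $\bk$, which is neither true nor needed here ($V_{g,1}$ is not proper). The identification of the good moduli space with $V_{g,1}$ should, as you do in your last paragraph, go through universality (Theorem \ref{thm:gms-universal}), normality (Proposition \ref{prop:hilb-chow}(4)), bijectivity, and Zariski's Main Theorem — this part matches the paper.
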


\begin{proof}
We first show that $\sY_g$ admits a good moduli space using Theorem \ref{thm:AFS-gms}.

We start from checking condition  (1) of Theorem \ref{thm:AFS-gms}. Let $y\in \sY_g(\bk)$ be a closed point. If $y\not\in \sH_g\cup \sR_g$, then by the last paragraph of the proof of Theorem \ref{thm:hilb-chow} there is a stack isomorphism $\sX_g^{\circ}\setminus (\sH_g \cup \sR_g) \cong [(\tV_{g,1}\setminus H_{\rib})/\PGL(g)]$. Thus by taking $\PGL(g)$-torsors we have an isomorphism $\sY_g \setminus (\tsH_g\cup \tsR_g) \cong \tV_{g,1}\setminus H_{\rib}$. Moreover, by Proposition \ref{prop:hilb-chow}(4) we know that $\tPhi_g' = \tPhi_g|_{\sY_g \setminus (\tsH_g \cup \tsR_g)}: \sY_g \setminus (\tsH_g\cup \tsR_g) \to V_{g,1}\setminus Z_{\rib}$ is an isomorphism, which implies that $\sY_g \setminus (\tsH_g\cup \tsR_g)$ is represented by a scheme. Since $[(V_{g,1}\setminus Z_{\rib})/\PGL(g)]$ is saturated open in $\sX_g^{\rm c, \circ}$, every $\bk$-point in $\sY_g \setminus (\tsH_g\cup \tsR_g)$ is closed. Thus (1) holds at $y\not\in \sH_g\cup \sR_g$. 

If $y\in \tsH_g \cup \tsR_g$ is a closed point, then by Proposition \ref{prop:hyp-rib-intersection} we know that $y$ parameterizes the hyperelliptic ribbon $R_{\hyp}$ with a projective basis $[s] = [s_1, \cdots , s_g]$ of $H^0(R_{\hyp}, \omega_{R_{\hyp}})$. By Proposition \ref{prop:hyp-rib-closedpt} we have $\Aut(R_{\hyp}) \cong  \GL(2)/\bmu_{g+1}.$
Thus we have 
\[
\Stab_y \cong \Aut(R_{\hyp}, [s]) \cong \bG_m /\bmu_{g+1}\cong \bG_m. 
\]
By Propositions \ref{prop:hyp-rib-transversal} and \ref{prop:hyp-rib-intersection} we have
\[
T_{\sY_g, y}\cong H^0(\bP^1, L^{\otimes -2}) \oplus \Ext^1(\Omega_{\bP^1}, L) \oplus T_{\rm id} \PGL(g+1)/\PGL(2).
\]
Here $L = \cO_{\bP^1} (-g-1)$. Hence $\Stab_y \cong \bG_m$ acts on $T_{\sY_g, y}$ with weight $(2, -1, 0)$ in the above direct sum presentation. Thus we see that the locus of vectors in $T_{\sY, y}$ with non-trivial stabilizers are precisely the union of $(T_{\rm id} \PGL(g+1)/\PGL(2)) \setminus \{0\}$ (with stabilizer $\bG_m$) and $H^0(\bP^1, L^{\otimes -2})\setminus \{0\}$ (with stabilizer $\bmu_2$). By Theorem \ref{thm:ahr}, there exists a local quotient presentation $f: [\Spec(A)/\bG_m] \to \sY_g$ around $y$. Denote by $[V(I)/\bG_m]$ the preimage of $\tsH_g\cup \tsR_g$ under $f$. Then from the local description we know that the stabilizer is trivial in $\Spec(A) \setminus V(I)$ after possibly shrinking $\Spec(A)$. Moreover, since $\tPhi_{\rib}$ is a good moduli space morphism by Lemma \ref{lem:gms-ribbon}, using \cite[Proposition 4.3]{AHLH18} we know that after possibly shrinking $\Spec(A)$, the restriction  $[V(I)/\bG_m]\to \tsH_g\cup \tsR_g$ of $f$ satisfies condition (1). Thus condition (1) holds at closed points $y\in \tsH_g\cup \tsR_g$.

Next, we check condition (2) of Theorem \ref{thm:AFS-gms}. If $y\in \sY_g\setminus (\tsH_g\cup \tsR_g)$, then $y$ is a closed point with trivial stabilizer, thus (2) holds. If $y\in \tsH_g\cup \tsR_g$, then by Lemma \ref{lem:gms-ribbon} and \cite[Remark 2.11]{ABHLX19} we know that (2) holds. Thus by Theorem \ref{thm:AFS-gms} we conclude that $\sY_g$ admits a good moduli space $\fY_g$.

Finally we show that $\fY_g \cong V_{g,1}$. By universality of good moduli spaces (see Theorem \ref{thm:gms-universal}), we have a morphism $g:\fY_g \to V_{g,1}$. From the above arguments and Lemma \ref{lem:gms-ribbon}, we know that $g$ is bijective and $\fY_g$ is normal. Thus $g$ is separated. By Zariski's main theorem of algebraic spaces \cite[\href{https://stacks.math.columbia.edu/tag/082K}{Tag 082K}]{stacksproject}, we conclude that $g$ is an isomorphism. 
\end{proof}

\begin{proof}[Proof of Theorem \ref{thm:sX_g-gms}]
By Proposition \ref{prop:gms-PGLtorsor}, we have that $\tPhi_g: \sY_g\to V_{g,1}$ is cohomologically affine, and $(\tPhi_g)_* \cO_{\sY_g} = \cO_{V_{g,1}}$. This implies that $\Phi_g^{\circ}:\sX_g^{\circ}\to \sX_g^{\rm c, \circ}$ is cohomologically affine and $(\Phi_g^{\circ})_* \cO_{\sX_g^{\circ}} = \cO_{\sX_g^{\rm c, \circ}}$ as $\tPhi_g$ is a $\PGL(g)$-torsor over $\Phi_g^{\circ}$. Thus $\Phi_g^{\circ}$ is a good moduli space morphism. Since $\phi_g^{\rm c, \circ}: \sX_g^{\rm c, \circ} \to \fX_g^{\rm c,\circ}$ is a good moduli space morphism, we conclude that the composition $\phi_g^{\circ}: \sX_g^{\circ} \to \fX_g^{\rm c, \circ}$ is a good moduli space morphism. This proves the theorem.
\end{proof}

\section{The hyperelliptic flip}\label{sec:hyp-flip}

\subsection{Good moduli spaces for genus $4$ curves with Chow semistable cycles}

From now on, we shall focus on the case of genus $g=4$. For simplicity, we omit the subscript $4$ from $\sX_4$, $\sX_4^{\rm c}$, $\fX_4^{\rm c}$, etc.~to denote the corresponding stack or space by $\sX$, $\sX^{\rm c}$, $\fX^{\rm c}$, etc. In particular, $\sX$ parameterizes Gorenstein smoothable curves of genus $4$ with ample and basepoint free canonical bundle whose cycle-theoretic canonical image is Chow semistable.

The following is the main result of this subsection which strengthens Theorem \ref{thm:sX_g-gms} in genus $4$. 

\begin{thm}\label{thm:sX-gms}
The stack $\sX$ is  a smooth irreducible algebraic stack of finite type over $\bk$ with affine diagonal that contains $\cM_4$ as a dense open substack. We have an open immersion $\sX\hookrightarrow \Curves_{4}^{\lci}\cap \Curves_{4,1}$.
Moreover, $\sX$ admits a projective good moduli space isomorphic to $\fX^{\rm c}$.
\end{thm}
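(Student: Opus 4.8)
The strategy is to bootstrap from the general statement in Theorem~\ref{thm:sX_g-gms}, which for $g=4$ already produces a good moduli space $\phi_4^{\circ} \colon \sX_4^{\circ} \to \fX_4^{\rm c, \circ}$ on the \emph{open} substack $\sX_4^{\circ}$. Thus the essential content of Theorem~\ref{thm:sX-gms} is the equality $\sX_4^{\circ} = \sX$ (equivalently $\sX^{{\rm c},\circ} = \sX^{\rm c}$, equivalently $V_{4,1} = \Chow_{4,1}^{\rm ss}$), plus the promotion of "good moduli space" to "\emph{projective} good moduli space". First I would recall from Definition-Proposition~\ref{dp:Xgcirc} that $\sX^{{\rm c},\circ}$ is the largest saturated open substack of $\sX^{\rm c}$ lying inside $[U_{4,1}/\PGL(4)]$, where $U_{4,1} \subset \Chow_{4,1}^{\rm ss}$ is the open subscheme from Proposition~\ref{prop:hilb-chow}. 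Since a saturated open substack is the preimage under $\phi^{\rm c}$ of an open set in $\fX^{\rm c}$, and since the complement of $\sX^{{\rm c},\circ}$ is $(\phi^{\rm c})^{-1}$ of the closed set $\phi^{\rm c}([(\Chow_{4,1}^{\rm ss}\setminus U_{4,1})/\PGL(4)])$, it suffices to show $U_{4,1}$ is all of $\Chow_{4,1}^{\rm ss}$ up to the set of non-closed points; more precisely, it suffices to show that every Chow \emph{polystable} $1$-cycle in $\Chow_{4,1}^{\rm ss}$ lies in $U_{4,1}$. Here is where the explicit genus-four classification of Theorem~\ref{thm:CMJL-Chow-GIT} is the crucial input that is unavailable in general genus: the strictly Chow polystable locus $\fX^{\rm c}\setminus \fX^{\rm s}$ consists precisely of the two isolated points $[C_{2A_5}]$, $[C_D]$ and the rational curve $\Gamma$ (with its distinguished ribbon point), while every Chow stable cycle is a curve $C \in |\cO_Q(3)|$ with $Q$ of rank $\ge 3$ having at worst $A_4$ singularities away from the cone vertex and $A_{\le 2}$ at the vertex.

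The plan for verifying $U_{4,1} = \Chow_{4,1}^{\rm ss}$ is then a case-by-case check against the conditions defining $\tU_{g,1}$ in Proposition~\ref{prop:hilb-chow}(3): a curve belongs to $\tU_{g,1}$ iff it is a canonical ribbon, or else it is reduced with $[C]\in \Curves_g^{\lci+}$, $\omega_C$ very ample inducing the canonical embedding, and the associated cycle Chow semistable. For the Chow stable cycles, I would argue that a $(2,3)$-complete intersection curve $C$ on a rank $\ge 3$ quadric $Q$ with only $A_{\le 4}$ singularities is lci (it is a divisor on a normal quadric surface), has finite singular locus, and is non-degenerate with very ample canonical bundle $\omega_C \cong \cO_C(1)$ (by adjunction on $\bP^3$, $\omega_C \cong \cO_C(K_{\bP^3}+Q+|\cO_Q(3)|)|_C \cong \cO_C(1)$). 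For the strictly semistable polystable points: $[C_{2A_5}]$ and $[C_D]$ are reduced $(2,3)$-complete intersection curves with $D_4$ and $A_5$ singularities — still lci with finite singular locus, and one checks $\omega_C\cong\cO_C(1)$ is very ample as before (any reduced non-degenerate connected curve with $\omega_C$ of degree $6$ computing $h^0=4$ and embedding into $\bP^3$ qualifies). For the generic point of $\Gamma$ the curves $C_{A,B}$ lie on the quadric cone $\bP(1,1,2)$ with an $A_3$ at the vertex and an $A_5$ at a smooth point — again lci (Cartier divisor on the smooth locus of $\bP(1,1,2)$, and the cone point singularity of $C$ is lci since it's a hypersurface in a smooth surface chart), finite singular locus, and $\omega_C$ very ample by the same adjunction argument on $\bP(1,1,2)$ as used in Proposition~\ref{prop:hyp-canonical-map}. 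Finally, the ribbon point of $\Gamma$ corresponds to the canonical genus-four ribbon, which is explicitly allowed in $\tU_{g,1}$. Since every point of $\Chow_{4,1}^{\rm ss}$ degenerates to a polystable point, and $U_{4,1}$ is saturated-open (Proposition~\ref{prop:hilb-chow}(2)–(4)) containing all polystable points, we conclude $U_{4,1} = \Chow_{4,1}^{\rm ss}$ and hence $\sX_4^{\circ} = \sX$. The smoothness, irreducibility, finite type, affine diagonal, and density of $\cM_4$ then follow directly from Theorem~\ref{thm:sX_g-smooth} together with $\sX = \sX^{\circ}$ being smooth by Theorem~\ref{thm:hilb-chow}; the open immersion $\sX \hookrightarrow \Curves_4^{\lci}\cap\Curves_{4,1}$ follows from Proposition~\ref{prop:hilb-chow}(3) applied to all non-ribbon curves together with the lci-ness of ribbons from Proposition~\ref{prop:hyp-canonical-map} (hyperelliptic curves, hence ribbons as limits, are lci) — or directly from the observation that every curve in $\sX$ has finite lci singular locus.

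For projectivity of the good moduli space, I would note that $\fX_4^{{\rm c},\circ} = V_{4,1}\sslash \PGL(4) = \Chow_{4,1}^{\rm ss}\sslash\PGL(4) = \fX^{\rm c}$ once $V_{4,1} = \Chow_{4,1}^{\rm ss}$, and $\fX^{\rm c}$ is projective because $\Chow_{4,1}$ is projective (it is a subscheme of a Chow scheme) carrying the ample Chow polarization $L_\infty$, so the GIT quotient $\Chow_{4,1}^{\rm ss}\sslash\PGL(4)$ is projective by standard GIT. Chaining the two good moduli space morphisms $\Phi^{\circ}\colon \sX \to \sX^{\rm c}$ and $\phi^{\rm c}\colon \sX^{\rm c}\to \fX^{\rm c}$ (composition of good moduli space morphisms is a good moduli space morphism) gives $\phi\colon \sX\to\fX^{\rm c}$ as the required projective good moduli space. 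The main obstacle I anticipate is not any single hard computation but rather the careful verification, for the genus-four strictly Chow semistable (non-polystable) curves appearing in Theorem~\ref{thm:CMJL-Chow-GIT}(1)(a),(2)(a), that one does not need them to lie in $\tU_{4,1}$ directly — it is enough that each such point \emph{degenerates into} $U_{4,1}$, so the argument must route everything through the polystable representatives and invoke saturatedness, rather than attempting to check the $\tU_{g,1}$ conditions on, say, every reducible degenerate curve with an $A_5$ singularity.
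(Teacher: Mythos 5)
Your proof is correct and takes essentially the same route as the paper: reduce the whole theorem to the identity $\sX^{\circ}=\sX$ (equivalently $U_{4,1}=\Chow_{4,1}^{\rm ss}$), establish that identity from the genus-four classification of Chow semistable cycles in Theorem~\ref{thm:CMJL-Chow-GIT}, and then quote Theorems~\ref{thm:sX_g-smooth}, \ref{thm:hilb-chow}, Definition-Proposition~\ref{dp:Xgcirc}, and Theorem~\ref{thm:sX_g-gms} for the remaining assertions. That is exactly the content of Lemma~\ref{lem:Xcirc=X} and the paper's proof.

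Two small remarks. First, your detour through Chow \emph{polystable} cycles is valid (an invariant non-empty closed subset of the semistable locus always contains a closed orbit), but it is unnecessary here and you end up having to do the same case check anyway: Theorem~\ref{thm:CMJL-Chow-GIT} already says that \emph{every} Chow semistable cycle, polystable or not, is either the cycle of a reduced $(2,3)$-complete intersection in $\bP^3$ or a doubled rational normal curve, and any reduced $(2,3)$-complete intersection satisfies the criteria of Proposition~\ref{prop:hilb-chow}(3) verbatim (lci with finite singular locus as a reduced complete intersection curve; $\omega_C\cong\cO_C(1)$ by adjunction, very ample and inducing the embedding). So the worry in your last paragraph about "reducible degenerate curves with an $A_5$ singularity" dissolves; the singularity type is irrelevant as long as the curve is a reduced complete intersection. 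Second, your parenthetical justification "(hyperelliptic curves, hence ribbons as limits, are lci)" is not a valid inference: lci-ness is an open, not closed, condition on fibers, so it does not pass to limits. Ribbons are lci, but for the direct reason that locally near any point they are hypersurfaces in a smooth surface (the local ring is $\bk[[x,t]]/(t^2)$), not because they are limits of hyperelliptic curves. The cleanest citation is simply Definition-Proposition~\ref{dp:Xgcirc}(1).
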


\begin{lem}\label{lem:Xcirc=X}
We have $\sX^{\circ} = \sX$ and $\sX^{{\rm c}, \circ} = \sX^{\rm c}$. 
\end{lem}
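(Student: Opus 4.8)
The plan is to show that the open substack $\sX^{\circ}\hookrightarrow\sX$ from Definition-Proposition \ref{dp:Xgcirc} is actually everything, i.e. that \emph{every} curve $C\in\sX(\bk)$ satisfies the three conditions defining $\sX^{\circ}$. Conditions (1) and (2) hold for every curve in $\sX$ by construction (curves in $\sX$ are smoothable with ample basepoint-free $\omega_C$, and their canonical cycle is Chow semistable), so the only content is condition (3): the Chow polystable degeneration of $\varphi_*[C]$ must be either a doubled rational normal curve or the reduced canonical cycle of a curve $C_0\in\Curves_4^{\lci+}$ with $\omega_{C_0}$ very ample. Equivalently, using the notation of Definition-Proposition \ref{dp:Xgcirc}, I must show that every Chow semistable $1$-cycle in $\Chow_{4,1}^{\rm ss}$ already lies in $V_{4,1}$, which amounts to showing $U_{4,1}=\Chow_{4,1}^{\rm ss}$ (equivalently, that the locus $\Chow_{4,1}^{\rm ss}\setminus U_{4,1}$, whose image under $\phi^{\rm c}$ we removed, is empty).

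The key input is the explicit classification of Chow (semi/poly)stable cycles in genus four from Theorem \ref{thm:CMJL-Chow-GIT} (after \cite{CMJL12}): every Chow polystable point of $\fX^{\rm c}$ is either a smooth canonical curve, one of $C_{2A_5}$, $C_D$, a curve $C_{A,B}$ on a quadric cone, or the canonical ribbon. So first I would run through these Chow polystable representatives and check each is in $U_{4,1}$, i.e.\ each arises as the canonical cycle of a curve satisfying the relevant conditions. A smooth canonical curve obviously qualifies. The ribbon lies in $H_{\rm rib}$, hence in $\tU_{4,1}$ and so the doubled twisted cubic lies in $U_{4,1}$ by Proposition \ref{prop:hilb-chow}(1),(2). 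For $C_{2A_5}$, $C_D$, and $C_{A,B}$ (with $4A\neq B^2$), these are reduced $(2,3)$-complete intersection curves in $\bP^3$; since a $(2,3)$-complete intersection curve of arithmetic genus four is canonically embedded (adjunction: $\omega_C\cong\cO_C(1)$), and such curves have lci singularities (complete intersections) with finite singular locus, they lie in $\Curves_4^{\lci+}$ with $\omega_C$ very ample. Hence their canonical cycles lie in $\tU_{4,1}$ by Proposition \ref{prop:hilb-chow}(3), and so in $U_{4,1}$. Since $U_{4,1}$ contains every Chow polystable point and is $\PGL(4)$-saturated-open over the good moduli space... wait — more precisely, since $\phi^{\rm c}(\Chow_{4,1}^{\rm ss}\setminus U_{4,1})$ is closed in $\fX^{\rm c}$ and every closed point of $\fX^{\rm c}$ (which is Chow polystable) lies in $\phi^{\rm c}(U_{4,1})$, and every point of $\fX^{\rm c}$ specializes to a closed point, we conclude $\phi^{\rm c}(\Chow_{4,1}^{\rm ss}\setminus U_{4,1})=\emptyset$, hence $\Chow_{4,1}^{\rm ss}\setminus U_{4,1}=\emptyset$, i.e.\ $\Chow_{4,1}^{\rm ss}=U_{4,1}$ and $\sX^{{\rm c},\circ}=\sX^{\rm c}$, whence $\sX^{\circ}=\sX$.

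The remaining subtlety, and the step I expect to require the most care, is verifying that \emph{every} $1$-cycle in $\Chow_{4,1}^{\rm ss}$ — not just the polystable ones — is the canonical cycle of a curve in $\sX$, so that $V_{4,1}=\Chow_{4,1}^{\rm ss}$ as asserted and the Hilbert--Chow morphism $\sX\to\sX^{\rm c}$ is genuinely surjective. This is where I would lean on Theorem \ref{thm:CMJL-Chow-GIT}: every reduced Chow semistable cycle is $C\in|\cO_Q(3)|$ for a quadric $Q$ of rank $\geq 2$, determined uniquely by $C$ when $C$ is reduced, and one checks that in all the listed cases $(Q,C)$ a complete intersection curve on $Q$ (using the genus computation and the normality arguments as in the proof of Lemma \ref{lem:Chow-normal}), so $C$ is a $(2,3)$-complete intersection, hence canonically embedded, lci, with finite singular locus, and smoothable; the non-reduced cycles are doubled twisted cubics, realized by canonical ribbons (Lemma \ref{lem:ribbon-cycle}). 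This shows $\Chow_{4,1}^{\rm ss}\subseteq\mathrm{im}(\tilde\Phi)$ set-theoretically; combined with the scheme-theoretic closure definition of $\sX$ and seminormality, one gets $V_{4,1}=\Chow_{4,1}^{\rm ss}$. With $\sX^{\circ}=\sX$ established, Theorem \ref{thm:sX-gms}'s remaining claims (smoothness, affine diagonal, projective good moduli space $\cong\fX^{\rm c}$, open immersion into $\Curves_4^{\lci}\cap\Curves_{4,1}$) follow from Theorems \ref{thm:sX_g-smooth}, \ref{thm:sX_g-gms}, \ref{thm:hilb-chow}, and the fact that away from $\sH\cup\sR$ all curves are in $\Curves_4^{\lci+}$.
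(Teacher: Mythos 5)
Your proposal lands on the same key inputs the paper uses — the CMJL classification of Chow semistable cycles (Theorem \ref{thm:CMJL-Chow-GIT}) together with Proposition \ref{prop:hilb-chow} — and correctly reduces the lemma to showing $U_{4,1}=\Chow_{4,1}^{\rm ss}$. But your route there contains a redundant detour with a small logical gap that you then silently patch, and it is worth being explicit about it.

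In the middle of the first paragraph you argue: since $\phi^{\rm c}(\Chow_{4,1}^{\rm ss}\setminus U_{4,1})$ is closed in $\fX^{\rm c}$ and every closed point of $\fX^{\rm c}$ lies in $\phi^{\rm c}(U_{4,1})$, the former set must be empty. As stated, this does not follow: the images $\phi^{\rm c}(U_{4,1})$ and $\phi^{\rm c}(\Chow_{4,1}^{\rm ss}\setminus U_{4,1})$ can a priori share closed points, since a semistable orbit and a different orbit can map to the same GIT point. The salvage is a standard GIT fact you did not invoke: a nonempty closed $\PGL(4)$-invariant subset $Z\subset\Chow_{4,1}^{\rm ss}$ must contain a polystable point (any orbit in $Z$ has closed orbit closure in $\Chow_{4,1}^{\rm ss}$, and that closure lies in $Z$). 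Only with that additional step does checking polystable representatives suffice. Your second paragraph — verifying that \emph{every} semistable cycle, not only the polystable ones, is the canonical cycle of a reduced $(2,3)$-complete intersection or a ribbon — is exactly the argument the paper runs, and it makes the whole polystable-plus-orbit-closure maneuver unnecessary: once every semistable cycle is seen to satisfy the hypotheses of Proposition \ref{prop:hilb-chow}(3) (for the reduced ones) or to lie over $Z_{\rm rib}$ (for the nonreduced ones), you get $U_{4,1}=\Chow_{4,1}^{\rm ss}$ directly, and then $V_{4,1}=U_{4,1}$ is automatic since $V_{4,1}$ is by definition the largest saturated open substack contained in $U_{4,1}$. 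So the proposal is correct in substance and in its final form coincides with the paper's argument, but the first half should either be removed or have the orbit-closure step stated explicitly.
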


\begin{proof}
We only need to show the second equality as the first one follows by Definition-Proposition \ref{dp:Xgcirc}. Since $\sX^{{\rm c}, \circ}$ is the largest saturated open substack of $\sX^{\rm c}$ contained in $[U_{4,1}/\PGL(4)]$, it suffices to show that $U_{4,1} = \Chow_{4,1}^{\rm ss}$. By Theorem \ref{thm:CMJL-Chow-GIT} we know that every $1$-cycle in $\Chow_{g,1}^{\rm ss}$ is either a reduced  $(2,3)$-complete intersection curve in $\bP^3$ or a doubled rational normal curve. Thus it follows from Proposition \ref{prop:hilb-chow} that $U_{4,1} = \Chow_{4,1}^{\rm ss}$ and the proof is finished.
\end{proof}

\begin{proof}[Proof of Theorem \ref{thm:sX-gms}]
By Lemma \ref{lem:Xcirc=X} we can identify $\sX$ with $\sX^{\circ}$. Thus the first statement follows from 
Theorems \ref{thm:sX_g-smooth} and \ref{thm:hilb-chow}. The second statement follows from Definition-Proposition \ref{dp:Xgcirc}. The last statement follows from Theorem \ref{thm:sX_g-gms}.
\end{proof}

\subsection{The hyperelliptic flip in genus $4$}

In this subsection, we verify that $\sX$ contains $\ocM_4(\frac{5}{9}, \frac{23}{44}) $ as an open substack and we introduce 
an open substack $\sX^+$ of $\sX$ and show that it admits a proper good moduli space which replaces ribbons by hyperelliptic curves in genus $4$.  If we let $\sX^-\hookrightarrow \sX$ be a suitable open substack that provides a model for the Hassett--Keel wall crossing near $\Gamma$ (see Remark \ref{rmk:sX-andVGIT}), this will give the wall crossing $\sX^- \hookrightarrow \sX \hookleftarrow \sX^+$ that provides the flip over the locus $\Gamma$ of $C_{A,B}$ curves, illustrated in Figure \ref{fig:M59}.  

\begin{prop}\label{prop:59-ewall}
    There is an open immersion $\ocM_4(\frac{5}{9}, \frac{23}{44}) \hookrightarrow \sX$, where $\ocM_4(\frac{5}{9}, \frac{23}{44}) $ is the last VGIT quotient stack from Theorem \ref{thm:lastVGITstack}.  Consequently, there is a diagram of stacks and good moduli spaces
\[
\begin{tikzcd}
\ocM_4(\frac{5}{9}, \frac{23}{44}) \arrow[d] \arrow[r,hook]  &
\sX \arrow[d]   \\
\oM_4(\frac{5}{9}, \frac{23}{44}) \arrow[r]  & 
\fX^{\rm c} \cong \oM_4(\frac{5}{9}) 
\end{tikzcd}
\]
where the bottom arrow is a projective morphism.
\end{prop}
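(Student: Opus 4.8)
The plan is to construct a forgetful morphism of stacks $\Psi\colon\ocM_4(\tfrac59,\tfrac{23}{44})\to\sX$, to identify its image with an open substack of $\sX$ using the analysis of $\sX$ carried out in Section~\ref{sec:chow-ss-can}, and then to deduce the good moduli space diagram from the universal property of good moduli spaces.

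For the construction, recall from Definition~\ref{def:23VGIT} and Theorem~\ref{thm:lastVGITstack} that $\ocM_4(\tfrac59,\tfrac{23}{44})=[W^{\rm ss}(N_t)/\PGL(4)]$ for any $t\in(\tfrac6{11},\tfrac23)$, and that every curve it parameterizes is a reduced $(2,3)$ complete intersection curve in $\bP^3$; in particular $p_1$ maps $W^{\rm ss}(N_t)$ into $U_{(2,3)}$, over which $p_1$ is an isomorphism, so $W^{\rm ss}(N_t)$ is identified with a $\PGL(4)$-invariant open subscheme of $U_{(2,3)}$. Restricting the universal $(2,3)$ complete intersection curve over $U_{(2,3)}$ and forgetting its embedding into the trivial $\bP^3$-bundle gives a $\PGL(4)$-invariant morphism $W^{\rm ss}(N_t)\to\Curves_4^{\lci +}$ (a reduced complete intersection in $\bP^3$ is lci with finite singular locus), which descends to a morphism $\Psi'\colon\ocM_4(\tfrac59,\tfrac{23}{44})\to\Curves_4^{\lci +}$. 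For a reduced $(2,3)$ complete intersection curve $C\subset\bP^3$ one has $\omega_C\cong\cO_{\bP^3}(1)|_C$ by adjunction, so $\omega_C$ is very ample; moreover $C$ is non-degenerate (a reduced plane curve of degree $6$ has arithmetic genus $10\neq 4$), whence the restriction $H^0(\bP^3,\cO(1))\to H^0(C,\omega_C)$ is an isomorphism and the given embedding $C\hookrightarrow\bP^3$ is the canonical one. Thus $\Psi'$ factors through $\Curves_{4,1}$; each such $C$ is smoothable, being a flat limit of smooth canonical curves inside the irreducible scheme $U_{(2,3)}\subset\bP(E_3)$, and its canonical image cycle equals the $p_2$-image of the corresponding point of $W^{\rm ss}(N_t)$, which is Chow semistable by Theorem~\ref{thm:lastVGITstack}. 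Hence $\Psi'$ factors through $\sX$, giving $\Psi\colon\ocM_4(\tfrac59,\tfrac{23}{44})\to\sX$, which on $\bk$-points is $[(C\subset\bP^3)]\mapsto[C]$.

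To show $\Psi$ is an open immersion, I would compare $W^{\rm ss}(N_t)$ with the scheme $\tU_{4,1}$ of Proposition~\ref{prop:hilb-chow}. Sending a reduced $(2,3)$ complete intersection curve to its canonical Hilbert point defines a $\PGL(4)$-equivariant morphism $W^{\rm ss}(N_t)\to\boldsymbol{H}_{\chi_4}(\bP^3)$ whose image lies in $\tU_{4,1}$ by Proposition~\ref{prop:hilb-chow}(3). This morphism is a monomorphism, since for a reduced Chow semistable $(2,3)$ complete intersection curve the defining quadric is uniquely determined by Theorem~\ref{thm:CMJL-Chow-GIT} and the cubic is then determined up to scalar, so the abstract curve together with its canonical embedding recovers the point of $W^{\rm ss}(N_t)$; and it is \'etale, for instance by the tangent space computation at such a point identifying $T\bP(E_3)$ with $H^0(N_{C/\bP^3})=H^0\big(\cO_C(2)\oplus\cO_C(3)\big)$ (equivalently, $U_{(2,3)}$ is an open subscheme of $\boldsymbol{H}_{\chi_4}(\bP^3)$ by miracle flatness, the Hilbert scheme being smooth at $(2,3)$ complete intersection points). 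An \'etale monomorphism is an open immersion, so $W^{\rm ss}(N_t)$ is a $\PGL(4)$-invariant open subscheme of $\tU_{4,1}$. Since $\tV_{4,1}=\tU_{4,1}$ in genus $4$ by Lemma~\ref{lem:Xcirc=X}, and there is an open immersion $[\tV_{4,1}/\PGL(4)]\hookrightarrow\sX$ by the proof of Theorem~\ref{thm:hilb-chow}, composing shows that $\Psi$ is the open immersion $\ocM_4(\tfrac59,\tfrac{23}{44})=[W^{\rm ss}(N_t)/\PGL(4)]\hookrightarrow\sX$. I expect the crux to be exactly this identification: verifying that the VGIT semistable locus $W^{\rm ss}(N_t)$ matches an \emph{open} subscheme of $\tU_{4,1}$, i.e.\ that being a canonically embedded $(2,3)$ complete intersection curve is an open condition in $\sX$.

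Finally, the composition of $\Psi$ with the good moduli space morphism $\sX\to\fX^{\rm c}$ (Theorem~\ref{thm:sX-gms}) is a morphism to an algebraic space, so by the universal property of good moduli spaces (Theorem~\ref{thm:gms-universal}) it factors uniquely through the good moduli space morphism $\ocM_4(\tfrac59,\tfrac{23}{44})\to\oM_4(\tfrac59,\tfrac{23}{44})$ (which exists by Theorem~\ref{thm:cmjl-vgit}); this supplies the bottom horizontal arrow $\oM_4(\tfrac59,\tfrac{23}{44})\to\fX^{\rm c}\cong\oM_4(\tfrac59)$ and makes the square commute. Both $\oM_4(\tfrac59,\tfrac{23}{44})$ (Theorem~\ref{thm:cmjl-vgit}) and $\fX^{\rm c}$ (Theorem~\ref{thm:sX-gms}) are projective over $\bk$, hence this morphism is projective, which completes the argument.
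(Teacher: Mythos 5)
Your argument is correct and reaches the conclusion via a genuinely different route than the paper's. The paper first identifies $W^{\rm ss}(N_t)$ with an open subscheme of the \emph{Chow} scheme $\Chow^{\rm ss}_{4,1}$: by \cite[Cor.\ I.6.6.1]{Kol96} the Hilbert--Chow morphism is an isomorphism over reduced cycles, so $W^{\rm ss}(N_t)\to\Chow^{\rm ss}_{4,1}$ is quasi-finite and birational, and then one applies Zariski's Main Theorem using the normality of $\Chow^{\rm ss}_{4,1}$ (Lemma~\ref{lem:Chow-normal}). This gives an open immersion of stacks $\sX_t\hookrightarrow\sX^{\rm c}$ missing the doubled rational normal curve, which lifts to $\sX$ via Theorem~\ref{thm:hilb-chow}(3). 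You instead identify $W^{\rm ss}(N_t)$ with an open subscheme of the \emph{Hilbert} scheme $\tU_{4,1}$, arguing $U_{(2,3)}\to\boldsymbol{H}_{\chi_4}(\bP^3)$ is an \'etale monomorphism (standard deformation theory of complete intersections: $H^1(N_{C/\bP^3})=0$, dimension count matches), so $W^{\rm ss}(N_t)\hookrightarrow\tU_{4,1}=\tV_{4,1}$ is an open immersion, and then compose with the open immersion $[\tV_{4,1}/\PGL(4)]\hookrightarrow\sX$ from Theorem~\ref{thm:hilb-chow}. Your route has the mild advantage of avoiding Koll\'ar's theorem, normality, and ZMT, trading them for a (well-known but not fully spelled out here) smoothness/openness fact about Hilbert schemes of complete intersections; the paper's route leans on machinery already set up in Section~2 and Lemma~\ref{lem:Chow-normal}, so stays closer to the Chow-theoretic framework used throughout. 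One small suggestion: when you invoke ``\'etale by the tangent space computation,'' it would be worth actually saying that the tangent map identifies the projective-bundle deformation $(\delta q,\delta f)$ with the normal section it induces, and that it is an isomorphism since $I_C/I_C^2\cong\cO_C(-2)\oplus\cO_C(-3)$ for a $(2,3)$-CI; likewise ``monomorphism'' is better justified by noting $h^0(\cI_C(2))=1$ so the quadric (and then the cubic modulo the quadric) is recovered functorially in families, rather than only appealing to the pointwise uniqueness from Theorem~\ref{thm:CMJL-Chow-GIT}. The good-moduli-space part of your argument agrees with the paper's.
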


\begin{proof}
We use notation as in Section \ref{sec:chow-vgit}.  Denote by $\sX_t = \ocM_4(\frac{5}{9}, \frac{23}{44})$ the VGIT quotient stack from \cite{CMJL12, CMJL14} for $t \in (\frac{6}{11}, \frac{2}{3})$ (see also Theorem \ref{thm:cmjl-vgit}).  By Definition \ref{def:23VGIT}, $\sX_t = [W^{\rm{ss}}(N_t)/\PGL(4)]$, where $W^{\rm{ss}}(N(t)) \subset W$.  By Theorems \ref{thm:CMJL-Chow-GIT} and \ref{thm:lastVGITstack}, every point of $W^{\rm{ss}}(N(t))$ represents a reduced $(2,3)$ complete intersection curve lying on a uniquely determined quadric.  By \cite[Corollary I.6.6.1]{Kol96}, the Hilbert--Chow morphism is an isomorphism along this locus, and thus the induced morphism  $W^{\rm{ss}}(N(t)) \hookrightarrow W \to \Chow_{4,1}$ is birational and quasi-finite.  Furthermore, the locus $\Chow^{\rm{ss}}_{4,1} \subset \Chow_{4,1}$ is open, and by Theorem \ref{thm:lastVGITstack}, the image of $W^{\rm{ss}}(N(t)) \to \Chow_{4,1}$ is contained in $\Chow^{\rm{ss}}_{4,1}$.

Therefore, by Zariski's Main Theorem and Lemma \ref{lem:Chow-normal}, $W^{\rm{ss}}(N(t))\to \Chow^{\rm{ss}}_{4,1}$ is an open immersion.  
This yields an open immersion of stacks $\sX_t = [W^{\rm{ss}}(N(t))/\PGL(4)] \to \sX^{\rm{c}} =[ \Chow^{\rm{ss}}_{4,1}/\PGL(4)]$ such that the image does not contain the point representing a doubled rational normal curve.  Therefore, by Theorem \ref{thm:hilb-chow}(3) and Lemma \ref{lem:Xcirc=X}, there is an open immersion $\sX_t \to \sX$, as desired. 

The last sentence follows from Theorem \ref{thm:sX-gms} and universality of the good moduli space morphism $\ocM_4(\frac{5}{9}, \frac{23}{44}) \to \oM_4(\frac{5}{9}, \frac{23}{44})$ among maps to algebraic spaces (see Theorem \ref{thm:gms-universal}).
\end{proof}

\begin{defn}\label{def:sUsV}
Recall that $\Phi: \sX\to \sX^{\rm c}$ is the Hilbert--Chow morphism, and 
\[
\phi=\phi^{\rm c}\circ\Phi: \sX\to \fX^{\rm c} = \Chow_{4,1}^{\rm ss}\sslash \PGL(4)
\] is a good moduli space morphism by Theorem \ref{thm:sX-gms}. 
We denote by $\sU := \phi^{-1}(\fX^{\rm c} \setminus \{[C_{2A_5}], [C_D]\})$ and $\sV:=\phi^{-1} (\fX^{\rm c}\setminus \Gamma)$ two open substacks of $\sX$. Then we have $\sX = \sU \cup \sV$. Moreover, $\sU\cap \sV = \phi^{-1}(\fX^{\rm s}) =\Phi^{-1}(\sX^{\rm s})$ is isomorphic to the Chow stable stack $\sX^{\rm s}$ by Theorem \ref{thm:hilb-chow} and Lemma \ref{lem:Xcirc=X}. By abuse of notation, we simply write $\sU\cap \sV = \sX^{\rm s}$.
\end{defn}

\begin{defn}\label{def:sX+}
Let $\sX^+$ be the substack of $\sX$ parametrizing curves $C$ satisfying either of the following conditions.
\begin{enumerate}
    \item $C$ has $A_{\leq 4}$-singularities;

    \item $C$ belongs to $\sV$, i.e. there are no special degenerations of the $1$-cycle $\Phi(C)$  to  $[C_{A,B}]$ for any $(A,B) \in \bk^2\setminus \{(0,0)\}$.
\end{enumerate}
By deformation theory of $A_k$-singularities, we know that condition (1) defines an open substack of $\sX$. By Definition \ref{def:sUsV} we know that $\sV$ is an open substack $\sX$. Thus $\sX^+$ is an open substack of $\sX$. We denote by $\sU^+:=\sU\cap \sX^+$. 
\end{defn}

The main result of this subsection is the following.

\begin{thm}\label{thm:X+gms}
Notation as above. The stack $\sX^+$ admits a good moduli space $\fX^+$ as a proper algebraic space.
\end{thm}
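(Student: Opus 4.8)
The strategy is to apply Theorem \ref{thm:AFS-gms} (the criterion of \cite{AFS17}) to $\sX^+$, by verifying its two conditions, and then to identify the resulting good moduli space as proper. First I would cover $\sX^+$ by the two open substacks $\sU^+ = \sU \cap \sX^+$ and $\sV$ (note $\sV \subseteq \sX^+$ by Definition \ref{def:sX+}(2), and $\sX^+ = \sU^+ \cup \sV$ since $\sX = \sU \cup \sV$). The plan is to show each of these admits a good moduli space and that they glue via Theorem \ref{thm:gms-glue}. For $\sV = \phi^{-1}(\fX^{\rm c} \setminus \Gamma)$: since $\phi\colon \sX \to \fX^{\rm c}$ is a good moduli space morphism by Theorem \ref{thm:sX-gms} and $\sV$ is a saturated open substack (preimage of the open set $\fX^{\rm c}\setminus\Gamma$), the restriction $\phi|_{\sV}\colon \sV \to \fX^{\rm c}\setminus\Gamma$ is already a good moduli space morphism onto an open subscheme. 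So the real work is with $\sU^+$, the open substack which, by Proposition \ref{prop:C2A5-replacement} and Proposition \ref{prop:D4replacement} (referenced in the introduction), should be a local model of the Hassett--Keel program near $[C_{2A_5}]$ and $[C_D]$; in particular $\sU^+$ should be identified with an open substack of $\ocM_4^{\K}$ or a comparable K-stable/KSBA stack near those points, which admits a good moduli space by Theorem \ref{thm:bpCYwallcrossing}.

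Second, once both $\sU^+$ and $\sV$ have good moduli spaces, to invoke Theorem \ref{thm:gms-glue} I must check that $\sU^+ \cap \sV$ is saturated in each of $\sU^+$ and in $\sV$. Here I would use that $\sU^+ \cap \sV = \sU^+ \cap \sX^{\rm s}$ (since $\sU \cap \sV = \sX^{\rm s}$ by Definition \ref{def:sUsV}), and that this locus consists of curves with $A_{\le 4}$ singularities lying on normal quadrics with $(Q,\frac{2}{3}C)$ klt (Remark \ref{rmk:GIT-stable-properties}); such points are stable (closed with finite, hence trivial in the K-stability sense, automorphism interaction) both in the Chow picture and in the K-moduli picture, so the locus is cut out as the preimage of an open subscheme of the respective good moduli spaces, hence saturated. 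Concretely, $\sU^+ \cap \sV \cong \sX^{\rm s} \cap \sX^+$ is the preimage in $\sV$ of $\fX^{\rm s} \cap (\fX^{\rm c}\setminus\Gamma)$, which is saturated in $\sV$; and it is the preimage in $\sU^+$ of the (open) stable locus of the good moduli space of $\sU^+$, hence saturated there. Then Theorem \ref{thm:gms-glue} produces a good moduli space $\phi^+\colon \sX^+ \to \fX^+$ with $\fX^+$ an algebraic space.

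Third, for properness of $\fX^+$, the cleanest route is Theorem \ref{thm:ahlh}: since $\sX^+$ is an open substack of $\sX$, which has affine diagonal and is of finite type (Theorem \ref{thm:sX-gms}), $\sX^+$ also has affine diagonal and is of finite type; the existence of a (separated) good moduli space already follows from the gluing, but properness requires the existence part of the valuative criterion for $\sX^+$. For this I would take a DVR $R$ with fraction field $K$ and a map $\Spec K \to \sX^+$, extend it to $\Spec R \to \sX$ using that $\fX^{\rm c}$ is proper (so after a finite base change we get a limiting curve $C_0 \in \sX$), and then argue that one can choose the limit inside $\sX^+$: if the generic curve has $A_{\le 4}$ singularities one can semistably replace to keep this, using the local bpCY wall-crossing / K-stability machinery near $[C_{2A_5}]$ and $[C_D]$ (Theorem \ref{thm:bpCYwallcrossing}, Proposition \ref{prop:klt-is-stable}) which is valuative-complete there; and if the limit lands near $\Gamma$ it automatically lies in $\sV \subseteq \sX^+$. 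Alternatively, and perhaps more simply, one notes $\fX^+ \to \fX^{\rm c}$ is the birational morphism obtained by gluing $\mathrm{id}$ on $\fX^{\rm c}\setminus\Gamma$ with the proper morphism $\oM_4^{\K}|_{\text{near }[C_{2A_5}],[C_D]} \to \fX^{\rm c}|_{\text{near }[C_{2A_5}],[C_D]}$ from Theorem \ref{thm:bpCYwallcrossing}; since $\fX^{\rm c}$ is proper and this morphism is proper, $\fX^+$ is proper. \textbf{The main obstacle} I anticipate is the careful verification that $\sU^+$ (and hence the gluing locus $\sU^+\cap\sV$) really does admit a good moduli space and that this agrees with the K-moduli/bpCY picture — i.e. pinning down precisely which curves lie in $\sU^+$ and matching them to KSBA- or K-(semi)stable pairs via Propositions \ref{prop:C2A5-replacement} and \ref{prop:D4replacement}; the topological/gluing formalism is then routine.
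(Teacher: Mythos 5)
Your decomposition $\sX^+ = \sU^+ \cup \sV$ and the plan to glue via Theorem \ref{thm:gms-glue} match the paper, and your treatment of $\sV$ (saturated preimage of $\fX^{\rm c}\setminus\Gamma$ under $\phi$) is correct. But there is a significant misidentification that derails the rest: you claim $\sU^+$ is the local model near $[C_{2A_5}]$ and $[C_D]$ and should therefore be matched to an open substack of $\ocM_4^{\K}$ or $\ocM_4^{\KSBA}$. This is backwards. By Definition \ref{def:sUsV}, $\sU$ lies over the \emph{complement} of $\{[C_{2A_5}],[C_D]\}$, so $\sU^+ = \sU \cap \sX^+$ omits exactly those two points and instead is the piece that lies over $\Gamma$ — the locus of hyperelliptic curves and their ribbon/$A_3$-at-the-vertex degenerations, which is precisely the region where the bpCY/K-moduli machinery does \emph{not} apply (as the paper stresses in the introduction: the K-moduli walls for $\alpha < \frac{5}{9}$ strictly refine the Hassett--Keel walls, and the KSBA side compactifies a blowup of $M_4$ along the hyperelliptic locus). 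The bpCY picture is what governs $\sV$ (more precisely its refinements $\sV_1^\pm, \sV_2^+$ used in later sections), but $\sV$ already has a good moduli space for free as a saturated open of $\sX$, so no bpCY input is needed in this theorem. The real work is showing $\sU^+$ admits a good moduli space, and the paper does this by proving $\sU^+$ is a \emph{saturated open substack of} $\ocM_4(\frac{2}{3}-\epsilon)$ — see Propositions \ref{prop:U+open}, \ref{prop:U+closed}, \ref{prop:U+closed-preserve} and Corollary \ref{cor:U+gms} — which requires a detailed case analysis of closed orbits (GIT polystable hyperelliptic curves vs.\ curves on quadric cones with controlled singularities at the vertex) and a delicate comparison of closures in $\sU^+$ vs.\ $\ocM_4(\frac{2}{3}-\epsilon)$ using the $\alpha_{\frac{2}{3}}$-atom decomposition of \cite{AFSvdW}. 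Your proposal doesn't touch this.

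The same confusion undercuts your properness argument. You propose to view $\fX^+ \to \fX^{\rm c}$ as "identity on $\fX^{\rm c}\setminus\Gamma$ glued with a proper map near $[C_{2A_5}],[C_D]$," but in fact the map is an isomorphism over $\fX^{\rm c}\setminus\Gamma$ precisely because $\sV$ is saturated there, and the nontrivial fibers are over $\Gamma$: the fiber over the ribbon is $\fH_4^+$ (the GIT quotient of hyperelliptic curves), and the fiber over $[C_{A,B}]$ with $4A \ne B^2$ is a weighted projective space $(\bA^5\setminus\{0\})\sslash\bG_m$ (Propositions \ref{prop:A3-cone}, \ref{prop:A4-cone}). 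The paper establishes properness by first proving separatedness of $\fX^+$ via an S-completeness argument (which your proposal omits entirely) and then applying Lemma \ref{lem:proper-algspace}, which requires verifying that every fiber of $\fX^+ \to \fX^{\rm c}$ over a $\bk$-point is proper and nonempty — this is where the explicit fiber identifications above are used. Your valuative-criterion sketch similarly reaches for "the local bpCY wall-crossing machinery" to produce limits, which is the wrong tool over $\Gamma$.
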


In order to prove Theorem \ref{thm:X+gms}, we first focus on the open substack $\sU^{+}$ of $\sX^+$.

\begin{prop}\label{prop:U+open}
The stack $\sU^{+}$ precisely parameterizes curves $C$ in $\sU$ with at worst $A_4$-singularities. Moreover, $\sU^+$ is an open substack of $\ocM_4(\frac{2}{3}-\epsilon)$.
\end{prop}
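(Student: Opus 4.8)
\textbf{Proof plan for Proposition \ref{prop:U+open}.}

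The plan is to prove the two claims separately: first characterize the curves in $\sU^+$, then show $\sU^+$ sits inside $\ocM_4(\frac{2}{3}-\epsilon)$. For the characterization, recall $\sU^+ = \sU \cap \sX^+$, and $\sX^+$ was defined as the union of the open substack of curves with $A_{\le 4}$ singularities and the open substack $\sV$. So I would show that on $\sU$, condition (2) of Definition \ref{def:sX+} (membership in $\sV$) already implies condition (1) (at worst $A_4$ singularities). A curve $C$ in $\sU$ has Chow semistable cycle but is not one of $[C_{2A_5}]$ or $[C_D]$; by Theorem \ref{thm:CMJL-Chow-GIT}, such $C$ (if reduced) lies on a quadric $Q$ with $\rank Q \ge 2$. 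The first step is: if $C$ lies on a smooth quadric ($\rank Q = 4$), then since $C \ne C_{2A_5}, C_D$ and $C$ has Chow semistable cycle, the only way $C$ can fail to have $A_{\le 4}$ singularities is via case (1)(a) (a $D_4$ or $A_5$) or case (1)(b) (an $A_k$, $k \ge 6$, with no component of degree $\le 2$); in either case one must check that the cycle $\Phi(C)$ admits a special degeneration to $[C_{A,B}]$ for some $(A,B)$ — but these curves degenerate instead toward $[C_{2A_5}]$ or $[C_D]$ (by the footnote in Theorem \ref{thm:lastVGITwall} and the analysis in \cite{CMJL12}), hence lie outside $\sV$ unless they are excluded from $\sU$ in the first place. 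The case $\rank Q = 3$ is the one to treat carefully: by Proposition \ref{prop:A3-cone} a degree-$6$ curve on $\bP(1,1,2)$ with an $A_3$ at the vertex degenerates $\bG_m$-equivariantly to $C_{A,B}$, so such a curve is \emph{not} in $\sV$; similarly Proposition \ref{prop:A4-cone} handles the $A_4$-at-the-vertex case, which degenerates to a hyperelliptic curve, hence (one checks, via Chow (semi)stability of that limit and Theorem \ref{thm:CMJL-Chow-GIT}) is also excluded. Conversely, if $C$ lies on a quadric cone with at worst a cusp ($A_{\le 2}$) at the vertex and at worst $A_4$ singularities elsewhere, then $C$ is Chow stable by Theorem \ref{thm:CMJL-Chow-GIT}, so $C \in \sX^{\rm s} = \sU \cap \sV \subset \sU^+$. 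And if $C$ lies on a smooth quadric with only $A_{\le 4}$ singularities, it is likewise Chow stable and in $\sU^+$. Finally $\rank Q = 2$ (elliptic triboroughs) cannot occur: such curves have non-lci singularities or worse — actually an elliptic tribrough has $D_4$-type behavior, so it is excluded from $\sU^+$ by condition (1); and the non-reduced case (a ribbon) is excluded since a ribbon lies in $\sV$'s complement. This gives: $\sU^+$ parameterizes exactly the curves in $\sU$ with at worst $A_4$ singularities, proving the first sentence.

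For the second claim, that $\sU^+$ is an open substack of $\ocM_4(\frac{2}{3}-\epsilon)$: by Definition \ref{def:HKsing}, $\ocM_4(\frac{2}{3}-\epsilon)$ is the open substack of $\Curves_4^{\lci+}$ consisting of reduced Gorenstein curves with $\omega_C$ ample, at worst $A_{\le 4}$ singularities, and not containing $A_1,A_3,A_4$-attached elliptic tails, $A_1/A_1, A_1/A_4, A_4/A_4$-attached elliptic chains, or $A_1$-attached Weierstrass chains. First I would note that every curve in $\sU^+$ has $\omega_C$ ample (built into $\sX$) and lies in $\Curves_4^{\lci}$ by Theorem \ref{thm:sX-gms}; since the singular locus of any curve in $\sU^+$ is a finite set of $A_k$ points ($k\le 4$), in fact $\sU^+ \subset \Curves_4^{\lci+}$. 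Then the content is a \emph{genus count / classification}: a genus-$4$ reduced Gorenstein curve with only $A_{\le 4}$ singularities that contains one of the forbidden subcurves (elliptic tail, elliptic chain, or Weierstrass chain with the specified attaching singularities) cannot simultaneously lie on a quadric surface of rank $\ge 2$ with Chow semistable canonical cycle in the way Theorem \ref{thm:CMJL-Chow-GIT} demands; equivalently, such decomposable curves either fail Chow semistability (their canonical image is too degenerate) or coincide with one of the excluded loci $[C_{2A_5}], [C_D], \Gamma$. Concretely: an $A_1$-attached elliptic tail $E$ of genus $1$ would make $\omega_C$ not ample on $E$ (or the canonical map contract $E$), contradicting Chow semistability; an $A_1/A_1$ or $A_4/A_4$-attached elliptic bridge gives a curve whose cycle is not Chow semistable by the explicit GIT analysis, and a Weierstrass chain attached at an $A_1$ gives genus-$2$-tail behavior which, at the level of cycles, degenerates outside $\sU$. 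Openness is then automatic: $\sU^+$ is already known to be an open substack of $\sX$, and once we know $\sU^+$ is contained in the open substack $\ocM_4(\frac{2}{3}-\epsilon) \subset \Curves_4^{\lci+}$, the inclusion $\sU^+ \hookrightarrow \ocM_4(\frac{2}{3}-\epsilon)$ is an open immersion because both are locally closed (indeed open) in $\Curves_4^{\lci+}$ and $\sU^+ \subset \sX$ is open.

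I expect the main obstacle to be the careful case analysis ruling out the "bad" singularity types on both quadrics of rank $3$ and rank $4$ — i.e. verifying precisely that a curve in $\sU$ with a $D_4$, an $A_{\ge 5}$, or an $A_3/A_4$ at a cone vertex always admits the special degeneration to some $[C_{A,B}]$ (hence lies in $\sV^c$) or else is one of the two excluded points. This is where Propositions \ref{prop:A3-cone} and \ref{prop:A4-cone} do the heavy lifting for the cone cases, and where the footnote to Theorem \ref{thm:lastVGITwall} together with the description in Theorem \ref{thm:CMJL-Chow-GIT}(1)–(2) handles the smooth-quadric cases; assembling these into a clean "iff" is the delicate part. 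The second claim is then comparatively routine given Definition \ref{def:HKsing} and the classification of Chow semistable cycles, amounting to checking that none of the $\alpha = \frac{2}{3}-\epsilon$ forbidden configurations survives the Chow semistability constraint.
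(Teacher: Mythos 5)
Your high-level plan matches the paper's (characterize the $\bk$-points of $\sU^+$, then show they land in $\ocM_4(\frac{2}{3}-\epsilon)$ and invoke openness in $\Curves_4^{\lci+}$), but the execution has real gaps in both halves.

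For the first claim, you take a long and unnecessary detour. By Definition \ref{def:sX+}, $\sU^+ = \sU \cap \sX^+$ decomposes as $(\sU \cap \{A_{\le 4}\}) \cup (\sU \cap \sV)$, and it was already recorded in Definition \ref{def:sUsV} that $\sU \cap \sV = \sX^{\rm s}$, the Chow stable locus. Theorem \ref{thm:CMJL-Chow-GIT} says $\sX^{\rm s}$ consists precisely of curves with at worst $A_4$-singularities, so the characterization is immediate — no rank-by-rank case analysis is needed. Your case analysis is also not quite correct: you assert that a smooth-quadric curve with an $A_k$, $k \ge 6$, singularity "degenerates instead toward $[C_{2A_5}]$ or $[C_D]$," but by Theorem \ref{thm:lastVGITwall} such a curve specializes to the ribbon point of $\Gamma$, i.e. it is excluded from $\sV$, not from $\sU$. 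The endpoint is the same ($C \notin \sU \cap \sV$), but the intermediate reasoning conflates the two exclusion conditions.

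For the second claim, the one-line justifications you offer for excluding the forbidden configurations do not hold up. For an $A_1$-attached elliptic tail $(E,p)$, you claim "$\omega_C$ not ample on $E$" — but $\omega_C|_E \cong \omega_E(p) \cong \cO_E(p)$, which is ample of degree $1$; the actual obstruction is that $\cO_E(p)$ is \emph{not basepoint free} (a genus-$1$ curve has no degree-$1$ basepoint-free line bundle), which contradicts membership in $\sX$ since all curves in $\sX$ have $\omega_C$ basepoint free. Likewise, the $A_2$-attached elliptic tail, the length-$1$ $A_1/A_1$-attached elliptic bridge, and the length-$\ge 2$ Weierstrass chain are excluded not by "failure of Chow semistability" in the abstract, but by the concrete observation that $\omega_C$ becomes non-very-ample on that component, forcing the canonical map to have a non-reduced (line- or conic-containing) image, contradicting the structure of curves in $\sU^+$; the remaining cases (longer chains, $A_4$-attachings) are ruled out by simple arithmetic-genus counts, which you do not attempt. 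Finally, the reduction to smooth elliptic/Weierstrass components (via openness of $\sU^+$) is a necessary step that you omit. So while your overall architecture is sound, the first half is over-engineered and has an error, and the second half lacks the actual argument.
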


\begin{proof}
 For the first statement, notice that a curve in $\sU$ with $A_{\leq 4}$-singularities belongs to $\sX^+$ and hence $\sU^+$ by Definition \ref{def:sX+}. Conversely, a curve $[C]$ in $\sU^+$ satisfies that either $C$ has $A_{\leq 4}$-singularities, or $[C]\in \sU \cap \sV $. Since $\sU\cap \sV = \sX^{\rm s}$ which only contains curves with $A_{\leq 4}$-singularities by Theorem \ref{thm:CMJL-Chow-GIT}, we know that $[C]$ always has $A_{\leq 4}$-singularities.  

 Next, we shall focus on the second statement.  From Theorem \ref{thm:sX-gms}, Definitions \ref{def:HKsing} and \ref{def:sX+} we know that both $\sU^+$ and $\ocM_4(\frac{2}{3}-\epsilon)$ are open substacks of $\Curves_4^{\lci}$. Thus it suffices to show that $\sU^+(\bk) \subset \ocM_4(\frac{2}{3}-\epsilon)(\bk)$. We have shown that every curve $C\in\sU^+(\bk)$ has $A_{\leq 4}$-singularities. Thus we only need to show that the three cases of elliptic tails, elliptic chains and Weierstrass chains do not appear in $\sU^+$. Since $\sU^+$ is open, it suffices to show that these three cases do not appear when each of the elliptic tail/elliptic chain/Weierstrass chain is smooth.
\begin{enumerate}
    \item elliptic tails $(E,p)$:
    \begin{itemize}
        \item $A_1$-attached. Then $\omega_C|_E \cong \omega_E(p)\cong \cO_E(p)$ is not base point free, a contradiction.
        \item $A_2$-attached. Then $\omega_C|_E \cong \omega_E(2p)\cong \cO_E(2p)$ is base point free but not very ample. Thus the canonical  image of $C$ is non-reduced and contains a line as the image of $E$. This is a contradiction.
        \item $A_4$-attached. Then $E$ is the normalization of $C$, and   $p_a(C)= p_a(E) + 2 = 3\neq 4$, a contradiction.
    \end{itemize}
    \item elliptic chains $(E, p_1, p_2)$:
    \begin{itemize}
        \item length is at least $3$. Then $p_a(C) \geq 3+2 =5 >4$, a contradiction.
        \item length is $2$. If $p_1$ or $p_2$ is $A_4$-attached, then $p_a(C) \geq 2+1+2 =5 >4$, a contradiction.
        \item length is $1$. If both $p_1$ and $p_2$ are $A_4$-attached, then $p_a(C)\geq 1+2+2 =5 >4$, a contradiction.
        If $p_1$ is $A_1$-attached and $p_2$ is $A_4$-attached, then $p_a$ of the image of $E$ in $C$ is $3$, which implies that the remaining curve is an $A_1$-attached elliptic tail, a contradiction.
        If both $p_1$ and $p_2$ are $A_1$-attached, then $\omega_C|_E\cong \cO_E(p_1+p_2)$. Thus the canonical  image of $C$ is non-reduced and contains a line as the image of $E$, a contradiction.
    \end{itemize}
    \item Weierstrass chain $(E, p)$:
    \begin{itemize}
        \item length is at least $2$. Let $(E',p')$ be the genus $2$ component with Weierstrass point. Then $\omega_C|_{E'}\cong \omega_{E'}(2p')\cong \pi^*\cO_{\bP^1}(2)$ where $\pi:E'\to\bP^1$ is the double cover map. Hence $\pi_* \omega_C|_{E'}\cong \cO_{\bP^1}(2)\oplus \cO_{\bP^1}(-1)$ which implies that $\omega_C|_{E'}$ is base point free but not very ample. Thus the canonical  image of $C$ is non-reduced and contains a conic as the image of $E'$, a contradiction.
        \item length is $1$. Then $\omega_C|_{E}\cong \omega_{E}(p)$ is not base point free, a contradiction.
    \end{itemize}
\end{enumerate}
\end{proof}

\begin{prop}\label{prop:U+closed}
The closed points $[C]$ of $\sU^+$ are one of the following.
\begin{enumerate}
    \item $C$ is a GIT polystable hyperelliptic curve;
    \item $C\in \sX\setminus \sH$ is a canonical curve on a normal quadric $Q$ with at worst $A_3$-singularities at a singularity of $Q$ and at worst $A_4$-singularities elsewhere.
\end{enumerate}
Moreover, every $\bk$-point of $\sU^+$ admits a $\bG_m$-equivariant degeneration to a closed point.
\end{prop}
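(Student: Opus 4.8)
The plan is to enumerate the geometric points of $\sU^+$, then sort them by whether the $\PGL$-orbit is closed, exhibiting explicit $\bG_m$-degenerations in the non-closed cases. By Proposition~\ref{prop:U+open} a curve $C$ in $\sU^+$ lies in $\sU$ and has only $A_{\le 4}$-singularities. Applying Theorem~\ref{thm:CMJL-Chow-GIT} together with Lemma~\ref{lem:2RNC-hyp-rib} for the non-reduced locus: ribbons are excluded (they are non-reduced with $\phi$-image $z_{\rib}$, hence not in $\sX^+$), curves on rank-$2$ quadrics are elliptic triboroughs with $\phi$-image $[C_D]$ and so not in $\sU$, and curves with an $A_{\ge 5}$-singularity are not in $\sX^+$. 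Hence the $\bk$-points of $\sU^+$ are exactly: the GIT-semistable hyperelliptic curves (whose canonical cycle is a doubled rational normal curve), and the reduced $(2,3)$-complete intersections on a normal quadric $Q$ with only $A_{\le 4}$-singularities, the latter being automatically $A_{\le 4}$ at the vertex of $Q$ by Chow semistability; these split according to whether the singularity at the vertex is $A_{\le 3}$ (the curves of case $(2)$) or $A_4$.

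Next I would treat the cases in which closedness, resp. non-closedness, is immediate. For case $(1)$: by Theorem~\ref{thm:hyp-stack} and Definition~\ref{def:hyp-GIT} the closed substack $\sH\cap\sU^+$ is the quotient of the GIT-semistable locus of binary forms of degree $10$ by $\GL(2)/\bmu_{5}$, whose closed points are the GIT-polystable forms; since $\sH$ is closed in $\sX$ by Proposition~\ref{prop:hyp-rib-closed-substack}, these are closed in $\sU^+$, while a GIT-semistable but not polystable hyperelliptic curve $\bG_m$-degenerates inside $\sU^+$ (the limit still has $A_{\le 4}$-singularities, so lies in $\sX^+\cap\sU$) to a GIT-polystable one, hence is not closed. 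For the Chow-stable curves of case $(2)$: by Theorem~\ref{thm:hilb-chow}(3) and Lemma~\ref{lem:Xcirc=X} the Hilbert--Chow morphism is an isomorphism on the Chow-stable locus, so such a curve is a closed point of $\sX$ and therefore of the open substack $\sU^+$. For the $A_4$-at-the-vertex curves of $\sU^+$ (which lie in neither case $(1)$ nor case $(2)$): Proposition~\ref{prop:A4-cone} provides a $\bG_m$-equivariant degeneration to the GIT-polystable hyperelliptic curve $(z^2=x^5y^5)$, which lies in $\sU^+$; since the source is a reduced complete intersection and the target is hyperelliptic this is nontrivial, so such curves are not closed.

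The remaining, and main, case is the Chow strictly semistable curves of case $(2)$, namely the reduced $(2,3)$-complete intersections on a quadric cone with an $A_3$ at the vertex and $A_{\le 4}$ elsewhere, which I claim are closed in $\sU^+$. Suppose $[C_0]\in\overline{\{[C]\}}\cap\sU^+$ for such a $C$. The key structural input is that $\sU=\phi^{-1}(\fX^{\rm c}\setminus\{[C_{2A_5}],[C_D]\})$ is a \emph{saturated} open substack of $\sX$, so $\phi|_{\sU}$ is a good moduli space morphism and $\phi([C_0])=\phi([C])$; by Proposition~\ref{prop:A3-cone} this common value is the point of $\Gamma$ attached to $C_{A,B}$, in particular it is not $z_{\rib}$. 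Using the enumeration above, $C_0$ is then a reduced complete intersection on a cone, it is not Chow stable (its $\phi$-image lies on $\Gamma$), and it is not hyperelliptic (its canonical cycle would be a doubled rational normal curve, forcing $\phi$-image $z_{\rib}$) nor $A_4$ at the vertex (Proposition~\ref{prop:A4-cone} would force $\phi$-image $z_{\rib}$); hence $C_0$ is again an $A_3$-at-the-vertex curve of case $(2)$ with the same $\phi$-image. To conclude $C_0\cong C$ I would reduce to the case where the degenerating one-parameter subgroup $\lambda$ lies in $\Aut(\bP(1,1,2))$: the reduced complete intersection $C$ lies on a unique quadric, namely its cone $\bP(1,1,2)$, and $\lim_{t\to0}\lambda(t)\cdot Q$ is the unique quadric through $C_0$, again of rank $3$, so after composing $\lambda$ with a fixed element of $\PGL(4)$ and using the weight structure of $\lambda$ one sees that $\lambda$ stabilizes $Q$, hence lies (up to conjugacy in $\Aut(\bP(1,1,2))$) in a maximal torus; one then reads off from the normal form $y^2z^2+Bx^3yz+Ax^6+y^2h_4(x,y)$ of Proposition~\ref{prop:A3-cone}, by inspecting the weights on the pieces $z^2q_2$, $zq_4$, $q_6$, that every such torus degeneration either fixes $C$ or leaves $\sU^+$ (it acquires an $A_{\ge 5}$-singularity, becomes non-reduced along a doubled line or conic, or falls out of $\sX$, i.e., degenerates to $C_{A,B}$ itself). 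Granting this, all curves of types $(1)$ and $(2)$ are closed, and combining with the previous paragraph every point of $\sU^+$ $\bG_m$-degenerates to a closed point, giving the ``moreover''.

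The step I expect to be the main obstacle is precisely this last one: carrying out the reduction to $\Aut(\bP(1,1,2))$ and the case analysis of torus one-parameter subgroups acting on the normal form, i.e., verifying that a curve on a quadric cone with an $A_3$ at the vertex has no nontrivial $\bG_m$-degeneration remaining inside $\sU^+$. The delicate point is that inside $\sU^+$ one is only allowed degenerations keeping all singularities of type $A_{\le 4}$ and keeping the curve reduced on a normal quadric, so one must check that the ``wrong'' limits -- $C_{A,B}$, non-reduced limits supported on a doubled rational curve, and limits acquiring higher cusps at smooth points of the cone -- are exactly the ones forced to leave $\sU^+$, while any limit surviving in $\sU^+$ is projectively equivalent to $C$.
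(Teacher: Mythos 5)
Your decomposition of the points of $\sU^+$ is correct, your treatment of the GIT polystable hyperelliptic curves and the Chow stable curves agrees with the paper, and your use of Proposition \ref{prop:A4-cone} to show that $A_4$-at-the-vertex curves are not closed is also the paper's argument. However, you leave the crucial case — that a curve on a quadric cone with an $A_3$ at the vertex and $A_{\le 4}$ singularities elsewhere is closed in $\sU^+$ — as an acknowledged gap, and the route you propose to fill it is both harder and less safe than what is needed.

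The paper's argument for this case is much shorter and avoids the one-parameter subgroup case analysis entirely. By Proposition \ref{prop:A3-cone}, $C$ has a $\bG_m$-equivariant degeneration to $C_{A,B}$ with $4A\neq B^2$. The point is then that $\Aut^0(C_{A,B})\cong\bG_m$, so in the quotient stack $\sX^{\rm c}=[\Chow_{4,1}^{\rm ss}/\PGL(4)]$ the orbit of $[C_{A,B}]$ is the unique closed orbit in $\overline{\PGL(4)\cdot[C]}$, and since its stabilizer dimension is exactly one greater than that of $[C]$ (which has finite stabilizer, because $h_4\neq 0$ in its normal form) there is no room for any intermediate orbit. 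Hence the closure of $[C]$ in $\sX^{\rm c}$ is precisely $\{[C],[C_{A,B}]\}$, and transferring this to $\sX$ via Theorem \ref{thm:hilb-chow}(3) (both orbits avoid $\sH\cup\sR$), the closure of $[C]$ in $\sX$ is the same two points. Since $[C_{A,B}]\notin\sU^+$, $[C]$ is closed in $\sU^+$. Your observation that $\phi|_{\sU}$ is a good moduli space morphism (since $\sU$ is $\phi^{-1}$ of an open) and that any $[C_0]\in\overline{\{[C]\}}\cap\sU^+$ has $\phi([C_0])=\phi([C])$ is correct but becomes unnecessary once you have the two-orbit statement.

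Two comments on the route you sketched instead. First, the reduction of the degenerating one-parameter subgroup to $\Aut(\bP(1,1,2))$ is not automatic: a 1-PS $\lambda$ in $\PGL(4)$ degenerating $C$ need not stabilize $Q$, and ``composing with a fixed element of $\PGL(4)$'' changes $\lambda$ rather than leaving a torus 1-PS acting on the fixed cone; moreover $\lim_t\lambda(t)\cdot Q$ can have rank $\le 2$, which your normal-form analysis on the cone does not see. Second, once one does reduce to a torus acting on the cone, the verification that every nontrivial limit leaves $\sU^+$ is a genuine (and delicate) computation that you explicitly did not carry out. Both difficulties disappear with the stabilizer-dimension argument above, which is the proof you should be using.
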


\begin{proof}
We first prove that (1) and (2) are both closed points. 

For (1), let $\sH^+\hookrightarrow \sH$ be the open substack parameterizing GIT semistable hyperelliptic curves. From the GIT of binary forms we know that $\sH^+$ precisely parameterizes hyperelliptic curves with $A_{\leq 4}$-singularities.  Thus we have $\sH^+ = \sH \cap \sU^+$. By Proposition \ref{prop:hyp-rib-closed-substack} we know that $\sH^+$ is a closed substack of $\sU^+$. Thus every GIT polystable hyperelliptic curve being a closed point of $\sH^+$ by GIT is also a closed point of $\sU^+$. 

For (2), by Theorem \ref{thm:CMJL-Chow-GIT} we know that if $C$ is a canonical curve with at worst $A_2$-singularities at singularity of $Q$ and at worst $A_4$-singularities elsewhere, then $[C]\in \sX^{\rm s}$. Thus $C$ is a closed point of $\sX$ and hence of $\sU^+$. 

Next we assume that $C$ has an $A_3$-singularity at the singularity of $Q$, and at worst $A_4$-singularities elsewhere. By \cite[Proof of Theorem 3.1]{CMJL12} we know that $C$ admits a $\bG_m$-equivariant degeneration to $C_{A,B}$ with $4A\neq B^2$ which has an $A_3$-singularity at the singularity of $Q$ and an $A_5$-singularity in the smooth locus of $Q$ (and another $A_1$-singularity if $B=0$). Since $\Aut^0(C_{A,B})\cong \bG_m$, we know that this is the only other point in the closure of $[C]$ in $\sX^{\rm c}$. Since $[C_{A,B}]\notin \sU^+$, we know that $[C]$ is a closed point. 

Next we show that every closed point $[C]$ of $\sU^+$ is either (1) or (2). The hyperelliptic case follows from GIT. In the canonical case, we have that $Q$ is normal as otherwise $C$ degenerates to $C_D$ which implies $[C]\not\in \sU$, a contradiction. Assume to the contrary that $C$ has an $A_4$-singularity at the singularity of $Q$. Then Proposition \ref{prop:A4-cone} implies that $C$ admits a $\bG_m$-equivariant degeneration to the unique hyperelliptic curve with two $A_4$-singularities, i.e. $(x_2^2=x_0^5y_1^5)\subset\bP(1,1,5)_{[x_0,x_1,x_2]}$, which is a closed point of $\sU^+$ from case (1), a contradiction.

Finally, the last statement follows from the discussion above. 
\end{proof}

\begin{prop}\label{prop:U+closed-preserve}
Every closed point of $\sU^+$ is also closed in $\ocM_4(\frac{2}{3}-\epsilon)$.
\end{prop}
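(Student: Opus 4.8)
\smallskip

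The plan is to use that $\ocM_4(\tfrac23-\epsilon)$ carries a projective---hence separated---good moduli space whose local structure near closed points is understood, so that the statement reduces to controlling one--parameter degenerations, and then to a classification (under $\bG_m$--invariance) of the curves that could occur as limits.

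First I would set up the reduction. By Proposition~\ref{prop:U+open}, $\sU^+$ is an open substack of $\ocM_4(\tfrac23-\epsilon)$, and by the Hassett--Keel results in the range $\alpha>\tfrac23-\epsilon$ the latter has a projective good moduli space $\phi_0\colon\ocM_4(\tfrac23-\epsilon)\to\oM_4(\tfrac23-\epsilon)$. Let $[C]$ be a closed point of $\sU^+$ and suppose, for contradiction, that it is not closed in $\ocM_4(\tfrac23-\epsilon)$. Then the fibre $\phi_0^{-1}(\phi_0([C]))$ has a unique closed point $[C_0]\ne[C]$, which lies in the closure of $[C]$; since $\sU^+$ is open and $[C]$ is closed in $\sU^+$ we must have $[C_0]\notin\sU^+$. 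Applying the local structure theorem (Theorem~\ref{thm:ahr}) at $[C_0]$---whose stabiliser $\Aut(C_0)$ is linearly reductive, being the stabiliser of a closed point of a stack admitting a good moduli space---together with the Hilbert--Mumford criterion for $\Aut(C_0)$ acting on an \'etale slice, the specialisation $[C]\rightsquigarrow[C_0]$ is induced by a nontrivial one--parameter subgroup $\lambda\colon\bG_m\to\Aut(C_0)$. In particular $\dim\Aut(C_0)\ge1$, and pulling back the universal curve along $\overline{\lambda(\bG_m)\cdot[C]}\cong\bA^1$ yields a $\bG_m$--equivariant flat family $\cC\to\bA^1$ with $\cC_t\cong C$ for $t\ne0$, $\cC_0\cong C_0$, and all fibres in $\ocM_4(\tfrac23-\epsilon)$.

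The crux is then to prove $C_0\in\sU^+$, contradicting $[C_0]\notin\sU^+$. Since every curve in $\ocM_4(\tfrac23-\epsilon)$ is reduced and smoothable, lies in $\Curves_4^{\lci+}$, has ample dualising sheaf, and has only $A_{\le4}$ singularities, it suffices to show $C_0\in\sX$ and that the canonical $1$--cycle of $C_0$ does not map to $[C_{2A_5}]$ or $[C_D]$ in $\fX^{\rm c}$: this gives $C_0\in\sU$, and $A_{\le4}$ gives $C_0\in\sX^+$, hence $C_0\in\sU^+$. For the first point I would argue that $\omega_{C_0}$ is base point free: if $\omega_{C_0}|_E$ failed to be base point free on a component $E$, then---using that the connected group $\bG_m$ fixes each component $E$ and acts on the ample sheaf $\omega_{C_0}|_E$, and running through the cases $E\cong\bP^1$, $E$ of arithmetic genus one, and $E$ of higher genus, exactly as in the proof of Proposition~\ref{prop:U+open}---one would find $E$ attached to the rest of $C_0$ in one of the configurations excluded from $\ocM_4(\tfrac23-\epsilon)$ by Definition~\ref{def:HKsing} (an $A_1$--, $A_3$--, or $A_4$--attached elliptic tail, an $A_1/A_1$--, $A_1/A_4$--, or $A_4/A_4$--attached elliptic chain, or an $A_1$--attached Weierstrass chain), or else would be forced to produce an $A_{\ge5}$ singularity. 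Granting this, the relative canonical morphism of $\cC\to\bA^1$ exists, is $\bG_m$--equivariant, and exhibits the canonical cycle $z_{C_0}$ of $C_0$ as $\lim_{t\to0}\lambda(t)\cdot z_C$, where $z_C$ is the (Chow semistable, since $C\in\sX$) canonical cycle of $C$. Then $z_{C_0}$ is again Chow semistable: by the classification in Theorem~\ref{thm:CMJL-Chow-GIT}, a canonical cycle of a reduced genus four curve fails to be Chow semistable only if the curve has a singularity of type $A_{\ge5}$ or $D_4$ (impossible in $\ocM_4(\tfrac23-\epsilon)$), or its canonical image lies on a quadric of rank $\le 1$ (impossible for degree reasons: a reduced degree six curve in a plane $\bP^2\subset\bP^3$ would satisfy $\omega\cong\cO_{\bP^2}(3)|_{C_0}\not\cong\cO_{\bP^3}(1)|_{C_0}$), or lies on a rank $2$ quadric without being an elliptic triborough, or contains a line through the vertex of a rank $3$ cone meeting the remaining components in three points; the last two possibilities are ruled out by $\bG_m$--invariance of $C_0$, since a $\bG_m$--fixed line with three marked attachment points forces $\bG_m$ to act trivially on it, and a $\bG_m$--fixed arithmetic genus one rational component has at most two $\bG_m$--fixed points, each singular or forcing a forbidden attachment type, incompatibly with the $A_{\le4}$ and the elliptic--tail/chain constraints of $\ocM_4(\tfrac23-\epsilon)$. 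As $z_{C_0}$ is thus a Chow semistable degeneration of $z_C$, it has the same Chow polystable limit, so $\phi^{\rm c}(z_{C_0})=\phi^{\rm c}(z_C)=\phi([C])$, which is neither $[C_{2A_5}]$ nor $[C_D]$ because $C\in\sU$. Hence $C_0\in\sX$ and $C_0\in\sU^+$, a contradiction, so $[C]$ is closed in $\ocM_4(\tfrac23-\epsilon)$.

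The main obstacle is the last step: turning ``$C_0$ is a $\bG_m$--invariant curve in $\ocM_4(\tfrac23-\epsilon)$'' into ``the canonical cycle of $C_0$ is Chow semistable'' requires a careful, if elementary, inspection over the strata of Theorem~\ref{thm:CMJL-Chow-GIT}---excluding rank $\le2$ quadrics and lines through cone vertices under $\bG_m$--invariance---and similarly the verification that each non--base--point--free component configuration is genuinely forbidden in $\ocM_4(\tfrac23-\epsilon)$ is where most of the bookkeeping lies. Should a uniform argument prove awkward, an alternative is to invoke the explicit list of $\bG_m$--polystable curves of genus four at the wall $\alpha=\tfrac23$ from \cite{AFSvdW,AFS17,AFS17b} and check directly that each such curve lies in $\sU^+$.
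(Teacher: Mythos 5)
Your broad strategy --- take the (unique) closed point $[C_0]$ in the closure of $[C]$ inside $\ocM_4(\tfrac23-\epsilon)$, show $C_0\in\sU^+$, and conclude $C_0=C$ --- is logically sound and is a genuinely different route from the paper's. The paper instead passes through $\ocM_4(\tfrac23)$, decomposes the closed point there as $K\cup E_1\cup\dots\cup E_r$ via the $\alpha_{2/3}$-atom classification of \cite{AFSvdW}, and then runs case-by-case on $r\in\{0,1,2\}$, using the explicit $\bG_m^2\rtimes\bmu_2$-action on $\mathrm{T}^1(C_0)$ in the $r=2$ case to pin down $C_0'$ as the GIT polystable hyperelliptic curve. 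That approach gains precision from reusing work already done in \cite{AFSvdW}; yours, if completed, would be more self-contained but repackages much the same combinatorics.

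There are, however, two genuine gaps in the argument as you sketch it, concentrated exactly at the step you flag as the crux. First, showing $\omega_{C_0}$ is basepoint free is \emph{not} a re-run of the proof of Proposition~\ref{prop:U+open}: that proof assumes $C\in\sU^+$ (hence $\omega_C$ bpf by definition of $\sX$) and derives the forbidden-configuration conclusions, whereas you need the converse implication. The converse is false without further hypotheses: a general point of $\delta_2$ --- two smooth genus-two curves meeting nodally at a non-Weierstrass point of each --- lies in $\ocM_4(\tfrac23-\epsilon)$ and avoids every excluded tail/chain, yet $\omega_C|_{E_i}\cong\omega_{E_i}(p)$ has $p$ as a base point (since $p$ is not Weierstrass), so $\omega_C$ is not bpf. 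You must therefore lean entirely on $\bG_m$-invariance of $C_0$ to exclude such curves; classifying the $\bG_m$-invariant closed points of $\ocM_4(\tfrac23-\epsilon)$ is precisely what the atom decomposition in \cite{AFSvdW} does, and the one-paragraph sketch you give does not substitute for it. Second, the Chow-semistability step is argued in the wrong direction: a $\bG_m$-equivariant degeneration of a semistable cycle is \emph{not} automatically semistable, and Theorem~\ref{thm:CMJL-Chow-GIT} describes the stable and strictly semistable strata rather than a list of unstable configurations. To conclude that $z_{C_0}$ is semistable you must place it in one of those two strata, which requires ruling out, under $\bG_m$-invariance, exactly the boundary cases you gesture at (rank-$\le 2$ quadrics, degree-one components through a cone vertex when the vertex is an $A_4$, the distinction between the non-reduced ribbon cycle and the doubled-twisted-cubic hyperelliptic cycle, etc.). These exclusions need to be proved, not merely listed, and doing so essentially recreates the case analysis of the paper's proof. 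You do acknowledge this and offer the paper's route as a fallback; that fallback is the proof, so the proposal as written does not close the argument.
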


\begin{proof}
Let $[C]\in \sU^+$ be a closed point. If $C$ has at worst $A_3$-singularities, then by Definition \ref{def:HKsing} we know that $[C]\in \ocM_4(\frac{2}{3}+\epsilon)$. Thus Lemma \ref{lem:2/3+DM} implies that $[C]$ is a closed point of $\ocM_4(\frac{2}{3}+\epsilon)$. Since $[C]\in \ocM_4(\frac{2}{3}-\epsilon)$, it is also a closed point in $\ocM_4(\frac{2}{3}-\epsilon)$ by local VGIT (see \cite[Section 3]{AFSvdW} and \cite[Theorem 1.3]{AFS17}).

From now on, we may assume that $C$ has at least an $A_4$-singularity by Proposition \ref{prop:U+closed}. Let $[C_0]$ (resp. $[C_0']$) be the unique closed point of $\ocM_4(\frac{2}{3})$ (resp. $\ocM_4(\frac{2}{3}-\epsilon)$) in the closure of $[C]$. By \cite[Theorem 2.22]{AFSvdW}, we know that $C_0 = K \cup E_1\cup \cdots \cup E_r$, such that each $E_i$ is an $\alpha_{\frac{2}{3}}$-atom and $[K]$ is a closed point in the stack of $(\frac{2}{3}+\epsilon)$-stable (pointed) curves with no $A_1$-attached Weierstrass tails. Since every $\alpha_{\frac{2}{3}}$-atom has arithmetic genus $2$, we know that $r\leq 2$. Below we split to three cases based on the value of $r$. 

If $r=0$, then $[C_0]=[K]$ belongs to the stack $\ocM_{4}(\frac{2}{3}+\epsilon)$ which implies that $C_0$ has at worst $A_3$-singularities. This is a contradiction to the assumption that $C$ has at least an $A_4$-singularity. 

If $r=1$, then $C_0= K\cup E_1$ where $[(K,q)]$ is a closed point in $\ocM_{2,1}(\frac{2}{3}+\epsilon)$ with $q=K\cap E_1$. If moreover $\Aut(K,q)$ is infinite, then by \cite[Remark 2.28]{AFSvdW} we know that $K=K_0\cup R_1$ where $R_1$ is an $A_1/A_1$-attached rosary of length $3$. Then we must have $p_a(K_0)=0$ and $K_0$ is a one pointed curve, which is a contradiction to the ampleness of $\omega_K (q)|_{K_0}$. Thus $\Aut(K,q)$ must be finite which implies that $\Aut^0(C_0)\cong \bG_m$. Since $C_0$ admits an $A_1$-attached Weierstrass tail, we know that $C_0\not\in \ocM_{4}(\frac{2}{3}-\epsilon)$. Thus $C_0'$ admits a non-trivial $\bG_m$-equivariant degeneration to $C_0$ which implies that $\Aut(C_0')$ is finite. Hence $C\cong C_0'$ is closed in $\ocM_4(\frac{2}{3}-\epsilon)$.

If $r=2$, then $C_0= E_1\cup E_2$ and $\Aut(C_0)\cong \bG_m^2 \rtimes \bmu_2$. Similar to the $r=1$ case, we have $C_0\not\in \ocM_{4}(\frac{2}{3}-\epsilon)$. Thus we have $\dim\Aut(C_0')<\dim\Aut(C_0)=2$. If moreover $\Aut(C_0')$ is finite, then $C\cong C_0'$ is closed in $\ocM_4(\frac{2}{3}-\epsilon)$.  Hence we may assume $\Aut^0(C_0')\cong \bG_m$. Denote by $q=E_1\cap E_2$. By \cite[Lemma 3.18 and Proposition 3.20, Type C]{AFSvdW}, we have
\[
\rmT^1 (C_0) = \rmT^1(E_1) \oplus \rmT^1(E_2) \oplus \rmT^1(\widehat{\cO}_{C_0, q}) \cong \bk^5\oplus\bk^5 \oplus \bk.
\]
Denote by $\sigma_1, \sigma_2$ the two standard $1$-PS's of $\bG_m^2$, i.e. $\sigma_1(t) = (t,1)$ and  $\sigma_2(t) = (1,t)$. Then from loc. cit. we know that $\sigma_i$ acts on $\rmT^1(E_i)$ with weights $(1,-10,-8,-6,-4)$, on $\rmT^1(E_{3-i})$ trivially, and on $\rmT^1(\widehat{\cO}_{C_0, q})$ with weight $1$ for $i\in\{1,2\}$. In addition, the $\bmu_2$-action swaps $\rmT^1(E_1)$ and $\rmT^1(E_2)$ and acts trivially on $\rmT^1(\widehat{\cO}_{C_0, q})$.  Thus by analyzing the $\bG_m^2 \rtimes \bmu_2$ orbits, we see that $C_0'$ is either of the form $K_i\cup E_{3-i}$, or  an irreducible curve with two $A_4$-singularities. The former is not possible as it contains a $A_1$-attached Weierstrass tail which implies $[C_0']\not\in  \ocM_{4}(\frac{2}{3}-\epsilon)$. The latter is precisely the strictly GIT polystable hyperelliptic curve $(z^2 = x^5 y^5)\subset \bP(1,1,5)$ which is a closed point of $\sU^+$ by Proposition \ref{prop:U+closed}. Thus $C$ admits a $\bG_m$-equivariant degeneration to $C_0'$ in $\sU^+$ and  both points are closed. This implies $C\cong C_0'$ is closed in $\ocM_{4}(\frac{2}{3}-\epsilon)$. The proof is finished.
\end{proof}

\begin{remark}\label{rmk:M21-is-DM}
    In the $r = 1$ case of the previous proof, it is shown that any closed point of $\ocM_{2,1}(\frac{2}{3}+\epsilon)$ has finite stabilizer.  In particular, this implies that $\ocM_{2,1}(\frac{2}{3}+\epsilon)$ is a DM stack. 
\end{remark}

\begin{cor}\label{cor:U+gms}
$\sU^+$ is a saturated open substack of $\ocM_4(\frac{2}{3}-\epsilon)$. In particular, $\sU^+$ admits a separated good moduli space.
\end{cor}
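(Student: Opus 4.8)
The plan is to reduce saturatedness to a statement about closed points that has essentially already been verified in Propositions \ref{prop:U+closed} and \ref{prop:U+closed-preserve}. First I would record the following criterion: if $\cX$ is an algebraic stack of finite type over $\bk$ with good moduli space $\phi\colon\cX\to X$ and $\cU\subset\cX$ is open, then $\cU$ is saturated provided that for every $\bk$-point $u$ of $\cU$, the unique closed point $u_0$ in the closure $\overline{\{u\}}$ (formed inside $\cX$) again lies in $\cU$. The justification is formal: for a $\bk$-point $u$ the fibre $\phi^{-1}(\phi(u))$ is a closed substack (as $\phi(u)$ is a closed point of the algebraic space $X$) which has a unique closed point, so $\overline{\{u\}}\subset\phi^{-1}(\phi(u))$ and $u_0$ is that closed point; then for any $\bk$-point $x\in\phi^{-1}(\phi(\cU))$, say $\phi(x)=\phi(v)$ with $v\in\cU$, the closed points of $\overline{\{x\}}$ and of $\overline{\{v\}}$ both lie in the fibre $\phi^{-1}(\phi(x))$, hence coincide with its single closed point, which lies in $\cU$ by hypothesis; since $\cU$ is open and contains this specialization of $x$, we conclude $x\in\cU$, so $\phi^{-1}(\phi(\cU))=\cU$ (using that these open substacks are determined by their $\bk$-points).

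Next I would apply this criterion with $\cX=\ocM_4(\frac{2}{3}-\epsilon)$, which admits a projective — hence separated — good moduli space $\oM_4(\frac{2}{3}-\epsilon)$ by the Hassett--Keel program for $g\ge 4$ recalled above, and with $\cU=\sU^+$, which is open in $\ocM_4(\frac{2}{3}-\epsilon)$ by Proposition \ref{prop:U+open}. Given a $\bk$-point $[C]$ of $\sU^+$, the last sentence of Proposition \ref{prop:U+closed} provides a $\bG_m$-equivariant degeneration of $[C]$ to a closed point $[C_0]$ of $\sU^+$, so in particular $[C_0]\in\overline{\{[C]\}}$; by Proposition \ref{prop:U+closed-preserve}, $[C_0]$ is moreover closed in $\ocM_4(\frac{2}{3}-\epsilon)$. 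Since the closure $\overline{\{[C]\}}$ taken inside $\ocM_4(\frac{2}{3}-\epsilon)$ has a unique closed point, that point must be $[C_0]\in\sU^+$, so the criterion is met and $\sU^+$ is a saturated open substack.

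For the final assertion, once $\sU^+$ is saturated the image $\phi(\sU^+)$ is an open subscheme of $\oM_4(\frac{2}{3}-\epsilon)$ and $\phi|_{\sU^+}\colon\sU^+\to\phi(\sU^+)$ is a good moduli space morphism, by the general properties of good moduli spaces recalled just before Theorem \ref{thm:gms-glue}; since $\oM_4(\frac{2}{3}-\epsilon)$ is projective, the open subscheme $\phi(\sU^+)$ is separated, hence $\sU^+$ admits a separated good moduli space. The only genuinely delicate point I anticipate is the interplay between ``closed in $\sU^+$'' and ``closed in $\ocM_4(\frac{2}{3}-\epsilon)$'': a priori the $\bG_m$-degeneration of Proposition \ref{prop:U+closed} lands only at a point closed within the open substack $\sU^+$, and it is exactly Proposition \ref{prop:U+closed-preserve} that upgrades this to closedness in the ambient stack, which is what forces the closure $\overline{\{[C]\}}$ computed in $\ocM_4(\frac{2}{3}-\epsilon)$ to terminate inside $\sU^+$. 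Everything else is a formal consequence of the good moduli space formalism.
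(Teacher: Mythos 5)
Your proof is correct and takes essentially the same approach as the paper: both reduce saturatedness to showing that the closure in $\ocM_4(\frac{2}{3}-\epsilon)$ of any $\bk$-point of $\sU^+$ terminates at a closed point of $\sU^+$, and both derive this from Propositions \ref{prop:U+closed} and \ref{prop:U+closed-preserve}. The paper states the reduction in terms of common $\bG_m$-equivariant degenerations while you phrase it via the unique closed point of a fiber and openness under generization, but these are the same argument in different notation.
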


\begin{proof}
It suffices to show that if $[C_1]\in \sU^+$ and $[C_2]\in \ocM_4(\frac{2}{3}-\epsilon)$ admit a common $\bG_m$-equivariant degeneration to a closed point $[C_0]\in \ocM_4(\frac{2}{3}-\epsilon)$, then $[C_2]\in \sU^+$. By Proposition \ref{prop:U+closed} we know that  $[C_1]$ admits a $\bG_m$-equivariant degeneration to a closed point $[C_0']\in \sU^+$. By Proposition \ref{prop:U+closed-preserve}, we know that $[C_0']$ is also closed in  $\ocM_4(\frac{2}{3}-\epsilon)$. Thus $[C_0]=[C_0']\in \sU^+$. By openness of $\sU^+$ from Proposition \ref{prop:U+open} we know that $[C_2]\in \sU^+$ as well. The last statement follows from \cite[Remark 6.2]{alper}. The proof is finished.
\end{proof}

\begin{proof}[Proof of Theorem \ref{thm:X+gms}]
We first show that $\sX^+$ admits a good moduli space.  By Theorem \ref{thm:CMJL-Chow-GIT}, a curve $[C]\in \sV$ either belongs to the Chow stable locus $\sX^{\rm s}$ (hence has at worst $A_4$-singularities), or degenerates to $[C_{2A_5}]$ or $[C_D]$. Thus $\sV$ is an open substack of $\sX^+$. By Corollary \ref{cor:U+gms} $\sU^+$ admits a separated good moduli space which we denote by $\fU^+$. By definition we know that $\sV\cong \Phi(\sV) = (\phi^{\rm c})^{-1}(\fX^{\rm c}\setminus \Gamma)$ admits a separated good moduli space $\fV\cong \fX^{\rm c}\setminus \Gamma$. 

Next we show that $\sU^+\cap \sV$ is saturated in both $\sU^+$ and $\sV$.  Since $\sV\subset \sX^+$, we know that $\sU^+\cap \sV = \sU\cap \sX^+ \cap \sV=\sU\cap \sV =\sX^{\rm s}$. In particular, every $\bk$-point of $\sU^+\cap \sV$ is closed with finite inertia in $\sX$. Thus $\sU^+\cap \sV$ is saturated in both $\sU^+$ and $\sV$. Thus Theorem \ref{thm:gms-glue} implies that $\sX^+=\sU^+\cup \sV$ admits a good moduli space which we denote by $\fX^+ = \fU^+ \cup \fV$. 

Next we show that the good moduli space $\fX^+$ of $\sX^+$ is separated. By \cite[Theorem 5.4]{AHLH18}, it suffices to show that $\sX^+$ is S-complete with respect to DVRs $(R,\fm, \kappa)$ that are essentially of finite type over $\bk$. Let $f:\STR \setminus 0 \to \sX^+$ be a map. Then $f$ is obtained by gluing two maps $f_1, f_2: \Spec R \to \sX^+$ such that $f_1|_{\Spec K} = f_2|_{\Spec K}$ where $K= \mathrm{Frac}(R)$. If the image of $f$ lies in $\sU^+$ (resp. in  $\sV$), then there exists an extension $\bar{f}: \STR \to \sU^+$ (resp. $\bar{f}: \STR\to \sV$) by the S-completeness of $\sU^+$ (resp. of $\sV$). Moreover, such an extension is unique by \cite[\S 3.2]{ABHLX19} (see also \cite[\S 5.2]{ABB+}). If the image of $f$ is not contained in $\sU^+$ or $\sV$, after possibly switching $t$ and $s$ we may assume that $f_1([\fm])\in \sU^+\setminus \sV$ and $f_2([\fm])\in \sV\setminus\sU^+$. This implies that $\phi(f_1([\fm]))\in \Gamma$ and $\phi(f_2([\fm]))\in \{C_{2A_5}, C_D\}$. By the S-completeness of $\sX^{\rm c}$, we know that $\Phi\circ f: \STR\setminus 0\to \sX^{\rm c}$ admits an extension to $\STR$. In particular, we have $\phi(f_1([\fm]))=\phi(f_2([\fm]))$. This contradicts the fact that $\Gamma$ and $\{C_{2A_5}, C_D\}$ are disjoint in $\fX^{\rm c}$. Thus the separatedness of $\fX^+$ is proved.

Finally, we show that $\fX^+$ is proper. By Theorem \ref{thm:gms-universal}, the composition $\sX^+ \hookrightarrow \sX \to \sX^{\rm c}$ induces a morphism $\psi: \fX^+\to \fX^{\rm c}$ between separated good moduli spaces. By Lemma \ref{lem:Chow-normal} and Theorem \ref{thm:sX-gms} we know that both $\fX^+$ and $\fX^{\rm c}$ are normal and irreducible. 
Moreover, from the construction we know that $\psi$ is birational. Since $\fX^{\rm c}$ is normal, irreducible and proper, by Lemma \ref{lem:proper-algspace} it suffices to show that every fiber of $\psi$ over $\bk$-points is proper and non-empty. Since $\sV$ is saturated in $\sX^+$, we know that $\psi$ is an isomorphism over $\fX^{\rm c}\setminus \Gamma$. Thus it suffices to show that $\psi^{-1}([C_{A,B}])$ is proper and non-empty for every $(A,B)\in \bk^2\setminus\{(0,0)\}$.  By Proposition \ref{prop:U+closed} we know that $\psi^{-1}(\Gamma)$ precisely consists of closed points of the stack $\sU^+\setminus \sV$, which are either GIT polystable hyperelliptic curves, or curves of degree $6$ on $\bP(1,1,2)$ with an $A_3$-singularity at the cone vertex and at worst $A_4$-singularities elsewhere.

If $4A=B^2$, then $C_{A,B}$ is the canonical ribbon which we denote by $C_R$. By Proposition \ref{prop:A4-cone}, 
$\psi^{-1}([C_{R}])$ precisely consists of GIT polystable hyperelliptic curves. Thus we have an isomorphism $\fH^+\cong \psi^{-1}([C_{R}])$ which implies the properness of $\psi^{-1}([C_{R}])$, where $\fH^+=\fH_4^+$ is as in Definition \ref{def:hyp-GIT}. 

If $4A\neq B^2$, then by Proposition \ref{prop:A3-cone},  $\psi^{-1}([C_{A,B}])$ precisely consists curves of the form $(y^2z^2 + Bx^3 yz + Ax^6 +h_4(x,y)=0)$ where $h_4\neq 0$. Such curves are parameterized by the weighted  projective stack $[\bA^5\setminus \{0\}/\bG_m]$ where the $\bG_m$ acts on $\bA^5 \cong H^0(\bP^1_{[x,y]}, \cO(4))$ with weight $(1,3)$ on $(x,y)$. Thus we have a morphism 
$[(\bA^5\setminus \{0\})/\bG_m]\to \sX^+$
that descends to a morphism $(\bA^5\setminus \{0\})\sslash\bG_m \to \fX^+$ on good moduli spaces whose image equals  $\psi^{-1}([C_{A,B}])$. Thus the properness of $\psi^{-1}([C_{A,B}])$ follows from the properness of $(\bA^5\setminus \{0\})\sslash\bG_m$. The proof is finished.
\end{proof}

The following lemma was used in the previous proof. 

\begin{lem}\label{lem:proper-algspace}
Let $f:X\to Y$ be a birational morphism between normal irreducible separated algebraic spaces of finite type over $\bk$.
Assume that $f$ is surjective. If every fiber of $f$ over $\bk$-points of $Y$ is proper, then $f$ is proper.
\end{lem}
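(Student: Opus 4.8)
The plan is to prove Lemma \ref{lem:proper-algspace} by a standard application of the valuative criterion for properness, reducing to DVRs and using the properness of the fibers to close up limits. Since $f$ is of finite type and $X$, $Y$ are separated, it suffices to verify the existence part of the valuative criterion: given a DVR $R$ with fraction field $K$, closed point $\kappa$, and a commutative square consisting of a map $\Spec K \to X$ and a map $\Spec R \to Y$, we must produce a lift $\Spec R \to X$ (uniqueness being automatic from separatedness of $X$). We may assume $R$ is essentially of finite type over $\bk$ by the usual noetherian approximation / spreading-out arguments (see e.g. \cite[Tag 0A3X]{stacksproject}), so that all schemes and morphisms in sight remain of finite type.

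First I would record the setup: let $\eta = \Spec K \to X$ be the generic point of the trait, landing over a point $y_\eta \in Y(K)$, and let $y: \Spec R \to Y$ extend $y_\eta$, with $y(\kappa) = y_0 \in Y(\bk)$ (after possibly a finite extension of $R$, which is harmless). Consider the fiber product $X_R := X \times_Y \Spec R$, which is a separated algebraic space of finite type over $R$, equipped with a section over the generic point $\Spec K \to X_R$. The closed fiber $(X_R)_\kappa = f^{-1}(y_0)$ is by hypothesis proper over $\kappa$. Now take the scheme-theoretic closure $Z \subset X_R$ of the image of $\Spec K \to X_R$; this is an integral closed subspace of $X_R$, flat over $R$ (it is $R$-torsion-free, being a closure of the generic fiber in a flat-over-generic-point situation), dominating $\Spec R$, hence $Z \to \Spec R$ is surjective with generic fiber a single reduced $K$-point. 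The key point is that $Z_\kappa$ is a non-empty closed subspace of the proper $\kappa$-space $(X_R)_\kappa$, hence $Z_\kappa$ is itself proper and non-empty; moreover $Z \to \Spec R$ is proper, since $Z$ is closed in $X_R$ and... here is the crux.

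The main obstacle is precisely establishing that $Z \to \Spec R$ is proper — one does not get this for free, because $X_R \to \Spec R$ need not be proper (indeed that is what we are trying to prove). The way around this is to argue directly with the valuative criterion one more time, or better: since $Z \to \Spec R$ is of finite type, separated, and its generic fiber is a point while we only need a single closed point of $Z_\kappa$, I would instead invoke Nagata compactification for algebraic spaces to embed $Z$ as an open dense subspace of a proper $R$-space $\overline{Z}$; then pick any point $z_0$ in the closure of $Z_\eta$ inside the closed fiber $\overline{Z}_\kappa$. This $z_0$ maps to $y_0 \in Y(\bk)$ and lies in the closure, inside $\overline{Z}$, of the generic point. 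Now the content of the hypothesis that $f^{-1}(y_0)$ is proper is used to show $z_0$ actually lies in $Z$ (equivalently in $X_R$): the limit point $z_0$ of the trait $\Spec R \to \overline{Z}$, when composed to $Y$, factors through $\Spec R \to Y$ whose closed point is $y_0$; applying the valuative criterion to the \emph{proper} morphism $f^{-1}(y_0) \hookrightarrow \{y_0\} \to \Spec \bk$ — more precisely, base-changing the trait's limit to the fiber and using that $f^{-1}(y_0)$ contains all $\kappa$-limits of $K$-points mapping into it — forces $z_0 \in f^{-1}(y_0) \subset X$. This produces the desired section $\Spec R \to X$ extending $\eta$, completing the valuative criterion.

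Finally I would assemble the pieces: having verified the existence part of the valuative criterion for the finite-type separated morphism $f: X \to Y$, properness follows from \cite[Tag 0A40]{stacksproject} (the valuative criterion for properness of algebraic spaces, where it suffices to check traits essentially of finite type over $\bk$ by \cite[Tag 0CM5]{stacksproject} since everything is of finite type over a field). The only delicate point worth spelling out carefully in the write-up is the Nagata-compactification step and the verification that the chosen limit point $z_0$ genuinely lands in $X$ and not merely in the compactification $\overline{Z}$; this is exactly where the hypothesis "every fiber of $f$ over $\bk$-points is proper" is consumed, and I expect it to be the subtlest part of the argument, though it is ultimately a formal consequence of the definitions once the setup is in place.
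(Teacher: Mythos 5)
Your proposal has a genuine gap at precisely the step you flag as the subtlest. After compactifying the scheme-theoretic closure $Z \subset X_R$ to a proper $R$-space $\overline{Z}$ via Nagata, the ambient space $\overline{Z}$ does \emph{not} come with a map to $X_R$, $X$, or the fiber $f^{-1}(y_0)$; it only maps to $\Spec R$, hence (via the given trait) to $Y$. So while the limit point $z_0 \in \overline{Z}_\kappa$ does sit over $y_0 \in Y(\bk)$, there is no morphism that would place $z_0$ in $f^{-1}(y_0)$, and "applying the valuative criterion to the proper morphism $f^{-1}(y_0) \to \Spec \bk$" does not make sense here: that criterion concerns DVRs and points valued in $f^{-1}(y_0)$, whereas $z_0$ is a $\kappa$-point of a compactification that a priori has nothing to do with $X$. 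Even if you instead take $\overline{Z}$ to be the closure of $Z$ inside a Nagata compactification of $X_R$ over $\Spec R$ (so that $\overline{Z}$ at least lives in a space containing $X_R$), the hypothesis that $f^{-1}(y_0)$ is proper does not by itself force $Z_\kappa$ to be non-empty: properness is not a fiberwise condition, and one can construct separated finite-type morphisms with proper fibers and a section over the generic point whose closure misses the special fiber. The crucial extra structure you are not exploiting is the normality and irreducibility of $X$ and $Y$ together with birationality of $f$.

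The paper avoids the valuative criterion entirely and takes a more global route. It compactifies the morphism $f\colon X \to Y$ itself (not the base-changed family over a DVR) to a proper $\overline{f}\colon \overline{X} \to Y$, replaces $\overline{X}$ by the normalization of the component containing $X$, and invokes Stein factorization for the proper birational morphism $\overline{f}$ between normal irreducible spaces to conclude that every fiber $\overline{f}^{-1}(y)$ is connected. Then $f^{-1}(y)$ is open in $\overline{f}^{-1}(y)$ (since $X$ is open in $\overline{X}$), closed in it (since $f^{-1}(y)$ is proper), and non-empty (by surjectivity); in a connected space this forces $f^{-1}(y) = \overline{f}^{-1}(y)$. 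Hence $X(\bk) = \overline{X}(\bk)$ and so $X = \overline{X}$, whence $f = \overline{f}$ is proper. This is where the hypotheses on normality, irreducibility, and birationality are actually consumed, none of which your trait-by-trait argument uses, which is a signal that the proposal as written cannot close.
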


\begin{proof}
By the Nagata compactification for algebraic spaces \cite{Rao74, CLO12}, there exists an open immersion $j: X\to \oX$ to an algebraic space $\oX$ and a proper morphism $\bar{f}: \oX \to Y$ such that $f= \bar{f}\circ j$. By replacing $\oX$ with the normalization of the irreducible component containing $X$, we may assume that $\oX$ is normal irreducible as well. By Stein factorization theorem for algebraic spaces \cite[\href{https://stacks.math.columbia.edu/tag/0AYI}{Tag 0AYI}]{stacksproject}, we know that $\bar{f}^{-1}(y)$ is connected for every $y\in Y(\bk)$. Since $f^{-1}(y)$ is proper, non-empty, and open in $\bar{f}^{-1}(y)$, we have $f^{-1}(y) = \bar{f}^{-1}(y)$ for every $y\in Y(\bk)$. Thus we have $X(\bk) = \oX(\bk)$ which implies that $X = \oX$. Thus $f=\bar{f}$ is proper.
\end{proof}

\begin{corollary}\label{cor:U+toU}
    The open immersion $\sU^+ \to \sU$ descends to a proper birational morphism of good moduli spaces $\fU^+ \to \fU$ with exceptional locus consisting of the curves with $A_3$ singularities at the singularity of $Q$ mapping to the associated point of $C_{A,B}$ in Proposition \ref{prop:A3-cone} and GIT polystable hyperelliptic curves mapping to $C_R$, the canonical ribbon.
\end{corollary}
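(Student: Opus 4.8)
The plan is to deduce this corollary from Theorem~\ref{thm:X+gms} and its proof, by restricting the proper birational morphism $\psi\colon \fX^+\to\fX^{\rm c}$ to a suitable open subscheme of the target. First I would record that $\sU$ admits a good moduli space: since $\sU=\phi^{-1}(\fX^{\rm c}\setminus\{[C_{2A_5}],[C_D]\})$ is a saturated open substack of $\sX$ and $\phi\colon\sX\to\fX^{\rm c}$ is a good moduli space morphism by Theorem~\ref{thm:sX-gms}, its restriction is a good moduli space morphism $\sU\to\fU$ onto the open subscheme $\fU:=\fX^{\rm c}\setminus\{[C_{2A_5}],[C_D]\}$. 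By Corollary~\ref{cor:U+gms}, $\sU^+$ has a separated good moduli space $\fU^+$, and composing $\sU^+\hookrightarrow\sU\to\fU$ with the universal property of $\sU^+\to\fU^+$ (Theorem~\ref{thm:gms-universal}) produces the desired morphism $\fU^+\to\fU$.

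The next step is to identify this morphism with a base change of $\psi$. Writing $\phi_{\sX^+}\colon\sX^+\to\fX^+$ for the good moduli space morphism of $\sX^+$ and using $\psi\circ\phi_{\sX^+}=\phi|_{\sX^+}$, one computes $\sU^+=\sU\cap\sX^+=(\phi|_{\sX^+})^{-1}(\fU)=\phi_{\sX^+}^{-1}(\psi^{-1}(\fU))$, so $\sU^+$ is a saturated open substack of $\sX^+$ with good moduli space $\psi^{-1}(\fU)$; hence $\fU^+=\psi^{-1}(\fU)$ and $\fU^+\to\fU$ is the base change of $\psi$ along the open immersion $\fU\hookrightarrow\fX^{\rm c}$. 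Since $\psi$ is proper (Theorem~\ref{thm:X+gms}), so is $\fU^+\to\fU$. For birationality, note that the Chow stable locus $\fX^{\rm s}$ lies in $\fU$ (the points $[C_{2A_5}]$, $[C_D]$ are strictly Chow semistable, by Theorem~\ref{thm:CMJL-Chow-GIT}) and is dense open in $\fX^{\rm c}$; by the proof of Theorem~\ref{thm:X+gms}, $\psi$ restricts to an isomorphism over $\fX^{\rm c}\setminus\Gamma\supseteq\fX^{\rm s}$, so $\fU^+\to\fU$ is an isomorphism over a dense open and therefore birational.

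Finally, for the exceptional locus: since $\Gamma$ is disjoint from $\{[C_{2A_5}],[C_D]\}$ (Theorem~\ref{thm:CMJL-Chow-GIT}), $\Gamma\subseteq\fU$, and $\psi$ is an isomorphism over $\fU\setminus\Gamma$; thus the exceptional locus of $\fU^+\to\fU$ equals $\psi^{-1}(\Gamma)$. The last paragraph of the proof of Theorem~\ref{thm:X+gms} identifies $\psi^{-1}(\Gamma)$ precisely as the union of the GIT polystable hyperelliptic curves (forming $\fH_4^+$ and lying over the point $[C_R]$ corresponding to the canonical ribbon, via Proposition~\ref{prop:A4-cone}) together with the degree six curves on $\bP(1,1,2)$ having an $A_3$ singularity at the cone vertex and at worst $A_4$ singularities elsewhere (lying over the associated point of $C_{A,B}$, via Proposition~\ref{prop:A3-cone}), which is exactly the asserted description.

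I do not expect a genuine obstacle here: the argument introduces no new ideas beyond Theorem~\ref{thm:X+gms}. The only point requiring care is the identification $\fU^+=\psi^{-1}(\fU)$ — that is, checking that $\sU^+$ is saturated inside $\sX^+$ (so that it corresponds to an open subspace of $\fX^+$ and the base-change argument for properness applies), together with the elementary verifications that $\fX^{\rm s}$ and $\Gamma$ are contained in $\fU$.
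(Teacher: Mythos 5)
Your proof is correct and follows essentially the same approach as the paper, which simply states that the corollary follows from the proof of Theorem \ref{thm:X+gms}. Your elaboration correctly unpacks the key points: saturatedness of $\sU^+$ inside $\sX^+$ (giving $\fU^+ \cong \psi^{-1}(\fU)$), properness by base change, birationality from the dense stable locus, and the identification of $\psi^{-1}(\Gamma)$ carried out in the last paragraph of the proof of Theorem \ref{thm:X+gms}.
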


\begin{proof}
    This follows from the proof of Theorem \ref{thm:X+gms}.
\end{proof}

\begin{remark}\label{rmk:sX-andVGIT}
    Analogously to $\sX^+$, we can define a companion open substack $\sX^-\hookrightarrow \sX$ of $(2,3)$ complete intersection curves $C = V(f,q)$ in $\sX$ where, if $Q=V(q)$ is a singular quadric surface, $C$ has at worst $A_2$ singularities at the vertex of $Q$. Moreover, one can show that there is a wall crossing diagram as follows.
\[
\begin{tikzcd}
\sX^- \arrow[d] \arrow[r,hook]  &
\sX \arrow[d] & 
\sX^+ \arrow[d] \arrow[l,hook']  \\
\fX^- \arrow[r]  & 
\fX^{\rm c}  & 
\fX^+ \arrow[l]
\end{tikzcd}
\]
Here the top arrows are open immersions of stacks, the bottom arrows are birational morphisms between proper varieties, and the the vertical arrows are good moduli space morphisms. One can show that $\sX^-$ and $\ocM_4(\frac{5}{9}, \frac{23}{44})$ are isomorphic near the locus of curves whose Chow polystable degeneration lies in $\Gamma$. 
\end{remark}

Combining Proposition \ref{prop:59-ewall} and the diagram from Remark \ref{rmk:sX-andVGIT}, we illustrate the wall crossing for $\alpha = \frac{5}{9}$ near the curve $\Gamma \subset \fX^{\rm c}$ in Figure \ref{fig:M59}.  The description of the preimage on the left side follows from Theorem \ref{thm:lastVGITwall} and the description of the preimage on the right from Corollary \ref{cor:U+toU}. For a complete description of the wall crossing at $\alpha = \frac{5}{9}$, see Theorem \ref{thm:new-models}.

\begin{center}
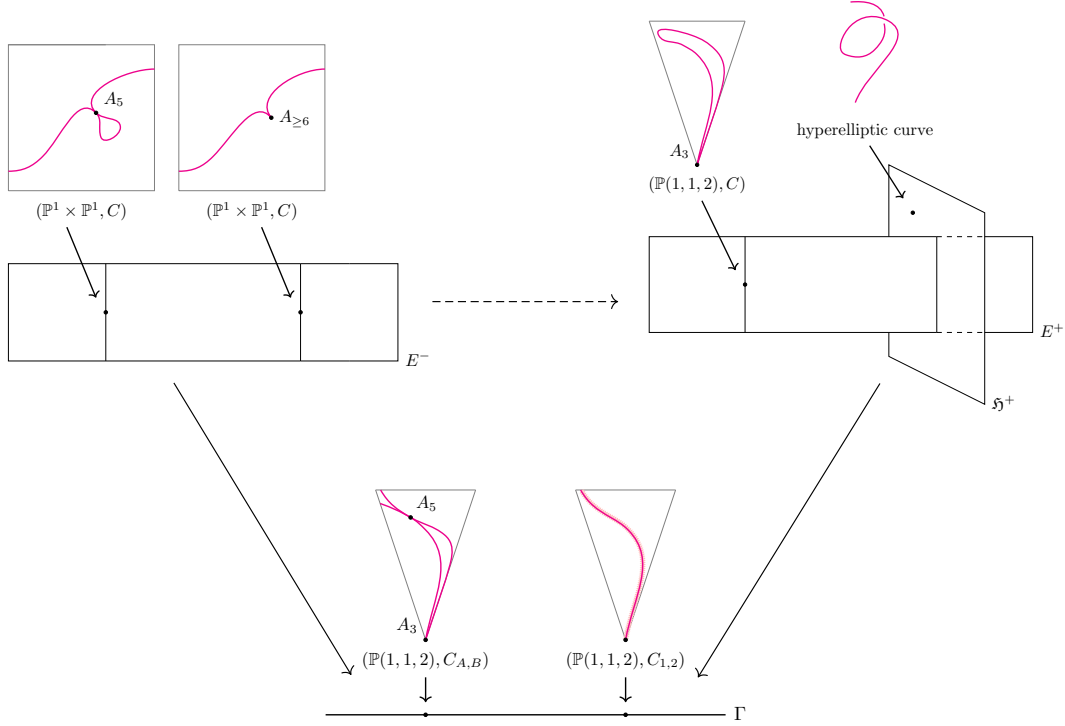
\begin{figure}
\begin{tikzcd}
        \resizebox{.4\textwidth}{!}{
    \begin{tabular}{c}
    {\scalebox{4}{\begin{tabular}{c}
	\begin{tikzpicture}[gren0/.style = {draw, circle,fill=greener!80,scale=.7},gren/.style ={draw, circle, fill=greener!80,scale=.4},blk/.style ={draw, circle, fill=black!,scale=.08},plc/.style ={draw, circle, color=white!100,fill=white!100,scale=0.02},smt/.style ={draw, circle, color=gray!100,fill=gray!100,scale=0.02},lbl/.style ={scale=.2}] 


		\draw [-,color=gray] (-1-1, 3) to (1-1,3);
            \draw [-,color=gray] (-2,0) to (-2,3) to (1,3) to (1,0) to (-2,0);

        \draw [-,color=magenta,thick] (-2,.4) to [out=0, in=-135] (.7-2,.7) to [out=45,in=135] (-.2,1.6) to [out=-45, in=110] (-0.1,1.1) to [out=-70,in=-90] (.3,1.3) to [out=90,in=-45] (-.2,1.6) to [out=135, in=180] (3-2,2.5);
    \filldraw (-.2,1.6) circle (1pt);

		\node[below, node font=\small] at (0-.5,-.1) {$(\bP^1 \times \bP^1, C)$};

		\node[above right, node font=\small] at (-.2,1.6) {$A_5$};

        \draw [-,color=gray] (0+1.5,0) to (0+1.5,3) to (3+1.5,3) to (3+1.5,0) to (0+1.5,0);

    \draw [-,color=magenta,thick] (1.5,.4) to [out=0, in=-135] (.7+1.5,.7) to [out=45,in=130] (1.8+1.6,1.5) to [out=130, in=180] (3+1.5,2.5);
    \filldraw (3.4,1.5) circle (1pt);

    \node[below, node font=\small] at (1.5+1.5,-.1) {$(\bP^1 \times \bP^1, C)$};
    \node[right, node font=\small] at (3.4,1.5) {$A_{\ge 6}$};

    \draw[-]  (4,-3.5) to (5,-3.5);
    \draw[-]  (4,-1.5) to (5,-1.5);
    \draw[-]  (5,-1.5) to (6,-1.5) to (6, -3.5) to (5, -3.5);
    \draw[-]  (4, -3.5) to (-2, -3.5) to (-2,-1.5) to (4,-1.5);
    \draw[-] (0,-1.5) to (0,-3.5); 
    \filldraw (0, -2.5) circle (1pt);
    \draw[-] (4,-1.5) to (4,-3.5); 
    \filldraw (4, -2.5) circle (1pt);
    \node[right, node font=\small] at (6,-3.5) {$E^-$};

    \draw[->,thick] (-.8,-.75) to (-.2, -2.2);
    \draw[->,thick] (3.2,-.75) to (3.8, -2.2);
	\end{tikzpicture}
    
\end{tabular}}}
    \end{tabular}
    } \arrow[rd, start anchor={[xshift=-20ex]},
end anchor={[xshift=-15ex,yshift=-15ex]}] \arrow[rr,dashed,start anchor={[xshift=-3ex,yshift=-8ex]},
end anchor={[xshift=-19ex,yshift=-8ex]}] & & 
    \hspace{-1.25in} \resizebox{.4\textwidth}{!}{
    \begin{tabular}{c}
    {\scalebox{4}{\begin{tabular}{c}
	\begin{tikzpicture}[gren0/.style = {draw, circle,fill=greener!80,scale=.7},gren/.style ={draw, circle, fill=greener!80,scale=.4},blk/.style ={draw, circle, fill=black!,scale=.08},plc/.style ={draw, circle, color=white!100,fill=white!100,scale=0.02},smt/.style ={draw, circle, color=gray!100,fill=gray!100,scale=0.02},lbl/.style ={scale=.2}] 

		\draw [-,color=gray] (0-1,0) to (1-1, 3);
		\draw [-,color=gray] (0-1,0) to (-1-1, 3);

		\draw [-,color=gray] (-1-1, 3) to (1-1,3);

    \draw [-,color=magenta,thick] (0-1,0) to [out=80,in=-45] (0-1,2.3) to [out=135,in=-30] (-.3-1,2.5) to [out=150,in=-60] (-1.8,2.7) to [out = 120, in=160] (-1.1, 2.7) to [out=-20,in=135] (.38-1,2.4)  to [in=75,out=-45] (.4-1,1.2) to (-1,0);

		\filldraw (0-1,0) circle (1pt);

		\node[below, node font=\small] at (0-1,-.1) {$(\bP(1,1,2), C)$};
		\node[above left, node font=\small] at (0-1,0) {$A_{3}$};

    \draw [-,color=magenta,thick] (2.3, 1.3) to [in=-30,out=70] (3, 3) to [out=150,in=45] (2.2, 2.9) to [out=-135,in=160] (2.1,2.3) to [out=-20,in=-135] (2.7, 2.4) to [out=45,in=-75] (2.9, 2.95);
    \draw [-,color=magenta,thick](2.9, 3.1) to [out=105,in=5] (2.2, 3.4);

    \node[below, node font=\small] at (1.5+1,1) {hyperelliptic curve};

    \draw[-,dashed]  (4,-3.5) to (5,-3.5);
    \draw[-,dashed]  (4,-1.5) to (5,-1.5);
    \draw[-]  (5,-1.5) to (6,-1.5) to (6, -3.5) to (5, -3.5);
    \draw[-]  (4, -3.5) to (-2, -3.5) to (-2,-1.5) to (4,-1.5);
    \draw[-] (0,-1.5) to (0,-3.5); 
    \filldraw (0, -2.5) circle (1pt);
    \draw[-] (3,-1.5) to (3,0) to (5, -1) to (5, -5) to (3,-4) to (3,-3.5);
    \draw[-] (4,-1.5) to (4,-3.5);
    \filldraw (3.5,-1) circle (1pt);
    \node[right, node font=\small] at (6,-3.5) {$E^+$};
    \node[right, node font=\small] at (5,-5) {$\frak{H}^+$};

    \draw[->,thick] (-.9,-.75) to (-.2, -2.2);
    \draw[->,thick] (2.6,.4) to (3.3, -.7);
	\end{tikzpicture}
    
\end{tabular}}}
    \end{tabular}
    } \arrow[ld,start anchor={[xshift=2ex,yshift=-2ex]},
end anchor={[xshift=-5ex,yshift=-16ex]}] \\
    & \hspace{-1.25in}    \resizebox{.4\textwidth}{!}{
    \begin{tabular}{c}
    {\scalebox{4}{\begin{tabular}{c}
	\begin{tikzpicture}[gren0/.style = {draw, circle,fill=greener!80,scale=.7},gren/.style ={draw, circle, fill=greener!80,scale=.4},blk/.style ={draw, circle, fill=black!,scale=.08},plc/.style ={draw, circle, color=white!100,fill=white!100,scale=0.02},smt/.style ={draw, circle, color=gray!100,fill=gray!100,scale=0.02},lbl/.style ={scale=.2}] 
		
		\draw [-,color=gray] (0,0) to (1, 3);
		\draw [-,color=gray] (0,0) to (-1, 3);

		\draw [-,color=gray] (-1, 3) to (1,3);

        \draw [-,color=pink, decorate,decoration={snake, amplitude=.4mm,segment length=.4mm}] (0,0) to [out=80,in=-45] (.05,2.2) to [out=135,in=-30] (-.3,2.45) to [out=150,in=-60] (-.9,3);
		\draw [-,color=magenta,thick] (0,0) to [out=80,in=-45] (.05,2.2) to [out=135,in=-30] (-.3,2.45) to [out=150,in=-60] (-.9,3);

		\filldraw (0,0) circle (1pt);

		\node[below, node font=\small] at (0,-.1) {$(\bP(1,1,2), C_{1,2})$};

    	\draw [-,color=gray] (0-4,0) to (1-4, 3);
		\draw [-,color=gray] (0-4,0) to (-1-4, 3);

		\draw [-,color=gray] (-1-4, 3) to (1-4,3);

		\draw [-,color=magenta,thick] (0-4,0) to [out=80,in=-45] (0-4,2.2) to [out=135,in=-30] (-.3-4,2.45) to [out=150,in=-60] (-.9-4,3);
		\draw [-,color=magenta,thick] (0-4,0) to (.4-4,1.2) to [out=75,in=-40] (.38-4,2.1) to [out=140,in=-30] (-.3-4,2.45) to [out=150,in=-20] (-.91-4,2.73);

		\filldraw (0-4,0) circle (1pt);
		\filldraw (-.3-4,2.45) circle (1pt);
		
		\node[below, node font=\small] at (0-4,-.1) {$(\bP(1,1,2), C_{A,B})$};
		\node[above left, node font=\small] at (0-4,0) {$A_3$};
		\node[above right, node font=\small] at (-.35-4,2.45) {$A_5$};


        \draw[-,thick]  (-6,-1.5) to (2,-1.5);
        \filldraw (-4,-1.5) circle (1pt);
        \filldraw (0,-1.5) circle (1pt);
        \draw[->, thick] (-4,-.75) to (-4, -1.25);
        \draw[->, thick] (0,-.75) to (0, -1.25);
        \node[right] at (2,-1.5) {$\Gamma$};
	\end{tikzpicture}
\end{tabular}}}
    \end{tabular}
    } & 
\end{tikzcd}

    \caption{A depiction of the wall crossing $\oM_4(\frac{5}{9} -\epsilon) \rightarrow \oM_4(\frac{5}{9}) \leftarrow \oM_4(\frac{5}{9}+\epsilon)$ near the curve $\Gamma$.}
    \label{fig:M59}
\end{figure}
\end{center}

\begin{remark}\label{rmk:highergenus-hyperellipticflip}
We expect the construction of $\sX^{\pm}$ and the wall crossing diagram from Remark \ref{rmk:sX-andVGIT} can be generalized to any genus as open substacks $\sX_g^{\circ, \pm}\hookrightarrow \sX_g^{\circ}$ that admit separated good moduli spaces $\fX_g^{\circ, \pm} \to \fX_g^{\circ}$ fitting into a similar diagram. This would provide a local model for the hyperelliptic flip in every genus $g\geq 4$. The idea is to follow the local VGIT approach of \cite[Theorem 1.3]{AFS17} at the hyperbolic ribbon $[R_{\hyp}]$. By Proposition \ref{prop:hyp-rib-intersection} and \cite{AHR20}, we know that the \'etale local structure of $\sX_g^{\circ}$ at $[R_{\hyp}]$ is given by 
\[
[\bA(T_{\sX_g^{\circ}, [R_{\hyp}]})/\Aut(R_{\hyp})]\cong [\bA(H^0(\bP^1, L^{\otimes -2})\oplus \Ext^1(\Omega_{\bP^1}, L))/(\GL(2)/\bmu_{g+1})], 
\]
where $L = \cO_{\bP^1}(-g-1)$. The center $\bG_m\cong \bG_m/\bmu_{g+1}$ of $\GL(2)/\bmu_{g+1}$ acts on $H^0(\bP^1, L^{\otimes -2})$ and $\Ext^1(\Omega_{\bP^1}, L)$ with weights $2$ and $-1$ respectively. Thus we can have a VGIT wall crossing for the $\GL(2)/\bmu_{g+1}$-action on $\bA(H^0(\bP^1, L^{\otimes -2})\oplus \Ext^1(\Omega_{\bP^1}, L))$ via the non-trivial character $\det:\GL(2) \to \bG_m$ in the sense of \cite{Tha96, DH98}.
We expect that this VGIT on the \'etale Luna slice can be extended to give $\sX_g^{\circ, \pm}\hookrightarrow \sX_g^{\circ}$.
\end{remark}

\section{Log Calabi--Yau wall crossing}\label{sec:logCYwallcrossing}

In this section, we describe the wall crossing for curves with separating $A_5$-singularities or $D_4$-singularities. Both loci can be replaced using the log Calabi--Yau wall crossing framework developed in \cite{ABB+}. Our approach is to glue the good moduli spaces therein with the Chow quotient. 
Note that these loci from the Chow moduli space are described in \cite[Section 3]{CMJL12} (see also \cite[Remark 3.3]{CMJL14}).

\subsection{Curves with separating $A_5$-singularities}

In the Chow moduli space $\fX^{\rm c}$, there is a unique polystable point with separating $A_5$-singularities: $C_{2A_5}= V(x_0x_3-x_1x_2, x_0x_2^2 +x_1^2 x_3)$.  In the associated wall crossing in the Hassett--Keel program, this is replaced by curves on a surface that is two copies of a $(1,3)$-weighted blow-up of $\bP(1,1,3)$, glued them along the ruling with weight $1$ upside down. The exceptional locus after the wall is a divisor as the birational transform of $\delta_2$. As we shall see later, this locus is replaced at $\alpha = \frac{19}{29}$, which matches the prediction from \cite{CMJL14}.

\begin{lemma}\label{lem:C2A5-is-K-polystable}
    The pair $(\bP^1 \times \bP^1, \frac{2}{3} C_{2A_5})$, depicted in Figure \ref{fig:C2A5andCD}, is K-polystable in $\ocM_4^{\K}$.
\end{lemma}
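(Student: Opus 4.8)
The plan is to verify K-polystability of the log Fano pair $(\bP^1\times\bP^1,(\tfrac23-\epsilon)C_{2A_5})$ for $0<\epsilon\ll 1$ directly, and then invoke Proposition~\ref{prop:klt-is-stable} and the structure of $\ocM_4^{\K}$ (Theorem~\ref{thm:bpCYwallcrossing}) to transfer this to K-polystability of $(\bP^1\times\bP^1,\tfrac23 C_{2A_5})$ inside $\ocM_4^{\CY}$ at the boundary coefficient $c=\tfrac23$. Since $C_{2A_5}$ is \emph{not} klt (it has two $A_5$ singularities, which are lc but not klt, at coefficient $\tfrac23$ since $\mathrm{lct}(A_5)=\tfrac23$ is not attained strictly), I cannot use Proposition~\ref{prop:klt-is-stable} verbatim; instead I would compute stability thresholds and the torus action directly. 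The pair has a large automorphism group: $C_{2A_5}=V(x_0x_3-x_1x_2,x_0x_2^2+x_1^2x_3)$ on the smooth quadric is invariant under a one-dimensional torus $\bT\cong\bG_m$ (acting with suitable weights on the $x_i$ preserving both equations), so the natural approach is equivariant K-stability: by the equivariant valuative criterion it suffices to check $\beta$-positivity on $\bT$-invariant divisorial valuations, plus polystability via the reductive automorphism group / Fut vanishing on the torus.

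First I would identify $\Aut(\bP^1\times\bP^1,C_{2A_5})$ precisely. Each $A_5$ singularity sits on a $(1,0)$-ruling, the two rulings are exchanged or fixed, and the residual $(1,3)$-curve is pinned down; a direct computation with the two defining equations should show the identity component is exactly $\bG_m$ (this matches the statement $\Aut^0(C_{A,B})\cong\bG_m$ for the companion curves and the triviality-of-centralizer remark in Lemma~\ref{lem:sm-chow}'s proof). Then I would enumerate the $\bT$-invariant prime divisors over $\bP^1\times\bP^1$: the two invariant rulings through the $A_5$ points, the invariant $(1,3)$ curve component of $C_{2A_5}$, and the divisorial valuations extracted by weighted blow-ups resolving each $A_5$ (these are the dangerous ones, since the log discrepancy tends to $0$ as $\epsilon\to 0$). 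For each such valuation $E$ I would compute $A_{X,(\frac23-\epsilon)D}(E)$ and $S_{(\frac23-\epsilon)D}(E)$ — the latter via the Zariski decomposition / volume of $-K_X-(\frac23-\epsilon)D-tE$ on this toric-with-one-curve surface — and check $\beta=A-S>0$. The $\epsilon\to0$ limit should give $\beta=0$ exactly along the $A_5$-exceptional valuations (these are the lc places forced by the boundary-CY structure), consistent with the pair being merely K-polystable, not K-stable, at $c=\frac23$.

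For the polystability (as opposed to semistability) at $c=\frac23$, the cleanest route is to use the bpCY framework: by Theorem~\ref{thm:CY-gms} and Theorem~\ref{thm:bpCYwallcrossing}, $[(\bP^1\times\bP^1,\frac23 C_{2A_5})]$ is a point of $\ocM_4^{\CY}$, and it lies in $\ocM_4^{\K}$ iff $(X_b,(\frac23-\epsilon)D_b)$ is K-semistable; K-polystability in $\ocM_4^{\K}$ then amounts to showing the point is closed in $\ocM_4^{\K}$, equivalently that every test configuration with $\beta\le 0$ (for the $(\frac23-\epsilon)$-pair) is a product configuration induced by $\bT$. Concretely I would show: (i) the pair is K-semistable for $0<\epsilon\ll1$ by the $\beta$-computation above; (ii) the only special degenerations (isotrivial limits) preserving the bidegree-$(3,3)$ CY structure and the separating-$A_5$ type are products, using that $\Aut^0=\bG_m$ and the explicit $\bG_m$-orbit structure; hence the orbit is closed. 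The main obstacle will be step (i)'s volume computation: resolving the two $A_5$ singularities involves a chain of weighted blow-ups, and computing $S(E)$ for the exceptional curves over a non-toric (but still nicely-fibered) surface requires carefully tracking the Zariski-decomposed volume of $-K_X-(\frac23-\epsilon)D-tE$ along the whole chain and confirming the limit $\beta\to 0^+$ rather than $\beta\to 0^-$; getting the sign right near the wall is the delicate point, and it is precisely what distinguishes K-polystability at $c=\frac23$ from instability. An alternative that may sidestep this is to exhibit $(\bP^1\times\bP^1,\frac23 C_{2A_5})$ as a $\bG_m$-equivariant degeneration of nearby klt pairs $(\bP^1\times\bP^1,\frac23 D)$ (which are K-stable by Proposition~\ref{prop:klt-is-stable}) and use openness of K-semistability plus Fut vanishing on $\bT$ (forced by $K_X+\frac23 D\sim_{\bQ}0$ and $\bT$-invariance of $D$) to conclude semistability, then closedness of the $\bG_m$-orbit for polystability.
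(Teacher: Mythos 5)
Your main approach (direct equivariant $\beta$-computation on weighted blow-ups resolving the $A_5$ singularities) would in principle work, but it is a genuinely different and much more laborious route than the one the paper actually takes. The paper's proof is a three-line interpolation argument that avoids all volume computations: (i) by Fedorchuk \cite[Section 2.1]{Fed12}, $C_{2A_5}$ is \emph{GIT polystable} in the GIT of bidegree $(3,3)$ curves on $\bP^1\times\bP^1$; (ii) by \cite[Theorem 1.4]{ADL20}, this GIT polystability is equivalent to K-polystability of $(\bP^1\times\bP^1,\epsilon C_{2A_5})$ for $0<\epsilon\ll 1$; (iii) $(\bP^1\times\bP^1,\tfrac23 C_{2A_5})$ is an lc log CY pair, hence K-semistable, so the interpolation result \cite[Proposition 2.13]{ADL19} upgrades this to K-polystability of $(\bP^1\times\bP^1,c\,C_{2A_5})$ for every $c\in(0,\tfrac23)$, in particular at $c=\tfrac23-\epsilon$. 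The key insight you are missing is that the low-coefficient endpoint of the interpolation is handled entirely by the GIT-to-K correspondence rather than a direct stability check, which turns a delicate numerical wall analysis into a citation.

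One further point about your proposed "alternative" at the end: degenerating nearby klt pairs $(\bP^1\times\bP^1,\tfrac23 D)$ to $(\bP^1\times\bP^1,\tfrac23 C_{2A_5})$ and invoking openness of K-semistability does not close the argument. Openness runs in the wrong direction — it says that K-semistability propagates from a point to a neighborhood, not that K-semistability of a generic nearby pair passes to an isotrivial $\bG_m$-limit (the limit of K-semistable points in a good moduli stack is K-semistable only if it is the polystable degeneration, which is exactly what you are trying to establish). The correct substitute is precisely what the paper uses: $(\bP^1\times\bP^1,\tfrac23 C_{2A_5})$ is K-semistable on its own merits because it is lc log CY, and the interpolation lemma then forces K-polystability at all intermediate coefficients once you have it at one small coefficient.
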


\begin{proof}
    By \cite[Section 2.1]{Fed12}, $C_{2A_5}$ is GIT polystable in the GIT moduli stack of $(3,3)$ curves on $\bP^1 \times \bP^1$.  By \cite[Theorem 1.4]{ADL20}, $(\bP^1 \times \bP^1, \epsilon C_{2A_5})$ is K-polystable for $0<\epsilon \ll 1$.  Because $(\bP^1 \times \bP^1, \frac{2}{3} C_{2A_5})$ is log canonical log Calabi--Yau, it is K-semistable, and hence by interpolation \cite[Proposition 2.13]{ADL19}, $(\bP^1 \times \bP^1, c C_{2A_5})$ is K-polystable for all $c \in (0, \frac{2}{3})$.  In particular, it is K-polystable for  $c = \frac{2}{3} - \epsilon$ so polystable in $\ocM_4^{\K}$. 
\end{proof}

In $\ocM_4^{\CY}$, however, the pair $(\bP^1 \times \bP^1, \frac{2}{3}C_{2A_5})$ does not remain polystable.  It admits a further degeneration to a curve on the non-normal surface $S_{2A_5}$ defined below.

\begin{defn}\label{def:S2A5}
    Let $\pi:S_{A_5}\to \bP(1,1,3)$ be the weighted blow-up of $\bP(1,1,3)_{[x_0,x_1,x_2]}$ at the point $p_1 = [1,0,0]$ with weight $(1,3)$ in $(x_1, x_2)$, depicted in Figure \ref{fig:SA5}. Denote by $\ell_{1}\subset S_{A_5}$ the strict transform of $(x_1=0)\subset \bP(1,1,3)$. Let $[u_0, u_1]$ be the coordinates of $\ell_1$ such that $\pi([u_0, u_1]) = [u_0, 0, u_1^3]$. Let $S_{2A_5}$ be the surface obtained as the union of two isomorphic surfaces  $S_{A_5}$ and $S_{A_5}'$ glued along $\ell_1$ and $\ell_1'$ via the map $[u_0,u_1]\mapsto [u_0', u_1']:=[u_1, u_0]$, depicted in Figure \ref{fig:S2A5}.
\end{defn}

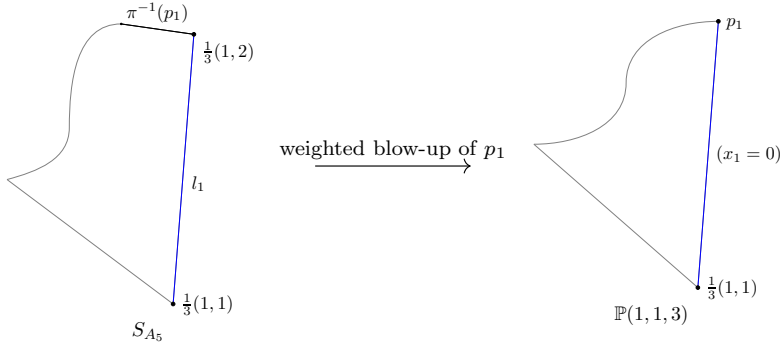
\begin{figure}[h]
    \centering
    \begin{tikzcd}
    \resizebox{.25\textwidth}{!}{
    \begin{tabular}{c}
    {\scalebox{4}{\begin{tabular}{c}
\begin{tikzpicture}[gren0/.style = {draw, circle,fill=greener!80,scale=.7},gren/.style ={draw, circle, fill=greener!80,scale=.4},blk/.style ={draw, circle, fill=black!,scale=.08},plc/.style ={draw, circle, color=white!100,fill=white!100,scale=0.02},smt/.style ={draw, circle, color=gray!100,fill=gray!100,scale=0.02},lbl/.style ={scale=.2}] 

\draw [-,color=gray] (-3.4,-.2) to [out=15, in=-90] (-2.2,.8) to [out=90, in=180] (-1.2,2.8);
\draw [-,color=gray] (-.2,-2.6) to (-3.4,-0.2);
\draw [-,color=gray] (0.2,2.6) to (-0.2,-2.6);
\draw [-] (-1.2,2.8) to (0.2,2.6);

\draw [-,color=blue] (0.2,2.6) to (-.2,-2.6);
\draw [-,color=black] (-1.2,2.8) to (0.2,2.6);

\filldraw (-.2,-2.6) circle (1pt);
\filldraw (0.2,2.6) circle (1pt);
\filldraw[black] (-1.2,2.8) circle (0.4pt);

\node[below left, node font=\normalsize] at (-.2,-2.8) {$S_{A_5}$};
\node[right, node font=\small] at (-.2,-2.6) {$\frac{1}{3}(1,1)$};
\node[below right, node font=\small] at (.2,2.6) {$\frac{1}{3}(1,2)$};
\node[above, node font=\small] at (-.5,2.7) {$\pi^{-1}(p_1)$};
\node[below right, node font=\small] at (0,0) {$l_1$};
\end{tikzpicture}
\end{tabular}}}
    \end{tabular}
    } \qquad \arrow[r, start anchor={[xshift=-3ex]}, end anchor={[xshift=4ex]}, "\text{weighted blow-up of } p_1"] & 
    \qquad \quad \resizebox{.25\textwidth}{!}{
    \begin{tabular}{c}
    {\scalebox{4}{\begin{tabular}{c}
\begin{tikzpicture}[gren0/.style = {draw, circle,fill=greener!80,scale=.7},gren/.style ={draw, circle, fill=greener!80,scale=.4},blk/.style ={draw, circle, fill=black!,scale=.08},plc/.style ={draw, circle, color=white!100,fill=white!100,scale=0.02},smt/.style ={draw, circle, color=gray!100,fill=gray!100,scale=0.02},lbl/.style ={scale=.2}] 

\draw [-,color=gray] (-3.4,.2) to [out=0, in=-90] (-1.6,1.4) to [out=90, in=180] (0.2,2.6);

\draw [-,color=gray] (-.2,-2.6) to (-3.4,.2);
\draw [-,color=gray] (0.2,2.6) to (-.2,-2.6);

\draw [-,color=blue] (0.2,2.6) to (-.2,-2.6);

\filldraw (-.2,-2.6) circle (1pt);
\filldraw (.2,2.6) circle (1pt);

\node[below left, node font=\normalsize] at (-.2,-2.8) {$\bP(1,1,3)$};
\node[right, node font=\small] at (-.2,-2.6) {$\frac{1}{3}(1,1)$};
\node[right, node font=\small] at (0.2,2.6) {$p_1$};
\node[right, node font=\small] at (0,0) {$(x_1 = 0)$};
\end{tikzpicture}
\end{tabular}}}
    \end{tabular}
    } \\
    \end{tikzcd}
    \vspace{-2em} 
    \caption{The construction of $S_{A_5}$.}
    \label{fig:SA5}
\end{figure}

\begin{figure}[h]
\resizebox{.32\textwidth}{!}{
\begin{tabular}{c}
{\scalebox{4}{\begin{tabular}{c}
\begin{tikzpicture}[gren0/.style = {draw, circle,fill=greener!80,scale=.7},gren/.style ={draw, circle, fill=greener!80,scale=.4},blk/.style ={draw, circle, fill=black!,scale=.08},plc/.style ={draw, circle, color=white!100,fill=white!100,scale=0.02},smt/.style ={draw, circle, color=gray!100,fill=gray!100,scale=0.02},lbl/.style ={scale=.2}] 

\draw [-,color=gray] (-3.4,-.2) to [out=15, in=-90] (-2.2,.8) to [out=90, in=180] (-1.2,2.8);
\draw [-,color=gray] (-.2,-2.6) to (-3.4,-0.2);
\draw [-,color=gray] (0.2,2.6) to (-0.2,-2.6);
\draw [-] (-1.2,2.8) to (0.2,2.6);

\draw [-,color=blue] (0.2,2.6) to (-.2,-2.6);
\draw [-,color=black] (-1.2,2.8) to (0.2,2.6);

\filldraw (-.2,-2.6) circle (1pt);
\filldraw (0.2,2.6) circle (1pt);
\filldraw (-1.2,2.8) circle (0.4pt);


\draw [-,color=gray] (3.4,.2) to [out=195, in=90] (2.2,-.8) to [out=-90, in=0] (1.2,-2.8);
\draw [-,color=gray] (.2,2.6) to (3.4,0.2);
\draw [-,color=gray] (-0.2,-2.6) to (0.2,2.6);
\draw[-] (1.2,-2.8) to (-0.2,-2.6);

\draw [-,color=blue] (-0.2,-2.6) to (.2,2.6);
\draw [-,color=black] (1.2,-2.8) to (-0.2,-2.6);

\filldraw (.2,2.6) circle (1pt);
\filldraw (-0.2,-2.6) circle (1pt);
\filldraw (1.2,-2.8) circle (0.4pt);



\node[above left, node font=\small] at (-.2,-2.6) {$\frac{1}{3}(1,1)$};
\node[above right, node font=\small] at (-.2,-2.6) {$\frac{1}{3}(1,2)$};
\node[below left, node font=\small] at (.2,2.6) {$\frac{1}{3}(1,2)$};
\node[below right, node font=\small] at (.2,2.6) {$\frac{1}{3}(1,1)$};
\node[below left, node font=\small] at (0,0) {$l_1$};
\node[above right, node font=\small] at (0,0) {$l_1'$};

\end{tikzpicture}
\end{tabular}}}
\end{tabular}
}
\caption{The surface $S_{2A_5}$.}
\label{fig:S2A5}
\end{figure}
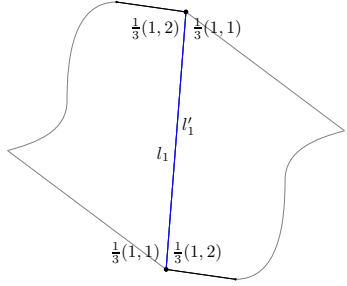

\begin{prop}\label{prop:S2A5polystable}
The polystable degeneration of $(\bP^1\times \bP^1, \frac{2}{3}C_{2A_5})$ in $\ocM_4^{\CY}$ is $(S_{2A_5}, \frac{2}{3}C_{2A_5}^\circ)$ where $C_{2A_5}^\circ$ is defined by the strict transform of the equations $(x_2^2 - x_0^3 x_2 = 0)$ and $(x_2'^2 - x_0'^3 x_2' = 0)$ on the irreducible components $S_{A_5}$ and $S_{A_5}'$, respectively.  This is depicted in the lower right of Figure \ref{fig:C2A50}.
\end{prop}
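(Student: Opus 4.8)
The goal is to identify the unique closed orbit in the good moduli space fiber over $[(\bP^1\times\bP^1,\tfrac23 C_{2A_5})]$. I would prove this in three steps: (1) verify that $(S_{2A_5},\tfrac23 C_{2A_5}^\circ)$ is a genuine $\bk$-point of $\ocM_4^{\CY}$, i.e.\ an slc boundary polarized Calabi--Yau pair lying in the closure of $\cM_4^\circ$; (2) construct a one-parameter $\bQ$-Gorenstein degeneration of $(\bP^1\times\bP^1,\tfrac23 C_{2A_5})$ with central fiber $(S_{2A_5},\tfrac23 C_{2A_5}^\circ)$, so the two pairs lie in the same fiber of $\phi\colon\ocM_4^{\CY}\to\oM_4^{\CY}$; and (3) show that the orbit of $(S_{2A_5},\tfrac23 C_{2A_5}^\circ)$ is closed in $\ocM_4^{\CY}$. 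Since $\ocM_4^{\CY}$ admits a good moduli space (Theorem~\ref{thm:CY-gms}), every fiber of $\phi$ contains a unique closed orbit, so (2) and (3) together pin down $(S_{2A_5},\tfrac23 C_{2A_5}^\circ)$ as the polystable representative.

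\textbf{Steps (1) and (2).} By Definition~\ref{def:S2A5} the normalization of $S_{2A_5}$ is $\nu\colon S_{A_5}\sqcup S_{A_5}'\to S_{2A_5}$ with conductor $\ell_1+\ell_1'$, so by Kollár's gluing theory \cite{Kol23} it suffices to check that $(S_{A_5},\,\ell_1+\tfrac23\nu^*C_{2A_5}^\circ)$ is log canonical with $K_{S_{A_5}}+\ell_1+\tfrac23\nu^*C_{2A_5}^\circ\sim_\bQ 0$, and that the first residues (differents) on $\ell_1$ and $\ell_1'$ are interchanged by the gluing involution $[u_0{:}u_1]\mapsto[u_1{:}u_0]$. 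The $\bQ$-linear equivalence and the log canonicity are local computations on the weighted blowup $\pi\colon S_{A_5}\to\bP(1,1,3)$: express $K_{S_{A_5}}$ and the class of $C_{2A_5}^\circ$ (the strict transform of $x_2(x_2-x_0^3)=0$) in terms of $\pi^*\cO(1)$ and the exceptional curve, and check discrepancies at the two cyclic quotient points $\tfrac13(1,1)$ and $\tfrac13(1,2)$ and along the two tangent components of $C_{2A_5}^\circ$. The remaining conditions in Definition~\ref{def:CY4stack} (Hilbert polynomial $\chi(m)=(2m+1)^2$, index dividing $3$, and membership in $\overline{\cM_4^\circ}$) follow automatically once step (2) produces a flat family in the moduli functor with this central fiber. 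For step (2) I would build the degeneration by a weighted blowup of $(\bP^1\times\bP^1)\times\bA^1$ at the two points $(p_i,0)$, where the $p_i$ are the $A_5$ points of $C_{2A_5}$, with weights adapted to the orders of tangency, and then pass to the relative log canonical (CY) model over $\bA^1$; the original $\bP^1\times\bP^1$ component is contracted onto the gluing curve, so the central fiber is $S_{2A_5}$ with boundary $\tfrac23 C_{2A_5}^\circ$. By properness of $\ocM_4^{\CY}$ (Theorem~\ref{thm:CY-gms}) this family may be taken inside the stack.

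\textbf{Step (3), the main obstacle.} The crux is the closedness of the orbit of $(S_{2A_5},\tfrac23 C_{2A_5}^\circ)$. I would argue via $S$-completeness and $\Theta$-reductivity of $\ocM_4^{\CY}$ (Theorem~\ref{thm:CY-gms}) together with the Hilbert--Mumford criterion for good moduli spaces: the point is closed iff every test configuration of it has central fiber isomorphic to it. Since $\ocM_4^{\CY}$ contains no type III boundary polarized CY pairs \cite[Theorem~16.11]{ABB+}, the number of surface components cannot increase, so a test configuration can only (i) deform the gluing of the two copies of $S_{A_5}$, or (ii) isotrivially degenerate the pair $(S_{A_5},\ell_1+\tfrac23\nu^*C_{2A_5}^\circ)$ on each component. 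For (i), the automorphisms of $(S_{A_5},\ell_1)$ act transitively on the relevant gluing data (identifications $\ell_1\cong\ell_1'\cong\bP^1$ preserving the marked points), so all gluings are isomorphic; for (ii), $(S_{A_5},\ell_1+\tfrac23\nu^*C_{2A_5}^\circ)$ is a plt log Calabi--Yau pair with an explicit nearly-toric structure that admits no nontrivial $\bG_m$-degeneration inside the CY moduli. Alternatively — and likely more cleanly — one can first note that the normalization $(S_{A_5}\sqcup S_{A_5}',\ell_1\sqcup\ell_1')$ of $(S_{2A_5},0)$ is plt, so $\ocM_4^{\CY}$ is smooth at this point by Proposition~\ref{prop:deformations}; then the étale local model is $[\Spec\widehat{\cO}/\Aut(S_{2A_5},\tfrac23 C_{2A_5}^\circ)]$ with our pair at the origin, and computing $\Aut^0(S_{2A_5},\tfrac23 C_{2A_5}^\circ)\cong\bG_m$ and the $\Aut$-weights on the tangent space $\Ext^1$ lets one conclude via $S$-completeness that the origin's orbit is closed. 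A secondary technical difficulty, already present in step (1), is the discrepancy and different bookkeeping on the non-normal surface $S_{2A_5}$; I expect this to be routine but lengthy.
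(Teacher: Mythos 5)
Your overall three-step plan (verify the pair is a $\bk$-point, construct a degeneration, prove closedness of the orbit) matches the skeleton of the paper's proof, and steps (1) and (2) are in the right spirit: the paper likewise verifies slc-ness and builds the degeneration by modifying the trivial family $(\bP^1\times\bP^1, C_{2A_5})\times\bA^1$, although it does so very explicitly, using a $(3,1,1)$-weighted blow-up of the two $A_5$ points followed by two constructed flips and a final contraction of $\tilde{\cS}_0^+$, rather than appealing abstractly to a relative log canonical model. That level of explicitness matters here because one needs to identify the central fiber precisely as $(S_{2A_5}, C_{2A_5}^\circ)$, not just some slc limit.

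The genuine gap is in step (3). Your alternative (b) claims $\Aut^0(S_{2A_5},\tfrac23 C_{2A_5}^\circ)\cong\bG_m$, but this is incorrect: the actual connected automorphism group is $\bG_m^2$. On each component $S_{A_5}$, the automorphism $[x_0:x_1:x_2]\mapsto[x_0:\lambda x_1:x_2]$ of $\bP(1,1,3)$ preserves the blown-up point $p_1=[1,0,0]$, the strict transform of $x_2(x_2-x_0^3)=0$, and fixes the gluing curve $\ell_1=(x_1=0)$ \emph{pointwise}; since it is trivial on $\ell_1$, it extends to $S_{2A_5}$ by the identity on the other component, giving independent $\bG_m$-actions on the two components. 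Your alternative (a) also has a problem: the pair $(S_{A_5},\ell_1+\tfrac23\nu^*C_{2A_5}^\circ)$ is \emph{not} plt, because $\nu^*C_{2A_5}^\circ$ has an $A_5$ singularity and $\lct$ of an $A_5$ plane curve singularity is exactly $\tfrac23$, so the pair is strictly lc there; moreover the dichotomy ``test configurations either deform the gluing or isotrivially degenerate each component'' does not follow from the absence of type III pairs alone and would need a separate argument. The paper sidesteps all of this with a short, clean argument: exhibit $\bG_m^2\subset\Aut(S_{2A_5}, C_{2A_5}^\circ)$ as above, and invoke \cite[Proposition 8.11]{ABB+}, which bounds $\dim\Aut$ of such a bpCY pair by $2$ and states that achieving this bound forces polystability. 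This is the key lemma your proposal is missing, and it replaces both of your more involved (and in part erroneous) alternatives.
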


\begin{proof}
We construct an explicit degeneration of $(\bP^1\times \bP^1, C_{2A_5})$ to $(S_{2A_5}, C_{2A_5}^\circ)$ by birationally modifying the constant family $(\cS, \cC) := (\bP^1\times \bP^1, C_{2A_5}) \times \bA^1$ over $\bA^1$.  We perform several birational transformations of the family, modifying the central fiber to $(S_{2A_5}, C_{2A_5}^\circ)$.  The central fibers of the transformations are depicted in Figure \ref{fig:C2A50}.  Let $p_1$, $p_2$ denote the two $A_5$ singularities in the central fiber $t  =0$.  Let $f_1$ (resp. $f_2$) be the ruling of $\cS_0 = \bP^1 \times \bP^1$ through $p_1$ (resp. $p_2$) contained in the curve $\cC_0 = C_{2A_5}$, and let $C$ denote the remaining component.  Suppose $p_1$ (resp. $p_2$) is given by local coordinates $(y_1, z_1, t)$ with local equation $y_1(y_1-z_1^3) = 0, t = 0$  (resp. $(y_2, z_2, t)$ with equation $y_2(y_2-z_2^3) = 0, t = 0$) and $f_1 = (y_1 = 0)$ (resp. $f_2 = (y_2 = 0)$). Perform a $(3,1,1)$ weighted blow-up of $\cS$ in the coordinates $(y_i, z_i, t)$ of $p_1$ and $p_2$, resulting in exceptional divisors $E_1 \cong \bP(1,1,3)$ and $E_2 \cong \bP(1,1,3)$.  Denote this blow up by $\phi: \hcS \to \cS$ and denote by $\tcS_0$ the strict transform of $\cS_0$, so $\hcS_0 = \tcS_0 \cup E_1 \cup E_2$.  By direct computation, in a suitable choice of coordinates on $E_i \cong \bP(1,1,3)_{[x_0:x_1:x_2]}$, $\tcS_0 \cap E_i = (x_1 = 0)$ and $\cC|_{E_i}$ has equation $x_2(x_2-x_0^3) = 0$.

The surface $\tcS_0$ is the $(3,1)$ weighted blow-up of $\bP^1 \times \bP^1$ in the two points $p_1$ and $p_2$.  Let $\Delta_i = E_i|_{\tcS_0}$ be the exceptional divisor over $p_i$.  By construction, $(\Delta_i)^2 = -\frac{1}{3}$ and $\tcS_0$ has a $\frac{1}{3}(1,2)$ singularity along $\Delta_i$.  If $\tilde{f}_i$ denotes the strict transform of $f_i$, $\tilde{f}_i$ is contained in the smooth locus of $\tcS_0$ and by direct computation $(\phi|_{\tcS_0})^* f_i = \tilde{f}_i + 3 \Delta_i$, so $(\tilde{f}_i)^2 = -3$.  Furthermore, $\phi^*(\cS_0) = \tcS_0 + E_1 + E_2$, so $\tcS_0 \cdot \tilde{f}_i = -1$.  It follows that $\tilde{f}_i$ is extremal in $\overline{NE}(\hcS)$.  

Let $F_i = f_i \times \bA^1 \subset \cS$ and, as above, compute $\phi^* F_i = \tilde{F}_i + 3 \Delta_i$, where $\tilde{F}_i$ is the strict transform of $F_i$.  Then, $\tilde{F}_i \cdot \tilde{f}_i = -3$ and $\phi|_{\tilde{F}_i}: \tilde{F}_i \to F_i$ is the $(1,1)$-weighted blow up of $p_i \in F_i$, hence $(\tilde{f}_i)^2 = -1$ where this intersection number is computed on the surface $\tilde{F}_i$.  As $(\hcS, \tilde{F}_i)$ is a plt pair and $(K_{\hcS} + \tilde{F}_i) \cdot \tilde{f}_i = -1$, $\tilde{f}_i$ is in fact a contractible extremal ray in $\overline{NE}(\hcS)$ and we may construct the flip of $\tilde{f}_i$ for each $i$.  Denote by $\hcS^+$ the (relative) flip (over $\bA^1$) of $\tilde{f}_1$ and $\tilde{f}_2$.  We construct the flip explicitly below.

By the exact sequence 
\[ 0 \to \cN_{\tilde{f}_i/\tilde{F}_i} \to \cN_{\tilde{f_i}/\hcS} \to \cN_{\tilde{F}_i/\hcS}|_{\tilde{f}_i} \to 0,\]
which by the computation above is 
\[ 0 \to \cO(-1) \to \cN_{\tilde{f_i}/\hcS} \to \cO(-3) \to 0, \]
we see that $\cN_{\tilde{f_i}/\hcS} \cong \cO(-1) \oplus \cO(-3)$ (as $\Ext^1_{\bP^1}(\cO(-3), \cO(-1)) = 0$).  

In $\hcS$, blow up each curve $\tilde{f}_i$ in a morphism $\phi_1: \hcS_1 \to \hcS$, which by the normal bundle computation yields exceptional divisors $\Sigma_{i,1} \cong \bF_2$.  By a similar computation, the negative section $\sigma_{i,1}$ of $\Sigma_{i,1}$ has normal bundle $\cN_{\sigma_{i,1}/\hcS_1} \cong \cO(-1) \oplus \cO(-2)$ and by construction is not contained in the strict transform of $\tcS_0$.  Next, blow up each $\sigma_{i,1}$ in a morphism $\phi_2: \hcS_2 \to \hcS_1$ which results in exceptional divisors $\Sigma_{i,2} \cong \bF_1$, with negative sections $\sigma_{i,2}$ not contained in the strict transform of $\Sigma_{i,1}$ and $\cN_{\sigma_{i,2}/\hcS_2} \cong \cO(-1) \oplus \cO(-1)$.  Now, perform the Atiyah flop of each $\sigma_{i,2}$ by blowing up each $\sigma_{i,2}$ to create an exceptional divisor $\Sigma_{i,3} \cong \bP^1 \times \bP^1$ and then contracting the opposite ruling.  Let $\hcS_2 \dashrightarrow \hcS_2^+$ be the Atiyah flop.  The strict transform of $\Sigma_{i,2}$ in $\hcS_2^+$ is isomorphic to $\bP^2$ with normal bundle $\cO(-2)$, so may be contracted to a $\frac{1}{2}(1,1,1)$ singularity in a morphism $\phi_2^+: \hcS_2^+ \to \hcS_1^+$.  Finally, the strict transform of $\Sigma_{i,1}$  in $\hcS_1^+$ is isomorphic to $\bP(1,1,2)$ with normal bundle $\cO(-3)$ (in the weighted coordinates on $\bP(1,1,2)$) and hence may be contracted to a $\frac{1}{3}(1,1,2)$ singularity in a morphism $\phi_1^+: \hcS_1^+ \to \hcS^+$.  

Denote by $\tilde{f}_i^+$ the image of $\Sigma_{i,3}$ on $\hcS^+$.  It is straightforward from the construction to see that the induced map $\hcS \dashrightarrow \hcS^+$ is an isomorphism of $\hcS \setminus \{ \tilde{f}_1 \cup \tilde{f}_2 \}$ with $\hcS^+ \setminus \{ \tilde{f}_1^+ \cup \tilde{f}_2^+ \}$ and that $\tilde{f}_i^+$ is contractible in $\hcS^+$.  Furthermore, if $\tilde{F}_i^+$ is the strict transform of $\tilde{F}_i$, by computation, $(K_{\hcS^+} + \tilde{F}_i^+)\cdot \tilde{f}_i^+ = \frac{1}{3}$ and hence by uniqueness of flips \cite[Corollary 6.4]{KM98}, the rational map $\hcS \dashrightarrow \hcS^+$ is the flip of $\{ \tilde{f}_1, \tilde{f}_2\}$

Now, observe that the rational map $\hcS \dashrightarrow \hcS^+$ restricted to $\tcS_0$ is the contraction of each $\tilde{f}_i \subset \tcS_0$, creating two $\frac{1}{3}(1,1)$ singularities on its strict transform $\tcS_0^+$.  Similarly, the restriction to $E_i$ is the $(1,3)$ weighted blow-up of $E_i$ at the intersection point of $E_i$ and $\tilde{f}_i$ with exceptional divisor denoted $\tilde{f}_i^+$, the flip of $\tilde{f}_i$.  Denote by $E_i^+$ the strict transform of $E_i$.

The surface $\tcS_0^+$ then has Picard rank 2 with two extremal rays, $[\Delta_i^+:=\tcS_0^+\cap E_i^+]$ (both $i = 1, 2$ are equivalent) and $[C^+]$, where $C^+$ is the strict transform of $C$ in $\tcS_0^+$.  Indeed, by computation, $(\Delta_i^+)^2 = 0$ and $(C^+)^2 = 0$, yet $C^+ \cdot \Delta_i^+ = 1$ so both curves are extremal.  Because $\tcS_0^+ \cdot C^+ = -2$, $[C^+]$ is in fact extremal in $\hcS^+$, and as $(\hcS^+,\tcS_0^+)$ is plt and $(K_{\hcS^+} + \tcS_0^+) \cdot C^+= -2$, $[C^+]$ can be contracted over $\bA^1$ in $\hcS^+$.  Denote the image of this contraction by $\oS$.  This contraction glues the surfaces $E_1^+$ and $E_2^+$ together along $\Delta_1^+$ and $\Delta_2^+$.  In summary, the family $\oS \to \bA^1$ has generic fiber $\bP^1 \times \bP^1$ and central fiber $E_1^+ \cup E_2^+ \cong S_{2A_5}$ and the image of the family of curves $\cC$ has equation as claimed.

Finally, it is straightforward to see that $C_{2A_5}^\circ$ has an $A_5$ singularity in the smooth locus of each component of $S_{2A_5}$ and $(S_{2A_5}, \frac{2}{3} C_{2A_5}^\circ)$ is slc so indeed is a point of $\ocM_4^{\CY}$.  To verify the polystability, observe that $\Aut(S_{2A_5}, C_{2A_5}^\circ)$ contains $\mathbb{G}_m^2$: on each component, we may consider the automorphism $[x_0:x_1:x_2] \mapsto [x_0: \lambda x_1: x_2 ]$ of $\bP(1,1,3)$.  This fixes the curves $x_2(x_2 - x_0^3) = 0$ and $x_1 =0$ and thus extends to an automorphism of each component of $S_{2A_5}$ fixing $C_{2A_5}^\circ$ and the intersection of the two components.  In particular, we have an independent $\mathbb{G}_m$ action on each component of $S_{2A_5}$ so  $\mathbb{G}_m^2 \subset \Aut(S_{2A_5}, C_{2A_5}^\circ)$.  Therefore, by \cite[Proposition 8.11]{ABB+}, $\Aut(S_{2A_5}, C_{2A_5}^\circ) = \bG_m^2$ and $(S_{2A_5}, \frac{2}{3} C_{2A_5}^\circ)$ is polystable. 
\end{proof}

\begin{figure}[h]
\begin{adjustwidth}{-5ex}{0ex}
    \centering
    \begin{tikzcd}
    \resizebox{.5\textwidth}{!}{
    \begin{tabular}{c}
    {\scalebox{4}{\begin{tabular}{c}
\begin{tikzpicture}[gren0/.style = {draw, circle,fill=greener!80,scale=.7},gren/.style ={draw, circle, fill=greener!80,scale=.4},blk/.style ={draw, circle, fill=black!,scale=.08},plc/.style ={draw, circle, color=white!100,fill=white!100,scale=0.02},smt/.style ={draw, circle, color=gray!100,fill=gray!100,scale=0.02},lbl/.style ={scale=.2}] 

\draw [-,color=gray] (-.2,.1) to (-.2,3.9) to (4.2,3.9) to (4.2,.1) to (-.2,.1);

\draw [-,color=magenta,thick] (0.1+.8,1.3+1.4) to [out=45, in=-90] (1,3.9);
\draw [-,color=magenta,thick] (-0.1+3.2,-1.3+2.6) to [out=-135, in=90] (3.2,0.1);
\draw [-,color=magenta,thick] (-.05+.8,-.6+1.4) to [out=-15,in=165] (.05+3.2,.6+2.6);

\draw [-,color=gray] (-1.7+.8,.1+1.4) to [out=0, in=-90] (-.8+.8,.7+1.4) to [out=90, in=180] (0.1+.8,1.3+1.4);

\draw [-,color=gray] (-.1+.8,-1.3+1.4) to (-1.7+.8,.1+1.4);
\draw [-,color=gray] (0.1+.8,1.3+1.4) to (-.1+.8,-1.3+1.4);

\draw [-,color=blue] (0.1+.8,1.3+1.4) to (-.1+.8,-1.3+1.4);

\draw [-,color=teal,thick] (-.05+.8,-.6+1.4) -- (-.8+.8,.05+1.4);
\draw [-,color=teal,thick] (-.6+.8,-.7+1.4) to [out=45, in=-45] (-.5+.8,-.21+1.4) to [out=135, in=-135] (0.1+.8,1.3+1.4);
\filldraw (-.1+.8,-1.3+1.4) circle (0.5pt);
\filldraw (.1+.8,1.3+1.4) circle (0.5pt);

\draw [-,color=gray] (1.7+3.2,-.1+2.6) to [out=180, in=90] (.8+3.2,-.7+2.6) to [out=-90, in=0] (-0.1+3.2,-1.3+2.6);

\draw [-,color=gray] (.1+3.2,1.3+2.6) to (1.7+3.2,-.1+2.6);
\draw [-,color=gray] (-0.1+3.2,-1.3+2.6) to (.1+3.2,1.3+2.6);

\draw [-,color=blue] (-0.1+3.2,-1.3+2.6) to (.1+3.2,1.3+2.6);

\draw [-,color=teal,thick] (.05+3.2,.6+2.6) -- (.8+3.2,-.05+2.6);
\draw [-,color=teal,thick] (.6+3.2,.7+2.6) to [out=-135, in=135] (.5+3.2,.21+2.6) to [out=-45, in=45] (-0.1+3.2,-1.3+2.6);
\filldraw (.1+3.2,1.3+2.6) circle (0.5pt);
\filldraw (-.1+3.2,-1.3+2.6) circle (0.5pt);

\node[below left, node font=\normalsize] at (-.5,1.3) {$E_1 = \bP(1,1,3)$};
\node[above right, node font=\normalsize] at (4.5,2.7) {$E_2 = \bP(1,1,3)$};
\node[above, node font=\normalsize] at (2,3.9) {$\tilde{\cS}_0$};
\node[right, node font=\small] at (1.1, 3.4) {$\tilde{f}_1$};
\node[left, node font=\small] at (3, 0.6) {$\tilde{f}_2$};
\end{tikzpicture}
\end{tabular}}}
    \end{tabular}
    } \quad \arrow[r, dashed, start anchor={[xshift=-3ex]}, end anchor={[xshift=4ex]}, "\text{flip of } \tilde{f}_1 \text{ and } \tilde{f}_2"] \arrow[d, near start, start anchor={[xshift=-4ex]}, end anchor={[xshift=-6ex]}, "\text{blow up of } p_1 \text{ and } p_2"] & 
    \quad \resizebox{.45\textwidth}{!}{
    \begin{tabular}{c}
    {\scalebox{4}{\begin{tabular}{c}
\begin{tikzpicture}[gren0/.style = {draw, circle,fill=greener!80,scale=.7},gren/.style ={draw, circle, fill=greener!80,scale=.4},blk/.style ={draw, circle, fill=black!,scale=.08},plc/.style ={draw, circle, color=white!100,fill=white!100,scale=0.02},smt/.style ={draw, circle, color=gray!100,fill=gray!100,scale=0.02},lbl/.style ={scale=.2}] 

\draw [-,color=gray] (-1,-1.3) to (-1,1.3) to (3,1.3) to (3,-1.3) to (-1,-1.3);

\draw [-,color=magenta,thick] (0,0) to [out=-15,in=165] (2,0);

\draw [-,color=gray] (-1.7,-.1) to [out=15, in=-90] (-1.1,.4) to [out=90, in=180] (-.6,1.4);
\draw [-,color=gray] (-.1,-1.3) to (-1.7,-0.1);
\draw [-,color=gray] (0.1,1.3) to (-0.1,-1.3);
\draw [-,color=gray] (-.6,1.4) to (0.1,1.3);

\draw [-,color=blue] (0.1,1.3) to (-.1,-1.3);
\draw [-,color=black] (-.6,1.4) to (0.1,1.3);

\draw [-,color=teal,thick] (0,0) -- (-.85,.65);
\draw [-,color=teal,thick] (-.8,-.5) to [out=30, in=-45] (-.4,.31) to [out=150, in=-135] (-.25,1.35);
\filldraw (-.1,-1.3) circle (0.5pt);
\filldraw (0.1,1.3) circle (0.5pt);
\filldraw[black] (-.6,1.4) circle (0.2pt);

\draw [-,color=gray] (3.7,.1) to [out=195, in=90] (3.1,-.4) to [out=-90, in=0] (2.6,-1.4);
\draw [-,color=gray] (2.1,1.3) to (3.7,0.1);
\draw [-,color=gray] (2-0.1,-1.3) to (2.1,1.3);
\draw [-,color=gray] (2.6,-1.4) to (2-0.1,-1.3);

\draw [-,color=blue] (2-0.1,-1.3) to (2.1,1.3);
\draw [-,color=black] (2.6,-1.4) to (2-0.1,-1.3);

\draw [-,color=teal,thick] (2,0) -- (2.85,-.65);
\draw [-,color=teal,thick] (2.8,.5) to [out=-150, in=135] (2.4,-.31) to [out=-30, in=45] (2.25,-1.35);
\filldraw (2.1,1.3) circle (0.5pt);
\filldraw (2-0.1,-1.3) circle (0.5pt);
\filldraw[black] (2.6,-1.4) circle (0.2pt);

\node[below left, node font=\normalsize] at (-1.5,-.1) {$E_1^+ = S_{A_5}$};
\node[above right, node font=\normalsize] at (3.5,.1) {$E_2^+ = S_{A_5}$};
\node[above, node font=\normalsize] at (1.2,1.3) {$\tilde{\cS}_0^+$};
\node[above, node font=\small] at (-.4, 1.4) {$\tilde{f}_1^+$};
\node[below, node font=\small] at (2.4, -1.4) {$\tilde{f}_2^+$};
\end{tikzpicture}
\end{tabular}}}
    \end{tabular}
    } \arrow[d, swap, near start, start anchor={[xshift=4ex]}, end anchor={[xshift=6ex]}, "\text{contraction of } \tilde{\cS}_0^+"] \\
    \resizebox{.2\textwidth}{!}{
    \begin{tabular}{c}
    {\scalebox{4}{\begin{tabular}{c}
\begin{tikzpicture}[gren0/.style = {draw, circle,fill=greener!80,scale=.7},gren/.style ={draw, circle, fill=greener!80,scale=.4},blk/.style ={draw, circle, fill=black!,scale=.08},plc/.style ={draw, circle, color=white!100,fill=white!100,scale=0.02},smt/.style ={draw, circle, color=gray!100,fill=gray!100,scale=0.02},lbl/.style ={scale=.2}] 

\draw [-,color=gray] (0,0) to (0,4) to (4,4) to (4,0) to (0,0);

\draw [color=magenta,thick] (.8,0) -- (.8,4);
\draw [color=magenta,thick] (3.2,0) -- (3.2,4);
\draw [-,color=magenta,thick] (.2,.8) to [out=0, in=-90] (.8,1.4) to [out=90,in=-90] (3.2,2.6) to [out=90, in=180] (3.8,3.2);
\filldraw[color=teal] (.8,1.4) circle (1pt);
\filldraw[color=teal] (3.2,2.6) circle (1pt);

\node[below, node font=\normalsize] at (2,0) {$\cS_0=\bP^1 \times \bP^1$};
\node[left, node font=\small] at (.8,1.4) {$p_1$};
\node[right, node font=\small] at (3.2,2.6) {$p_2$};
\node[right, node font=\small] at (.8, 3.4) {$f_1$};
\node[left, node font=\small] at (3.2, 0.6) {$f_2$};
\end{tikzpicture}
\end{tabular}}}
    \end{tabular}
    } \qquad \qquad \qquad \qquad & \qquad \qquad \qquad \resizebox{.29\textwidth}{!}{
    \begin{tabular}{c}
    {\scalebox{4}{\begin{tabular}{c}
\begin{tikzpicture}[gren0/.style = {draw, circle,fill=greener!80,scale=.7},gren/.style ={draw, circle, fill=greener!80,scale=.4},blk/.style ={draw, circle, fill=black!,scale=.08},plc/.style ={draw, circle, color=white!100,fill=white!100,scale=0.02},smt/.style ={draw, circle, color=gray!100,fill=gray!100,scale=0.02},lbl/.style ={scale=.2}] 

\draw [-,color=gray] (-3.4,-.2) to [out=15, in=-90] (-2.2,.8) to [out=90, in=180] (-1.2,2.8);
\draw [-,color=gray] (-.2,-2.6) to (-3.4,-0.2);
\draw [-,color=gray] (0.2,2.6) to (-0.2,-2.6);
\draw [-] (-1.2,2.8) to (0.2,2.6);

\draw [-,color=blue] (0.2,2.6) to (-.2,-2.6);
\draw [-,color=black] (-1.2,2.8) to (0.2,2.6);

\draw [-,color=teal,thick] (0,0) -- (-1.7,1.3);
\draw [-,color=teal,thick] (-1.6,-1) to [out=30, in=-45] (-.8,.62) to [out=150, in=-135] (-.5,2.7);
\filldraw (-.2,-2.6) circle (1pt);
\filldraw (0.2,2.6) circle (1pt);
\filldraw (-1.2,2.8) circle (0.4pt);


\draw [-,color=gray] (3.4,.2) to [out=195, in=90] (2.2,-.8) to [out=-90, in=0] (1.2,-2.8);
\draw [-,color=gray] (.2,2.6) to (3.4,0.2);
\draw [-,color=gray] (-0.2,-2.6) to (0.2,2.6);
\draw[-] (1.2,-2.8) to (-0.2,-2.6);

\draw [-,color=blue] (-0.2,-2.6) to (.2,2.6);
\draw [-,color=black] (1.2,-2.8) to (-0.2,-2.6);

\draw [-,color=teal,thick] (0,0) -- (1.7,-1.3);
\draw [-,color=teal,thick] (1.6,1) to [out=-150, in=135] (.8,-.62) to [out=-30, in=45] (.5,-2.7);
\filldraw (.2,2.6) circle (1pt);
\filldraw (-0.2,-2.6) circle (1pt);
\filldraw (1.2,-2.8) circle (0.4pt);

\node[below, node font=\normalsize] at (-.2,-2.8) {$S_{2A_5}$};
\node[left, node font=\normalsize] at (-1.4, -.5) {$C_{2A_5}^\circ$};

\end{tikzpicture}
\end{tabular}}}
    \end{tabular}
    } \\
    \end{tikzcd}
    \vspace{-2em} 
    \caption{The construction of $(S_{2A_5}, C_{2A_5}^\circ$) as a specialization of $(\bP^1 \times \bP^1, C_{2A_5})$.}
    \label{fig:C2A50}
    \end{adjustwidth}
\end{figure}
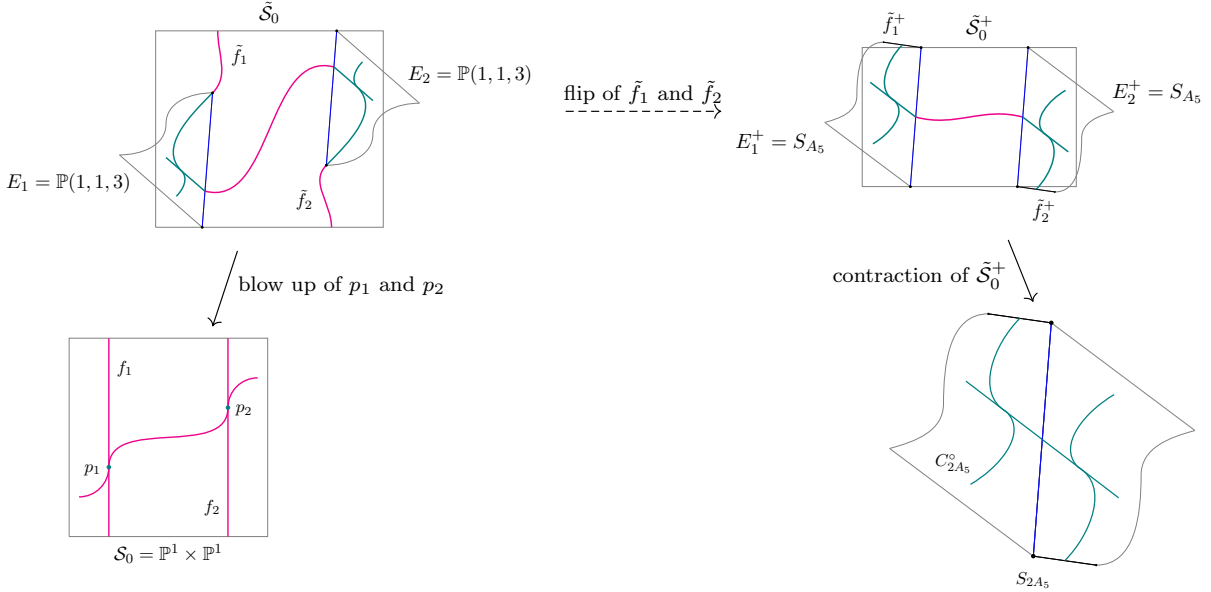

\begin{lem}\label{lem:S2A5-deform}
The $\bQ$-Gorenstein deformation of $S_{2A_5}$ is unobstructed. 
Moreover, every small $\bQ$-Gorenstein deformation of $S_{2A_5}$ is isomorphic to $S_{2A_5}$ or $\bP^1\times \bP^1$.
\end{lem}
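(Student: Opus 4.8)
The plan is to control the $\bQ$-Gorenstein deformation functor $\Def^{\bQ\mathrm G}_{S_{2A_5}}$, first deducing unobstructedness from the already-established smoothness of the moduli of pairs, and then pinning down the one-dimensional tangent space via a local-to-global analysis. I would begin by recording the geometry of $S_{2A_5}$: it is a demi-normal surface with normalization $\nu\colon S_{A_5}\sqcup S_{A_5}'\to S_{2A_5}$, conductor $\ell_1\sqcup\ell_1'$, and double curve $\Delta\cong\bP^1$ (the common image of $\ell_1$ and $\ell_1'$); away from two points $q_\pm$ it has an ordinary double-curve singularity of type $xy=0$, while at $q_+$ and $q_-$ the two branches contribute a $\tfrac13(1,1)$ and an $A_2=\tfrac13(1,2)$ cyclic quotient singularity glued along $\Delta$. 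Both assertions of the lemma will be reduced to statements about $\bT^1_{\bQ\mathrm G}(S_{2A_5})$ and $\bT^2_{\bQ\mathrm G}(S_{2A_5})$, all computed stack-theoretically on the index-one orbifold.

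For unobstructedness I would argue that smoothness of $\Def^{\bQ\mathrm G}_{S_{2A_5}}$ follows from smoothness of the pair deformation functor. By Theorem~\ref{thm:bpCY-is-smooth} (or by Proposition~\ref{prop:deformations} once one checks that $(S_{A_5},\ell_1)$ is plt), $\ocM_4^{\CY}$ is smooth at $[(S_{2A_5},\tfrac23 C_{2A_5}^\circ)]$, so the $\bQ$-Gorenstein deformations of the pair are unobstructed. The forgetful morphism of functors $\Def^{\bQ\mathrm G}_{(S_{2A_5},\,\frac23 C_{2A_5}^\circ)}\to\Def^{\bQ\mathrm G}_{S_{2A_5}}$ is smooth and surjective: its fibers are the slc-locus open subschemes of the deformations of $C_{2A_5}^\circ$ inside the ambient surface, whose obstruction group is $H^1(C_{2A_5}^\circ,\cN_{C_{2A_5}^\circ/S_{2A_5}})$, and via $0\to\cO_{S_{2A_5}}\to\cO_{S_{2A_5}}(C_{2A_5}^\circ)\to\cN_{C_{2A_5}^\circ/S_{2A_5}}\to0$ together with $H^2(\cO_{S_{2A_5}})=0$ this vanishes provided $H^1(S_{2A_5},\cO_{S_{2A_5}}(C_{2A_5}^\circ))=0$; the latter I would verify using $\cO(C_{2A_5}^\circ)\sim_{\bQ}-\tfrac32 K_{S_{2A_5}}$ and a Kawamata--Viehweg type vanishing, or by pulling back through $\nu$ and computing on the rational surface $S_{A_5}$. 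Surjectivity then follows since small deformations preserve slc-ness. A smooth morphism of deformation functors with unobstructed source has unobstructed target (the prorepresenting ring of the target is a quotient of a power series ring by part of a regular system of parameters, hence regular), so $\Def^{\bQ\mathrm G}_{S_{2A_5}}$ is smooth.

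For the classification I would compute $\bT^1_{\bQ\mathrm G}(S_{2A_5})$ through the local-to-global spectral sequence $H^p(S_{2A_5},\cT^q_{\bQ\mathrm G})\Rightarrow\bT^{p+q}_{\bQ\mathrm G}(S_{2A_5})$, where $\cT^0_{\bQ\mathrm G}$ is the $\bQ$-Gorenstein tangent sheaf and $\cT^1_{\bQ\mathrm G}$ the sheaf of local first-order $\bQ$-Gorenstein deformations, supported on the singular locus. There are two inputs. First, $H^1(\cT^0_{\bQ\mathrm G})=H^2(\cT^0_{\bQ\mathrm G})=0$: via Kollár's conductor exact sequence this reduces to $H^i(S_{A_5},\cT_{S_{A_5}}(-\log\ell_1))=0$ for $i=1,2$, where $H^1=0$ expresses rigidity of the pair $(S_{A_5},\ell_1)$ — which holds because $S_{A_5}$ is a weighted blow-up of the rigid orbifold $\bP(1,1,3)$ at a point of the rigid curve $(x_1=0)$ — and $H^2=0$ follows from rationality of $S_{A_5}$ and Serre duality (note $-K_{S_{A_5}}$ is big). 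Second, $H^0(\cT^1_{\bQ\mathrm G})$ is one-dimensional: away from $q_\pm$ the sheaf $\cT^1_{\bQ\mathrm G}$ is the line bundle $\cN_{\Delta/S_{A_5}}\otimes\cN_{\Delta/S_{A_5}'}$ on $\Delta\cong\bP^1$, which I would compute from the explicit blow-up and the ``flip'' gluing $[u_0,u_1]\mapsto[u_1,u_0]$ and show has degree $0$, hence is $\cO_{\bP^1}$; and the local $\bQ$-Gorenstein $T^1$ of the germ at each $q_\pm$ adds nothing to the global sections because the $\tfrac13(1,1)$ factor is $\bQ$-Gorenstein rigid and forces any smoothing to be constant along $\Delta$. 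Together with $H^1(\cT^1_{\bQ\mathrm G})=0$ this yields $\bT^1_{\bQ\mathrm G}(S_{2A_5})\cong H^0(\cT^1_{\bQ\mathrm G})\cong\bk$, so $\Def^{\bQ\mathrm G}_{S_{2A_5}}$ is prorepresented by a smooth one-dimensional base. The unique nontrivial direction is realized by the one-parameter smoothing of Proposition~\ref{prop:S2A5polystable}, whose general fiber is $\bP^1\times\bP^1$; hence any small $\bQ$-Gorenstein deformation of $S_{2A_5}$ over a base $(B,0)$ is the pullback of that family and its fibers are isomorphic either to $S_{2A_5}$ (over the origin) or to $\bP^1\times\bP^1$ (elsewhere).

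The main obstacle is the local analysis at the two corners $q_\pm$: one must identify the non-normal germ where a $\tfrac13(1,1)$ singularity of one branch is glued along a smooth curve to an $A_2$ singularity of the other, compute its $\bQ$-Gorenstein $T^1$, and verify that it contributes no extra deformation directions — equivalently, that $\cN_{\Delta/S_{A_5}}\otimes\cN_{\Delta/S_{A_5}'}$ is exactly $\cO_{\bP^1}$ rather than of positive degree (which would allow partial smoothings of $\Delta$ and hence other surfaces). This ``d-semistability at the corners'' computation is what forces the deformation space to be a single formal parameter, and therefore it is what makes the dichotomy in the statement hold.
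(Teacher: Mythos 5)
Your argument takes a genuinely different route from the paper. The paper's proof is short and external: it identifies $S_{2A_5}$ with the surface parameterized by $\xi_{3,3}$ in Deopurkar--Han \cite{DH21} by matching the toric fan of the $(1,3)$-weighted blow-up, and then quotes their Theorem~6.6 for unobstructedness and their Theorem~1(4) for the divisoriality of the $S_{2A_5}$ locus, which forces every nearby fiber to be $S_{2A_5}$ or $\bP^1\times\bP^1$. Your proposal instead aims at a self-contained deformation-theoretic argument: deduce unobstructedness of $\Def^{\bQ\mathrm{G}}_{S_{2A_5}}$ from smoothness of the pair stack via a smooth forgetful map, and pin down $\dim\bT^1_{\bQ\mathrm{G}}=1$ by the local-to-global spectral sequence, identifying the unique deformation direction with the known smoothing to $\bP^1\times\bP^1$. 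The reduction through the forgetful map $\Def^{\bQ\mathrm{G}}_{(S,\frac23 D)}\to\Def^{\bQ\mathrm{G}}_{S}$ is sound in principle (note $\omega^{[m]}_{X/B}(nD)$ commuting with base change for all $m,n$ in particular gives the $\bQ$-Gorenstein condition on $X$ alone at $n=0$, so the map is well defined); and your computation $\ell_1^2=0$ on $S_{A_5}$ is correct, so the degree-$0$ claim for $\cN_{\Delta/S_{A_5}}\otimes\cN_{\Delta/S_{A_5}'}$ is right. If completed, this approach would have the advantage of not depending on \cite{DH21} and of exhibiting $\bT^1_{\bQ\mathrm{G}}$ explicitly, which is useful information.

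That said, as written there are genuine gaps that you flag but do not close, and the statement cannot be considered proved without them. First, the vanishing $H^1(S_{A_5},\cT_{S_{A_5}}(-\log\ell_1))=0$ is asserted on the grounds that $S_{A_5}$ is a blow-up of a rigid orbifold along a rigid curve, but that heuristic does not substitute for a computation: one should either do the toric Euler-sequence/Demazure computation directly on $S_{A_5}$, or use the equivariant structure of the weighted blow-up, and in the same stroke check $H^2=0$ carefully, since $-K_{S_{A_5}}$ is big but not ample and $(S_{A_5},\ell_1)$ is plt, not klt. Second, and more seriously, the analysis at the two corners $q_\pm$ is not carried out. At each $q_\pm$ the double curve meets a $\tfrac13(1,1)$ point on one branch and a $\tfrac13(1,2)=A_2$ point on the other, and one has to compute the local $\bQ$-Gorenstein $T^1$ of that non-normal germ, determine how its sections interact with the degree-$0$ line bundle $\cN_{\Delta/S_{A_5}}\otimes\cN_{\Delta/S_{A_5}'}$ on the open part of $\Delta$, and conclude that the global sections of $\cT^1_{\bQ\mathrm{G}}$ form a one-dimensional space. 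The assertion that ``the $\tfrac13(1,1)$ factor is $\bQ$-Gorenstein rigid and forces any smoothing to be constant along $\Delta$'' identifies the correct mechanism (indeed $\tfrac13(1,1)$ is not a $T$-singularity, while $\tfrac13(1,2)$ is Du Val), but the gluing of the two branches at those points is exactly where partial smoothings could in principle enter, so this step needs an explicit local calculation. Until both points are settled, the dimension count, and hence the dichotomy $S_{2A_5}$ or $\bP^1\times\bP^1$, remains conjectural in your proposal; this is precisely the content the paper outsources to \cite[Theorem~1(4) and Theorem~6.6]{DH21}.
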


\begin{proof}We will show that our surface $S_{2A_5}$ is the same surface that is parameterized by $\xi_{3,3}$ in \cite[Theorem 1]{DH21}. From here, by \cite[Theorem 6.6]{DH21}, the $\bQ$-Gorenstein deformations of the surface $S_{2A_5}$ are unobstructed. Furthermore, by \cite[Theorem 1 (4)]{DH21}, the locus of pairs $(S, C)$ is a divisor, and so the only possible deformations are $S$ or $\bP^1 \times \bP^1$.

By Definition \ref{def:S2A5}, the surface $S_{2A_5}$ is constructed via gluing two copies $S_{A_5}$ where each $S_{A_5}$ is obtained via (1,3)-weighted blowup of $\mathbb{P}(1,1,3)$ at the point $[1,0,0]$. We recall that the fan of $\mathbb{P}(1,1,3)$ is generated in appropriate coordinates by primitive vectors $\mathbf{v}_1 = (0,1), \mathbf{v}_2 =(1,0)$ and $\mathbf{v}_3 =(-1,-3)$. The fan of the weighted blowup is obtained by forming the star subdivision associated to the (1,3)-weighted blowup, i.e. adding the vector $3\mathbf{v}_2 +\mathbf{v}_3$ (see e.g. \cite[10.3]{KM92}). This construction agrees with the description of the surface parameterized by $\xi_{3,3}$ in \cite{DH21}, see \cite[Remark 5.14 and Figure 9]{DH21} for the toric description. \end{proof}

\begin{lemma}\label{lem:D-lci-on-S2A5}
    If $[(S_{2A_5}, \frac{2}{3}D)] \in \ocM_4^{\CY}$, then $D$ is Cartier and has lci singularities.  
\end{lemma}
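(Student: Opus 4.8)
\textbf{Proof plan for Lemma \ref{lem:D-lci-on-S2A5}.}

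The plan is to leverage the explicit description of $S_{2A_5}$ as the gluing of two copies of $S_{A_5}$ along the curves $\ell_1, \ell_1'$, together with the fact that a curve $D$ with $[(S_{2A_5}, \frac{2}{3}D)]\in\ocM_4^{\CY}$ has arithmetic genus four and must restrict to a well-behaved curve on each component. First I would recall that $D$ is a relative K-flat Mumford divisor, so in particular $D$ does not contain any component of $S_{2A_5}$ and $D|_{S_{A_5}}$, $D|_{S_{A_5}'}$ are honest curves of the appropriate numerical class on each component. Since $K_{S_{2A_5}} + \frac{2}{3}D \sim_\bQ 0$, restricting to the component $S_{A_5}$ and using adjunction along $\ell_1$ one determines the numerical class of $D|_{S_{A_5}}$: it must be the class whose restriction to $\ell_1$ has the correct degree forced by the conductor/different and whose self-intersection is compatible with the Calabi--Yau condition. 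The upshot is that $D|_{S_{A_5}}$ lies in a linear system not passing through the two cyclic quotient singularities $\frac13(1,1)$ and $\frac13(1,2)$ of $S_{A_5}$, and meeting $\ell_1$ transversally at two points (the preimages of the gluing). This is the key numerical input; I would extract it from the construction in Proposition \ref{prop:S2A5polystable} and the self-intersection computations performed there.

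Granting that $D$ avoids the singular points of each component, the Cartier claim follows: on the smooth locus of $S_{A_5}$ (resp. $S_{A_5}'$) every Weil divisor is Cartier, and near the gluing locus $\ell_1 = \ell_1'$ the surface $S_{2A_5}$ is a normal crossing (demi-normal) surface, so a divisor meeting each branch transversally at the same points is Cartier there as well — locally $S_{2A_5}$ looks like $(xy=0)\subset \bA^3$ and $D$ is cut out by a single function $z - (\text{unit})$ on each branch, gluing to a global section of $\cO_{S_{2A_5}}$. Hence $\cO_{S_{2A_5}}(D)$ is invertible; in particular $\omega_{S_{2A_5}}^{[N]}(NcD)\cong\cO$ is consistent with the index-$3$ hypothesis. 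I would phrase this using that $S_{2A_5}$ is Gorenstein away from the cyclic quotient singularities and that $D$ misses those points.

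For the lci statement: away from $\ell_1\cup\ell_1'$, $D$ is a Cartier divisor in a smooth surface, hence lci. Along the double curve, since $(S_{2A_5}, \frac23 D)$ is slc (being a degeneration inside $\ocM_4^{\CY}$, which parameterizes slc pairs) and $D$ meets $\ell_1=\ell_1'$ at finitely many points transversally to each branch, in local coordinates $(xy=0)\subset\bA^3$ the divisor $D$ is the zero locus of $(x, f)$ on one branch and $(y,g)$ on the other for suitable $f,g$, so $D$ itself is $(xy, h)$ for a single further equation $h$ — i.e. a complete intersection in $\bA^3$. Thus $D$ has lci singularities at these nodal points as well. I expect the main obstacle to be the first paragraph: carefully pinning down the numerical class of $D$ on each component and proving it must avoid the $\frac13(1,1)$ and $\frac13(1,2)$ singular points rather than merely being $\bQ$-Cartier there. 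The cleanest route is probably to note that an irreducible curve through such a quotient singularity would force the local intersection number with $\ell_1$ (or fail the slc/K-flatness condition), contradicting the transversality forced by the conductor along $\ell_1$; alternatively one can argue that a curve in $|\omega_{S_{A_5}}^{-1}|$-type class passing through a singularity would have the wrong arithmetic genus, violating $p_a(D)=4$. I would present whichever of these is shortest once the numerics are in hand.
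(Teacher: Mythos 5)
Your plan inverts the logical order that the paper uses, and the inversion creates a real gap. You propose to first establish that $D$ avoids the $\tfrac{1}{3}(1,1)$ and $\tfrac{1}{3}(1,2)$ singular points of the components $S_{A_5}$, and then deduce Cartier-ness from avoidance. But the step "$D$ avoids the singular points" is exactly the hard part, and the devices you gesture at --- numerical class, adjunction along $\ell_1$, an $\ell_1$-intersection-number or arithmetic-genus contradiction --- do not obviously close it. A $\bQ$-Cartier Weil divisor in the right numerical class can perfectly well pass through a cyclic quotient singularity with small multiplicity (compare the curves $C_{A,B}$ on $\bP(1,1,2)$ elsewhere in the paper), and nothing in the Mumford/K-flatness conditions or the arithmetic genus forces avoidance by itself. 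You flag this yourself ("I expect the main obstacle to be the first paragraph"), and indeed the plan leaves the key implication unproved.

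The paper avoids this circularity by proving Cartier-ness \emph{first}, without any knowledge of where $D$ sits relative to the singular locus, via a gcd-of-indices argument: since $3K_{S_{2A_5}}+2D\sim 0$ and $3K_{S_{2A_5}}$ is Cartier, $2D$ is Cartier; since every local singularity of $S_{2A_5}$ is either normal crossing or $(xy=0)\subset\tfrac{1}{3}(1,1,2)$, the local $\bQ$-Cartier index divides $3$, so $3D$ is Cartier; hence $D$ is Cartier. Only \emph{after} this is established does one argue avoidance: a Cartier divisor through a $\tfrac{1}{3}(1,1)$ point of $S_{A_5}$ has multiplicity at least $3$ there, and multiplied by the coefficient $\tfrac{2}{3}$ this violates log canonicity of the normalization $(S_{A_5},\tfrac{2}{3}D_1+\ell)$. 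That is the mechanism that forces $D_1$ off the quotient singularities, and it crucially uses Cartier-ness as an input. Your proposal does have the right endgame --- once Cartier-ness and avoidance are in hand, $D$ only meets the nc locus and the lci statement is immediate (your local computation in $(xy=0)\subset\bA^3$ is fine) --- but as written it lacks a working argument for the first step and misses the gcd trick that makes the whole proof short. I'd recommend restructuring: prove $2D$ and $3D$ Cartier directly, conclude $D$ Cartier, and then use Cartier-ness plus the slc/lc condition to rule out passage through the quotient singular points.
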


\begin{proof}
    Because $[(S_{2A_5},\frac{2}{3}D)] \in \ocM_4^{\CY}$, by definition it admits a smoothing to $(\bP^1 \times \bP^1, \frac{2}{3}D')$ where $D'$ is a $(3,3)$ curve.  By \cite[Lemma 2.11]{ABB+}, because $3K_{\bP^1 \times \bP^1} + 2D' \sim 0$, we have $3K_{S_{2A_5}}+2D \sim 0$ and hence $2D \sim -3K_{S_{2A_5}}$ where $\sim$ denotes linear equivalence.  From the explicit description of $S_{2A_5}$, $3K_{S_{2A_5}}$ is Cartier and hence $2D$ is Cartier.  As the local singularities of $S_{2A_5}$ are either normal crossing or of the form $(xy = 0 ) \subset \frac{1}{3}(1,1,2)$, any integral $\bQ$-Cartier divisor $\Delta$ on $S_{2A_5}$ satisfies $3\Delta$ is Cartier and therefore $3D$ is Cartier.  Because $3D$ is Cartier and $2D$ is Cartier, $D$ is Cartier.  Now, we verify that $D$ has lci singularities.  Because $[(S_{2A_5}, \frac{2}{3}D)] \in \ocM_4^{\CY}$, the pair $(S_{2A_5}, \frac{2}{3}D)$ has slc singularities and hence by \cite[Definition-Lemma 5.10]{Kol23}, the normalization $(S_{2A_5}^\nu, \frac{2}{3}D^\nu + \ell)$ has log canonical singularities.  This implies that $D$ does not contain the double locus of $S_{2A_5}$.  Write $S_{2A_5} =S_{A_5} \cup S_{A_5}$ and let $(S_{A_5}, \frac{2}{3}D_1 + \ell)$ be either component of the normalization.  Because $D$ is Cartier, $D_1$ is Cartier.  If $D_1$ passes through the $\frac{1}{3}(1,1)$ singularity, it necessarily has multiplicity at least three at that point, in which case $(S_{A_5}, \frac{2}{3}D_1 + \ell)$ is not log canonical.  Therefore, $D_1$ avoids the $\frac{1}{3}(1,1)$ singularity.  By restricting to one of the components, this implies that $D$ avoids the $\frac{1}{3}(1,1,2)$ singularities on $S_{2A_5}$.  In particular, $D$ is a Cartier divisor that only meets the normal crossing locus of $S_{2A_5}$ and therefore has lci singularities. 
\end{proof}

\begin{prop}\label{prop:eqnofcurveonS2A5}
Suppose $[(S, \frac{2}{3}D)] \in \ocM_4^{\KSBA}\setminus \ocM_4^{\K}$  admits a special degeneration to $(S_{2A_5}, \frac{2}{3}C_{2A_5}^\circ)$. Then $S\cong S_{2A_5}$ and $D$ is defined by the strict transform of the equation
\begin{align*}
(x_2^2 - x_0^3 x_2 + ax_0x_1^2 x_2 + b_0x_1^6 + b_1 x_0 x_1^5 +b_2 x_0^2 x_1^4 + b_3 x_0^3 x_1^3 = 0)\subset \bP(1,1,3);\\
(x_2'^2 - x_0'^3 x_2' + a'x_0'x_1'^2 x_2'+ b_0'x_1'^6 + b_1' x_0' x_1'^5 +b_2' x_0'^2 x_1'^4 + b_3' x_0'^3 x_1'^3 = 0)\subset \bP(1,1,3)
\end{align*}
on each component of $S_{2A_5}$.
Here $([a, b_0, b_1, b_2, b_3], [a', b_0', b_1', b_2', b_3'])\in \bP(2,6,5,4,3)^2$.
\end{prop}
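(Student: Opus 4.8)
The plan is to follow the blueprint already laid out for the $C_{2A_5}$ analysis: first pin down the surface, then pin down the divisor. Starting from $[(S,\tfrac23 D)]\in \ocM_4^{\KSBA}\setminus\ocM_4^{\K}$ admitting a special (i.e. $\bG_m$-equivariant isotrivial) degeneration to $(S_{2A_5},\tfrac23 C_{2A_5}^\circ)$, the $\bG_m$-action degenerates $S$ to $S_{2A_5}$, so by Lemma \ref{lem:S2A5-deform} every small $\bQ$-Gorenstein deformation of $S_{2A_5}$ is either $S_{2A_5}$ itself or $\bP^1\times\bP^1$. To rule out $\bP^1\times\bP^1$, I would argue that if $S\cong\bP^1\times\bP^1$ then $(\bP^1\times\bP^1,(\tfrac23+\epsilon)D)$ is slc, so by Proposition \ref{prop:klt-is-stable}-type considerations together with the K-polystability statement (Lemma \ref{lem:C2A5-is-K-polystable}) and the structure of the log Calabi--Yau wall crossing, $[(S,\tfrac23 D)]$ would be in $\ocM_4^{\K}$ — but actually one must be more careful, since the point is that a pair on $\bP^1\times\bP^1$ specializing to $(S_{2A_5},\tfrac23 C_{2A_5}^\circ)$ must be the K-semistable limit. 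Cleanest is: the pairs on $\bP^1\times\bP^1$ whose CY-limit is $(S_{2A_5},\tfrac23 C_{2A_5}^\circ)$ are exactly those in the K-semistable locus lying over $[(S_{2A_5},\tfrac23 C_{2A_5}^\circ)]$ in $\oM_4^{\CY}$, whereas $[(S,\tfrac23 D)]$ is assumed to lie in the KSBA locus but not the K-locus; since the KSBA-semistable pairs over $[(S_{2A_5},\tfrac23 C_{2A_5}^\circ)]$ that are not K-semistable must have $S$ non-normal (by the general structure of these walls, the K-side compactifies with $\bP^1\times\bP^1$ and the KSBA side with $S_{2A_5}$), we conclude $S\cong S_{2A_5}$.

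Once $S\cong S_{2A_5}$, the divisor $D$ is handled purely by linear algebra on $\bP(1,1,3)$. By Lemma \ref{lem:D-lci-on-S2A5}, $D$ is Cartier, avoids the $\tfrac13(1,1,2)$ singularities, and meets only the normal crossing (double) locus $\ell=\ell_1\cup\ell_1'$ of $S_{2A_5}$; moreover $2D\sim -3K_{S_{2A_5}}$. On a single component $S_{A_5}$ (the $(1,3)$-weighted blow-up of $\bP(1,1,3)$ at $p_1$), pushing forward to $\bP(1,1,3)$: since $D_1:=D|_{S_{A_5}}$ avoids the singular point, it is the strict transform of a curve $\bar D_1\subset\bP(1,1,3)$ in the appropriate linear system. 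Computing $-3K_{\bP(1,1,3)}=\cO(15)$ and tracking the multiplicity condition at $p_1$ forced by the requirement that $D_1$ meet $\ell_1$ the way $C_{2A_5}^\circ$ does (the strict transform picture: the proper transform has a fixed intersection profile with the exceptional curve $\pi^{-1}(p_1)$), I would show $\bar D_1\in |\cO_{\bP(1,1,3)}(6)|$ with prescribed leading behavior, i.e. of the form $x_2^2 + x_2\cdot(\text{degree-}4) + (\text{degree-}6)$ after using the $\bG_m$-action to normalize. Matching the special-degeneration constraint — that the associated graded / initial term is exactly $x_2(x_2-x_0^3)$, equivalently $x_2^2-x_0^3x_2$ after the change of coordinate absorbing the linear-in-$x_2$ term — forces the general member to be $x_2^2 - x_0^3 x_2 + a x_0 x_1^2 x_2 + \sum_{i=0}^{3} b_i x_0^i x_1^{6-i}$, since the monomials $x_1^4 x_2$, $x_0 x_1^3 x_2$, $x_0^2 x_1^2 x_2$ (wait — of weight dividing correctly), and $x_0^4 x_1^2$, $x_0^5 x_1$, $x_0^6$ are eliminated either by the passage-through-$\ell_1$ condition, the absence of a $\tfrac13(1,1)$ point on $D_1$, or by absorbing into the already-normalized terms via coordinate changes fixing the group action. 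The same analysis on the second component $S_{A_5}'$ gives the primed equation, and the two sets of parameters are a priori independent, yielding $([a,b_0,\dots,b_3],[a',b_0',\dots,b_3'])\in\bP(2,6,5,4,3)^2$ once one records the weights under the residual $\bG_m$-action $[x_0:x_1:x_2]\mapsto[x_0:\lambda x_1:x_2]$ (so $a$ has weight $2$, $b_i$ has weight $6-i$).

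The main obstacle I expect is the first half — rigorously showing $S\cong S_{2A_5}$ and not $\bP^1\times\bP^1$. The issue is that a priori a pair $(\bP^1\times\bP^1,\tfrac23 D)$ could also specialize $\bG_m$-equivariantly to $(S_{2A_5},\tfrac23 C_{2A_5}^\circ)$ (indeed Proposition \ref{prop:S2A5polystable} exhibits exactly such a degeneration), so "admits a special degeneration to $(S_{2A_5},\tfrac23 C_{2A_5}^\circ)$" alone does not distinguish the two surfaces; the distinction must come from the hypothesis $[(S,\tfrac23 D)]\notin\ocM_4^{\K}$ together with $\in\ocM_4^{\KSBA}$. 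So the real content is: the fiber of $\oM_4^{\KSBA}\to\oM_4^{\CY}$ over $[(S_{2A_5},\tfrac23 C_{2A_5}^\circ)]$ parametrizes precisely the pairs with $S\cong S_{2A_5}$ (the "KSBA/stable replacement" of the CY point), while the K-side fiber is the single K-polystable pair $(\bP^1\times\bP^1,\tfrac23 C_{2A_5})$ and its K-semistable degenerations. I would extract this from the deformation-theoretic picture of Lemma \ref{lem:S2A5-deform} combined with \cite[Theorem 1]{DH21} (identifying $S_{2A_5}$ with the surface $\xi_{3,3}$, so that the $(S,C)$-locus is a divisor whose generic point has $S=S_{2A_5}$), and from the fact that on $\bP^1\times\bP^1$ the pair $(\bP^1\times\bP^1,(\tfrac23+\epsilon)D)$ with $D=C_{2A_5}$ is \emph{not} slc (since $C_{2A_5}$ has $A_5$ singularities, which are log canonical for coefficient $\tfrac23$ but not for $\tfrac23+\epsilon$), so $(\bP^1\times\bP^1,\tfrac23 C_{2A_5})\notin\ocM_4^{\KSBA}$; thus any $[(S,\tfrac23 D)]\in\ocM_4^{\KSBA}$ near the wall with limit $(S_{2A_5},\tfrac23 C_{2A_5}^\circ)$ must have non-normal $S$, forcing $S\cong S_{2A_5}$ by Lemma \ref{lem:S2A5-deform}. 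Once that normalization is in hand, the divisor computation, while requiring care with weighted-projective-space conventions and the blow-up combinatorics, is routine bookkeeping.
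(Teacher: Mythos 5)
Your high-level plan (pin down the surface via Lemma~\ref{lem:S2A5-deform}, then normalize the equation of the divisor on each component) matches the paper's, but both halves have gaps in the execution.

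For the surface: your last-paragraph argument does not close the case $S\cong\bP^1\times\bP^1$. You verify that the one K-polystable pair $(\bP^1\times\bP^1,\tfrac{2}{3}C_{2A_5})$ fails to be in $\ocM_4^{\KSBA}$ (correct: $\lct$ of an $A_5$ point is exactly $\tfrac{2}{3}$, so $(\tfrac{2}{3}+\epsilon)C_{2A_5}$ is not slc), but that says nothing about a \emph{different} $(3,3)$ curve $D$ on $\bP^1\times\bP^1$ degenerating to $C_{2A_5}^\circ$; the phrase ``thus any $[(S,\tfrac{2}{3}D)]\in\ocM_4^{\KSBA}$ near the wall must have non-normal $S$'' is exactly the thing that needs proof, not a consequence. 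Your first, hedged paragraph actually gestures at the correct mechanism, but the two citations you lean on do not deliver it: Proposition~\ref{prop:klt-is-stable} applies only to klt pairs, whereas the pair in question must be strictly lc (else it \emph{would} be in $\ocM_4^{\K}$), and Lemma~\ref{lem:C2A5-is-K-polystable} is about the single curve $C_{2A_5}$. The clean step, which is what the paper actually invokes, is interpolation in the sense of \cite[Proposition~2.13]{ADL19}: $(\bP^1\times\bP^1,0)$ is K-polystable Fano, $(\bP^1\times\bP^1,\tfrac{2}{3}D)$ is slc log CY hence K-semistable, so $(\bP^1\times\bP^1,(\tfrac{2}{3}-\epsilon)D)$ is K-semistable for every such $D$, contradicting $[(S,\tfrac{2}{3}D)]\notin\ocM_4^{\K}$ without any case analysis on $D$.

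For the divisor: the normalization to the stated form is where the content lives, and your bookkeeping is off. You yourself flag that the monomials $x_1^4x_2$, $x_0x_1^3x_2$, $x_0^2x_1^2x_2$ have the wrong weighted degree (they are degree $7$, not $6$); the actual degree-$6$ monomials involving $x_2$ linearly are $x_1^3x_2, x_0x_1^2x_2, x_0^2x_1x_2, x_0^3x_2$. More importantly, your description (``passage-through-$\ell_1$ condition'', ``absence of the $\tfrac{1}{3}(1,1)$ point'', ``absorbing via coordinate changes'') does not identify the actual constraints. The paper's argument pins down $C_1=\pi_*D_1$ by writing $C_1=(f(x_0,x_2)+x_1g=0)$ and exploiting the $1$-PS $\sigma$ acting as $[tx_0,x_1,t^3x_2]$: $m>0$ because the limit $C_0$ does not contain $(x_1=0)$, which forces the limit to be $f=0$ and hence $f=x_2^2-x_0^3x_2$ after rescaling; $g\neq 0$ by KSBA stability (else $\sigma$-invariance gives $\Aut(S_{2A_5},D)$ infinite); the $(1,3)$-weighted valuation at $p_1=[1,0,0]$ must equal $3$, killing $x_0^5x_1$ and $x_0^4x_1^2$; and the affine substitution $[x_0,x_1,x_2]\mapsto[x_0+rx_1,x_1,x_2+sx_1^3]$ removes $x_0^2x_1x_2$ and $x_1^3x_2$. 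That is the precise list leaving exactly the five parameters $a,b_0,\dots,b_3$ with weights $2,6,5,4,3$. Your sketch has the right target but does not actually get there without these steps spelled out.
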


\begin{proof}
By Lemma \ref{lem:S2A5-deform} we know that $S\cong S_{2A_5}$ or $S\cong \bP^1\times \bP^1$. The latter is not possible as otherwise $\bP^1\times \bP^1$ is K-polystable which implies $[(S,\frac{2}{3}D)] \in \ocM_4^{\K}$ by interpolation \cite[Proposition 2.13]{ADL19}, a contradiction. Since there is a symmetry between $S_{A_5}$ and $S_{A_5}'$, we may focus on the curve $D_1:=D|_{S_{A_5}}$. Denote by $C_1:=\pi_* D_1$ and $D_0:=C_{2A_5}^\circ |_{S_{A_5}}$. Then we know that $C_1$ specially degenerates to the curve $C_0:=(x_2^2 - x_0^3 x_2 =0)$. This degeneration is induced by a non-trivial power $\sigma^m$ of the $1$-PS $\sigma$ in $\Aut(S_{A_5}, \ell_1 + \frac{2}{3}D_1)$ such that $\pi_*\sigma$ acts on $\bP(1,1,3)$ as $\pi_*\sigma(t)\cdot [x_0, x_1, x_2] = [tx_0, x_1, t^3 x_2]$. Suppose $C_1 = (f(x_0, x_2) + x_1 g(x_0, x_1, x_2) = 0)$. We claim that $m>0$ and $g\neq 0$. If $g=0$ then clearly $C_1$ is $\pi_*\sigma$-invariant, which implies that $\Aut(S_{2A_5}, D)$ is infinite, a contradiction to KSBA stability.  It is clear that 
\[
C_0 = \lim_{t\to 0}\pi_* \sigma^m (t) \cdot C_1 = \begin{cases}(f(x_0, x_2)=0) & \textrm { if }m>0;\\
(x_1 h(x_0, x_1, x_2)=0) & \textrm{ if }m<0.
\end{cases}
\]
Here $h$ is the sum of terms in $g$ with the highest degree in  $x_1$. 
Since $C_0$ does not contain the line $(x_1=0)$, we must have $m>0$. 
Thus after rescaling we have $f(x_0, x_2) = x_2^2 - x_0^3 x_2$.

Moreover, if we denote by $v$ the monomial valuation centered at $[1,0,0]$ of weight $(1,3)$ in $(x_1, x_2)$, then the condition $D_1 \sim D_0$ implies that  $v(C_1) = v(C_0) = 3$. Thus the terms $x_0^5 x_1$ and $x_0^4 x_1^2$ do not appear in the equation of $C_1$. Moreover, using the affine transformation $[x_0, x_1, x_2]\mapsto [x_0 + rx_1, x_1, x_2+sx_1^3]$ we can eliminate the terms $x_0^2 x_1 x_2$ and $x_1^3 x_2$ in the equation of $C_1$. As a result, we get the desired form of $C_1$. Moreover, by projection to $\bP^1_{[x_0, x_1]}$ we know that $C_1$ is a hyperelliptic curve of arithmetic genus $2$. By taking the discriminant it is not hard to see that $C_1$ has at worst $A_4$-singularities if and only if $a, b_0, b_1,b_2,b_3$ are not all zero. Thus we know that $(S_{2A_5}, (\frac{2}{3}+\epsilon)D)$ is KSBA stable if and only if both vectors $(a, b_0, b_1, b_2, b_3)$ and $(a', b_0', b_1', b_2', b_3')$ are non-zero. Thus the proof is finished.
\end{proof}

We summarize the behavior of the bpCY wall crossing near $[(S_{2A_5}, \frac{2}{3}C^\circ_{2A_5})]$ in the following proposition.

\begin{prop}\label{prop:C2A5-replacement}
Consider the projective morphisms induced by the bpCY wall crossing
\[\oM^{\K}_4 \rightarrow \oM^{\CY}_4 \leftarrow \oM^{\KSBA}_4.\]
Let $[(S_{2A_5}, \frac{2}{3}C^\circ_{2A_5})]$ be a polystable point of $\oM_4^{\CY}$ from Proposition \ref{prop:S2A5polystable}.  The preimage of this pair in $\oM_4^{\K}$ is the pair $[(\mathbb{P}^1 \times \mathbb{P}^1, \frac{2}{3}C_{2A_5})]$ (see Lemma \ref{lem:C2A5-is-K-polystable}), and the morphism $\oM^{\K}_4 \to \oM^{\CY}_4$ is an isomorphism on the preimage of an open neighborhood of $[(S_{2A_5}, \frac{2}{3}C^\circ_{2A_5})]$.
\end{prop}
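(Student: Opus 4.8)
The plan is to combine the polystability of $(S_{2A_5}, \tfrac{2}{3}C^\circ_{2A_5})$ in $\ocM_4^{\CY}$ (Proposition \ref{prop:S2A5polystable}), the explicit equations from Proposition \ref{prop:eqnofcurveonS2A5}, and the deformation-unobstructedness from Lemma \ref{lem:S2A5-deform} with the abstract wall-crossing machinery of Theorem \ref{thm:bpCYwallcrossing}. First I would record the identification of the fiber over $[(S_{2A_5}, \tfrac{2}{3}C^\circ_{2A_5})]$ under the good moduli space morphism $\ocM_4^{\CY} \to \oM_4^{\CY}$: by Proposition \ref{prop:S2A5polystable} this point is the polystable representative of the closed orbit containing $[(\bP^1\times\bP^1, \tfrac23 C_{2A_5})]$, and by Lemma \ref{lem:C2A5-is-K-polystable} the latter is K-polystable, so it lies in $\ocM_4^{\K}$ and maps to the claimed preimage point in $\oM_4^{\K}$. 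For the full fiber in $\oM_4^{\K}$, any K-semistable point mapping to $[(S_{2A_5}, \tfrac23 C^\circ_{2A_5})]$ specially degenerates to it; but $S_{2A_5}$ is non-normal hence not K-semistable itself, so by Lemma \ref{lem:S2A5-deform} the underlying surface of such a point must be $\bP^1\times\bP^1$, and then interpolation (\cite[Proposition 2.13]{ADL19}, as used in Proposition \ref{prop:eqnofcurveonS2A5}) forces the curve to be K-polystable, so it must be $[(\bP^1\times\bP^1, \tfrac23 C_{2A_5})]$ by uniqueness of the polystable representative. This gives the first assertion.

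For the second assertion — that $\oM_4^{\K} \to \oM_4^{\CY}$ is an isomorphism over an open neighborhood of $[(S_{2A_5}, \tfrac23 C^\circ_{2A_5})]$ — I would work at the level of stacks and use the étale-local structure at the polystable point. By Theorem \ref{thm:CY-gms} the stack $\ocM_4^{\CY}$ has affine diagonal and the point $[(S_{2A_5}, \tfrac23 C^\circ_{2A_5})]$ has reductive stabilizer $\bG_m^2$ (computed in the proof of Proposition \ref{prop:S2A5polystable}), so Theorem \ref{thm:ahr} furnishes an étale local quotient presentation $[\Spec A/\bG_m^2] \to \ocM_4^{\CY}$ around it. By Proposition \ref{prop:deformations} together with Lemma \ref{lem:S2A5-deform}, $\ocM_4^{\CY}$ is smooth at this point, so $\Spec A$ can be taken smooth and, after shrinking, identified with the versal $\bG_m^2$-representation on the tangent space $T$. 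I would then compute the $\bG_m^2$-weights on $T$ using the explicit families of Propositions \ref{prop:eqnofcurveonS2A5} and \ref{prop:S2A5polystable}: the deformations of the curve on each $\bP(1,1,3)$ component are parameterized by $[a, b_0, b_1, b_2, b_3]$ with positive weights under the respective $\bG_m$ factor, while the smoothing direction of $S_{2A_5}$ to $\bP^1\times\bP^1$ carries a single weight, say $(w_1, w_2)$; the key point is to check that the K-semistable locus $\ocM_4^{\K}$ corresponds to the complement of the locus where all curve-deformation coordinates vanish, i.e. precisely the locus on which the $\bG_m^2$-action has finite stabilizers, so that the good moduli quotient there is a geometric quotient and agrees with the GIT quotient of the full slice — both equal the quotient of $T$ itself by $\bG_m^2$ near the origin.

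Concretely, the comparison reduces to a statement about $\bG_m^2$-actions on affine space: the good moduli space of $[\Spec A/\bG_m^2]$ is $\Spec A /\!\!/ \bG_m^2$, and I want to show that the open substack cut out by the K-semistability condition has the \emph{same} good moduli space near the image of the origin. This follows once one verifies that the K-semistable open substack $\ocM_4^{\K} \cap [\Spec A /\bG_m^2]$ is \emph{saturated} with respect to the good moduli morphism to $\oM_4^{\CY}$ in a neighborhood of $[(S_{2A_5}, \tfrac23 C^\circ_{2A_5})]$ — equivalently, every point specializing to $[(S_{2A_5}, \tfrac23 C^\circ_{2A_5})]$ that lies in $\ocM_4^{\K}$ has all of its degenerations in $\ocM_4^{\K}$. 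But by the first part, the only point of $\ocM_4^{\K}$ over this neighborhood mapping to the polystable point is $[(\bP^1\times\bP^1, \tfrac23 C_{2A_5})]$, which is itself polystable in $\ocM_4^{\K}$ and closed there; combined with openness of $\ocM_4^{\K}$ in $\ocM_4^{\CY}$ (Theorem \ref{thm:bpCYwallcrossing}) this gives saturatedness, hence $\oM_4^{\K} \to \oM_4^{\CY}$ is an isomorphism over the image neighborhood.

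\textbf{Main obstacle.} The delicate step is the weight computation on the tangent space and the verification that the K-semistability condition matches exactly the ``not identically zero on the curve-deformation coordinates'' condition — i.e.\ that the KSBA side ($\ocM_4^{\KSBA}$, with the $(\tfrac23+\epsilon)$ condition requiring $A_{\le 4}$ singularities, cf. Proposition \ref{prop:eqnofcurveonS2A5}) really does require a nonzero curve-deformation vector on \emph{each} component while the K-side allows the full slice minus that locus. This is where one genuinely uses the sign/magnitude of the smoothing weight $(w_1,w_2)$ relative to the curve-deformation weights, and it is the crux distinguishing the two sides of the wall; everything else is formal manipulation of good moduli spaces via the results quoted above. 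I would isolate this as a separate lemma identifying the local VGIT chamber structure, modeled on \cite[Section 8]{ABB+}, rather than grinding it out inline.
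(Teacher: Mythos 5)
There is a genuine gap in your argument for the first assertion. Having reduced to the case of a K-polystable pair $(\bP^1\times\bP^1,\tfrac23 D)$ mapping to $y := [(S_{2A_5},\tfrac23 C^\circ_{2A_5})]$, you conclude ``interpolation \dots forces the curve to be K-polystable, so it must be $[(\bP^1\times\bP^1,\tfrac23 C_{2A_5})]$ by uniqueness of the polystable representative.'' But the uniqueness in question is exactly what has to be proved: you have \emph{not} ruled out that some other K-polystable $(\bP^1\times\bP^1,\tfrac23 D)$ with $D\ne C_{2A_5}$ also admits a bpCY special degeneration to $(S_{2A_5},\tfrac23 C^\circ_{2A_5})$, and interpolation says nothing about this. (Indeed, the K-polystable representative in $\ocM_4^{\K}$ and the polystable representative in $\ocM_4^{\CY}$ are different objects; invoking uniqueness of the latter for the former is circular.) The paper closes this gap in a different way: by the Paul--Tian criterion it identifies an open neighborhood of $x := [(\bP^1\times\bP^1,\tfrac23 C_{2A_5})]$ in $\oM_4^{\K}$ with an open neighborhood of $[C_{2A_5}]$ in the GIT moduli of $(3,3)$-curves, and then uses Fedorchuk's classification to see that every \emph{other} nearby polystable point is a klt pair, which by Proposition~\ref{prop:klt-is-stable} is stable in $\ocM_4^{\CY}$ and so cannot map to $y$. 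This shows $\{x\}$ is \emph{open} in the fiber over $y$; connectedness of that fiber (via Zariski's Main Theorem, using normality of $\oM_4^{\CY}$ from Proposition~\ref{prop:deformations}) then forces the fiber to be exactly $\{x\}$.

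For the second assertion, your plan via local quotient presentations and a $\bG_m^2$-weight computation is not wrong in principle, but you explicitly defer the decisive step (the chamber identification) to a future lemma, so the proof is not complete as written. The paper avoids this entirely: once the fiber over $y$ is known to be $\{x\}$, one shrinks the neighborhood so that $U'\to V$ becomes quasi-finite, birational, and proper, hence finite, and then Zariski's Main Theorem (applied to the finite birational map of normal varieties) gives the isomorphism with no weight computation needed. Your saturatedness argument also only gives an \emph{open immersion} $\fV_1^{\K}\hookrightarrow V$ on moduli spaces; to upgrade to an isomorphism you additionally need surjectivity, which the paper gets for free from properness and birationality of $\oM_4^{\K}\to\oM_4^{\CY}$.
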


\begin{proof}
Firstly we claim that $x: = [(\mathbb{P}^1 \times \mathbb{P}^1, \frac{2}{3}C_{2A_5})]$ is a connected component in the preimage of $y:=[(S_{2A_5}, \frac{2}{3}C^\circ_{2A_5})]$. By openness of K-semistability and the Paul--Tian criterion \cite[Theorem 2.22]{ADL19}, there is an isomorphism between some open neighborhood of $x$ in $\oM_4^{\K}$ and some open neighborhood of the point $[C_{2A_5}]$ in the GIT moduli space of $(3,3)$-curves on $\bP^1\times \bP^1$. Then by \cite{Fed12} we know that there exists an open neighborhood $U$ of $x$ in $\oM_4^{\K}$ that $U\setminus \{x\}$ parameterizes  klt bpCY pairs of the form $(\bP^1\times \bP^1, \frac{2}{3}C)$ where  $C$ has at worst $A_4$-singularities. This implies that the image of $U\setminus \{x\}$ in $\oM_4^{\CY}$ does not contain $y$. Hence the claim is proved. 

Next, we consider a sufficiently small open neighborhood $V$ of $y$ in $\oM_4^{\CY}$ whose preimage in $\oM_4^{\K}$ is denoted by $U'$. By Proposition \ref{prop:deformations}, $V$ is normal. Since the induced map $U'\to V$ is birational and proper, by Zariski's Main Theorem we know that the preimage of $y$ in $U'$ is connected. Thus the claim implies that the preimage of $y$ in $U'$ is precisely $x$. Then after  shrinking $V$ we may assume that $U'\subset U$ and the map $U'\to V$ is birational and finite. Thus $U'\to V$ is an isomorphism by Zariski's main theorem. The proof is finished.
\end{proof}

\subsection{Replacing $D_4$} In the Chow moduli space, there is a unique polystable point with $D_4$ singularities by \cite[Remark 3.3]{CMJL12}: $C_D=V(x_0 x_3, x_1^3 + x_2^3)$.  We will consider this curve and its S-equivalent pairs in $\ocM_4^{\CY}$ and to do, we first fix notation.  We denote by $\bP^1 \times \bP^1$ a smooth quadric and by $\bP(1,1,2)$ a singular quadric cone in $\bP^3$ with coordinates $[x_0:x_1:x_2:x_3]$.  We denote by $\PuP := V(x_0x_3) \subset \bP^3$ the non-normal quadric surface that is the union of two planes.  

\begin{lem}\label{lem:quadric-deform}
A small deformation of a  (possibly singular) reduced quadric surface in $\bP^3$ is still a quadric surface.
\end{lem}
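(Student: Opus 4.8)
The statement is that the Hilbert scheme point corresponding to a reduced quadric surface $Q \subset \bP^3$ is a smooth point, with the component of quadrics, so that any small (flat, $\bP^3$-embedded or abstract $\bQ$-Gorenstein) deformation is again a quadric surface. The plan is to reduce to a local cohomology computation on the three types of reduced quadrics: the smooth quadric $\bP^1 \times \bP^1$, the quadric cone $\bP(1,1,2)$, and the union of two planes $\PuP$. For the embedded deformation statement, I would note that a quadric $Q = V(q)$ is a hypersurface of degree $2$ in $\bP^3$, so the Hilbert scheme of such $Q$ near $[Q]$ is simply $\bP(H^0(\bP^3, \cO_{\bP^3}(2))) = \bP^9$, which is smooth and irreducible; hence any embedded deformation of $Q$ over a local Artinian base stays inside $\bP^9$ and is a quadric (possibly a non-reduced or rank-$\le 1$ one, but those form a closed subset not containing the reduced locus, and "small" keeps us in the open reduced locus).

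For the abstract deformation statement (which is what is actually needed in context, since we are dealing with $\bQ$-Gorenstein deformations of surfaces appearing as fibers of families), I would argue as follows. First, I would record that each reduced quadric $Q$ is a Gorenstein surface with $-K_Q$ very ample (it is $\cO_Q(2)$ up to the embedding, more precisely $\omega_Q \cong \cO_Q(-2)$ by adjunction since $Q$ is a degree-$2$ hypersurface in $\bP^3$). I would then use that $Q$ has at worst an isolated $A_1$ (du Val) singularity in the $\bP(1,1,2)$ case or a normal-crossing double curve in the $\PuP$ case; in either case the singularities are $\bQ$-Gorenstein smoothable and the local-to-global deformation theory applies. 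The key step is to show $H^2(Q, T_Q) = 0$ (unobstructedness) and to identify $H^1$ of the relevant tangent sheaf with the tangent space to $\bP^9$ at $[Q]$, i.e. $\dim H^1 = 9$, so that the functor of $\bQ$-Gorenstein deformations is smooth of dimension $9$ and pro-represented by (the completion of) $\bP^9$ at $[Q]$. Concretely: for $Q = \bP^1 \times \bP^1$ and $Q = \bP(1,1,2)$ these are toric del Pezzo surfaces and one computes $H^i(T_Q)$ directly; for $\PuP$ one uses the normalization sequence relating $T_{\PuP}$ to the tangent sheaves of the two copies of $\bP^2$ and the double line, or simply quotes the well-known fact that $\PuP$ deforms only to smooth quadrics. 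In all three cases $H^2 = 0$ and $h^1 = 9$.

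I would finish by invoking openness of the reduced (and of the "rank $\geq 2$") locus: the non-reduced quadrics and the rank-$1$ double planes form a proper closed subscheme of $\bP^9$, so a sufficiently small deformation of a reduced quadric remains in the reduced quadric locus, hence is again a reduced quadric surface. Alternatively, and more cleanly for the embedded statement, I would simply cite that $\bP^9 = \bP(H^0(\cO_{\bP^3}(2)))$ is a fine moduli space (an open subset thereof) for the relevant family and that the reduced locus is open, so flatness forces nearby fibers of any deformation to lie in it.

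\textbf{Expected main obstacle.} The genuinely delicate case is $\PuP$, the union of two planes, since it is non-normal and one must be careful that a $\bQ$-Gorenstein (equivalently, in this Gorenstein setting, an ordinary flat) deformation cannot break it into something other than a quadric — e.g. one must rule out deformations that separate the double curve into a "higher genus" configuration. This is controlled precisely by the computation that $T^1$ of the non-normal surface, or equivalently $H^1(T_{\PuP})$, is $9$-dimensional and matches the embedded deformations; the smoothing direction corresponds to deforming $x_0 x_3$ to $x_0 x_3 - \epsilon \ell$ for a linear form $\ell$, giving a smooth quadric. I expect most of the work to be in citing or redoing this $T^1$ computation for $\PuP$ and confirming that no obstructions arise (i.e. $T^2 = 0$), which is standard but needs to be stated carefully given the non-normality.
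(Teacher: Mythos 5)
The paper proves this in one line by citing \cite[Examples 3.2.11]{Sernesi}, which establishes exactly the needed statement for hypersurfaces of arbitrary degree in $\bP^n$, $n\geq 3$: the forgetful morphism from the Hilbert scheme of hypersurfaces (an open subset of $\bP(H^0(\cO_{\bP^n}(d)))$) to the deformation functor $\Def_X$ is smooth, so every small deformation is realized by varying the defining equation. Your high-level route—relate abstract deformations to embedded ones via the smooth, irreducible Hilbert scheme $\bP^9$, then invoke openness of the reduced locus—is the same strategy, and the openness observation at the end is correct.

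However, there are concrete errors in the way you fill in the middle of the argument. First, you conflate $H^1(Q, T_Q)$ with the tangent space to the Hilbert scheme at $[Q]$. The latter is $H^0(Q, N_{Q/\bP^3}) = H^0(Q, \cO_Q(2))$, which is indeed $9$-dimensional; but $H^1(T_Q)$ is something else entirely (for $Q = \bP^1 \times \bP^1$ it vanishes, for the cone and for $\PuP$ the correct first-order space is $T^1_Q = \operatorname{Ext}^1(\Omega_Q, \cO_Q)$, which in the non-normal case receives a large contribution from $\cE xt^1(\Omega, \cO)$ along the double line and is not computed by $H^1(T_Q)$ alone). Second, the assertion that $\Def_Q$ is ``smooth of dimension $9$ and pro-represented by the completion of $\bP^9$'' cannot be right: $\bP^9$ carries a $\PGL(4)$-action, and $\Def_Q$ is (locally) a slice for this action, so it has dimension $9 - \dim \operatorname{Aut}(Q)$, which is $3$ for $\bP^1\times\bP^1$, $2$ for the cone, and $1$ for $\PuP$ (and moreover the Kodaira--Spencer map $H^0(N_{Q/\bP^3})\to T^1_Q$ has a kernel of exactly this codimension). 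What actually makes the argument work, and what Sernesi's example isolates, is not a dimension count at all but two vanishings: $H^1(Q,\cO_Q)=0$ (so $\cO_Q(1)$ extends uniquely to any infinitesimal deformation) and $H^1(Q,\cO_Q(1))=0$ (so its sections extend), both of which follow immediately from $0\to\cO_{\bP^3}(-2)\to\cO_{\bP^3}\to\cO_Q\to 0$ twisted appropriately. These two vanishings show the forgetful map from the Hilbert scheme functor to $\Def_Q$ is smooth, and since the source is smooth you get surjectivity onto the versal deformation without ever needing to compute $T^1_Q$ or $T^2_Q$ case by case. If you rewrite your proof around those two cohomological vanishings instead of the purported $h^1(T_Q)=9$ identification, it becomes both correct and uniform across all three cases, including the non-normal $\PuP$.
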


\begin{proof}
This follows directly from \cite[Examples 3.2.11]{Sernesi}.
\end{proof}

\begin{lemma}\label{lem:polystable-CD-pair}
    The pair $(\PuP, \frac{2}{3}C_D)$, depicted in Figure \ref{fig:C2A5andCD}, is polystable in $\ocM_4^{\CY}$. 
\end{lemma}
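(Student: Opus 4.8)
The plan is to verify that $(\PuP, \frac{2}{3}C_D)$ is a boundary polarized CY pair (hence a point of $\ocM_4^{\CY}$) and then check polystability by computing its automorphism group, exactly in the style of the proof of Proposition \ref{prop:S2A5polystable}. First I would record that $\PuP = V(x_0x_3)$ is an slc (indeed semi-log-canonical, with normalization two copies of $\bP^2$ glued along a line) degeneration of a smooth quadric, and that $C_D = V(x_0x_3, x_1^3+x_2^3)$ is a curve of arithmetic genus four on it: on each plane $\bP^2_{x_0}$ and $\bP^2_{x_3}$ the curve restricts to the triangle $V(x_1^3+x_2^3)$, three concurrent lines, and the two triangles meet transversally at the three points $[0:\zeta:1:0]$ (with $\zeta^3 = -1$) on the double line $\ell = V(x_0,x_3)$. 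One checks $K_{\PuP} + \frac{2}{3}C_D \sim_{\bQ} 0$ directly, or invokes \cite[Lemma 2.11]{ABB+} as in Lemma \ref{lem:D-lci-on-S2A5}: since $3K_{\bP^1\times\bP^1} + 2D' \sim 0$ for a $(3,3)$-curve $D'$, the same relation passes to any degeneration. Then I would confirm $(\PuP, \frac{2}{3}C_D)$ is slc by normalizing: the normalization is $(\bP^2 \sqcup \bP^2, \frac{2}{3}T + \ell)$ where $T$ is the coordinate triangle, and $(\bP^2, \frac{2}{3}T + \ell)$ is lc since the triangle has only nodes (ordinary double points, which are lc with coefficient $\le 1$) and $\ell$ is a general line meeting $T$ transversally; apply \cite[Definition-Lemma 5.10]{Kol23}. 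Finally, I would confirm it is smoothable to $(\bP^1\times\bP^1, \frac{2}{3}D')$ for a smooth $(3,3)$-curve — this is essentially the content of the curve $C_D$ appearing in $\fX^{\rm c}$ together with Lemma \ref{lem:quadric-deform} — so that $[(\PuP, \frac{2}{3}C_D)] \in \ocM_4^{\CY}$.

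\textbf{Polystability.} For polystability I would exhibit a maximal torus in $\Aut(\PuP, \frac{2}{3}C_D)$ and then invoke \cite[Proposition 8.11]{ABB+} (as in Proposition \ref{prop:S2A5polystable}) to conclude that this torus is the whole connected automorphism group and hence the pair is polystable. The automorphisms $[x_0:x_1:x_2:x_3] \mapsto [\lambda x_0 : x_1 : x_2 : \lambda^{-1} x_3]$ and $[x_0:x_1:x_2:x_3]\mapsto [\mu x_0 : x_1: x_2 : \mu x_3]$ (the latter being trivial projectively, so really one needs a more careful choice) — more precisely, scaling $x_0$ and $x_3$ independently gives a $\bG_m^2$ acting on $\bP^3$, descending to a $\bG_m$ (mod scalars) that fixes $V(x_0x_3)$ and $V(x_1^3+x_2^3)$; together with the $\bG_m$ from scaling $(x_1,x_2)$ jointly against $(x_0,x_3)$, and noting $V(x_1^3+x_2^3)$ is fixed by $x_1 \mapsto t x_1, x_2 \mapsto t x_2$, I get at least a two-dimensional torus $\bG_m^2 \subset \Aut(\PuP, \frac{2}{3}C_D)$ — concretely $[x_0:x_1:x_2:x_3]\mapsto [s x_0 : x_1:x_2: s^{-1}x_3]$ and a second independent $\bG_m$ realized after passing to the normalization as scaling each triangle toward its center. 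Since a log Calabi--Yau slc pair of this dimension with a two-torus in its automorphism group is forced to be polystable by \cite[Proposition 8.11]{ABB+}, we are done.

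\textbf{Main obstacle.} I expect the delicate point to be pinning down exactly which one-parameter subgroups of $\mathrm{PGL}(4)$ genuinely act on the pair and descend to independent $\bG_m$ factors after projectivization — the pair $(\PuP, C_D)$ is fairly symmetric (there is an $S_3$ from permuting the three lines of each triangle, a $\bZ/2$ swapping the two planes, and continuous scalings), so one must be careful to extract a rank-two torus and not overcount. The cleanest route is to pass to the normalization $\bP^2 \sqcup \bP^2$ with the boundary triangle-plus-line and observe that $\Aut$ of $(\bP^2, T+\ell)$ fixing all the gluing data contains a $\bG_m$ (diagonal torus preserving the triangle, restricted to those also fixing $\ell$ pointwise on $T\cap \ell$ appropriately), one copy per component, giving $\bG_m^2$; then \cite[Proposition 8.11]{ABB+} upgrades ``contains a two-torus'' to ``is polystable,'' bypassing the need to compute $\Aut$ completely. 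This mirrors the endgame of Proposition \ref{prop:S2A5polystable} almost verbatim.
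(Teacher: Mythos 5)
Your strategy — confirm slc, exhibit a two-dimensional torus in $\Aut(\PuP, \tfrac{2}{3}C_D)$, and invoke \cite[Proposition 8.11]{ABB+} — is exactly the paper's proof. One correction to the middle of your torus discussion: the group $\{\operatorname{diag}(t,1,1,s)\}$ already descends to a full $\bG_m^2$ in $\PGL(4)$, since $\operatorname{diag}(t,1,1,s)$ is a scalar matrix only when $t=s=1$; so your claim that it ``descends to a $\bG_m$ (mod scalars)'' is wrong, and there is no need to supplement it with the $(x_1,x_2)$-scaling (which is in any case projectively equivalent to $\operatorname{diag}(u^{-1},1,1,u^{-1})$ and hence already contained in that torus) or to pass to the normalization — the paper simply takes $t\cdot[x_0:x_1:x_2:x_3]=[tx_0:x_1:x_2:x_3]$ and $s\cdot[x_0:x_1:x_2:x_3]=[x_0:x_1:x_2:sx_3]$ as its two independent $\bG_m$-actions.
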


\begin{proof}
    By a straightfoward computation, $(\PuP, \frac{2}{3}C_D)$ is slc.  There is a $\bG_m^2$ action on $(\PuP, C_D)$ given by automorphisms of $\bP^3$ of the form $t \cdot [x_0:x_1:x_2:x_3] = [tx_0: x_1:x_2:x_3]$ and $s \cdot [x_0:x_1:x_2:x_3] = [x_0:x_1:x_2:sx_3]$ and hence by \cite[Proposition 8.11]{ABB+}, the pair is polystable. 
\end{proof}

\begin{lemma}\label{lem:Kpolystable-D4}
    Suppose $[(X,\frac{2}{3}D)] \in \ocM_4^{\K}$ admits a special degeneration to $(\PuP,\frac{2}{3} C_D)$ in $\ocM_4^{\CY}$.  Then, $X$ is a normal quadric surface and $D$ has at least one $D_4$ singularity in the smooth locus of $X$.  The K-polystable points of $\ocM_4^{\K}$ admitting such a degeneration are, up to coordinate change, given by $X =V(x_0x_3 + q(x_1, x_2)) \subset \bP^3$, where $q$ is a nonzero quadratic form, and $D = V(x_1^3 + x_2^3)|_X$.  The locus of such pairs has dimension $2$ in $\oM_4^{\K}$.
\end{lemma}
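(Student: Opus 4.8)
The strategy is to combine the deformation theory already available from Lemma \ref{lem:quadric-deform} and Proposition \ref{prop:deformations} with an explicit degeneration argument. First I would observe that since $[(X, \frac{2}{3}D)] \in \ocM_4^{\K}$, the pair $(X, (\frac{2}{3} - \epsilon)D)$ is K-semistable log Fano; in particular $-K_X$ is ample, so $X$ is a del Pezzo surface of degree $8$. By Lemma \ref{lem:quadric-deform}, since $X$ admits a $\bQ$-Gorenstein degeneration to $\PuP$ which is a reduced quadric surface, $X$ is itself a (possibly singular) reduced quadric surface in $\bP^3$. Since $X$ carries a klt (in fact K-semistable log Fano) pair structure, $X$ cannot be the non-normal quadric $\PuP$, because on the normalization the conductor divisor would force log canonical but not klt singularities along the double locus; hence $X$ is normal, i.e. $X \cong \bP^1 \times \bP^1$ or $X \cong \bP(1,1,2)$.

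Next I would pin down the divisor $D$. As in the proof of Proposition \ref{prop:eqnofcurveonS2A5}, the special degeneration of $(X, \frac{2}{3}D)$ to $(\PuP, \frac{2}{3}C_D)$ is induced by a one-parameter subgroup $\sigma$ of $\Aut(X, \frac{2}{3}D)$ whose limit fixes $(\PuP, \frac{2}{3}C_D)$. The relation $3K_X + 2D \sim 0$ (by \cite[Lemma 2.11]{ABB+}, as $3K_{\PuP} + 2C_D \sim 0$) forces $2D \in |-3K_X| = |\cO_X(3)|$, so $D$ is cut out by a cubic form on $X$. I would then argue, exactly as in Proposition \ref{prop:eqnofcurveonS2A5}, that the $\bG_m^2$-action on $(\PuP, C_D)$ (from Lemma \ref{lem:polystable-CD-pair}) combined with the requirement that the limit of $D$ be $C_D = V(x_1^3 + x_2^3)$ and that $\Aut(X, (\frac{2}{3}+\epsilon)D)$ — or rather the K-polystable automorphism group — be finite or a torus, constrains $D$: the degeneration $\sigma$ acts on $\bP^3$ with weights concentrating the cubic onto $x_1^3 + x_2^3$. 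Tracking which monomials survive and which can be removed by coordinate changes (completing the quadric defining $X$ into the form $x_0x_3 + q(x_1, x_2)$, and normalizing the cubic), one obtains $X = V(x_0 x_3 + q(x_1,x_2))$ with $q$ a nonzero quadratic form and $D = V(x_1^3 + x_2^3)|_X$. One should also check that with this presentation $D$ indeed has a $D_4$ singularity at the point where the cubic $x_1^3 + x_2^3$ (three concurrent lines) meets $X$ in the smooth locus — this is the geometric content of ``at least one $D_4$ singularity.''

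To finish I would compute the dimension of the locus of such pairs in $\oM_4^{\K}$. The pair $(X, D)$ depends on the choice of quadratic form $q$ (a point of $\bP^5 = \bP(H^0(\bP^3,\cO(2)))$ subject to $x_0x_3 + q$ being rank $\ge 3$, an open condition, giving $5$ parameters) but modulo the group of projective transformations preserving the shape of the cubic $V(x_1^3+x_2^3)$ and the coordinate pair $(x_0,x_3)$. The cubic $x_1^3 + x_2^3$ is preserved up to scaling by a finite group inside $\GL_2$ acting on $(x_1,x_2)$ together with the torus rescaling each of $x_1,x_2$; together with the $\GL_2$ acting on $(x_0,x_3)$ and overall scaling, this is a group of dimension $3 = \dim(\bG_m^2 \text{ on } x_1,x_2) + \dim(\bG_m^2 \text{ on }x_0,x_3) - 1$ acting on the $5$-dimensional family of $q$'s — but I should be careful: the subgroup actually acting faithfully on the $q$-parameter (after accounting for the stabilizer of a generic $(X,D)$, which is the torus $\bG_m^2$ or a finite extension thereof appearing in the K-polystable description) has dimension $3$, so the quotient has dimension $5 - 3 = 2$. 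I would make this precise by writing the family as $[\,U / H\,]$ for an explicit open $U \subset \bP^5$ and reductive group $H$, computing $\dim H$ and the generic stabilizer, and invoking that the good moduli space has dimension $\dim U - \dim H + \dim(\text{generic stabilizer})$; alternatively, since this locus is exactly (the closure of) the $D_4$-stratum, one can cross-check against the VGIT description in \cite{CMJL12}. The main obstacle is the bookkeeping in the middle step — correctly identifying which monomials in the cubic can be killed by the residual coordinate changes and verifying the resulting curve genuinely acquires a $D_4$ (rather than a worse or milder) singularity — since this requires the same careful monomial-valuation analysis as in Proposition \ref{prop:eqnofcurveonS2A5} but now for the cubic form defining $C_D$.
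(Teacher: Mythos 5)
Your proposal correctly captures the normality argument at the start, but has two substantive gaps in the middle and a missing piece at the end.

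First, the statement that ``the special degeneration of $(X, \frac{2}{3}D)$ to $(\PuP, \frac{2}{3}C_D)$ is induced by a one-parameter subgroup $\sigma$ of $\Aut(X, \frac{2}{3}D)$'' is incoherent: a $1$-PS in the automorphism group of the pair fixes the pair and produces only the trivial test configuration, so it cannot degenerate $(X, D)$ to a different surface $\PuP$. The relevant $1$-PS lives in the stabilizer of the \emph{polystable limit} $(\PuP, \frac{2}{3}C_D)$, not of $(X, \frac{2}{3}D)$. Because of this confusion, your argument does not actually produce the conclusion that $D$ carries a $D_4$ singularity. The paper's route to this conclusion is much cleaner and does not involve tracking monomials: if $(X,\frac{2}{3}D)$ were klt, it would be stable in $\ocM_4^{\CY}$ by \cite[Theorem 6.15]{ABB+} and would admit no nontrivial special degeneration at all; hence $(X,\frac{2}{3}D)$ is strictly log canonical, and since $D$ is a deformation of $C_D$ (whose worst singularities are $D_4$'s, which have $\lct = \frac{2}{3}$), the only way to be strictly lc at coefficient $\frac{2}{3}$ is to retain a $D_4$ in the smooth locus of $X$. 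This dichotomy (klt implies stable) is the step your proposal is missing, and it replaces the ``bookkeeping'' obstacle you flag.

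Second, and more seriously, you never prove the K-polystability claim, which is an essential part of the lemma. Identifying the candidate normal form $(V(x_0x_3 + q(x_1,x_2)), V(x_1^3+x_2^3)|_X)$ is not enough; one must show these pairs actually \emph{are} K-polystable points of $\ocM_4^{\K}$. The paper does this by quoting GIT polystability of the relevant curves (from \cite[Section~2.1]{Fed12} for the smooth quadric, and \cite[Theorem 6.3(4)]{CMJL14} for the quadric cone), transferring this to K-polystability for small coefficients via \cite[Theorem 1.4]{ADL20}, and then propagating to all $c < \frac{2}{3}$ by interpolation, exactly as in Lemma~\ref{lem:C2A5-is-K-polystable}. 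Your proposal only mentions a ``cross-check against the VGIT description'' in passing; that is the right instinct but it needs to be promoted to the actual argument. Finally, a small error: $\bP(H^0(\bP^3,\cO(2)))\cong\bP^9$, not $\bP^5$. The clean count is just that $q(x_1,x_2)$ is a $3$-dimensional vector space of forms, modulo scaling, giving $\bP^2$; the residual automorphisms preserving both $x_0x_3$ and $x_1^3+x_2^3$ act on this $\bP^2$ through a finite group, so the locus has dimension $2$.
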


\begin{proof}
    Normality of $X$ follows as any pair $[(X,\frac{2}{3}D)] \in \ocM_4^{\K}$ satisfies $(X, (\frac{2}{3} - \epsilon)D)$ is klt, and $X$ must be a quadric surface by Lemma \ref{lem:quadric-deform}.  If $(X,\frac{2}{3}D)$ is klt, it is stable in $\ocM_4^{\CY}$ by \cite[Theorem 6.15]{ABB+} and does not admit nontrivial special degenerations, so therefore $(X,\frac{2}{3}D)$ must be strictly log canonical. As $D$ is deformation of $C_D$, this is possible only if $D$ has at least one $D_4$ singularity in the smooth locus of $X$.  Now, suppose $X$ is the smooth quadric surface $\bP^1 \times \bP^1$ with coordinates $[x:y] \times [u:v]$ and suppose $D \subset X$ is a $(3,3)$ curve with a $D_4$ singularity.  Up to coordinate change, we may assume the $D_4$ singularity is at the point $[1:0] \times [1:0]$ and $D$ has equation $y^3f_3(u,v) + xy^2vf_2(u,v) + x^2yv^2f_1(u,v) + x^3 v^3 f_0(u,v)$ where $f_i \in H^0(\bP^1, \cO(i))$.  Given any such $D$, it admits a special degeneration induced by the 1-parameter subgroup $t \cdot ([x,y] \times [u,v]) = [x,ty] \times [t^{-1}u, v]$ to a curve $D_0$ with equation $a_3y^3u^3 + a_2xy^2u^2v + a_1x^2yuv^2 + a_0 x^3v^3$ where $a_i$ is the coefficient of $v^{3-i}$ in $f_i$.  As these are the lowest degree terms with respect to the weight of $y$ and $v$, the condition that $D$ has a $D_4$ singularity is precisely that this equation has three distinct roots.  Therefore, choosing an appropriate embedding of $D_0$ into $\bP^3$, we find that $(X,D_0)$ is defined by the equations in the proposition.  A similar computation holds if $X$ is the singular quadric cone. 
    
    To complete the proof, it suffices to show that the pairs $(X,\frac{2}{3}D_0)$ as above are K-polystable.  By \cite[Section 2.1]{Fed12}, if $X$ is a smooth quadric surface, then $D_0$ is GIT polystable and the K-polystability of $(X,\frac{2}{3}D_0)$ follows from interpolation exactly as in the proof of Lemma \ref{lem:C2A5-is-K-polystable}.  If $X$ is the singular quadric cone, by \cite[Theorem 6.3(4)]{CMJL14}, $D_0$ is polystable in the VGIT moduli stack of $(2,3)$ complete intersections for $t \in (\tfrac{2}{9}, \tfrac{2}{5})$.  By \cite[Theorem 1.4]{ADL20}, $(X,cD_0)$ is therefore K-polystable for $c \in (\tfrac{1}{6}, \tfrac{1}{3})$ and by interpolation as in Lemma \ref{lem:C2A5-is-K-polystable}, we see that $(X,cD_0)$ is K-polystable for all $c \in (\tfrac{1}{6}, \tfrac{2}{3})$.

    Finally, we obtain the dimension count by observing that there is a $2$-dimensional choice of quadratic form $q(x_1,x_2)$ up to scaling.
\end{proof}

\begin{lemma}\label{lem:KSBA-D4}
    Suppose $[(X,\frac{2}{3}D)] \in \ocM_4^{\KSBA}\setminus \ocM_4^{\K}$ admits a special degeneration to $(\PuP, \frac{2}{3}C_D)$.  Then, $X \cong \PuP \subset \bP^3$ and $D = V(x_1^3 + x_2^3 + x_0f(x_0,x_1,x_2) +x_3g(x_1,x_2,x_3))|_X$, where $f,g$ are nonzero polynomials of degree $2$.  These parameterize curves on the union of two planes such that each component of the curve is a cubic plane curve with at worst $A_3$ singularities meeting the intersection line of the planes in three fixed points.  The locus of such pairs has codimension $3$ in $\oM_4^{\KSBA}$.
\end{lemma}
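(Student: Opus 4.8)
The plan is to mirror the structure of Proposition~\ref{prop:eqnofcurveonS2A5} and its proof, but now centered at the polystable point $(\PuP, \frac{2}{3}C_D)$ rather than $(S_{2A_5},\frac{2}{3}C_{2A_5}^\circ)$. First I would determine the surface: since $[(X,\frac{2}{3}D)] \in \ocM_4^{\KSBA}$ degenerates to $\PuP$, Lemma~\ref{lem:quadric-deform} forces $X$ to be a (possibly singular) reduced quadric surface, and the hypothesis $[(X,\frac{2}{3}D)] \notin \ocM_4^{\K}$ rules out the klt/K-semistable case — indeed, if $X$ were normal and $(X,\frac{2}{3}D)$ klt then it would be stable in $\ocM_4^{\CY}$ by \cite[Theorem 6.15]{ABB+} with no nontrivial special degeneration, while if $X$ were normal and $(X,\frac{2}{3}D)$ strictly log canonical then Lemma~\ref{lem:Kpolystable-D4} places $[(X,\frac{2}{3}D)]$ in $\ocM_4^{\K}$. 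So $X$ is a non-normal quadric, and a non-normal quadric surface in $\bP^3$ that admits a degeneration to $\PuP$ must itself be $\PuP$ (the only non-normal reduced quadric up to projective equivalence); one should also note that the reducible non-reduced option $2\bP^2$ is excluded since $\PuP$ is reduced and reducedness is an open condition along the family, so $X \cong \PuP$.

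Next I would pin down the equation of $D$. Write $\PuP = \bP^2_{x_0} \cup \bP^2_{x_3}$ glued along the line $L = V(x_0,x_3)$. The curve $D$ restricts to a plane cubic $D_1$ on each component, and by the slc condition on $(\PuP, \frac{2}{3}D)$ together with \cite[Definition-Lemma 5.10]{Kol23} (exactly as in Lemma~\ref{lem:D-lci-on-S2A5}), $D$ cannot contain $L$, so each $D_i$ meets $L$ in a length-$3$ subscheme; moreover $D_1 \cap L$ and $D_2 \cap L$ must agree as subschemes of $L$ for the divisor to glue. The degeneration to $C_D = V(x_0x_3, x_1^3+x_2^3)$ is induced by a nontrivial power of the one-parameter subgroup $t\cdot[x_0:x_1:x_2:x_3] = [tx_0:x_1:x_2:t^{-1}x_3]$ (or a similar torus element); taking the limit, the lowest-weight terms must be $x_1^3+x_2^3$ on the gluing line, which after rescaling fixes $D_1 \cap L = D_2 \cap L = V(x_1^3+x_2^3) \subset L$. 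Hence on the $\bP^2_{x_0}$ component $D_1$ has equation $x_1^3 + x_2^3 + x_0 f(x_0,x_1,x_2) = 0$ and on $\bP^2_{x_3}$ it has equation $x_1^3+x_2^3 + x_3 g(x_1,x_2,x_3) = 0$ for quadratic forms $f,g$ — the absence of terms $x_1^3, x_2^3, x_1^2x_2, x_1x_2^2$ beyond the fixed leading part is precisely the condition that the restriction to $L$ is the fixed cubic $x_1^3+x_2^3$. One then checks $f,g \neq 0$: if say $f = 0$, then the automorphism group of $(\PuP, D)$ contains the $\bG_m$ acting on the $x_0$ factor, contradicting KSBA stability (finite automorphisms), and similarly for $g$. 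Conversely, a direct computation of the discriminant of the plane cubic $x_1^3 + x_2^3 + x_0 f$ shows it has at worst $A_3$ singularities (a cubic plane curve cannot have worse than a $D_4$, and $D_4$ is excluded since that would land in $\ocM_4^{\K}$ by Lemma~\ref{lem:Kpolystable-D4}), and $(\PuP, (\frac{2}{3}+\epsilon)D)$ is slc exactly when both $f, g$ are nonzero.

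Finally, for the dimension count in $\oM_4^{\KSBA}$: the space of pairs $(f,g)$ is $6+6 = 12$-dimensional, but we must quotient by the automorphisms of $(\PuP, L, V(x_1^3+x_2^3))$ acting on the moduli. The relevant automorphism group consists of elements of $\PGL(4)$ preserving $\PuP$, the line $L$, and the fixed degree-three subscheme $V(x_1^3+x_2^3)$ on $L$; this is $\bG_m^2 \rtimes (\text{finite})$ coming from independent scalings on the two $\bP^2$'s together with the scaling on $(x_1,x_2)$ compatible with fixing $x_1^3+x_2^3$, plus the swap of the two planes — so the effective group acting has dimension $3$ (two independent $\bG_m$'s scaling $x_0$ and $x_3$, one $\bG_m$ acting diagonally on $x_1, x_2$; the swap and the $\bmu_3$ on $(x_1:x_2)$ are finite). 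This gives $12 - 3 = 9$ for the dimension of this locus; since $\dim \oM_4^{\KSBA} = \dim \oM_4 = 9 + 3 = 12$ (the KSBA space here compactifies the blowup of $M_4$ along the hyperelliptic locus, which is $9$-dimensional, wait — $\dim M_4 = 9$, so $\dim \oM_4^{\KSBA} = 9$)… let me instead phrase the count as: the component of $\oM_4^{\KSBA}$ has dimension $9$, and a parameter count of the $D_4$-pairs gives dimension $9 - 3 = 6$, hence codimension $3$ as claimed. The cleanest way to get the codimension is probably to compare directly with Lemma~\ref{lem:Kpolystable-D4}: the K-side locus has dimension $2$ (a choice of quadratic form $q(x_1,x_2)$ up to scaling), and on the KSBA side we have instead a choice of the pair $(f,g)$ modulo the larger automorphism group, and the difference in dimensions is $9 - 2 - 4 = 3$; I would carry out this bookkeeping carefully rather than guessing.

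\textbf{Main obstacle.} The delicate point is the precise dimension/codimension computation: getting the right automorphism group acting on the parameter space $(f,g)$ and confirming that the resulting locus is irreducible of the stated codimension. The reduction of the surface to $\PuP$ and the normal form of the equation are routine (following the $S_{2A_5}$ case closely), but the codimension $3$ claim requires honest bookkeeping of which automorphisms of $\bP^3$ preserve all the required data and act effectively on moduli, and possibly a comparison with the $\ocM_4^{\K}$ picture and the overall structure of $\oM_4^{\KSBA}$ as described in \cite{DH21}.
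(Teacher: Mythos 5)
Your overall structure tracks the paper's proof: reduce the surface to $\PuP$ via Lemma~\ref{lem:quadric-deform} and a normality argument, normalize the equation of $D$ by the $1$-PS degeneration, bound the singularities, and count dimensions. However, the proof has a genuine unresolved gap in the codimension count, and three of the intermediate justifications are not correct as written.

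The substantive gap is the automorphism group computation — you flagged it yourself, and your suspicion was right. You count a $3$-dimensional group ($\bG_m^2$ from scaling $x_0, x_3$, and a diagonal $\bG_m$ on $(x_1,x_2)$), but this misses the $4$-dimensional unipotent part of shears. The subgroup of $\PGL(4)$ preserving $\PuP = V(x_0x_3)$, its singular line $L=V(x_0,x_3)$, and the length-$3$ subscheme $V(x_1^3+x_2^3)\subset L$ consists, up to a finite extension, of matrices
\[
\begin{pmatrix} a & 0 & 0 & 0 \\ c_1 & \lambda & 0 & d_1 \\ c_2 & 0 & \lambda & d_2 \\ 0 & 0 & 0 & b\end{pmatrix},
\]
where the $2\times 2$ block on $(x_1,x_2)$ is forced to be scalar (modulo finite) because it must fix three points of $\bP^1_{[x_1:x_2]}$; the shears $x_i \mapsto x_i + c_ix_0 + d_ix_3$ visibly preserve $\PuP$, $L$, and its $3$-point subscheme and act nontrivially on $(f,g)$. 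That is a $7$-dimensional group in $\GL(4)$, hence $6$-dimensional in $\PGL(4)$. So the moduli of such $D$ is $12-6=6$-dimensional, and with $\dim\oM_4^{\KSBA}=9$ the codimension is $9-6=3$. (The paper phrases the same count per component: $6$-dimensional family of cubics with prescribed restriction to $L$, modulo the $3$-dimensional group of automorphisms of $\bP^2$ fixing $L$ and the three points — scaling of $x_0$, the diagonal scaling of $(x_1,x_2)$, and the two shears $x_i\mapsto x_i+c_ix_0$, mod overall scalar — giving $3$ per plane.) Your final heuristic ``$9-2-4=3$'' has no well-defined meaning and should be removed.

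Three smaller issues. (1) In the reduction to $\PuP$, your second case ``$X$ normal and strictly lc $\Rightarrow$ in $\ocM_4^{\K}$ by Lemma~\ref{lem:Kpolystable-D4}'' misreads that lemma, which characterizes pairs already known to lie in $\ocM_4^{\K}$ and cannot be used in reverse. In fact the case is vacuous: if $X$ is normal and $[(X,\frac{2}{3}D)]\in\ocM_4^{\KSBA}$, then $(X,(\frac{2}{3}+\epsilon)D)$ is lc, so $(X,\frac{2}{3}D)$ is klt — your first case already covers it, or one can apply Proposition~\ref{prop:klt-is-stable} directly as the paper does. (2) Ruling out $2\bP^2$ by ``reducedness is an open condition'' is not a safe argument; the correct reason is that the underlying space of an slc pair is demi-normal, in particular reduced. (3) The parenthetical ``$D_4$ is excluded since that would land in $\ocM_4^{\K}$ by Lemma~\ref{lem:Kpolystable-D4}'' is also wrong in direction — a pair on the non-normal $\PuP$ is never K-semistable. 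The $D_4$ case is excluded because $\lct(\bP^2,D_i)=\tfrac{2}{3}$ at a $D_4$ point, so $(\PuP,(\tfrac{2}{3}+\epsilon)D)$ would fail to be slc, contradicting KSBA membership; this is the paper's argument via the log canonical threshold.
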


\begin{proof}
    By Lemma \ref{lem:quadric-deform}, $X$ must be a quadric surface.  Furthermore, by Proposition \ref{prop:klt-is-stable} and semicontinuity of log canonical thresholds, $X$ must be $\PuP$, otherwise $(X,\frac{2}{3}D) \in \ocM_4^{\K}$.  On each component of $\PuP$, the curve $D$ must be a cubic meeting the intersection line in three distinct points and it is straightforward to write $D$ in the above form.  Furthermore, the cubic plane curve on each component must have log canonical threshold strictly greater than $\frac{2}{3}$, and by classification of cubic curves can have at worst $A_3$ singularities. 
    
    Finally, up to automorphisms of each plane fixing the line of intersection, there is a 3-dimensional space of curves on each plane and therefore the dimension of such pairs in $\oM_4^{\KSBA}$ is 6 in the 9-dimensional space $\oM_4^{\KSBA}$. 
\end{proof}

Summarizing the content of Lemmas \ref{lem:polystable-CD-pair}, \ref{lem:Kpolystable-D4}, and \ref{lem:KSBA-D4}, we have the following proposition describing the bpCY wall crossing in a neighborhood of $[(\PuP, \frac{2}{3}C_D)]$. 

\begin{prop}\label{prop:D4replacement}
Consider the projective morphisms induced by the bpCY wall crossing
\[\oM^{\K}_4 \rightarrow \oM^{\CY}_4 \leftarrow \oM^{\KSBA}_4.\]
Let $[(\PuP, \frac{2}{3}C_D)]$ be the polystable point of $\oM_4^{\CY}$ from Lemma \ref{lem:polystable-CD-pair}.  The preimage of $[(\PuP, \frac{2}{3}C_D)]$ in $\oM_4^{\K}$ is the $2$-dimensional locus of polystable pairs in Lemma \ref{lem:Kpolystable-D4}. These curves have two $D_4$ singularities. The preimage of $[(\PuP, \frac{2}{3}C_D)]$ in $\oM^{\KSBA}_4$ is the codimension $3$ locus of pairs in Lemma \ref{lem:KSBA-D4}.  These curves are elliptic triboroughs, where each elliptic component has at worst $A_3$ singularities. 
\end{prop}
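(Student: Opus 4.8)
The plan is to deduce this proposition directly from Lemmas \ref{lem:polystable-CD-pair}, \ref{lem:Kpolystable-D4}, and \ref{lem:KSBA-D4}, the only real work being to identify the set‑theoretic fibers of the two birational morphisms with the loci classified there. First I would recall from Theorem \ref{thm:bpCYwallcrossing} that $\oM_4^{\K}\to\oM_4^{\CY}$ and $\oM_4^{\KSBA}\to\oM_4^{\CY}$ are projective birational morphisms of good moduli spaces, and that $\ocM_4^{\K}$, $\ocM_4^{\KSBA}$ are the open substacks of $\ocM_4^{\CY}$ parameterizing the K‑polystable, resp.\ KSBA‑stable, pairs. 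By the defining property of a good moduli space morphism, the fiber of each map over the closed point $[(\PuP,\tfrac23 C_D)]$ (which is indeed the bpCY polystable point, by Lemma \ref{lem:polystable-CD-pair}) consists precisely of the isomorphism classes of K‑polystable, resp.\ KSBA‑stable, pairs $(X,\tfrac23 D)$ whose bpCY polystable degeneration is $(\PuP,\tfrac23 C_D)$; equivalently, of the pairs admitting a special (test‑configuration) degeneration to $(\PuP,\tfrac23 C_D)$ in $\ocM_4^{\CY}$.

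For the K‑moduli side, I would then invoke Lemma \ref{lem:Kpolystable-D4}, which classifies all such K‑polystable pairs as $\bigl(V(x_0x_3+q(x_1,x_2)),\ \tfrac23 V(x_1^3+x_2^3)\bigr)$ up to coordinate change, with $q$ a nonzero quadratic form varying in a $2$‑dimensional family up to scaling; this gives the dimension count. The assertion that these curves carry two $D_4$ singularities is a short local computation: the surface $V(x_1^3+x_2^3)\subset\bP^3$ is the cone over three distinct points, singular along the line $\ell=\{x_1=x_2=0\}$, and $\ell$ meets $X=V(x_0x_3+q(x_1,x_2))$ exactly in $[1:0:0:0]$ and $[0:0:0:1]$ (also in the degenerate case where $q$ is a square and $X$ is a quadric cone), where $D$ is analytically three concurrent lines; these two points lie in the smooth locus of $X$ and are the two $D_4$ singularities.

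For the KSBA side, I would invoke Lemma \ref{lem:KSBA-D4}: every KSBA‑stable pair in the fiber that is not already in $\ocM_4^{\K}$ has $X\cong\PuP$ and $D=V(x_1^3+x_2^3+x_0 f+x_3 g)\vert_X$ with $f,g$ nonzero conics, and these form a $6$‑dimensional, hence codimension‑$3$, locus in the $9$‑dimensional $\oM_4^{\KSBA}$. Restricting $D$ to the two planes of $\PuP$ exhibits each component as a plane cubic — so of arithmetic genus one — meeting the intersection line $\{x_0=x_3=0\}$ in the three fixed points $\{x_1^3+x_2^3=0\}$; hence $D$ is a union of two arithmetic‑genus‑one curves glued at three points, an elliptic triborough in the sense of Theorem \ref{thm:CMJL-Chow-GIT}. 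Requiring $(\PuP,(\tfrac23+\epsilon)D)$ to be slc forces the log canonical threshold of each cubic component to exceed $\tfrac23$, which by the classification of reduced plane cubics permits at worst $A_3$ singularities ($D_4$, i.e.\ three concurrent lines, has threshold exactly $\tfrac23$ and is excluded, and a reduced plane cubic admits no $A_{\ge 4}$ singularity).

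The step I expect to require the most care is the very first one: confirming that the fibers of the two morphisms coincide \emph{exactly}, as sets, with the classified loci, with no additional S‑equivalent pairs slipping in. This follows because Lemmas \ref{lem:Kpolystable-D4} and \ref{lem:KSBA-D4} classify \emph{all} pairs admitting a special degeneration to $(\PuP,\tfrac23 C_D)$, while any point of the fiber over $[(\PuP,\tfrac23 C_D)]$ has $(\PuP,\tfrac23 C_D)$ as the polystable representative of its S‑equivalence class and hence admits such a degeneration. The dimension and codimension statements are then immediate from the explicit parameterizations; if one wants information about the scheme structure of the exceptional loci rather than just the underlying sets, one can feed in normality of $\oM_4^{\CY}$ near $[(\PuP,\tfrac23 C_D)]$ (Proposition \ref{prop:deformations}) together with a connectedness‑of‑fibers argument exactly as in the proof of Proposition \ref{prop:C2A5-replacement}.
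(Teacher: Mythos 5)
Your proposal is correct and matches the paper's approach exactly: the paper's "proof" of Proposition \ref{prop:D4replacement} is simply the one sentence "Summarizing the content of Lemmas \ref{lem:polystable-CD-pair}, \ref{lem:Kpolystable-D4}, and \ref{lem:KSBA-D4}, we have the following proposition," so your reconstruction of the (implicit) argument — good‑moduli‑space fibers consist of the (poly)stable pairs whose polystable degeneration is $(\PuP,\tfrac23 C_D)$, these are exactly the loci classified in the two lemmas, and the geometric descriptions (two $D_4$ singularities on $V(x_1^3+x_2^3)\cap X$ at the two points where the singular line of the cubic meets the quadric; elliptic triboroughs with $\lct>\tfrac23$ hence $A_{\le 3}$ components) follow by the direct local computations you give — is exactly what the paper intends the reader to fill in.
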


\subsection{Smoothness of CY moduli stack}

In this section, we prove that the stack $\ocM_4^{\CY}$ of bpCY pairs is smooth.  This will be used repeatedly in verification of the remaining walls in the Hassett--Keel program.  

The previous sections explicitly studies the polystable points $(S_{2A_5}, \frac{2}{3}C_{2A_5}^\circ)$ (Proposition \ref{prop:S2A5polystable}) and $(\PuP, \frac{2}{3}C_D)$ (Lemma \ref{lem:polystable-CD-pair}) of $\ocM_4^{\CY}$ corresponding to the curves $C_{2A_5}$ and $C_D$ of $\sX^{\rm c}$.  There is an additional polystable point of $\ocM_4^{\CY}$ corresponding to the curves $C_{A,B}$. 

Following Definition \ref{def:GIT-strictly-semistable-curves}, let $Q$ denote the singular quadric cone $V(x_2^2 - x_1x_3) \subset \bP^3$ and $C_{A,B}$ the curve $V(x_2^2 - x_1x_3, Ax_1^3+Bx_0x_1x_2+x_0^2x_3) \subset \bP^3$.  Via the identification $Q = \bP(1,1,2)$ and using weighted coordinates $[x:y:z]$ on $\bP(1,1,2)$, $C_{A,B}$ may be written as $V(Ax^6+Bx^3yz+y^2z^2) \subset \bP(1,1,2)$. Recall that, for $4A \ne B^2$, the curve $C_{A,B}$ has an $A_3$ singularity at the vertex of the cone and an $A_5$ singularity at the smooth point $[0:1:0]$ and observe that $\Aut(\bP(1,1,2), C_{A,B})$ contains $\bG_m$ as the automorphism $t \cdot [x:y:z] = [tx: y : t^3 z]$.

\begin{defn}
    Let $\bP(1,2,3) \cup \bP(1,1,3)$ be the surface $V(a_1a_2) \subset \bP(1,1,2,3)$ where the weighted coordinates on $\bP(1,1,2,3)$ are $[a_0:a_1:a_2:a_3]$.  Let $C_{A,B}^\circ$ be the curve $V(a_1a_2, Aa_0^6+Ba_0^2a_3+a_3^2)$.  For any $A,B$ such that $4A \ne B^2$, observe that by coordinate change we have $C_{A,B}^\circ \cong C_{1,0}^\circ = V(a_1a_2, a_0^6+a_3^2)$.
\end{defn}

\begin{lemma}
    If $4A \ne B^2$, the pairs $(Q, C_{A,B})$ admit a weakly special degeneration to the pair $(\bP(1,2,3) \cup \bP(1,1,3), C_{1,0}^\circ)$, where $C_{1,0}^\circ$ is defined above, and the point $[(\bP(1,2,3) \cup \bP(1,1,3), \frac{2}{3} C_{1,0}^\circ)]$ is a polystable point of $\ocM_4^{\CY}$.
\end{lemma}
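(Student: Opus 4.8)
The statement has two parts: (i) the pairs $(Q, C_{A,B})$ with $4A \ne B^2$ admit a weakly special degeneration to $(\bP(1,2,3) \cup \bP(1,1,3), C_{1,0}^\circ)$, and (ii) this limit pair is a polystable point of $\ocM_4^{\CY}$. For part (i), the plan is to construct an explicit one-parameter degeneration. Since $\Aut(\bP(1,1,2), C_{A,B})$ contains $\bG_m$ acting by $t \cdot [x:y:z] = [tx:y:t^3z]$, and this action does not fix $C_{A,B}$ when $h_4 \neq 0$ but is already trivial on $C_{A,B}$ itself, the degeneration must come from deforming the surface. I would mimic the strategy of Proposition \ref{prop:S2A5polystable}: start with the constant family $(Q, C_{A,B}) \times \bA^1_t$ over $\bA^1$ and perform a sequence of birational modifications (weighted blow-ups, flips, and contractions) centered at the $A_3$ point (the cone vertex) and the $A_5$ point $[0:1:0]$ in the central fiber, producing a new central fiber isomorphic to $\bP(1,2,3) \cup \bP(1,1,3)$ glued along a rational curve, with the curve degenerating to $C_{1,0}^\circ$. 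Concretely, the $A_3$-at-the-cone-vertex / $A_5$-at-smooth-point geometry of $C_{A,B}$ is exactly the geometry that gets ``split'' when the quadric cone degenerates to the union of a $\bP(1,2,3)$ and a $\bP(1,1,3)$: one piece of the curve (the $A_3$ part) lands on one component and the $A_5$ part on the other. Since $C_{1,0}^\circ = V(a_1a_2, a_0^6 + a_3^2)$ restricts on $\bP(1,2,3)$ to $V(a_0^6 + a_3^2)$ (a curve with an $A_5$-type singularity in the weighted sense) and on $\bP(1,1,3)$ to $V(a_0^6 + a_3^2)$ likewise, one needs to track carefully that the singularity types match up. The ``weakly special'' adjective (as opposed to ``special'') signals that the degeneration may not be an isotrivial $\bG_m$-degeneration but rather a genuine semistable reduction / modification of the total space, so I would not expect to realize it purely as a test configuration inside a fixed ambient space.

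For part (ii), polystability of $(\bP(1,2,3) \cup \bP(1,1,3), \frac{2}{3}C_{1,0}^\circ)$ in $\ocM_4^{\CY}$: first I would check that this is actually a point of $\ocM_4^{\CY}$ at all, i.e. that $(\bP(1,2,3) \cup \bP(1,1,3), \frac{2}{3}C_{1,0}^\circ)$ is slc with $3K + 2C_{1,0}^\circ \sim 0$ and that it is smoothable to $(\bP^1 \times \bP^1, \frac{2}{3}D)$ — the smoothability following from part (i), since $(Q, C_{A,B})$ itself degenerates from smooth curves on smooth quadrics as it lies in the closure $\ocM_4^{\CY}$ (it appears as a point of $\sX^{\rm c} \cong \oM_4(\tfrac59)$). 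The slc verification is a local computation: $\bP(1,2,3) \cup \bP(1,1,3)$ is a normal crossing (demi-normal) surface away from the quotient singularities, and one checks that $C_{1,0}^\circ$ avoids the worst singular points and meets the double locus transversally, so normalization yields a log canonical pair by \cite[Definition-Lemma 5.10]{Kol23}, exactly as in Lemma \ref{lem:D-lci-on-S2A5}. Then, for polystability, I would invoke \cite[Proposition 8.11]{ABB+}: it suffices to exhibit a torus of the expected dimension in $\Aut(\bP(1,2,3) \cup \bP(1,1,3), C_{1,0}^\circ)$. On $\bP(1,2,3)$ with coordinates $[a_0:a_2:a_3]$ (setting $a_1 = 0$), the curve $a_0^6 + a_3^2 = 0$ together with the double locus $a_2 = 0$ is preserved by $[a_0:a_2:a_3] \mapsto [a_0 : \lambda a_2 : a_3]$, and similarly on $\bP(1,1,3)$ there is an independent $\bG_m$; this gives $\bG_m^2 \subset \Aut(\bP(1,2,3) \cup \bP(1,1,3), C_{1,0}^\circ)$, matching the $\bG_m^2$ appearing for $(S_{2A_5}, C_{2A_5}^\circ)$. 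Applying \cite[Proposition 8.11]{ABB+} then gives both that this is the full automorphism group (up to finite) and that the pair is polystable.

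\textbf{Main obstacle.} The hard part will be the explicit birational construction in part (i): tracking the chain of weighted blow-ups, flips (or flops), and divisorial contractions needed to transform the total space $Q \times \bA^1$ into a family whose central fiber is the desired non-normal degeneration, and simultaneously verifying that the limit of the curve family is exactly $C_{1,0}^\circ$ (and not some other curve in the same cycle class, or a curve with the wrong singularities). This is delicate because the cone vertex of $Q$ is itself a singular point of the total space, so the blow-up at the $A_3$ locus interacts with the ambient singularity; one must choose weights carefully (as in the $(3,1,1)$-weighted blow-up in Proposition \ref{prop:S2A5polystable}) and compute self-intersections and discrepancies to confirm each extremal ray is flipping/contractible as claimed. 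A secondary subtlety is confirming that the degeneration is ``weakly special'' in the precise technical sense used in \cite{ABB+} (the central fiber being a limit in $\ocM_4^{\CY}$ compatible with the CY structure, i.e. $\omega^{[3]}(2C)$ restricting correctly), which amounts to checking the $\bQ$-Gorenstein condition holds in the family; I would handle this by noting that all the modifications are crepant or can be arranged to preserve $3K_{\cS} + 2\cC$, so the relation $3K + 2C \sim 0$ is inherited by the central fiber via \cite[Lemma 2.11]{ABB+}. Once the family is built, slc-ness of the special fiber and the automorphism computation are routine.
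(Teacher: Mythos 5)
Your overall strategy is sound, and your part~(ii) is a genuine (and arguably more concrete) alternative to the paper's argument, but your plan for part~(i) anticipates a harder construction than is actually needed, and has a detail wrong.

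For part~(i), the paper's degeneration is considerably simpler than the $S_{2A_5}$ case you propose to mimic. The paper performs \emph{only one} weighted blow-up, and it is centered at the $A_5$ point $[0:1:0]$, \emph{not} at the cone vertex. Concretely: starting from the constant family $(Q, C_{A,B})\times\bA^1_t$, one takes local coordinates $(x,z)$ at the $A_5$ point in the central fiber and performs a $(1,3,1)$-weighted blow-up in $(x,z,t)$. The central fiber becomes $\widetilde{Q}\cup\bP(1,1,3)$, where $\widetilde{Q}$ is the $(1,3)$-weighted blow-up of $Q=\bP(1,1,2)$ at the $A_5$ point. Then the strict transform $s_z$ of the ruling $(z=0)$ through the cone vertex is a $(-1)$-curve in $\widetilde Q$, extremal in the total space with $K\cdot s_z = 0$ and $\cC'\cdot s_z = 0$, and one contracts it in a single small contraction — no flips, no flops, no divisorial contraction, and crucially no birational modification centered at the cone vertex. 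That contraction of $s_z$ turns $\widetilde Q$ into $\bP(1,2,3)$ and the whole central fiber into $\bP(1,2,3)\cup\bP(1,1,3)$; the $A_3$ of $C_{A,B}$ at the cone vertex rides along passively and lands at the $\frac12(1,1)$ point of $\bP(1,2,3)$. Your remark that ``the $A_3$ part lands on one component and the $A_5$ part on the other'' is correct as a picture, but your plan to do blow-ups at both the $A_3$ and $A_5$ loci is misguided, and the description of the restriction of $C_{1,0}^\circ$ to $\bP(1,2,3)$ as ``an $A_5$-type singularity in the weighted sense'' is imprecise: the curve $a_0^6 + a_3^2 = 0$ sits at the $\frac12(1,1)$ point of $\bP(1,2,3)$, and after passing to the $\bmu_2$-quotient it is a genuine $A_3$ singularity. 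Getting this right is not optional, because both the slc verification and the ``weakly special'' assertion depend on it.

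For part~(ii), your route differs from the paper's and is perfectly valid. The paper never writes down the $\bG_m^2$ explicitly: instead it observes the normalization of $(\bP(1,2,3)\cup\bP(1,1,3), \tfrac23 C_{1,0}^\circ)$ is non-dlt and invokes \cite[Proposition 6.6]{ABB+} to get $\dim\Aut > \dim\Aut(Q, C_{A,B}) = 1$, then caps it at $2$ by \cite[Proposition 8.11]{ABB+} and concludes polystability from the same proposition. You instead exhibit the torus $(\lambda,\mu)\cdot[a_0{:}a_1{:}a_2{:}a_3] = [a_0{:}\mu a_1{:}\lambda a_2{:}a_3]$ directly, which visibly preserves $V(a_1a_2)$ and $V(a_1a_2, a_0^6 + a_3^2)$, and then combine with the same upper bound from \cite[Proposition 8.11]{ABB+}. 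This is a cleaner lower bound that avoids citing \cite[Proposition 6.6]{ABB+}, so the trade is a small explicit computation in exchange for one fewer black-box citation; I'd regard your version as a mild improvement in concreteness, provided you state explicitly that the upper bound $\dim\Aut\le 2$ is also needed to pin down equality.
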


\begin{proof}
    For fixed $A,B$, let $(\cQ, \cC):=(Q, C_{A,B}) \times \bA^1_t$ be the trivial family of pairs over $\bA^1$.  In the fiber $(t = 0)$, let $x,z$ be local coordinates near the $A_5$ singularity of $C_{A,B}$.  Perform a $(1,3,1)$ weighted blow up in the local coordinates $(x,z,t)$.  This yields a new family $(\cQ', \cC')$ over $\bA^1$ with central fiber $\cQ'_0 = \widetilde{Q}\cup \bP(1,1,3)$, where $\widetilde{Q}$ is the $(1,3)$-weighted blow-up of the point $[0:1:0] \in Q$.  Let $s_z \subset \widetilde{Q}$ be the strict transform of the section $(z = 0) \in Q$.  By direct computation, $s_z$ is a $-1$-curve in $\widetilde{Q}$ and extremal in $\cQ'$ with $K_{\cQ'} \cdot s_z = 0$ and $\cC' \cdot s_z = 0$.  In particular, we may contract $s_z$ in a small contraction to a new family $(\overline{\cQ}, \overline{\cC})$ over $\bA^1$, a weakly special degeneration of $(Q, C_{A,B})$ to $(\overline{\cQ}_0, \overline{\cC}_0)$. As the contraction of $s_z$ in $\widetilde{Q}$ is a birational morphism $\widetilde{Q} \to \bP(1,2,3)$, the central fiber $\overline{\cQ}_0 \cong \bP(1,2,3) \cup \bP(1,1,3)$.  From the equation of the strict transform of $C_{A,B}$, the curve $\overline{\cC}_0$ is precisely $C_{A,B}^\circ$.  As $C_{A,B}^\circ \cong C_{1,0}^\circ$, we have constructed the claimed weakly special degeneration.  Now, observe that $C_{1,0}^\circ$ has an $A_5$ singularity at a smooth point of $\bP(1,1,3)$, an $A_3$ singularity at the $\frac{1}{2}(1,1)$ singularity of $\bP(1,2,3)$, and meets the double locus of $\overline{\cQ}_0$ transversally.  In particular, the pair $(\overline{\cQ}_0, \tfrac{2}{3} C_{1,0}^\circ)$ has type II slc singularities and is a point of $\ocM_4^{\CY}$ and the normalization is non-dlt.  By \cite[Proposition 6.6]{ABB+}, $\dim \Aut(\overline{\cQ}_0, C_{1,0}^\circ) > \dim \Aut(Q, C_{A,B}) = 1$ and by \cite[Proposition 8.11]{ABB+}, $\dim \Aut(\overline{\cQ}_0, C_{1,0}^\circ) \le 2$, so  $\dim \Aut(\overline{\cQ}_0, C_{1,0}^\circ) = 2$ and  $(\overline{\cQ}_0, C_{1,0}^\circ)$ is polystable by \cite[Proposition 8.11]{ABB+}. 
\end{proof}

\begin{remark}\label{rmk:3-polystable-pts}
We have now exhibited three non-isomorphic type II polystable points of $\ocM_4^{\CY}$ with non-dlt normalization: $(S_{2A_5}, \frac{2}{3}C_{2A_5}^\circ)$, $(\PuP, \frac{2}{3}C_D)$, and $(\bP(1,2,3) \cup \bP(1,1,3), \frac{2}{3}C_{1,0}^\circ)$.  
\end{remark}

\begin{thm}\label{thm:bpCY-is-smooth}
The moduli stack $\ocM_4^{\CY}$ is smooth. Moreover, its good moduli space $\oM_4^{\CY}$ is a normal projective variety and isomorphic to the Baily--Borel compactification of Kond\={o}'s ball quotient from \cite{Kondog4}.
\end{thm}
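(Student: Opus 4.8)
The plan is to establish smoothness of $\ocM_4^{\CY}$ by a case analysis over closed points, then deduce normality of $\oM_4^{\CY}$ and finally match it with Kond\={o}'s Baily--Borel compactification via an open-dense comparison plus a direct analysis of the boundary.

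First I would reduce smoothness to a statement at closed points. Since $\ocM_4^{\CY}$ is of finite type with affine diagonal (Theorem \ref{thm:CY-gms}), by the local structure theorem \cite{AHR20} it suffices to check that $\ocM_4^{\CY}$ is smooth at each closed point, i.e. at each polystable pair $[(X, \tfrac23 D)]$. The locus of \emph{type I} (irreducible $X$, plt normalization of $(X,0)$) pairs is handled by Proposition \ref{prop:deformations}: whenever the normalization $(\overline X_0, \overline G_0)$ of $(X_0,0)$ is plt, $\ocM_4^{\CY}$ is smooth there; in particular the klt locus and all pairs on a normal quadric (or a mildly singular quadric with plt normalization) are covered. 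The remaining closed points are the \emph{type II} polystable pairs with non-dlt normalization. By Remark \ref{rmk:3-polystable-pts} there are exactly three of these: $(S_{2A_5}, \tfrac23 C_{2A_5}^\circ)$, $(\PuP, \tfrac23 C_D)$, and $(\bP(1,2,3)\cup\bP(1,1,3), \tfrac23 C_{1,0}^\circ)$. (Type III pairs are excluded by \cite[Theorem 16.11]{ABB+}, as noted in the introduction.) For each of these three points I would compute the $\bQ$-Gorenstein deformation space of the pair and show it is unobstructed of the expected dimension. For $(S_{2A_5}, \tfrac23 C_{2A_5}^\circ)$ this is essentially Lemma \ref{lem:S2A5-deform} together with Proposition \ref{prop:eqnofcurveonS2A5}: the surface deformations are unobstructed by \cite[Theorem 6.6]{DH21} and deform $S_{2A_5}$ only to itself or $\bP^1\times\bP^1$, while the curve, being Cartier with lci singularities (Lemma \ref{lem:D-lci-on-S2A5}), contributes an unobstructed complementary family of the explicit shape in Proposition \ref{prop:eqnofcurveonS2A5}; one then checks the tangent space has the right dimension ($=\dim \ocM_4^{\CY} = 9$) and that $H^2$ of the relevant cotangent complex of the pair vanishes. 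The cases $(\PuP, \tfrac23 C_D)$ and $(\bP(1,2,3)\cup\bP(1,1,3), \tfrac23 C_{1,0}^\circ)$ are treated similarly: by Lemma \ref{lem:quadric-deform} a small deformation of $\PuP$ is again a quadric surface, so the surface factor is unobstructed with tangent space of the expected dimension, and the curve factor (again Cartier, meeting the double locus transversally, with at worst $A_{\le k}$ plane-curve singularities on the components) has unobstructed deformations by the classical deformation theory of plane curves with ADE singularities; one assembles these via the exact sequence relating $\mathrm{Ext}^i$ of the pair to deformations of the surface and of the divisor, checking the obstruction group vanishes. \textbf{This case-by-case deformation-theoretic computation at the three non-dlt type II points is the main obstacle}, since one must correctly set up the $\bQ$-Gorenstein/stable-pair deformation functor on a non-normal surface with quotient singularities along the double locus and verify vanishing of the second cohomology.

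Having established smoothness, normality of $\oM_4^{\CY}$ follows immediately from Theorem \ref{thm:gms-normal} (a good moduli space of a reduced — indeed smooth, hence normal — stack is normal), and projectivity is already in Theorem \ref{thm:CY-gms}; irreducibility likewise follows since $\ocM_4^{\CY}$ is irreducible.

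Finally, for the identification with Kond\={o}'s ball quotient, I would argue as in \cite[Theorem 7.11]{BL24}. Kond\={o} \cite{Kondog4} realizes the moduli of non-hyperelliptic genus four curves as a ball quotient $\Gamma\backslash \mathbb{B}^9$, via periods of the triple cyclic cover of $\bP^3$ branched along a $(2,3)$ complete intersection; let $\overline{\mathscr{B}}$ denote its Baily--Borel compactification. On the open locus $\cM_4^{\circ}$ of pairs $(\bP^1\times\bP^1,\tfrac23 D)$ with $D$ smooth of bidegree $(3,3)$ — equivalently smooth non-hyperelliptic canonical curves — the period map gives an isomorphism $\oM_4^{\circ} \cong \Gamma\backslash\mathbb{B}^9 \setminus (\text{divisors})$, matching the two normal varieties over a common dense open subset; since both $\oM_4^{\CY}$ and $\overline{\mathscr{B}}$ are normal and proper, a birational map that is an isomorphism in codimension one and extends as a morphism is an isomorphism, so it remains to check the boundary behavior. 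The Baily--Borel boundary of $\overline{\mathscr{B}}$ consists of finitely many cusps (points and at most one curve); on the $\ocM_4^{\CY}$ side the complement of the interior is precisely the three type II polystable points above (giving the boundary points) together with the discriminant, and one checks that the extended period map identifies these with the corresponding cusps — this is exactly the content of the slight strengthening over \cite[Theorem 7.11]{BL24}, where smoothness of $\ocM_4^{\CY}$ (just proved) lets one upgrade the comparison from a statement in codimension one to a genuine isomorphism. I would then cite \cite[Theorem 7.11]{BL24} and the smoothness result to conclude $\oM_4^{\CY} \cong \overline{\mathscr{B}}$.
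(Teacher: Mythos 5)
Your proposal has the right pieces floating around but gets the logical dependencies backwards, and it contains one genuine gap that would make the argument circular or incomplete if carried out as written.

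The critical gap is your claim that ``by Remark \ref{rmk:3-polystable-pts} there are exactly three'' type II polystable pairs with non-dlt normalization. Remark \ref{rmk:3-polystable-pts} only \emph{exhibits} three; it does not rule out others, and an a priori classification of all type II polystable pairs in $\ocM_4^{\CY}$ is not available by elementary means. The paper's proof gets around this precisely by \emph{starting} from \cite[Theorem 7.11]{BL24}, which says the normalization $(\oM_4^{\CY})^{\nu}$ is already known to be $F^{\rm BB}$. Since $F^{\rm BB}$ has exactly three cusps (\cite[Theorem 5.9]{CMJL12}), and the normalization map is finite and surjective, the boundary of $\oM_4^{\CY}$ has at most three points; the three exhibited pairs give at least three, so there are exactly three. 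In other words, the identification with Kond\={o}'s ball quotient is an \emph{input} to the smoothness argument, not its final consequence as in your plan. Without that input you have no way to certify that there is no rogue fourth type II polystable point.

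Second, the laborious case-by-case $\bQ$-Gorenstein deformation analysis you flag as ``the main obstacle'' is unnecessary. Proposition \ref{prop:deformations} already applies to the three type II pairs: for each of $(S_{2A_5}, \frac{2}{3}C_{2A_5}^\circ)$, $(\PuP, \frac{2}{3}C_D)$, and $(\bP(1,2,3)\cup\bP(1,1,3), \frac{2}{3}C_{1,0}^\circ)$, the normalization of the underlying surface with its conductor is plt, so that proposition gives unobstructedness at one stroke. You appear to have read the hypothesis of Proposition \ref{prop:deformations} as applying only to the klt (type I) locus, but the hypothesis is about the normalization of $(X_0,0)$, and it is satisfied by the non-normal type II surfaces too. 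Setting up an explicit stable-pair deformation functor on these non-normal surfaces and checking an $H^2$ vanishing, as you propose, is far more work than what's needed and is not the paper's route.

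Finally, the concluding identification $\oM_4^{\CY}\cong F^{\rm BB}$ should be stated cleanly: once $\ocM_4^{\CY}$ is shown to be smooth, $\oM_4^{\CY}$ is normal by Theorem \ref{thm:gms-normal}, hence equal to its normalization, hence isomorphic to $F^{\rm BB}$ by \cite[Theorem 7.11]{BL24}. Your version, which invokes a period map on the interior and then an isomorphism-in-codimension-one argument, is vaguer and also contains a minor slip: the Baily--Borel boundary of a ball quotient is zero-dimensional (a finite set of point cusps); there are no curve cusps in Kond\={o}'s $F^{\rm BB}$.
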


\begin{proof}
Firstly, by \cite[Theorem 7.11]{BL24} we have an isomorphism between the normalization of $\oM_4^{\CY}$ and the Baily--Borel compactification $F^{\rm BB}$, where $F$ denotes Kond\={o}'s ball quotient $\cB/\Gamma$ from \cite{Kondog4}.  Any point of $\ocM_4^{\CY}$ is either of type I or type II by \cite[Proposition 8.18]{ABB+}.  Under this isomorphism, the type I pairs correspond to the interior points of $F^{\rm BB}$ and the type II pairs correspond to the cusps.  By \cite[Theorem 5.9]{CMJL12}, there are precisely three cusps in $F^{\rm BB}$, which then correspond to at most three polystable type II pairs in $\ocM_4^{\CY}$.  By Remark \ref{rmk:3-polystable-pts}, there are at least three such type II polystable points so the normalization $(\oM_4^{\CY})^{\nu} \to \oM_4^{\CY}$ must be bijective over these points.  However, these three pairs have unobstructed deformations by Proposition \ref{prop:deformations}, so $\ocM_4^{\CY}$ is smooth in a neighborhood of each point and hence $\oM_4^{\CY}$ is normal in a neighborhood of each point.  As all other points of $\oM_4^{\CY}$ must correspond to interior points of $F^{\rm BB}$, they are of type I and hence klt pairs, and again by Proposition \ref{prop:deformations} have unobstructed deformations.  Therefore, $\ocM_4^{\CY}$ is smooth at these points and $\oM_4^{\CY}$ is normal.  In particular, $\ocM_4^{\CY}$ is smooth at all points.  Because $\oM_4^{\CY}$ is normal, it is equal to its normalization and therefore isomorphic to $F^{\rm BB}$. 
\end{proof}

\subsection{From Calabi--Yau pairs to curves}

We use the results of the previous subsection to prove that the Hassett--Keel program in a neighborhood of $[C_D]$ and $[C_{2A_5}]$ can be obtained by bpCY wall crossing.  In particular, we will prove that there is a forgetful map $\ocM_4^{\CY} \dashrightarrow\Curves^{\lci+}_4$ that is an isomorphism over a dense open subset containing these points (see Theorem \ref{thm:CY-forget-open}).  

\begin{prop}\label{prop:open-substack-CY}
There exists a smooth open substack $\sW^{\CY}$ of $\ocM_4^{\CY}$ that parameterizes bpCY pairs $(X,\frac{2}{3}D)$ such that either $X\subset \bP^3$ is a quadric surface or $X\cong S_{2A_5}$. Moreover, every $\bk$-point $[(X,\frac{2}{3}D)]\in \sW^{\CY}(\bk)$ satisfies that $D$ is connected, reduced, with lci singularities, and  Cartier in $X$.
\end{prop}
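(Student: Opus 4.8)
The plan is to construct $\sW^{\CY}$ explicitly as the locus in $\ocM_4^{\CY}$ where the underlying surface $X$ is either a quadric surface in $\bP^3$ or isomorphic to $S_{2A_5}$, and then verify the claimed properties of the boundary divisor $D$ pointwise. First I would address smoothness and openness: by Lemma \ref{lem:quadric-deform}, a small deformation of a (possibly singular) reduced quadric surface is again a quadric surface, and by Lemma \ref{lem:S2A5-deform}, a small $\bQ$-Gorenstein deformation of $S_{2A_5}$ is either $S_{2A_5}$ or $\bP^1 \times \bP^1$, hence still a quadric surface. Therefore the condition ``$X$ is a quadric surface or $X \cong S_{2A_5}$'' is open in $\ocM_4^{\CY}$, defining an open substack $\sW^{\CY}$. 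To see it is nonempty and dense, note that $\cM_4^{\circ}$ (pairs $(\bP^1\times\bP^1, \frac23 D)$ with $D$ smooth of bidegree $(3,3)$) lies in $\sW^{\CY}$, and $\ocM_4^{\CY}$ is irreducible by Theorem \ref{thm:CY-gms}. Smoothness follows from Theorem \ref{thm:bpCY-is-smooth}, which asserts that the entire stack $\ocM_4^{\CY}$ is smooth, so any open substack is smooth.

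Next I would verify the pointwise properties of $D$, splitting into the cases for $X$. \textbf{Case 1: $X$ is a normal quadric surface} (i.e. $\bP^1\times\bP^1$ or $\bP(1,1,2)$). Since $K_X + \frac23 D \sim_{\bQ} 0$ and $-K_X$ is Cartier (resp. $-K_X = \cO_{\bP(1,1,2)}(4)$ with $2(-K_X)$ basepoint free and the relevant multiple Cartier), one computes that $D \in |-\tfrac32 K_X|$ is a Cartier divisor: on $\bP^1\times\bP^1$, $D$ has bidegree $(3,3)$; on $\bP(1,1,2)$, $D$ is a degree-$6$ curve avoiding or mildly passing through the vertex, and the slc (hence log canonical threshold $\geq \frac23$) condition on $(X, \frac23 D)$ forces $D$ to be Cartier, reduced, connected of arithmetic genus $4$ with lci singularities — the arguments here are exactly parallel to those in Propositions \ref{prop:A4-cone}, \ref{prop:A3-cone}, \ref{prop:hyp-canonical-map} on curves on quadric cones, which show such curves have lci singularities. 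Connectedness and genus $4$ follow from $D$ being a member of a fixed linear system and flatness (the arithmetic genus is a deformation invariant, equal to $4$ from the smooth $(3,3)$-curve in $\cM_4^\circ$). \textbf{Case 2: $X = \PuP$, the non-normal quadric.} Here Lemma \ref{lem:KSBA-D4} (and the surrounding discussion) describes $D$: it is a union of two plane cubics, each with at worst $A_3$ singularities, meeting the intersection line in three points; such a $D$ is Cartier (it is cut out by a cubic on each plane, glued), reduced, connected (the three intersection points glue the components), of arithmetic genus $4$, and has lci singularities (plane curve singularities plus the node along the double line where $D$ crosses it transversally). \textbf{Case 3: $X \cong S_{2A_5}$.} This is precisely Lemma \ref{lem:D-lci-on-S2A5}: if $[(S_{2A_5}, \frac23 D)] \in \ocM_4^{\CY}$, then $D$ is Cartier and has lci singularities; reducedness and connectedness follow since $D$ avoids the $\frac13(1,1,2)$ singularities and the double locus, meeting the latter transversally in the gluing, so $D$ is two copies of a genus-$2$ hyperelliptic curve on $S_{A_5}$ glued at points, hence connected reduced of arithmetic genus $4$ (cf. Proposition \ref{prop:eqnofcurveonS2A5}).

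\textbf{Main obstacle.} The routine part is managing the case-by-case bookkeeping; the genuinely delicate point is Case 1 for the singular cone $\bP(1,1,2)$ and, more importantly, ensuring in \emph{all} cases that $D$ is Cartier (not merely $\bQ$-Cartier). On $S_{2A_5}$ and on $\PuP$ the surface is not $\bQ$-factorial and Weil divisors need not be Cartier; the key input is that $D$ is forced into a specific linear system by the relation $2D \sim -3K_X$ together with $3D$ being Cartier (as in the proof of Lemma \ref{lem:D-lci-on-S2A5}, using that local class groups of the singularities of $S_{2A_5}$ are killed by $3$), from which $\gcd(2,3)=1$ gives Cartierness of $D$. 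I would emphasize that this Cartier property is exactly what makes ``$D$ has lci singularities'' meaningful and is the crux tying together the slc condition on the pair with the lci condition on the curve. Finally, connectedness in every case reduces to the observation that $D$ is a degeneration of the connected curve from $\cM_4^\circ$ and $h^0(X,\cO_X) = 1$ in families (which holds for all the surfaces in question), so $h^0(D,\cO_D) = 1$ by the long exact sequence associated to $0 \to \cO_X(-D) \to \cO_X \to \cO_D \to 0$ together with $H^1(X, \cO_X(-D)) = 0$.
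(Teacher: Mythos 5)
Your proposal is correct and follows essentially the same route as the paper: define $\sW^{\CY}$ as the locus where $X$ is a quadric or $S_{2A_5}$, prove openness via Lemmas \ref{lem:quadric-deform} and \ref{lem:S2A5-deform}, establish smoothness from the ambient smoothness results, deduce reducedness/connectedness from slc and ampleness, and settle Cartierness and the lci property by a class-group/degree argument on quadrics and by Lemma \ref{lem:D-lci-on-S2A5} on $S_{2A_5}$. Two small points of comparison. First, you invoke Theorem \ref{thm:bpCY-is-smooth} for smoothness, whereas the paper cites Proposition \ref{prop:deformations} directly; both are available at this point in the paper, the former being the global version of the latter, so this is a cosmetic difference. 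Second, your Case 2 for $X = \PuP$ leans on Lemma \ref{lem:KSBA-D4}, but that lemma only classifies pairs in $\ocM_4^{\KSBA}$ specially degenerating to $(\PuP, \tfrac{2}{3}C_D)$; there may be pairs $(\PuP, \tfrac{2}{3}D) \in \ocM_4^{\CY}$ that are not KSBA-stable (e.g.\ $D$ with an $A_5$ singularity in the smooth locus), which fall outside the scope of that lemma. The paper's uniform argument for all quadric surfaces $X \subset \bP^3$ — namely that $2D \sim -3K_X = \cO_X(6)$ and torsion-freeness of the relevant divisor class group forces $D \sim \cO_X(3)$, hence Cartier with lci singularities — avoids the case split and covers every pair in $\sW^{\CY}$ with no further classification needed; you already use exactly this reasoning in your Case 1, so you should simply extend it to $\PuP$ as well rather than cite \ref{lem:KSBA-D4}.
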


\begin{proof}

To show openness of $\sW^{\CY}$, it suffices to show that the underlying surface being a quadric surface or isomorphic to $S_{2A_5}$ is open in families of bpCY pairs. Clearly this condition is constructible, so we only need to show that it is preserved under generalization. Let $f: (X, \frac{2}{3}D)\to B$ be a family of bpCY pairs in $\ocM_4^{\CY}$ over a pointed smooth curve $0\in B$ such that $X_0$ is either a quadric surface or $X_0\cong S_{2A_5}$. In the latter case, by Lemma \ref{lem:S2A5-deform} we know that after shrinking $B$,  every fiber $X_b$ is either isomorphic to $ S_{2A_5}$ or isomorphic to $\bP^1\times\bP^1$. By Proposition \ref{prop:deformations}, we know that $\sW^{\CY}$ is smooth.

The reducedness of $D$ follows from the fact that $(X, \frac{2}{3}D)$ is slc. The connectedness of $D$ follows from ampleness of $D$ in $X$. If $X$ is isomorphic to a quadric surface, then $D\sim \cO_X(3)$ is Cartier and hence $D$ has lci singularities. If $X\cong S_{2A_5}$, this is the content of Lemma \ref{lem:D-lci-on-S2A5}.
\end{proof}

The main goal of this subsection is to prove the following result on the forgetful map.

\begin{thm}\label{thm:CY-forget-open}
There is an open immersion of algebraic stacks $\Psi: \sW^{\CY} \to \Curves_4^{\lci +}$ induced by the forgetful map $[(X,D)]\mapsto [D]$. 
\end{thm}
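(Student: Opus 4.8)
The strategy is to verify that the forgetful morphism $\Psi\colon \sW^{\CY}\to\Curves_4^{\lci+}$, $[(X,D)]\mapsto[D]$, is (a) well-defined, (b) representable and quasi-finite, (c) proper onto its image, hence a locally closed immersion, and finally (d) that the image is actually open in $\Curves_4^{\lci+}$. That $\Psi$ lands in $\Curves_4^{\lci+}$ is essentially the content of Proposition \ref{prop:open-substack-CY}: for $[(X,\tfrac23 D)]\in\sW^{\CY}(\bk)$ the curve $D$ is connected, reduced, Gorenstein (being a Cartier divisor in the Gorenstein surface $X$), has lci singularities, and is $1$-dimensional with finite singular locus, so $[D]\in\Curves_4^{\lci+}(\bk)$; one checks arithmetic genus $4$ by adjunction from $3K_X+2D\sim 0$ (using \cite[Lemma 2.11]{ABB+} as in the proof of Lemma \ref{lem:D-lci-on-S2A5}). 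The same computation works in families since all the sheaves involved commute with base change in a family of bpCY pairs, giving the morphism $\Psi$ of stacks.

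The heart of the argument is to show $\Psi$ is a monomorphism, or at least unramified with trivial relative automorphisms, and then to identify the image. For this I would reconstruct $X$ from $D$. Given $[D]\in\Psi(\sW^{\CY})$, the curve $D$ is a canonical curve of genus $4$ (since $\omega_D\cong\cO_X(3K_X+2D)|_D\otimes\cO_X(-2K_X-D)|_D$... more cleanly: on a quadric $X\subset\bP^3$, $\omega_D\cong\omega_X(D)|_D\cong\cO_D(1)$, and similarly on $S_{2A_5}$ the restriction $\cO_X(-K_X)|_D$ realizes $\omega_D$), so $D$ comes with its canonical embedding into $\bP^3$, and $X$ is recovered as a quadric (resp.\ the surface $S_{2A_5}$) through $D$. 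The key point is \emph{uniqueness}: a canonically embedded genus $4$ curve lies on a unique quadric when that quadric is irreducible, and when $D$ is a non-reduced or sufficiently degenerate curve one must rule out ambiguity — but $\sW^{\CY}$ by construction only contains pairs with $X$ a quadric surface or $S_{2A_5}$, and the boundary polarized CY condition $K_X+\tfrac23 D\sim_\bQ 0$ together with $D$ Cartier pins down $X$ up to isomorphism. This reconstruction, carried out in families, shows $\Psi$ is a monomorphism; since $\sW^{\CY}$ and $\Curves_4^{\lci+}$ are of finite type, $\Psi$ is quasi-finite, and (arguing as in Proposition \ref{prop:hyp-rib-closed-substack}) representable by schemes. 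For properness onto the image I would use the valuative criterion: given a family of curves $D_\eta$ over the punctured spectrum of a DVR lying in $\Psi(\sW^{\CY})$, reconstruct the quadric/$S_{2A_5}$ family $X_\eta$, take its flat limit, and use S-completeness / valuative properness of $\ocM_4^{\CY}$ (Theorem \ref{thm:CY-gms}) together with Lemma \ref{lem:quadric-deform} and Lemma \ref{lem:S2A5-deform} to see the limiting surface is again a quadric or $S_{2A_5}$, hence the limit pair lies in $\sW^{\CY}$; uniqueness of the limit follows from separatedness of $\Curves_4^{\lci+}$ over the locus in question. A quasi-finite, proper, representable monomorphism is a closed immersion onto its image, hence (being also flat, as both stacks are smooth of the same dimension near these points, or by checking dimensions) a locally closed immersion.

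Finally, to upgrade ``locally closed immersion'' to ``open immersion'' I would show $\Psi(\sW^{\CY})$ is open in $\Curves_4^{\lci+}$, equivalently that $\Psi$ is étale (it is already unramified and representable; one needs flatness / smoothness). Since $\sW^{\CY}$ is smooth (Proposition \ref{prop:open-substack-CY}) and $\Curves_4^{\lci+}$ is smooth (Section \ref{sec:stack-curves}), it suffices to check that $\Psi$ induces isomorphisms on tangent spaces at every $\bk$-point, i.e.\ that every first-order deformation of $D$ (as an abstract lci curve) is induced by a unique first-order deformation of the pair $(X,\tfrac23 D)$. This is a deformation-theoretic comparison: the tangent space to $\sW^{\CY}$ at $[(X,\tfrac23 D)]$ maps to $\Ext^1_D(\bfL_D,\cO_D)$, and one shows this map is bijective by comparing the deformation theory of the pair with the deformation theory of $D$ using that $D$ is Cartier in $X$ with $X$ either a del Pezzo-type surface or $S_{2A_5}$ whose $\bQ$-Gorenstein deformations are unobstructed and ``generated by'' the curve (cf.\ the dimension counts $\dim\sW^{\CY}=9=\dim\Curves_4^{\lci+}$ near $\cM_4$). \textbf{The main obstacle} I expect is precisely this infinitesimal surjectivity/injectivity at the three special boundary points $C_{2A_5}$, $C_D$ (and the $C_{A,B}$ locus if it meets $\sW^{\CY}$): one must verify that deforming the curve forces a compatible deformation of the quadric or of $S_{2A_5}$ and that no ``extra'' deformations of the pair exist beyond those of the curve — this is where the explicit surface geometry of Definition \ref{def:S2A5}, Lemma \ref{lem:S2A5-deform}, and Lemma \ref{lem:quadric-deform} must be combined carefully with the local structure of $D$ at its singular points (lci, avoiding the non-Gorenstein points of $S_{2A_5}$ by Lemma \ref{lem:D-lci-on-S2A5}).
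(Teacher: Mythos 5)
Your plan is essentially the paper's plan in outline — well-definedness, monomorphism/representability, quasi-finiteness, and an upgrade to open immersion using smoothness — but two places where you wave your hands are exactly where the paper has to work, and conversely you set up extra work at the end that the paper avoids. First, your reconstruction of $X$ from $D$ is fine on the quadric locus (the quadric through a canonical genus $4$ curve is unique), but in the $S_{2A_5}$ case ``$K_X+\tfrac23 D\sim_\bQ 0$ plus $D$ Cartier pins down $X$'' is not an argument: here $\omega_D$ is \emph{not} basepoint free, so there is no canonical embedding to exploit. The paper needs two dedicated lemmas for this: Lemma~\ref{lem:CY-bpf} shows that basepoint-freeness of $\omega_D$ detects whether $X$ is a quadric or $S_{2A_5}$ (so the two cases cannot overlap), and Lemma~\ref{lem:curve-iso-surface} shows that an abstract isomorphism $D\cong D'$ between curves on $S_{2A_5}$ lifts to an automorphism of $S_{2A_5}$ — this uses the explicit description of $D$ as two genus $2$ hyperelliptic curves glued at a non-Weierstrass point and the uniqueness of the hyperelliptic map (Corollary~\ref{cor:equiv-hyp}). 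That lemma is also what gives stabilizer preservation (Corollary~\ref{prop:CY-stab-preserve}), hence representability; you would need to make the corresponding argument explicit.

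Second, the ``main obstacle'' you flag — comparing tangent spaces at the boundary points to get étaleness — does not actually need to be confronted, and your ``properness onto image'' step is a detour. The paper instead pulls back $\Psi$ along a smooth atlas $U\to\Curves_4^{\lci+}$ to get $\Psi_U\colon V\to U$ between \emph{smooth} (hence Cohen–Macaulay) irreducible algebraic spaces; quasi-finiteness then gives flatness for free by miracle flatness, and once $\Psi_U$ is quasi-finite, flat, separated, and birational between normal spaces, Zariski's Main Theorem yields the open immersion directly, with no deformation-theoretic surjectivity check. You mention this option parenthetically (``or by checking dimensions'') but spend the rest of the paragraph on the harder tangent-space route; the miracle-flatness + ZMT route is the one that closes the argument cleanly, and you should recognize it makes the explicit tangent space comparison unnecessary.
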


\begin{prop}
The map $\Psi$ is well-defined as a morphism of algebraic stacks.
\end{prop}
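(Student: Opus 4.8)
The plan is to check that the assignment $[(X,D)/B]\mapsto [D\to B]$ defines a morphism of stacks, i.e.\ that it is compatible with base change and actually lands in $\Curves_4^{\lci+}$. Concretely, given a family of bpCY pairs $(X,\tfrac23 D)\to B$ with $[(X_b,\tfrac23 D_b)]\in\sW^{\CY}(\bk)$ for all $b$, I must verify that $D\to B$ is an object of $\Curves_4^{\lci+}(B)$; since $\Curves_4^{\lci+}$ is a substack of $\Curves$, this amounts to showing: (i) $D\to B$ is flat, proper, of finite presentation, with one-dimensional fibers; (ii) $f_*\cO_D=\cO_B$ universally and $R^1f_*\cO_D$ is locally free of rank $4$; (iii) each geometric fiber $D_b$ is lci with finite singular locus. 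Functoriality (compatibility with arbitrary base change $B'\to B$) is then essentially automatic because all the constructions involved commute with base change by the definition of a family of boundary polarized CY pairs.

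First I would address the pointwise statements using Proposition \ref{prop:open-substack-CY}: for $[(X,\tfrac23 D)]\in\sW^{\CY}(\bk)$, the divisor $D$ is reduced, connected, Cartier in $X$, with lci singularities; connectedness plus the fact that $D$ is a Cartier divisor on a surface makes $D$ equidimensional of dimension $1$. To pin down the genus, I would use the adjunction/linear equivalence $3K_X+2D\sim 0$ recorded in the proof of Lemma \ref{lem:D-lci-on-S2A5} (and the analogous statement when $X$ is a quadric, where $D\sim\cO_X(3)$), together with the explicit description of $K_X$ on a quadric surface or on $S_{2A_5}$, to compute $\chi(\cO_D)=1-4$, hence $H^0(D,\cO_D)\cong\bk$ (using connectedness and reducedness) and $H^1(D,\cO_D)\cong\bk^4$. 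That $D$ has finite singular locus follows because $D$ is reduced of pure dimension $1$, so its singular locus is a proper closed subset, hence finite. Thus each geometric fiber lies in $\Curves_4^{\lci+}(\bk)$.

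Next, for the family-level statements I would invoke the definition of a family of boundary polarized CY pairs: $X\to B$ is flat projective and $D$ is a relative K-flat Mumford divisor, so $D\to B$ is flat, proper, of finite presentation with one-dimensional fibers. The sheaf-theoretic conditions (ii) are then checked by combining the pointwise computations above with cohomology and base change: $R^1f_*\cO_D$ has constant fiber dimension $4$ over $B$, hence is locally free of rank $4$ and commutes with base change, and likewise $f_*\cO_D$ has constant rank $1$ fibers with $\cO_B\to f_*\cO_D$ an isomorphism on fibers (reducedness and connectedness of each $D_b$), so it is an isomorphism universally. Since the properties ``lci'' and ``finite singular locus'' are open in $\Curves$ by the cited \texttt{stacks project} tags, and they hold on every fiber, the family $D\to B$ factors through the open substack $\Curves_4^{\lci+}\hookrightarrow\Curves$. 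Finally, compatibility with morphisms of families and with base change is immediate since pullback of $(X,D)$ pulls back $D$, and all the sheaves above commute with base change. This gives a well-defined $1$-morphism of stacks $\Psi\colon\sW^{\CY}\to\Curves_4^{\lci+}$.

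I expect the main obstacle to be the genus computation on the non-normal surface $S_{2A_5}$ and the verification that $D$ really is Cartier and lci uniformly in families there; but this is exactly what Proposition \ref{prop:open-substack-CY} and Lemma \ref{lem:D-lci-on-S2A5} were set up to supply, so the argument should reduce to bookkeeping with $3K_X+2D\sim 0$, the explicit canonical classes, and semicontinuity. (The separate, harder claim that $\Psi$ is an \emph{open immersion}, rather than merely a morphism, will be handled afterward and is not part of this statement.)
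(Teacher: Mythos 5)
Your proposal is correct and follows essentially the same route as the paper: pull back the universal family, use Proposition \ref{prop:open-substack-CY} for fiberwise reducedness/connectedness/lci, and observe the resulting family of curves factors through the open substack $\Curves_4^{\lci+}$. The one place you deviate slightly is in pinning down the genus: you propose computing $\chi(\cO_D)=-3$ directly from $3K_X+2D\sim 0$ and the explicit canonical class of $X$, whereas the paper simply notes that $b\mapsto\chi(\cO_{D_b})$ is locally constant (flatness) and hence constant by irreducibility of $\sW^{\CY}$, so it equals the value $-3$ of the generic smooth $(3,3)$ curve — a slicker argument that avoids any computation on $S_{2A_5}$; also, the paper cites \cite[Remark 3.2(vii)]{ABB+} for the flatness and pure relative dimension of $D\to B$, which your write-up attributes more informally to ``the definition of a family of boundary polarized CY pairs.''
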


\begin{proof}
Let $f:(X, \frac{2}{3}D)\to B$ be a family of bpCY pairs over a scheme $B$ from pulling back the universal family over $\sW^{\CY}$. By \cite[Remark 3.2(vii)]{ABB+}, we know that $D$ is flat proper over $B$ of relative pure dimension $1$. By Proposition \ref{prop:open-substack-CY}, we know that $D_b$ has lci singularities hence is Gorenstein for each $b\in B$, and $h^0(D_b, \cO_{D_b})=1$ as $D_b$ is reduced. Hence $b\to \chi(\cO_{D_b})$ is locally constant and equals $-3$ by irreducibility of $\sW^{\CY}$. Thus $h^1(D_b, \cO_{D_b})=4$. By Proposition \ref{prop:open-substack-CY}, we know that every fiber $D_b$ is connected, reduced, with lci singularities, in particular with finitely many singularities. This shows that $D\to B$ is a flat proper family of curves from pulling back the universal family over $\Curves_{4}^{\lci +}$.  The proof is finished.
\end{proof}

\begin{lem}\label{lem:curve-iso-surface}
Let $D$ and $D'$ be two curves on $S_{2A_5}$ such that both $(S_{2A_5}, \frac{2}{3} D)$ and $(S_{2A_5}, \frac{2}{3} D')$ belong to $\ocM_4^{\CY}$. If $D\cong D'$, then there exists $\sigma\in \Aut(S_{2A_5})$ such that $D' = \sigma^* D$.  
\end{lem}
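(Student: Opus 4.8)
The plan is to reconstruct the embedded pair $(S_{2A_5},D)$ from the abstract isomorphism class of $D$, up to the action of $\Aut(S_{2A_5})$; the lemma then follows because any isomorphism $\alpha\colon D\xrightarrow{\sim}D'$ is forced to be compatible with this reconstruction. First I would analyze the local structure. By Lemma \ref{lem:D-lci-on-S2A5}, applied to both $D$ and $D'$, the curve $D$ is Cartier with lci singularities, avoids the two cyclic quotient singularities of each copy of $S_{A_5}$, and meets only the normal crossing locus of $S_{2A_5}$. Writing $D=D_1\cup D_2$ with $D_i\subset S_{A_5}^{(i)}$ the restriction to the $i$-th component, each $D_i$ is a Cartier divisor on the normal surface $S_{A_5}$, hence Gorenstein, and a direct computation from Definition \ref{def:S2A5} gives $D_i\sim\pi^*\cO_{\bP(1,1,3)}(6)-3E$, so $D_i$ has arithmetic genus $2$ and $D_i\cdot\ell=1$ for $\ell:=\ell_1=\ell_1'$. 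Thus $D_1$ and $D_2$ meet $\ell$ at a single reduced point $p_i$, these are glued to a single node $p$ of $D$, and $p$ is characterized intrinsically as the unique singular point of $D$ whose normalization disconnects $D$ into two connected subcurves of positive arithmetic genus. Hence $\alpha(p)=p'$; after composing $\alpha$ with the involution in $\Aut(S_{2A_5})$ interchanging the two components, if necessary, we may assume $\alpha(D_i)=D_i'$, so that $\alpha_i:=\alpha|_{D_i}$ is an isomorphism of marked curves $(D_i,p_i)\xrightarrow{\sim}(D_i',p_i')$.

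Next I would reconstruct each component from $(D_i,p_i)$ alone. Each $D_i$ is a Gorenstein hyperelliptic curve of arithmetic genus $2$: $\omega_{D_i}$ has degree $2$, is base point free, and has two sections, so it defines the hyperelliptic degree-$2$ map. By Theorem \ref{thm:hyp=Weierstrass}, $D_i$ admits a Weierstrass model, and since $D_i$ avoids the $\tfrac13(1,1)$-point of $S_{A_5}$ lying over the cone point of $\bP(1,1,3)$, the curve $C_i:=\pi_*D_i\in|\cO_{\bP(1,1,3)}(6)|$ avoids the cone point, so it is projectively equivalent to that Weierstrass model and $\pi|_{D_i}\colon D_i\to C_i$ is an isomorphism. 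The marked point $p_i$ maps to a point $q_i\in C_i$ lying on the ruling line $(x_1=0)$ and distinct from the cone point; since $q_i$ determines the unique ruling line through it, and $[1,0,0]$ is the residual intersection of this line with $C_i$, the data $(S_{A_5},\ell_1,D_i,p_i)$ is recovered from $(C_i,q_i)\subset\bP(1,1,3)$, hence from $(D_i,p_i)$, up to $\Aut(S_{A_5},\ell_1)$. Combining this with Corollary \ref{cor:equiv-hyp}, the isomorphism $\alpha_i$ is induced by some $\sigma_i\in\Aut(S_{A_5},\ell_1)$ with $\sigma_i(D_i)=D_i'$ and $\sigma_i(p_i)=p_i'$.

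It remains to glue. The pair $(\sigma_1,\sigma_2)$ descends to an automorphism of $S_{2A_5}$ exactly when $\sigma_1|_\ell$ and $\sigma_2|_\ell$ agree under the gluing isomorphism $\gamma\colon\ell_1\xrightarrow{\sim}\ell_1'$, $[u_0:u_1]\mapsto[u_1:u_0]$. The image of $\Aut(S_{A_5},\ell_1)\to\Aut(\ell_1)\cong\PGL(2)$ is the one-dimensional torus fixing the two singular points of $\ell_1$, so each $\sigma_i|_\ell$ is pinned down by the single equation $\sigma_i(p_i)=p_i'$; since $p_2=\gamma(p_1)$ and $p_2'=\gamma(p_1')$ (both pairs being identified to the node $p$, resp.\ $p'$), a short calculation shows this already forces $\sigma_2|_\ell\circ\gamma=\gamma\circ\sigma_1|_\ell$. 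Therefore $(\sigma_1,\sigma_2)$ — composed with the component swap if it was used above — defines $\sigma\in\Aut(S_{2A_5})$ with $\sigma(D)=D'$, i.e.\ $\sigma^*D=D'$ after replacing $\sigma$ by $\sigma^{-1}$ if necessary.

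The step I expect to be the main obstacle is the reconstruction claim of the second paragraph: that the \emph{embedded} curve $D_i\subset S_{A_5}$ is determined, up to $\Aut(S_{A_5},\ell_1)$, by the abstract marked curve $(D_i,p_i)$. Carrying this out requires isolating exactly which invariants of $D_i$ are visible — the hyperelliptic pencil, the marked point, and the singular points together with their positions relative to the exceptional curve $E$ and the $A_5$ locus — and upgrading Corollary \ref{cor:equiv-hyp} to the statement that any isomorphism respecting these extends to an automorphism of $\bP(1,1,3)$ fixing $[1,0,0]$ and the cone point, hence to $\Aut(S_{A_5},\ell_1)$. The degenerate configurations — $D_i$ reducible (as for $C_{2A_5}^\circ$ itself, cf.\ Proposition \ref{prop:S2A5polystable}), $D_i$ carrying an $A_5$ rather than an $A_{\le 4}$ singularity, or $C_i$ tangent to $(x_1=0)$ at $q_i$ — are where $\Aut(S_{A_5},\ell_1,D_i)$ jumps and a short case analysis is needed; this is where most of the work lies. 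A possible streamlining, which would still reduce to essentially the same rigidity, is to use that $\cO_{S_{2A_5}}(D)$ is very ample with $2D\sim-3K_{S_{2A_5}}$ and restricts on $D$ to the tricanonical system $|\omega_D^{\otimes3}|=|\cN_{D/S_{2A_5}}|$, so that $\alpha$ automatically extends to the ambient projective space and one only needs to recognize $S_{2A_5}$ among the surfaces containing the tricanonically embedded $D$.
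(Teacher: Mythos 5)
Your proposal is correct and rests on the same key inputs as the paper's proof: decompose $D=D_1\cup D_2$ into its two genus-$2$ hyperelliptic components meeting at the node, identify the node intrinsically, and invoke Corollary~\ref{cor:equiv-hyp} to upgrade the isomorphism of components to an ambient automorphism. The difference is in how the two component automorphisms are made to agree along $\ell$. The paper streamlines this by pushing $D$ forward along the contraction $\pi\colon S_{2A_5}\to \bP(1,1,3)\cup\bP(1,1,3)$ of the exceptional divisors $\Delta_i$ (which identifies the two points $q_1,q_2\in D_i\cap\Delta_i$ lying in the hyperelliptic fiber over the node) and then applies Corollary~\ref{cor:equiv-hyp} directly to the resulting connected curve $\overline{D}$, so no separate gluing check is needed; you instead construct $\sigma_i\in\Aut(S_{A_5},\ell_1)$ component by component and then verify compatibility by hand, which works because the image of $\Aut(S_{A_5},\ell_1)$ in $\Aut(\ell)\cong\PGL(2)$ is the torus fixing the two singular points, so $\sigma_i|_\ell$ is pinned down uniquely by $\sigma_i(p_i)=p_i'$, and the product relation $t_1t_2=1$ gives $\sigma_2|_\ell\circ\gamma=\gamma\circ\sigma_1|_\ell$ as you assert. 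Your worry in the final paragraph about degenerate configurations is unnecessary: Corollary~\ref{cor:equiv-hyp} already covers reducible or worse-singular Weierstrass curves uniformly, and since $(S_{2A_5},\tfrac23 D)$ is slc the intersection $D_i\cdot\ell=1$ is always transversal (a tangency at the double locus would violate log canonicity), so there is no case analysis to do.
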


\begin{proof}
Recall that $S_{2A_5}$ is the union of two copies of $S_{A_5}$, each the weighted blow-up of $\bP(1,1,3)$, glued along $\ell$ the strict transform of the ruling through the point blown up.  Denote the map by $\pi: S_{2A_5} \to \bP(1,1,3) \cup \bP(1,1,3)$ and the point blown up by $p$.  By definition of $\ocM_4^{\CY}$, the curve $D$ (respectively $D'$) is the union of two genus 2 hyperelliptic curves meeting at a non-Weierstress point $q$ (respectively, $q'$), noting that the curves $D_i, D_i'$ are indeed hyperelliptic as in Definition \ref{def:hyperelliptic} via the composition of $\pi$ and the projection map on $\bP(1,1,3)$.  Write $D = D_1 \cup D_2$ (respectively, $D' = D_1' \cup D_2'$) as the union of these hyperelliptic components.  

As $q$ (respectively, $q'$) is a non-Weierstrass point of each $D_i$ (respectively, $D_i'$), an isomorphism $D \cong D'$ must fix $q$ and $q'$ and be compatible with the hyperelliptic map $\phi: D_i \to \bP^1$  (respectively, $\phi': D_i' \to \bP^1$) and therefore fix the second point in the fiber $\phi^{-1}(q)$ on each component $D_i$.  Denote these points by $q_1, q_2$ (respectively, $q_1', q_2'$).  Let $\overline{D}$ (respectively $\overline{D'})$ be the image of $D$ (respectively $D'$) in the map $\pi: S_{2A_5} \to \bP(1,1,3) \cup \bP(1,1,3)$, which by construction glues the points $q_1$ and $q_2$ (respectively $q_1'$ and $q_2'$).  As any isomorphism $D \cong D'$ must fix the points $q_1, q_2$ (respectively, $q_1', q_2'$), is it equivalent to an isomorphism $\overline{D} \cong \overline{D'}$.  By Corollary \ref{cor:equiv-hyp}, as any two isomorphic genus 2 hyperelliptic curves differ by an automorphism of $\bP(1,1,3)$, it follows that the isomorphism $\overline{D} \cong \overline{D'}$ extends to an element $ \overline{\sigma} \in \Aut(\bP(1,1,3) \cup \bP(1,1,3) )$ fixing $p$ and $q$ (respectively, $q'$) such that $\overline{\sigma}^* \overline{D} = \overline{D'}$.  Then, if $\sigma = \pi \circ \overline{\sigma}$, the isomorphism $D \cong D'$ is realized by $D' = \sigma^* D$.  
\end{proof}

\begin{cor}\label{prop:CY-stab-preserve}
The morphism $\Psi$ is stabilizer preserving, in particular representable.
\end{cor}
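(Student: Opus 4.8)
The claim is that $\Psi\colon\sW^{\CY}\to\Curves_4^{\lci+}$ is stabilizer preserving, meaning for every $\bk$-point $[(X,D)]\in\sW^{\CY}$ the induced map $\Aut(X,\tfrac23 D)\to\Aut(D)$ is an isomorphism. Since $\Psi$ forgets $X$, the map is automatically injective on those automorphisms of $(X,\tfrac23 D)$ that fix the identity component, but more basically: an automorphism of the pair that acts trivially on $D$ is the main thing to rule out, together with surjectivity, i.e.\ that every automorphism of $D$ extends to $X$. The plan is to reduce both statements to the two separate cases $X\cong S_{2A_5}$ and $X$ a quadric surface, and in each case invoke either Lemma \ref{lem:curve-iso-surface} (and its proof) or the classical fact that a genus-four canonical curve determines its quadric.

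First I would treat injectivity. Suppose $\sigma\in\Aut(X,\tfrac23 D)$ restricts to the identity on $D$. In the quadric case, $D\sim\cO_X(3)$ is very ample on the quadric (it is a canonical curve when $X=\bP^1\times\bP^1$, and the relevant curves on $\bP(1,1,2)$ or $\PuP$ span $\bP^3$), so $D$ is non-degenerate in $\bP^3$; an automorphism of $\bP^3$ fixing a non-degenerate curve pointwise is the identity, and then $\sigma=\id_X$. When $X\cong S_{2A_5}$, by Proposition \ref{prop:open-substack-CY} the curve $D=D_1\cup D_2$ is a union of two genus-two hyperelliptic curves meeting at a point, and the argument in the proof of Lemma \ref{lem:curve-iso-surface} shows the gluing data of $D$ determines the configuration on $\bP(1,1,3)\cup\bP(1,1,3)$ up to the automorphism acting on the two fibered surfaces; an automorphism of $S_{2A_5}$ fixing $D$ pointwise fixes each hyperelliptic component and the point $p$ blown up, hence descends to an automorphism of each $\bP(1,1,3)$ fixing a non-degenerate (genus two, i.e.\ spanning a $\bP^2$ after the $2:1$ map composed with the embedding, equivalently not contained in a ruling) curve pointwise, which forces it to be the identity.

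Second I would treat surjectivity, i.e.\ any $\tau\in\Aut(D)$ lifts to $\Aut(X,\tfrac23 D)$. In the quadric case this follows from uniqueness of the quadric: when $D$ is a canonical curve of genus four, the ideal of $D$ in $\bP^3$ contains a unique quadric $X$, so $\tau$ (viewed via the canonical embedding as an element of $\PGL(4)$ preserving $D$) must preserve $X$; for the degenerate quadrics $\PuP$ and $\bP(1,1,2)$ appearing in $\sW^{\CY}$ one argues similarly using that $D$ still spans $\bP^3$ and lies on a unique quadric by the analysis of Theorem \ref{thm:CMJL-Chow-GIT} / Lemma \ref{lem:Chow-normal}. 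When $X\cong S_{2A_5}$, this is exactly the content of Lemma \ref{lem:curve-iso-surface} applied with $D'=\tau^*D$: since $D\cong D'$ via $\tau$, there is $\sigma\in\Aut(S_{2A_5})$ with $\tau^*D=\sigma^*D$, and $\sigma$ (or $\sigma$ composed with the element of $\Aut(D)$ realizing the identification, which itself extends by the injectivity discussion applied to the pair $(X,D)$) furnishes the desired lift preserving $\tfrac23 D$. Finally, representability of $\Psi$ follows formally: $\Psi$ is a morphism of algebraic stacks that is a monomorphism on automorphism groups at every point, hence has unramified, in fact (being stabilizer preserving and, by Theorem \ref{thm:CY-forget-open}, an open immersion) trivial, relative inertia, so it is representable.

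The main obstacle I expect is the $S_{2A_5}$ case of surjectivity and injectivity simultaneously: one must be careful that an abstract isomorphism $D\cong D'$ of the reducible curves genuinely forces fixing the singular point and the hyperelliptic structure on each component (so that Corollary \ref{cor:equiv-hyp} applies), and that the extension $\overline\sigma\in\Aut(\bP(1,1,3)\cup\bP(1,1,3))$ one produces actually fixes the blown-up point $p$ on each factor so that it lifts through the blow-up to $\Aut(S_{2A_5})$ rather than merely to a birational self-map. All of this is supplied by the proof of Lemma \ref{lem:curve-iso-surface}, so in the write-up I would simply cite that proof and note that the construction there is functorial enough to give the bijection on automorphism groups, not just on isomorphism classes.
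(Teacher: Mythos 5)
Your proposal follows the same two-case structure as the paper and leans on the same key input, Lemma~\ref{lem:curve-iso-surface}, so the overall strategy is correct. The quadric case is fine (the paper phrases it via the isomorphism $H^0(\bP^3, \cO_{\bP^3}(1)) \to H^0(D, \cO_D(1))$, which is equivalent to your non-degeneracy plus uniqueness-of-quadric argument), and the $S_{2A_5}$ surjectivity via Lemma~\ref{lem:curve-iso-surface} with $D' = D$ is exactly what the paper does.

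There is, however, a concrete gap in your $S_{2A_5}$ injectivity argument. You assert that an automorphism $\sigma$ of $S_{2A_5}$ fixing $D$ pointwise ``descends to an automorphism of each $\bP(1,1,3)$,'' but this requires knowing that $\sigma$ preserves the exceptional divisors $\Delta_i$ of the weighted blow-ups $S_{A_5} \to \bP(1,1,3)$, and you give no reason for this. Fixing $D$ pointwise does not by itself force $\sigma(\Delta_i) = \Delta_i$: $D$ meets $\Delta_i$ but does not contain it, so one only gets a fixed point on $\Delta_i$, not preservation of $\Delta_i$ as a curve. The paper supplies precisely the missing step: $\Delta_i^2 = -\tfrac{1}{3}$ while $\ell^2 = 0$, and these two classes generate $\overline{NE}(S_{A_5})$, so $\Delta_i$ is the unique curve of negative self-intersection; hence \emph{every} automorphism of $S_{2A_5}$ preserves $\Delta_1 \cup \Delta_2$, i.e.\ $\Aut(S_{2A_5}) = \Aut(S_{2A_5}, \Delta_1 \cup \Delta_2)$. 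With that observation in hand, your remaining argument (descend to $\bP(1,1,3)$, then an automorphism fixing a degree-$6$ hyperelliptic curve pointwise is trivial) closes the loop. Your appeal to ``the construction being functorial enough'' gestures at this but does not actually identify the Mori-cone fact that makes it go through; you should state it explicitly.
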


\begin{proof}
It suffices to show that  $\Psi$ induces an isomorphism $\Aut(X,D)\xrightarrow{\cong }\Aut(D)$. If $X$ is isomorphic to a quadric surface $Q$, we know that $D$ is a complete $(2,3)$-intersection curve in $\bP^3$. Thus this follows from the fact that $Q$ is the unique quadric surface containing $D$ and that $H^0(\bP^3, \cO_{\bP^3}(1)) \to H^0(D, \cO_D(1))$ is an isomorphism. If $X$ is isomorphic to $S_{2A_5}$, by Lemma \ref{lem:curve-iso-surface}, an automorphism of $D$ can be lifted to a unique automorphism of $X$ fixing $\Delta_i$, the exceptional divisor of the weighted blow-up on each component $S_{A_5}$.  

By computation, $\Delta_i^2 = - \frac{1}{3}$ and $\ell^2 = 0$ and these curves generate the Mori cone of $S_{A_5}$.  As $\Delta_i$ is the unique curve with negative self-intersection, any automorphism of $S_{2A_5}$ must fix $\Delta_1 \cup \Delta_2$, and hence it follows that $\Aut(S_{2A_5}) = \Aut(S_{2A_5}, \Delta_1 \cup \Delta_2)$ so $\Aut(X,D) \cong \Aut(D)$. 
\end{proof}

\begin{prop}\label{prop:CY-forget-flat}
The morphism $\Psi$ is quasi-finite, flat, and separated.
\end{prop}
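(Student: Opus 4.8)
The plan is to verify each property of $\Psi$ separately, using the explicit description of $\sW^{\CY}$ from Proposition \ref{prop:open-substack-CY} together with the deformation theory already established. Throughout, I would exploit the dichotomy that a pair $[(X,\frac{2}{3}D)]\in\sW^{\CY}$ either has $X$ a quadric surface in $\bP^3$ (in which case $D$ is a $(2,3)$-complete intersection curve) or $X\cong S_{2A_5}$ (in which case $D$ is a union of two genus two hyperelliptic curves glued at a non-Weierstrass point, by Lemma \ref{lem:D-lci-on-S2A5} and the analysis around Proposition \ref{prop:eqnofcurveonS2A5}).

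First I would prove \emph{quasi-finiteness}. Since $\Psi$ is representable by Corollary \ref{prop:CY-stab-preserve}, it suffices to check that the fiber of $\Psi$ over a $\bk$-point $[D]\in\Curves_4^{\lci+}$ is finite, i.e., that only finitely many pairs $(X,\frac{2}{3}D)$ in $\sW^{\CY}$ have underlying curve isomorphic to $D$, up to isomorphism. For $D$ a $(2,3)$-complete intersection, the quadric $X$ containing $D$ is uniquely determined (as used in the proof of Lemma \ref{lem:Chow-normal} and Corollary \ref{prop:CY-stab-preserve}), so the fiber is a single point. For $D$ a curve on $S_{2A_5}$, Lemma \ref{lem:curve-iso-surface} shows that any two pairs in $\sW^{\CY}$ with isomorphic underlying curve differ by an automorphism of $S_{2A_5}$, so again the fiber over $[D]$ is a single point. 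Hence $\Psi$ has finite fibers, and being locally of finite type (both stacks are), $\Psi$ is quasi-finite.

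Next, \emph{separatedness}: since $\Psi$ is representable and quasi-finite, and $\sW^{\CY}$ has affine (in particular separated) diagonal inherited from $\ocM_4^{\CY}$ (Theorem \ref{thm:CY-gms}), while $\Curves_4^{\lci+}$ has affine diagonal, the relative diagonal $\Delta_\Psi$ is a monomorphism; combined with quasi-finiteness this gives that $\Psi$ is separated. Alternatively, one can check the valuative criterion directly: given a family of curves $D$ over a DVR with two extensions to families of bpCY pairs in $\sW^{\CY}$ agreeing over the generic point, one recovers $X$ from $D$ either as the unique quadric containing $D$ (the quadric case, using Lemma \ref{lem:quadric-deform} to control the special fiber) or via the canonical construction of $S_{2A_5}$ and Lemma \ref{lem:curve-iso-surface} in the $S_{2A_5}$ case, forcing the two extensions to coincide.

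Finally, \emph{flatness}: here I would use the standard fact that a quasi-finite morphism between smooth (or regular) algebraic stacks of the same dimension is flat. The stack $\sW^{\CY}$ is smooth by Proposition \ref{prop:open-substack-CY}, and $\Curves_4^{\lci+}$ is smooth by the Stacks Project (cited in Section \ref{sec:stack-curves}). It therefore suffices to compare dimensions: one checks $\dim_{[(X,D)]}\sW^{\CY} = \dim_{[D]}\Curves_4^{\lci+}$ at each point, which follows because $\Psi$ is stabilizer-preserving (Corollary \ref{prop:CY-stab-preserve}) and the tangent spaces match — deformations of $(X,\frac{2}{3}D)$ in $\sW^{\CY}$ are in bijection with deformations of $D$, since $X$ is rigidly recovered from $D$ (the quadric containing a $(2,3)$ curve deforms with the curve by Lemma \ref{lem:quadric-deform}, and the canonical construction of $S_{2A_5}$ from $D$ shows the same in that case). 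More precisely, I would argue that $\Psi$ is \'etale by checking it is unramified (injectivity on tangent spaces, again from rigidity of $X$ over $D$) and flat, where flatness at a point follows from the miracle flatness criterion applied to the map of smooth local rings of the same dimension; the main technical point to nail down carefully is the identification $T_{[(X,D)]}\sW^{\CY}\xrightarrow{\cong}T_{[D]}\Curves_4^{\lci+}$ in the $S_{2A_5}$ case, where one uses that $S_{2A_5}$ and its embedding $D\hookrightarrow S_{2A_5}$ are canonically reconstructed from the curve $D$. The hardest part is this tangent-space comparison over the non-normal surface $S_{2A_5}$, since one must ensure that embedded deformations of $D$ in $S_{2A_5}$ together with $\bQ$-Gorenstein deformations of $S_{2A_5}$ precisely account for all abstract deformations of $D$ — this is where the unobstructedness results (Lemma \ref{lem:S2A5-deform}, Proposition \ref{prop:deformations}) and the explicit equations of Proposition \ref{prop:eqnofcurveonS2A5} do the work.
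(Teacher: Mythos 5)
Your treatment of quasi-finiteness matches the paper's: both use representability from Corollary \ref{prop:CY-stab-preserve} and then argue uniqueness of $X$ given $D$, splitting into the quadric and $S_{2A_5}$ cases. The flatness argument also has the right core idea — miracle flatness after passing to a smooth cover — but you over-engineer it. Once $\Psi_U: V\to U$ is quasi-finite and birational between smooth irreducible algebraic spaces of finite type, miracle flatness applies immediately (fiber dimension is zero, dimensions of $V$ and $U$ agree by birationality); no tangent-space comparison, no discussion of unramifiedness or étaleness is needed. Your worry about ``embedded deformations of $D$ in $S_{2A_5}$ accounting for all abstract deformations'' is genuine content for understanding $\Psi$, but it is not required for this proposition and should be cut.

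Your separatedness argument has a real gap. The first route — asserting that $\Psi$ is separated because it is quasi-finite, representable, and both $\sW^{\CY}$ and $\Curves_4^{\lci+}$ have affine diagonal — is wrong. Affine diagonal means the stabilizer groups are affine; it does not make the stack separated over $\bk$ (e.g.\ $B\bG_m$ has affine diagonal but is not separated). And even a quasi-finite representable morphism need not be separated (the line with two origins over the line). So this route fails outright. Your alternative valuative-criterion sketch is closer, but it is under-specified precisely where the real work lies: in the $S_{2A_5}$ case, one must control the two families over the full DVR, not just compare their closed fibers. The paper handles this by invoking S-completeness of $\ocM_4^{\CY}$ to glue $f_1, f_2$ to a map $\STR\to\ocM_4^{\CY}$, and then — crucially — needs the fact that the $S_{2A_5}$-locus is \emph{closed} in $\ocM_4^{\CY}$ to know the image lands in $\sW^{\CY}$. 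That closedness rests on the classification of type II polystable pairs (Remark \ref{rmk:3-polystable-pts}) and the observation that $S_{2A_5}$ cannot specialize to either $\PuP$ or $\bP(1,2,3)\cup\bP(1,1,3)$. Your ``canonical construction of $S_{2A_5}$ and Lemma \ref{lem:curve-iso-surface}'' invokes a lemma that applies only to $\bk$-points and does not address this degeneration question over a DVR. As written, the separatedness part does not go through.
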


\begin{proof}
First, we show quasi-finiteness of $\Psi$. Since $\ocM_4^{\CY}$ is of finite type, we know that $\sW^{\CY}$ is quasi-compact which implies quasi-compactness of $\Psi$. By Proposition \ref{prop:CY-stab-preserve} we know that $\Psi$ is representable. Thus by \cite[\href{https://stacks.math.columbia.edu/tag/0G2M}{Tag 0G2M} and \href{https://stacks.math.columbia.edu/tag/06UA}{Tag 06UA}]{stacksproject} it suffices to show that the induced map $|\Psi|:|\sW^{\CY}|\to |\Curves_{4}^{\lci +}|$ on topological spaces has finite fibers. Indeed, we will show that $|\Psi|$ is injective. 

Let $[(X,\frac{2}{3}D)]\in |\sW^{\CY}|$. Then we will show that $|\Psi^{-1}(D)|$ contains only one point. Let $[(X',\frac{2}{3}D')]\in |\Psi^{-1}(D)|$, i.e. $D\cong D'$. If $\omega_D$ is basepoint free, then Lemma \ref{lem:CY-bpf} implies that both $X$ and $X'$ are isomorphic to quadric surfaces and $D$ is a $(2,3)$-complete intersection curve in $\bP^3$. Since the embedding of $D\hookrightarrow \bP^3$ is induced by $|\omega_D|$, it is unique up to projective transformations. Thus $D$ is contained in a unique quadric surface, which implies that $(X,D)\cong (X',D')$. If $\omega_D$ is not basepoint free, then Lemma \ref{lem:CY-bpf} implies that $X\cong X'\cong S_{2A_5}$. Thus the result follows from Lemma \ref{lem:curve-iso-surface}.

Next, we show flatness of $\Psi$ by applying the miracle flatness criterion. Since flatness is smooth local on the source-and-target, it suffices to show that the base change of $\Psi$ to a smooth cover of $\Curves_4^{\lci +}$ by algebraic spaces is flat, and we may assume that each connected component of the cover is of finite type as $\Curves_4^{\lci +}$ is locally of finite type. Let $U \to \Curves_{4}^{\lci +}$ be a smooth morphism from an algebraic space $U$ of finite type. Let $V:= U\times_{\Curves_{4}^{\lci +}} \sW^{\CY}$. Then $V$ is an algebraic space by representability of $\Psi$ from Proposition \ref{prop:CY-stab-preserve}. Since both algebraic stacks $\sW^{\CY}$ and $\Curves_{4}^{\lci +}$ are smooth by Proposition \ref{prop:open-substack-CY} and \cite[\href{https://stacks.math.columbia.edu/tag/0DZT}{Tag 0DZT}]{stacksproject}, we know that both $U$ and $V$ are smooth algebraic spaces of finite type. For simplicity, we may assume that $U$ is connected and $V$ is non-empty. Thus $U$ dominates  $\Curves_{4}^{\lci +}$ by irreducibility of $\Curves_{4}^{\lci +}$ (see Section \ref{sec:stack-curves}). Since $\Psi: \sW^{\CY}\to \Curves_4^{\lci +}$ is birational, we know that $\Psi_U: V \to U$ is also birational between smooth irreducible algebraic spaces. We know that $\Psi_U$ is quasi-finite as $\Psi$ is quasi-finite. Thus $\Psi_U$ is flat by miracle flatness \cite[\href{https://stacks.math.columbia.edu/tag/00R4}{Tag 00R4}]{stacksproject}. Therefore, the morphism $\Psi$ is flat. 

Finally, we show separatedness of $\Psi$ by the valuative criterion. Suppose $(R,\fm)$ is a DVR essentially of finite type over $\bk$ with residue field $\kappa := R/\fm$ and fraction field $K := \mathrm{Frac}(R)$. Let $f_1, f_2: \Spec R\to \sW^{\rm CY}$ be two morphisms such that $\Psi\circ f_1 = \Psi \circ f_2: \Spec R\to \Curves_4^{\lci +}$. It suffices to show $f_1=f_2$. By pulling back the universal families over $\ocM_4^{\CY}$ through $f_1$ and $f_2$, we obtain families $(X_1, \frac{2}{3}D_1)$ and $(X_2, \frac{2}{3}D_2)$ of bpCY pairs over $\Spec R$ such that $D_1\cong D_2$ as $R$-schemes. Equivalently, it suffices to show $(X_1, D_1)\cong (X_2, D_2)$ over $R$. Below we split into two cases.

If $\omega_{D_{i, \kappa}}$ is basepoint free for every $i\in \{1,2\}$, by Lemma \ref{lem:CY-bpf} we know that $X_{i,\kappa}$ is a quadric surface for every $i$. By Lemma \ref{lem:quadric-deform} we know that $X_{i, K}$ is also a quadric surface for every $i$. In particular, the line bundle $\omega_{X_i}(D_i)$ is very ample over $R$ for each $i$ and induces a closed embedding $h_i: X_i\hookrightarrow \bP^3_R$. Since $D_1\cong D_2$ and $h_i|_{D_i}$ is induced by $\omega_{D_i}$, there exists $g\in \PGL(4,R)$ such that $D_1 = g^* D_2$. Since every $(2,3)$-complete intersection curve in $\bP^3$ is contained in a unique quadric surface, we conclude that $X_1= g^* X_2$, which implies $(X_1, D_1) \cong (X_2, D_2)$ over $R$. 

If $\omega_{D_{i, \kappa}}$ is not basepoint free for every $i\in \{1,2\}$, by Lemma \ref{lem:CY-bpf} we know that $X_{i,\kappa}\cong S_{2A_5}$ for every $i$. Since the stack $\ocM_4^{\CY}$ is S-complete  by Theorem \ref{thm:CY-gms}, we know that there exists $f: \STR \to \ocM_4^{\CY}$ such that $f_1 = f|_{s\neq 0}$ and $f_2 = f|_{t\neq 0}$. Observe that the $S_{2A_5}$-locus in $\ocM_4^{\CY}$ is closed: as $\ocM_4^{\CY}$ contains only type I and II pairs (c.f. \cite[Proposition 8.18]{ABB+}), any pair $(S_{2A_5}, \frac{2}{3}C)$ must be of type II and hence can only admit a specialization to a type II pair $(X,\frac{2}{3}D)$ in $\ocM_4^{\CY}$ by \cite[Proposition 8.8]{ABB+}.  The only type II polystable pairs in $\ocM_4^{\CY}$ are given in Remark \ref{rmk:3-polystable-pts}, and $S_{2A_5}$ cannot degenerate to either $\PuP$ or $\bP(1,2,3) \cup \bP(1,1,3)$ as their double loci are normal crossing away from at most one point.  Therefore, $S_{2A_5}$ can only specialize to itself and hence the $S_{2A_5}$ locus is closed.  This implies that the the image of $f$ is contained in $\sW^{\CY}$. 
By assumption we know that $\Psi\circ f$ factorizes into $\STR \to \Spec R\to \Curves_4^{\lci +}$. Since $\Psi$ is representable by Proposition \ref{prop:CY-stab-preserve}, it is also DM by \cite[\href{https://stacks.math.columbia.edu/tag/050E}{Tag 050E}]{stacksproject} which implies that $f$ also factorizes into $\STR \to \Spec R\to \sW^{\CY}$. Thus we conclude that $f_1=f_2$. This finishes the proof of separatedness of $\Psi$.
\end{proof}

The following lemma was used in the previous proof.

\begin{lem}\label{lem:CY-bpf}
Let $[(X,\frac{2}{3}D)]\in \sW^{\CY}(\bk)$. Then $\omega_D$ is basepoint free if and only if $X$ is isomorphic to a quadric surface in $\bP^3$.
\end{lem}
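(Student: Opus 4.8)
The plan is to handle the two implications separately, exploiting the fact that by Proposition~\ref{prop:open-substack-CY} the underlying surface $X$ of a pair in $\sW^{\CY}$ is either a quadric surface in $\bP^3$ (smooth, a cone, or the union $\PuP$ of two planes) or a copy of $S_{2A_5}$.

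For the direction ``$X$ a quadric $\Rightarrow \omega_D$ basepoint free'', I would first show that $D$ is a genuine $(2,3)$-complete intersection curve in $\bP^3$. Since $[(X,\tfrac23 D)]\in\ocM_4^{\CY}$ is a $\bQ$-Gorenstein degeneration of $(\bP^1\times\bP^1,\tfrac23 D')$ with $D'$ a $(3,3)$-curve, \cite[Lemma 2.11]{ABB+} gives $3K_X+2D\sim 0$; combined with $\omega_X\cong\cO_{\bP^3}(-2)|_X$ (adjunction for a degree-$2$ hypersurface) and the fact that the class group of each of $\bP^1\times\bP^1$, the quadric cone and $\PuP$ is torsion free, this forces $D\sim\cO_{\bP^3}(3)|_X$, so $D=X\cap V(f)$ for a cubic $f$. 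Because $D$ is a reduced curve containing no component of $X$, the defining forms of $X$ and $V(f)$ form a regular sequence, hence $D$ is a complete intersection and adjunction for complete intersections yields $\omega_D\cong\cO_{\bP^3}(1)|_D$, which is globally generated since $\cO_{\bP^3}(1)$ is. Thus $\omega_D$ is basepoint free.

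For the converse I would show that when $X\cong S_{2A_5}$ the sheaf $\omega_D$ is \emph{not} basepoint free. By Lemma~\ref{lem:D-lci-on-S2A5} the divisor $D$ is Cartier, has lci singularities, avoids the $\tfrac13(1,1,2)$ points, and meets the double locus $\ell$ of $S_{2A_5}$ transversally at smooth points; together with connectedness of $D$ and the arithmetic-genus formula, this forces $D=D_1\cup D_2$, where $D_i$ is the restriction of $D$ to one copy of $S_{A_5}$, is a Gorenstein curve of arithmetic genus $2$, and $D_1\cap D_2$ is a single node $q$, smooth on each $D_i$ (this is exactly the description of the generic point of $\Delta_2$ that reappears in Proposition~\ref{prop:eqnofcurveonS2A5}, Proposition~\ref{prop:S2A5polystable} and Lemma~\ref{lem:curve-iso-surface}). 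By adjunction along the gluing, $\omega_D|_{D_1}\cong\omega_{D_1}(q_1)$, where $q_1\in D_1$ lies over $q$. Now Riemann--Roch for Gorenstein curves \cite[Tag~0BS6]{stacksproject} gives $\chi(\omega_{D_1}(q_1))=\deg\omega_{D_1}(q_1)+1-p_a(D_1)=2$, while Serre duality gives $h^1(\omega_{D_1}(q_1))=h^0(\cO_{D_1}(-q_1))=0$ (a negative-degree line bundle on a reduced connected curve has no sections), so $h^0(D_1,\omega_{D_1}(q_1))=2=h^0(D_1,\omega_{D_1})$. Hence the inclusion $H^0(D_1,\omega_{D_1})\hookrightarrow H^0(D_1,\omega_{D_1}(q_1))$ is an equality, i.e. every section of $\omega_{D_1}(q_1)$ vanishes at $q_1$; since the restriction $H^0(D,\omega_D)\to H^0(D_1,\omega_D|_{D_1})=H^0(D_1,\omega_{D_1}(q_1))$ lands in this space, the evaluation $H^0(D,\omega_D)\to\omega_D\otimes\kappa(q)$ is zero and $\omega_D$ fails to be globally generated at $q$.

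The main obstacle I anticipate is purely organizational: nailing down the combinatorial structure of $D$ on $S_{2A_5}$ — that $D_1\cap D_2$ is a single reduced point at which both components are smooth — and checking that the genus-$2$ Riemann--Roch computation is valid for a possibly singular $D_1$ (it is, as $D_1$ is lci, hence Gorenstein, and $q_1$ is a Cartier point). Both follow from the slc hypothesis together with Lemma~\ref{lem:D-lci-on-S2A5}, so the real content is the one-line numerical estimate $h^0(\omega_{D_1}(q_1))=h^0(\omega_{D_1})$.
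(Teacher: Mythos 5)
Your proposal is correct. The forward direction (quadric $\Rightarrow\omega_D$ basepoint free) runs parallel to the paper's one-liner, just more spelled out: in both cases one reduces to $D\sim\cO_X(3)$, already recorded in Proposition~\ref{prop:open-substack-CY}, and then $\omega_D\cong\cO_D(1)$ is globally generated.

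For the converse, however, you take a genuinely different route. The paper deforms $D$ inside $S_{2A_5}$ to a \emph{general} curve $D'$, which is a union of two smooth genus-$2$ curves $D_i'$ glued at non-Weierstrass points $p_i$, checks that $\omega_{D'}|_{D_i'}\cong\omega_{D_i'}(p_i)$ has a base point, and then invokes openness of basepoint-freeness (given constancy of $h^0(\omega)$) to conclude for the special fiber $D$. You instead run the Riemann--Roch / Serre duality argument directly on the (possibly singular) component $D_1$ of $D$ itself, obtaining $h^0(\omega_{D_1}(q_1))=h^0(\omega_{D_1})$ so that every section vanishes at the node $q$. Your computation is cleaner and avoids the extra step of deforming and appealing to openness; it also makes clear that the paper's aside ``as $p_i$ is not a Weierstrass point'' is not actually doing any work, since $\omega_E(p)$ on a genus-$2$ curve $E$ has a base point at $p$ for \emph{every} point $p$ (the identity $h^0(\omega_E(p)-p)=h^0(\omega_E)=2=h^0(\omega_E(p))$ holds regardless). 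Both arguments ultimately rely on the structural fact that a curve $D$ with $[(S_{2A_5},\tfrac23 D)]\in\ocM_4^{\CY}$ decomposes as two arithmetic-genus-$2$ Gorenstein components meeting transversally at a single node on the double locus; you correctly trace this to Lemma~\ref{lem:D-lci-on-S2A5}, Proposition~\ref{prop:eqnofcurveonS2A5}, and Lemma~\ref{lem:curve-iso-surface}. The one place you could tighten the write-up is the transition ``together with connectedness of $D$ and the arithmetic-genus formula, this forces $\ldots$'': the genus count $p_a(D)=p_a(D_1)+p_a(D_2)+\delta-1$ does pin down $\delta=1$ once one knows $p_a(D_i)=2$, but establishing $p_a(D_i)=2$ independently requires the intersection computation on $S_{A_5}$ which the paper packages into Proposition~\ref{prop:eqnofcurveonS2A5} and Lemma~\ref{lem:curve-iso-surface}; since you cite those, the logic is sound, but the sentence reads as if the genus formula alone does the work, when really it is the cited description of $D$ that does.
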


\begin{proof}
If $X$ is a quadric surface, then $D\sim \cO_X(3)$ which implies that $\omega_D\cong \cO_D(1)$ is very ample and hence basepoint free. If $X\cong S_{2A_5}$, then we claim that $\omega_D$ is not basepoint free. Indeed, a general small deformation $D'$ of $D$ in $S_{2A_5}$ is obtained by gluing two smooth genus $2$ curves $D_1'$ and $D_2'$ at non-Weierstrass points $p_1\in D_1'$ and $p_2\in D_2'$. Thus $\omega_{D'}|_{D_i'}\cong \omega_{D_i'}(p_i)$ is not basepoint-free as $p_i$ is not a Weierstrass point. Since basepoint-freeness is an open condition given that $h^0$ of the canonical bundle is constant in this family, we know that $\omega_D$ is not basepoint free. 
\end{proof}

\begin{proof}[Proof of Theorem \ref{thm:CY-forget-open}] Since open immersion is smooth local on the source-and-target, by similar argument to the proof of Proposition \ref{prop:CY-forget-flat} it suffices to show that $\Psi_U: V\to U$ is an open immersion where $U \to \Curves_4^{\lci +}$ is a smooth morphism from a connected algebraic space $U$ of finite type, where $V = U\times_{\Curves_4^{\lci +}} \sW^{\CY}$. Moreover, by Proposition \ref{prop:CY-forget-flat} we know that both $U$ and $V$ are smooth irreducible algebraic spaces of finite type, and $\Psi_U$ is  quasi-finite, flat, separated, and birational. By Zariski's main theorem for algebraic spaces \cite[\href{https://stacks.math.columbia.edu/tag/082K}{Tag 082K}]{stacksproject}, we conclude that $\Psi_U$ is an open immersion. Thus the proof is finished. 
\end{proof}

\section{New models for the Hassett--Keel program}\label{sec:newmodelsforHK}

\subsection{New stacks and good moduli spaces}
\begin{defn}\label{def:sWK}
Recall that $\sW^{\CY}$ is the open substack of $\ocM^{\CY}_4$ parametrizing bpCY pairs $(X, \frac{2}{3}D)$ such that $X$ is either a quadric surface or $S_{2A_5}$. Let $\sW:=\Psi(\sW^{\CY})$ be the corresponding open substack of $\Curves_4^{\Gor}$ by Theorem \ref{thm:CY-forget-open}. Denote two open substacks of $\sW^{\CY}$ by $\sW^{\K}:= \sW^{\CY}\cap \ocM_4^{\K}$ and $\sW^{\KSBA}:=\sW^{\CY}\cap \ocM_4^{\KSBA}$. Denote by their images under $\Psi$ by $\sW^-:=\Psi(\sW^{\K})$ and $\sW^+:=\Psi(\sW^{\KSBA})$ as open substacks of $\sW$.
\end{defn}

\begin{defn}\label{def:V1minus-V2}
Recall from Definition \ref{def:sUsV} that $\phi: \sX\to \fX^{\rm c}$ is a good moduli space morphism.
Define $\sV_1^-:= \phi^{-1}(\fX^{\rm c} \setminus (\Gamma\cup \{[C_{D}]\}))$ and $\sV_2:=\phi^{-1}(\fX^{\rm c} \setminus (\Gamma\cup \{[C_{2A_5}]\}))$ as open substacks of $\sX$. 
\end{defn}

Since $\sV = \phi^{-1}(\fX^{\rm c} \setminus \Gamma)$, we have $\sV  = \sV_1^- \cup \sV_2$. Moreover, $\sV_1^- \cap \sV_2 =\sU\cap \sV=\sX^{\rm s}$.  By Theorem \ref{thm:CMJL-Chow-GIT} and Remark \ref{rmk:GIT-stable-properties}, we have $\sV \subset \sW$.

\begin{remark}\label{rmk:GITstable-in-K-and-KSBA}
    By Theorem \ref{thm:CMJL-Chow-GIT}, every curve in the Chow stable stack $\sX^{\rm s}$ lies on a normal quadric surface $Q$ such that $(Q, \tfrac{2}{3}C)$ is klt.  In particular, we have $\sX^{\rm s} \subset \sW$ and the preimage $\Psi^{-1}(\sX^{\rm s})$ is contained in each of $\ocM_4^{\K}$, $\ocM_4^{\CY}$, and $\ocM_4^{\KSBA}$ by Proposition \ref{prop:klt-is-stable}.  
\end{remark}

\begin{prop}
$\sV_1^-$ is isomorphic to a saturated open substack of the K-moduli stack $\ocM_4^{\K}$. 
\end{prop}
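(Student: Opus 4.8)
The goal is to show that $\sV_1^- = \phi^{-1}(\fX^{\rm c}\setminus(\Gamma\cup\{[C_D]\}))$ is isomorphic, via the forgetful map $\Psi$ composed with suitable identifications, to a saturated open substack of $\ocM_4^{\K}$. The natural approach is to produce an open immersion $\sV_1^-\hookrightarrow \ocM_4^{\K}$ and then verify saturatedness with respect to the good moduli space morphism $\ocM_4^{\K}\to\oM_4^{\K}$.

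First I would argue that $\sV_1^-\subset \sW$, so that $\Psi$ can be inverted on $\sV_1^-$. By Theorem~\ref{thm:CMJL-Chow-GIT} and Remark~\ref{rmk:GIT-stable-properties}, any Chow semistable curve $C$ parametrized by $\sX$ whose Chow polystable degeneration is not in $\Gamma$ and is not $[C_D]$ lies on a quadric surface $Q$ with $(Q,\tfrac{2}{3}C)$ slc, and one checks (using the explicit descriptions: Chow stable curves lie on normal quadrics with klt pair structure, $C_{2A_5}$ lies on $\bP^1\times\bP^1$, and degenerations of $C_{2A_5}$ give the surface $S_{2A_5}$) that in fact $C$ is the divisor of a bpCY pair $(X,\tfrac{2}{3}C)\in\sW^{\CY}$ with $X$ a quadric or $X\cong S_{2A_5}$. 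Hence $\sV_1^-\subset \sW=\Psi(\sW^{\CY})$, and since $\Psi$ is an open immersion (Theorem~\ref{thm:CY-forget-open}), there is an open substack $\sW_1^{\CY}:=\Psi^{-1}(\sV_1^-)\subset\sW^{\CY}$ with $\Psi|_{\sW_1^{\CY}}:\sW_1^{\CY}\xrightarrow{\cong}\sV_1^-$.

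Next I would show $\sW_1^{\CY}\subset \ocM_4^{\K}$, i.e.\ every bpCY pair $(X,\tfrac{2}{3}D)$ over $\sW_1^{\CY}$ is K-semistable on the log Fano side. The pairs here come in two types: those with $D$ Chow stable, which by Remark~\ref{rmk:GITstable-in-K-and-KSBA} (via Proposition~\ref{prop:klt-is-stable}, since $(Q,\tfrac23 C)$ is klt) lie in $\ocM_4^{\K}$; and the pairs admitting a special degeneration to $(S_{2A_5},\tfrac23 C_{2A_5}^\circ)$ or to $(\bP^1\times\bP^1,\tfrac23 C_{2A_5})$, which by Lemma~\ref{lem:C2A5-is-K-polystable} and Proposition~\ref{prop:C2A5-replacement} are exactly the K-semistable pairs of $\ocM_4^{\K}$ lying over an open neighborhood of $[(S_{2A_5},\tfrac23 C_{2A_5}^\circ)]$ in $\oM_4^{\CY}$. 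Since $\ocM_4^{\K}$ is an open substack of $\ocM_4^{\CY}$ (Theorem~\ref{thm:bpCYwallcrossing}) and $\oM_4^{\K}\to\oM_4^{\CY}$ is an isomorphism near that point, $\sW_1^{\CY}$ is contained in the open substack $\ocM_4^{\K}$; in fact $\sW_1^{\CY}$ equals the preimage in $\ocM_4^{\K}$ of the saturated open set $\phi^{\CY}$-image of $\{$quadric and $S_{2A_5}$ pairs over $\fX^{\rm c}\setminus(\Gamma\cup\{[C_D]\})\}$. This identification, together with the fact that $\ocM_4^{\K}$ is an open substack of $\ocM_4^{\CY}$, yields the open immersion $\sV_1^-\cong\sW_1^{\CY}\hookrightarrow\ocM_4^{\K}$.

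Finally, saturatedness: I must show $\sW_1^{\CY}$ is the preimage under $\pi^{\K}:\ocM_4^{\K}\to\oM_4^{\K}$ of its image. Equivalently, if a K-semistable pair $(X,\tfrac23 D)\in\ocM_4^{\K}$ is S-equivalent to some pair in $\sW_1^{\CY}$, then $(X,\tfrac23 D)\in\sW_1^{\CY}$. The key point is that the K-polystable degeneration of any pair in $\sW_1^{\CY}$ still lies in $\sW_1^{\CY}$: a Chow stable curve degenerates (as a K-semistable pair) either within the Chow stable locus or to $[(\bP^1\times\bP^1,\tfrac23 C_{2A_5})]$, by Theorem~\ref{thm:CMJL-Chow-GIT}; and by Proposition~\ref{prop:C2A5-replacement} the fiber of $\oM_4^{\K}\to\oM_4^{\CY}$ over $[(S_{2A_5},\tfrac23 C_{2A_5}^\circ)]$ is the single polystable point $[(\bP^1\times\bP^1,\tfrac23 C_{2A_5})]$, which is in $\sW_1^{\CY}$ (its Chow polystable degeneration is $[C_{2A_5}]\notin\Gamma\cup\{[C_D]\}$). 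So every S-equivalence class meeting $\sW_1^{\CY}$ has its polystable representative in $\sW_1^{\CY}$; combined with openness this gives that $\sW_1^{\CY}$ is saturated. I expect the main obstacle to be the careful bookkeeping in the second paragraph's step — matching up the three descriptions (Chow polystable degeneration in $\fX^{\rm c}$, K-polystable degeneration in $\oM_4^{\K}$, and bpCY polystable degeneration in $\oM_4^{\CY}$) to confirm that $\sW_1^{\CY}$ is precisely a saturated open substack of $\ocM_4^{\K}$ rather than merely an open substack contained in it, i.e.\ verifying that no K-semistable pair outside $\sW_1^{\CY}$ is S-equivalent to one inside it.
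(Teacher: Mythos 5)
Your overall plan matches the paper's: pass through the forgetful map $\Psi$ to identify $\sV_1^-$ with an open substack $\sW_1^{\CY}=\Psi^{-1}(\sV_1^-)$ of $\sW^{\CY}$, argue this lies in $\ocM_4^{\K}$, and then check saturatedness. Two issues, one minor and one substantive.

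The minor issue: in your first paragraph you allow $X\cong S_{2A_5}$ as a possibility for the surface underlying a pair $\Psi^{-1}(C)$ with $C\in\sV_1^-$. This cannot happen: curves in $\sV_1^-\subset\sX$ have $\omega_C$ basepoint free by the very definition of $\sX$, and Lemma~\ref{lem:CY-bpf} says precisely that $\omega_D$ is basepoint free iff $X$ is a quadric; so $X$ is always a quadric here. (This is also why the paper can state flatly that $X$ has rank $\ge 3$.) Your later mention of pairs on $S_{2A_5}$ in the second paragraph is likewise a red herring --- pairs with $X\cong S_{2A_5}$ live only on the KSBA side of the bpCY wall crossing and are never K-semistable, so they should not enter the description of $\sV_1^{\K}$ at all.

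The substantive issue is the saturatedness step. You prove: every S-equivalence class meeting $\sW_1^{\CY}$ has its K-polystable representative inside $\sW_1^{\CY}$. But saturatedness requires the converse implication as well: if the K-polystable representative lies in $\sW_1^{\CY}$, then the \emph{entire} S-equivalence class lies in $\sW_1^{\CY}$. Openness of $\sW_1^{\CY}$ does not supply this --- an open set can contain the closed point of a good-moduli-space fiber without containing the whole fiber. You need to argue that if $(X,\frac{2}{3}D)\in\ocM_4^{\K}$ admits a special degeneration to $(\bP^1\times\bP^1,\frac{2}{3}C_{2A_5})$, then $(X,\frac{2}{3}D)\in\sW_1^{\CY}$. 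The paper supplies exactly this via Lemma~\ref{lem:quadric-deform} (a deformation of a reduced quadric is a quadric, so $X$ is a quadric and $[D]$ is a $(2,3)$ complete intersection) together with the fact that the Chow cycle of $D$ then degenerates to the Chow-polystable cycle $[C_{2A_5}]$, placing $[D]$ in $\sV_1^-$. You should invoke Lemma~\ref{lem:quadric-deform} and this Chow-cycle argument explicitly; without it your proof establishes only the weaker statement and the saturatedness claim is not justified.

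Proposition~\ref{prop:C2A5-replacement} is still relevant --- it guarantees that the only polystable point over $[(S_{2A_5},\frac{2}{3}C_{2A_5}^\circ)]$ is $[(\bP^1\times\bP^1,\frac{2}{3}C_{2A_5})]$ --- but it does not by itself replace the deformation-theoretic fact that a quadric stays a quadric under generization.
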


\begin{proof}
As $\sV_1^-$ is contained in $\sV$, it is contained in $\sW$.  Define $\sV_1^{\K}$ to be the preimage of $\sV_1^-$ under the forgetful map $\Psi: \sW^{\CY} \to \sW$.  By Theorem \ref{thm:CMJL-Chow-GIT}, if $[(X,\frac{2}{3}D)] \in \sV_1^{\K}$, then $X$ is a quadric of rank $\ge 3$ and $(X,\frac{2}{3}D)$ is klt for any $D$ with $\phi([D]) \ne [C_{2A_5}]$.  In particular, $(X,(\frac{2}{3}-\epsilon)D)$ is K-stable if $\phi([D]) \ne [C_{2A_5}]$ by Proposition \ref{prop:klt-is-stable}.  By Lemma \ref{lem:C2A5-is-K-polystable}, the polystable point $[C_{2A_5}] \in \sV^-$ has K-polystable preimage $[(\bP^1 \times \bP^1, \frac{2}{3}C_{2A_5})] \in \sW^{\K}$ and thus any $(X,\frac{2}{3}D)$ with $\phi([D]) = [C_{2A_5}]$ is K-semistable by openness of K-semistablity.  Hence, $\sV_1^{\K} \subset \sW^{\K}$.  As $\Psi$ is an open immersion, $\sV_1^- \cong \sV_1^{\K}$ an open substack of $\ocM_4^{\K}$. Finally, by Lemma \ref{lem:quadric-deform} and the polystability of $(\bP^1 \times \bP^1, \frac{2}{3}C_{2A_5})$ from Lemma \ref{lem:C2A5-is-K-polystable}, this substack is saturated.    
\end{proof}

\begin{definition}\label{def:sV1andsV1+}
Consider the bpCY wall crossing in Theorem \ref{thm:bpCYwallcrossing}: 
        \[
\begin{tikzcd}
\ocM_4^{\K} \arrow[d] \arrow[r,hook]  &
\ocM_{4}^{\CY} \arrow[d] & 
\ocM_{4}^{\KSBA} \arrow[d] \arrow[l,hook']  \\
\oM_4^{\K} \arrow[r]  & 
\oM_4^{\CY}  & 
\oM_4^{\KSBA} \arrow[l]
\end{tikzcd}
,\]
where the top row represents the moduli stacks and the bottom row the corresponding good moduli spaces.  As $\sV_1^{\K}$ is saturated, the induced morphism $\sV_1^{\K}$ to its image in the good moduli space morphism $\ocM_4^{\K} \to \oM_4^{\K}$ is a good moduli space. Denote the image by $V_1^{\K}$.

Define $\sV_1^{\CY} \subset \ocM_4^{\CY}$ as the saturation of $\sV_1^{\K}$, i.e. the preimage under the good moduli space morphism $\ocM_4^{\CY} \to \oM_4^{\CY}$ of the image of $V_1^{\K}$ in $\oM_4^{\CY}$, which is open by Proposition \ref{prop:C2A5-replacement}.  Define $\sV_1^{\KSBA}$ to be $\sV_1^{\CY} \cap \ocM_4^{\KSBA}$.  These are saturated open substacks of $\ocM_4^{\CY}$ and $\ocM_4^{\KSBA}$, respectively. By Proposition \ref{prop:C2A5-replacement} we know that $\sV_1^{\CY}\hookrightarrow \sW^{\CY}$ is an open substack. Finally, define $\sV_1 := \Psi(\sV_1^{\CY})$ and $\sV_1^+ := \Psi(\sV_1^{\KSBA})$ which are both open substacks of $\sW$ by Theorem \ref{thm:CY-forget-open}.
\end{definition}

\begin{prop}\label{prop:gms-for-V1}
We have a diagram $\sV_1^{-}\hookrightarrow \sV_1 \hookleftarrow \sV_1^+$ of open immersions that descends to good moduli spaces
\[
\fV_1^{-}\xrightarrow{\cong} \fV_1 \xleftarrow{\psi_1^+} \fV_1^+,
\]
where $\psi_1^+$ is a proper birational morphism.  Furthermore, the exceptional locus of $\psi_1^+$ is a $\bQ$-Cartier prime divisor birational to $\delta_2$ that parameterizes curves as described in Proposition \ref{prop:eqnofcurveonS2A5} which is contracted to the point $[C_{2A_5}^\circ]$ by $\psi_1^+$.
\end{prop}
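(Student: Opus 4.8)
The plan is to transport the boundary polarized Calabi--Yau wall crossing of Theorem~\ref{thm:bpCYwallcrossing} across the forgetful open immersion $\Psi$ of Theorem~\ref{thm:CY-forget-open}, and then to read off the exceptional behaviour from Propositions~\ref{prop:C2A5-replacement} and~\ref{prop:eqnofcurveonS2A5}.

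\textbf{The diagram and its good moduli spaces.} By Definition~\ref{def:sV1andsV1+} we have open substacks $\sV_1^{\K}\hookrightarrow\sV_1^{\CY}\hookleftarrow\sV_1^{\KSBA}$, where $\sV_1^{\CY}$ is the saturation of $\sV_1^{\K}$ in $\ocM_4^{\CY}$ (so it contains $\sV_1^{\K}$) and $\sV_1^{\KSBA}=\sV_1^{\CY}\cap\ocM_4^{\KSBA}$; moreover $\sV_1^{\CY}\hookrightarrow\sW^{\CY}$ is open by Proposition~\ref{prop:C2A5-replacement}, hence so are the other two. Applying $\Psi$, which is an open immersion, we obtain the open immersions $\sV_1^-\hookrightarrow\sV_1\hookleftarrow\sV_1^+$ of the statement together with isomorphisms of stacks $\sV_1^-\cong\sV_1^{\K}$, $\sV_1\cong\sV_1^{\CY}$, $\sV_1^+\cong\sV_1^{\KSBA}$. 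Since $\sV_1^{\K}$, $\sV_1^{\CY}$, $\sV_1^{\KSBA}$ are saturated in $\ocM_4^{\K}$, $\ocM_4^{\CY}$, $\ocM_4^{\KSBA}$ respectively (Definition~\ref{def:sV1andsV1+}), they descend to open subschemes $V_1^{\K}\subset\oM_4^{\K}$, $\fV_1\subset\oM_4^{\CY}$, $\fV_1^+\subset\oM_4^{\KSBA}$ of the good (resp.\ coarse) moduli spaces, and the above stack isomorphisms identify $\fV_1^-:=V_1^{\K}$, $\fV_1$, $\fV_1^+$ with the good moduli spaces of $\sV_1^-$, $\sV_1$, $\sV_1^+$. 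Tracing preimages through $\ocM_4^{\K}\hookrightarrow\ocM_4^{\CY}\hookleftarrow\ocM_4^{\KSBA}$ and using saturatedness gives $V_1^{\K}=\rho^{-1}(\fV_1)$ and $\fV_1^+=\psi^{-1}(\fV_1)$, where $\rho\colon\oM_4^{\K}\to\oM_4^{\CY}$ and $\psi\colon\oM_4^{\KSBA}\to\oM_4^{\CY}$ are the projective birational morphisms of Theorem~\ref{thm:bpCYwallcrossing}; we set $\psi_1^+:=\psi|_{\fV_1^+}$.

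\textbf{The left arrow is an isomorphism and the right arrow is proper birational.} Every closed point of $\sV_1^{\CY}$ other than $[(S_{2A_5},\tfrac23 C_{2A_5}^\circ)]$ is a klt pair, since the saturation adds only the polystable degeneration of the single non-klt K-semistable orbit; such pairs are K-stable by Proposition~\ref{prop:klt-is-stable}, so over $\fV_1\setminus\{[C_{2A_5}^\circ]\}$ the morphisms $\rho$ and $\psi$ are isomorphisms (the three stacks agree on the klt locus). Over a neighbourhood of $[C_{2A_5}^\circ]$ the morphism $\rho$ is an isomorphism by Proposition~\ref{prop:C2A5-replacement}. Hence $V_1^{\K}\to\fV_1$ is an isomorphism over an open cover of $\fV_1$, so it is an isomorphism $\fV_1^-\xrightarrow{\ \cong\ }\fV_1$ (alternatively: it is a proper quasi-finite, hence finite, birational morphism onto the normal variety $\fV_1$, which is open in $\oM_4^{\CY}$, normal by Theorem~\ref{thm:bpCY-is-smooth}, so an isomorphism by Zariski's main theorem). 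Likewise $\psi_1^+$, being a base change of the projective morphism $\psi$, is proper, and it is birational because $\psi$ is an isomorphism over the dense open klt locus of $\fV_1$.

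\textbf{The exceptional locus of $\psi_1^+$.} By the previous step $\psi_1^+$ is an isomorphism over $\fV_1\setminus\{[C_{2A_5}^\circ]\}$, so its exceptional locus $E$ is precisely the fibre $(\psi_1^+)^{-1}([C_{2A_5}^\circ])$. Under the identification $\sV_1^+\cong\sV_1^{\KSBA}$, Proposition~\ref{prop:eqnofcurveonS2A5} describes the points of $E$ as the KSBA-stable pairs $(S_{2A_5},(\tfrac23+\epsilon)D)$ whose curve $D$ is cut out on the two components by the displayed equations with parameters in $\bP(2,6,5,4,3)^2$ (both coordinate vectors nonzero), isomorphic pairs corresponding to $\Aut(S_{2A_5})$-equivalent curves by Lemma~\ref{lem:curve-iso-surface}. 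For a general choice of parameters the curve $D$ is the transverse union of two smooth genus two curves glued at a point which is a non-Weierstrass point of each (the node being non-Weierstrass is forced by the slc condition on the bpCY degeneration, cf.\ the description of $\Delta_2$ in Theorem~\ref{introthm:59to23}); that is, $D$ is a general member of the boundary divisor $\delta_2\subset\ocM_4$. Thus the forgetful assignment $[(S_{2A_5},D)]\mapsto[D]$ is a birational morphism $E\to\delta_2$, so $E$ is irreducible of dimension $\dim\delta_2=8$; since $\fV_1^+$ is an open subvariety of the $9$-dimensional $\oM_4^{\KSBA}$, this makes $E$ a prime divisor, contracted to $[C_{2A_5}^\circ]$ by construction. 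Finally, $\fV_1^+$ is the coarse moduli space of the smooth separated Deligne--Mumford stack $\sV_1^{\KSBA}$ (an open substack of the smooth stack $\ocM_4^{\CY}$), so it has quotient singularities and is therefore $\bQ$-factorial; in particular $E$ is $\bQ$-Cartier.

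\textbf{Main obstacle.} The delicate point is the identification of $E$ in the last step, i.e.\ matching the parameter space $\bP(2,6,5,4,3)^2$ of Proposition~\ref{prop:eqnofcurveonS2A5} with the geometry: one must verify that the connected part of $\Aut(S_{2A_5})$ (computed via the negative-curve argument in the proof of Corollary~\ref{prop:CY-stab-preserve}) has already been absorbed into the weighted coordinates up to the residual component swap, so that $E$ has the expected dimension $8$ and is irreducible, and that the general such $D$ is genuinely a general $\delta_2$-curve. The remaining assertions ($\bQ$-Cartier, properness, birationality, and the isomorphism $\fV_1^-\cong\fV_1$) are formal consequences of the smoothness of $\ocM_4^{\CY}$, the projectivity of the bpCY wall crossing, and Zariski's main theorem.
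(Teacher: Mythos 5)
Your proposal follows essentially the same route as the paper's proof: identify $\sV_1^{\pm}$, $\sV_1$ with saturated open substacks of $\ocM_4^{\K}$, $\ocM_4^{\KSBA}$, $\ocM_4^{\CY}$ via $\Psi$; transport the bpCY wall-crossing diagram; use smoothness (Theorem~\ref{thm:bpCY-is-smooth}) and Zariski's main theorem for the isomorphism $\fV_1^-\cong\fV_1$; and use Proposition~\ref{prop:eqnofcurveonS2A5} together with $\bQ$-factoriality of $\oM_4^{\KSBA}$ (quotient singularities) for the description of the exceptional divisor.

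Two small points of difference, neither of which affects correctness. First, to show $\fV_1^-\to\fV_1$ is an isomorphism you cover $\fV_1$ by the klt locus and a neighbourhood of $[C_{2A_5}^\circ]$, invoking Proposition~\ref{prop:C2A5-replacement} on the latter; the paper instead observes via Theorem~\ref{thm:CMJL-Chow-GIT} that the map is birational and bijective hence finite, and applies ZMT once to the whole map. Both work; your version leans more directly on the local analysis already done for Proposition~\ref{prop:C2A5-replacement}. Second, for irreducibility of the exceptional divisor $E$ you argue via the birational morphism $E\to\delta_2$ and a dimension count, whereas the paper notes more economically that $E$ is dominated by the irreducible parameter space $\bP(2,6,5,4,3)^2$ coming out of Proposition~\ref{prop:eqnofcurveonS2A5}. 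Your route requires the extra (true but unremarked) observation that the generic fibre of $E\to\delta_2$ is a point, which is why the dominance argument in the paper is the cleaner option; you flag this yourself under ``main obstacle,'' and indeed the work of checking that the $\Aut(S_{2A_5})$-action is fully absorbed into the weighted-projective coordinates is already packaged inside Proposition~\ref{prop:eqnofcurveonS2A5}, so the paper's one-line argument avoids reopening it.
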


\begin{proof}
    From Theorem \ref{thm:bpCYwallcrossing}, there is a diagram of $\ocM_4^{\K}\hookrightarrow \ocM_4^{\CY} \hookleftarrow \ocM_4^{\KSBA}$ of open immersions that descends to good moduli spaces
\[
\oM_4^{\K}\rightarrow \oM_4^{\CY} \leftarrow \oM_4^{\KSBA}.
\]  This induces the corresponding diagram of the saturated open substacks and their good moduli spaces 
        \[
\begin{tikzcd}
\sV_1^{\K} \arrow[d] \arrow[r,hook]  &
\sV_1^{\CY} \arrow[d] & 
\sV_1^{\KSBA} \arrow[d] \arrow[l,hook']  \\
V_1^{\K} \arrow[r]  & 
V_1^{\CY}  & 
V_1^{\KSBA} \arrow[l]
\end{tikzcd}.
\]
By the isomorphisms $\sV_1^{\K} \cong \sV_1^-$, $\sV_1^{\CY} \cong \sV_1$, and  $\sV_1^{\KSBA} \cong \sV_1^+$ induced by the forgetful map $\Psi: \sW^{\CY} \to \sW$, we have the existence of good moduli spaces and the diagram as claimed.  It then suffices to verify the properties of the morphisms on good moduli spaces.  By Theorem \ref{thm:bpCY-is-smooth}, $\sV_1^{-}$, $\sV_1$, and $\sV_1^{+}$ are smooth and hence the good moduli spaces $\fV_1^-$, $\fV_1$, and $\fV_1^+$ are normal.  By Theorem \ref{thm:CMJL-Chow-GIT}, the morphism $\psi_1^-: \fV_1^- \to \fV_1$ is birational and an isomorphism on $\fV_1^- - \{[C_{2A_5}]\}$, hence finite, and therefore an isomorphism by Zariski's Main Theorem. The morphism $\psi_1^+: \fV_1^+ \to \fV_1$ is an isomorphism away from the preimage of the complement of $\{[C_{2A_5}^\circ]\}$, and the exceptional locus of $\psi_1^+$ is as described in Proposition \ref{prop:eqnofcurveonS2A5}.  From the description in the proposition, the curves parameterized by the exceptional locus are unions of two genus two hyperelliptic curves glued at a non-Weierstrass point.  As the forgetful map $\Psi: \sW^{\CY} \to \sW$ is an isomorphism, the exceptional locus is birational to $\delta_2$ and hence of codimension one.  Next, the moduli space $\oM_4^{\KSBA}$ has quotient singularities as it is the coarse space of the smooth Deligne--Mumford stack $\ocM_4^{\KSBA}$ and hence $\bQ$-factorial by \cite[Proposition 5.15]{KM98}.  This implies that the open subset $\fV_1^+ \subset \oM_4^{\KSBA}$ is $\bQ$-factorial and therefore the codimension one exceptional locus of $\psi_1^+$ is a $\bQ$-Cartier divisor.  In fact, the exceptional locus of $\psi_1^+$ is a prime divisor because it is dominated by the irreducible variety $\bP(2,6,5,4,3)^2$ by Proposition \ref{prop:eqnofcurveonS2A5}. 
Finally, the morphism $\fV_1^+ \to \fV_1$ is induced by base change of the proper morphism $\oM_4^{\KSBA} \to \oM_4^{\CY}$ (see Theorem \ref{thm:bpCYwallcrossing}) and is therefore proper. 
\end{proof}

\begin{prop}
$\sV_2$ is isomorphic to a saturated open substack of the bpCY moduli stack $\ocM_4^{\CY}$. Moreover, $\sV_2$ is smooth.
\end{prop}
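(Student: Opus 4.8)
The plan is to transport the open substack $\sV_2\subseteq\sX$ to the Calabi--Yau moduli stack along the forgetful map and then verify saturatedness. Since $\sV_2\subseteq\sV\subseteq\sW$ by Theorem~\ref{thm:CMJL-Chow-GIT} and Remark~\ref{rmk:GIT-stable-properties}, and the forgetful map $\Psi\colon\sW^{\CY}\to\Curves_4^{\lci +}$ is an open immersion with image $\sW$ by Theorem~\ref{thm:CY-forget-open}, we set $\sV_2^{\CY}:=\Psi^{-1}(\sV_2)$; this is an open substack of $\sW^{\CY}\subseteq\ocM_4^{\CY}$ and $\Psi$ restricts to an isomorphism $\sV_2^{\CY}\xrightarrow{\ \sim\ }\sV_2$. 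Smoothness of $\sV_2$ is then immediate, since $\ocM_4^{\CY}$ is smooth by Theorem~\ref{thm:bpCY-is-smooth} (equivalently, $\sW^{\CY}$ is smooth by Proposition~\ref{prop:open-substack-CY}) and $\sV_2^{\CY}$ is one of its open substacks. It remains to show that $\sV_2^{\CY}$ is \emph{saturated} in $\ocM_4^{\CY}$.

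For saturatedness I would use the criterion that an open substack of a stack admitting a good moduli space is saturated precisely when it is a union of fibres of the good moduli space morphism; concretely, it suffices to show that every closed point of $\sV_2^{\CY}$ is closed in $\ocM_4^{\CY}$ and that the full S-equivalence class in $\ocM_4^{\CY}$ of each such closed point already lies in $\sV_2^{\CY}$. (Here we use that $\sV_2$, being saturated in $\sX$, is itself a good moduli space, so every point of $\sV_2^{\CY}$ does degenerate to a closed point of $\sV_2^{\CY}$.) By Theorem~\ref{thm:CMJL-Chow-GIT} the closed points of $\sV_2$ are the Chow stable curves $C$ and the curve $C_D$; under $\Psi^{-1}$ these correspond to the klt pairs $(Q,\tfrac23 C)$ with $Q$ a normal quadric (Remark~\ref{rmk:GIT-stable-properties}) and to the pair $(\PuP,\tfrac23 C_D)$. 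The klt pairs are stable in $\ocM_4^{\CY}$ by \cite[Theorem~6.15]{ABB+}, hence closed with one-point S-equivalence class, so nothing further is needed there; and $(\PuP,\tfrac23 C_D)$ is polystable in $\ocM_4^{\CY}$ by Lemma~\ref{lem:polystable-CD-pair}, hence closed.

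The crux, and the step I expect to be the main obstacle, is to show that every pair of $\ocM_4^{\CY}$ that is S-equivalent to $(\PuP,\tfrac23 C_D)$ has underlying curve in $\sV_2$. Let $(X,\tfrac23 D)\in\ocM_4^{\CY}$ be such a pair. Then $X$ degenerates to $\PuP$ in $\ocM_4^{\CY}$; embedding the family of surfaces in $\bP^3$ near the central fibre and applying Lemma~\ref{lem:quadric-deform}, we conclude that $X$ is a quadric surface (and $X\not\cong S_{2A_5}$, consistent with the $S_{2A_5}$-locus being closed in $\ocM_4^{\CY}$ as in the proof of Proposition~\ref{prop:CY-forget-flat}), so $D$ is a $(2,3)$ complete intersection curve with $[D]\in\sX$. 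Now $(\PuP,\tfrac23 C_D)$ is one of exactly three type~II polystable points of $\ocM_4^{\CY}$ by Remark~\ref{rmk:3-polystable-pts}, the other two being $(S_{2A_5},\tfrac23 C_{2A_5}^\circ)$ and $(\bP(1,2,3)\cup\bP(1,1,3),\tfrac23 C_{1,0}^\circ)$. If the Chow polystable limit $\phi([D])$ were $[C_{2A_5}]$, or lay on $\Gamma$, then by the constructions of this section (Lemma~\ref{lem:Kpolystable-D4}, Lemma~\ref{lem:KSBA-D4}, and the analysis around Theorem~\ref{thm:lastVGITwall}) the pair $(X,\tfrac23 D)$ would instead be S-equivalent to the first, resp.\ the third, of these type~II pairs, contradicting the fact that the three are non-isomorphic; the incompatibility with $[C_{2A_5}]$ can also be seen directly, since a $D_4$ singularity is not adjacent to an $A_5$ singularity, so a curve specializing to $C_D$ cannot carry a non-separating $A_5$. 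Hence $\phi([D])\in\fX^{\rm s}\cup\{[C_D]\}$, so $[D]\in\sV_2$ and $(X,\tfrac23 D)\in\sV_2^{\CY}$. Combining this with the previous paragraph via the saturatedness criterion completes the proof. The delicate points to be checked carefully are the classification of the basin of attraction of $(\PuP,\tfrac23 C_D)$ in $\ocM_4^{\CY}$, using the type~I/II trichotomy of $\ocM_4^{\CY}$ together with Lemmas~\ref{lem:Kpolystable-D4}--\ref{lem:KSBA-D4} and Theorem~\ref{thm:lastVGITwall}, and the precise form of the embedded-surface degeneration argument invoking Lemma~\ref{lem:quadric-deform}.
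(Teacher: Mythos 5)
Your overall approach matches the paper's: transport $\sV_2$ to $\sV_2^{\CY}=\Psi^{-1}(\sV_2)$ via the forgetful map, deduce smoothness from Theorem \ref{thm:bpCY-is-smooth}, and check saturatedness by controlling the S-equivalence classes of closed points, with Lemma \ref{lem:quadric-deform} used to force any pair S-equivalent to $(\PuP,\tfrac{2}{3}C_D)$ onto a quadric. The use of Lemma \ref{lem:polystable-CD-pair} and of klt $\Rightarrow$ K-stability (Proposition \ref{prop:klt-is-stable}) for identifying the closed points is also correct, and the parenthetical observation supplying condition (c) of your saturatedness criterion via $\sV_2$ being saturated in $\sX$ is the right way to make that criterion rigorous.

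The issue is in the step you yourself flag as the crux. After concluding that $X$ is a quadric and $[D]\in\sX$, you rule out $\phi([D])\in\Gamma\cup\{[C_{2A_5}]\}$ by contradiction against the pairwise non-isomorphy of the three type II polystable pairs (Remark \ref{rmk:3-polystable-pts}). The contradiction needs the implication that $\phi([D])=[C_{2A_5}]$ (resp.\ $\phi([D])\in\Gamma$) would make $(X,\tfrac23 D)$ S-equivalent to $(S_{2A_5},\tfrac23 C_{2A_5}^\circ)$ (resp.\ to the $\bP(1,2,3)\cup\bP(1,1,3)$ cusp). That implication is true, but the references you give do not carry it: Lemmas \ref{lem:Kpolystable-D4} and \ref{lem:KSBA-D4} concern pairs degenerating to $(\PuP,\tfrac23 C_D)$ only, and Theorem \ref{thm:lastVGITwall} is a statement about the VGIT model $\oM_4(\tfrac59-\epsilon)$, not about bpCY degenerations. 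To close your argument you would need to lift the Chow degeneration of $D$ to $C_{2A_5}$ into a bpCY degeneration of $(X,\tfrac23 D)$ through $(\bP^1\times\bP^1,\tfrac23 C_{2A_5})$ and then apply Proposition \ref{prop:S2A5polystable} (and the analogous degeneration for $\Gamma$). Note that this lifting is exactly the same kind of step as the more direct route: embed the already-given bpCY degeneration of $(X,\tfrac23 D)$ to $(\PuP,\tfrac23 C_D)$ in $\bP^3$, obtain a $\PGL(4)$-equivariant degeneration of $D$ to $C_D$, and conclude $\phi([D])=[C_D]$ at once from Chow polystability of $[C_D]$; so the contradiction route adds a detour without avoiding the delicate lifting. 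Finally, the ``can also be seen directly'' adjacency aside is off: a curve with $\phi([D])=[C_{2A_5}]$ generically carries a \emph{separating} $A_5$, not a non-separating one (cf.\ Remark \ref{rmk:separating} and the footnote to Theorem \ref{thm:lastVGITwall}), and since $C_D$ is reducible its $D_4$ singularities need not arise as limits of any particular singularity of $D$, so Milnor-number/adjacency bookkeeping alone is not conclusive.
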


\begin{proof}
By definition, $\sV_2 \subset \sV$ so $\sV_2 \subset \sW$.  Let $\sV_2^{\CY}$ be its isomorphic preimage under the forgetful map $\Psi: \sW^{\CY} \to \sW$.  By Lemma \ref{lem:quadric-deform}, being contained in a quadric is an open condition, so this is an open substack of $\ocM_4^{\CY}$.  Furthermore, by Remark \ref{rmk:GITstable-in-K-and-KSBA}, all points of $\sV_2$ such that $\phi([D]) \ne [C_D]$ have polystable preimage in $\sV_2^{\CY}$.  If $[D]$ is a point of $\sV_2$ such that $\phi([D]) = [C_D]$, then the corresponding preimage $[(X,\frac{2}{3}D)] \in \sV_2^{\CY}$ admits a special degeneration to the polystable point $[(\PuP, \frac{2}{3}C_D)]$ as described in Proposition \ref{prop:D4replacement}, which is a point of $\sV_2^{\CY}$.  By Lemma \ref{lem:quadric-deform}, $\sV_2^{\CY}$ is saturated in $\ocM_4^{\CY}$.  Smoothness follows from Theorem \ref{thm:bpCY-is-smooth}. 
\end{proof}

\begin{defn}\label{def:V2+}
Let $\sV_2^{\KSBA} := \sV_2^{\CY} \cap \ocM_4^{\KSBA}$ be the corresponding saturated open substack in the KSBA moduli stack $\ocM_4^{\KSBA}$.  Let $\sV_2^+ := \Psi(\sV_2^{\KSBA})$ be its isomorphic image under the forgetful map.
\end{defn}

\begin{prop}\label{prop:gms-for-V2} The open immersion $\sV_2^+\hookrightarrow \sV_2$ descends to a morphism of good moduli spaces $\psi_2^+:\fV_2^+ \to \fV_2$ which is a proper small birational morphism contracting the locus of curves in Lemma \ref{lem:KSBA-D4} to the point $[C_D]$.
\end{prop}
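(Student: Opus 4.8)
The plan is to follow the same strategy as in the proof of Proposition~\ref{prop:gms-for-V1}, but with the KSBA moduli stack replacing both the $\CY$ and K-moduli stacks, since here the wall crossing only happens on the KSBA side. First I would note that Definition~\ref{def:V2+} gives the open immersion $\sV_2^{\KSBA}\hookrightarrow \sV_2^{\CY}$ of saturated open substacks of $\ocM_4^{\KSBA}$ and $\ocM_4^{\CY}$ respectively. Applying the bpCY wall crossing diagram of Theorem~\ref{thm:bpCYwallcrossing} and restricting to these saturated open substacks, the good moduli space morphisms $\ocM_4^{\KSBA}\to\oM_4^{\KSBA}$ and $\ocM_4^{\CY}\to\oM_4^{\CY}$ restrict to good moduli spaces $\sV_2^{\KSBA}\to V_2^{\KSBA}$ and $\sV_2^{\CY}\to V_2^{\CY}$, where $V_2^{\KSBA}\to V_2^{\CY}$ is induced by base change from the projective birational morphism $\oM_4^{\KSBA}\to\oM_4^{\CY}$ and hence is proper. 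Via the isomorphisms $\sV_2^{\KSBA}\cong\sV_2^+$ and $\sV_2^{\CY}\cong\sV_2$ induced by the forgetful map $\Psi$ (Theorem~\ref{thm:CY-forget-open}), these descend to the claimed good moduli spaces $\fV_2^+$ and $\fV_2$ and a proper birational morphism $\psi_2^+:\fV_2^+\to\fV_2$.

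Next I would identify the exceptional locus. By Theorem~\ref{thm:CMJL-Chow-GIT} and the definition $\sV_2 = \phi^{-1}(\fX^{\rm c}\setminus(\Gamma\cup\{[C_{2A_5}]\}))$, the only point of $\fV_2$ (equivalently of $\oM_4^{\CY}$ via $\Psi$) over which $\psi_2^+$ can fail to be an isomorphism is the polystable point $[(\PuP,\frac{2}{3}C_D)]$ corresponding to $[C_D]$: on the complement of this point every pair in $\sV_2^{\CY}$ is either klt or degenerates to $[(\PuP,\frac23 C_D)]$ only through $[C_D]$ itself, so by Proposition~\ref{prop:klt-is-stable} (or Remark~\ref{rmk:GITstable-in-K-and-KSBA}) the KSBA and $\CY$ moduli coincide there, making $\psi_2^+$ an isomorphism away from $\psi_2^{+,-1}([C_D])$. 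By Proposition~\ref{prop:D4replacement}, the fiber $\psi_2^{+,-1}([C_D])$ is exactly the codimension three locus of pairs described in Lemma~\ref{lem:KSBA-D4}, namely the elliptic triboroughs with $A_{\le 3}$ singularities on each elliptic component. Since this locus has codimension $\ge 2$ in the normal variety $\fV_2^+$ (normality following from smoothness of $\sV_2^{\KSBA}$ via Theorem~\ref{thm:bpCY-is-smooth}, or from $\fV_2^+$ being an open subset of the quotient space $\oM_4^{\KSBA}$), $\psi_2^+$ is a small morphism.

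The routine parts — existence and descent of good moduli spaces under the forgetful isomorphism, properness by base change, birationality since all these stacks share the dense open $\cM_4$ — follow formally as in Proposition~\ref{prop:gms-for-V1}. The one point requiring a little care, which I expect to be the main (though minor) obstacle, is confirming that $\psi_2^+$ is an isomorphism on \emph{all} of $\fV_2\setminus\{[C_D]\}$ rather than merely birational there: one must use Theorem~\ref{thm:CMJL-Chow-GIT} together with Proposition~\ref{prop:D4replacement} to rule out any further $D_4$- or worse degenerations inside $\sV_2$, and then invoke Zariski's Main Theorem to upgrade the finite birational morphism over that open set to an isomorphism. The codimension-three statement for the exceptional locus is immediate from Lemma~\ref{lem:KSBA-D4}, which also yields smallness at once.
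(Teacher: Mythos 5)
Your proposal is correct and follows essentially the same strategy as the paper's own proof: identify $\sV_2$ and $\sV_2^+$ with saturated open substacks of $\ocM_4^{\CY}$ and $\ocM_4^{\KSBA}$ via the forgetful map of Theorem \ref{thm:CY-forget-open}, obtain $\psi_2^+$ by restricting the proper morphism $\oM_4^{\KSBA}\to\oM_4^{\CY}$, conclude birationality from the dense klt locus, and read off the exceptional locus from Proposition \ref{prop:D4replacement} and Lemma \ref{lem:KSBA-D4}. The only stylistic difference is that you spell out the Zariski's Main Theorem step showing the map is an isomorphism away from $[C_D]$, and the normality needed for smallness, which the paper leaves implicit; both are correct and unproblematic additions.
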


\begin{proof}
    The existence of the good moduli spaces and morphism $\psi_2^+: \fV_2^+ \to \fV_2$ follows from the isomorphisms of $\sV_2$ and $\sV_2^+$ with saturated open substacks of $\ocM_4^{\CY}$ and $\ocM_4^{\KSBA}$.  This morphism is birational because it is an isomorphism on the dense klt locus and proper because it is constructed as the base change of the proper morphism $\oM_4^{\KSBA} \to \oM_4^{\CY}$.  Finally, the exceptional locus of $\psi_2^+$ is as described in Lemma \ref{lem:KSBA-D4} parameterizing elliptic triboroughs which get mapped to the single point $[(V(x_0x_3), \frac{2}{3}C_D)]$ under $\psi_2^+$, which has codimension three.  
\end{proof}

\begin{defn}\label{def:newmodels}
For $\alpha \in [\frac{5}{9}, \frac{2}{3})$, we define 
\[
\ocM_4(\alpha):= \begin{cases}
    \sX & \textrm{ if }\alpha = \frac{5}{9};\\
    \sU^+\cup \sV_1^{-} \cup \sV_2^+ & \textrm{ if }\alpha\in (\frac{5}{9},\frac{19}{29});\\
    \sU^+\cup \sV_1 \cup \sV_2^+ & \textrm{ if }\alpha = \frac{19}{29};\\
    \sU^+\cup \sV_1^{+} \cup \sV_2^+ & \textrm{ if }\alpha\in (\frac{19}{29},\frac{2}{3}).
\end{cases}
\]
A proper curve $C$ over $\bk'$ is called \emph{$\alpha$-stable} if $C$ is isomorphic to a curve represented by a $\bk'$-point in  $\ocM_4(\alpha)$. 
\end{defn}

\begin{thm}\label{thm:HK-stack}
For each $\alpha \in [\frac{5}{9}, \frac{2}{3})$, the stack $\ocM_4(\alpha)$ is smooth, algebraic, of finite type over $\bk$ with affine diagonal. There exists an open immersion $\ocM_4(\alpha) \hookrightarrow \Curves_4^{\lci}$. Moreover, every $\alpha$-stable curve has an ample canonical bundle.
\end{thm}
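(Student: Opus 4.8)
The plan is to prove the four assertions of Theorem~\ref{thm:HK-stack} by assembling the local pieces $\sU^+$, $\sV_1^{\pm}$ (or $\sV_1$), $\sV_2^+$ using the gluing criterion for good moduli spaces together with the structural results already established for each piece. First I would record that each of the open substacks appearing in Definition~\ref{def:newmodels} has already been identified, via an explicit open immersion, with an open substack of one of $\sX$, $\ocM_4^{\K}$, $\ocM_4^{\CY}$, $\ocM_4^{\KSBA}$, and hence of $\Curves_4^{\lci}$: indeed $\sX \hookrightarrow \Curves_4^{\lci}\cap\Curves_{4,1}$ by Theorem~\ref{thm:sX-gms}, $\sU^+$ is an open substack of $\ocM_4(\tfrac{2}{3}-\epsilon)\subset\Curves_4^{\lci+}$ by Proposition~\ref{prop:U+open}, and $\sV_1,\sV_1^{\pm},\sV_2^+$ are images under the open immersion $\Psi:\sW^{\CY}\to\Curves_4^{\lci+}$ of Theorem~\ref{thm:CY-forget-open}. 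Thus each $\ocM_4(\alpha)$ in Definition~\ref{def:newmodels} is a union of open substacks of $\Curves_4^{\lci}$, so it \emph{is} an open substack of $\Curves_4^{\lci}$ provided these open substacks are compatible on overlaps; but they all sit inside the single ambient stack $\Curves_4^{\lci}$ (the forgetful maps to $\Curves_4^{\Gor}$ all being the literal "underlying curve" functor), so the union makes sense as an open substack. This simultaneously gives the open immersion $\ocM_4(\alpha)\hookrightarrow\Curves_4^{\lci}$, algebraicity and local finite type, and the affine diagonal (inherited from $\Curves_{4,1}$, on which $\omega_C$ ample forces affine diagonal; note each piece parameterizes curves with $\omega_C$ ample, so lies in $\Curves_{4,1}$).

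Next I would establish smoothness. Each of the three constituent substacks is smooth: $\sU^+\subset\ocM_4(\tfrac{2}{3}-\epsilon)$ which is an open substack of $\Curves_4^{\lci+}$, smooth by \cite{stacksproject} Tag 0DZT; $\sV_1$ and $\sV_1^{\pm}$ are open in the smooth stack $\ocM_4^{\CY}$ by Theorem~\ref{thm:bpCY-is-smooth} (and $\sV_1^-$ open in $\ocM_4^{\K}$, also smooth there); $\sV_2^+$ is open in $\ocM_4^{\KSBA}$, which is a smooth Deligne--Mumford stack. Since smoothness is local, the glued stack $\ocM_4(\alpha)$ is smooth. For the case $\alpha=\tfrac{5}{9}$, $\ocM_4(\tfrac59)=\sX$ is smooth by Theorem~\ref{thm:sX-gms} directly. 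The finite-type claim follows since each piece is of finite type (being open in a finite-type stack, or using Lemma~\ref{lem:g,1-ft} for $\sX$) and there are finitely many pieces.

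The assertion that every $\alpha$-stable curve has ample canonical bundle is then immediate: every curve parameterized by any of $\sX$, $\ocM_4(\tfrac23-\epsilon)$, or $\sW=\Psi(\sW^{\CY})$ has ample $\omega_C$. For $\sX$ this is part of Definition~\ref{def:sX_g}; for $\sU^+\subset\ocM_4(\tfrac23-\epsilon)$ it is part of Definition~\ref{def:HKsing}; for the pieces coming from $\sW^{\CY}$, each such curve $D$ satisfies $2D\sim -3K_X$ (by Lemma~\ref{lem:D-lci-on-S2A5} when $X\cong S_{2A_5}$, and $D\sim\cO_X(3)$ when $X$ is a quadric), so $\omega_D\cong\cO_D(K_X+D)$ which is the restriction of an ample divisor to the one-dimensional $D$ and hence ample. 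I would spell this out slightly: $K_X+D$ is $\bQ$-Cartier, $3(K_X+D)=3K_X+3D\sim 3D-\tfrac32\cdot 2D = \tfrac32 D$... actually cleaner to use $2(K_X+D)\sim -3K_X+2D+... $; the honest statement is $K_X+cD\sim_\bQ 0$ with $c=\tfrac23$, so $K_X+D\sim_\bQ (1-c)D=\tfrac13 D$, which is ample since $D$ is ample on $X$ (it is an ample Weil divisor by the bpCY condition), hence $\omega_D$ is ample by adjunction.

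\textbf{Main obstacle.} The genuinely delicate point is that $\ocM_4(\alpha)$ is a \emph{well-defined} open substack of $\Curves_4^{\lci}$ and not merely a formal union — i.e., that the open immersions $\sU^+\hookrightarrow\Curves_4^{\lci}$, $\sV_1^{(\pm)}\hookrightarrow\Curves_4^{\lci}$, $\sV_2^+\hookrightarrow\Curves_4^{\lci}$ glue to a single open substack whose overlaps are the expected ones (e.g. $\sU^+\cap\sV_1^-$, $\sU^+\cap\sV_2^+$, $\sV_1^-\cap\sV_2^+$ each equal the Chow-stable locus $\sX^{\rm s}$ up to the relevant identifications). This requires checking that the forgetful maps to $\Curves_4^{\lci}$ from the different moduli-of-pairs sources agree on the common curves, which follows because in each case the map is literally $[(X,D)]\mapsto[D]$ and $\sX^{\rm s}$ embeds compatibly into $\sX$, $\sW^{\K}$, $\sW^{\KSBA}$, $\sW^{\CY}$ by Remark~\ref{rmk:GITstable-in-K-and-KSBA} and the saturation constructions of Definitions~\ref{def:sV1andsV1+} and \ref{def:V2+}; one also needs that $\sU^+\cap\sV$ (computed inside $\sX$) maps isomorphically onto the corresponding locus inside $\sW$, which is Proposition~\ref{prop:59-ewall} combined with $\sU^+\subset\sX$. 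So the proof is essentially bookkeeping: identify all the overlaps with $\sX^{\rm s}$, invoke that a finite union of open substacks of a fixed stack is again an open substack, and then read off smoothness, finite type, affine diagonal, and ampleness of $\omega_C$ piece by piece.
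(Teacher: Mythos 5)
Your proposal is correct and takes essentially the same route as the paper. The paper streamlines the bookkeeping by observing that for every $\alpha$ in the range, $\ocM_4(\alpha)$ is an open substack of $\sX \cup \sW$, and that $\sX$ (Theorem~\ref{thm:sX-gms}) and $\sW$ (Theorem~\ref{thm:CY-forget-open}) are each already known to be smooth, finite-type open substacks of $\Curves_4^{\lci}$ with affine diagonal, so all the properties are inherited at once rather than verified piece by piece for $\sU^+$, $\sV_1^{(\pm)}$, $\sV_2^+$; the adjunction argument for ampleness of $\omega_C$ is then the same calculation you give.
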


\begin{proof}
From the definitions above we know that for each $\alpha \in [\frac{5}{9}, \frac{2}{3})$, the stack $\ocM_4(\alpha)$ admits an open immersion into $\sX \cup \sW$. By Theorems \ref{thm:sX-gms} and \ref{thm:CY-forget-open}, we know that both $\sX$ and $\sW$ are smooth open substacks of $\Curves_{4}^{\lci}$ of finite type over $\bk$ with affine diagonal, which implies the first and second statements on $\ocM_4(\alpha)$. For the last statement, notice that if $[C]\in \sX$ then $C$ has ample and basepoint free canonical bundle by definition; if $[C]\in \sW$ then there exists a surface $X$ containing $C$ as a Cartier divisor such that $[(X, \frac{2}{3}C)]\in \sW^{\CY}\subset \ocM_4^{\CY}$, which implies that $\omega_C^{\otimes 2} \sim_{\bQ} 2(K_X + C)|_C \sim -K_X|_C$ is ample by adjunction. 
\end{proof}

\begin{thm}\label{thm:new-models}
For each $\alpha \in [\frac{5}{9}, \frac{2}{3})$, the stack $\ocM_4(\alpha)$ admits a proper good moduli space which we denote by $\ofM_4(\alpha)$. Moreover, we have a  diagram of open immersions of stacks
\[
\ocM_4(\tfrac{5}{9})\hookleftarrow\ocM_4(\tfrac{5}{9},\tfrac{19}{29}) \hookrightarrow \ocM_4(\tfrac{19}{29}) \hookleftarrow \ocM_4(\tfrac{19}{29},\tfrac{2}{3}),
\]
which induces a wall crossing diagram of birational morphisms:
\[
\fX^{\rm c}\cong \ofM_4(\tfrac{5}{9})\xleftarrow{j_5^+}\ofM_4(\tfrac{5}{9},\tfrac{19}{29}) \xrightarrow[\cong]{j_4^-} \ofM_4(\tfrac{19}{29}) \xleftarrow{j_4^+} \ofM_4(\tfrac{19}{29},\tfrac{2}{3}). 
\]
The wall crossing morphisms have the following properties. 
\begin{enumerate}
    \item The exceptional locus of $j_5^+$ parameterizes elliptic triboroughs as described in the second half of Proposition \ref{prop:D4replacement}, which get mapped to the point $[C_D]$ in $\ofM_4(\tfrac{5}{9})$, the hyperelliptic curves parameterized by $\fH_4^{+}$  which get mapped to $[R_{\hyp}]$ in $\ofM_4(\tfrac{5}{9})$, and the curves in Proposition \ref{prop:A3-cone} with $h_4(x,y) \ne 0$ which get mapped to the appropriate point $[C_{A,B}]$ in $\ofM_4(\tfrac{5}{9})$.
    \item The isomorphism $j_4^{-}$ identifies the point  $[C_{2A_5}]$ with the point  $[C_{2A_5}^\circ]$.
    \item The exceptional locus of $j_4^+$ is a $\bQ$-Cartier prime divisor as a birational transform of $\delta_2$ and parameterizes unions of genus 2 hyperelliptic curves glued at a non-Weierstrass point with at worst $A_4$ singularities as described in Proposition \ref{prop:eqnofcurveonS2A5}, which get mapped to the point $[C_{2A_5}^\circ]$ by $j_4^+$.
\end{enumerate}

\end{thm}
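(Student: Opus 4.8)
The plan is to assemble $\ofM_4(\alpha)$ and the entire wall-crossing diagram by gluing the three local models $\sU^+$, $\sV_1'$, $\sV_2^+$ of Section~\ref{sec:newmodelsforHK} (where $\sV_1'$ denotes $\sV_1^-$, $\sV_1$, or $\sV_1^+$ according to whether $\alpha<\tfrac{19}{29}$, $\alpha=\tfrac{19}{29}$, or $\alpha>\tfrac{19}{29}$), via the gluing criterion Theorem~\ref{thm:gms-glue}, and then to read off all the asserted properties from the local descriptions proved earlier. For the existence of the good moduli space: when $\alpha=\tfrac59$ this is Theorem~\ref{thm:sX-gms}. For $\alpha\in(\tfrac59,\tfrac23)$, each of $\sU^+$, $\sV_1'$, $\sV_2^+$ admits a good moduli space (Corollary~\ref{cor:U+gms}, Proposition~\ref{prop:gms-for-V1}, Proposition~\ref{prop:gms-for-V2}), and $\ocM_4(\alpha)$ is their union by Definition~\ref{def:newmodels}. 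To invoke Theorem~\ref{thm:gms-glue} I would check that each pairwise intersection is saturated in each factor, which I expect to follow from the identity that every pairwise intersection equals the Chow stable stack $\sX^{\rm s}$. Indeed $\sX^{\rm s}=\sU\cap\sV$ (Definition~\ref{def:sUsV}) and $\sX^{\rm s}\subset\sU^+,\sV_1',\sV_2^+$ since Chow stable curves give klt pairs (Remark~\ref{rmk:GIT-stable-properties}); conversely any curve in $\sU^+\cap\sV_1'$ lies in $\sX$ with at worst $A_4$ singularities, hence is Chow stable by Theorem~\ref{thm:CMJL-Chow-GIT}, and any curve in $\sV_1'\cap\sV_2^+$ corresponds to a pair whose polystable degeneration avoids the three type~II points, hence is klt, hence Chow stable by Remark~\ref{rmk:GIT-stable-properties}. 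Since $\sX^{\rm s}$ is Deligne--Mumford with coarse space $\fX^{\rm s}$, every point of $\sX^{\rm s}$ is closed in $\sX$, hence closed in any open substack containing it; and an open substack all of whose points are closed is saturated (generizations stay in an open substack). Theorem~\ref{thm:gms-glue} then yields $\ofM_4(\alpha)$ with open subspaces $\fU^+,\fV_1',\fV_2^+$.

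\emph{The wall-crossing diagram.} The stack-level open immersions come directly from the definitions: $\sU^+,\sV_1^-,\sV_2^+$ are open substacks of $\sX$, giving $\ocM_4(\tfrac59,\tfrac{19}{29})\hookrightarrow\ocM_4(\tfrac59)$, while the open immersions $\sV_1^-\hookrightarrow\sV_1\hookleftarrow\sV_1^+$ of Proposition~\ref{prop:gms-for-V1} (with $\sU^+$, $\sV_2^+$ unchanged) give the remaining two. On good moduli spaces, the morphisms $\fU^+\to\fU$ (Corollary~\ref{cor:U+toU}), $\fV_1'\to\fV_1$ (Proposition~\ref{prop:gms-for-V1}), $\fV_2^+\to\fV_2$ (Proposition~\ref{prop:gms-for-V2}) all restrict to the identity over $\fX^{\rm s}$, so by the sheaf property of morphisms out of $\ofM_4(\alpha)$ they combine into birational morphisms $j_5^+\colon\ofM_4(\tfrac59,\tfrac{19}{29})\to\fX^{\rm c}$, $j_4^-\colon\ofM_4(\tfrac59,\tfrac{19}{29})\to\ofM_4(\tfrac{19}{29})$, and $j_4^+\colon\ofM_4(\tfrac{19}{29},\tfrac23)\to\ofM_4(\tfrac{19}{29})$. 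That $j_4^-$ is an isomorphism is because $\psi_1^-\colon\fV_1^-\to\fV_1$ is one (Proposition~\ref{prop:gms-for-V1}) and the other two pieces are literally the same; its identification of $[C_{2A_5}]$ with $[C_{2A_5}^\circ]$ is exactly that these are the points of $\fV_1^-$ and $\fV_1$ matched by $\psi_1^-$, which is property~(2).

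\emph{Properness and exceptional loci.} Since $\fU=\fX^{\rm c}\setminus\{[C_{2A_5}],[C_D]\}$, $\fV_1=\fX^{\rm c}\setminus(\Gamma\cup\{[C_D]\})$, $\fV_2=\fX^{\rm c}\setminus(\Gamma\cup\{[C_{2A_5}]\})$ form an open cover of $\fX^{\rm c}$ and each of $\fU^+\to\fU$, $\fV_1'\to\fV_1$, $\fV_2^+\to\fV_2$ is proper, properness being local on the target the glued morphism $\ofM_4(\alpha)\to\fX^{\rm c}$ is proper; as $\fX^{\rm c}$ is proper, so is $\ofM_4(\alpha)$ (equivalently, $\ocM_4(\alpha)$ satisfies the existence part of the valuative criterion for properness, Theorem~\ref{thm:ahlh}). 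The exceptional locus of $j_5^+$ decomposes according to the cover: over $\fV_1$ the map $\psi_1^-$ is an isomorphism and contributes nothing; over $\fU$, Corollary~\ref{cor:U+toU} gives the hyperelliptic curves of $\fH_4^+$ contracted to $[R_{\hyp}]$ and (via Proposition~\ref{prop:A3-cone}) the $A_3$-at-the-vertex curves contracted to the appropriate $[C_{A,B}]$; over $\fV_2$, Propositions~\ref{prop:gms-for-V2} and~\ref{prop:D4replacement} give the elliptic triboroughs contracted to $[C_D]$ --- this is property~(1). Property~(3) is Proposition~\ref{prop:gms-for-V1} combined with Proposition~\ref{prop:eqnofcurveonS2A5}: the exceptional divisor of $j_4^+$ is that of $\psi_1^+$, a $\bQ$-Cartier prime divisor birational to $\delta_2$ parameterizing unions of two genus-two hyperelliptic curves glued at a non-Weierstrass point with at worst $A_4$ singularities, contracted to $[C_{2A_5}^\circ]$.

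\emph{Main obstacle.} The only substantial work is the identification of the pairwise intersections of the three local models inside $\Curves_4^{\lci}$ in the first step: one must verify that no non-closed point, no curve supported on $S_{2A_5}$ or on a reducible quadric, and no $C_{A,B}$-type curve or hyperelliptic ribbon sneaks into an overlap, so that each overlap is precisely $\sX^{\rm s}$. Granting this, saturatedness of the overlaps and the compatibility of the three morphisms to $\fX^{\rm c}$ are both formal, because over $\fX^{\rm s}$ all three models and all the maps between them are canonically the identity.
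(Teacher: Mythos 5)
Your proposal follows essentially the same route as the paper's proof: glue the local models $\sU^+$, $\sV_1'$, $\sV_2^+$ via Theorem~\ref{thm:gms-glue} after checking that each pairwise intersection equals the saturated stack $\sX^{\rm s}$, build the wall-crossing morphisms from the local good moduli space morphisms, deduce properness of the glued spaces from the local properness over the open cover $\fU\cup\fV_1\cup\fV_2 = \fX^{\rm c}$, and read off the exceptional loci from Propositions~\ref{prop:gms-for-V1}, \ref{prop:gms-for-V2} and Corollary~\ref{cor:U+toU}. The paper's proof follows precisely this structure, and you have correctly flagged the identification of the pairwise intersections with $\sX^{\rm s}$ as the crux.

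One step in your verification of that crux is slightly off. You claim that a curve in $\sU^+\cap\sV_1'$ ``lies in $\sX$ with at worst $A_4$ singularities, hence is Chow stable by Theorem~\ref{thm:CMJL-Chow-GIT}.'' This inference is not valid as stated: Theorem~\ref{thm:CMJL-Chow-GIT} shows that a curve on a rank-$3$ quadric with $A_3$ or $A_4$ at the cone vertex (and $A_{\leq 4}$ elsewhere) is Chow strictly semistable, and Proposition~\ref{prop:U+closed} confirms such curves do occur in $\sU^+$. The conclusion $\sU^+\cap\sV_1'=\sX^{\rm s}$ is nonetheless correct, but one must rule out these cone-vertex curves on the $\sV_1'$-side rather than the $\sU^+$-side: an $A_4$ at the vertex makes $(Q,\tfrac{2}{3}C)$ fail to be lc (pulling back to an $A_7$ on the double cover gives lct $\tfrac58<\tfrac23$), so the pair is not even in $\ocM_4^{\CY}$; an $A_3$ at the vertex gives lct exactly $\tfrac23$, so the pair is lc but not klt and its bpCY polystable degeneration lands at $[(\bP(1,2,3)\cup\bP(1,1,3),\tfrac23 C^\circ_{1,0})]$, which lies outside the image of $V_1^{\K}$, and the pair also fails KSBA stability. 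Either way, such a curve is excluded from each of $\sV_1^-$, $\sV_1$, $\sV_1^+$. Your ``main obstacle'' paragraph mentions ruling out $C_{A,B}$-type curves and ribbons, but the real work is ruling out their generizations with $A_3$ or $A_4$ at the cone vertex, which is where the singularity-class argument alone fails. With this repair, your plan matches the paper's.
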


\begin{proof}
When $\alpha = \frac{5}{9}$, the existence of a proper good moduli space isomorphic to $\fX^{\rm c}$ follows from Theorem \ref{thm:sX-gms}. Thus we shall focus on showing the existence of proper good moduli spaces for $\alpha \in (\frac{5}{9}, \frac{2}{3})$.
By Propositions \ref{prop:gms-for-V1}, \ref{prop:gms-for-V2} and Corollary \ref{cor:U+gms}, each open substack $\sU^+$, $\sV_1^-$, $\sV_1$, $\sV_1^+$, $\sV_2$, and $\sV_2^+$ of $\Curves_4^{\Gor}$ admits a good moduli space.  Furthermore, their pairwise intersections are the Chow stable locus $\sX^{\rm s}$, which is saturated, and thus the induced good moduli spaces agree on their intersections.  Therefore, we may construct a good moduli space $\ofM_4(\alpha)$ of their union by gluing the associated good moduli space morphisms by Theorem \ref{thm:gms-glue}.  

 The diagram on open immersions of stacks $\ocM_4(\alpha)$ follows directly from Definition \ref{def:newmodels} and definitions of relevant stacks $\sX$, $\sU^+$, $\sV_1$, $\sV_1^{\pm}$, $\sV_2$, and $\sV_2^+$. By Theorem \ref{thm:gms-universal} this descends to a diagram for good moduli spaces $\ofM_4(\alpha)$. 

Next, we show properness of $\ofM_4(\alpha)$, which will follow by properness of $\fX^{\rm c}$ and properness of the wall crossing morphisms in Theorem \ref{thm:X+gms}, Proposition \ref{prop:gms-for-V1}, and Proposition \ref{prop:gms-for-V2} and two general remarks on properness: 

\begin{enumerate}[label=(\roman*)]
    \item If $Y$ is proper and $X \to Y$ is a proper morphism, then $X$ is proper \cite[\href{https://stacks.math.columbia.edu/tag/01W3}{Tag 01W3}]{stacksproject}.
    \item If $X$ is proper and $X \to Y$ is a proper surjective morphism such that $Y$ is locally of finite type, then $Y$ is proper \cite[\href{https://stacks.math.columbia.edu/tag/03GN}{Tag 03GN}, \href{https://stacks.math.columbia.edu/tag/09MQ}{Tag 09MQ}]{stacksproject}. 
\end{enumerate}
 The wall crossing morphisms for $\fV_i^{\pm}$ are proper by Propositions \ref{prop:gms-for-V1}, \ref{prop:gms-for-V2}, and the wall crossing morphism $\fU^+ \to \fU$ is proper as it is the base change of the proper morphism $\fX^+ \to \fX^{\rm c}$ from the proof of Theorem \ref{thm:X+gms}.  These  morphisms are surjective on $\bk$-points by construction, so surjective by \cite[\href{https://stacks.math.columbia.edu/tag/0487}{Tag 0487}]{stacksproject}.  The good moduli spaces $\fV_1^-$, $\fV_1$, $\fV_1^+$, $\fV_2$, and $\fV_2^+$ are  of finite type as they are isomorphic to subspaces of the projective varieties $\oM_4^{\K}$, $\oM_4^{\CY}$, and $\oM_4^{\KSBA}$ and $\fU$, $\fU^+$ are  of finite type as they are contained in the proper good moduli spaces $\fX^{\rm c}$, $\fX^+$.  Therefore, as each $\ofM_4(\alpha)$ is obtained by wall crossing from the proper moduli space $\fX^{\rm c}$, each $\ofM_4(\alpha)$ is proper.  

 The description of the exceptional loci follows from Propositions \ref{prop:gms-for-V1}, \ref{prop:gms-for-V2} and Corollary \ref{cor:U+toU}.
\end{proof}

\begin{thm}\label{thm:AFS-agree}
If $\alpha\in (\frac{19}{29}, \frac{2}{3})$, then the stack $\ocM_4(\alpha)$ (resp. the good moduli space $\ofM_4(\alpha)$) is isomorphic to the stack  of $\ocM_4(\frac{2}{3}-\epsilon)$ (resp. the good moduli space $\oM_4(\frac{2}{3}-\epsilon)$) defined in \cite{AFSvdW}.  In particular, the wall crossing diagram in Theorem \ref{thm:new-models} can be extended to 
\[
\fX^{\rm c}\cong \oM_4(\tfrac{5}{9})\xleftarrow{j_5^+}\ofM_4(\tfrac{5}{9},\tfrac{19}{29}) \xrightarrow[\cong]{j_4^-} \ofM_4(\tfrac{19}{29}) \xleftarrow{j_4^+} \ofM_4(\tfrac{19}{29},\tfrac{2}{3})\cong \oM_4(\tfrac{2}{3}-\epsilon)\xrightarrow{j_3^-} \oM_4(\tfrac{2}{3}). 
\]
\end{thm}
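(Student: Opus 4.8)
The plan is to identify the stack $\ocM_4(\alpha)$ for $\alpha \in (\tfrac{19}{29}, \tfrac{2}{3})$ with the Alper--Fedorchuk--Smyth--van der Wyck stack $\ocM_4(\tfrac{2}{3}-\epsilon)$ from Definition \ref{def:HKsing}, and then observe that the latter's good moduli space is $\oM_4(\tfrac{2}{3}-\epsilon)$ by the work summarized in the preliminaries, so that the wall crossing diagram of Theorem \ref{thm:new-models} extends as claimed via the known $\alpha = \tfrac{2}{3}$ wall crossing $\oM_4(\tfrac{2}{3}-\epsilon)\xrightarrow{j_3^-}\oM_4(\tfrac{2}{3})$ of \cite{HH13, AFSvdW, AFS17, AFS17b}. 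By Theorem \ref{thm:HK-stack} both $\ocM_4(\alpha)$ and $\ocM_4(\tfrac{2}{3}-\epsilon)$ are open substacks of $\Curves_4^{\lci}$ (the latter an open substack of $\Curves_g^{\lci+}$ by Definition \ref{def:HKsing}), so to prove they are isomorphic it suffices to show they have the same $\bk$-points, i.e. that a curve $C$ is $\alpha$-stable in the sense of Definition \ref{def:newmodels} if and only if $C$ satisfies the AFSvdW constraints for $\alpha \in (\tfrac{2}{3}-\epsilon, \tfrac{2}{3})$: namely $C$ is a proper reduced Gorenstein curve of arithmetic genus $4$ with $\omega_C$ ample, at worst $A_{\le 4}$-singularities, and not containing $A_1,A_3,A_4$-attached elliptic tails, $A_1/A_1, A_1/A_4, A_4/A_4$-attached elliptic chains, or $A_1$-attached Weierstrass chains.

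The key steps, in order: First I would recall that for $\alpha \in (\tfrac{19}{29}, \tfrac{2}{3})$ we have $\ocM_4(\alpha) = \sU^+ \cup \sV_1^+ \cup \sV_2^+$, and analyze each piece. For $\sU^+$, Proposition \ref{prop:U+open} already shows $\sU^+$ is an open substack of $\ocM_4(\tfrac{2}{3}-\epsilon)$, with the precise characterization that $\sU^+$ parameterizes curves in $\sU$ with at worst $A_4$-singularities. Next I would show that $\sV_1^+$ and $\sV_2^+$ are also open substacks of $\ocM_4(\tfrac{2}{3}-\epsilon)$ by checking the singularity and chain constraints on the curves they parameterize: $\sV_1^+ = \Psi(\sV_1^{\KSBA})$ parameterizes $(2,3)$-complete intersection curves on quadrics together with the curves of Proposition \ref{prop:eqnofcurveonS2A5} on $S_{2A_5}$, which are unions of two genus two hyperelliptic curves glued at a non-Weierstrass point with at worst $A_4$-singularities --- these are $A_1/A_1$-attached genus two curves which are exactly $\delta_2$, are $\alpha$-stable for $\alpha < \tfrac{2}{3}$ since they are not elliptic chains or $A_1$-attached Weierstrass chains, and are precisely the locus that AFSvdW predict gets contracted only later at $\alpha = \tfrac{19}{29}$ (matching the bold row of Table \ref{table:singularities}); and $\sV_2^+ = \Psi(\sV_2^{\KSBA})$ contains the Chow stable locus together with the elliptic triboroughs of Lemma \ref{lem:KSBA-D4}, whose components are elliptic with at worst $A_3$-singularities and whose singularities are $D_4$ and $A_1$, all within the AFSvdW constraints (an elliptic tribrough has three nodes joining two genus one curves, so it is neither an elliptic tail, nor an elliptic chain of the prohibited attaching types, nor a Weierstrass chain). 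Then I would check the reverse containment: every AFSvdW $(\tfrac{2}{3}-\epsilon)$-stable curve lies in $\ocM_4(\alpha)$. Since AFSvdW curves have at worst $A_{\le 4}$-singularities and $\omega_C$ ample, one uses the embedding-by-$\omega_C$ or $\omega_C^{\otimes 3}$ argument of Lemma \ref{lem:g,1-ft} together with Theorem \ref{thm:CMJL-Chow-GIT} and the bpCY correspondence (Theorem \ref{thm:CY-forget-open}) to place each such curve on a quadric or on $S_{2A_5}$; the curves on a quadric of rank $\ge 3$ with $A_{\le 4}$-singularities are Chow stable or in $\sV_i^+$, and the non-quadric cases (the $\delta_2$ locus and the elliptic triboroughs) are covered by $\sV_1^+$ and $\sV_2^+$ respectively. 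Finally, having established $\ocM_4(\alpha) \cong \ocM_4(\tfrac{2}{3}-\epsilon)$, uniqueness of good moduli spaces (Theorems \ref{thm:gms-universal}, \ref{thm:gms-normal}) gives $\ofM_4(\alpha) \cong \oM_4(\tfrac{2}{3}-\epsilon)$, and the extended diagram follows by appending the known morphism $j_3^-$.

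The main obstacle I anticipate is verifying the reverse containment cleanly --- that is, showing that \emph{every} AFSvdW $(\tfrac{2}{3}-\epsilon)$-stable curve actually appears in $\sU^+ \cup \sV_1^+ \cup \sV_2^+$, since this requires knowing that all such curves either embed by their canonical bundle onto a quadric surface or onto $S_{2A_5}$, which is not completely immediate from the local singularity classification alone. One approach to circumvent a case-by-case geometric analysis is to argue more indirectly: both $\ocM_4(\alpha)$ and $\ocM_4(\tfrac{2}{3}-\epsilon)$ are open substacks of the irreducible stack $\Curves_4^{\lci+}$ containing the dense open $\cM_4$, both are smooth of finite type with affine diagonal and admit proper good moduli spaces (Theorem \ref{thm:new-models} for the former, \cite{AFSvdW, AFS17} for the latter), and the identity on $\cM_4$ extends; if one shows the open immersion $\sU^+ \hookrightarrow \ocM_4(\tfrac{2}{3}-\epsilon)$ of Proposition \ref{prop:U+open} together with the open immersions $\sV_1^+, \sV_2^+ \hookrightarrow \ocM_4(\tfrac{2}{3}-\epsilon)$ are jointly surjective, this can be reduced to a dimension/properness count on the good moduli spaces --- both proper, both containing the same dense open $M_4$, and the boundary strata matching up by Theorem \ref{thm:new-models}(1)--(3) against the AFSvdW description --- using Lemma \ref{lem:proper-algspace} to conclude the induced birational morphism of proper good moduli spaces is an isomorphism. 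The secondary subtlety is bookkeeping: one must be careful that the \emph{stack} structures (not just the coarse/good moduli spaces) agree, which is why the argument is best run at the level of $\bk$-points of open substacks of the fixed ambient stack $\Curves_4^{\lci+}$ rather than on good moduli spaces, invoking that a monomorphism of algebraic stacks which is bijective on points and an open immersion source-locally is an isomorphism.
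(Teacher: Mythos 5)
Your first half matches the paper's proof: the stack decomposition $\ocM_4(\alpha) = \sU^+ \cup \sV_1^+ \cup \sV_2^+$, the use of Proposition~\ref{prop:U+open} for $\sU^+$, and the piece-by-piece verification that curves on $\PuP$ and $S_{2A_5}$ satisfy the singularity and chain/tail constraints of Definition~\ref{def:HKsing}, together yield the one-way open immersion $\ocM_4(\alpha) \hookrightarrow \ocM_4(\tfrac{2}{3}-\epsilon)$. You also correctly identify that the reverse containment is the genuine obstacle, and that running the argument at the level of open substacks of $\Curves_4^{\lci+}$ is the right framing.

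However, the second half of your proposal has a concrete gap. You invoke Lemma~\ref{lem:proper-algspace} to conclude that ``the induced birational morphism of proper good moduli spaces is an isomorphism,'' but that lemma only shows a morphism is proper when it has proper fibers; a proper birational morphism between proper normal varieties need not be an isomorphism (it could have positive-dimensional exceptional fibers). The missing ingredient is \emph{injectivity}, and this is where the paper does the real work: it proves that the open immersion sends closed points to closed points (Proposition~\ref{prop:U+closed-preserve} for $\sU^+$, and for $\sV_1^+ \cup \sV_2^+$ a separate analysis using the DM-ness of $\ocM_4(\tfrac{2}{3}+\epsilon)$, local VGIT, and the $\alpha_{\frac{2}{3}}$-atom classification of \cite{AFSvdW} for curves on $S_{2A_5}$). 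This closed-to-closed-points step is what makes the induced map on good moduli spaces injective, hence finite (since it is also proper). The final conclusion then uses \cite[Proposition 6.4]{alper} to lift finiteness from good moduli spaces to the stack map, and since a morphism that is both finite and an open immersion is an isomorphism (Zariski's Main Theorem), one avoids the reverse containment entirely. Your proposal neither establishes closed-to-closed-points nor cites the Alper result that transfers finiteness up to the stack; without these, the ``dimension/properness count'' does not close the argument.
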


\begin{proof}
Recall that $\ocM_4(\alpha) = \sU^+ \cup \sV_1^+ \cup \sV_2^+$.  We will show that there is an open immersion $\ocM_4(\alpha) \hookrightarrow \ocM_4(\frac{2}{3} - \epsilon)$ that takes closed points to closed points and apply \cite[Proposition 6.4]{alper} to prove the isomorphism of stacks.

First, consider $\sU^+$.  By Propositions \ref{prop:U+open} and \ref{prop:U+closed-preserve}, $\sU^+$ admits an open immersion to $\ocM_4(\frac{2}{3} - \epsilon)$ that takes closed points to closed points.

Now, consider $\sV_1^+ \cup \sV_2^+$.  By definition, $\sV_1^+ \cup \sV_2^+$ is isomorphic to the open substack $\sV_1^{\KSBA} \cup \sV_2^{\KSBA} \subset \sW^{\KSBA}$.  We claim that the forgetful map $\Psi: \sW^{\KSBA} \to \Curves_4^{\lci+}$ has image contained in $\ocM_4(\frac{2}{3}-\epsilon)$.  Indeed, from Definition \ref{def:HKsing}, it suffices to verify that the curves $D$ in the pairs $[(X,D)] \in \sW^{\KSBA}$ have the required properties.  This holds for all $D$ in the Chow stable locus $\sX^{\rm s}$ by Remark \ref{rmk:GIT-stable-properties}.  If $(X,D)$ satisfies that $D\not\in \sX^{\rm s}$, then $X \cong \PuP$ or $S_{2A_5}$ and the pair admits a special degeneration to $(\PuP, C_D)$ or $(S_{2A_5}, C_{2A_5}^\circ)$, respectively.  Such pairs also have images contained in $\ocM_4(\frac{2}{3} - \epsilon)$: the singularity condition holds by Proposition \ref{prop:eqnofcurveonS2A5} and Proposition \ref{prop:D4replacement}, and the conditions on tails and chains hold from the geometry of $\PuP$ and $S_{2A_5}$.  Indeed, on $\PuP$, the curve $D = D_1 \cup D_2$ is a union of two arithmetic genus one components (one on each copy of $\bP^2$) meeting transversally at three distinct points, so by construction cannot contain an irreducible genus two curve and thus can contain no Weierstrass chains.  An elliptic tail is necessarily irreducible, so if it occurred on $\PuP$, would have to be either $D_1$ or $D_2$, but then would meet the rest of the curve at three points and thus not a tail.  Similarly, an elliptic chain could not be equal to $D_1$ or $D_2$, so would necessarily be the union of two rational curves meeting in an $A_3$ singularity, with one rational curve on each component $\bP^2$.  This contradicts the components $D_1$ and $D_2$ meeting transversally.  Therefore, there are no elliptic or Weierstrass chains or tails on $\PuP$.  
On $S_{2A_5}$, a curve $D = C_1 \cup C_2$ where each $C_i$ is an arithmetic genus 2 curve and the intersection $C_1 \cap C_2$ is a non-Weierstrass point of each $C_i$.  By construction, each $C_i$ is the strict transform of a curve of degree 6 on $\bP(1,1,3)$ not passing through the singular point, and this implies that if $C_i$ is reducible, it is the union of at most two arithmetic genus $0$ components (yet their union meets $C_{3-i}$ at only one point).  Therefore, there can be no elliptic or Weierstrass chains or tails.  In particular, the forgetful map $\Psi: \sW^{\KSBA} \to \Curves_4^{\lci+}$ has image contained in $\ocM_4(\frac{2}{3}-\epsilon)$. 

Furthermore, on $\sW^{\KSBA}$, the forgetful map is an open immersion by Theorem \ref{thm:CY-forget-open} and therefore the inclusion $\sV_1^+ \cup \sV_2^+ \to \ocM_4(\frac{2}{3}- \epsilon)$ is an open immersion.  To verify it takes closed points to closed points, observe that $\sV_1^+ \cap \sV_2^+$ is contained in $\sU^+$ and any point of $\sV_1^+ \cap \sV_2^+$ is stable and its closure in $\sV_1^+ \cup \sV_2^+$ is equivalent to its closure in $\sU^+$.  These points are then closed in $\ocM_4(\frac{2}{3}-\epsilon)$ by Proposition \ref{prop:U+closed-preserve}.  The remaining curves in $\sV_1^+ \cup \sV_2^+$ are either curves on $\PuP$ or $S_{2A_5}$.  The curves on $\PuP$ have at worst $A_3$ singularities, hence are closed in the DM stack $\ocM_4(\frac{2}{3} + \epsilon)$ (c.f. Lemma \ref{lem:2/3+DM}).  As these curves are contained in $\ocM_4(\frac{2}{3} - \epsilon)$, by local VGIT they are closed in $\ocM_4(\frac{2}{3} - \epsilon)$.  For curves on $S_{2A_5}$, write $C = C_1 \cup C_2$ as the union of the arithmetic genus 2 curves on each component of $S_{2A_5}$.  By Proposition \ref{prop:eqnofcurveonS2A5}, $C$ has at worst $A_4$ singularities.  If both components have at worst $A_3$ singularities, then the same argument applies to show $[C]$ is closed in $\ocM_4(\frac{2}{3} + \epsilon)$.  Assume then that $C_1$ has an $A_4$ singularity.  By construction, $C_1$ is an $\alpha_{\frac{2}{3}}$ atom.  The second component $C_2$ is either an $\alpha_{\frac{2}{3}}$ atom if it also has an $A_4$ singularity, or is a curve in $\ocM_{2,1}(\frac{2}{3}+\epsilon)$.  In the latter case, by Remark \ref{rmk:M21-is-DM}, $[C_2]$ is closed in $\ocM_{2,1}(\frac{2}{3}+\epsilon)$.  Therefore, in either case, $C$ is closed in $\ocM_4(\frac{2}{3} - \epsilon)$ by \cite[Definition 2.21, Theorem 2.22]{AFSvdW}.  

Finally, by gluing together $\sU^+$ and $\sV_1^+ \cup \sV_2^+$ along the Chow stable locus $\sX^{\rm s}$, we conclude the existence of an open immersion $\ocM_4(\alpha) \to \ocM_4(\frac{2}{3} - \epsilon)$ sending closed points to closed points. This implies that the induced map between the proper good moduli spaces is proper and injective and hence finite, so by \cite[Proposition 6.4]{alper}, $\ocM_4(\alpha) \to \ocM_4(\frac{2}{3} - \epsilon)$ is finite.  As it was an open immersion, we conclude $\ocM_4(\alpha) \cong \ocM_4(\frac{2}{3} - \epsilon)$ by Zariski's Main Theorem. 
\end{proof}

\subsection{Projectivity}

For $\alpha\in (\frac{5}{9}, \frac{2}{3})$, denote by $\varphi_\alpha: \ocM_4 \dashrightarrow \ofM_4(\alpha)$ the birational contraction. Denote by $\lambda_\alpha:=(\varphi_\alpha)_* \lambda$ and $\delta_{i,\alpha}:=(\varphi_\alpha)_* \delta_i$.   From the construction, we know that $\varphi_\alpha$ always contracts $\delta_1$, and it contracts $\delta_2$ if and only if $\alpha\in (\frac{5}{9}, \frac{19}{29}]$. 

\begin{defn}
For each $\alpha\in (\frac{5}{9}, \frac{2}{3})\cap \bQ$, we define a $\bQ$-divisor $\fL_\alpha$ on $\ofM_{4}(\alpha)$ as 
\[
\fL_\alpha:=\begin{cases}
13\lambda_\alpha - (2-\alpha)\delta_{0, \alpha} & \textrm{ if }\alpha \in (\frac{5}{9}, \frac{19}{29}];\\
13\lambda_\alpha - (2-\alpha)(\delta_{0, \alpha}+ \delta_{2, \alpha}) & \textrm{ if }\alpha \in (\frac{19}{29}, \frac{2}{3}).
\end{cases}
\]
\end{defn}

\begin{thm}\label{thm:projectivity}
\begin{enumerate}
    \item For  any $\alpha \in (\frac{5}{9}, \frac{2}{3})\cap \bQ$, $\fL_\alpha$ is $\bQ$-Cartier and ample on $\ofM_4(\alpha)$. 
    \item For any $\alpha \in [\frac{5}{9}, \frac{2}{3})\cap \bQ$, we have
\begin{equation} \label{eq:proj1}
\ofM_4(\alpha) \cong \oM_4(\alpha) := \Proj R(\ocM_4, K_{\ocM_4}+\alpha \delta).
\end{equation}
\end{enumerate} 
\end{thm}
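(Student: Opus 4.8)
The plan is to prove part (1) — that $\fL_\alpha$ is $\bQ$-Cartier and ample on $\ofM_4(\alpha)$ — and then to deduce part (2) by a standard section-ring comparison along the birational contraction $\varphi_\alpha$. For the $\bQ$-Cartier claim I would argue by descent: since $\ocM_4(\alpha)$ is a smooth algebraic stack of finite type with affine diagonal (Theorem \ref{thm:HK-stack}), the Hodge line bundle $\lambda$ and the boundary divisors $\delta_0,\delta_2$ are honest $\bQ$-line bundles on the stack, and a power of the combination $13\lambda-(2-\alpha)\delta_0$ (resp.\ with $\delta_2$) descends to a line bundle on $\ofM_4(\alpha)$ as soon as the (linearly reductive, at most one-dimensional) stabilizer of every closed point acts with weight zero on its fibre. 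The closed points of $\ocM_4(\alpha)$ have been explicitly described in the previous sections — Chow-polystable curves, GIT-polystable hyperelliptic curves, the curves underlying $C_{2A_5}^\circ$, $C_D$, $C_{A,B}$, and the hyperelliptic ribbon $R_{\hyp}$ — and at each of them the relevant weight is computed by a local CM/intersection-number computation and seen to vanish, which is exactly the reason that $13\lambda-(2-\alpha)\delta$ is the log canonical polarization. I would then check that $\fL_\alpha$ is precisely $(\varphi_\alpha)_*\big(K_{\ocM_4}+\alpha\delta\big)=(\varphi_\alpha)_*\big(13\lambda-(2-\alpha)(\delta_0+\delta_1+\delta_2)\big)$, using that $\varphi_\alpha$ always contracts $\delta_1$ and contracts $\delta_2$ exactly when $\alpha\le\frac{19}{29}$.

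For ampleness the strategy is: (a) show $\fL_\alpha$ is nef via a positivity lemma — pulling $\fL_\alpha$ back along a finite-type smooth scheme mapping to $\ocM_4(\alpha)$ and invoking Koll\'ar's ampleness lemma one gets that $\fL_\alpha$ descends to a nef $\bQ$-divisor; (b) upgrade nef to ample by Nakai--Moishezon. Concretely, $\ofM_4(\alpha)$ is glued from $\fU^+$, $\fV_1^{*}$, $\fV_2^{+}$ along the Chow stable locus $\fX^{\rm s}$: on $\fU^+\subset\oM_4(\tfrac{2}{3}-\epsilon)$ the divisor $13\lambda-(2-\alpha)(\delta_0+\delta_2)$ is ample by \cite{AFSvdW}, while on $\fV_1^{*}$, $\fV_2^{+}$ one identifies $\fL_\alpha$ with a positive multiple of the CM/Hodge polarizations that make $\oM_4^{\K}$, $\oM_4^{\CY}$, $\oM_4^{\KSBA}$ projective (Theorems \ref{thm:CY-gms}, \ref{thm:bpCYwallcrossing}), modulo corrections that vanish by the weight computation above. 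Since $\fL_\alpha$ is nef, it then suffices to verify strict positivity on every curve class, which is done by intersecting with the test curves sweeping out the exceptional loci of the wall-crossing morphisms of Theorem \ref{thm:new-models}: a general genus-two-tail test curve in $\delta_{2,\alpha}$ gives $\fL_\alpha$-degree a positive multiple of $29\alpha-19$, the test curves in the loci contracted at $\alpha=\tfrac{5}{9}$ (elliptic triboroughs, hyperelliptic curves, tacnodal curves on the cone) give $\fL_\alpha$-degree vanishing exactly at $\tfrac{5}{9}$, and so on, all strictly positive for the $\alpha$ in question.

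Granting part (1), part (2) follows as follows. For $\alpha=\tfrac{5}{9}$ it is Theorem \ref{thm:cmjl-vgit}, and for $\alpha\in(\tfrac{19}{29},\tfrac{2}{3})$ one may also invoke Theorem \ref{thm:AFS-agree}; the uniform argument is: choose a common resolution $p\colon W\to\oM_4$, $q\colon W\to\ofM_4(\alpha)$ of $\varphi_\alpha$ and write $p^*\big(K_{\oM_4}+\alpha(\Delta_0+\Delta_2)+\tfrac{1+\alpha}{2}\Delta_1\big)=q^*\fL_\alpha+E$. The key point is that $E\ge 0$ and $E$ is $q$-exceptional, which amounts to a log-discrepancy computation at the birational transforms of $\delta_1$ (and of $\delta_2$ when $\alpha\le\tfrac{19}{29}$): the curve-singularity replacements recorded in Table \ref{table:singularities} — elliptic tails/bridges by cusps/tacnodes, general genus-two tails by separating $A_5$'s, elliptic triboroughs by $D_4$'s — force the relevant discrepancies to be nonnegative. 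Then for every $m$,
\[
H^0\big(\ocM_4,\lfloor m(K_{\ocM_4}+\alpha\delta)\rfloor\big)=H^0\big(W,\lfloor m(q^*\fL_\alpha+E)\rfloor\big)=H^0\big(\ofM_4(\alpha),\lfloor m\fL_\alpha\rfloor\big),
\]
using the projection formula together with $K_{\ocM_4}+\alpha\delta=\phi^*(K_{\oM_4}+\cdots)$ and the fact that $E$ effective and $q$-exceptional gives $q_*\cO_W(\lfloor mE\rfloor)=\cO_{\ofM_4(\alpha)}$. Since $\fL_\alpha$ is ample by part (1), applying $\Proj$ yields $\ofM_4(\alpha)\cong\oM_4(\alpha)$.

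I expect the real difficulty to be concentrated in two places. First, the vanishing of stabilizer weights and the identification of $\fL_\alpha$ with the CM/Hodge polarizations on $\fV_1^{*}$ and $\fV_2^{+}$: translating between the "curves" normalization of $\lambda,\delta_0,\delta_2$ and the "pairs" normalization coming from $\ocM_4^{\CY}$ through the forgetful map $\Psi$ requires careful adjunction and CM-line-bundle bookkeeping, and is where an error is most likely to hide. Second, the Nakai--Moishezon step: one must produce enough test curves — and compute their slopes on the (mildly singular) good moduli spaces rather than on $\ocM_g$ — to rule out ampleness failing along an unexpected subvariety; here one leans heavily on the precise descriptions of the exceptional loci of the wall crossings in Theorems \ref{thm:new-models} and \ref{thm:X+gms} and Propositions \ref{prop:gms-for-V1} and \ref{prop:gms-for-V2}.
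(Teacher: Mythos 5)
Your overall architecture — establish that $\fL_\alpha$ is $\bQ$-Cartier and ample and then compare section rings along $\varphi_\alpha$ — matches the paper's, but almost every individual step is executed differently and several of your proposed steps have real gaps.

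For $\bQ$-Cartierness, you propose a descent argument via vanishing of stabilizer weights, but you assert the weights vanish ``because $13\lambda-(2-\alpha)\delta$ is the log canonical polarization,'' which is circular — that vanishing is precisely what needs verifying, and it is not automatic. You also assert that stabilizers are at most one-dimensional, which is false: $(S_{2A_5},\tfrac{2}{3}C_{2A_5}^\circ)$ and $(\PuP,\tfrac{2}{3}C_D)$ have $\bG_m^2$ stabilizer and $R_{\hyp}$ has $\GL(2)/\bmu_{g+1}$. The paper sidesteps all of this by working with explicit pullback identities: equation \eqref{eq:CMJL-proj} from \cite{CMJL14} expresses the VGIT/Chow polarization pullbacks in terms of $\lambda,\delta_0,\delta_1,\delta_2$, and combining this with the Alper--Fedorchuk--Smyth result that $\fL_{\frac{2}{3}-\epsilon}$ is $\bQ$-Cartier and ample, one deduces $\bQ$-Cartierness of $\lambda_\alpha$, $\delta_{0,\alpha}$, $\delta_{2,\alpha}$ purely by linear algebra.

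For ampleness, invoking Kollár's ampleness lemma is a detour that requires verifying hypotheses (boundedness, maximal variation, a finite determination of stable objects) which you do not discuss and which are not trivial for the non-separated stacks at hand. More importantly, your Nakai--Moishezon step is wrong as stated: a nef divisor with positive degree on every curve need not be ample; the correct criterion is positivity of $\fL_\alpha^{\dim Z}\cdot Z$ for every positive-dimensional subvariety $Z$, or equivalently bigness of $\fL_\alpha|_Z$. This is exactly how the paper handles the critical case $\alpha=\frac{19}{29}$ in Proposition \ref{prop:proj2}: it produces an ample perturbation $\fL_{\frac{19}{29}}^+-c\,\delta_{2,\alpha'}$ via a convex combination (equation \eqref{eq:proj3}) and deduces that $\fL_{\frac{19}{29}}^+|_{Z'}$ is big for any subvariety $Z'\not\subset\delta_{2,\alpha'}$. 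For the remaining ranges of $\alpha$ the paper never needs Nakai--Moishezon at all: it simply writes $\fL_\alpha$ as a strict convex combination of a nef divisor and an ample divisor (e.g., $\fL_{\frac{19}{29}}^+$ and $\fL_{\frac{2}{3}-\epsilon_0}$ in Proposition \ref{prop:proj1}). This convexity trick is the key structural idea and is absent from your proposal.

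For part (2), a common resolution would work, but your claim that the discrepancies at the transforms of $\delta_1$ (and $\delta_2$) are nonnegative is asserted rather than computed, with a gesture at Table \ref{table:singularities}. The paper instead applies \cite[Lemma 4.1]{HH09} directly to get the exact identity $K_{\ocM_4}+\alpha\delta=\varphi_\alpha^*\fL_\alpha+(9-11\alpha)\delta_1$ (and, for $\alpha\le\frac{19}{29}$, the analogous identity with the extra term $(19-29\alpha)\delta_2$, derived by the explicit pullback formulas $j^*\lambda_\alpha=\lambda_{\alpha'}+3\delta_{2,\alpha'}$ and $j^*\delta_{0,\alpha}=\delta_{0,\alpha'}+30\delta_{2,\alpha'}$). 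You would need to produce these coefficients to know they are nonnegative; without the identity your claim is a conjecture, not a proof.
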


\begin{prop}\label{prop:proj1}
Theorem \ref{thm:projectivity}(1) holds if $\alpha\in (\frac{19}{29}, \frac{2}{3})\cap \bQ$.
\end{prop}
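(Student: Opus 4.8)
\textbf{Proof plan for Proposition \ref{prop:proj1}.} The key observation is that for $\alpha \in (\tfrac{19}{29}, \tfrac{2}{3})$, by Theorem \ref{thm:AFS-agree} we have isomorphisms $\ocM_4(\alpha) \cong \ocM_4(\tfrac{2}{3}-\epsilon)$ and $\ofM_4(\alpha) \cong \oM_4(\tfrac{2}{3}-\epsilon)$, where the latter is the good moduli space constructed in \cite{AFSvdW, AFS17, AFS17b}. Since the Hassett--Keel program is already established for $\alpha > \tfrac{2}{3} - \epsilon$ by those works, we know that $\oM_4(\tfrac{2}{3}-\epsilon) \cong \Proj R(\ocM_4, K_{\ocM_4} + \alpha\delta)$ and that the descent of $K_{\ocM_4} + \alpha\delta$ is an ample $\bQ$-Cartier divisor on this space. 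Therefore the plan is to identify $\fL_\alpha$ (up to a positive rational multiple and possibly a positive multiple of an ample class) with the descent of a suitable multiple of $K_{\ocM_4} + \alpha\delta$, and invoke the known ampleness.

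First I would recall the standard relation on $\ocM_g$ between the canonical class, the Hodge class $\lambda$, and the boundary: $K_{\ocM_g} = 13\lambda - 2\delta + \psi$ (with $\psi = 0$ on $\ocM_g$ since there are no marked points), so $K_{\ocM_4} + \alpha\delta = 13\lambda - (2-\alpha)\delta = 13\lambda - (2-\alpha)(\delta_0 + \delta_1 + \delta_2)$. Next I would push forward under the birational contraction $\varphi_\alpha: \ocM_4 \dashrightarrow \ofM_4(\alpha)$. Since $\varphi_\alpha$ contracts both $\delta_1$ (always) and $\delta_2$ (for $\alpha \le \tfrac{19}{29}$, but for $\alpha \in (\tfrac{19}{29}, \tfrac{2}{3})$ it does \emph{not} contract $\delta_2$), we get $(\varphi_\alpha)_*(K_{\ocM_4} + \alpha\delta) = 13\lambda_\alpha - (2-\alpha)(\delta_{0,\alpha} + \delta_{2,\alpha}) = \fL_\alpha$ in this range, since $(\varphi_\alpha)_*\delta_1 = 0$. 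This matches the second case of the definition of $\fL_\alpha$ exactly. So $\fL_\alpha$ is, by definition, the pushforward of $K_{\ocM_4} + \alpha\delta$. Then I would argue that under the isomorphism $\ofM_4(\alpha) \cong \oM_4(\tfrac{2}{3}-\epsilon) = \Proj R(\ocM_4, K_{\ocM_4}+\alpha\delta)$, the divisor $\fL_\alpha$ is exactly the ample polarization coming from the $\Proj$ construction — more precisely, since $\varphi_\alpha$ is a birational contraction onto its log canonical model and $K_{\ocM_4} + \alpha\delta$ is big and nef on the relevant model, its pushforward $\fL_\alpha$ is $\bQ$-Cartier and ample. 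The $\bQ$-Cartier property needs a small argument: one should check that $\ofM_4(\alpha)$ is $\bQ$-factorial near the relevant loci (which follows since it is a good moduli space of a smooth stack, c.f. the argument via \cite[Proposition 5.15]{KM98} used in the proof of Proposition \ref{prop:gms-for-V1}, together with the fact that $\oM_4(\tfrac{2}{3}-\epsilon)$ is the coarse space of a smooth DM stack by Lemma \ref{lem:2/3+DM}), so $\fL_\alpha$ is automatically $\bQ$-Cartier.

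The main obstacle, I expect, is being careful about exactly which divisor class descends and that the descended class is genuinely the ample class from the $\Proj$ and not merely nef or a sum with an exceptional contribution. Concretely, one must verify that the birational contraction $\varphi_\alpha$ does not introduce discrepancy corrections to the formula $K_{\ocM_4} + \alpha\delta = 13\lambda - (2-\alpha)\delta$ when pushing forward — i.e., that $\fL_\alpha$ really is the ample generator and not off by a boundary term supported on the $\delta_1$ locus that $\varphi_\alpha$ contracts. Since $\delta_1$ is contracted, its pushforward vanishes and there is no correction, but one should spell this out using that $\varphi_\alpha$ is the canonical map associated to the linear system $|m(K_{\ocM_4}+\alpha\delta)|$ for $m \gg 0$. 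An alternative, and perhaps cleaner, route would be to avoid the $\Proj$ language entirely here: directly cite the ampleness statement from \cite{AFSvdW, AFS17, AFS17b} for $\oM_4(\tfrac{2}{3}-\epsilon)$ — those papers prove $\oM_4(\tfrac{2}{3}-\epsilon)$ is projective with a distinguished ample class — and then match that class with $\fL_\alpha$ via the explicit divisor-class bookkeeping above. Either way, both parts of the statement for $\alpha \in (\tfrac{19}{29},\tfrac{2}{3})$ reduce to the already-established case $\alpha > \tfrac{2}{3}-\epsilon$ by Theorem \ref{thm:AFS-agree}, which is why this proposition is separated out and proved first before the harder range $\alpha \in (\tfrac{5}{9}, \tfrac{19}{29}]$.
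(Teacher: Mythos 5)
The proposal has a genuine gap: it is essentially circular. You argue that since $\ofM_4(\alpha) \cong \oM_4(\tfrac{2}{3}-\epsilon)$ (by Theorem \ref{thm:AFS-agree}) and the Hassett--Keel program is already established ``for $\alpha > \tfrac{2}{3}-\epsilon$,'' we may conclude that $\oM_4(\tfrac{2}{3}-\epsilon) \cong \Proj R(\ocM_4, K_{\ocM_4}+\alpha\delta)$ and that the pushforward $\fL_\alpha$ is ample for all $\alpha\in(\tfrac{19}{29},\tfrac{2}{3})$. But the works \cite{AFSvdW, AFS17, AFS17b} only establish ampleness of $\fL_{\alpha'}$ for $\alpha'$ in a \emph{small} interval $(\tfrac{2}{3}-\epsilon_0,\tfrac{2}{3})$: they locate the $\tfrac{2}{3}$-wall but say nothing about how far down one can decrease $\alpha$ before the nef cone is exited — indeed, identifying $\tfrac{19}{29}$ as the next wall is a new result of this paper. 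The isomorphism of Theorem \ref{thm:AFS-agree} tells you the underlying \emph{space} is unchanged for $\alpha\in(\tfrac{19}{29},\tfrac{2}{3})$, but $\fL_\alpha$ is a one-parameter family of divisor \emph{classes} on that fixed space, and ampleness of $\fL_{\alpha'}$ for $\alpha'$ near $\tfrac{2}{3}$ does not propagate to smaller $\alpha$ without controlling the boundary of the nef cone. Establishing that $\ofM_4(\alpha) = \Proj R(\ocM_4, K_{\ocM_4}+\alpha\delta)$ for the full range is precisely Theorem \ref{thm:projectivity}(2), which the paper deduces \emph{from} Theorem \ref{thm:projectivity}(1); invoking (2) to prove (1) is circular.

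The paper instead argues via a convex-combination/nef-cone argument: having ampleness from \cite{AFS17b} only for $\alpha \in (\tfrac{2}{3}-\epsilon_0,\tfrac{2}{3})$, it reduces to showing that the limiting class $\fL_{\tfrac{19}{29}}^+ = 13\lambda_\alpha - (2-\tfrac{19}{29})(\delta_{0,\alpha}+\delta_{2,\alpha})$ is $\bQ$-Cartier and nef, since $\fL_\alpha$ for $\alpha\in(\tfrac{19}{29},\tfrac{2}{3})$ is then a strict convex combination of an ample and a nef class. To get nefness of $\fL_{\tfrac{19}{29}}^+$, the paper writes a positive multiple of it as a sum of known big nef classes minus a correction supported on $\delta_{2,\alpha}$ (equation \eqref{eq:proj2}, using the pullback \eqref{eq:CMJL-proj2} of the Chow GIT polarization), and then shows that $\fL_{\tfrac{19}{29}}^+\big|_{\delta_{2,\alpha}}$ is numerically trivial by plugging the appropriate slope $s=\tfrac{17}{14}$ into the VGIT line-bundle formula \eqref{eq:CMJL-proj}. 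None of this additional computation appears in your proposal, and it is not replaceable by the AFS citation; your fallback suggestion to ``match that class with $\fL_\alpha$'' suffers from the same gap, since the distinguished ample class from AFS is $\fL_{\alpha'}$ with $\alpha'$ near $\tfrac{2}{3}$, a different class from the $\fL_\alpha$ you need.
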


\begin{proof}

By \cite{AFS17b} we know that there exists a sufficiently small $\epsilon_0\in \bQ_{>0}$ such that $\fL_{\frac{2}{3}-\epsilon}$ is $\bQ$-Cartier and ample for every $\epsilon\in (0, \epsilon_0]\cap \bQ$. Thus we may assume that $\alpha\in (\frac{19}{29}, \frac{2}{3}-\epsilon_0)\cap\bQ$.  It suffices to show $\fL_{\frac{19}{29}}^+:=13\lambda_\alpha - (2-\frac{19}{29})(\delta_{0,\alpha} + \delta_{2, \alpha})$ is $\bQ$-Cartier and nef on $\ofM_4(\alpha)$, as $\fL_\alpha$ is a strictly convex combination of $\fL_{\frac{2}{3}-\epsilon_0}$ and $\fL_{\frac{19}{29}}^+$.

Next, we adopt the notation from \cite{ADL20} of $\eta$ and $\xi$ on the VGIT quotients (see also Section \ref{sec:chow-vgit}). By \cite[Proof of Theorem 7.1]{CMJL14} the pull-back of these line bundles to $\ocM_4$ are given by
\begin{equation}\label{eq:CMJL-proj}
\phi_t^*(4s\eta + 4\xi) = (34s-33)\lambda - (4s-4) \delta_0 - (14s-15)\delta_1 - (18s-21) \delta_2. 
\end{equation}
Here $\phi_t: \ocM_4\dashrightarrow \fX_t = W^{\rm ss}(N_t)\sslash \PGL(4)$ is the birational contraction and $s=\frac{1}{t}$. Denote by 
\[\phi^{\rm c}_\alpha:= \phi_{\frac{2}{3}} \circ \varphi_\alpha^{-1} = j_5^+\circ (j_4^-)^{-1} \circ j_4^+ : \ofM_4(\alpha) \to \fX^{\rm c}.
\]
Then by Theorem \ref{thm:new-models} we know that $\phi_\alpha^{\rm c}$ is a birational morphism.
Since $\frac{3}{2}\eta + \xi = \frac{3}{2} (\eta + \frac{2}{3}\xi)$ is a positive multiple of the GIT polarization on $\fX^{\rm c}$, we know that 
\begin{equation}\label{eq:CMJL-proj2}
(\phi_\alpha^{\rm c})^* ( 4\cdot \tfrac{3}{2}\eta + 4\xi) = 18\lambda_\alpha - 2\delta_{0,\alpha} - 6 \delta_{2, \alpha} = 2(9\lambda_\alpha - \delta_{0,\alpha} - 3 \delta_{2, \alpha})
\end{equation}
is $\bQ$-Cartier, big and nef on $\ofM_4(\alpha)$. On the other hand, by \cite{AFS17b} we know that $\fL_{\frac{2}{3}}^{-} := 13\lambda_\alpha - (2-\frac{2}{3}) (\delta_{0,\alpha}+\delta_{2,\alpha})$ is $\bQ$-Cartier, big and nef on $\ofM_4(\alpha)$. Thus
\begin{equation}\label{eq:proj2}
\frac{1}{3}(9\lambda_\alpha - \delta_{0,\alpha} - 3 \delta_{2, \alpha}) + 2 \fL_{\frac{2}{3}}^{-} = 29 \lambda_\alpha - 3\delta_{0,\alpha} - \frac{11}{3}\delta_{2, \alpha} = \frac{29}{13}\fL_{\frac{19}{29}}^+ -\frac{2}{3}\delta_{2,\alpha}
\end{equation}
is $\bQ$-Cartier, big and nef on  $\ofM_4(\alpha)$. Since $\delta_{2,\alpha}$ is $\bQ$-Cartier by Theorem \ref{thm:new-models}(3), so is $\fL_{\frac{19}{29}}^+$. Thus to show nefness of $\fL_{\frac{19}{29}}^+$, it suffices to show nefness of its restriction $\fL_{\frac{19}{29}}^+|_{\delta_{2,\alpha}}$. Indeed, we will show that this restriction is trivial. 

For any $t\in (0, \frac{2}{3})$, consider the birational map $\phi_{t}\circ\varphi_{\alpha}^{-1}:\ofM_4(\alpha) \dashrightarrow \fX_t$ as the composition of the birational morphism $\phi_{\alpha}^{\rm c}$ and the birational map $\fX^{\rm c} \dashrightarrow \fX_t$. Since the latter map is an isomorphism near $[C_{2A_5}]$, we know that $\phi_t\circ \varphi_{\alpha}^{-1}$ is regular near $\delta_{2,\alpha}$ which contracts $\delta_{2,\alpha}$ to a point. Thus by \eqref{eq:CMJL-proj} 
\[
\left.\left((\phi_{t}\circ\varphi_{\alpha}^{-1})^* (4s\eta + 4\xi)\right)\right|_{\delta_{2,\alpha}} = \left.\left( (34s-33)\lambda_\alpha - (4s-4) \delta_{0,\alpha} - (18s-21) \delta_{2,\alpha} \right)\right|_{\delta_{2,\alpha}}
\]
is trivial for any $t\in (0, \frac{2}{3})$. Since the above divisor is linear in $s$, we know that it is trivial for any $s\in \bR$. In particular, we can choose $s=\frac{17}{14}$ which yields
\begin{align*}
0 & = \left.\left( (34\cdot \tfrac{17}{14}-33)\lambda_\alpha - (4\cdot\tfrac{17}{14}-4) \delta_{0,\alpha} - (18\cdot\tfrac{17}{14}-21) \delta_{2,\alpha} \right)\right|_{\delta_{2,\alpha}} \\
& = \left.\left(\tfrac{58}{7}\lambda_\alpha  - \tfrac{6}{7}(\delta_{0,\alpha}+\delta_{2,\alpha})\right)\right|_{\delta_{2,\alpha}} 
  = \tfrac{58}{91} \left.\fL_{\frac{19}{29}}^+\right|_{\delta_{2,\alpha}}. 
\end{align*}
This shows triviality of  $\fL_{\frac{19}{29}}^+|_{\delta_{2,\alpha}}$ and concludes the proof.
\end{proof}

\begin{prop}\label{prop:proj2}
Theorem \ref{thm:projectivity}(1) holds if $\alpha = \frac{19}{29}$. 
\end{prop}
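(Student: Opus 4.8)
The strategy is to obtain the case $\alpha = \frac{19}{29}$ as a limiting case of Proposition \ref{prop:proj1}, using that $\ofM_4(\frac{19}{29})$ sits between $\ofM_4(\frac{5}{9},\frac{19}{29})$ and $\ofM_4(\frac{19}{29},\frac{2}{3})$ in the wall crossing diagram of Theorem \ref{thm:new-models}. Recall that at $\alpha = \frac{19}{29}$ the stack is $\ocM_4(\frac{19}{29}) = \sU^+ \cup \sV_1 \cup \sV_2^+$, and by Theorem \ref{thm:new-models} we have the morphisms $j_4^- : \ofM_4(\frac{5}{9},\frac{19}{29}) \xrightarrow{\cong} \ofM_4(\frac{19}{29})$ and $j_4^+ : \ofM_4(\frac{19}{29},\frac{2}{3}) \to \ofM_4(\frac{19}{29})$, where $j_4^+$ is the divisorial contraction of the birational transform of $\delta_2$. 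Since $j_4^-$ is an isomorphism, $\delta_{2,\frac{19}{29}}$ is a $\bQ$-Cartier divisor on $\ofM_4(\frac{19}{29})$ (it is $\bQ$-Cartier on $\ofM_4(\frac{5}{9},\frac{19}{29})$ as in Proposition \ref{prop:gms-for-V1}), so $\fL_{\frac{19}{29}} = 13\lambda_{\frac{19}{29}} - \tfrac{39}{29}\delta_{0,\frac{19}{29}}$ is $\bQ$-Cartier; the content is ampleness.

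First I would pull back $\fL_{\frac{19}{29}}$ along the proper birational morphism $j_4^+$, obtaining $(j_4^+)^*\fL_{\frac{19}{29}} = \fL^+_{\frac{19}{29}}$ on $\ofM_4(\frac{19}{29},\frac{2}{3})$ in the notation of Proposition \ref{prop:proj1} (since $\delta_2$ is in the exceptional locus of $j_4^+$, the pullback of $13\lambda - \tfrac{39}{29}\delta_0$ agrees with $13\lambda - \tfrac{39}{29}(\delta_0 + \delta_2)$ up to the exceptional correction, which must be worked out precisely but is forced by the discrepancy computation already implicit in the proof of Proposition \ref{prop:proj1}). By Proposition \ref{prop:proj1} (more precisely, by the computation there that $\fL^+_{\frac{19}{29}}$ is $\bQ$-Cartier, big, and nef on $\ofM_4(\frac{19}{29},\frac{2}{3})$, with trivial restriction to $\delta_{2}$), the line bundle $\fL^+_{\frac{19}{29}}$ is semiample and its associated morphism contracts precisely $\delta_2$; hence it descends to an ample $\bQ$-Cartier divisor on the image of that contraction. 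Since $j_4^+$ is exactly the contraction of $\delta_2$ (Theorem \ref{thm:new-models}(3)), this image is $\ofM_4(\frac{19}{29})$ and the descended ample class is $\fL_{\frac{19}{29}}$.

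The cleanest way to phrase this is: $\fL^+_{\frac{19}{29}}$ is base point free on $\ofM_4(\frac{19}{29},\frac{2}{3})$ and the induced morphism coincides with $j_4^+$, so by the rigidity/descent lemma for semiample bundles $\fL_{\frac{19}{29}}$ is ample on $\ofM_4(\frac{19}{29})$. To see that $\fL^+_{\frac{19}{29}}$ is actually base point free (not merely nef), note that for $t$ slightly larger than the value corresponding to $\alpha = \frac{19}{29}$, the nef and big $\bQ$-divisor $\tfrac{1}{3}(9\lambda_\alpha - \delta_{0,\alpha} - 3\delta_{2,\alpha}) + 2\fL^-_{\frac{2}{3}}$ appearing in \eqref{eq:proj2} is semiample (it is the pullback of an ample class from a VGIT quotient plus the pullback of the AFS ample class), and $\fL^+_{\frac{19}{29}}$ is a positive rational combination of this class and $\delta_{2,\alpha}$; the linear system associated to the former contracts $\delta_{2,\alpha}$, and combining with base-point-freeness of $\fL^-_{\frac{2}{3}}$ away from $\delta_2$ gives base-point-freeness of $\fL^+_{\frac{19}{29}}$ with the contraction morphism equal to $j_4^+$.

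\textbf{Main obstacle.} The delicate point is bookkeeping the exact relationship between $(j_4^+)^*\fL_{\frac{19}{29}}$ and $\fL^+_{\frac{19}{29}} = 13\lambda - \tfrac{39}{29}(\delta_0+\delta_2)$ on $\ofM_4(\frac{19}{29},\frac{2}{3})$: one must verify that the exceptional divisor $\delta_{2,\alpha}$ appears with the correct (namely the predicted) coefficient so that pullback of the $\delta_2$-free expression on $\ofM_4(\frac{19}{29})$ lands exactly on the $\delta_2$-including expression on $\ofM_4(\frac{19}{29},\frac{2}{3})$, using that $\fL^+_{\frac{19}{29}}\cdot C = 0$ for curves $C$ contracted by $j_4^+$ (which is essentially the triviality of $\fL^+_{\frac{19}{29}}|_{\delta_{2,\alpha}}$ already established in Proposition \ref{prop:proj1}). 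Once this numerical matching is in place, ampleness of $\fL_{\frac{19}{29}}$ follows formally from ampleness of the pushforward of a semiample bundle under the contraction it defines.
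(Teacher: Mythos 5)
Your overall strategy — pulling $\fL_{\frac{19}{29}}$ back along $j_4^+$ and comparing with $\fL^+_{\frac{19}{29}}$ — is the same one the paper takes, but there are two substantive gaps.

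First, your argument for $\bQ$-Cartierness of $\fL_{\frac{19}{29}}$ doesn't work. You assert that $\delta_{2,\frac{19}{29}}$ is a $\bQ$-Cartier \emph{divisor} on $\ofM_4(\frac{19}{29})$, citing Proposition \ref{prop:gms-for-V1} via the isomorphism $j_4^-$. But Proposition \ref{prop:gms-for-V1} establishes that the birational transform of $\delta_2$ is a $\bQ$-Cartier prime divisor on $\fV_1^+$, i.e.\ on $\ofM_4(\frac{19}{29},\frac{2}{3})$; on $\ofM_4(\frac{19}{29})$ (and on the isomorphic $\ofM_4(\frac{5}{9},\frac{19}{29})$) that locus has already been contracted to the single point $[C_{2A_5}^\circ]$, so there is no such divisor. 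More to the point, the definition gives $\fL_{\frac{19}{29}} = 13\lambda_{\frac{19}{29}} - \tfrac{39}{29}\delta_{0,\frac{19}{29}}$ — $\delta_2$ does not appear — so what actually needs to be shown is that $\lambda_{\frac{19}{29}}$ and $\delta_{0,\frac{19}{29}}$ are $\bQ$-Cartier. Away from the contracted point $p = [C_{2A_5}^\circ]$ this follows from the $\alpha'$ case, but near $p$ one needs a separate argument; the paper supplies it via the VGIT pullback identity \eqref{eq:CMJL-proj}, using that $\phi_t \circ \varphi_\alpha^{-1}$ is regular near $p$ for every $t$, so $(34s-33)\lambda_\alpha - (4s-4)\delta_{0,\alpha}$ is $\bQ$-Cartier near $p$ for two independent values of $s$. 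You omit this step entirely.

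Second, the descent-to-ampleness step as you phrase it requires semiampleness (or base-point-freeness) of $\fL^+_{\frac{19}{29}}$, not merely nef and big, and you don't actually establish this — you gesture at it via ``pullback of an ample class from a VGIT quotient plus the AFS class,'' which is not a proof. The paper sidesteps this entirely: it first applies the negativity lemma to the difference $\fL^+_{\frac{19}{29}} - (j_4^+)^*\fL_{\frac{19}{29}}$ (which is supported on the exceptional divisor with trivial restriction, by the computation in Proposition \ref{prop:proj1}) to conclude the exact identity $(j_4^+)^*\fL_{\frac{19}{29}} = \fL^+_{\frac{19}{29}}$, giving nefness immediately; it then upgrades to ampleness by Nakai--Moishezon, checking that $\fL^+_{\frac{19}{29}}|_{Z'}$ is big for any positive-dimensional $Z' \not\subset \delta_{2,\alpha'}$ via a convex combination argument \eqref{eq:proj3}. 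Your ``main obstacle'' paragraph correctly identifies where the difficulty lies, but the negativity-lemma route is exactly the rigorous resolution, and Nakai--Moishezon is the cleaner path to ampleness than semiampleness-plus-descent.
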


\begin{proof}
We first show $\bQ$-Cartierness of $\fL_{\frac{19}{29}}$.
Pick some $\alpha'\in (\frac{19}{29}, \frac{2}{3})\cap \bQ$. Then we have the birational morphism $j_4^+:\ofM_4(\alpha') \to \ofM_4(\frac{19}{29})$ which contracts $\delta_{2,\alpha'}$ to a point $p$ and is isomorphic elsewhere. From the proof of Proposition \ref{prop:proj1} we know that $\delta_{2,\alpha'}$ and the following two $\bQ$-divisors \[
\fL_{\frac{2}{3}}^{-} = 13\lambda_{\alpha'} - \tfrac{4}{3}(\delta_{0, \alpha'} + \delta_{2, \alpha'})  \quad \textrm{and}\quad\fL_{\frac{19}{29}}^{+} = 13\lambda_{\alpha'} - \tfrac{39}{29}(\delta_{0, \alpha'} + \delta_{2, \alpha'})
\]
are all $\bQ$-Cartier. Thus we have that $\lambda_{\alpha'}$ and $\delta_{0, \alpha'}$ are both $\bQ$-Cartier. This implies that both $\lambda_{\alpha}$ and $\delta_{0, \alpha}$ are  $\bQ$-Cartier away from $p$. 

Similar to the proof of Proposition \ref{prop:proj1}, for each $t\in (0, \frac{2}{3})$ the birational map $\phi_t\circ \varphi_{\alpha}^{-1}: \ofM_4(\frac{19}{29})\dashrightarrow \fX_t$ is an isomorphism near $p$ which sends $p$ to $[C_{2A_5}]$. Thus 
\[
(\phi_{t}\circ\varphi_{\alpha}^{-1})^* (4s\eta + 4\xi) =  (34s-33)\lambda_\alpha - (4s-4) \delta_{0,\alpha} 
\]
is $\bQ$-Cartier near $p$ for every $t\in (0, \frac{2}{3})$. Thus  both $\lambda_{\alpha}$ and $\delta_{0, \alpha}$ are  $\bQ$-Cartier near $p$. This concludes the proof of $\bQ$-Cartierness of $\fL_{\alpha}$.

Next, we show nefness of $\fL_{\frac{19}{29}}$. Clearly, $\fL_{\frac{19}{29}}^+ - (j_4^+)^*\fL_{\frac{19}{29}}$ only supports on $\delta_{2,\alpha'}$ whose restriction to $\delta_{2,\alpha'}$ is trivial. Hence the negativity lemma \cite[Lemma 3.39]{KM98} implies  
\begin{equation}\label{eq:proj4}
    \fL_{\frac{19}{29}}^+ = (j_4^+)^*\fL_{\frac{19}{29}}.
\end{equation} 
Thus the nefness of $\fL_{\frac{19}{29}}$ follows from the nefness of $\fL_{\frac{19}{29}}^+$ as shown in the proof of Proposition \ref{prop:proj1}.

Finally, we show ampleness of $\fL_{\frac{19}{29}}$.  By the Nakai-Moishezon criterion, it suffices to show that for every positive dimensional  closed integral algebraic subspace $Z\subset \ofM_4(\alpha)$, the restriction $\fL_{\frac{19}{29}}|_Z$ is big. Since $Z$ is not contained in the exceptional locus of $j_4^+$ which is a single point $p$, the birational transform $Z'$ of $Z$ under $j_4^+$ is a closed integral subvariety of $\ofM_4(\alpha')$. It suffices to show that $(j_4^+)^* \fL_{\frac{19}{29}}|_{Z'} = \fL_{\frac{19}{29}}^+|_{Z'}$ is big. Similar to \eqref{eq:proj2}, for $0<\epsilon\ll 1$ we can pick positive numbers $a := \frac{1-87\epsilon}{3(1-9\epsilon)}$ and $b :=\frac{2}{1-9\epsilon}$ such that 
\begin{equation}\label{eq:proj3}
    a (9\lambda_{\alpha'} - \delta_{0, \alpha'} - 3\delta_{2,\alpha'}) + b \fL_{\frac{2}{3}-\epsilon} = \tfrac{29}{13}\fL_{\frac{19}{29}}^+ - 2a\delta_{2,\alpha'}.
\end{equation}
From the proof of Proposition \ref{prop:proj1} we know that $9\lambda_{\alpha'} - \delta_{0, \alpha'} - 3\delta_{2,\alpha'}$ (resp. $\fL_{\frac{2}{3}-\epsilon}$) is nef (resp. ample). Thus \eqref{eq:proj3} implies that $\fL_{\frac{19}{29}}^+ - \frac{26}{29}a\delta_{2,\alpha'}$ is ample. As a result, the restriction $\fL_{\frac{19}{29}}^+|_{Z'} - \frac{26}{29}a\delta_{2,\alpha'}|_{Z'}$ is also ample. Since $Z'$ is not contained in $\delta_{2,\alpha'}$ and $a>0$, the $\bQ$-divisor $\frac{26}{29}a\delta_{2,\alpha'}|_{Z'}$ is effective. Therefore, $\fL_{\frac{19}{29}}^+|_{Z'}$ is ample plus effective and hence big. This finishes the proof of ampleness.
\end{proof}

\begin{prop}\label{prop:proj3}
Theorem \ref{thm:projectivity}(1) holds if $\alpha\in (\frac{5}{9},\frac{19}{29})\cap \bQ$.
\end{prop}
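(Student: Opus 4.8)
The plan is to reduce the range $\alpha \in (\frac{5}{9}, \frac{19}{29})$ to the already-established endpoint $\alpha = \frac{19}{29}$ (Proposition \ref{prop:proj2}) together with a nefness statement at the lower end $\alpha = \frac{5}{9}$, using convexity. For $\alpha$ in this range, the birational contraction $\varphi_\alpha$ contracts both $\delta_1$ and $\delta_2$, so $\ocM_4(\alpha) = \sU^+ \cup \sV_1^- \cup \sV_2^+$ and $\ofM_4(\alpha) = \fU^+ \cup \fV_1^- \cup \fV_2^+$; recall that by Theorem \ref{thm:new-models} the map $j_4^- \colon \ofM_4(\frac{5}{9},\frac{19}{29}) \xrightarrow{\cong} \ofM_4(\frac{19}{29})$ is an isomorphism, so the moduli space itself does not change as $\alpha$ varies in $(\frac{5}{9}, \frac{19}{29})$ — only the polarization $\fL_\alpha = 13\lambda_\alpha - (2-\alpha)\delta_{0,\alpha}$ varies, and it varies \emph{affinely} in $\alpha$ once $\lambda_\alpha$ and $\delta_{0,\alpha}$ are known to be $\bQ$-Cartier. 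The $\bQ$-Cartierness follows from Proposition \ref{prop:proj2}: there $\lambda_{\frac{19}{29}}$ and $\delta_{0,\frac{19}{29}}$ were shown to be $\bQ$-Cartier on $\ofM_4(\frac{19}{29})$, and via the isomorphism $j_4^-$ the same holds on $\ofM_4(\frac{5}{9},\frac{19}{29})$. Hence $\fL_\alpha$ is $\bQ$-Cartier for every $\alpha \in (\frac{5}{9},\frac{19}{29})\cap\bQ$.

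For ampleness, I would write $\fL_\alpha$ as a strictly convex combination of $\fL_{\frac{19}{29}}$ (ample by Proposition \ref{prop:proj2}, pulled back under the isomorphism $j_4^-$) and a second divisor that I will show is nef, namely the one corresponding to the lower wall $\alpha = \frac{5}{9}$. Concretely, since $\fL_\alpha$ is affine in $\alpha$, for $\alpha \in (\frac{5}{9},\frac{19}{29})$ we have
\[
\fL_\alpha = \lambda \cdot \fL_{\frac{19}{29}} + (1-\lambda)\cdot \fL_{\frac{5}{9}}^{-}
\]
for the appropriate $\lambda = \lambda(\alpha) \in (0,1)$, where $\fL_{\frac{5}{9}}^{-} := 13\lambda_\alpha - (2-\frac{5}{9})\delta_{0,\alpha}$ is the limiting divisor as $\alpha \to \frac{5}{9}^+$. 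It then suffices to show $\fL_{\frac{5}{9}}^{-}$ is $\bQ$-Cartier and nef on $\ofM_4(\frac{5}{9},\frac{19}{29})$. For this I would use the birational morphism $j_5^+ \colon \ofM_4(\frac{5}{9},\frac{19}{29}) \to \fX^{\rm c} \cong \oM_4(\frac{5}{9})$ from Theorem \ref{thm:new-models} together with the pullback formula \eqref{eq:CMJL-proj} from \cite{CMJL14}: the GIT polarization on $\fX^{\rm c}$ is a positive multiple of $\eta + \frac{2}{3}\xi$ (i.e. $s = \frac{3}{2}$ in the notation of \eqref{eq:CMJL-proj}), whose pullback under $(j_5^+)^* \circ (\text{birational map to }\fX^{\rm c})$ is $\bQ$-Cartier, big, and nef. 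Combining this big-and-nef class with the big-and-nef classes already available from \cite{AFS17b} (applied after pulling back through $j_4^+$ as in the proof of Proposition \ref{prop:proj1}) via a nonnegative linear combination, I expect to express $\fL_{\frac{5}{9}}^{-}$ (or a positive multiple) as a sum of nef classes, hence nef.

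The main obstacle will be the bookkeeping of which classes are $\bQ$-Cartier on which model and producing the correct \emph{positive} linear combination that realizes $\fL_{\frac{5}{9}}^{-}$ as nef — in particular, handling the exceptional loci of $j_5^+$ (the elliptic triborough divisor, the hyperelliptic locus $\fH_4^+$, and the curves of Proposition \ref{prop:A3-cone}), which are contracted to the points $[C_D]$, $[R_{\hyp}]$, and the points of $\Gamma$. As in the proofs of Propositions \ref{prop:proj1} and \ref{prop:proj2}, one must check that the relevant restriction of $\fL_{\frac{5}{9}}^{-}$ to these contracted loci is trivial (or at worst nef) so that the negativity lemma \cite[Lemma 3.39]{KM98} applies and nefness descends; this is where the explicit coefficients from \eqref{eq:CMJL-proj}, evaluated at a well-chosen value of $s$, do the work. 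Once nefness of $\fL_{\frac{5}{9}}^{-}$ is in hand, ampleness of $\fL_\alpha$ for $\alpha \in (\frac{5}{9},\frac{19}{29})$ follows immediately since it is a strictly convex combination of an ample and a nef $\bQ$-Cartier divisor, completing the proof of Theorem \ref{thm:projectivity}(1) in this range.
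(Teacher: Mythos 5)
Your strategy matches the paper's: express $\fL_\alpha$ as a strictly convex combination of $\fL_{\frac{19}{29}}^{-}:=(j_4^-)^*\fL_{\frac{19}{29}}$ (ample, from Proposition \ref{prop:proj2}) and a second divisor at the lower wall, and establish the $\bQ$-Cartierness of $\lambda_\alpha$, $\delta_{0,\alpha}$ by transporting the result of Proposition \ref{prop:proj2} through the isomorphism $j_4^-$. However, your plan for showing $\fL_{\frac{5}{9}}^{-}$ is nef overcomplicates the step and contains a piece that would not go through. You propose combining the pullback of the GIT polarization with classes from \cite{AFS17b} ``pulled back through $j_4^+$,'' but $j_4^+$ is a morphism $\ofM_4(\frac{19}{29},\frac{2}{3})\to\ofM_4(\frac{19}{29})$ and is not available on $\ofM_4(\frac{5}{9},\frac{19}{29})$, which sits on the opposite side of the wall where $\delta_2$ is already contracted. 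You also anticipate needing the negativity lemma to control behavior along the exceptional loci of $j_5^+$, but none of that is required here.

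The actual computation closes this cleanly: substituting $s=\frac{3}{2}$ into \eqref{eq:CMJL-proj} and using $\delta_{1,\alpha}=\delta_{2,\alpha}=0$ on $\ofM_4(\frac{5}{9},\frac{19}{29})$ gives
\[
(j_5^+)^*\bigl(4\cdot\tfrac{3}{2}\eta + 4\xi\bigr) = 18\lambda_\alpha - 2\delta_{0,\alpha} = \tfrac{18}{13}\,\fL_{\frac{5}{9}}^{-},
\]
so $\fL_{\frac{5}{9}}^{-}$ is directly a positive rational multiple of the pullback, under the birational morphism $j_5^+$, of the ample GIT polarization on $\fX^{\rm c}$. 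It is therefore automatically $\bQ$-Cartier, big and nef (and its restriction to any $j_5^+$-contracted locus is trivial by definition of pullback), with no further decomposition needed. Had you carried out the $s=\frac{3}{2}$ substitution you invoked, you would have found this identity; as written, your plan's middle step is both circuitous and, in part, not executable.
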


\begin{proof}
We define two $\bQ$-divisors on $\ofM_4(\alpha)$ as 
\[
\fL_{\frac{19}{29}}^{-}:=(j_4^{-})^*\fL_{\frac{19}{29}} =13\lambda_{\alpha} - (2-\tfrac{19}{29})\delta_{0, \alpha} \quad \textrm{and} \quad\fL_{\frac{5}{9}}^{+}:=13\lambda_{\alpha} - (2-\tfrac{5}{9})\delta_{0, \alpha}. 
\]
Since $j_4^-$ is an isomorphism by Theorem \ref{thm:new-models}, Proposition \ref{prop:proj2} implies that $\fL_{\frac{19}{29}}^{-}$ is $\bQ$-Cartier and ample. 
Similar to the proofs of Proposition \ref{prop:proj1} and \ref{prop:proj2},  
\[
\fL_{\frac{5}{9}}^+ = \tfrac{13}{18}(j_5^+)^* (4\cdot \tfrac{3}{2}\eta + 4\xi)
\]
is $\bQ$-Cartier, big and nef. Since $\fL_\alpha$ is a strict convex combination of  $\fL_{\frac{19}{29}}^{-}$ and $\fL_{\frac{5}{9}}^+$, we conclude that it is $\bQ$-Cartier and ample. 
\end{proof}

\begin{proof}[Proof of Theorem \ref{thm:projectivity}]
Part (1) follows directly from the combination of Propositions \ref{prop:proj1}, \ref{prop:proj2}, and \ref{prop:proj3}. Thus we focus on part (2) for the rest of the proof. If $\alpha = \frac{5}{9}$, then (2) follows from Theorems \ref{thm:cmjl-vgit} and \ref{thm:new-models}. Thus we may assume $\alpha \in (\frac{5}{9}, \frac{2}{3})\cap \bQ$ from now on.

We start from the case where $\alpha \in (\frac{19}{29}, \frac{2}{3})$.
From \cite{AFSvdW} and Theorem \ref{thm:AFS-agree} we know that the birational map $\oM_4^{\rm ps} \dashrightarrow \oM_4(\frac{2}{3}-\epsilon)\cong \ofM_4(\alpha)$ is an isomorphism in codimension $1$ and at  the center of $\delta_1$ under the divisorial contraction $\oM_4 \to \oM_4^{\rm ps}$. Thus by \cite[Lemma 4.1]{HH09} we have
\[
K_{\ocM_4}+ \alpha \delta = \varphi_\alpha^* \fL_\alpha + (9-11\alpha) \delta_1.
\]
Here we use the canonical bundle formula $K_{\ocM_4} = 13\lambda - 2\delta$ (see e.g. \cite[Section 3.E]{HM98} or \cite[1.45]{Morrison}). Since $\alpha <\frac{9}{11}$ by assumption and $\delta_1$ is $\varphi_\alpha$-exceptional, \eqref{eq:proj1} follows from the ampleness of $\fL_\alpha$. 

Finally we study the case where $\alpha\in (\frac{5}{9}, \frac{19}{29}]$. 
Pick some $\alpha'\in (\frac{19}{29}, \frac{2}{3})$. According to Theorem \ref{thm:new-models}, let $j: \ofM_4(\alpha')\to \ofM_4(\alpha)$ be the birational morphism that only contracts $\delta_{2,\alpha'}$ and is isomorphic elsewhere (in fact, $j$ is $j_4^+$ if $\alpha = \frac{19}{29}$ or $(j_4^{-})^{-1}\circ j_4^{+}$ otherwise). Notice that $\phi_{\alpha}^{\rm c}:\ofM_{4}(\alpha) \to \fX^{\rm c}$ is a birational morphism by Theorem \ref{thm:new-models}. Thus \eqref{eq:CMJL-proj2} implies that 
\begin{equation}\label{eq:proj-pf1}
9\lambda_{\alpha'} - \delta_{0, \alpha'} - 3\delta_{2, \alpha'} =  (\phi_{\alpha'}^{\rm c})^*(3\lambda+2\xi) = j^* (\phi_\alpha^{\rm c})^* (3\lambda + 2\xi) = 9j^* \lambda_\alpha - j^* \delta_{0,\alpha}.
\end{equation}
Moreover, by \eqref{eq:proj4} we know that 
\begin{equation}\label{eq:proj-pf2}
   29\lambda_{\alpha'} - 3\delta_{0, \alpha'} - 3\delta_{2, \alpha'} = 29j^*\lambda_\alpha  - 3 j^* \delta_{0,\alpha}. 
\end{equation}
Combining \eqref{eq:proj-pf1} and \eqref{eq:proj-pf2} yields
\[
j^* \lambda_\alpha = \lambda_{\alpha'} + 3\delta_{2,\alpha'}, \qquad j^* \delta_{0, \alpha} = \delta_{0, \alpha'} + 30 \delta_{2, \alpha'}.
\]
Therefore, 
\begin{align*}
j^*\fL_{\alpha}&  = j^*(13\lambda_\alpha- (2-\alpha)\delta_{0, \alpha})\\
& = 13\lambda_{\alpha'} - (2-\alpha)\delta_{0, \alpha'} +(39 - 30(2-\alpha))\delta_{2, \alpha'} \\
& = 13\lambda_{\alpha'} - (2-\alpha) (\delta_{0, \alpha'} + \delta_{2, \alpha'}) - (19-29\alpha) \delta_{2, \alpha'}. 
\end{align*}
Since the center of $\delta_1$ on $\ofM_4(\alpha')$ generically parameterizes irreducible curves with an $A_2$-singularity, it is not contained in $\delta_{2,\alpha'}$ which implies $\varphi_{\alpha'}^*\delta_{2, \alpha'} = \delta_2$.  Again by \cite[Lemma 4.1]{HH09} we have 
\begin{align*}
K_{\ocM_4} + \alpha \delta & = \varphi_{\alpha'}^*(13\lambda_{\alpha'} - (2-\alpha) (\delta_{0, \alpha'} + \delta_{2, \alpha'})) + (9-11\alpha) \delta_1\\
& = \varphi_{\alpha'}^*j^*\fL_{\alpha} + (19-29\alpha)\varphi_{\alpha'}^*\delta_{2, \alpha'} + (9-11\alpha) \delta_1\\
& = \varphi_{\alpha}^*\fL_{\alpha} + (19-29\alpha)\delta_{2} + (9-11\alpha) \delta_1.
\end{align*}
Since $\alpha <\frac{19}{29}<\frac{9}{11}$ by assumption and $\delta_1$, $\delta_2$ are both $\varphi_\alpha$-exceptional, \eqref{eq:proj1} follows from the ampleness of $\fL_\alpha$. 
\end{proof}

\subsection{Proofs of Theorems \ref{introthm:59to23} and \ref{introthm:fullHKprogram}}

\begin{proof}[Proof of Theorem \ref{introthm:59to23}]
    The properties of the stacks $\ocM_4(\alpha)$ follows directly from Theorem \ref{thm:HK-stack}.
    The existence of projective good moduli spaces and all parts of the diagram except the right-most square follow from Theorems \ref{thm:new-models} and \ref{thm:projectivity}.  The right-most square follows from Proposition \ref{prop:59-ewall}.
\end{proof}

\begin{proof}[Proof of Theorem \ref{introthm:fullHKprogram}]
    This follows from Theorem \ref{introthm:59to23}, Theorem \ref{thm:AFS-agree}, and the previous works \cite{Fed12, CMJL12, CMJL14} for the Hassett--Keel models with $\alpha < \frac{5}{9}$ and the works \cite{HH09, HH13, AFS17} for the Hassett--Keel models with $\alpha > \frac{2}{3} - \epsilon$.  
\end{proof}

\newpage

\appendix

\section{Table of Stacks}

Throughout the paper, many different stacks are defined.  The following table summarizes the stacks and their relationships to one another. 

\renewcommand{\arraystretch}{1.5}

\begin{center}
\begin{longtable}{| p{.15\textwidth} | p{.5\textwidth}| p{.25\textwidth} |}
    \hline \textbf{Stack} & \textbf{Description of $\bk$-points} & \textbf{Reference}  \\ \hline
    \hline  $\cM_g$ & smooth genus $g$ curves & \\
    \hline $\ocM_g$ & Deligne--Mumford stable genus $g$ curves & \\
    \hline $\ocM_g(\alpha)$ for $\alpha > \frac{2}{3} - \epsilon$ & $\alpha$-stable genus $g$ curves &  Definition \ref{def:HKsing}  \\
    \hline $\Curves$ & proper schemes of dimension $\leq 1$ over $\bk$ & See \cite[\href{https://stacks.math.columbia.edu/tag/0D4Z}{Tag 0D4Y} and \href{https://stacks.math.columbia.edu/tag/0DMJ}{Tag 0DMJ}]{stacksproject}  \\
    \hline $\Curves_g^\Gor$ & Gorenstein proper curves $C$ with (arithmetic) genus $g$ (i.e. $h^0(C, \cO_C)=1$ \& $h^1(C, \cO_C) = g$) & See \cite[\href{https://stacks.math.columbia.edu/tag/0E1L}{Tag 0E1L} and \href{https://stacks.math.columbia.edu/tag/0E6K}{Tag 0E6K}]{stacksproject} \\
    \hline $\Curves_g^{\lci}$ & proper curves with local complete intersection (lci) singularities of genus $g$ & See \cite[\href{https://stacks.math.columbia.edu/tag/0E0J}{Tag 0E0J}]{stacksproject} \\ 
    \hline $\Curves_g^{\lci+}$ & proper lci curves of genus $g$ with finite singular locus & See \cite[\href{https://stacks.math.columbia.edu/tag/0DZT}{Tag 0DZT}]{stacksproject}\\
    \hline $\sX_g^{\rm c}$ & Chow semistable 1-cycles in $\bP^{g-1}$ of degree $2g-2$ in the closure of the locus parameterizing smooth canonical curves of genus $g$ &  Definition \ref{def:chowss}\\
    \hline $\sX_t$ & VGIT quotient stack of $(2,3)$ complete intersections in $\bP^3$ of slope $t$ &  Definition \ref{def:23VGIT} \\
    \hline $\ocM_4^{\CY}$ & slc pairs $(X, \frac{2}{3}D)$ admitting a $\bQ$-Gorenstein smoothing to $(\bP^1 \times \bP^1, \frac{2}{3}C_{3,3})$, for $C_{3,3}$ a smooth $(3,3)$ curve &  Definition \ref{def:CY4stack} \\
    \hline $\ocM_4^{\K}$ & pairs $(X,\frac{2}{3}D) \in \ocM_4^{\CY}(\bk)$ such that $(X, (\frac{2}{3}-\epsilon)D)$ is K-semistable for $0<\epsilon \ll 1$ &  Definition \ref{def:KAndKSBA} \\
    \hline $\ocM_4^{\KSBA}$ & pairs $(X,\frac{2}{3}D) \in \ocM_4^{\CY}(\bk)$ such that $(X, (\frac{2}{3}+\epsilon)D)$ is slc for $0<\epsilon \ll 1$ &  Definition \ref{def:KAndKSBA} \\
    \hline $\sH_g$ & genus $g$ hyperelliptic curves &  Definition \ref{def:hyp-rib-stacks} \& Theorem \ref{thm:hyp-stack} \\
    \hline $\sR_g$ & genus $g$ rational ribbons &  Definition \ref{def:hyp-rib-stacks} \& Theorem \ref{thm:rib-stack} \\
    \hline $\Curves_{g,1}$ & curves $C \in \Curves_g^{\Gor}(\bk)$ such that $\omega_C$ is ample and basepoint free &  Definition \ref{def:curves_g,1} \\
    \hline $\osX_{\!g}$ & curves in the closure of $\cM_g$ in $\Curves_{g,1}$, i.e. smoothable Gorenstein genus $g$ curves with $\omega_C$ ample and basepoint free & Definition \ref{def:sX_g} \\
    \hline $\sX_g$ & smoothable Gorenstein genus $g$ curves with $\omega_C$ ample and basepoint free whose Hilbert--Chow image is a Chow-semistable 1-cycle in $\bP^{g-1}$ & Definition \ref{def:sX_g} \\
    \hline $\sX_g^\circ$ & smoothable lci genus $g$ curves with $\omega_C$ ample and basepoint free whose Hilbert--Chow image is a Chow-semistable 1-cycle in $\bP^{g-1}$ whose Chow polystable degeneration is either a doubled rational normal curve or the cycle theoretic image of a reduced lci curve with very ample canonical divisor & Definition-Proposition \ref{dp:Xgcirc}; a smooth open substack of $\sX_g$ that admits a good moduli space (if $g = 4$, $\sX_4^{\circ}$ will be all of $\sX_4$) \\
    \hline $\sX^{\rm c} := \sX_4^{\rm c}$ & Chow semistable 1-cycles in $\bP^{3}$ of degree $6$ in the closure of the locus parameterizing smooth canonical curves of genus $g$ &  Definition \ref{def:chowss}\\
    \hline $\sX := \sX_4 = \sX_4^\circ$ & curves in $\sX_g$ for $g = 4$, i.e. smoothable Gorenstein genus $4$ curves with $\omega_C$ ample and basepoint free whose Hilbert--Chow image is a Chow-semistable 1-cycle in $\bP^{3}$  & Definition \ref{def:sX_g}, Definition-Proposition \ref{dp:Xgcirc}, Lemma \ref{lem:Xcirc=X} \\
    \hline $\sU$ & curves in $\sX$ whose Chow 1-cycle does not admit special degenerations to $[C_{2A_5}]$ or $[C_D]$ & Definition \ref{def:sUsV}  \\
    \hline $\sV$ & curves in $\sX$ whose Chow 1-cycle does not admit special degenerations to $[C_{A,B}]$ for any $(A,B) \in \bk^2 \setminus \{(0,0)\}$ & Definition \ref{def:sUsV} \\
    \hline $\sX^+$ & curves $C \in \sX(\bk)$ such that either $C$ has $A_{\le 4}$ singularities or $C \in \sV$ & Definition \ref{def:sX+} \\
    \hline $\sU^+$ & curves $C \in \sX^+(\bk) \cap \sU(\bk)$ & Definition \ref{def:sX+} \\    
    \hline $\sW^{\CY}$ & pairs $(X, \frac{2}{3}D) \in \ocM_4^{\CY}(\bk)$ such that $X$ is a quadric or $S_{2A_5}$ & Proposition \ref{prop:open-substack-CY} \\ 
    \hline $\sW^{\K}$ & curves $C \in \sW^{\CY}(\bk) \cap \ocM_4^\K(\bk)$ & Definition \ref{def:sWK} \\
    \hline $\sW^{\KSBA}$ & curves $C \in \sW^{\CY}(\bk) \cap \ocM_4^\KSBA(\bk)$ & Definition \ref{def:sWK} \\
    \hline $\sW$ & curves $C$ in the image of the forgetful map $\sW^{\CY} \to \Curves_4^{\lci+}$ & Definition \ref{def:sWK}; isomorphic to $\sW^{\CY}$ by Theorem \ref{thm:CY-forget-open} and $\sV \subset \sW$ by Theorem \ref{thm:CMJL-Chow-GIT} and Remark \ref{rmk:GIT-stable-properties} \\ 
    \hline $\sW^-$ & curves $C$ in the image of the forgetful map $\sW^{\K} \to \Curves_4^{\lci+}$ & Definition \ref{def:sWK}; isomorphic to $\sW^{\K}$ by Theorem \ref{thm:CY-forget-open} \\ 
    \hline $\sW^+$ & curves $C$ in the image of the forgetful map $\sW^{\KSBA} \to \Curves_4^{\lci+}$ & Definition \ref{def:sWK}; isomorphic to $\sW^{\KSBA}$ by Theorem \ref{thm:CY-forget-open} \\
    \hline 
    $\sV_1^-$ & curves in $\sV$ whose Chow 1-cycle does not admit special degenerations to $[C_D]$, or curves in the pre-image of $\fX^{\mathrm c} \setminus (\Gamma \cup \{[C_D]\})$ the good moduli space morphism $\sX \to \fX^{\mathrm c}$ & Definition \ref{def:V1minus-V2} \\ 
    \hline $\sV_1$ & denoting by $\Psi: \sW^{\CY} \subset \ocM_4^{\CY} \to \sW$ the forgetful map, curves in the image of the saturation of $\Psi^{-1}(\sV_1^-)$ under $\Psi$ & Definition \ref{def:sV1andsV1+} \\ 
    \hline $\sV_1^+$ & curves in $\sV_1(\bk) \cap \sW^+(\bk)$, i.e. curves in $\sV_1$ whose preimage in $\ocM_4^{\CY}$ is a KSBA stable pair  & Definition \ref{def:sV1andsV1+} \\
    \hline $\sV_2$ & curves in $\sV$ whose Chow 1-cycle does not admit special degenerations to $[C_{2A_5}]$, or curves in the pre-image of $\fX^{\mathrm c} \setminus (\Gamma \cup \{[C_{2A_5}]\})$ the good moduli space morphism $\sX \to \fX^{\mathrm c}$ & Definition \ref{def:V1minus-V2} \\ 
    \hline $\sV_2^+$ & curves in $\sV_2(\bk) \cap \sW^+(\bk)$, i.e. curves in $\sV_2$ whose preimage in $\ocM_4^{\CY}$ is a KSBA stable pair   & Definition \ref{def:V2+}\\
    \hline $\ocM_4(\alpha)$ for $\alpha \in [\frac{5}{9}, \frac{2}{3})$ & $\bk$-points of $\begin{cases}
    \sX &\textrm{ if }\alpha=\frac{5}{9} \\
    \sU^+\cup \sV_1^{-} \cup \sV_2^+ & \textrm{ if }\alpha\in (\frac{5}{9},\frac{19}{29})\\
    \sU^+\cup \sV_1 \cup \sV_2^+ & \textrm{ if }\alpha = \frac{19}{29}\\
    \sU^+\cup \sV_1^{+} \cup \sV_2^+ & \textrm{ if }\alpha\in (\frac{19}{29},\frac{2}{3})
    \end{cases}$ &  Definition \ref{def:newmodels}; this stack admits $\oM_4(\alpha)$ as its good moduli space \\ \hline
\end{longtable}
    
\end{center}

\bibliographystyle{alpha}
\bibliography{p1xp1}

\end{document}